\newtheorem{defi}{Définition}[section]
\newtheorem{thm}[defi]{Théorème}
\newtheorem{lemme}[defi]{Lemme}
\newtheorem{prop}[defi]{Proposition}
\newtheorem{ex}[defi]{Exemple}
\newtheorem{cor}[defi]{Corollaire}
\newtheorem{conj}[defi]{Conjecture}
\newtheorem{prop-def}[defi]{Proposition-Définition}
\newtheorem*{conj_nn}{Conjecture}
\newtheorem*{prop_nn}{Proposition}
\newtheorem{thmintro}{Th\'eor\`eme}
\newtheorem{conjintro}[thmintro]{Conjecture}
\newcommand\numberthis{\addtocounter{equation}{1}\tag{\theequation}}
\newcommand{\aq}{\mathbb{A}_\mathbb{Q}}
\newcommand{\af}{\mathbb{A}_f}
\newcommand{\re}{\mathrm{Re} \:}
\newcommand{\qp}{\mathbb{Q}_p}
\newcommand{\zp}{\mathbb{Z}_p}
\newcommand{\sgocs}{sous-groupes compacts ouverts}
\newcommand{\Gal}{\mathrm{Gal}}
\newcommand{\ab}{\mathrm{ab}}
\newcommand{\eps}{\varepsilon}
\newcommand{\C}{\mathbb{C}}
\newcommand{\R}{\mathbb{R}}
\newcommand{\Z}{\mathbb{Z}}
\newcommand{\Q}{\mathbb{Q}}
\newcommand{\F}{\mathrm{F}}
\newcommand{\Ll}{\mathscr{L}}
\newcommand{\Ker}{\mathrm{Ker} \:}
\newcommand{\Imm}{\mathrm{Im} \:}
\newcommand{\Ok}{\mathcal{O}}
\newcommand{\pk}{\mathfrak{p}}
\newcommand{\Okk}{\mathcal{O}_K}
\newcommand{\pkk}{\mathfrak{p}_K}
\newcommand{\1}{\mathbf{1}}
\newcommand{\GL}{\mathrm{GL}}
\newcommand{\SL}{\mathrm{SL}}
\newcommand{\PGL}{\mathrm{PGL}}
\newcommand{\PGSp}{\mathrm{PGSp}}
\newcommand{\aar}{\mathrm{a}}
\newcommand{\SO}{\mathrm{SO}}
\newcommand{\SU}{\mathrm{SU}}
\newcommand{\Sp}{\mathrm{Sp}}
\newcommand{\Fr}{\mathrm{Fr}}
\newcommand{\Ind}{\mathrm{Ind}}
\newcommand{\St}{\mathrm{St}}
\newcommand{\Oo}{\mathrm{O}}
\newcommand{\Vect}{\mathrm{Vect}}
\newcommand{\val}{\mathrm{val}}
\newcommand{\J}{\mathrm{J}}
\newcommand{\K}{\mathrm{K}}
\newcommand{\Jac}{\mathrm{Jac}}
\newcommand{\ii}{\boldsymbol{i}}
\newcommand{\rr}{\boldsymbol{r}}
\newcommand{\Hom}{\mathrm{Hom}}
\newcommand{\dd}{\mathrm{d}}
\newcommand{\Cent}{\mathrm{Cent}}
\newcommand{\Jord}{\mathrm{Jord}}
\newcommand{\WD}{\mathrm{WD}}
\newcommand{\W}{\mathrm{W}}
\newcommand{\B}{\mathrm{B}}
\newcommand{\E}{\mathrm{E}}
\newcommand{\GSp}{\mathrm{GSp}}
\newcommand{\ps}{\par\smallskip}
\newcommand{\longhookrightarrow}{}
\DeclareRobustCommand{\longhookrightarrow}{\lhook\joinrel\relbar\joinrel\rightarrow}
\newcommand{\ie}{\emph{i.e.}\xspace}
\newcommand{\cf}{\emph{cf.}\xspace}
\newcommand{\m}{\mathrm{m}}
\newcommand{\isomo}{\overset{\sim}{\rightarrow}}
\newcommand{\AAA}{\mathbb{A}}
\newtheorem{fact}[defi]{Fait}
\def\got{\mathfrak}
\begin{document}

\begin{titlepage}
	\parindent0pt
	\begin{minipage}{\dimexpr\linewidth\relax}
		\centering
		{\huge Université Paris-Saclay \par}\medskip \Large
		Laboratoire de Mathématiques d'Orsay\par
	\end{minipage}%
	\vskip1cm plus 1fil
	\begin{center}
	\Huge Autour de l'énumération des représentations automorphes cuspidales algébriques de $\GL_n$ \linebreak sur $\Q$ en conducteur $>1$
	\end{center}
	\vskip1.5cm
	\begin{center}
	\Large Guillaume \bsc{Lachaussée}
	\end{center}
	\vskip0pt plus 1fil
	\vskip1cm
	\leavevmode
	\vskip0pt plus 1fil

	{\centering\Large\it
Thèse de doctorat réalisée sous la direction de  \newline
Gaëtan Chenevier
\par
	}
	
	\vskip0pt plus 2fil
\end{titlepage}


\emph{Abstract}\newline

The cuspidal automorphic representations of the linear group over the rationals are, in a certain sense, "the final objects" in the theory of automorphic forms. Among these, a distinguished subclass are the algebraic representations. The complexity of such a representation is measured by two numbers, its motivic weight $w$ and its conductor $N$. It is then natural to try to make lists of automorphic algebraic representations with small conductor and small weight. Chenevier and his coauthors succeeded in making such lists for motivic weight up to 23 and conductor $N = 1$. The next logical case to consider is that of conductor $N = p$, a single prime.

The first main result of this thesis is an explicit list of all such representations with motivic weight up to 17 and conductor $N = 2$; there are 10 of them. This result can be extended under the additional hypothesis of self-duality up to motivic weight 19. There are similar results for prime conductor up to 17 (in which the weight bound becomes lower as the conductor becomes higher).
Making exhaustive lists of automorphic representations (up to a certain motivic weight and conductor) involves two steps: firstly find the \emph{footprints} of the representations in question; secondly, prove that the list is complete.

For the first step, we use the theory of Arthur, which allows for the construction of many relevant representations from classical modular forms. (First the classical object leads to a representation of an orthogonal or symplectic group, which can then be transferred to a general linear group.) For the second step, we use an analytic method known as the explicit formula of Riemann-Weil-Mestre. For small weight and conductor, the lower bound provided by the constructive method coincides with the upper bound provided by the explicit formula, and hence one has obtained a complete list of automorphic representations. 

Along Arthur's theory, the relevant transfer for this thesis is that of split orthogonal groups ${\rm SO}_{2n + 1}$ to ${\rm GL}_{2n}$. Since the goal is to construct representations of ${\rm GL}_{2n}$ with prime conductor, a precise understanding of the representations of ${\rm SO}_{2n + 1}$ with prime conductor is required.
This is where the local part of the thesis comes in. We are able to classify the irreducible, admissible, tempered representations of ${\rm SO}_{2n + 1}(F)$ of prime conductor, where $F$ is a $p$-adic field. We are furthermore able to characterize such representations according to a conjecture of Gross (which is then proven in the given case). This is the second main result of this thesis.\newpage

\emph{Résumé}\newline

Les représentations automorphes cuspidales du groupe linéaire sur le corps des rationnels sont, en un certain sens, \og les objets finaux \fg de la théorie des formes automorphes. On s’intéresse ici à une sous-classe, celle des représentations algébriques. La complexité d'une telle représentation est mesurée par deux entiers, son poids motivique $w$ et son conducteur $N$. Il est alors naturel d'essayer d’établir une liste de représentations automorphes cuspidales algébriques de petit conducteur et de petit poids. Chenevier et ses coauteurs ont réussi à établir une telle liste en poids motivique inférieur à $23$ et en conducteur $N = 1$. Le cas suivant à considérer est celui du conducteur $N = p$, où $p$ est un nombre premier.

Le premier résultat principal de cette thèse est une liste explicite de toutes les représentations de ce type, en poids motivique inférieur à $17$ et en conducteur $N = 2$ (il y en a 10). Sous l'hypothèse supplémentaire d'autodualité, le résultat peut être étendu jusqu'au poids motivique $19$. On obtient des résultats similaires pour des conducteurs premiers jusqu’à $17$ (la borne de poids diminuant à mesure que le conducteur augmente).
Ces listes exhaustives de représentations automorphes sont obtenues en deux étapes : il faut trouver \emph{l'empreinte} des représentations en question puis prouver que la liste ainsi obtenue est complète.

Pour la première étape, nous utilisons la théorie d'Arthur, qui permet de construire de nombreuses représentations pertinentes à partir de formes modulaires classiques : l'objet classique conduit à une représentation d'un groupe orthogonal ou symplectique, qui peut ensuite être transférée à un groupe linéaire général. Pour la deuxième étape, nous utilisons une méthode analytique à savoir la formule explicite de Riemann-Weil-Mestre. Pour les petits poids et petits conducteurs, la limite inférieure fournie par la méthode constructive coïncide avec la limite supérieure fournie par la formule explicite, et on obtient ainsi une liste exhaustive des représentations automorphes.

Le transfert qui nous intéresse ici, selon la théorie d'Arthur, est celui des groupes orthogonaux déployés ${\rm SO}_{2n + 1}$ vers ${\rm GL}_{2n}$. Puisque le but est de construire des représentations de ${\rm GL}_{2n}$ de conducteur premier, il s’agit de comprendre les représentations de ${\rm SO}_{2n+1}$ de conducteur premier.
C'est là qu'intervient la partie locale de la thèse. Nous sommes en mesure de classifier les représentations irréductibles, admissibles et tempérées de ${\rm SO}_{2n+1}(F)$ de conducteur premier, où $F$ est un corps $p$-adique. Nous sommes en outre capable de caractériser de telles représentations selon une conjecture de Gross (qui est donc prouvée dans le cas donné). Cela constitue le deuxième résultat principal de cette thèse.\newpage

\newpage

\tableofcontents

\chapter*{Introduction}

\section*{Énoncés locaux}

Nos énoncés locaux porteront sur une extension finie $F$ du corps $\qp$ des nombres $p$-adiques. Considérons pour l'instant le cas de $F=\qp$, qui nous permettra d'introduire les choses plus simplement.

\subsection*{Conducteur d'une représentation locale de $\GL_n$}
Commençons avec un caractère $\chi$ de $\GL_1(\qp)=\qp^\times$, \ie un morphisme continu de $\qp^\times$ dans $\C^\times$. On a une filtration naturelle de $\qp^\times$ par les sous-groupes $U_m$ pour $m \in \Z_{\geq 0}$ avec $U_0=\zp^\times$ et $U_m=1+p^m\zp$ si $m\geq 1$.
La continuité de $\chi$ impose que $\chi_{|U_m}$ est trivial pour $m$ assez grand. On définit alors l'\emph{exposant} de $\chi$, $\aar(\chi)$, comme le plus petit entier $m$ tel que $\chi_{|U_m}$ soit trivial. Il est d'ailleurs intéressant à plus d'un titre de considérer plutôt la quantité $p^{\aar(\chi)}$, dite \emph{conducteur} de $\chi$.

Soit maintenant $\pi$ une représentation lisse de $\GL_n(\qp)$, \ie un morphisme continu de $\GL_n(\qp)$ dans $\GL(V)$ (où $V$ un $\C$-espace vectoriel et où $\GL(V)$ est muni de la topologie discrète). On cherche à étendre à ce contexte la notion de conducteur. On suppose de plus que $\pi$ est irréductible. 

Si $\pi$ est de dimension finie (\ie si $\dim V < \infty$), alors on montre simplement qu'elle est en fait la composée du déterminant et d'un caractère de $\qp^\times$ (et donc de dimension 1). Hormis ce cas, on a donc affaire à une représentation de dimension infinie et, plutôt que de chercher un sous-groupe $K$ de $\GL_n(\qp)$ tel que $\pi_{|K}$ soit triviale, 
on s'intéresse aux sous-groupes compacts $K$ les plus gros possibles de $\GL_n(\Q_p)$ tels que l'espace des $K$-invariants $\pi^K$ soit non trivial.\ps 

Le fait de trouver alors une famille de sous-groupes compacts \og avec de bonnes propriétés \fg{}\footnote{Nous restons volontairement vague pour l'instant. À noter que l'idée naturelle d'utiliser la filtration de $\GL_n(\qp)$ par $\{\GL_n(\zp),(1+p^m {\rm M}_n(\zp))_{m\geq 1}\}$ ne donne pas de résultats satisfaisants.} est délicat. Le premier résultat en ce sens historiquement est le travail d'Arthur Atkin et Joseph Lehner concernant les formes modulaires pour des sous-groupes arithmétiques de $\SL_2(\Z)$ 
dont le but était de détecter, parmi les formes modulaires, celles qui sont \emph{nouvelles} et celles qui sont \emph{anciennes}. 
Ce travail, réinterprété par William Casselman, fournit une famille intéressante (en fait, une suite décroissante) de tels sous-groupes compacts de $\GL_2(\Q_p)$.
Hervé Jacquet, Ilia Piatetski-Shapiro et Joseph Shalika généralisent le point de vue de Casselman dans \cite{JPSS} et utilisent la famille de sous-groupes de congruences suivante\footnote{Nous adoptons des notations légèrement différentes dans un souci de cohérence, ainsi ce que nous notons $K(p^m)$ est noté $K_m$ \emph{loc. cit.}} :
%
 $K(1)=\GL_n(\zp)$ et, pour $m \geq 1$,

\[
K(p^m)=\left\lbrace \begin{pmatrix}
A & B \\
C & d
\end{pmatrix} \in \GL_n(\zp) \middle| {\begin{array}{l} A \in \GL_{n-1}(\zp) \\ C \in {\rm M}_{1,n-1}(p^m\zp)\\ d \in 1+p^m\zp \end{array}}
\right\rbrace.
\]

Sous des hypothèses de généricité de $\pi$ que nous ne détaillons pas (mais qui sont par exemple satisfaites si $\pi$ est la composante locale d'une représentation automorphe cuspidale de $\GL_n$ sur $\Q$), ces auteurs montrent qu'il existe un plus petit entier $m$, noté $\aar(\pi)$, tel que $\pi$ ait des $K(p^m)$-invariants. Mieux, ces invariants sont alors de dimension 1 (\cite{JPSS}, §5 Théorème). Là encore, il est intéressant de considérer la quantité $p^{\aar(\pi)}$, le \emph{conducteur} de $\pi$.

On remarque d'ailleurs que, lorsque $n=1$, les sous-groupes $K(p^m)$ sont bien égaux aux sous-groupes $U_m$, assurant la cohérence de ces définitions.\newline

\color{black}
Les conducteurs locaux ainsi définis ont une manifestation globale remarquable et importante. Soit $\Pi$ une représentation automorphe cuspidale de $\GL_n$ sur $\Q$ : pour chaque nombre premier $p$, sa composante locale (en $p$) $\Pi_p$ est bien définie, c'est une représentation lisse irréductible de $\GL_n(\qp)$. Notons ${\rm N}(\Pi)$ le produit des $p^{\aar(\Pi_p)}$ lorsque $p$ parcourt l'ensemble de tous les nombres premiers (ce produit est bien défini car on a $\aar(\Pi_p)=0$ pour $\pi_p$ non ramifiée et donc pour presque tout $p$). Alors ce conducteur global intervient dans l'équation fonctionnelle qui relie la fonction $L$ de $\Pi$, définie par Roger Godement et Hervé Jacquet \cite{GJ}, à celle de sa représentation contragrédiente $\Pi^\vee$ :
\begin{equation*}
L(s,\Pi)=\eps(\Pi)\,{\rm N}(\pi)^{(\frac{1}{2}-s)}L(1-s,\Pi^\vee),
\end{equation*}
où $\eps(\Pi)$ est une constante (c'est en fait la valeur de la \emph{fonction} epsilon en $s=\frac{1}{2}$, selon l'écriture de \cite{JPSS}). C'est d'ailleurs un des objectifs de \cite{JPSS} que de montrer que les entiers $\aar(\Pi_p)$ définis par la filtration $K(p^m)$ (pour tout $p$) sont bien ceux qui interviennent dans l'équation fonctionnelle (pour être exact, ils raisonnent dans l'autre sens). \newline 
\color{black}

Reprenons notre caractère $\chi$ de $\qp^\times$. Par la théorie du corps de classes local, on peut aussi le voir comme un caractère de $\W_{\qp}$, le \emph{groupe de Weil} de $\qp$, un substitut \emph{ad hoc} du groupe de Galois $\Gal(\overline{\qp}/\qp)$. Quitte à tordre notre caractère $\chi$ par un caractère non ramifié, on peut même le voir comme 
un caractère du groupe de Galois d'une extension galoisienne finie $L/\qp$ de $\qp$. Il est alors possible de définir son exposant d'Artin qui mesure (également via une filtration) sa ramification \og côté galoisien \fg{} et c'est une des propriétés de la théorie du corps de classes (voir \cite{Se}, XV §2) que la filtration \og galoisienne \fg correspond, via l'application de réciprocité d'Artin, à la filtration $U_m$ introduite ci-dessus : on compte bien la même chose.\ps 

En dimension supérieure, la \emph{correspondance de Langlands locale} pour $\GL_n$ \footnote{Prouvée dans le cas non archimédien par Michael Harris et Richard Taylor \cite{HT}, avec une seconde preuve par Guy Henniart \cite{Henniart2000}.} associe, à une représentation lisse irréductible $\pi$ de $\GL_n(\qp)$, une (classe de conjugaison de) représentation semi-simple continue $\varphi=\Ll(\pi)$ de $\WD_{\qp}$ de dimension $n$, où $\WD_{\qp}$ est le groupe de Weil-Deligne de $\qp$, produit direct de $\W_{\qp}$ et du groupe compact $\SU(2)$.

On peut encore définir l'exposant d'Artin de $\varphi$ et se pose alors la question de savoir si les conducteurs (ou les exposants) sont les mêmes pour $\pi$ et pour son paramètre de Langlands $\Ll(\pi)$. La réponse est oui \emph{par construction} de la correspondance de Langlands, dont on veut qu'elle respecte ce genre de propriétés. Il faut noter que cette compatibilité est d'ailleurs utilisée pour \emph{rigidifier} la correspondance (et elle est \og cachée \fg dans la compatibilité des {\it facteurs epsilon de paires} dont le rôle crucial a été identifié par Henniart \cite{Hen85}, \cite{Hen93}). \newline

\subsection*{Conducteur d'une représentation locale de $\SO_{2n+1}$}
Fort de cette mécanique bien huilée, on peut s'interroger sur ce qu'il se passe pour un autre groupe (algébrique) que $\GL_n$. Peu de choses sont connues en général.
Toutefois, dans le cas particulier du groupe ${\rm GSp}_4$, une théorie comparable à celle de Atkin-Lehner/Casselman a été mise en place par Brooks Roberts et Ralf Schmidt \cite{RS_book}. Une famille de sous-groupes dits {\it paramodulaires} y joue le rôle des $K(p^m)$ ci-dessus : nous y reviendrons plus loin.

Partant des isomorphismes exceptionnels ${\rm PGSp}_2 \simeq \SO_3$ et ${\rm PGSp}_4 \simeq \SO_5$, ainsi que d'une remarque d'Armand Brumer \cite{Brumer}, Benedict Gross a par la suite observé que la famille des groupes paramodulaires peut se définir plus généralement pour tous les groupes spéciaux orthogonaux impairs, et il a formulé une conjecture dans ce contexte général, que nous allons rappeler.\smallskip


Précisons tout de suite à quels groupes on s'intéresse.
 On munit l'espace $\qp^{2n+1}$ de la forme quadratique $q:\underline{x} \mapsto x_1x_2+\cdots+x_{2n-1}x_{2n}+x_{2n+1}^2$ et on considère alors les automorphismes de $\qp^{2n+1}$, qui préservent $q$ et qui sont de déterminant 1. Cela définit un groupe (algébrique) réductif et déployé sur $\qp$, noté $\SO_{2n+1}$.

%
%

Soit donc $\pi$ une représentation lisse irréductible de $\SO_{2n+1}(\qp)$ et reprenons les choses \emph{à l'envers} par rapport au cas de $\GL_n$. La correspondance de Langlands locale pour $\SO_{2n+1}$ est connue par les travaux de James Arthur \cite{Art13} et de Colette M\oe{}glin. 
À $\pi$, Arthur associe son paramètre de Langlands $\varphi=\Ll(\pi)$ qui est un(e classe de conjugaison de) morphisme continu semi-simple de $\WD_{\qp}$ à valeurs dans $\Sp_{2n}(\C)$. En effet, les conjectures de Langlands précisent que les paramètres d'un groupe réductif $G$ sont à valeurs dans le dual de Langlands de ce groupe ${}^L G$. Ceci était \og invisible \fg{} pour le cas de $\GL_n$ puisque le dual de Langlands de $\GL_n$ est $\GL_n$ lui-même, tandis que le dual de $\SO_{2n+1}$ est $\Sp_{2n}$. 

On peut alors en utilisant la représentation tautologique (fidèle) $\tau : \Sp_{2n}(\C)\rightarrow \GL_{2n}(\C)$ voir notre paramètre $\varphi$ comme une représentation continue semi-simple (autoduale symplectique) de $\WD_{\qp}$ et définir son exposant d'Artin. Cela nous fournit alors une \emph{définition} de l'exposant de $\pi$ noté $\aar(\pi,\tau)$ et, partant, de son conducteur.\ps 

Il n'existe pas de théorie similaire à celle de Godement-Jacquet (en tout cas pas aussi générale) pour associer une fonction $L$ à une représentation $\pi$ lisse irréductible de $\SO_{2n+1}(\qp)$.
On peut en revanche à ce dessein utiliser son paramètre de Langlands $\varphi$ et la construction classique d'Artin. De même que pour le conducteur, 
 les objets sont bien définis à valeurs dans le groupe linéaire, il est donc encore nécessaire de faire appel à une représentation de $\Sp_{2n}(\C)$. Si l'on utilise la même représentation tautologique $\tau$, alors on \emph{définit} $L(s,\pi,\tau)$ comme étant $L(s,\tau \circ \varphi)$.
 
Ces définitions étant posées, l'interprétation analytique globale du conducteur en termes de fonctions $L$ standard (dans l'esprit de ce qui a été fait \emph{supra}) découle alors tautologiquement du cas des groupes linéaires, du moins si l'on admet l'existence de transferts vers $\GL_{2n}$ (comme ceux fournis par la théorie d'Arthur). 
La question restante importante est donc de trouver une interprétation des $\aar(\pi_p,\tau)$ en termes de sous-groupes compacts ouverts de $\SO_{2n+1}(\qp)$.
 
\subsubsection{La conjecture de Gross} 
On en vient donc à ce qui était le premier point de notre discussion sur $\GL_n$ : la question est de savoir si cette ramification est mesurable par une famille de sous-groupes et des invariants de $\pi$ associés.

Benedict Gross définit dans \cite{Gross} une famille de sous-groupes compacts ouverts $K(p^m)$ de $\SO_{2n+1}(\qp)$ pour $m\in \Z_{\geq 0}$, dits sous-groupes paramodulaires, ainsi qu'une famille de sur-groupes $J(p^m)$ (en fait plus naturels), vérifiant $J(p^m)/K(p^m)=\Z/2\Z$.


\begin{conj_nn} {\rm (Gross)}

Soit $\pi$ une représentation lisse irréductible générique de $\SO_{2n+1}(\qp)$, de conducteur $p^m$.
Alors pour tout $x<m$, $\pi^{K(p^x)}=\{0\}$ et $\pi^{K(p^m)}\neq\{0\}$. \smallskip

Mieux, $\pi^{K(p^m)}$ est de dimension $1$ et, si $m\geq 1$, le groupe $J(p^m)$ agit sur cette droite par un signe, ce signe étant exactement le facteur epsilon de $\pi$ (en $s=\frac{1}{2}$).
\end{conj_nn}

Dans le cas $\SO_3\simeq \PGL_2$, la famille de sous-groupes $K(p^m)$ est conjuguée à (l'image par $\SL_2(\Z)\rightarrow \PGL_2(\Z)$ de) la famille de sous-groupes de congruence de $\SL_2(\Z)$ classiquement notés $\Gamma_0(p^m)$. La conjecture est alors un théorème, dont la preuve complète est due à William Casselman (\cite{Casselman-AL}).

Dans le cas $\SO_5 \simeq {\rm PGSp}_4$, la famille de sous-groupes $K(p^m)$ est conjuguée à la famille des sous-groupes étudiés par Brooks Roberts et Ralf Schmidt dans \cite{RS_book}. Ces deux auteurs démontrent \emph{loc. cit.} la conjecture.\ps 

Indépendamment de la dimension, on remarque que dans le cas $m=0$, on a affaire à une représentation non ramifiée. La conjecture est alors très classiquement vraie (voir paragraphes \ref{Représentations non ramifiées} et \ref{Paramètres non ramifiés}).

Dans cette partie locale, on s'intéresse uniquement au cas $m=1$, pour des raisons qui s'éclaireront lorsque nous introduirons la partie globale.

Le seul résultat disponible à notre connaissance en toute dimension est celui de la thèse de doctorat de Pei-Yu Tsai \cite{Tsai-phd}, qui démontre la conjecture pour $\pi$ supercuspidale (mais elle le démontre \emph{pour tout} $m$).

Nous démontrons la conjecture dans le cas $m=1$ sous l'hypothèse que $\pi$ est tempérée. Puisque nous l'avons démontrée dans le cadre général d'un corps local non archimédien de caractéristique nulle $F$ (d'anneau des entiers $\Ok$ dont l'idéal maximal est $\pk$), énonçons le résultat dans ce cadre.
Nous adoptons maintenant les notations que nous avons utilisées dans le corps du texte : nous notons respectivement $\K_0,\J^+,\J$ ce que Gross note $K(1),K(\pk),J(\pk)$ et nous parlons de sous-groupe compact hyperspécial, de sous-groupe \emph{paramodulaire}, de sous-groupe \emph{épiparamodulaire}.\ps 

%
%

\begin{thmintro}\label{thm_A}
Soit $F$ un corps local non archimédien de caractéristique nulle. Soit $\pi$ une représentation irréductible tempérée de $\SO_{2n+1}(F)$. Soit $\tau$ la représentation tautologique (fidèle) $\Sp_{2n}(\C)\rightarrow \GL_{2n}(\C)$. Alors les propositions suivantes sont équivalentes :
\begin{enumerate}
\item $\pi^{\K_0}=\{0\}$ et $\pi^{\J^+}\neq\{0\}$ ;
\item $\pi$ est de conducteur $\pk$.
\end{enumerate}
De plus, l'espace des invariants $\pi^{\J^+}$ est une droite, sur laquelle le groupe $\J$ agit par un signe. Ce signe est égal à $\eps(\pi,\tau)$.
\end{thmintro}

On renvoie aux Théorèmes \ref{thm_pcpl_temp} et \ref{thm_J_variants_eps} (et à tout le paragraphe \ref{Lien avec les facteurs epsilon} pour la définition correcte de $\eps(\pi,\tau)$ qui fait intervenir un caractère additif de $F$). Nous ne savons pas démontrer la conjecture dans le cas où $\pi$ n'est pas tempérée, plus exactement nous ne savons pas démontrer l'implication $\mathit{2}. \Rightarrow \mathit{1}.$ Une piste potentielle serait de faire l'analogue en conducteur $\pk$ de ce que fait Casselman en conducteur $\Ok$ dans \cite{Cass-art}, pour s'assurer que le quotient de Langlands a bien les invariants souhaités. 
Ces représentations non tempérées n'interviennent cependant pas dans notre étude globale.

Précisons maintenant la structure de notre travail et les étapes de la démonstration. \newline

Nous commençons par des rappels généraux sur les représentations lisses des groupes réductifs $p$-adiques, qui nous permettent en outre de fixer certaines notations pour la suite (Chapitre \ref{Chapitre_Représentations}). \smallskip

Nous formulons également des rappels concernant la correspondance de Langlands (Chapitre \ref{Chapitre_Paramétrisation_Lgl}), notamment dans les cas que nous utiliserons des groupes\footnote{Notre convention est de noter les groupes algébriques par des lettres droites et grasses. Pour la lisibilité, nous faisons une exception pour les familles de groupes $\GL_n, \, \Sp_{n},\, \SO_n$.} $\GL_n$ et $\SO_{2n+1}$. Nous analysons de façon plus approfondie au §\ref{Paramètres discrets} la forme que prennent les paramètres de Langlands discrets de $\SO_{2n+1}$. \smallskip

Nous définissons au Chapitre \ref{Le groupe paramodulaire} les groupes hyperspéciaux, épiparamodulaires et paramodulaires, suivant \cite{Gross} et \cite{Tsai-phd}. Plusieurs éléments nouveaux et déterminants pour la démonstration interviennent.
\begin{itemize}
\item On remarque que les groupes épiparamodulaires et paramodulaires ont les mêmes propriétés de \og bonne intersection \fg{} avec les sous-groupes de Levi de $\SO_{2n+1}(F)$, que les sous-groupes compacts hyperspéciaux (\cf Propositions \ref{Hypersp_inter_Levi}, \ref{Param_inter_Levi}, \ref{Param+_inter_Levi}), \ie une telle intersection fait apparaître des sous-groupes \emph{du même type} permettant un raisonnement par récurrence sur la dimension $2n+1$.
\item Le groupe paramodulaire $\J^+$ est défini comme un sous-groupe d'indice 2 du groupe épiparamodulaire $\J$ (Proposition-Définition \ref{prop-def_J+}). Nous donnons une autre caractérisation (à notre connaissance nouvelle) de $\J^+$ à l'intérieur de $\J$ par la norme spinorielle (§\ref{Norme spinorielle}). L'avantage de la norme spinorielle est qu'elle est définie sur $\SO_{2n+1}(F)$ tout entier, si bien qu'une torsion adéquate nous permettra de raisonner sur des représentations avec des $\J$-invariants plutôt que sur des représentations avec des $\J^+$-invariants.
\item Les factorisations \emph{d'Iwasawa} faisant intervenir les groupes $\K_0$ et $\J$ (voir paragraphe \ref{Factorisations d'Iwasawa}) et un sous-groupe de Borel de $\SO_{2n+1}(F)$ se trouvent dans la thèse de Tsai §7.2. Nous en donnons une nouvelle démonstration, géométrique avec des réseaux (là où elle utilisait des techniques immobilières).
\end{itemize}
\smallskip

Le Chapitre \ref{Représentations avec des invariants paramodulaires} caractérise les représentations qui vérifient la propriété $\mathit{1.}$ du Théorème et fait intervenir le résultat-clé suivant (c'est la Proposition \ref{ser_disc_>4_pas_d'inv}).
\begin{prop_nn}
Soit $F$ un corps local non archimédien de caractéristique nulle.
Il n'existe pas de série discrète de $\SO_{2n+1}(F)$ pour $2n+1 \geq 5$ ayant des $\J^+$-invariants.
\end{prop_nn}
{\noindent Outre les trois ingrédients du Chapitre \ref{Le groupe paramodulaire} sus-mentionnés, la démonstration de cette Proposition utilise les propriétés des foncteurs de Jacquet des séries discrètes, en lien avec la paramétrisation de Langlands, démontrées par Colette M\oe{}glin et Marko Tadi\'c (\cite{Moeg_ser_dis}, \cite{Moeg-Tad}). Cela nous permet de nous ramener au fait suivant : \emph{si $\pi$ est une série discrète non ramifiée pour $\GL_n$, alors $n=1$ et $\pi$ est un caractère (non ramifié)}.}

Indiquons également que nous n'utilisons pas le résultat de Tsai (dont les techniques de démonstration sont différentes des nôtres). En effet, une représentation ayant des $\J^+$-invariants possède \emph{a fortiori} des ${\rm I}$-invariants où ${\rm I}$ désigne un sous-groupe d'Iwahori (c'est la Proposition \ref{prop_I_inclus_J+}), elle est donc classiquement dans la série principale non ramifiée et ne peut donc pas être supercuspidale (sauf bien sûr si $2n+1=1$).

\smallskip

Enfin, au Chapitre \ref{Conducteur}, après avoir rappelé comment définir le conducteur d'une représentation locale et redémontré \emph{l'inégalité d'Henniart} \eqref{exposant_prod_tens_rep_locales}, nous montrons que les représentations lisses irréductibles tempérées de $\SO_{2n+1}(F)$ de conducteur $\pk$ sont exactement celles que nous avons identifiées au Chapitre précédent. Cela démontre ainsi le Théorème de façon plus explicite que nécessaire (puisqu'on sait exactement de quelles représentations il s'agit). Nous terminons par des rappels sur les facteurs epsilon (§\ref{Lien avec les facteurs epsilon}) qui nous permettent de vérifier la bonne propriété de \emph{détection du signe local} par l'action du groupe $\J$ sur l'espace des $\J^+$-invariants (Théorème \ref{thm_J_variants_eps}).

\newpage

\section*{Énoncés globaux}
Dans la seconde partie de cette thèse, de nature globale, notre objectif
est de généraliser au cas d'un conducteur $>1$ les résultats de
classification de représentations automorphes cuspidales obtenus par
Chenevier-Lannes (\cite{Chen-Lannes}, Théorème F) et Chenevier-Taïbi (\cite{Chen-Ta}, Theorems 3, 4, 5) dans le
cas du conducteur 1. Avant d'énoncer nos résultats, discutons plus en
détail le type de représentations que l'on cherche à classifier.\ps 

On s'intéresse ici aux représentations automorphes cuspidales des groupes linéaires $\GL_n$, définis sur le corps $\Q$ des rationnels. Nous ne commentons pas le choix du corps de base (d'autres cas seraient intéressants, mais ne sont pas étudiés ici), mais nous attardons sur le choix des groupes linéaires (plutôt que de groupes généraux).

Les représentations automorphes cuspidales du groupe linéaire disposent en effet de meilleures propriétés.
\begin{itemize}
\item Leurs fonctions $L$ (et plus généralement celles de \emph{paires} de telles représentations) ont des propriétés analytiques bien comprises et démontrées (c'est l'objet du §\ref{Fonctions $L$ et facteurs epsilon} que de les rappeler).
\item Il y a une rigidité très forte\footnote{À laquelle est d'ailleurs liée le fait que la correspondance de Langlands locale pour $\GL_n$ soit une bijection.} sur ces représentations comme l'indique le Théorème de multiplicité 1 forte pour $\GL_n$ (\cite{PS_mult_1}, \cite{JS}).\ps 
\end{itemize}

Au-delà de ce seul \emph{confort d'étude}, le cas de $\GL_n$ est \emph{primordial} (au sens de \og devant être étudié en premier \fg{}). En effet, selon les conjectures globales de Langlands et d'Arthur, \emph{toutes} les représentations automorphes cuspidales de \emph{tous} les groupes réductifs s'expriment en termes d'objets pour le groupe linéaire : il faut donc \emph{d'abord} comprendre les représentations automorphes cuspidales de $\GL_n$ avant de pouvoir s'intéresser à celles des autres groupes.

Le lecteur peut alors être surpris que l'étude locale ait porté sur le groupe $\SO_{2n+1}$ : c'est précisément que nous utilisons cette intrication \emph{en sens inverse}. 
Cette approche est rendue possible par le travail monumental de James Arthur (\cite{Art13}), et par le complément apporté par Olivier Taïbi (\cite{Taibi_cpctmult}), cas particuliers des conjectures globales susmentionnées.

\smallskip

Nous poursuivons donc ici un objectif de classification des représentations automorphes cuspidales du groupe linéaire sur $\Q$.
Cette question est évidemment encore trop générale et il faut préciser à quelle sous-classe de représentations on va s'intéresser. \newline


Notre première restriction concerne le fait que l'on s'intéresse uniquement aux représentations \emph{algébriques}. Ces représentations sont d'un intérêt particulier car, selon des conjectures de Fontaine-Mazur et Langlands (\cite{Fontaine-Mazur}, \cite{Langlands_funct}), elles sont reliées à la cohomologie des variétés algébriques sur $\Q$. Rappelons leur définition.

Soit $\pi$ une représentation automorphe cuspidale de $\GL_n$ sur $\Q$. Alors sa composante archimédienne $\pi_\infty$ admet un caractère infinitésimal que l'on peut voir  suivant Harish-Chandra et Langlands (on renvoie à \cite{Lgl_Euler_products}), comme une classe de conjugaison semi-simple de $\mathrm{M}_n(\C)$. On appelle \emph{poids} de $\pi_\infty$ (ou de $\pi$ par abus de langage) les valeurs propres de cette classe de conjugaison semi-simple. Une représentation automorphe cuspidale est dite \emph{strictement algébrique} si ses poids sont des entiers relatifs $w_1 \geq \cdots \geq w_n$. Quitte à tordre $\pi$ par une puissance de la norme, on peut supposer que le plus petit des poids $w_n$ est 0, le plus grand étant alors appelé poids motivique de $\pi$ et noté ${\rm w}(\pi)$. Selon les conjectures susmentionnées, $\pi$ est ainsi associée à une représentation $\rho : \Gal(\overline{\Q}/\Q)\rightarrow \GL_n(\overline{\Q_\ell})$ géométrique et irréductible dont les poids de Hodge-Tate sont ceux de $\pi$, de poids de Deligne égal à ${\rm w}(\pi)$ avec la fonction $L$ (de Godement-Jacquet) de $\pi$ égale à la fonction $L$ (d'Artin) de $\rho$.

En réalité, le fait que $\pi_\infty$ ne soit pas un module de Harish-Chandra quelconque, mais bien la composante archimédienne d'une représentation automorphe cuspidale impose que la quantité $w_i+w_{n+1-i}$ est constante lorsque $i$ parcourt $\{1,\cdots,n\}$ (c'est le \emph{lemme de pureté} de Clozel). Il est alors plus naturel de \emph{centrer} les représentations et on dira dans ce texte que $\pi$ est \emph{algébrique} si ses poids $w_1 \geq \cdots \geq w_n$ avec $w_i=-w_{n+1-i}$ sont tous dans $\Z$ ou tous dans $\frac{1}{2}\Z - \Z$. Avec cette convention, le poids motivique de $\pi$, ${\rm w}(\pi)$ est défini comme étant $2w_1$.\newline


Notre seconde restriction concerne le conducteur des représentations automorphes cuspidales. Soit donc $\pi$ une telle représentation. De même que l'on a contraint (avec l'algébricité) la composante archimédienne $\pi_\infty$, nous allons imposer des conditions aux places finies. Soit $p$ un nombre premier, la composante locale $\pi_p$ est une représentation admissible irréductible générique de $\GL_n(\qp)$ et on peut donc définir son conducteur qui se manifeste, d'après la discussion \emph{supra}, de trois façons : invariants par une certaine famille de sous-groupes, équation fonctionnelle, exposant d'Artin du paramètre de Langlands. Le conducteur de $\pi$ est alors défini comme le produit des conducteurs de ses composantes locales $\pi_p$, ce produit étant en fait un produit fini puisque $\pi_p$ est non ramifiée hors d'un nombre fini de places.\newline

Notons que l'on dispose déjà du résultat de finitude suivant dû à Harish-Chandra \cite{HC68} : soit $n\geq 1$, alors \emph{il n'existe qu'un nombre fini de représentations automorphes cuspidales algébriques de $\GL_n$ sur $\Q$ de poids et de conducteur fixés.} Savoir quel est ce nombre, et quelles sont les représentations en question devient une question très difficile dès que $n>2$ car alors $\GL_n(\R)$ n'a plus de séries discrètes et on ne dispose d'aucune \og formule de dimension\fg d'espaces de formes automorphes. C'est donc tout l'intérêt de l'objectif de \emph{classification}. 

Le cas le plus naturel à considérer selon ce prisme est celui du conducteur 1, \ie où $\pi_p$ est non ramifiée \emph{pour tout} $p$ et il a été exploré par Gaëtan Chenevier et ses co-auteurs David Renard \cite{Chen-Ren}, Jean Lannes \cite{Chen-Lannes} et Olivier Taïbi \cite{Chen-Ta}. Ces auteurs parviennent à lister \emph{toutes} les représentations automorphes cuspidales algébriques de $\GL_n$ (bien noter que $n$ est quelconque) de conducteur 1, sous restriction de poids motivique ($\leq 23$).

Notre objectif est de conduire le même travail, mais dans un cas minimalement ramifié, qui est celui du conducteur $p$ (une seule place qui ramifie, et de façon minimale). 
Il faut déjà noter qu'en \og petit poids \fg{}, il existe une amélioration du résultat de finitude de Harish-Chandra, due à Chenevier (\cite{Chen-HM}, Theorem A) : \emph{soit $N \in \Z_{>0}$, alors il n'existe qu'un nombre fini de représentations automorphes cuspidales algébriques de $\GL_n$ (avec $n$ variable) sur $\Q$, de conducteur $N$ et dont les poids sont dans $\left[-\frac{23}{2};\frac{23}{2}\right]\cap \frac{1}{2}\Z$.} Ce résultat ne dit pas davantage que celui de Harish-Chandra quel est ce nombre, et quelles sont les représentations en question : c'est l'objet de notre travail.  \smallskip

On dira qu'une représentation est \emph{régulière} si ses poids sont distincts. \ps

\begin{thmintro}\label{thm_2_w17}
Il existe exactement $10$ représentations automorphes cuspidales algébriques de $\GL_n$ sur $\Q$ de conducteur $2$ et de poids motivique $\leq 17$ : 
\smallskip
\begin{itemize}
\item $6$ pour $\GL_2$ : $\E_7^+,\, \E_9^-,\, \E_{13}^+,\, \E_{13}^-,\, \E_{15}^+,\, \E_{17}^-$ ; \smallskip
\item $4$ pour $\GL_4$ : $\E_{15,5}^-,\, \E_{17,5}^+,\, \E_{17,7}^-,\, \E_{17,9}^+$ ; \smallskip
\item aucune pour $\GL_n$ avec $n\neq 2,4$. \smallskip
\end{itemize}

En particulier, elles sont toutes autoduales 
et régulières.
\end{thmintro}

Il nous faut préciser un peu les notations. Les indices désignent les doubles des poids positifs de la représentation (ainsi les poids de ${\rm E}_{w,v}$ sont $\{\pm\frac{w}{2},\pm\frac{v}{2}\}$), tandis que l'exposant indique le signe du facteur epsilon local en 2 de la représentation (ce que nous appellerons plus brièvement \emph{signe local en $2$}).

Ainsi la représentation $\E_w^\eps$ désigne la\footnote{Étant entendu que $\dim {\rm S}_{w+1}(\Gamma_0(2),\eps)=1$ ; sinon il y a autant de représentations automorphes cuspidales de $\GL_2$ que la dimension de ${\rm S}_{w+1}(\Gamma_0(2),\eps)$, représentations que nous distinguons d'ailleurs par des lettres en exposant.} représentation automorphe cuspidale de $\GL_2$ engendrée par la droite ${\rm S}_{w+1}(\Gamma_0(2),\eps)$ des formes modulaires paraboliques de poids $w+1$ pour le sous-groupe de congruence $\Gamma_0(2) \subset \SL_2(\Z)$, et de signe d'Atkin-Lehner $\eps$ (c'est un fait, commenté au §\ref{En niveau Gamma_0_N}, que le signe local $\eps_2(\pi)$ est le même que le signe d'Atkin-Lehner).\newline

Nous avons également les extensions suivantes. La première concerne les représentations \emph{autoduales}.
\begin{thmintro}\label{thm_2_w19}
Il existe exactement $20$ représentations automorphes cuspidales algébriques \emph{autoduales} de $\GL_n$ sur $\Q$ de conducteur $2$ et de poids motivique $\leq 19$. Outre les $10$ représentations de poids motivique $\leq 17$ ci-dessus, nous avons les représentations suivantes : \smallskip
\begin{itemize}
\item $2$ pour $\GL_2$ :  $\E_{19}^+,\, \E_{19}^-$ ; \smallskip
\item $6$ pour $\GL_4$ :  $\E_{19,3}^+,\,\E_{19,5}^-,\, \E_{19,9}^+,\,\E_{19,9}^-,\,\E_{19,11}^+,\, \E_{19,13}^-$ ; \smallskip
\item $2$ pour $\GL_6$ :  $\E_{19,13,3}^-,\, \E_{19,13,5}^+$ ; \smallskip
\item aucune pour $\GL_n$ avec $n\neq 2,4,6$. \smallskip
\end{itemize}
En particulier, elles sont toutes régulières.
\end{thmintro}

La seconde extension est conjecturale, et concerne les représentations \emph{autoduales régulières}.

\begin{conjintro}\label{thm_2_w21}
Soit $\pi$ une représentation automorphe cuspidale algébrique \emph{autoduale régulière} de $\mathrm{GL}_n$ sur $\mathbb{Q}$ de conducteur $2$ et de poids motivique $\leq 21$.

Alors $n \leq 8$ et $\pi$ appartient à une liste finie explicite dont les éléments sont donnés en Annexe \ref{Tables_de_representations}. Outre les $20$ représentations ci-dessus, on trouve $40$ représentations de poids motivique $21$ exactement, dont on connaît les poids et le signe local en $2$.
\end{conjintro}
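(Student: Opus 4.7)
The plan mirrors the two-step strategy of Theorems \ref{thm_2_w17} and \ref{thm_2_w19}: Arthur's theory furnishes a constructive lower bound (the $40$ candidate representations of motivic weight $21$), and the explicit formula of Riemann-Weil-Mestre is meant to furnish a matching upper bound. The statement remains a conjecture precisely because, at weight $21$, the two bounds cannot yet be shown to coincide exactly.

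\textbf{Construction.} I would first enumerate all formal self-dual regular parameters $\psi$ for $\GL_n$ over $\Q$ with $n\leq 8$, conductor $2$, and motivic weight exactly $21$. Each such $\psi$ is an isobaric sum of simple self-dual factors whose Hodge--Tate weights are constrained by purity and regularity; the global conductor being the prime $2$ forces each local component at $2$ to be either unramified or tempered of conductor $\mathfrak{p}$. Via Arthur's multiplicity formula (extended to inner forms by Taïbi), each simple factor is realized as the transfer to $\GL$ of a cuspidal automorphic representation of some $\SO_{2k+1}$ or $\Sp_{2k}$. Theorem \ref{thm_A} is the crucial local input at $p=2$: it identifies, for a tempered irreducible representation of $\SO_{2k+1}(\qp)$ of conductor $\mathfrak{p}$, both the existence of $\J^+$-invariants and the equality between the $\J$-sign and the local $\varepsilon$-factor. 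This rigidifies the dictionary between pairs of Atkin-Lehner eigenforms on $\Gamma_0(2)$ (of prescribed weights and signs) and candidate $\SO$-parameters, and produces precisely the $40$ representations to be listed in the appendix.

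\textbf{Upper bound and main obstacle.} To eliminate any extra representation, I would apply the Riemann-Weil-Mestre explicit formula to the Rankin-Selberg $L$-functions $L(s,\pi\times\sigma)$, where $\pi$ is a hypothetical extra representation and $\sigma$ ranges over the constructed list (together with $\sigma=\pi^\vee$). Combined with Henniart's inequality on the conductor of a Rankin-Selberg product and with an Odlyzko-Mestre family of test functions tuned to the archimedean profile prescribed by the weights, this yields linear inequalities on the weights of $\pi$ and on its Satake trace at $2$. For motivic weight $\leq 19$ these inequalities match the construction exactly; the main obstacle at weight $21$ is that the analytic margin becomes too tight, so the current choice of test functions fails to exclude a narrow strip of hypothetical parameters. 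Plausibly, closing the gap will require either a finer test function exploiting the prime $p=2$ and the refined $\J$-sign afforded by Theorem \ref{thm_A}, or the introduction of symmetric and exterior square $L$-functions whose unconditional analytic continuation in the required range is not yet at our disposal. Absent such additional input, the completeness assertion cannot be promoted from a conjecture to a theorem.
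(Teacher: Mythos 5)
Your overall two-step frame (Arthur's theory for the lower bound, the explicit formula for the upper bound) is the right one, and your construction step is essentially what the paper does: all $40$ weight-$21$ representations in the list have dimension $\leq 8$, so they arise from parameters for $\SO_3,\SO_5,\SO_7,\SO_9$ (the \textbf{Cas 1} and \textbf{Cas 2} of §\ref{Groupes étudiés et leurs représentations}), and their \emph{existence} is actually proven, not conjectural — the very regular ones directly by Corollaire \ref{cor_762}, the merely regular ones by ad hoc counting arguments, and the $\GL_4$-signs by the explicit formula or by embedding into $\SO_7$/$\SO_9$ parameters and reading off the signed dimensions of Proposition \ref{dim_inv_2_so}. (Two small corrections: only odd orthogonal groups intervene, since conductor-$2$ self-dual representations of type (I) are symplectic by Proposition \ref{type I sp}; and the classical objects feeding the machine are not only $\Gamma_0(2)$-eigenforms but also Siegel forms and covariant functions on odd unimodular lattices.)

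Where your proposal goes wrong is in the diagnosis of the obstacle, which is the whole content of the conjectural status. It is not that the explicit formula "fails by a narrow analytic margin" that finer test functions or symmetric/exterior square $L$-functions might close. Restricted to the $2^{10}$ \emph{regular} elements of $K_\infty^{\leq 21}$ of motivic weight exactly $21$, the explicit formula works rather well: it leaves $128$ elements, all of dimension $\leq 12$. What is missing to eliminate the survivors of dimension $10$ and $12$ — and hence to establish $n\leq 8$ at all — is the automorphic theory for $\SO_{11}$ and $\SO_{13}$, which fall in the \textbf{Cas 3} ($2n+1\equiv \pm 3 \bmod 8$): the relevant group is compact at infinity but \emph{non-split} at $p=2$ (anisotropic kernel of dimension $3$), and for this inner form both the paramodular theory (the analogue of Theorem \ref{thm_A}) and the Arthur multiplicity formula — including for the non-generic parameters such as $\Delta_{21}\oplus\1[6]$ needed to handle regular but not very regular weights — are only conjectured (Conjecture \ref{conj_dim_inv_2_so} and the heuristic of §\ref{Heuristique pour m=5}, with its sign inversion $\chi_2=-1$ in conductor $2$). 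This is also why the statement must be restricted to \emph{regular} representations: the non-regular elements of $K_\infty^{\leq 21}$ (e.g.\ with $I_{21}$ occurring with multiplicity) resist both methods, a point your proposal does not address.
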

La finitude est ici automatique (par le résultat de Harish-Chandra mentionné \emph{supra}, noter cependant l'amélioration de $n\leq 22$ \emph{a priori} à $n\leq 8$), c'est le fait que l'on dispose d'informations précises sur les éléments de cette liste qui importe. \newline

%
%
%

Et on peut enfin étendre dans une autre direction. Notons $b_2=17 \; ; \;  b_3=13\; ; \;  b_5=11 \; ; \; b_7=b_{11}=7 \; ; \; b_{13}=5 \; ; \; b_{17}=3$.
\begin{thmintro}\label{thm_p}
Soit $\pi$ une représentation automorphe cuspidale algébrique de $\mathrm{GL}_n$ sur $\mathbb{Q}$ de conducteur premier $p \leq 17$ et de poids motivique \emph{impair} $\leq b_p$.

Alors $\pi$ appartient à une liste finie explicite dont les éléments sont donnés en Annexe \ref{Tables_de_representations}, en particulier la dimension $n$ est bornée. De plus, $\pi$ est autoduale, régulière, et on connaît son signe local en $p$.
\end{thmintro}

%

Notre travail utilise les deux mêmes techniques que les livre \cite{Chen-Lannes} et article \cite{Chen-Ta} déjà cités.\ps 
\begin{itemize}
\item Une \emph{technique constructive} liée aux travaux de James Arthur qui permet
de démontrer l'existence de représentations automorphes cuspidales autoduales algébriques régulières de $\GL_n$.

En effet, la classification endoscopique d'Arthur, notamment la formule de multiplicité, les met en relation (par une combinatoire inductive et intriquée) avec les représentations automorphes discrètes $\pi$ de groupes classiques (orthogonaux et symplectiques) sur $\Q$ telles que $\pi_\infty$ est une série discrète. Ces dernières sont alors reliées à des objets plus classiques (formes modulaires classiques, de Siegel, fonctions de réseaux) qu'il devient possible de compter (existence de \og formules de dimensions\fg).\ps 
\item Une \emph{technique limitative} liée à la formule explicite de Riemann-Weil dans le cadre présenté par Jean-François Mestre \cite{Mes} qui nous permet d'interdire l'existence de certaines représentations.

Typiquement, on montre qu'une représentation $\pi$ putative de poids et de conducteur fixés n'existe pas en montrant que la fonction $L$ de la paire $\{\pi, \pi'\}$ (pour une représentation $\pi'$ connue bien choisie) ne peut pas exister. \smallskip
\end{itemize}

En poids motivique suffisamment petit, nous arrivons à \og abouter \fg{} exactement ces deux techniques, nous fournissant la liste exhaustive recherchée. \newline

La quasi-totalité de notre travail a porté sur le conducteur 2 et ce, pour deux raisons (outre le fait que c'est le premier nombre premier) :
\begin{itemize}
\item la technique limitative avec la formule explicite ne fonctionne bien qu'en petit poids et en petit conducteur : la \og perte d'efficacité \fg{} est spectaculaire lorsque le conducteur croît comme le laisse deviner l'énoncé du Théorème \ref{thm_p} (à comparer aussi aux énoncés en conducteur 1 de \cite{Chen-Lannes}, \cite{Chen-Ta}) ;
\item il existe deux façons différentes\footnote{Si l'on mesure le conducteur avec le paramètre de Langlands, alors ces deux façons correspondent respectivement au facteur $\W_{\qp}$ et au facteur $\SU(2)$ de $\WD_{\qp}$.} d'être de conducteur $p$ premier et une seule des deux existe quand $p=2$ (c'est l'objet du Lemme \ref{lemme_cond_2_implique_type_I}).
\end{itemize}
\medskip
Nous précisons maintenant la structure de notre travail et les étapes de la démonstration des Théorèmes \ref{thm_2_w17}, \ref{thm_2_w19}, \ref{thm_p}. \newline

Le Chapitre \ref{Chapitre_Représentations_automorphes...} précise de manière plus formelle les objets que l'on étudie. Nous en profitons pour rappeler ensuite au §\ref{Fonctions $L$ et facteurs epsilon} comment calculer les fonctions $L$ et facteurs epsilon des représentations automorphes cuspidales algébriques (et même des \emph{paires} de représentations), ce qui nous sera utile aux Chapitres \ref{La_formule explicite de Riemann-Weil-Mestre} et \ref{Chapitre_calculs}. Ce Chapitre se termine par la présentation de l'alternative symplectique-orthogonale au sens de \cite{Art13} et par la cruciale Proposition \ref{type I sp} qui explique pourquoi nous avons besoin de comprendre les représentations locales du groupe $\SO_{2n+1}$ (et fait donc le lien avec la Première Partie).\smallskip

Le Chapitre \ref{Théorie d'Arthur pour SO-2n+1} présente des rappels de la théorie d'Arthur globale pour les groupes spéciaux orthogonaux impairs dans un cadre \emph{ad hoc}. C'est la mise en place de la \emph{technique constructive} qui relie les représentations automorphes cuspidales de groupes spéciaux orthogonaux bien choisis à des représentations automorphes cuspidales du groupe linéaire (et même \emph{de groupes linéaires} au pluriel), dont l'aboutissement est le Corollaire \ref{cor_762}.
Le Théorème \ref{thm_A} joue un rôle crucial ici, car il nous permet d'expliciter la formule de multiplicité d'Arthur associée à un paramètre global {\it algébrique tempéré et de conducteur sans facteur carré}. Mentionnons que nous utilisons également de manière importante le cas particulier de la conjecture de Ramanujan démontré par Sug-Woo Shin et Ana Caraiani (cité ici comme Théorème \ref{thm_shin_caraiani}).\smallskip

Le Chapitre \ref{Lien avec des objets classiques} \emph{nourrit} cette machinerie en fournissant, à travers des formules de dimensions, les cardinaux des ensembles que le Corollaire \ref{cor_762} met en \oe{}uvre. À noter que ces formules sont\footnote{Il est abusif de noter ici $\SO_{2n+1}$ sans dire de quel groupe il s'agit. Ce sont en fait exactement les groupes présentés au §\ref{Groupes étudiés et leurs représentations} et dont il est question dans tout le Chapitre \ref{Théorie d'Arthur pour SO-2n+1}.} :
\begin{itemize}
\item des dimensions d'espaces de formes modulaires classiques dans le cas $\SO_3\simeq \PGL_2$ ;
\item des dimensions d'espaces de formes modulaires de Siegel dans le cas $\SO_5\simeq {\rm PGSp}_4$ dues à Tomoyoshi Ibukiyama et Hidetaka Kitayama \cite{Ibu-Kita} ;
\item des dimensions d'espaces de \emph{fonctions covariantes} sur des réseaux unimodulaires impairs dues à Chenevier et inédites dans les cas $\SO_7$ et $\SO_9$.
\end{itemize}
\smallskip

Le Chapitre \ref{La_formule explicite de Riemann-Weil-Mestre} développe la \emph{technique limitative} à travers la formule explicite de Riemann-Weil dans le cadre présenté par Mestre \cite{Mes}. Outre une \emph{ingénierie} déjà développée par \cite{Chen-Lannes} et \cite{Chen-Ta}, nous introduisons des outils nouveaux pour prendre en compte le conducteur, ce qui n'est pas sans rajouter une certaine technicité.
\smallskip

Enfin le Chapitre \ref{Chapitre_calculs} détaille, poids motivique par poids motivique, la façon dont les techniques limitative et constructive se rejoignent pour nous fournir les listes exhaustives des Théorèmes \ref{thm_2_w17}, \ref{thm_2_w19}, \ref{thm_p} ci-dessus.
\medskip

Un dernier commentaire s'impose pour préciser le lien entre nos deux Parties.

La technique limitative de la formule explicite fonctionne bien en petit poids motivique, c'est-à-dire qu'elle n'autorise que peu de multi-ensembles de poids comme pouvant être ceux d'une représentation automorphe cuspidale algébrique\footnote{Les multi-ensembles de poids \emph{autorisés} tendent d'ailleurs à être de \emph{vrais} ensembles, \ie les poids sont distincts et les représentations correspondantes sont régulières.}. Plus précisément, et puisqu'une représentation automorphe cuspidale \emph{centrée} et sa contragrédiente ont les mêmes poids, la formule explicite tend (en petit poids motivique toujours) à contraindre les putatives représentations à être autoduales : il n'y a \og pas de place \fg pour deux représentations distinctes, la représentation et sa contragrédiente doivent être les mêmes.


Ainsi, toutes les représentations que nous saurons classifier sont autoduales régulières (voir d'ailleurs les énoncés de nos Théorèmes) symplectiques, ce qui explique le rôle joué par $\SO_{2n+1}$ dans notre travail.

\bigskip\bigskip
Ce travail a été réalisé sous la direction attentive, exigeante et engagée de Gaëtan Chenevier. Je ne saurais trop le remercier de son enthousiasme, de sa disponibilité et c'est un grand honneur pour moi que d'avoir pu bénéficier de son encadrement. Je souhaite également remercier Colette Mœglin de m'avoir mis sur la piste de la démonstration de la (cruciale) Proposition \ref{ser_disc_>4_pas_d'inv}. Anne-Marie Aubert et Ralf Schmidt ont accepté de relire ce travail en détail, je leur sais gré du temps qu'ils y ont consacré et de leurs remarques d'amélioration.




\part{Étude locale}
\chapter{Représentations}\label{Chapitre_Représentations}
\section{Généralités}\label{Généralités}
Dans tout ce paragraphe, $G$ désignera un groupe quelconque et $V$ un $\C$-espace vectoriel. La donnée d'une action de $G$ sur $V$ par automorphismes linéaires est la même que celle d'un morphisme $\pi : G \rightarrow \GL (V)$, on dit alors que $(\pi, V)$ est une représentation linéaire (complexe) du groupe $G$. On peut aussi définir l'algèbre de groupe $\C[G]$ et munir $V$, via $\pi$, d'une structure de $\C[G]$-module (à gauche).

Un sous-espace de $V$ stable par l'action de $G$ définit donc une \emph{sous-représen\-tation} de $V$ et on dit qu'une représentation est \emph{irréductible} si elle n'admet pas d'autres sous-représentations qu'elle-même et la représentation nulle. Il est équivalent de parler de $\C[G]$-module simple.

Étant donnée une famille $(V_i)_{i\in I}$ de représentations de $G$, on peut construire naturellement la représentation somme directe $\bigoplus_{i\in I} V_i$.

Il est faux de dire en général que toute représentation se décompose en somme directe de sous-représentations irréductibles ; une représentation qui vérifie cette propriété sera dite \emph{semi-simple}. De manière équivalente, une représentation est semi-simple si tout sous-espace stable (\ie toute sous-représentation) admet un supplémentaire stable. Cette équivalence provient d'un résultat général concernant les modules semi-simples définis comme somme de modules simples (voir le chapitre XVII de \cite{Lang}). Il existe des groupes dont toutes les représentations sont semi-simples (les groupes finis et plus généralement les groupes compacts, sous l'hypothèse supplémentaire de continuité des représentations).\ps 

Une représentation $(\pi,V)$ est dite de longueur finie, si elle est de longueur finie comme $\C[G]$-module, \ie s'il existe une suite finie de sous-modules (sous-représentations) 
\[
\{0\}=V_0 \subset V_1 \subset \cdots \subset V_{n-1} \subset V_n=V
\]
avec $V_i / V_{i-1}$ $\C[G]$-module simple (\ie $G$-représentation irréductible). On définit alors une \emph{semi-simplifiée} de $V$, notée $V^\mathrm{ss}$ par 
\[
V^\mathrm{ss}=\bigoplus_{i=1}^n V_i/V_{i-1}
\]
qui est bien semi-simple par construction. Le théorème de Jordan-Hölder (voir par exemple \cite{Lang}, III, §8) affirme alors que la classe d'isomorphie de $V^\mathrm{ss}$ ne dépend pas du choix des $V_i$. 
\emph{A fortiori}, l'entier $n$ est bien défini et est appelé la longueur de $V$. Une représentation est semi-simple si, et seulement si elle est isomorphe à sa semi-simplifiée et, dans ce cas, la longueur est le nombre de termes irréductibles qui interviennent dans son écriture comme somme directe (si ce nombre est fini).\ps 

Un morphisme de $(\pi_1,V_1)$ vers $(\pi_2,V_2)$, représentations de $G$, est une application linéaire $\varphi: V_1 \rightarrow V_2$ qui commute à l'action de $G$ \ie telle que $\varphi \circ \pi_1(g)=\pi_2(g) \circ \varphi$ pour tout $g$ dans $G$. L'ensemble de tels morphismes est noté $\Hom_G(V_1,V_2)$ et on parle de $G$-morphismes, ou d'opérateurs d'entrelacement. On peut voir cet espace comme l'ensemble des éléments de $\Hom_\C (V_1,V_2)$ fixes sous l'action de $G$ donnée par $g\cdot \varphi : v_1 \mapsto \pi_2(g)^{-1}(\varphi (\pi_1(g)(v_1)))$. 

En particulier, si on prend $V_2=\C$ muni de la représentation triviale, on définit ainsi la représentation duale $(\pi_1^*,V_1^*)$ de $(\pi_1,V_1)$.


Soit $(\pi,V)$ une représentation de $G$ et soit $(\rho, W)$ une représentation irréductible de $G$ (ou plus exactement une classe d'équivalence pour les opérateurs d'entrelacement bijectifs de représentation irréductible). On définit la composante $\rho$-isotypique de $V$, notée $V^\rho$ comme étant la somme de tous les sous-espaces stables (irréductibles) de $V$ de classe $\rho$. Alors $\Hom_G (W, V) \otimes_\C W$ s'injecte (comme $G$-représentation) dans $V^\rho$ et lui est isomorphe si $\Hom_G (W, V)$ est de dimension finie. Un cas particulier est celui où $\rho$ est la représentation triviale : la composante isotypique associée correspond aux points de $V$ fixes sous l'action de $G$, ce que l'on note plus classiquement $V^G$.

Tout $G$-morphisme préserve les composantes isotypiques. Plus précisément, étant donnés deux représentations $(\pi_1,V_1)$ et $(\pi_2,V_2)$ de $G$, un opérateur d'entrelacement $\varphi: V_1 \rightarrow V_2$, et une représentation irréductible $\rho$ de $G$, on a :
\[
\varphi(V_1^\rho) \subset V_2^\rho.
\] 

Enfin, une représentation semi-simple est somme (canonique) de ses composantes isotypiques.

\section{Groupes localement profinis}\label{Groupes localement profinis}

Soit $F$ un corps local non-archimédien de caractéristique nulle (\ie une extension finie d'un corps $p$-adique). On note $\Ok$ son anneau des entiers et $\pk$ son idéal maximal. On choisit une uniformisante $\varpi$ telle que $\pk=\varpi \Ok$ et la valeur absolue $|\cdot |_F : F \rightarrow \R_+$ est normalisée par $|\varpi|_F=(\#\Ok/\pk)^{-1}$, le corps résiduel $k=\Ok/\pk$ étant bien un corps fini, dont on notera $p$ la caractéristique.


\begin{defi}
Un espace topologique est dit \emph{localement profini} s'il est séparé et si tout point admet une base de voisinages formée par des compacts ouverts.
\end{defi}
\begin{prop}
Un espace topologique est localement profini si, et seulement s'il est séparé, localement compact, totalement discontinu.
\end{prop}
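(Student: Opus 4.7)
Je propose de démontrer les deux implications séparément.

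\textbf{Sens direct.} Supposons $X$ localement profini. Par définition, $X$ est séparé. L'existence d'une base de voisinages formée de compacts ouverts implique en particulier que chaque point admet un voisinage compact, d'où la locale compacité. Pour la totale discontinuité, je fixerais $x \in X$ et un point $y \neq x$ : la séparation fournit un ouvert contenant $x$ et évitant $y$, et par hypothèse on peut trouver un compact ouvert $U$ tel que $x \in U$ et $y \notin U$. Dans un espace séparé, un compact est fermé, donc $U$ est à la fois ouvert et fermé. Cela montre que la composante connexe de $x$ est réduite à $\{x\}$, d'où la totale discontinuité.

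\textbf{Sens réciproque.} Supposons $X$ séparé, localement compact, totalement discontinu, et fixons $x \in X$ et un voisinage ouvert $W$ de $x$. L'objectif est de construire un compact ouvert $U$ avec $x \in U \subset W$. La locale compacité (en présence de la séparation) permet de choisir un voisinage compact $K$ de $x$ tel que $K \subset W$, et l'intérieur $K^\circ$ est un ouvert contenant $x$. Muni de la topologie induite, $K$ est alors compact, séparé et totalement discontinu.

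L'étape principale consiste à invoquer le résultat classique suivant : \emph{dans un espace compact séparé totalement discontinu, les parties à la fois ouvertes et fermées (clopen) forment une base de la topologie}. Appliqué à $K$, cela fournit un clopen $U$ de $K$ tel que $x \in U \subset K^\circ$. Comme $U$ est fermé dans le compact $K$, il est compact. Comme $U$ est ouvert dans $K$ et contenu dans $K^\circ$ (qui est ouvert dans $X$), $U$ est ouvert dans $X$. Ainsi $U$ est un compact ouvert contenant $x$ et contenu dans $W$, ce qui conclut.

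\textbf{Point délicat.} La seule difficulté réside dans le résultat classique utilisé au sens réciproque. Sa démonstration repose sur le fait que, dans un espace compact séparé, la \emph{quasi-composante} d'un point (intersection des clopen le contenant) coïncide avec sa \emph{composante connexe} : combiné à la totale discontinuité, ceci donne que les clopen contenant $x$ ont pour intersection $\{x\}$. Par un argument de compacité standard (recouvrement d'un fermé disjoint d'un voisinage fermé de $x$), on en déduit que les clopen forment une base de voisinages de $x$. Je considérerais cette assertion comme un prérequis de topologie générale et ne redétaillerais pas ses étapes, la preuve se ramenant alors à l'assemblage des arguments ci-dessus.
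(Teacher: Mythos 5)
Votre démonstration est correcte et suit l'argument standard : le sens direct est immédiat (un compact ouvert d'un espace séparé est fermé, donc sépare les composantes connexes), et le sens réciproque repose bien sur le fait classique que dans un compact séparé totalement discontinu les quasi-composantes coïncident avec les composantes connexes, de sorte que les parties ouvertes et fermées forment une base. La thèse énonce cette proposition sans démonstration ; votre rédaction comble cette omission exactement comme on s'y attendrait, et le seul point non trivial (le lemme de type Šura-Bura sur les quasi-composantes) est correctement identifié et esquissé.
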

En pratique, on considérera des \emph{groupes} localement profinis, \ie des groupes topologiques dont l'espace sous-jacent est un espace localement profini. Il suffit alors de vérifier que l'élément identité du groupe admet une base de voisinages par des sous-groupes ouverts compacts (et cette condition est nécessaire par le théorème de van Dantzig, voir \cite{HewRoss} 7.7). 
Les sous-groupes fermés (en particulier le centre) et les quotients (par un sous-groupe fermé distingué) d'un groupe localement profini sont des groupes localement profinis.

Le corps $F$ est localement profini ainsi que tous les espaces vectoriels de dimension finie sur $F$. En particulier, pour $n\geq 1$, l'espace des matrices carrées de taille $n$ puis l'ouvert des matrices inversibles $\GL_n(F)$ sont localement profinis.
Ainsi, si $\mathbf{G}$ est un groupe algébrique affine défini sur $F$, on dispose d'une immersion fermée $\mathbf{G} \rightarrow \GL_n$ qui permet de munir $\mathbf{G}(F)$ de la topologie trace de $\GL_n(F)$ et donc d'une structure de groupe localement profini. La topologie ainsi définie ne dépend pas du choix de l'immersion fermée.


On peut maintenant définir la notion de représentation lisse.

\begin{defi}
Soit $G$ un groupe localement profini et soit $(\pi, V)$ une représentation linéaire de $G$ où $V$ est un $\C$-espace vectoriel. On dit que $(\pi, V)$ est \emph{lisse} si pour tout $v \in V$, il existe un sous-groupe ouvert $K \subset G$ (que l'on peut supposer compact) tel que pour tout $ k \in K, \, \pi(k) v=v$ (i.e. $v$ est $K$-invariant).

De façon équivalente, en notant $V ^K$ l'ensemble des vecteurs de $V$ fixés par $K$, $(\pi, V)$ est lisse si 
$$
V= \bigcup_K V^K
$$
où $K$ parcourt l'ensemble des \sgocs.
\end{defi}

\emph{Remarque :} C'est bien une condition de \emph{lissité} puisqu'elle revient à voir que, pour tout $v \in V$, la fonction $g \mapsto \pi(g) v$ est localement constante (\ie continue si l'on munit $V$ de la topologie discrète).\medskip

On voit immédiatement que toute sous-représentation (resp. tout quotient) d'une représentation lisse est lisse.

On se donne pour toute la suite de cette section un groupe localement profini $G$. La catégorie des représentations lisses de $G$, notée ici $\mathrm{Rep} (G)$ est une sous-catégorie pleine de la catégorie des représentations complexes, c'est-à-dire qu'un morphisme entre représentations lisses est simplement une application linéaire qui commute à l'action de $G$.

\begin{defi}\label{defi_Irr_G}
On note $\mathrm{Irr} (G) $ l'ensemble des classes d'équivalence (pour les opérateurs d'entrelacement bijectifs) de représentations \emph{lisses} irréductibles de $G$.
\end{defi}

Si $(\pi, V)$ est une représentation lisse de $G$, on peut, pour chaque sous-groupe compact ouvert $K$, considérer par restriction $(\pi_{|K},V)$ représentation de $K$, qui est alors semi-simple (par lissité et par compacité de $K$) : on dit que $V$ est $K$-semi-simple.

\begin{lemme}\label{5.2.1 de DeB}
Soient $V_1, V_2$ et $V_3$ trois représentations lisses de $G$. Alors la suite 
\begin{equation*}
\begin{tikzcd}
0 \arrow{r} & V_1 \arrow{r} & V_2 \arrow{r} & V_3 \arrow{r} & 0
\end{tikzcd}
\end{equation*}
est exacte si, et seulement si, pour tout sous-groupe compact ouvert $K$ de $G$, la suite
\begin{equation*}
\begin{tikzcd}
0 \arrow{r} & V_1^K \arrow{r} & V_2^K \arrow{r} & V_3^K \arrow{r} & 0
\end{tikzcd}
\end{equation*}
est exacte.
\end{lemme}
\begin{proof}
C'est le Lemme 5.2.1 de \cite{DeB}.
\end{proof}


\begin{defi}\label{defi_rep_lisse_admissible}
Une représentation lisse $(\pi, V)$ de $G$ est dite \emph{admissible} si pour tout sous-groupe compact ouvert $K$, l'espace $V^K$ est de dimension finie.
\end{defi}

On a la caractérisation équivalente suivante :
\begin{prop}\label{323}
Une représentation lisse $(\pi, V)$ est admissible si, et seulement si pour \emph{un} sous-groupe compact ouvert $K_0$, on a :
$$
\forall \rho \in \mathrm{Irr}(K_0), \, \dim V^\rho < +\infty
$$
où $V^\rho$ désigne la composante $\rho$-isotypique.

Cette propriété est alors vérifiée pour \emph{tout} sous-groupe compact ouvert K.
\end{prop}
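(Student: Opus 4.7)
Le plan est d'utiliser deux faits standards : tout sous-groupe compact ouvert de $G$ est profini, si bien que chacune de ses représentations lisses irréductibles se factorise par un quotient fini (et est donc de dimension finie) ; et toute représentation lisse d'un tel sous-groupe est semi-simple, car réunion des sous-espaces $V^{K'}$ pour $K'$ parcourant les sous-groupes ouverts distingués du compact considéré, chacun étant une représentation d'un groupe fini.

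Pour l'implication \emph{admissibilité $\Rightarrow$ finitude des composantes isotypiques pour tout $K_0$ compact ouvert}, on fixe un tel $K_0$ et $\rho \in \mathrm{Irr}(K_0)$. Par lissité, $\rho$ est trivial sur un sous-groupe ouvert distingué $K' \subset K_0$ (par exemple $K' = \ker \rho$). L'action de $K'$ sur $V^\rho$, somme de sous-représentations isomorphes à $\rho$, est donc triviale ; ainsi $V^\rho \subset V^{K'}$, qui est de dimension finie par admissibilité.

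Réciproquement, supposons la condition vraie pour un $K_0$ fixé, et soit $K$ un sous-groupe compact ouvert quelconque de $G$. L'intersection $K_0 \cap K$ est ouverte dans le compact $K_0$, donc d'indice fini ; en prenant l'intersection de ses conjugués sous $K_0$ (en nombre fini) on construit un sous-groupe compact ouvert $K_0'$ distingué dans $K_0$ et contenu dans $K_0 \cap K \subset K$. Comme $V^K \subset V^{K_0'}$, il suffit de montrer que $V^{K_0'}$ est de dimension finie. Or $V^{K_0'}$, muni de l'action du groupe fini $K_0/K_0'$, se décompose en la somme directe $\bigoplus_\rho V^\rho$ où $\rho$ parcourt l'ensemble fini des éléments de $\mathrm{Irr}(K_0)$ triviaux sur $K_0'$. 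Chaque $V^\rho$ étant de dimension finie par hypothèse, on conclut.

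Le point à justifier avec soin est l'identification de la décomposition isotypique de $V^{K_0'}$ sous $K_0/K_0'$ avec celle, restreinte, de $V$ sous $K_0$. L'inclusion $V^\rho \subset V^{K_0'}$ pour $\rho$ trivial sur $K_0'$ est immédiate ; réciproquement, un argument à la Clifford montre que si $\rho \in \mathrm{Irr}(K_0)$ est non triviale sur le sous-groupe distingué $K_0'$, alors $\rho_{|K_0'}$ est une somme d'irréductibles $K_0$-conjuguées, toutes non triviales, et donc $V^\rho$ ne contient aucun vecteur $K_0'$-invariant non nul. Ceci légitime la décomposition et achève la preuve.
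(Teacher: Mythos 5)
Votre démonstration est correcte et complète ; notez que le texte énonce cette Proposition sans en donner de preuve, il n'y a donc pas d'argument du manuscrit auquel la comparer. Les deux points délicats sont bien traités : dans le sens direct, l'ouverture de $\ker \rho$ (qui découle de ce qu'une irréductible lisse d'un groupe profini se factorise par un quotient fini) donne $V^\rho \subset V^{\ker\rho}$ ; dans le sens réciproque, la réduction à un sous-groupe ouvert \emph{distingué} $K_0'$ de $K_0$ puis l'identification $V^{K_0'}=\bigoplus_\rho V^\rho$ (somme finie, indexée par $\mathrm{Irr}(K_0/K_0')$) reposent sur la $K_0$-semi-simplicité de $V$ et sur votre argument à la Clifford -- que l'on peut d'ailleurs raccourcir en observant que $W_\rho^{K_0'}$ est un sous-espace $K_0$-stable de l'espace $W_\rho$ de $\rho$, donc nul ou égal à $W_\rho$ tout entier.
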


En particulier, le Lemme \ref{5.2.1 de DeB} nous dit que toute sous-représentation (resp. tout quotient) d'une représentation admissible est admissible.


Les groupes localement profinis que l'on étudie ($F$-points d'un groupe algébrique affine défini sur $F$) possèdent une propriété topologique supplémentaire qui s'avère déterminante : ils sont \emph{séparables}, à savoir qu'ils admettent une base dénombrable d'ouverts. En particulier, l'élément identité possède une base dénombrable de voisinages (par des sous-groupes compacts ouverts si l'on veut). On a alors, pour tout sous-groupe compact ouvert $K$ le fait que l'espace quotient $G/K$ est dénombrable (\cite{Ren}, section II.3.2). Cette propriété implique le lemme technique suivant et sa fameuse conséquence.

\begin{lemme}
Soit $(\pi,V)$ une représentation lisse \emph{irréductible} de $G$. Alors $\dim_\C V$ est au plus dénombrable.
\end{lemme}

\begin{prop}\emph{Lemme de Schur}

Soit $(\pi,V)$ une représentation lisse \emph{irréductible} de $G$. Alors tout opérateur d'entrelacement de $(\pi,V)$ avec elle-même est une homothétie.
\end{prop}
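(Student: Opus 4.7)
Mon plan consiste à adapter l'argument classique de Schur en dimension finie, en exploitant de manière cruciale la majoration \emph{dénombrable} de $\dim_{\C} V$ établie par le lemme qui précède. Soit $T : V \rightarrow V$ un opérateur d'entrelacement. Pour tout $\lambda \in \C$, l'opérateur $T - \lambda \cdot \mathrm{id}_V$ commute encore à l'action de $G$, donc son noyau et son image sont des sous-représentations de $(\pi,V)$. Par irréductibilité, chacun vaut $\{0\}$ ou $V$ : autrement dit, $T - \lambda \cdot \mathrm{id}_V$ est, pour chaque $\lambda$, soit nul, soit bijectif. Il suffit donc de montrer qu'il existe $\lambda \in \C$ pour lequel $T - \lambda \cdot \mathrm{id}_V$ n'est pas bijectif.

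Je raisonne par l'absurde en supposant que $T - \lambda \cdot \mathrm{id}_V$ est bijectif pour tout $\lambda \in \C$. Je fixe un vecteur non nul $v \in V$ et je considère la famille
\[
\mathcal{F} = \left\{ (T - \lambda \cdot \mathrm{id}_V)^{-1} v \, \middle| \, \lambda \in \C \right\}
\]
indexée par $\C$ (non dénombrable). L'étape technique clé est de vérifier que $\mathcal{F}$ est libre dans $V$. Pour cela, je prends une combinaison linéaire nulle finie $\sum_{i=1}^{n} c_i (T - \lambda_i \cdot \mathrm{id}_V)^{-1} v = 0$ portant sur des $\lambda_i$ deux à deux distincts ; en multipliant par l'opérateur $\prod_{j=1}^{n} (T - \lambda_j \cdot \mathrm{id}_V)$, j'obtiens $P(T)\, v = 0$ où $P(X) = \sum_{i=1}^{n} c_i \prod_{j \neq i} (X - \lambda_j) \in \C[X]$.

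Par hypothèse, chaque facteur $(T - \mu \cdot \mathrm{id}_V)$ avec $\mu \in \C$ est bijectif, donc tout polynôme scindé $P$ opère de manière bijective sur $V$. Comme $v \neq 0$, ceci impose $P = 0$ dans $\C[X]$ ; en évaluant $P$ successivement en chaque $\lambda_k$, on trouve $c_k \prod_{j \neq k}(\lambda_k - \lambda_j) = 0$, d'où $c_k = 0$ pour tout $k$. La famille $\mathcal{F}$ est donc libre, et $V$ contient un système libre de cardinal $\# \C$, contredisant le Lemme précédent. Il existe donc $\lambda_0 \in \C$ tel que $T - \lambda_0 \cdot \mathrm{id}_V$ ne soit pas bijectif, et la discussion initiale force alors $T = \lambda_0 \cdot \mathrm{id}_V$.

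L'étape la plus délicate est le passage de l'hypothèse \og spectre vide \fg{} à la construction explicite d'une famille libre non dénombrable dans $V$ : c'est là que l'on utilise à la fois la commutation de $T$ avec les inverses $(T - \lambda \cdot \mathrm{id}_V)^{-1}$ (pour pouvoir multiplier par des polynômes en $T$) et la structure de $\C$ comme corps algébriquement clos non dénombrable (pour disposer d'une réserve suffisante de $\lambda$). Le reste n'est qu'algèbre linéaire élémentaire, sans recours à aucune propriété spécifique de la topologie de $G$ au-delà de la séparabilité déjà encodée dans le lemme de dénombrabilité.
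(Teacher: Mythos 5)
Votre preuve est correcte et repose essentiellement sur la même idée que celle du texte : exhiber, à partir des résolvantes $(T-\lambda\,\mathrm{id}_V)^{-1}$ indexées par $\C$, une famille libre non dénombrable qui contredit la dénombrabilité de la dimension garantie par le lemme précédent. Vous travaillez directement dans $V$ (en appliquant ces opérateurs à un vecteur $v$ fixé et en justifiant la liberté par une manipulation polynomiale explicite), là où le texte reste dans $\mathrm{End}_G(V)$, dont il borne la dimension via $\phi \mapsto \phi(v_0)$, et invoque la transcendance de $\phi$ sur $\C$ ; il s'agit donc de la même démonstration à une variante technique près.
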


\begin{proof}
(de la Proposition, suivant \cite{BH} p. 21)

Soit $\phi \in (\mathrm{End}_{G}(V)) \setminus \{0\}$. Alors $\Ker(\phi)$ et $\Imm(\phi)$ sont des sous-espaces $G$-invariants de $V$. L'irréductibilité impose $\Ker(\phi)=\{0\}$ et $\Imm(\phi)=V$, autrement dit, tous les éléments non nuls de $\mathrm{End}_{G}(V)$ sont inversibles, \emph{i.e.} $\mathrm{End}_{G}(V)$ est une $\C$-algèbre à division.

Soit $v_0 \in V \setminus \{0\}$, alors par irréductibilité de $V$, on a $V=\sum_g \C \, g \cdot v_0$, ainsi un élément $\phi \in \mathrm{End}_{G}(V)$ est déterminé uniquement par la valeur $\phi(v_0)$. L'espace $\mathrm{End}_{G}(V)$  est donc de dimension dénombrable.
Supposons maintenant qu'il existe un $\phi \in \mathrm{End}_{G}(V)$ qui ne soit pas une homothétie (\emph{i.e.} $\phi \notin \C$), alors $\phi$ est transcendant sur $\C$ et engendre un corps $\C(\phi) \subset \mathrm{End}_{G}(V)$. La famille $\{ (\phi -a)^{-1} | a \in \C \}$ est libre, si bien que la dimension de $\C(\phi)$ est indénombrable, ce qui fournit une contradiction. Finalement $\mathrm{End}_{G}(V)=\C$.
\end{proof}

Donnons tout de suite une conséquence importante du lemme de Schur.
\begin{cor}\label{Caractère central pour représentation locale}
Soit $(\pi,V)$ une représentation lisse irréductible de $G$. Alors le centre $Z(G)$ de $G$ agit sur $V$ via un caractère $\omega_\pi : Z(G) \rightarrow \C^\times$, dit \emph{caractère central}.
\end{cor}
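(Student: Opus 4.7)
L'idée est d'appliquer directement le lemme de Schur, qui vient d'être établi, à chaque élément du centre. Soit $z \in Z(G)$ : puisque $z$ commute à tout élément de $G$, l'opérateur $\pi(z) : V \to V$ vérifie $\pi(z) \circ \pi(g) = \pi(zg) = \pi(gz) = \pi(g) \circ \pi(z)$ pour tout $g \in G$. Ainsi $\pi(z)$ est un opérateur d'entrelacement de $(\pi,V)$ avec elle-même et appartient à $\mathrm{End}_G(V)$.

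Par le lemme de Schur, il existe donc un scalaire $\omega_\pi(z) \in \C$ tel que $\pi(z) = \omega_\pi(z) \cdot \mathrm{Id}_V$. On vérifie alors que $\omega_\pi(z) \in \C^\times$ : comme $\pi(z) \circ \pi(z^{-1}) = \pi(1) = \mathrm{Id}_V$, l'opérateur $\pi(z)$ est inversible, et le scalaire $\omega_\pi(z)$ est donc non nul. Enfin, la propriété $\omega_\pi(zz') \mathrm{Id}_V = \pi(zz') = \pi(z)\pi(z') = \omega_\pi(z)\omega_\pi(z') \mathrm{Id}_V$ montre que $\omega_\pi : Z(G) \to \C^\times$ est un morphisme de groupes, et c'est donc bien un caractère au sens voulu.

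Pour être tout à fait complet, on peut vérifier que $\omega_\pi$ est continu, ce qui résulte immédiatement de la lissité de $\pi$ : pour $v \in V \setminus \{0\}$ fixé par un \sgoc{} $K$ de $G$, tout $z \in Z(G) \cap K$ agit trivialement sur $v$, donc $\omega_\pi(z)=1$, et $\omega_\pi$ est trivial sur le \sgoc{} $Z(G) \cap K$ de $Z(G)$.

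Le résultat découle ainsi tautologiquement du lemme de Schur : il n'y a pas d'obstacle réel à surmonter, l'argument étant la mise en forme immédiate de l'observation que les opérateurs $\pi(z)$ pour $z$ central sont des endomorphismes $G$-équivariants de la représentation irréductible $V$.
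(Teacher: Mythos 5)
Votre preuve est correcte et suit exactement la même démarche que celle du texte : le lemme de Schur fournit le scalaire $\omega_\pi(z)$ pour chaque $z$ central, et la continuité s'obtient en remarquant que $\omega_\pi$ est trivial sur le sous-groupe compact ouvert $K \cap Z(G)$ dès que $V^K \neq 0$. Les vérifications supplémentaires (inversibilité du scalaire, multiplicativité) sont des explicitations bienvenues mais implicites dans l'argument du texte.
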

\begin{proof}
Le lemme de Schur nous donne directement l'existence d'un morphisme $\omega_\pi : Z(G) \rightarrow \C^\times$ tel que $z \cdot v=\omega_\pi(z)v$ pour $z \in Z(G)$. Si $K$ est un sous-groupe compact ouvert tel que $V^K \neq 0$ alors $\omega_\pi$ est trivial sur le sous-groupe compact ouvert $K \cap Z(G)$ de $Z(G)$, ainsi $\omega_\pi$ est bien continu.
\end{proof}

Il est faux en général que le dual d'une représentation lisse est lisse, c'est pourquoi nous devons définir la notion de \emph{dual lisse}.

Soit $(\pi, V)$ une représentation lisse de $G$, on a défini au paragraphe \ref{Généralités} sa duale (abstraite) $(\pi^*,V^*)$ 
et on peut définir le \og crochet de dualité \fg{} :
\begin{align*}
V^* \times V &\longrightarrow \mathbb{C} \\
(v^*,v) &\longmapsto \langle v^*,v \rangle
\end{align*}
correspondant à l'évaluation de la forme linéaire $v^*$ en le vecteur $v$.
La représentation $\pi^*$ de $G$ est donc donnée par :
\[
\langle \pi^*(g) v^*, v \rangle=\langle v^* , \pi(g^{-1}) v \rangle
\]

On prend alors le \emph{lissé} de cette représentation, \emph{i.e.} on pose :
\[
V^\vee=\bigcup_K (V^*)^K
\]
où $K$ parcourt les sous-groupes compacts ouverts de $G$. C'est un sous-espace $G$-stable de $V^*$, et en notant $\pi^\vee$ la restriction de $\pi^*$ à $V^\vee$, on obtient la représentation lisse $(\pi^\vee, V^\vee)$ dite \emph{contragrédiente} ou dual lisse de la représentation de départ. Par ailleurs, on a $(V^\vee)^K \simeq (V^K)^*$.
\begin{prop}\label{1.2.10}
Le foncteur contravariant $V \mapsto V^\vee$  de la catégorie des représentations lisses dans elle-même est exact.\ps 

Soit $(\pi, V)$ une représentation lisse de $G$. Alors
\begin{enumerate}
\item $\pi$ est admissible si, et seulement si $\pi^\vee$ est admissible, et dans ce cas, l'application canonique $V \rightarrow (V^\vee)^\vee$ est un isomorphisme de $G$-modules ;
\item $\pi$ est irréductible si, et seulement si $\pi^\vee$ est irréductible.
\end{enumerate}
\end{prop}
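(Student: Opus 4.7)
The overall strategy is to push every assertion down to the level of $K$-fixed vectors, with $K$ ranging over the compact open subgroups of $G$, thereby reducing each claim to elementary linear algebra. Two inputs make this feasible: Lemme \ref{5.2.1 de DeB} (so that a morphism of smooth representations is an isomorphism if and only if it is so on all $K$-invariants) and the natural identification $(V^\vee)^K \simeq (V^K)^*$ noted just before the proposition. The canonical biduality map $V \to (V^\vee)^\vee$ is the one sending $v$ to the evaluation form $w^\vee \mapsto \langle w^\vee, v\rangle$, which is easily checked to be $G$-equivariant and smooth (any compact open subgroup fixing $v$ fixes the associated form).

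For the exactness of $V \mapsto V^\vee$, starting from a short exact sequence $0 \to V_1 \to V_2 \to V_3 \to 0$ of smooth representations, I apply Lemme \ref{5.2.1 de DeB} to get exactness of the $K$-invariants, then invoke ordinary $\C$-linear duality (which is exact) to obtain a short exact sequence of $(V_i^K)^*$, and finally translate back through $(V_i^\vee)^K \simeq (V_i^K)^*$ together with the converse direction of Lemme \ref{5.2.1 de DeB}. For part (1), admissibility transfers immediately since $\dim V^K < \infty$ is equivalent to $\dim (V^K)^* < \infty$. On $K$-invariants the biduality map $V \to V^{\vee\vee}$ restricts to the classical $V^K \to (V^K)^{**}$, which is an isomorphism precisely because admissibility guarantees $V^K$ is finite-dimensional; hence Lemme \ref{5.2.1 de DeB} upgrades this to a $G$-equivariant isomorphism globally.

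For part (2), assume $V$ irreducible and let $W \subset V^\vee$ be a nonzero $G$-subrepresentation. The annihilator $W^\perp = \{ v \in V \mid \langle w, v\rangle = 0 \text{ pour tout } w \in W \}$ is $G$-stable; by non-degeneracy of the evaluation pairing one has $W^\perp \neq V$, forcing $W^\perp = \{0\}$ by irreducibility. The bridge from $W^\perp = 0$ to $W = V^\vee$ is an averaging argument: for any compact open $K$ and any $w \in W$, the smoothness of $V^\vee$ makes the $K$-orbit of $w$ finite, so the average $e_K(w)$ lies in $W^K$, and a direct computation gives $\langle e_K(w), v\rangle = \langle w, v\rangle$ for every $v \in V^K$. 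Consequently $W^\perp \cap V^K$ coincides with the ordinary annihilator of $W^K$ inside $V^K$; finite-dimensional duality then yields $W^K = (V^K)^* = (V^\vee)^K$ for every $K$, whence $W = V^\vee$ by Lemme \ref{5.2.1 de DeB}. The converse implication follows by running the same argument for $V^\vee$ and identifying $V$ with $V^{\vee\vee}$ via (1). The main obstacle is the last step, which uses finite-dimensionality of $V^K$ and therefore admissibility; for the general smooth case one must appeal separately to the fact that irreducible smooth representations of the reductive $p$-adic groups under consideration are automatically admissible, a result of Jacquet tacitly used throughout this chapter.
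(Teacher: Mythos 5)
Votre démonstration est correcte et suit essentiellement la même voie que celle à laquelle le texte renvoie (la réduction systématique aux $K$-invariants via le Lemme \ref{5.2.1 de DeB} et l'identification $(V^\vee)^K \simeq (V^K)^*$, telle qu'exposée dans \cite{DeB}). Vous identifiez en outre exactement la même subtilité que celle signalée dans le texte pour le point \emph{2} : l'argument d'annulateur ne conclut que si $V^K$ est de dimension finie, ce qui dans le cas lisse général repose sur l'admissibilité des représentations irréductibles, elle-même déduite de l'admissibilité des supercuspidales (Proposition \ref{admissibilité_sc}) et de la classification à partir de celles-ci.
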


\begin{proof}
On renvoie au Lemme 5.1.7 et au paragraphe 5.2. de \cite{DeB}. Le dernier point est facile si l'on suppose que $\pi$ est admissible, mais dans le cas général, il utilise de façon cruciale l'admissibilité des représentations supercuspidales (et la classification à partir de celles-ci) que nous évoquons plus bas (Proposition \ref{admissibilité_sc}).
\end{proof}

\subsection{Mesure de Haar et fonction modulaire}\label{Fonction modulaire}

Les groupes localement profinis sont localement compacts, ils admettent donc une mesure de Haar à gauche (positive et non nulle par définition), unique à multiplication par un scalaire près (\cite{Bbki_Int}, Chapitre VII §1). Ce résultat peut se démontrer dans le cas spécifique des groupes localement profinis avec une preuve plus simple que celle du cas général (voir \cite{Ren} II.3.6).

Pour un groupe localement profini $G$, on notera $\mu_G$ \og la \fg{} mesure de Haar à gauche (il y a un choix de normalisation à faire) qui nous permet de définir la distribution 
\[
\phi \longmapsto \int_G \phi(g) \dd \mu_G(g)
\]
où $\phi$ est localement constante à support compact, si bien que l'intégrale est en réalité une somme finie.

L'invariance à gauche de la mesure de Haar nous dit que 
\begin{equation*}
\int_G \phi(hg) \dd \mu_G(g)= \int_G \phi(g) \dd \mu_G(h^{-1}g)= \int_G \phi(g) \dd \mu_G(g).
\end{equation*}

Étant donné un élément $g \in G$, on peut définir la mesure (translatée à droite par $g$) $r(g)\cdot \mu_G : X \mapsto \mu_G(Xg)$ où $X$ est un borélien de $G$. Cette mesure n'a pas de raison \emph{a priori} d'être égale à la mesure $\mu_G$ mais elle est invariante par translations à gauche et positive, donc c'est une mesure de Haar (à gauche) et donc un multiple scalaire de la mesure de départ. Il existe donc un réel strictement positif $\delta_G(g)$ tel que $r(g)\cdot \mu_G= \delta_G(g) \mu_G$.\ps 

On a donc défini la \emph{fonction modulaire} $\delta_G : G \rightarrow \R_+^*$ dont on voit facilement qu'elle est en fait un morphisme de groupes continu.
Si la fonction modulaire est constamment égale à 1 (par exemple si $G$ est abélien, ou à l'opposé si $G$ est égal à son groupe dérivé fermé), on parle de groupe  \emph{unimodulaire}. Dans ce cas, toute mesure de Haar à gauche est aussi une mesure de Haar à droite.


\begin{prop}
\begin{itemize}
\item Un groupe compact est unimodulaire.
\item Si un groupe est réunion de ses sous-groupes compacts, alors il est unimodulaire.
\item Le groupe des $F$-points d'un groupe algébrique défini et réductif sur $F$ est unimodulaire.
\end{itemize}
\end{prop}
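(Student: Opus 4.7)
Le plan est de traiter les trois points dans l'ordre, en exploitant la propriété centrale que $\delta_G : G \to \R_+^*$ est un morphisme de groupes \emph{continu}, combinée au fait élémentaire que le seul sous-groupe compact de $\R_+^*$ est $\{1\}$ (le logarithme identifiant topologiquement $\R_+^*$ à $\R$, qui ne possède aucun sous-groupe compact non trivial).

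Pour le premier point, si $G$ est compact, alors $\delta_G(G)$ est un sous-groupe compact de $\R_+^*$, donc trivial. Pour le second point, chaque $g\in G$ appartient, par hypothèse, à un sous-groupe compact $H$ de $G$ ; la restriction $\delta_G|_H$ est alors un morphisme continu d'un groupe compact dans $\R_+^*$, donc trivial par le premier point, d'où $\delta_G(g)=1$. Ceci étant vrai pour tout $g\in G$, le groupe $G$ est unimodulaire.

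Le troisième point est le plus délicat : on ne peut pas se ramener directement au second, puisque $\GL_n(F)$, par exemple, n'est pas réunion de ses sous-groupes compacts (à cause du déterminant, qui est surjectif sur $F^\times$). L'idée serait d'invoquer la formule classique $\delta_G(g) = |\det \Ad(g)|_F^{-1}$ qui exprime la fonction modulaire des $F$-points d'un groupe algébrique connexe $\mathbf{G}$ défini sur $F$ en termes de sa représentation adjointe $\Ad : \mathbf{G} \to \GL(\mathfrak{g})$ (cette formule se démontrant en partant d'une forme différentielle invariante à gauche de degré maximal sur $\mathbf{G}$, et en calculant comment elle se transforme par translation à droite). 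Pour $\mathbf{G}$ réductif, on dispose de la décomposition $\mathfrak{g} = \mathfrak{z} \oplus \mathfrak{g}_{\mathrm{der}}$ où $\Ad$ agit trivialement sur le centre $\mathfrak{z}$, et préserve la forme de Killing sur la partie semi-simple $\mathfrak{g}_{\mathrm{der}}$, cette forme étant non dégénérée ; par conséquent $\det \Ad(g) \in \{\pm 1\}$, et donc $|\det \Ad(g)|_F = 1$ pour tout $g$. Pour le cas non connexe, il suffit de remarquer que la composante neutre est d'indice fini, et que la fonction modulaire étant continue à valeurs dans le groupe sans torsion $\R_+^*$, est nécessairement triviale sur les composantes finies.

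Le principal obstacle est la justification de la formule $\delta_G = |\det \Ad|_F^{-1}$, que je préfèrerais admettre en renvoyant à une référence standard (\cite{Bbki_Int}) plutôt que de la redémontrer. Une rédaction alternative, plus autonome mais moins uniforme, consisterait à traiter séparément le cas des tores (triviaux car abéliens, donc unimodulaires), le cas des groupes semi-simples déployés (engendrés par leurs sous-groupes radiciels unipotents, eux-mêmes réunions de sous-groupes compacts car isomorphes comme groupes topologiques à $(F,+)$ qui est réunion des $\varpi^{-n}\Ok$, puis conclure par le point (2)), et enfin utiliser la décomposition $\mathbf{G} = \mathbf{Z}\cdot \mathbf{G}_{\mathrm{der}}$ pour un groupe réductif général.
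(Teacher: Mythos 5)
Votre preuve des deux premiers points est exactement celle du texte (continuité de $\delta_G$ et trivialité des sous-groupes compacts de $\R_+^*$), et pour le troisième point le texte se contente de renvoyer à une référence, tandis que votre argument principal via $\delta_G=|\det \Ad|_F^{-1}$ et la préservation de la forme de Killing est l'argument standard et est correct. Signalons seulement que votre rédaction \emph{alternative} pour le cas semi-simple est plus fragile : les $F$-points d'un groupe semi-simple déployé non simplement connexe (par exemple $\PGL_2(F)$) ne sont pas engendrés par les sous-groupes radiciels, et $\mathbf{Z}(F)\cdot\mathbf{G}_{\mathrm{der}}(F)$ n'épuise pas $\mathbf{G}(F)$ en général ; mais comme il ne s'agit que d'une variante proposée en aparté, cela n'affecte pas la validité de votre démonstration principale.
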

\begin{proof}
On utilise la continuité de $\delta_G$ et le fait que le seul sous-groupe compact de $\R_+^*$ est $\{1\}$.

Pour le troisième point, on renvoie à \cite{Ren} V.5.4.
\end{proof}

Du point de vue des intégrales, si $\phi$ est localement constante à support compact, on a :
\begin{equation*}
\int_G \phi(gh) \dd \mu_G(g)= \int_G \phi(g) \dd \mu_G(gh^{-1})= \delta_G(h)^{-1} \int_G \phi(g) \dd \mu_G(g).
\end{equation*}

%

\section{Coefficients matriciels et classes de représentations}
On se donne pour toute cette section un groupe localement profini séparable $G$ et on s'intéresse aux éléments de $\mathrm{Rep}(G)$, à savoir les représentations complexes lisses. Puisque c'est le seul cas que nous aurons à traiter en pratique, nous supposons dans cette section que $G$ est unimodulaire. En particulier, le sous-groupe fermé $Z(G)$ l'est également et le groupe quotient $G/Z(G)$ aussi.\medskip

Soit $(\pi,V) \in \mathrm{Rep}(G)$ et soit $(\pi^\vee,V^\vee)$ sa contragrédiente. Étant donnés $v \in V$ (un vecteur) et $\lambda \in V^\vee$ (une forme linéaire lisse), on peut définir le \emph{coefficient matriciel} 
\[
\begin{array}{ccccc}
m_{\lambda,v} & : & G &\longrightarrow & \mathbb{C} \\
 & & g & \longmapsto & \lambda(\pi(g)v) \\
\end{array}.
\]

On a, avec le crochet de dualité précédemment introduit, $m_{\lambda,v}(g)=\linebreak\langle \lambda , \pi(g) v \rangle=\langle \pi^\vee (g^{-1}) \lambda, v \rangle$. En particulier, par lissité de $\pi$ et de $\pi^\vee$, on trouve un sous-groupe compact ouvert $K$ tel que $m_{\lambda,v}$ est bi-$K$-invariante.

Il y a autant de coefficients matriciels qu'il y a de vecteurs dans $V$ et de formes linéaires dans $V^\vee$ (à multiplication par un scalaire près). Néanmoins on constate que, pour certaines propriétés, les coefficients matriciels ont un comportement uniforme.\ps 


Soit $(\pi,V)$ une représentation lisse, admettant un caractère central $\omega_\pi$ unitaire (\ie à valeurs dans l'ensemble des nombres complexes de module 1). Soient $v \in V$ et $\lambda \in V^\vee$ et soit $m_{\lambda,v}$ le coefficient matriciel correspondant. Puisqu'il y a un caractère central et que celui-ci est unitaire, la quantité $|m_{\lambda,v}(g)|$ est bien définie sur $G/Z(G)$ (on note abusivement $g$ la classe d'un élément de $G$ dans $G/Z(G)$) et on peut alors dire que le coefficient matriciel est \emph{de carré intégrable modulo le centre} si :
\[
\int_{G/Z(G)} |m_{\lambda,v}(g)|^2 \dd g^* < \infty,
\]
où $\dd g^*$ désigne une mesure de Haar sur $G/Z(G)$.

Cela revient à dire que $m_{\lambda,v}$ est un élément de $\mathrm{L}^2(G,\omega_\pi,\dd g^*)$ l'espace des fonctions lisses de $G$ dans $\C$ pour lesquelles le centre agit par le caractère $\omega_\pi$ et qui sont de carré intégrable sur $G/Z(G)$ pour la mesure $\dd g^*$. Or cet espace, muni de l'action à droite par translations, est une représentation de $G$ : la représentation régulière (c'est en fait une sous-représentation de l'ensemble des fonctions lisses pour la même action). Elle est unitaire pour le produit scalaire ($G$-invariant)
\[
\langle f_1, f_2 \rangle_{\mathrm{L}^2(G,\omega_\pi,\dd g^*)}=\int_{G/Z(G)} \overline{f_1(g)}f_2(g) \dd g^*.
\]

\begin{prop}\label{coeff_mat_L2}
Soit $(\pi,V)$ une représentation lisse, irréductible, dont le caractère central (qui existe d'après le Corollaire \ref{Caractère central pour représentation locale}) est unitaire. Alors si un coefficient matriciel (non nul) est de carré intégrable modulo le centre, tous les coefficients matriciels le sont.
\end{prop}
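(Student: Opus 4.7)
Le plan est d'exploiter l'irréductibilité de $\pi$ et celle de $\pi^\vee$ (garantie par la Proposition \ref{1.2.10}), couplées à l'invariance par bi-translations de la norme $L^2$ modulo le centre. L'idée-clef sera d'exprimer un coefficient matriciel arbitraire comme une combinaison linéaire finie de bi-translatées du coefficient de départ, sachant que la bi-translation préserve le carré intégrable modulo le centre.

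Je partirais d'un coefficient matriciel non nul $m_{\lambda_0,v_0}$ de carré intégrable modulo le centre ; nécessairement $v_0$ et $\lambda_0$ sont alors tous deux non nuls. Par irréductibilité de $\pi$ et de $\pi^\vee$, tout $v \in V$ et tout $\lambda \in V^\vee$ admettent des décompositions finies $v=\sum_j b_j \, \pi(g_j)v_0$ et $\lambda=\sum_i a_i \, \pi^\vee(h_i)\lambda_0$. Un calcul direct fournit alors la formule
\[
m_{\pi^\vee(h)\lambda_0,\,\pi(g_0)v_0}(g)=\langle \lambda_0, \pi(h^{-1}gg_0)v_0\rangle=m_{\lambda_0,v_0}(h^{-1}gg_0),
\]
qui exhibe ces coefficients particuliers comme des bi-translatées de $m_{\lambda_0,v_0}$. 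Par bilinéarité de $(v,\lambda)\mapsto m_{\lambda,v}$, tout coefficient matriciel $m_{\lambda,v}$ s'écrira donc $g \mapsto \sum_{i,j} a_i b_j \, m_{\lambda_0,v_0}(h_i^{-1}gg_j)$, somme finie de bi-translatées de $m_{\lambda_0,v_0}$.

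Il restera alors à vérifier que l'intégrabilité au carré modulo le centre est stable par bi-translation. Comme $\omega_\pi$ est unitaire, la fonction $|m_{\lambda,v}|$ descend bien à $G/Z(G)$. Puisque $G$ est unimodulaire par hypothèse et $Z(G)$ abélien, le quotient $G/Z(G)$ est lui aussi unimodulaire (\cf §\ref{Fonction modulaire}) : la mesure $\dd g^*$ est donc bi-invariante, si bien que la norme $L^2(G/Z(G),\omega_\pi,\dd g^*)$ est préservée aussi bien par translations à gauche que par translations à droite. Comme une somme finie de fonctions de carré intégrable modulo le centre reste de carré intégrable modulo le centre, la conclusion suit. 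Le plan ne présente à mon sens aucun obstacle sérieux : la seule vérification non triviale est celle de l'unimodularité du quotient $G/Z(G)$, qui est ici automatique.
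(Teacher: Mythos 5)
Votre démonstration est correcte et suit essentiellement la même stratégie que celle du texte : l'irréductibilité de $\pi$ (et de $\pi^\vee$) ramène tout coefficient matriciel à une combinaison linéaire finie de bi-translatés de $m_{\lambda_0,v_0}$, et l'unimodularité de $G/Z(G)$ assure que la norme ${\rm L}^2$ modulo le centre est préservée par ces translations. Le texte présente le même argument sous la forme « l'ensemble des $v$ tels que $m_{\lambda_0,v}$ soit de carré intégrable est un sous-espace non nul et $G$-stable, donc égal à $V$ », ce qui est la version repliée de votre décomposition explicite.
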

\begin{proof}
Soient $\lambda_0 \in V^\vee$ et $v_0 \in V$ tel que le coefficient matriciel $m_{\lambda_0, v_0}$ soit non nul et de carré intégrable. En particulier $\lambda_0$ et $v_0$ sont non nuls. Considérons l'ensemble $A$ des $v \in V$ tels que le coefficient matriciel $m_{\lambda_0,v}$ soit de carré intégrable. On voit aisément que $A$ est un sous-espace vectoriel de $V$, non réduit à $\{0\}$ car il contient $v_0$. Pour montrer que $A=V$, il suffit donc, par irréductibilité de $V$, de montrer que $A$ est $G$-stable.

Or, pour $h \in G$, $m_{\lambda_0, h \cdot v}(g)=\lambda_0(g\cdot (h \cdot v))=\lambda_0((gh) \cdot v)=m_{\lambda_0,v}(gh)$. Si $v \in A$, alors (par unimodularité de $G/Z(G)$) $h \cdot v \in A$ également. Les coefficients matriciels $m_{\lambda_0,v}$ sont donc de carré intégrable, quel que soit $v \in V$ et le même raisonnement montre que cela reste vrai en remplaçant $\lambda_0$ par n'importe quelle forme linéaire dans $V^\vee$.
\end{proof}

\begin{defi}\label{déf_série_discrète}
Soit $(\pi,V)$ une représentation lisse irréductible de $G$. On dit que $\pi$ est une \emph{série discrète} ou \emph{de carré intégrable modulo le centre} si son caractère central $\omega_\pi$ est unitaire et si un coefficient matriciel de $\pi$ (et donc chacun d'eux) est de carré intégrable modulo le centre.\ps 

On dit que que $\pi$ est une série \emph{essentiellement} discrète ou \emph{essentiellement} de carré intégrable modulo le centre s'il existe un caractère lisse $\chi$ tel que $\chi \otimes \pi$ soit une série discrète.
\end{defi}

Une situation où la condition d'intégrabilité est naturellement remplie est quand le coefficient matriciel, vu comme fonction sur $G$ est à support compact modulo le centre, \ie quand l'image du support dans $G/Z(G)$ est compacte.

\begin{defi}\label{défi_sc}
Soit $(\pi,V)$ une représentation lisse irréductible de $G$. On dit que $\pi$ est \emph{supercuspidale} si tous ses coefficients matriciels sont à support compact modulo le centre.
\end{defi}

\begin{prop}\label{admissibilité_sc}
\begin{enumerate}
\item Une représentation supercuspidale est admissible.
\item Si $(\pi,V)$ est supposée lisse irréductible et qu'\emph{un} coefficient matriciel (non nul) est à support compact modulo le centre, alors tous le sont et $(\pi,V)$ est supercuspidale.
\end{enumerate}
\end{prop}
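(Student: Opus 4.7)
Pour le point (2), la stratégie serait identique à celle de la démonstration de la Proposition \ref{coeff_mat_L2}. Soit $m_{\lambda_0, v_0}$ le coefficient matriciel non nul à support compact modulo le centre donné par hypothèse. On pose
\[
A=\{v \in V \mid m_{\lambda_0, v} \text{ est à support compact modulo } Z(G)\}.
\]
C'est un sous-espace vectoriel de $V$ qui contient $v_0$, donc non nul. La formule $m_{\lambda_0, \pi(h)v}(g)=m_{\lambda_0,v}(gh)$ montre que le support de $m_{\lambda_0, \pi(h)v}$ est le translaté à droite par $h^{-1}$ de celui de $m_{\lambda_0,v}$, donc reste compact modulo $Z(G)$ ; ainsi $A$ est $G$-stable et, par irréductibilité de $V$, égal à $V$ tout entier. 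Un argument symétrique dans la représentation contragrédiente, utilisant son irréductibilité (Proposition \ref{1.2.10} (2)) et l'identité $m_{\pi^\vee(h)\lambda, v}(g)=m_{\lambda,v}(h^{-1}g)$, permettrait de conclure que $m_{\lambda,v}$ est à support compact modulo $Z(G)$ pour tous $\lambda \in V^\vee$ et $v \in V$, ce qui est précisément la supercuspidalité.

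Pour le point (1), mon plan est d'exploiter le fait que, par (2), tous les coefficients matriciels sont à support compact modulo le centre, et d'en déduire un contrôle sur la dimension des invariants via les relations d'orthogonalité de Schur pour les séries discrètes. Quitte à tordre $\pi$ par un caractère non ramifié (opération qui préserve la supercuspidalité et laisse inchangés les espaces $V^K$), je me ramènerais au cas où $\omega_\pi$ est unitaire ; $\pi$ est alors en particulier une série discrète au sens de la Définition \ref{déf_série_discrète}, ses coefficients matriciels étant dans $\mathrm{L}^2(G,\omega_\pi,\dd g^*)$. Soit $K$ un \sgoc{} de $G$ et supposons $V^K\neq\{0\}$. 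Je choisirais $v_0 \in V^K \setminus \{0\}$ et $\lambda_0 \in (V^\vee)^K$ avec $\langle \lambda_0, v_0\rangle\neq 0$ (possible grâce à l'isomorphisme $(V^\vee)^K \simeq (V^K)^*$ rappelé avant la Proposition \ref{1.2.10}) : la fonction $f=m_{\lambda_0,v_0}$ est bi-$K$-invariante, à support compact modulo $Z(G)$, et l'opérateur $\pi(\check f)$ (avec $\check f(g)=f(g^{-1})$) agirait sur $V$ comme un opérateur de rang un sur la droite $\C v_0$, à un scalaire près appelé \emph{degré formel} de $\pi$.

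Le principal obstacle réside dans la mise en place rigoureuse de cet argument : l'existence du degré formel et les formules d'orthogonalité de Schur pour les séries discrètes $p$-adiques ne se lisent pas immédiatement sur les définitions précédentes, et requièrent un travail sérieux d'analyse fonctionnelle sur $\mathrm{L}^2(G,\omega_\pi,\dd g^*)$. Une fois ces outils établis, la finitude de $\dim V^K$ se déduit en considérant l'application bilinéaire $V^K \times (V^\vee)^K \to \mathcal{H}(G,K)$, $(v,\lambda)\mapsto m_{\lambda,v}$, dont l'image est formée de fonctions bi-$K$-invariantes à support compact modulo le centre ; les relations de Schur fourniraient alors une famille de projecteurs de rang un sur chaque droite $\C v$ pour $v \in V^K$, engendrant l'algèbre $\mathrm{End}_\C(V^K)$ et forçant cette dernière à être de dimension finie. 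Une approche alternative, plus structurelle, consisterait à invoquer la projectivité des représentations supercuspidales dans la catégorie des représentations lisses à caractère central fixé (résultat dû à Bernstein), d'où l'admissibilité se déduit formellement.
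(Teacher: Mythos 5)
Votre point (2) est correct et suit exactement la voie du texte : on reprend l'argument de la Proposition \ref{coeff_mat_L2} (l'ensemble des $v$ tels que $m_{\lambda_0,v}$ soit à support compact modulo le centre est un sous-espace non nul et $G$-stable, donc égal à $V$ par irréductibilité, puis on passe à la contragrédiente pour faire varier $\lambda$).

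Pour le point (1), en revanche, votre plan s'écarte de la démonstration du texte et laisse une lacune réelle. La preuve donnée ici (suivant \cite{BH}) est un pur argument de dénombrement, sans aucune analyse fonctionnelle : si $V^K$ était de dimension infinie, elle serait de dimension infinie \emph{dénombrable} (séparabilité), donc $(V^\vee)^K \simeq (V^K)^*$ serait de dimension \emph{non} dénombrable ; or, pour $v \in V^K$ non nul fixé, l'application injective $\lambda \mapsto m_{\lambda,v}$ envoie $(V^\vee)^K$ dans l'espace des fonctions à support dans un nombre fini de doubles classes $ZKg_iK$ (compacité du support modulo le centre), lesquelles sont en nombre dénombrable -- contradiction. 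Votre voie par le degré formel et les relations de Schur est classique, mais telle que vous l'esquissez elle ne conclut pas : disposer, pour chaque $v \in V^K$, d'un projecteur de rang un sur la droite $\C v$ est vrai pour \emph{n'importe quel} espace vectoriel et ne force en rien $\dim V^K < \infty$ ; le pas manquant (borner le nombre de vecteurs $K$-invariants orthogonaux par une intégrale sur $G/Z(G)$, ou revenir à un argument de support) est précisément celui que vous reconnaissez ne pas savoir franchir. De même, invoquer la projectivité de Bernstein est trop rapide : dans les présentations usuelles ce résultat s'établit en aval (ou en parallèle) de l'admissibilité, et l'on n'en tire pas l'admissibilité de façon \og formelle \fg{}. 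En l'état, le point (1) n'est donc pas démontré.
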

\begin{proof} (suivant \cite{BH} p. 71)

Raisonnons par l'absurde en supposant que $(\pi,V)$ n'est pas admissible. On trouve alors un sous-groupe compact ouvert $K$ tel que $V^K$ soit de dimension infinie, et en fait de dimension infinie dénombrable à cause de l'hypothèse de séparabilité. Par conséquent $(V^K)^*=\Hom_\C(V^K,\C)$, qui est isomorphe à $(V^\vee)^K$, est de dimension infinie non dénombrable.

Soit $v \in V^K$ non nul et soit l'application
\[
\begin{array}{ccccc}
\Gamma_v & : & (V^\vee)^K &\longrightarrow & C^\infty(G) \\
 & & \lambda & \longmapsto & m_{\lambda,v} \\
\end{array}
\]
où $C^\infty(G)$ désigne l'ensemble des fonctions lisses de $G$ dans $\C$. L'application $\Gamma_v$ est $G$-équivariante (en munissant $C^\infty(G)$ de l'action par translations à droite).

La représentation $V$, irréductible, est $G$-engendrée par $v$, ce qui nous assure que $\Gamma_v$ est injective. Son image est composée de fonctions $f$ vérifiant :
\[
f(zkgk')=\omega_\pi(z)f(g), \; \forall g\in G, \forall z \in Z, \forall k,k' \in K.
\]
Ainsi, sur une double classe $ZKgK$, $f$ est soit identiquement nulle, soit ne s'annule jamais. Or on a la décomposition $G=\coprod _i Kg_iK$ où l'ensemble d'indices $I$ est dénombrable (c'est la séparabilité de $G$), de laquelle on déduit $Z\backslash G=\bigcup_i Z\backslash ZKg_iK$ (l'union n'est plus disjointe). En tous les cas, cette décomposition liée à la compacité modulo le centre du support des coefficients matriciels nous dit que chaque élément de l'image de $\Gamma_v$ est à support dans un nombre fini de doubles classes $ZKg_iK$. L'image de $\Gamma_v$ est donc un sous-espace de l'espace engendré par les fonctions caractéristiques de ces doubles classes, et est en particulier de dimension au plus dénombrable, ce qui fournit une contradiction.

La démonstration du deuxième point est similaire à celle de la Proposition \ref{coeff_mat_L2}, utilisant l'irréductibilité de $V$.
\end{proof}

\emph{Remarque 1 :} Une représentation supercuspidale est ici toujours supposée irréductible (point sur lequel la littérature diverge).\medskip

\emph{Remarque 2 :} Nous verrons plus loin (§ \ref{Lien avec les représentations supercuspidales}) une caractérisation très différente (mais équivalente) de la supercuspidalité. Certains auteurs parlent à propos de la propriété que nous venons d'introduire de $\gamma$-cuspidalité.\medskip

On déduit immédiatement qu'une représentation supercuspidale est essentiellement série discrète : il suffit de s'occuper du caractère central, il existe, et en tordant la représentation par un caractère convenable, on peut supposer qu'il est unitaire.\ps 

Si, inversement, on relâche légèrement la condition d'intégrabilité, on obtient les représentations tempérées.

\begin{defi}\label{déf_rep_tempérée}
Soit $(\pi,V)$ une représentation lisse irréductible de $G$. On dit que $\pi$ est \emph{tempérée} si son caractère central $\omega_\pi$ est unitaire et si les modules des coefficients matriciels de $\pi$ appartiennent à ${\rm L}^{2+\eps}(G/Z(G))$ pour tout $\eps >0$ (comme précédemment, il suffit en fait de vérifier cette propriété pour \emph{un} coefficient matriciel non nul).

On dit que $\pi$ est \emph{essentiellement} tempérée s'il existe un caractère lisse $\chi$ tel que $\chi \otimes \pi$ soit tempérée.
\end{defi}

\begin{prop}
Soit $(\pi, V)$ une série discrète de $G$. Alors $(\pi, V)$ est unitarisable et ses coefficients matriciels sont bornés.
\end{prop}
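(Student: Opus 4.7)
Le plan consiste à démontrer d'abord l'unitarisabilité, puis d'en déduire la bornitude des coefficients matriciels. Pour la première étape, puisque tout coefficient matriciel de $\pi$ est de carré intégrable modulo le centre (par l'hypothèse de série discrète et la Proposition \ref{coeff_mat_L2}), je choisirais un $\lambda_0 \in V^\vee$ non nul et considérerais l'application $\Phi : V \to \mathrm{L}^2(G,\omega_\pi,\mathrm{d}g^*)$ définie par $\Phi(v) = m_{\lambda_0,v}$. Cette application est $G$-équivariante pour l'action régulière à droite (unitaire) sur $\mathrm{L}^2$, et on peut choisir $\lambda_0$ de sorte que $m_{\lambda_0,v_0} \neq 0$ pour un certain $v_0 \in V$ -- ce qui est possible car $V^\vee$ sépare les points de $V$ (argument standard utilisant l'idempotent $e_K$ d'un sous-groupe compact ouvert fixant $v$) --, rendant $\Phi$ non nulle, donc injective par irréductibilité de $\pi$. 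En tirant en arrière le produit scalaire $\mathrm{L}^2$ le long de $\Phi$, on obtient une forme hermitienne définie positive et $G$-invariante sur $V$, c'est-à-dire une structure unitaire.

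Pour la bornitude des coefficients matriciels, je m'appuierais sur la structure unitaire $\langle\cdot,\cdot\rangle$ ainsi construite pour définir l'application $\Psi : V \to V^\vee$, antilinéaire et $G$-équivariante, envoyant $w$ sur la forme linéaire $v \mapsto \langle w,v\rangle$. Étant donné $w \in V^K$, la forme $\Psi(w)$ est elle-même $K$-invariante car $K$ agit unitairement, donc $\Psi(w)$ appartient bien à $V^\vee$. L'application $\Psi$ est non nulle (car la forme hermitienne est non dégénérée), et son image est un sous-espace $G$-stable non nul de $V^\vee$, lequel est irréductible d'après la Proposition \ref{1.2.10}; par conséquent $\Psi$ est surjective. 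Ainsi, pour tout $\lambda \in V^\vee$, il existe $w_\lambda \in V$ tel que $\lambda = \langle w_\lambda,\cdot\rangle$, et l'inégalité de Cauchy-Schwarz, combinée à l'unitarité $\|\pi(g)v\| = \|v\|$, donne
\[
|m_{\lambda,v}(g)| = |\langle w_\lambda, \pi(g)v\rangle| \leq \|w_\lambda\|\cdot\|v\|,
\]
ce qui fournit la bornitude voulue.

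Aucun véritable obstacle n'est à attendre, les deux étapes étant relativement courtes, mais un point demande de l'attention : dans la seconde étape, on utilise de façon essentielle l'irréductibilité de $\pi^\vee$ (et pas seulement celle de $\pi$) pour conclure à la surjectivité de $\Psi$, ce qui justifie l'appel à la Proposition \ref{1.2.10}. Il faudra également expliciter la $G$-équivariance $\Psi \circ \pi(g) = \pi^\vee(g) \circ \Psi$, qui résulte de la formule définissant $\pi^\vee$, à savoir $\langle \pi^\vee(g)\lambda,v\rangle = \langle \lambda,\pi(g^{-1})v\rangle$, et de l'invariance de $\langle\cdot,\cdot\rangle$ sous $\pi$.
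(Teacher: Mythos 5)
Votre démonstration est correcte et suit essentiellement la même démarche que celle du texte : on fixe $\lambda_0$ tel qu'un coefficient matriciel soit non nul, on construit le produit scalaire $G$-invariant à partir de la structure $\mathrm{L}^2$ (le texte écrit directement l'intégrale $(v_1|v_2)_V=\int \overline{m_{\lambda_0,v_1}}\,m_{\lambda_0,v_2}\,\mathrm{d}g^*$ là où vous passez par l'application équivariante $\Phi$, ce qui revient au même), puis on identifie $V$ à $V^\vee$ via ce produit scalaire pour conclure par Cauchy-Schwarz. Votre justification de la surjectivité de $V\rightarrow V^\vee$ par l'irréductibilité de $\pi^\vee$ (Proposition \ref{1.2.10}) explicite simplement un point que le texte laisse implicite.
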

La proposition implique que les coefficients matriciels d'une série discrète, qui sont dans $\mathrm{L}^{2}(G/Z(G))$ par hypothèse, sont \emph{a fortiori} dans $\mathrm{L}^{2+\eps}(G/Z(G))$ pour tout $\eps >0$. Donc une série discrète (resp. essentiellement discrète) est en particulier une représentation tempérée (resp. essentiellement tempérée).

\begin{proof}
Soit $\lambda \in V^\vee$ tel qu'on trouve un $v_0 \in V$ pour lequel le coefficient matriciel $m_{\lambda, v_0}$ soit non nul. On pose alors :
\[
(v_1|v_2)_V=\int_{G/Z(G)} \overline{m_{\lambda,v_1}(g)}m_{\lambda,v_2}(g) \mathrm{d}g^*,
\]
forme sesquilinéaire hermitienne positive $G$-invariante. Vérifions la définition : soit donc $w \in V$ tel que $(w|w)_V =0$, \ie $|m_{\lambda,w}|$ est nul dans $\mathrm{L}^{2}(G/Z(G))$. Alors, comme le centre agit par un caractère et que le coefficient matriciel est une application continue (lisse), $m_{\lambda,w}$ est en fait nul comme fonction de $G$ dans $\C$.
Si l'on fait agir $G$ sur $w$ alors on obtient que pour tout $h$ dans $G$, le coefficient matriciel $m_{\lambda,h\cdot w}$ est encore nul. Or $V$ est supposée irréductible et donc, si $w$ est non nul, les translatés de $w$ sous $G$ engendrent $V$, si bien que c'est pour tout $v$ dans $V$ que le coefficient matriciel $m_{\lambda,v}$ est nul. Ceci contredit l'existence du $v_0$ dans le début de la preuve, et donc on a en fait $w=0$, assurant que la forme hermitienne est définie et qu'il s'agit d'un produit scalaire hermitien.


La représentation $(\pi,V)$ est alors unitaire pour ce produit scalaire hermitien et l'injection :
\[
\begin{array}{ccccc}
\varphi & : & V &\longhookrightarrow & V^* \\
 & & v & \longmapsto & (\cdot|v) \\
\end{array}
\]
est en fait une bijection entre $V$ et $V^\vee$ (les vecteurs lisses sont envoyés sur des vecteurs lisses).

Considérons maintenant $\lambda \in V^\vee$ et $v \in V$. Alors $\lambda$ est \og représentable \fg{} par un vecteur $x_\lambda$ de $v$ (c'est la surjectivité de $\varphi$), si bien que $m_{\lambda,v}(g)=\lambda(g\cdot v)=(x_\lambda|g\cdot v)$ et que le coefficient matriciel est donc borné en module par $||x_\lambda|| \, ||g\cdot v||$, c'est-à-dire (par unitarité) par $||x_\lambda|| \, ||v||$.
\end{proof}


\begin{cor}
Les représentations tempérées irréductibles sont unitarisables.
\end{cor}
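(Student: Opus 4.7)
Mon plan est de ramener le cas tempéré au cas des séries discrètes via l'induction parabolique : la démonstration directe de la Proposition précédente, qui construisait un produit scalaire $G$-invariant en intégrant $|m_{\lambda,v}|^2$ sur $G/Z(G)$, ne s'adapte plus, l'intégrale divergeant en général pour une représentation seulement tempérée. L'ingrédient clé, que l'on peut admettre à ce stade comme un résultat de la théorie plus avancée (dû essentiellement à Harish-Chandra et à Casselman), est le suivant : toute représentation lisse tempérée irréductible $(\pi,V)$ de $G$ se réalise comme sous-représentation d'une induite parabolique normalisée $\Ind_P^G \sigma$, où $P=MN$ est un sous-groupe parabolique de $G$ et $\sigma$ une série discrète du sous-groupe de Levi $M$. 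Une fois ceci acquis, il suffit de produire un produit scalaire hermitien défini positif et $G$-invariant sur l'induite $\Ind_P^G \sigma$, puisque sa restriction à la sous-représentation $V$ fournira l'unitarisation de $\pi$ cherchée.

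La construction se fait en deux temps. D'abord, le sous-groupe de Levi $M$ est lui-même un groupe localement profini unimodulaire, et la Proposition précédente, appliquée à la série discrète $\sigma$ de $M$, fournit un produit scalaire hermitien $M$-invariant $\langle\cdot,\cdot\rangle_\sigma$ sur l'espace $V_\sigma$. Ensuite, la normalisation par $\delta_P^{1/2}$ permet de transférer ce produit scalaire en un produit scalaire $G$-invariant sur l'induite : pour $f_1,f_2$ vues comme fonctions $G\to V_\sigma$ vérifiant la condition d'équivariance $f(pg)=\delta_P(p)^{1/2}\sigma(p)f(g)$, on pose
\[
\langle f_1, f_2 \rangle = \int_K \langle f_1(k), f_2(k)\rangle_\sigma \, \mathrm{d}k,
\]
où $K$ est un bon sous-groupe compact ouvert tel que $G=PK$. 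La décomposition d'Iwasawa et la présence du facteur $\delta_P^{1/2}$, qui absorbe précisément la non-unimodularité de $P$ via la fonction modulaire introduite au §\ref{Fonction modulaire}, assurent à la fois l'indépendance vis-à-vis du choix de $K$ et la $G$-invariance ; le caractère hermitien défini positif est immédiat.

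Le point délicat est manifestement l'énoncé de plongement dans une induite de série discrète : sa démonstration repose sur les foncteurs de Jacquet et sur un théorème de sous-représentation à la Casselman, dans une version adaptée au cas tempéré (qui caractérise précisément la tempéritude par une condition de positivité des exposants sur les modules de Jacquet). Cela sort du cadre élémentaire de ce paragraphe introductif ; il me semble raisonnable à ce stade d'admettre ce résultat, quitte à le reformuler comme conséquence de la théorie développée dans les chapitres suivants, ou avec un renvoi à une référence standard (par exemple \cite{Ren}), plutôt que de chercher une démonstration auto-contenue, qui nécessiterait un investissement technique disproportionné vis-à-vis du reste de la thèse.
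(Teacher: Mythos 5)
Votre démonstration suit essentiellement la même voie que celle du texte : plongement d'une représentation tempérée irréductible dans une induite parabolique normalisée d'une série discrète d'un Levi (résultat admis avec référence, exactement comme le texte renvoie à la Proposition III.4.1 de Waldspurger), puis préservation de l'unitarité par l'induction normalisée (c'est la Proposition \ref{prop_induction_normalisée}, dont vous explicitez la preuve standard par intégration sur un compact). L'argument est correct ; veillez seulement à prendre pour $K$ un compact \emph{maximal} spécial garantissant $G=PK$, et non un compact ouvert quelconque.
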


\begin{proof}
On fait intervenir l'induction parabolique définie au paragraphe suivant.
On utilise alors le fait que l'induction parabolique normalisée préserve l'unitarité (Proposition \ref{prop_induction_normalisée}) et la caractérisation des représentations tempérées comme sous-quotients (sous-représentations si l'on veut) des induites de série discrète (\cite{waldspurger_2003} Proposition III.4.1).
\end{proof}

\section{Notations pour les groupes réductifs}\label{Notations pour les groupes reductifs}
Soit $\mathbf{G}$ un groupe algébrique défini et réductif sur le corps $F$. Par hypothèse, les groupes réductifs sont ici supposés connexes. 

Il sera parfois utile de fixer un sous-groupe (algébrique) parabolique minimal $\bm{\mathrm{P}_0}$ et de considérer seulement les sous-groupes (algébriques) paraboliques $\mathbf{P}$ le contenant, on parlera de sous-groupes paraboliques \emph{standard}. Ce choix, non canonique \emph{a priori} ne porte pas à conséquence, puisque tous les sous-groupes paraboliques minimaux sont conjugués sous $\mathbf{G}(F)$. Cela permettra néanmoins de donner des énoncés plus facilement manipulables.\ps 

En pratique, nous considérerons des groupes $\mathbf{G}$ déployés sur $F$ pour lesquels le sous-groupe parabolique minimal sera un sous-groupe de Borel, noté $\mathbf{B}$. Ayant fixé un sous-groupe parabolique minimal, nous fixons également (en tant que de besoin) un tore maximal $\mathbf{T}$, contenu dans $\mathbf{B}$ -- tous les tores maximaux contenus dans $\mathbf{B}$ sont conjugués sous $\mathbf{B}(F)$.

Si $\mathbf{P}$ est un sous-groupe parabolique, il admet une factorisation du type  $\mathbf{P}=\mathbf{M}\mathbf{N}$ où $\mathbf{N}$ est \emph{le} radical unipotent de $\mathbf{P}$ et $\mathbf{M}$ \emph{un} sous-groupe de Levi. Si l'on travaille dans le cadre \og standard \fg{} (et en supposant $\mathbf{G}$ déployé), alors on peut imposer que le sous-groupe de Levi contienne le tore maximal $\mathbf{T}$ (on parle de sous-groupe de Levi \emph{standard}), si bien que $\mathbf{M}$ est alors canoniquement associé à $\mathbf{P}$.



Ces notations servent pour toute la suite du chapitre et pour le chapitre suivant, en particulier nous noterons en maigre italique les $F$-points des groupes algébriques considérés. Nous commettrons par ailleurs l'abus de langage consistant à dire que $B=\mathbf{B}(F)$ est un sous-groupe de Borel de $G=\mathbf{G}(F)$, de même pour tous les sous-groupes paraboliques et les sous-groupes de Levi (en particulier le tore maximal).

\section{Induction parabolique et foncteur de Jacquet}
%

\subsection{Induction parabolique}
Soit donc $P=MN$ un sous-groupe parabolique de $G$ et soit $(\sigma, W)$ une représentation lisse de $M$. On peut étendre cette représentation en une représentation de $P$ (en imposant que l'action de $N$ soit triviale) puis construire l'espace :
\[
\Ind_P^G \sigma=\left\lbrace f \in {\rm C}^\infty (G,W) \middle| f(pg)= \sigma(p) f(g), \forall p \in P, \forall g \in G \right\rbrace,
\]
où ${\rm C}^\infty (G,W)$ désigne l'ensemble des fonctions lisses de $G$ dans $W$, \ie invariantes par translation pour un certain sous-groupe ouvert (compact si l'on veut). Puisque $G/P$ est compact, il est équivalent de parler de fonctions localement constantes.
L'action de $G$ par translations à droite fait de $\Ind_P^G \sigma$ une $G$-représentation, dite représentation induite. On énonce désormais sans démonstration certaines propriétés de l'induction pour lesquelles on renvoie à \cite{Ren} III.2.\ps 

La première est que $\Ind_P^G$, ainsi défini, est un foncteur de $\mathrm{Rep}(M)$ dans $\mathrm{Rep}(G)$, exact. Par ailleurs, on a une propriété naturelle de transitivité : étant donnés $P_1=M_1N_1$ et $P_2=M_2N_2$ deux sous-groupes paraboliques avec $P_1 \subset P_2$ et $M_1 \subset M_2$, et une représentation lisse $\sigma$ de $M_1$, on a un isomorphisme naturel :
\[
\Ind_{P_1}^G \sigma \simeq \Ind_{P_2}^G(\Ind_{P_1\cap M_2}^{M_2} \sigma).
\]

De plus, si $\sigma$ est admissible, $\Ind_P^G \sigma$ l'est également (ceci est lié au fait qu'on a des sous-groupes paraboliques et que l'espace $G/P$ est compact). Enfin, si $\sigma$ est de longueur finie (comme $M$-représentation), $\Ind_P^G \sigma$ l'est également (comme $G$-représentation) -- par exactitude, il suffit de le montrer quand $\sigma$ est irréductible.\ps 



On définit également (en reprenant les notations précédentes) le foncteur d'induction \emph{normalisée} par : 
\[
\ii_P^G \sigma=\left\lbrace f \in {\rm C}^\infty (G,W) \middle| f(pg)= \delta_P(p)^{1/2}\sigma(p) f(g), \forall p \in P, \forall g \in G \right\rbrace,
\]
où $\delta_P$ désigne la fonction modulaire du groupe $P$ définie au paragraphe \ref{Fonction modulaire}. On a donc $\ii_P^G \sigma=\Ind_P^G (\sigma \otimes \delta_P^{1/2})$. Le foncteur d'induction normalisée est encore transitif (pour des rasions de compatibilité de fonctions modulaires) ; avec les notations ci-dessus, on a un isomorphisme naturel :
\begin{equation*}
\ii_{P_1}^G \sigma \simeq \ii_{P_2}^G(\ii_{P_1\cap M_2}^{M_2} \sigma).
\end{equation*}
L'intérêt de cette normalisation est que le foncteur $\ii_P^G$ a de meilleures propriétés.
\begin{prop}\label{prop_induction_normalisée}
Soit $\sigma$ une représentation lisse de $M$, alors
\begin{itemize}
\item si $\sigma$ est unitaire, $\ii_P^G \sigma$ l'est également ;
\item $(\ii_P^G \sigma)^\vee \simeq \ii_P^G (\sigma^\vee).$
\end{itemize}
\end{prop}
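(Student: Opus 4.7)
La clé est que la normalisation par $\delta_P^{1/2}$ dans la définition de $\ii_P^G$ est précisément conçue pour que les accouplements $G$-invariants (hermitiens ou de dualité) sur $\sigma$ se transportent à l'induite en intégrant sur $P\backslash G$. La stratégie dans les deux cas sera donc : (i) définir la forme candidate sur $\ii_P^G \sigma$ en intégrant l'accouplement \emph{ponctuel} correspondant sur $\sigma$ ; (ii) vérifier que l'intégrande se transforme sous $P$ à gauche comme la fonction modulaire $\delta_P$ ; (iii) conclure grâce à l'existence classique (\cite{Ren} II.3) d'une intégrale $G$-invariante à droite $f \mapsto \int_{P\backslash G} f(g)\dd\bar{g}$ sur les fonctions $f : G \to \C$ lisses à support compact modulo $P$ vérifiant $f(pg)=\delta_P(p)\,f(g)$, ce qui utilise l'unimodularité de $G$.

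Pour l'unitarité, étant donné un produit scalaire $M$-invariant $(\cdot|\cdot)_\sigma$ sur $\sigma$ (étendu à $P$ par trivialité sur $N$), je poserai
\[
(f_1|f_2)_{\ii_P^G \sigma}=\int_{P\backslash G} (f_1(g)|f_2(g))_\sigma \dd\bar{g}.
\]
La vérification est directe : $(f_1(pg)|f_2(pg))_\sigma = \delta_P(p)^{1/2}\overline{\delta_P(p)^{1/2}}\,(\sigma(p)f_1(g)|\sigma(p)f_2(g))_\sigma = \delta_P(p)(f_1(g)|f_2(g))_\sigma$ par positivité (et donc réalité) de $\delta_P$ et par unitarité de $\sigma$. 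La $G$-invariance du produit scalaire résulte alors de celle de l'intégrale, et la séparation du fait qu'un élément de $\ii_P^G\sigma$ est déterminé par sa restriction à $K$ (via la décomposition d'Iwasawa $G=PK$ où $K$ est un compact maximal), ce qui ramène au cas \emph{compact} d'un produit scalaire sur $\Ind_{P\cap K}^K(\sigma|_{P\cap K})$.

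Pour la dualité, je procéderai de même avec le crochet naturel $\langle\cdot,\cdot\rangle : \sigma^\vee \times \sigma \to \C$. L'intégrande $g\mapsto \langle f'(g),f(g)\rangle$ se transforme encore comme $\delta_P$ à gauche, fournissant un accouplement $G$-invariant
\[
\langle f',f\rangle=\int_{P\backslash G}\langle f'(g),f(g)\rangle \dd\bar{g}
\]
et donc un $G$-morphisme canonique $\Phi : \ii_P^G(\sigma^\vee) \to (\ii_P^G\sigma)^\vee$, à valeurs dans le dual \emph{lisse} (un vecteur $K_0$-invariant étant envoyé, par changement de variable, sur une forme $K_0$-invariante). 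Le point principal, et la difficulté centrale, sera de montrer que $\Phi$ est un isomorphisme. L'injectivité s'obtiendra en évaluant ponctuellement : si $f'\neq 0$, on choisit $g_0$ avec $f'(g_0)\neq 0$, puis, par Iwasawa, un $f\in \ii_P^G\sigma$ à petit support autour de $g_0$ dont la valeur en $g_0$ s'accouple non trivialement avec $f'(g_0)$. Pour la surjectivité, je raisonnerai niveau par niveau : pour tout sous-groupe compact ouvert $K_0\subset K$, la décomposition d'Iwasawa identifie $(\ii_P^G\sigma)^{K_0}$ aux $K_0$-invariants d'une induite compacte convenablement tordue $\Ind_{P\cap K}^K(\sigma|_{P\cap K})$, et le résultat analogue dans la catégorie plus maniable des représentations de groupes compacts (où l'on dispose de la semi-simplicité complète via Peter-Weyl) fournira l'antécédent cherché, dont il restera à vérifier qu'il se prolonge naturellement en un élément de $\ii_P^G(\sigma^\vee)$ grâce à la décomposition $G=PK$.
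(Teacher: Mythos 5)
Votre démonstration est correcte et suit essentiellement la même voie que le texte : la thèse se contente de renvoyer aux Propositions 3.1.2, 3.1.3 et 3.1.4 des notes de Casselman, dont l'argument est précisément celui que vous développez (intégrale $G$-invariante sur $P\backslash G$ pour les fonctions se transformant selon $\delta_P$, ce qui utilise l'unimodularité de $G$, puis comparaison des $K_0$-invariants via la décomposition d'Iwasawa pour l'isomorphisme de dualité). Rien d'essentiel à signaler.
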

\begin{proof}
On renvoie aux Propositions 3.1.2, 3.1.3 et 3.1.4 de \cite{Cass-notes}.
\end{proof}

\subsection{Foncteur de Jacquet}
Donnons-nous à nouveau $P=MN$ un sous-groupe parabolique de $G$ et $(\pi,V)$ une représentation lisse de $G$. On peut alors considérer l'espace 
\[
V(N)=\Vect_\C \{\pi(n)v-v \,|\, v \in V, n \in N\}
\]
et on définit $V_N=V/V(N)$ l'espace des coïnvariants sous $N$ : c'est le plus grand quotient de $V$ sur lequel $N$ agit trivialement.
Puisque $N$ est distingué dans $P$, $V(N)$ est laissé stable par $P$ et $V_N$ est donc une représentation de $P$ sur laquelle $N$ agit trivialement, soit finalement une représentation de $P/N =M$, notée $(\pi_N,V_N)$.

On a ainsi défini un foncteur de la catégorie $\mathrm{Rep}(G)$ dans la catégorie $\mathrm{Rep}(M)$, qui est exact, on parle du \emph{foncteur de Jacquet}, que l'on note temporairement $\mathcal{F}$.

Soient $(\sigma,W)$ une représentation lisse de $M$ et $(\pi,V)$ une représentation lisse de $G$. On a les égalités :
\begin{equation*}
\Hom_G(\pi,\Ind_P^G\sigma)=\Hom_P(\mathrm{Res}_P^G \pi,\sigma)=\Hom_M(\mathcal{F}(\pi),\sigma).
\end{equation*}
La première égalité est la réciprocité de Frobenius \og classique \fg pour l'induction, où l'on voit -- de même que ci-dessus -- $\sigma$ comme une représentation de $P$ grâce à la factorisation $P=MN$ (l'action de $N$ étant triviale). La deuxième égalité vient justement de ce que l'action de $N$ est triviale : on peut donc prendre les coïnvariants et obtenir un morphisme entre $M$-représentations.\ps 

Le foncteur $\mathcal{F}$ de $\mathrm{Rep}(G)$ dans $\mathrm{Rep}(M)$ est ainsi adjoint à gauche du foncteur $\Ind_P^G$. Enfin, si $\pi$ est admissible, $\mathcal{F}(\pi)$ l'est également (\cite{Cass-notes} Theorem 3.3.1); si $\pi$ est de plus de longueur finie, alors $\mathcal{F}(\pi)$ l'est également (\cite{Cass-notes} Corollary 6.3.8).


On peut également définir un foncteur de Jacquet normalisé (adjoint à gauche de l'induction parabolique normalisée) : $\rr_P^G \pi=\mathcal{F}(\pi) \otimes \delta_P^{-1/2}$.
Ce foncteur préserve bien sûr l'admissibilité et la longueur finie et on a :

\begin{equation}
\label{réciprocité_de_Frobenius_foncteur_Jacquet}
\Hom_G(\pi,\ii_P^G\sigma)=\Hom_M(\rr_P^G \pi,\sigma).
\end{equation}

Par adjonction, le foncteur de Jacquet normalisé est aussi transitif. Étant donnés $P_1=M_1N_1$ et $P_2=M_2N_2$ deux sous-groupes paraboliques avec $P_1 \subset P_2$ et $M_1 \subset M_2$, et une représentation lisse $\pi$ de $G$, on a un isomorphisme naturel :
\begin{equation*}
\rr_{P_1}^G \pi \simeq \rr_{P_1\cap M_2}^{M_2}(\rr_{P_2}^{G} \pi).
\end{equation*}

\subsection{Lien avec les représentations supercuspidales}\label{Lien avec les représentations supercuspidales}
Nous avons défini en \ref{défi_sc} les représentations supercuspidales par les propriétés de leurs coefficients matriciels. Il existe une autre caractérisation par le fait de n'apparaître dans aucune induite parabolique et d'être en quelque sorte la brique la plus élémentaire possible en termes de représentations.
\begin{thm}
Soit $(\pi, V)$ une représentation lisse irréductible de $G$. Alors les propriétés suivantes sont équivalentes :
\begin{enumerate}
\item $(\pi,V)$ est supercuspidale (au sens de la Définition \ref{défi_sc}) ;

\item pour tout sous-groupe de Levi strict $M$ et pour toute représentation $\sigma$ de $M$, $\Hom_G(\pi,\ii_P^G \sigma)=0$ ;
\item pour tout sous-groupe parabolique strict $P$, on a $\rr_P^G \pi =0$.
\item pour tout sous-groupe de Levi \emph{standard} strict $M$ et pour toute représentation $\sigma$ de $M$, $\Hom_G(\pi,\ii_P^G \sigma)=0$ ;
\item pour tout sous-groupe parabolique \emph{standard} strict $P$, on a $\rr_P^G \pi =0$.
\end{enumerate}
\end{thm}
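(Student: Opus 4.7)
\emph{Plan de démonstration.} Le plan est de commencer par les équivalences \og formelles \fg{} entre les conditions $\mathit{(2)}$ à $\mathit{(5)}$, qui reposent essentiellement sur la réciprocité de Frobenius et sur la conjugaison des sous-groupes paraboliques, pour ensuite s'attaquer à l'équivalence centrale $\mathit{(1)} \Leftrightarrow \mathit{(3)}$.

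Les équivalences $\mathit{(2)} \Leftrightarrow \mathit{(3)}$ et $\mathit{(4)} \Leftrightarrow \mathit{(5)}$ découlent directement de la réciprocité de Frobenius \eqref{réciprocité_de_Frobenius_foncteur_Jacquet}. En effet, si $\rr_P^G\pi = 0$, alors $\Hom_M(\rr_P^G\pi, \sigma) = 0$ pour toute représentation $\sigma$ de $M$, d'où $\Hom_G(\pi, \ii_P^G\sigma) = 0$ par adjonction. Réciproquement, supposant cette annulation pour toute $\sigma$, on prend $\sigma = \rr_P^G\pi$ : le morphisme identité est un élément de $\Hom_M(\rr_P^G\pi, \rr_P^G\pi)$, qui doit être nul, ce qui force $\rr_P^G\pi = 0$. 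Quant aux équivalences $\mathit{(2)} \Leftrightarrow \mathit{(4)}$ et $\mathit{(3)} \Leftrightarrow \mathit{(5)}$, un sens est trivial ; pour l'autre, on invoque le fait que tout sous-groupe parabolique est $G$-conjugué à un sous-groupe parabolique standard, combiné à la compatibilité du foncteur de Jacquet à la conjugaison (les modules $\rr_{gPg^{-1}}^G\pi$ et $\rr_P^G\pi$ étant isomorphes via la translation par $g$).

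Pour $\mathit{(1)} \Rightarrow \mathit{(3)}$, je raisonne par contraposée : supposons qu'il existe un sous-groupe parabolique propre $P = MN$ tel que $\rr_P^G\pi \neq 0$. Par réciprocité de Frobenius appliquée à $\sigma = \rr_P^G\pi$, le morphisme identité fournit un morphisme non nul $\pi \to \ii_P^G\sigma$, qui est un plongement par irréductibilité de $\pi$. Il reste alors à montrer qu'aucune sous-représentation irréductible d'une induite parabolique propre ne peut avoir tous ses coefficients matriciels à support compact modulo le centre : une analyse via la décomposition d'Iwasawa $G=PK$ (où $K$ est un sous-groupe compact maximal adapté) permet d'exhiber un coefficient matriciel restant non nul sur une direction non bornée du tore déployé du centre de $M$ modulo celui de $G$, contredisant la supercuspidalité.

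La direction réciproque $\mathit{(3)} \Rightarrow \mathit{(1)}$ est la plus délicate et constitue l'obstacle principal. La stratégie consiste à invoquer le théorème d'asymptotique de Casselman : via la décomposition de Cartan $G=KA^+K$, les coefficients matriciels de $\pi$ sur la partie \og profonde \fg{} de $A^+$ se décrivent en termes des coefficients matriciels des modules de Jacquet $\rr_P^G\pi$, $P$ parcourant les sous-groupes paraboliques standard. L'annulation de ces modules pour tout $P$ propre entraîne alors que les coefficients matriciels de $\pi$ s'annulent identiquement hors d'un compact modulo le centre, ce qui est précisément la supercuspidalité au sens de la Définition \ref{défi_sc}. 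C'est pour cette dernière étape que s'effectue le véritable travail, et l'on renverrait naturellement aux notes de Casselman pour les détails techniques de cette asymptotique.
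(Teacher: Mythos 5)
Votre démonstration est correcte et suit essentiellement la même démarche que le texte : les équivalences entre les conditions $\mathit{2}$ à $\mathit{5}$ sont obtenues par réciprocité de Frobenius et par conjugaison des sous-groupes paraboliques, exactement comme dans la preuve du manuscrit. Pour l'équivalence avec la condition $\mathit{1}$, là où le texte renvoie simplement à \cite{Ren} VI.2.1, vous esquissez l'argument standard (asymptotique des coefficients matriciels de Casselman via la décomposition de Cartan) qui est précisément celui de la référence citée, en déférant comme il se doit les détails techniques.
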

\begin{proof}
L'équivalence entre 2. et 4. d'une part et entre 3. et 5. d'autre part vient du fait que l'on peut modulo conjugaison se ramener à considérer des sous-groupes standard.
L'équivalence de 4. et 5. provient de \eqref{réciprocité_de_Frobenius_foncteur_Jacquet}, pour l'équivalence avec 1. on renvoie à \cite{Ren} VI.2.1.
\end{proof}


\newpage
\chapter{Paramétrisation de Langlands}\label{Chapitre_Paramétrisation_Lgl}

\section{Le groupe de Weil}
\label{Groupe de Weil}
Comme précédemment, $F$ est un corps local non-archimédien de caractéristique nulle, $k$ son corps résiduel (fini), de caractéristique $p$. Afin d'éviter toute ambiguïté, nous écrirons dans cette partie $k_F$ au lieu de $k$ quand cela paraîtra nécessaire.

Il s'agit de rappeler ici comment on définit le groupe de Weil de $F$, substitut du groupe de Galois absolu $\Gal(\overline{F}/F)$, où $\overline{F}$ désigne une clôture algébrique de $F$. Nous suivons \cite{Tate-Corvallis} et \cite{Kna}.\ps 

Soit donc $K$ une extension galoisienne finie de $F$. On désigne par $\mathcal{O}_K$ l'anneau des entiers de $K$ et par $\mathfrak{p}_K$ son unique idéal maximal. Le corps résiduel $k_K=\mathcal{O}_K/\mathfrak{p}_K$ est une extension finie de $k_F$. Un élément du groupe de Galois de $K/F$ stabilise $\Okk$ et $\pkk$ et induit donc un automorphisme du quotient $k_K$ qui fixe point par point $k_F$.
%

On obtient ainsi un morphisme $\Gal(K/F) \rightarrow \Gal(k_K/k_F)$ dont on montre qu'il est surjectif, le groupe d'arrivée étant cyclique, engendré par le morphisme de Frobenius $x \mapsto x^{q_F}$ où $q_F=\# k_F$.

En prenant la limite projective sur $K$, on obtient un morphisme surjectif $\Gal( \overline{F} /F) \rightarrow \Gal (\overline{k}_F /k_F)$ dont le noyau est, par définition, le \emph{groupe d'inertie} de $F$, noté $I_F$.

\begin{prop}\label{galois z chapeau}
On a : $\Gal (\overline{k}_F /k_F) \simeq \widehat{\mathbb{Z}} \simeq \prod_p \mathbb{Z}_p$.
\end{prop}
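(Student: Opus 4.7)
Le plan est de procéder en deux temps, d'abord en identifiant $\Gal(\overline{k}_F/k_F)$ à $\widehat{\Z}$ par limite projective, puis en décomposant $\widehat{\Z}$ en produit de $\Z_p$ par restes chinois.

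Premièrement, je rappellerais les propriétés structurelles des extensions finies d'un corps fini $k_F$ de cardinal $q_F$. Pour chaque entier $n\geq 1$, il existe à isomorphisme près une unique extension de $k_F$ de degré $n$ à l'intérieur d'une clôture algébrique $\overline{k}_F$ fixée, à savoir le corps de décomposition du polynôme $X^{q_F^n}-X$. Cette extension, notons-la $k_n$, est galoisienne sur $k_F$ et son groupe de Galois est cyclique d'ordre $n$, engendré par le morphisme de Frobenius $\mathrm{Fr}_q : x \mapsto x^{q_F}$. On a ainsi un isomorphisme canonique $\Gal(k_n/k_F)\simeq \Z/n\Z$ qui envoie $\mathrm{Fr}_q$ sur la classe de $1$.

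Deuxièmement, puisque $\overline{k}_F=\bigcup_n k_n$ et que pour $m\mid n$, les flèches de restriction $\Gal(k_n/k_F)\rightarrow \Gal(k_m/k_F)$ correspondent via les isomorphismes précédents aux projections $\Z/n\Z \rightarrow \Z/m\Z$, on obtient, par passage à la limite projective le long du système filtrant des entiers ordonnés par divisibilité, un isomorphisme topologique
\[
\Gal(\overline{k}_F/k_F)\;=\;\varprojlim_n \Gal(k_n/k_F)\;\simeq\;\varprojlim_n \Z/n\Z\;=\;\widehat{\Z}.
\]

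Troisièmement, pour la seconde identification, j'utiliserais le théorème des restes chinois : pour chaque $n=\prod_p p^{v_p(n)}$ (produit fini), on a $\Z/n\Z\simeq \prod_p \Z/p^{v_p(n)}\Z$. Ces isomorphismes sont compatibles aux flèches de transition, et en passant à la limite projective (ce qui commute au produit), on obtient
\[
\widehat{\Z}\;\simeq\; \varprojlim_n \prod_p \Z/p^{v_p(n)}\Z\;\simeq\; \prod_p \left(\varprojlim_k \Z/p^k\Z\right)\;=\; \prod_p \Z_p.
\]

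Aucune étape ne présente ici de difficulté sérieuse ; le seul point méritant un peu d'attention est de justifier proprement la commutation de la limite projective avec le produit (au sens des limites projectives \emph{topologiques} si l'on souhaite garder trace des structures), ainsi que le fait que les systèmes projectifs considérés ont des morphismes de transition surjectifs, ce qui rend les limites particulièrement bien comportées.
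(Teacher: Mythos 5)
Votre démonstration est correcte et suit exactement la même démarche que celle du texte : unicité de l'extension de degré $n$ de $k_F$, passage à la limite projective le long de la divisibilité pour obtenir $\widehat{\Z}$, puis restes chinois pour la décomposition en $\prod_p \Z_p$. Vous explicitez simplement davantage les compatibilités des flèches de transition et la commutation de la limite projective avec le produit, ce que le texte laisse implicite.
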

\begin{proof}
Pour chaque entier naturel non nul $n$, il existe une extension de $k_F$ de degré $n$, unique à isomorphisme près. Elle est de groupe de Galois cyclique $\mathbb{Z}/n \mathbb{Z}$, et le corps $\overline{k}_F$ est l'union de ces extensions finies, qui forment un système projectif pour la divisibilité du degré.

On a donc :
\[
\Gal (\overline{k}_F /k_F) \simeq \varprojlim \mathbb{Z}/n \mathbb{Z} = \widehat{\mathbb{Z}} \simeq \prod_p \mathbb{Z}_p.
\]
\end{proof}
On peut donc écrire la suite exacte
\begin{equation}\label{suite exacte gal}
\begin{tikzcd}
1 \arrow{r} & I_F \arrow{r} & \Gal(\overline{F}/F) \arrow{r} & \widehat{\mathbb{Z}} \arrow{r} & 0.
\end{tikzcd}
\end{equation}

Le groupe $\widehat{\mathbb{Z}}$ contient $\mathbb{Z}$ comme sous-groupe dense, et par l'isomorphisme de la Proposition \ref{galois z chapeau}, $\mathbb{Z}$ correspond au groupe engendré par le morphisme de Frobenius \emph{arithmétique}, noté $\sigma_F$, qui envoie $x \in \overline{k}_F$ sur $x^{q_F}$. Nous choisissons de faire correspondre le morphisme de Frobenius arithmétique à l'élément $-1$ de $\Z$ et nous  notons $\mathrm{Fr}$ un élément dont l'image est $\sigma_F^{-1}$ (\emph{i.e.} qui correspond à l'élément $+1$ de $\Z$).\ps

On peut alors définir le groupe de Weil (abstrait) $\W_F$ de $F$ comme l'image inverse de $\mathbb{Z}$ dans le diagramme \eqref{suite exacte gal}, \emph{i.e.} $\W_F$ est engendré par le morphisme $\mathrm{Fr}$ et par le groupe d'inertie $I_F$.

On a donc la suite exacte 
\begin{equation*}
\begin{tikzcd}
1 \arrow{r} & I_F \arrow{r} & \W_F \arrow{r} & \mathbb{Z} \arrow{r} & 0.
\end{tikzcd}
\end{equation*}
On munit alors $\W_F$ de l'unique topologie de groupe localement compact telle que
\begin{itemize}
\item la projection $\W_F \rightarrow \langle \sigma_F \rangle \simeq \mathbb{Z}$ soit continue ($\mathbb{Z}$ étant muni de la topologie discrète),
\item la topologie induite sur $I_F$ est la topologie profinie induite par la topologie de $\Gal (\overline{F} /F)$.\ps 
\end{itemize}

On remarque que cette topologie \emph{n'est pas} celle induite par l'inclusion $\W_F \subset \Gal (\overline{F} /F)$, mais que l'injection topologique $\W_F \hookrightarrow \Gal (\overline{F} /F)$ est encore continue et d'image dense.\medskip 



La théorie du corps de classes local (nous renvoyons à \cite{Se} Chapitre XIII) nous permet de définir, dans le langage de \cite{Tate-Corvallis}, l’isomorphisme de groupes topologiques : 
\begin{equation}\label{application d'Artin-groupe de Weil}
\mathrm{Art}_F:F^\times \xlongrightarrow{\sim} \W_F^\ab.
\end{equation}

Cet isomorphisme envoie (surjectivement) les éléments de $\mathcal{O}^\times$ sur l'image du groupe d'inertie $I_F$ dans l'abélianisé et envoie une uniformisante sur un \og Frobenius géométrique \fg{} \emph{i.e.} sur un élément dont l'image dans $\Gal (\overline{k}_F /k_F)$ est $\sigma_F ^{-1}$. Cela permet en outre de définir la \emph{norme} d'un élément de $\W_F$ comme étant la norme ($p$-adique) de l'élément de $F^\times$ associé par \eqref{application d'Artin-groupe de Weil}, que l'on notera de la même façon.\ps

Par construction de $\W_F$, on a l'injection $\W_F \hookrightarrow \Gal(\overline{F} / F)$, d'image dense, ce qui nous permet de voir la catégorie des représentations de $\Gal(\overline{F} / F)$ (complexes, continues, de dimension finie) comme une sous-catégorie pleine de la catégorie des représentations de $\W_F$ (complexes, continues, de dimension finie). Une représentation de cette sous-catégorie est dite \emph{de type galoisien}. Nous avons la proposition suivante :
\begin{prop}\label{rep_WF_rep_type_galoisien}
\begin{enumerate}
\item Soit $(\rho,V)$ une représentation de $\W_F$. Alors $\rho$ est de type galoisien si, et seulement si l'image $\rho(\W_F)$ est finie.
\item Une représentation de type galoisien de $\W_F$ est irréductible si, et seulement si elle l'est comme représentation de $\Gal(\overline{F} / F)$.
\item Toute représentation \emph{irréductible} de $\W_F$ est de la forme $\sigma \otimes |\cdot |^s$ avec $\sigma$ de type galoisien, et $s \in \C$.
\item Une représentation \emph{irréductible} $\rho$ de $\W_F$ est de type galoisien si, et seulement si $\det \circ \rho (\W_F)$ est un sous-groupe fini de $\C ^\times$.
\end{enumerate}
\end{prop}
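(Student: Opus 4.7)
The plan is to treat the four statements in order, relying on the structure of $\W_F$ as an extension of $\langle \mathrm{Fr} \rangle$ by the profinite group $I_F$, together with Schur's lemma in its elementary form. For (1), the decisive input is that $\GL_n(\C)$ admits a neighborhood of the identity containing no nontrivial subgroup, so any continuous representation of a profinite group has open kernel and hence finite image; applied to $\rho$ viewed as a representation of $\Gal(\overline{F}/F)$, this shows that a Galois type representation has finite image on $\W_F$. Conversely, if $\rho(\W_F)$ is finite then $\ker\rho$ is open of finite index in $\W_F$; taking its closure $H$ in $\Gal(\overline{F}/F)$, density gives $\W_F/\ker\rho \simeq \Gal(\overline{F}/F)/H$ and $\rho$ extends through this finite quotient. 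For (2), the density of $\W_F$ in $\Gal(\overline{F}/F)$ combined with the continuity of the extension forces any $\W_F$-stable subspace (closed in $V$ since $V$ is finite-dimensional) to be stable under the full Galois group, by approximating any $g \in \Gal(\overline{F}/F)$ by a net of elements of $\W_F$ and using $\rho(g)w = \lim \rho(w_i) w$.

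The core of the argument is (3). Starting from an irreducible continuous $\rho:\W_F \to \GL(V)$, the no-small-subgroups property shows that $\rho(I_F)$ is finite. Since $I_F$ is normal in $\W_F$, conjugation by $\mathrm{Fr}$ defines an automorphism of the finite group $\rho(I_F)$, so some power $\mathrm{Fr}^n$ centralizes it. As $\rho(\mathrm{Fr}^n)$ also commutes with $\rho(\mathrm{Fr})$, it lies in the center of $\rho(\W_F)$, and Schur's lemma gives $\rho(\mathrm{Fr}^n) = \lambda\,\mathrm{id}_V$ for some $\lambda \in \C^\times$. Choosing $s\in\C$ with $q^{-ns}=\lambda^{-1}$ (possible since the exponential surjects onto $\C^\times$), the twist $\sigma := \rho \otimes |\cdot|^s$ sends $\mathrm{Fr}^n$ to the identity; hence $\sigma(\W_F)$ is generated by the finite normal subgroup $\sigma(I_F)=\rho(I_F)$ together with an element of order dividing $n$, and is therefore finite. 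By (1), $\sigma$ is of Galois type, and $\rho = \sigma \otimes |\cdot|^{-s}$ has the announced form.

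Finally, (4) is a short determinant computation from (3): writing $\rho = \sigma \otimes |\cdot|^s$ with $\sigma$ of Galois type and $n = \dim V$, we have $\det\rho = (\det\sigma)\cdot |\cdot|^{ns}$. Since $\det\sigma$ takes values in roots of unity, finiteness of $\det\rho(\W_F)$ forces $q^{-ns}$, and hence $q^{-s}$ (its $n$-th root in $\C^\times$), to be a root of unity; the unramified character $|\cdot|^s$ is then of finite order and $\rho$ is of Galois type. The principal obstacle in this plan is locating the central element in (3): everything rests on exhibiting a nonzero power of $\mathrm{Fr}$ that commutes with $\rho(I_F)$, so that Schur's lemma becomes applicable and the correct twist can be read off from the scalar $\lambda$; the remaining parts are essentially bookkeeping around density and determinants.
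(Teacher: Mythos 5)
Your proof is correct and, for the key point (3) — the only one the paper proves in detail — it follows exactly the paper's route: $\rho(I_F)$ is finite by continuity, a suitable power of $\mathrm{Fr}$ centralizes it and is therefore scalar by Schur's lemma, and the unramified twist $|\cdot|^s$ is read off from that scalar. The remaining points, which the paper dismisses as easy consequences of (3), you handle with the standard no-small-subgroups and density arguments, which is exactly what is intended.
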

\begin{proof}
On démontre le point 3. les autres s'en déduisent aisément.

Soit $(\rho,V)$ une représentation irréductible de $\W_F$.
La continuité de $\rho$ implique que le noyau $\Ker \rho$ contient un sous-groupe ouvert de $I_{F}$. En particulier, l'image $\rho(I_{F})$ est un sous-groupe fini de $\GL(V)$. Comme $I_{F}$ est distingué dans $\W_{F}$, ce sous-groupe est normalisé par $\rho(\W_{F})$.

Notons maintenant $\mathrm{Fr}$ un morphisme de Frobenius de $\W_{F}$. Alors, en particulier, $\rho(\mathrm{Fr})$ induit une permutation de l'ensemble fini $\rho(I_{F})$, si bien qu'en notant $m=\# \rho(I_{F})$, on a que $\rho(\mathrm{Fr})^{m!}$ induit la permutation identité, \emph{i.e.} commute à tous les éléments de $\rho(I_{F})$. Or $\W_{F}$ est engendré par $I_{F}$ et par $\mathrm{Fr}$, si bien que $\rho(\mathrm{Fr})^{m!}$ commute à tout $\rho(\W_{F})$.

Par le lemme de Schur, on a donc que $\rho(\mathrm{Fr})^{m!}$ est une homothétie, de rapport $\lambda$ disons. On peut alors définir un caractère $\chi$ de $\W_{F}$ par $\chi_{|I_{F}}=1$ et $\chi(\mathrm{Fr})=\frac{1}{\alpha}$ où $\alpha$ est une racine $m!$-ème de $\lambda$. Le caractère $\chi$ est non ramifié et peut donc bien s'écrire comme une puissance (complexe) de la norme : $\chi=|\cdot|^s$ pour un certain $s \in \C$. On voit alors de suite que $\rho \otimes \chi$ est d'image finie et de type galoisien.
\end{proof}

\section{Paramètres de Langlands}\label{Paramètres de Langlands}

Nous reprenons les notations du paragraphe \ref{Notations pour les groupes reductifs}. Soit donc $G$ le groupe des $F$-points d'un groupe algébrique $\mathbf{G}$ défini et réductif sur $F$. Nous renvoyons à \cite{Springer-Corvallis} et à \cite{Borel-Corvallis} pour la définition en toute généralité du $L$-groupe associé à $G$.

On se place ici dans le cas où $\mathbf{G}$ est déployé sur $F$. On a alors simplement $\tensor*[^L]{G}{}=\bm{\widehat{\mathrm{G}}}(\C)$ où $\bm{\widehat{\mathrm{G}}}(\C)$ est \emph{un} groupe complexe réductif, dont la donnée radicielle basée est duale de celle de $\mathbf{G}$. Ce groupe dual n'est bien défini qu'à un automorphisme intérieur près (d'où l'article indéfini), on remarque néanmoins que, dans la suite, on considère des morphismes \emph{à conjugaison près} si bien que cette ambiguïté (ou ce choix d'\emph{un} groupe dual) ne portera pas à conséquence. On notera $\widehat{G}=\bm{\widehat{\mathrm{G}}}(\C)$ dans toute la suite.\medskip

Un morphisme 
\[
\varphi : \W_{F} \times \SU (2) \longrightarrow \widehat{G}
\]
est dit admissible si :
\begin{enumerate}
\item $\varphi$ est continu,
\item $\varphi(\W_F)$ est constitué d'éléments semi-simples.
\end{enumerate}

\begin{defi}\label{defi_param_Lgl_p_adique}
Un \emph{paramètre de Langlands} de $G$ est une classe d'équivalence, pour la $\widehat{G}$-conjugaison, de morphismes admissibles. On note $\Phi(G)$ l'ensemble des paramètres de Langlands de $G$.
\end{defi}
Lorsque cela ne portera pas à conséquence, on commettra l'abus de langage consistant à désigner comme un paramètre de Langlands un représentant de la classe qu'il constitue. En particulier, lorsqu'il s'agira d'étudier des propriétés des morphismes admissibles préservées par $\widehat{G}$-conjugaison, on se permettra d'attribuer cette propriété au paramètre de Langlands lui-même.

\begin{lemme}\label{Frob-semi-simple}
Soit $\rho : \widehat{G} \rightarrow \GL(V)$ une représentation fidèle algébrique (où $V$ est un $\C$-espace vectoriel de dimension finie) de $\widehat{G}$.

Alors on peut remplacer la condition $2.$ ci-dessus par $2.'$ \og $\rho \circ \varphi_{|\W_{F}}$ est semi-simple comme représentation de $\W_{F}$ \fg .
\end{lemme}

\begin{proof}

Soit $\varphi$ un morphisme continu de $\W_{F} \times \SU (2)$ vers $\widehat{G}$.

On suppose d'abord que $\rho \circ \varphi_{|\W_{F}}$ est semi-simple et on veut montrer que $\varphi(\W_F)$ est constitué d'éléments semi-simples.

Un élément de $\widehat{G}$ est semi-simple si, et seulement s'il l'est dans une représentation fidèle (et alors il l'est dans toutes). Il est donc équivalent de démontrer que $(\rho \circ \varphi)(\W_F)$ est constitué d'éléments semi-simples (de $\GL(V)$).
D'après la Proposition \ref{rep_WF_rep_type_galoisien}, $\rho \circ \varphi=\bigoplus_i \sigma_i \otimes |\cdot |^{s_i}$ comme représentation de $\W_{F}$, où les $\sigma_i$ sont des représentations de type galoisien 
et les $s_i$ des nombres complexes. La torsion par une puissance de la norme ne jouant aucun rôle quant à la semi-simplicité, il suffit donc de montrer que, pour tout $i$, l'image de $\W_{F}$ par $\sigma_i$ est constituée d'éléments semi-simples. Chaque $\sigma_i$ se factorise, par continuité, en une représentation de $\Gal(K_i/F)$ où $K_i$ est une extension finie de $F$. Il s'agit ainsi d'une représentation d'un groupe fini, dans un espace vectoriel complexe de dimension finie, les éléments de $\sigma_i(\W_{F})$ sont donc semi-simples et, partant, ceux de $(\rho \circ \varphi)(\W_F)$ également.\ps 

Réciproquement, on suppose que l'image de $\W_{F}$ par $\varphi$ est constituée d'éléments semi-simples et on veut montrer que $\rho \circ \varphi_{|\W_{F}}$ est une représentation semi-simple de $\W_F$.

L'image de $\W_F$ par $\varphi'=\rho \circ \varphi$ est alors également constituée d'éléments semi-simples. Par continuité, $\Ker \varphi'$ contient un sous-groupe ouvert de $I_{F}$, en particulier l'image $\varphi'(I_{F})$ est finie.
En notant encore $\Fr$ un morphisme de Frobenius, on trouve, comme dans la preuve de la Proposition \ref{rep_WF_rep_type_galoisien},
une puissance du Frobenius $\Fr ^\alpha$ telle que $\varphi'(\Fr^\alpha)$ commute à tous les éléments de $\varphi'(\W_{F})$.
Par hypothèse, l'élément $\varphi'(\Fr ^\alpha)$ est semi-simple, notons $E_\lambda$ ses espaces propres. Par commutation, chacun de ces espaces propres est laissé stable par tous les éléments de $\varphi'(\W_{F})$, on a donc décomposé notre représentation $\varphi'$ en somme directe de sous-représentations, reste à voir que chacune d'entre elles est semi-simple.

Considérons donc un espace propre $E_\lambda$ associé à la valeur propre $\lambda$, quitte à tordre par une puissance de la norme, on peut supposer que $\lambda=1$. Ainsi $E_1$ est préservé par $\Fr^\alpha$ et, par continuité, par un sous-groupe ouvert de $I_{F}$ et donc finalement par un sous-groupe d'indice fini de $\W_{F}$. Ainsi $E_1$ est la représentation (complexe de dimension finie) d'un groupe fini et est donc complètement réductible.
\end{proof}

Par ailleurs, en gardant les notations du Lemme, puisque le groupe $\SU(2)$ est compact, la représentation continue et de dimension finie $\rho \circ \varphi_{|\SU(2)}$ est unitarisable et donc complètement réductible. Le produit $\W_{F} \times \SU (2)$ étant direct, il est donc équivalent d'avoir $2.''$ \og $\rho \circ \varphi$ est semi-simple \fg. \medskip 

\emph{Remarque :} On pourrait dans l'autre sens chercher à affaiblir la condition $2.$ et remarquer qu'il suffit en fait de savoir que $\varphi(\Fr)$ est semi-simple pour $\Fr$ un morphisme de Frobenius : puisque le groupe de Weil est engendré par le sous-groupe d'inertie (compact) et par $\Fr$, le même argument de commutation d'une puissance suffisamment élevée du $\Fr$ que dans la preuve du Lemme \ref{Frob-semi-simple} permet de conclure.

\section{Correspondance de Langlands locale}\label{Énoncé de la correspondance}
\subsection{Compatibilités}
\begin{conj} \emph{Correspondance de Langlands locale}

Soit $G$ le groupe des $F$-points d'un groupe algébrique $\mathbf{G}$ défini et réductif sur $F$, où $F$ est un corps local non archimédien.
Il existe une application \og naturelle \fg{}  :
\[
\Ll : \mathrm{Irr}(G) \longrightarrow \Phi(G)
\]
surjective à fibres finies.
\end{conj}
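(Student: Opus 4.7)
Le plan consisterait à construire l'application $\Ll$ par étapes, en procédant par réductions successives et en prenant comme pierre angulaire le cas $G=\GL_n$, qui sert de point d'appui à tous les autres. Pour ce cas, la correspondance est en fait une bijection (les fibres sont réduites à des singletons) et a été démontrée selon deux approches distinctes, comme l'indique d'ailleurs le corps du texte : celle de Harris-Taylor (\cite{HT}), de nature globale et géométrique, qui réalise les représentations supercuspidales de $\GL_n(F)$ dans la cohomologie $\ell$-adique de certaines variétés de Shimura de type unitaire à mauvaise réduction, puis utilise la comparaison entre la formule des traces d'Arthur-Selberg et la formule des points fixes de Lefschetz-Grothendieck ; et celle de Henniart (\cite{Henniart2000}), purement locale, qui rigidifie la correspondance en imposant sa compatibilité aux facteurs epsilon de paires (à la Jacquet-Piatetski-Shapiro-Shalika \cite{JPSS}), puis ramène sa construction au cas abélien (traité par la théorie du corps de classes local, \cf \eqref{application d'Artin-groupe de Weil}) via l'induction automorphe et une analyse fine par \emph{induction numérique}.

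Une fois ce cas établi, le passage aux groupes classiques $\SO_{2n+1}$, $\Sp_{2n}$, $\SO_{2n}$ s'effectue via la théorie endoscopique d'Arthur (\cite{Art13}) : à $\pi \in \mathrm{Irr}(G)$, on associe la représentation automorphe discrète du groupe linéaire $\GL_N$ approprié (où $N$ est la dimension de la représentation tautologique du $L$-groupe) obtenue par transfert endoscopique stable. Son paramètre de Langlands, déjà défini par la correspondance pour $\GL_N$, se factorise à travers le plongement $\widehat{G}\hookrightarrow \GL_N(\C)$ grâce à des propriétés d'autodualité (symplectique ou orthogonale selon le groupe), et fournit ainsi l'image souhaitée $\Ll(\pi)$. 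La fibre au-dessus d'un paramètre $\varphi$ -- le \emph{$L$-paquet} -- s'identifie alors à un ensemble fini en bijection avec les caractères irréductibles du groupe de composantes $S_\varphi=\mathrm{Cent}_{\widehat{G}}(\mathrm{Im}\,\varphi)/Z(\widehat{G})$, par la formule de multiplicité d'Arthur ; ceci donne à la fois la surjectivité et la finitude des fibres.

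La difficulté principale, et ce qui maintient la conjecture ouverte pour un groupe réductif général (notamment exceptionnel), réside dans la construction explicite des $L$-paquets supercuspidaux et dans la vérification des compatibilités nécessaires pour rigidifier la correspondance. Pour $\GL_n$, l'approche de Bushnell-Kutzko via les \emph{types} donne une classification exhaustive ; pour les autres groupes, le programme de Kaletha-Fintzen, fondé sur la classification de Yu des représentations supercuspidales tordues et sur une descente galoisienne cohomologique, offre une stratégie prometteuse mais non encore complète en toute généralité (notamment en caractéristique résiduelle petite). Une fois les représentations supercuspidales traitées, la classification de Langlands-Silberger via l'induction parabolique et le passage au quotient de Langlands permet d'étendre l'application $\Ll$ à toutes les représentations tempérées, puis à toutes les représentations irréductibles lisses, assurant surjectivité et finitude des fibres au niveau global.
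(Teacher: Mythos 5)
L'énoncé en question est une \emph{conjecture} : le texte ne la démontre pas (et ne pourrait pas le faire, puisqu'elle est ouverte pour un groupe réductif général), il se contente de l'énoncer puis de citer comme théorèmes les deux cas dont la thèse a besoin, à savoir $\GL_n$ (Harris--Taylor, Henniart) et $\SO_{2n+1}$ (Arthur, M\oe{}glin). Votre « démonstration » est donc en réalité un panorama de la stratégie connue, et il coïncide pour l'essentiel avec la discussion qui suit la conjecture dans le texte : mêmes références pour $\GL_n$, même description des paquets de $\SO_{2n+1}$ indexés par les caractères de $\mathcal{S}_\varphi$, et même constat que le cas général (notamment la construction des paquets supercuspidaux) reste ouvert. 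Rien à redire sur le fond, mais il faut être clair sur le statut de l'énoncé : ce que vous proposez n'est pas une preuve de la conjecture, c'est un résumé fidèle des cas démontrés et des obstacles restants.
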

Tout l'enjeu est évidemment de préciser l'adjectif \og naturel \fg{} et d'imposer un certain nombre de compatibilités afin de \og rigidifier \fg la correspondance. Nous allons énoncer certaines de ces compatibilités attendues, qui sont d'ailleurs bel et bien vérifiées pour les cas (démontrés) de la correspondance que nous utiliserons.\ps 

Étant donnée une représentation irréductible $\pi$, on parlera de son paramètre de Langlands $\Ll(\pi)$ et, étant donné un paramètre de Langlands $\varphi$, on parlera du paquet de Langlands associé $\Pi_\varphi=\Ll^{-1}(\varphi)$.

\begin{defi}

\begin{enumerate}
\item On dit qu'un paramètre de Langlands de $G$ est \emph{tempéré} si son image dans $\widehat{G}$ est bornée. On note $\Phi_\mathrm{temp}(G)$ le sous-ensemble de $\Phi(G)$ constitué de tels paramètres (modulo conjugaison).
\item On dit qu'un paramètre de Langlands de $G$ est \emph{discret} si son image n'est contenue dans aucun sous-groupe parabolique strict (ou de manière équivalente, dans aucun sous-groupe de Levi strict) de $\widehat{G}$. On note $\Phi_\mathrm{disc}(G)$ le sous-ensemble de $\Phi(G)$ constitué de tels paramètres (modulo conjugaison).
\end{enumerate}
\end{defi}

Alors pour une représentation irréductible $\pi$ de $G$, on veut :
\begin{align}
\pi \in \mathrm{Irr}_\mathrm{disc} (G) &\Longleftrightarrow \Ll(\pi) \in \Phi_{\mathrm{disc}}(G), \label{rep_disc_param_disc}\\ 
\pi \in \mathrm{Irr}_\mathrm{temp} (G) &\Longleftrightarrow \Ll(\pi) \in \Phi_{\mathrm{temp}}(G), \label{rep_temp_param_temp}
\end{align}
où $\mathrm{Irr}_\mathrm{disc}(F)$ indique l'ensemble des séries discrètes de $G$ introduites en \ref{déf_série_discrète}, et $\mathrm{Irr}_\mathrm{temp} (G)$ l'ensemble des représentations irréductibles tempérées de $G$, introduites en \ref{déf_rep_tempérée}. Ainsi, l'ensemble des séries discrètes (resp. des représentations irréductibles tempérées) de $G$ est partitionné en paquets de Langlands discrets (resp. tempérés), \ie correspondant à un paramètre discret (resp. tempéré) de $G$.

\begin{defi}
On dit qu'un paramètre de Langlands de $G$ est \emph{non ramifié} s'il est trivial sur le sous-groupe $I_F \times \SU(2)$, où $I_F$ désigne le sous-groupe d'inertie du groupe de Weil $\W_F$. On note $\Phi_\mathrm{nr}(G)$ le sous-ensemble de $\Phi(G)$ constitué de tels paramètres (modulo conjugaison).
\end{defi}
On remarque tout de suite que l'application qui à un tel paramètre $\varphi$ associe la classe de $\widehat{G}$-conjugaison de $\varphi(\Fr)$, où $\Fr$ est un morphisme de Frobenius, réalise une bijection entre $\Phi_\mathrm{nr}(G)$ et l'ensemble des classes de conjugaison d'éléments semi-simples de $\widehat{G}$.\ps

Pour définir les représentations non ramifiées, il faut définir une classe particulière de sous-groupes compacts ouverts, dits \emph{hyperspéciaux}. De tels sous-groupes n'existent pas toujours et quand ils existent, ils ne forment pas toujours une seule classe de conjugaison. Nous nous restreignons au cas des groupes dont le centre (schématique) est un tore déployé, pour lequel on a bien des sous-groupes hyperspéciaux, tous conjugués (nous renvoyons à \cite{Tits-Corvallis}, §3.8).\ps 

Soit donc $\mathbf{G}$ un groupe algébrique défini et réductif sur $F$, dont le centre schématique est un tore déployé (éventuellement trivial). Alors il existe un modèle réductif sur $\Ok$ de $\mathbf{G}$, \ie un schéma en groupes $\bm{\mathcal{G}}$ défini et réductif sur $\Ok$ et un isomorphisme $\bm{\mathcal{G}} \times_\Ok F \simeq \mathbf{G}$. Pour chaque modèle $\bm{\mathcal{G}}$, on peut considérer le groupe $K_0=\bm{\mathcal{G}} (\Ok)$, sous-groupe compact (maximal) de $G=\mathbf{G} (F)=\bm{\mathcal{G}} (F)$, hyperspécial par définition.


On dira d'une représentation $(\pi,V)$ de $G$ qu'elle est non ramifiée (ou $K_0$-sphérique) si l'espace $V^{K_0}$ des $K_0$-invariants est non nul (ce qui ne dépend que de la classe de conjugaison de $K_0$). On note $\mathrm{Irr}_\mathrm{nr}(G)$ l'ensemble des représentations (lisses) irréductibles non ramifiées, et on veut :
%
%
\begin{align}\label{LLC_resp_nr}
\pi \in \mathrm{Irr}_\mathrm{nr} (G) &\Longleftrightarrow \Ll(\pi) \in \Phi_{\mathrm{nr}}(G).
\end{align}

\subsection{Énoncés}
La conjecture de Langlands locale est un théorème pour le groupe $\GL_n$ avec trois preuves indépendantes (Harris et Taylor, Henniart, Scholze). On remarque que la théorie du corps de classes, notamment dans la formulation de l'équation \eqref{application d'Artin-groupe de Weil} nous donne une bijection \og naturelle \fg entre $\mathrm{Irr}(\GL_1)$ et $\Phi(\GL_1)$, c'est celle qui est retenue.

\begin{thm} \emph{Correspondance de Langlands locale pour $\GL_n$ (\cite{HT},\linebreak\cite{Henniart2000})}\ps  \label{LLC_GLn}

Il existe une unique collection de bijections :
\[
\Ll_n : \mathrm{Irr}(\GL_n) \longrightarrow \Phi(\GL_n)
\]
compatible à la théorie du corps de classes, à la torsion par un caractère, à la dualité, au caractère central et qui \og respecte les fonctions $L$ et les facteurs epsilon\footnote{Il faut bien considérer ici, contrairement à ce que nous ferons au paragraphe \ref{Fonctions $L$ et facteurs epsilon}, \emph{tout} le facteur epsilon, y compris ce qui correspond au conducteur.} de paires \fg .
\end{thm}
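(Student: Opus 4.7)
Le plan est de combiner des réductions purement locales à un noyau dur de construction par voie géométrique globale. On commence par ramener le théorème au cas où $\pi$ est supercuspidale et où $\Ll(\pi)$ est une représentation irréductible de dimension $n$ de $\W_F$. Côté $\GL_n$, la théorie de Bernstein-Zelevinsky classifie toute représentation admissible irréductible comme quotient de Langlands d'une induite parabolique de \og segments \fg{} formés à partir de supercuspidales. Côté Weil-Deligne, toute représentation semi-simple de $\WD_F$ se décompose en somme directe de produits tensoriels d'une irréductible de $\W_F$ et d'une représentation irréductible (de type Steinberg) de $\SU(2)$. Sous réserve d'établir et caractériser correctement la bijection sur les supercuspidales, l'énoncé général en découle.

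\textbf{Unicité.} L'unicité, sous la liste des compatibilités imposées, est fournie par le théorème de \emph{Langlands local numérique} d'Henniart (\cite{Hen93}). La théorie du corps de classes règle le cas $n=1$; par récurrence sur $n$, si deux bijections $\Ll_n, \Ll'_n$ coïncident en degré $<n$ et respectent toutes deux les facteurs epsilon de paires $\eps(s,\pi\times\pi',\psi)$ pour $\pi'$ parcourant les représentations irréductibles de $\GL_m(F)$ avec $m<n$, alors en faisant varier $\pi'$ parmi des caractères de $F^\times$ suffisamment ramifiés, on sépare les supercuspidales et l'on obtient $\Ll_n=\Ll'_n$. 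C'est le rôle \emph{rigidifiant} des facteurs epsilon de paires évoqué plus haut dans l'introduction.

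\textbf{Construction.} Pour l'existence, je suivrais la stratégie géométrique de Harris-Taylor. On globalise une supercuspidale $\pi$ de $\GL_n(F)$ en une représentation automorphe $\Pi$ d'une forme intérieure convenable d'un groupe unitaire $\mathrm{U}(1,n-1)$ sur un corps CM $E$, en contrôlant précisément les composantes archimédienne et aux places au-dessus de la place de $\Q$ sous $F$. La cohomologie étale $\ell$-adique de la tour de variétés de Shimura de type PEL associée porte une action commutante du groupe adélique et de $\Gal(\overline{E}/E)$; son analyse, via la formule de comptage des points de Langlands-Kottwitz et la géométrie des strates de Newton de la réduction modulo $p$, permet d'identifier la composante $\Pi$-isotypique à une représentation galoisienne $\rho_\Pi$. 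La restriction de $\rho_\Pi$ au groupe de décomposition en une place au-dessus de $F$ fournit alors, après torsion convenable, le paramètre $\Ll_n(\pi)$ cherché. Les compatibilités annoncées (corps de classes, torsion, dualité, caractère central, puis $L$ et $\eps$ de paires) se déduisent des propriétés géométriques de cette cohomologie, les deux dernières étant en fait un sous-produit non trivial de la construction (compatibilité locale-globale).

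\textbf{Principale difficulté.} Le poids de la démonstration est concentré dans l'étape de construction: compactification toroïdale et modèles entiers des variétés de Shimura, étude fine des strates de la réduction modulo $p$ (lieu ordinaire et strates supersingulières via les espaces de Rapoport-Zink), formule de comptage de Langlands-Kottwitz, sont tous des ingrédients essentiels et hautement non triviaux. Une voie alternative, due à Henniart \cite{Henniart2000}, remplace les variétés de Shimura globales par l'étude de la cohomologie des espaces de Lubin-Tate et des arguments purement locaux de comptage via la théorie des types, mais reste techniquement tout aussi exigeante; l'approche ultérieure de Scholze par les espaces perfectoïdes offre une troisième route. Dans tous les cas, aucune démonstration \og courte \fg{} n'est connue, et c'est la raison pour laquelle nous nous contenterons d'invoquer le théorème dans la suite.
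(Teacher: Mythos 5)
Le texte ne démontre pas ce théorème : il l'invoque comme résultat externe en renvoyant à \cite{HT} et \cite{Henniart2000}, sans en donner de preuve. Votre proposition n'est donc pas à confronter à un argument du texte ; elle constitue un résumé fidèle de l'architecture des démonstrations publiées : réduction au cas supercuspidal via la classification de Bernstein--Zelevinsky et le quotient de Langlands côté automorphe, et la décomposition en $\sigma \otimes U_d$ côté Weil--Deligne ; unicité par les facteurs epsilon de paires ; existence par la cohomologie des variétés de Shimura de type PEL (Harris--Taylor), avec les variantes de Henniart (Lubin--Tate, théorie des types) et de Scholze. C'est bien la feuille de route correcte, et votre dernière phrase reconnaît honnêtement que tout le poids technique est délégué à la littérature, ce qui est la seule attitude raisonnable ici.

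Deux réserves mineures. D'abord, vous confondez légèrement deux résultats distincts de Henniart : la \emph{correspondance de Langlands locale numérique} (égalité des cardinaux des ensembles de supercuspidales et de représentations irréductibles de $\W_F$ à conducteur et caractère central fixés, 1988), qui sert dans les preuves d'\emph{existence} comme argument de comptage final, et la \emph{caractérisation par les facteurs epsilon de paires} (\cite{Hen93}), qui est le véritable ingrédient de l'\emph{unicité} ; votre paragraphe d'unicité cite le bon article mais lui donne le mauvais nom. Ensuite, la réduction au cas supercuspidal n'est pas tout à fait gratuite pour les compatibilités annoncées : il faut vérifier que les facteurs $L$ et $\eps$ de paires définis par intégrales de Rankin--Selberg ou par la méthode de Shahidi côté automorphe se recollent correctement avec les facteurs d'Artin--Deligne côté galoisien à travers le quotient de Langlands et les segments de Zelevinsky ; c'est un point standard mais qu'une rédaction complète devrait mentionner.
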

On remarque que, dans le cas de $\GL_n$, les paquets de Langlands sont des singletons.\medskip

Le seul autre cas dont on aura besoin est celui où $G$ est le groupe spécial orthogonal impair déployé sur $F$ (défini au § \ref{Notations}). On a $G=\SO_{2n+1}(F)$ et $\widehat{G}=\Sp_{2n}(\C)$. On a alors bien la surjection à fibres finies recherchée, mais il y a une façon canonique d'indexer les éléments d'un paquet de Langlands (et donc de connaître le cardinal d'un paquet) pour laquelle nous devons introduire quelques notations.

Au paramètre $\varphi$, on peut associer son centralisateur dans $\widehat{G}$, noté $\Cent(\varphi)$, qui est par défnition le sous-groupe des éléments de $\widehat{G}$ qui commutent à l'image de $\varphi$. C'est un sous-groupe algébrique de $\widehat{G}$, qui contient le centre de $\widehat{G}$ et dont on notera $\Cent(\varphi)^0$ la composante neutre.
On pose
\[
\mathcal{S}_\varphi=\Cent(\varphi)/\Cent(\varphi))^0Z(\widehat{G}).
\]
C'est un groupe abélien fini -- on peut remarquer que c'est le groupe des composantes connexes de $\Cent(\varphi)/Z(\widehat{G})$.

%
%

\begin{thm} \label{LLC_SO}\emph{Correspondance de Langlands locale pour $\SO_{2n+1}$ (\cite{Art13}, Theorem 2.2.1 avec les résultats de \cite{moeglin2011multiplicite})} \ps 

Soit $G$ \emph{le} groupe (\cf § \ref{Notations}) spécial orthogonal impair déployé sur $F$.
Il existe une surjection \emph{canonique}
\[
\Ll : \mathrm{Irr}(G) \longrightarrow \Phi(G)
\]
telle que les éléments de $\Pi_\varphi=\Ll^{-1}(\varphi)$ sont indexés par $\widehat{\mathcal{S}_\varphi}$ où $\widehat{\mathcal{S}_\varphi}$ désigne l'ensemble des caractères linéaire du groupe (abélien fini) $\mathcal{S}_\varphi$. Cette indexation est déterminée par des identités endoscopiques (et endoscopiques tordues). En particulier, le paquet $\Pi_\varphi$ ne contient qu'un nombre fini d'éléments.
\end{thm}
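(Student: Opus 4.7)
The plan is to follow Arthur's endoscopic classification strategy, using the local Langlands correspondence for $\GL_n$ (Theorem \ref{LLC_GLn}) as a starting point together with the theory of twisted endoscopy. The key observation is that since $\widehat{G}=\Sp_{2n}(\C)$, a Langlands parameter $\varphi \in \Phi(G)$ composed with the tautological embedding $\tau : \Sp_{2n}(\C) \hookrightarrow \GL_{2n}(\C)$ yields a $2n$-dimensional representation $\tau \circ \varphi$ of $\W_F \times \SU(2)$ which is self-dual and symplectic. By Theorem \ref{LLC_GLn}, this corresponds to a self-dual representation $\Pi(\varphi)$ of $\GL_{2n}(F)$. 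So I would \emph{define} the map $\Ll$ through the requirement that for $\pi \in \mathrm{Irr}(G)$ with parameter $\varphi$, the representation $\pi$ lifts (via twisted endoscopic transfer) to $\Pi(\varphi)$ on $\GL_{2n}(F)$. The challenge is then twofold: first, to show that every $\pi \in \mathrm{Irr}(G)$ admits such a lift; second, to organize the fibres of this lift into packets indexed by $\widehat{\mathcal{S}_\varphi}$.

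The first step is a local-global argument. Given $\pi \in \mathrm{Irr}(G)$, I would realize $F$ as a completion of a number field $k$ and $G$ as a completion of a global reductive group $\mathbf{G}_k$ of the same type, then find a cuspidal automorphic representation $\Pi$ of $\mathbf{G}_k(\aq)$ whose local component at the chosen place is $\pi$ (this requires Poincaré series or Shin's simple trace formula approach; at other places one can impose constraints that force good behavior). One then compares the stable trace formula of $\mathbf{G}_k$ with the $\theta$-twisted trace formula of $\GL_{2n}$ over $k$, where $\theta(g)=J\,{}^tg^{-1}J^{-1}$ is the outer involution preserving the symplectic form. The matching of orbital integrals is provided by the fundamental lemma (now a theorem of Ngô) and the transfer theorem of Waldspurger. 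Out of this comparison drops the global lift $\Pi \mapsto \Pi'$ to an automorphic representation of $\GL_{2n}(\aq)$, and strong multiplicity one plus local-global compatibility of the correspondence for $\GL_n$ yields the local lift $\pi \mapsto \tau \circ \Ll(\pi)$.

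The second step, the parametrization of packets by $\widehat{\mathcal{S}_\varphi}$, is the heart of Arthur's analysis and would occupy the bulk of the work. To $s \in \mathcal{S}_\varphi$ one associates an endoscopic group $H_s$ of $G$: the centralizer $\Cent(\varphi)$ is a product of classical groups corresponding to the factorisation of $\tau \circ \varphi$ into isotypic components, and $s$ cuts out a pair of orthogonal/symplectic subgroups whose dual defines $H_s$. The idea is then to prove endoscopic character identities of the form
\begin{equation*}
\sum_{\pi \in \Pi_\varphi} \langle s,\pi \rangle \,\Theta_\pi(f) = \Theta^{\mathrm{st}}_{\varphi_s}(f^{H_s}),
\end{equation*}
where $\Theta^{\mathrm{st}}_{\varphi_s}$ is the stable character on $H_s(F)$ attached to the parameter $\varphi_s$ factoring through $\tensor*[^L]{H_s}{}$, and $f \mapsto f^{H_s}$ is the Langlands--Shelstad transfer. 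These identities, once established, determine the pairing $\langle\cdot,\cdot\rangle : \mathcal{S}_\varphi \times \Pi_\varphi \to \C$ and hence the bijection $\Pi_\varphi \isomo \widehat{\mathcal{S}_\varphi}$. Finiteness of $\Pi_\varphi$ follows because $\widehat{\mathcal{S}_\varphi}$ is finite, $\mathcal{S}_\varphi$ being a finite $2$-group by the explicit description of $\Cent(\varphi)$.

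The main obstacle is the simultaneous establishment of the character identities and of the existence of the local lift: one cannot be proved without the other, and both are tangled up with the spectral side of the twisted trace formula on $\GL_{2n}$. Arthur resolves this by an induction on the semisimple rank coupled with a delicate globalisation argument, using Moeglin's complement on non-tempered packets and her work on the local intertwining relation to pin down the normalization. The pieces not proved in \cite{Art13} itself (notably the stabilisation of the twisted trace formula and certain technical hypotheses on weighted fundamental lemmas) were completed by M\oe{}glin--Waldspurger and others; I would simply cite these as inputs rather than redo them. The final output is precisely the statement of Theorem \ref{LLC_SO}.
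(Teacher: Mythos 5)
The paper does not prove this statement: it is quoted verbatim as an external result, namely Arthur's Theorem 2.2.1 in \cite{Art13} combined with M\oe{}glin's multiplicity-one theorem for local packets, so there is no internal proof to compare yours against. Your outline is a faithful summary of the strategy of those cited works (twisted endoscopic transfer to $\GL_{2n}$, globalisation, comparison of the stable and twisted trace formulas, endoscopic character identities defining the pairing $\langle\cdot,\cdot\rangle$ on $\mathcal{S}_\varphi \times \Pi_\varphi$), and at the level of detail appropriate here it is as much of a ``proof'' as the paper offers. One small correction: the specific input from \cite{moeglin2011multiplicite} is not the local intertwining relation but the fact that Arthur's packets, a priori multi-sets, are multiplicity-free — this is what upgrades the map $\Pi_\varphi \to \widehat{\mathcal{S}_\varphi}$ to an honest injection and hence makes the indexation in the statement meaningful.
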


\section{Paramètres discrets}\label{Paramètres discrets}
Commençons par le résultat général suivant :
\begin{prop}
Soit $H$ le groupe des $\C$-points d'un groupe réductif complexe.
Soit $\varphi : \W_F \times \SU(2) \rightarrow H$ un morphisme dont l'image est constituée d'éléments semi-simples. Alors l'image de $\varphi$ n'est contenue dans aucun sous-groupe de Levi strict si, et seulement si $\Cent(\varphi)/Z(H)$ est fini, où $\Cent(\varphi)$ désigne le centralisateur dans $H$ de l'image de $\varphi$.
\end{prop}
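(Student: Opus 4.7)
L'idée est d'utiliser la correspondance entre sous-groupes de Levi propres de $H$ et sous-tores non centraux de $H$ : tout sous-groupe de Levi propre $L$ s'écrit $L = \Cent_H(T)$ où $T$ est, par exemple, la composante neutre de son centre, qui est un tore non contenu dans $Z(H)$ ; réciproquement, pour un tore $T \subseteq H$ avec $T \not\subseteq Z(H)$, le centralisateur $\Cent_H(T)$ est un sous-groupe de Levi \emph{propre} de $H$. Dire que l'image de $\varphi$ est contenue dans un sous-groupe de Levi propre de $H$ équivaut donc à l'existence d'un tore $T \subseteq \Cent(\varphi)$ avec $T \not\subseteq Z(H)$.

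\textbf{Sens $(\Leftarrow)$.} Supposons que l'image de $\varphi$ soit contenue dans un sous-groupe de Levi propre $L$ de $H$. Alors $Z(L)^0$, composante neutre du centre de $L$, est un tore qui commute à $L$ tout entier, donc \emph{a fortiori} à l'image de $\varphi$, d'où $Z(L)^0 \subseteq \Cent(\varphi)$. Comme $L$ est un sous-groupe de Levi \emph{propre} du groupe réductif connexe $H$, on a l'inclusion stricte $Z(L)^0 \supsetneq Z(H)^0$ ; le quotient $Z(L)^0 / Z(H)^0$ est ainsi un tore de dimension strictement positive, et $\Cent(\varphi)/Z(H)$ est en particulier infini.

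\textbf{Sens $(\Rightarrow)$ et difficulté principale.} Réciproquement, supposons $\Cent(\varphi)/Z(H)$ infini, de sorte que la composante neutre $\Cent(\varphi)^0$ est de dimension strictement supérieure à celle de $Z(H)^0$. Le point crucial, et principale difficulté de la démonstration, est que $\Cent(\varphi)^0$ est un groupe algébrique \emph{réductif connexe} : cela résulte de l'hypothèse que l'image de $\varphi$ est constituée d'éléments semi-simples (en caractéristique $0$, le centralisateur dans un groupe réductif d'un ensemble d'éléments semi-simples est encore réductif, via les résultats classiques de Steinberg et l'étude de la clôture de Zariski de l'image). Soit alors $T$ un tore maximal de $\Cent(\varphi)^0$. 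Comme $Z(H)^0$ est central dans $\Cent(\varphi)^0$, il est contenu dans tout tore maximal, d'où $Z(H)^0 \subseteq T$. Si l'on avait $T = Z(H)^0$, le tore maximal de $\Cent(\varphi)^0$ coïnciderait avec son centre connexe, ce qui forcerait la partie semi-simple $\Cent(\varphi)^0/Z(\Cent(\varphi)^0)^0$ à être connexe semi-simple de rang nul, donc triviale ; on aurait alors $\Cent(\varphi)^0 = Z(H)^0$, en contradiction avec notre hypothèse. Ainsi $T \supsetneq Z(H)^0$, donc $T \not\subseteq Z(H)$, et $\Cent_H(T)$ est un sous-groupe de Levi \emph{propre} de $H$ contenant l'image de $\varphi$, ce qui conclut.
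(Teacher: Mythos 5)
Votre démonstration est correcte et suit essentiellement la même approche que celle du texte : même argument via $Z(L)^0$ pour le premier sens, et pour la réciproque même stratégie (réductivité de $\Cent(\varphi)^0$, tore maximal $T \supseteq Z(H)^0$ dont le centralisateur fournit le Levi propre), la seule variante étant que vous excluez le cas $T=Z(H)^0$ par la structure des groupes réductifs (partie semi-simple de rang nul, donc triviale) là où le texte invoque la densité de Zariski des éléments semi-simples de $\Cent(\varphi)^0$. Notez simplement que le principe général que vous citez, \emph{le centralisateur d'un ensemble d'éléments semi-simples est réductif}, est à manier avec précaution : ce qui justifie réellement la réductivité de $\Cent(\varphi)^0$, c'est la semi-simplicité de $\rho\circ\varphi$ pour $\rho$ fidèle (Lemme \ref{Frob-semi-simple} et sa Remarque), d'où la réductivité de la composante neutre de l'adhérence de Zariski de l'image, puis celle de son centralisateur.
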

\begin{proof}

Supposons d'abord que l'image de $\varphi$ soit incluse dans un sous-groupe de Levi strict $L$ de $H$. Alors $\Cent(\varphi)$ contient $Z(L)$. Or, la composante neutre du centre de $L$ est un tore plus gros que la composante neutre du centre de $H$, si bien que $Z(L)^0/Z(H)^0$ puis $Z(L)/Z(H)$ sont infinis et $\Cent(\varphi)/Z(H)$ également. \ps 

Réciproquement, supposons $\Cent(\varphi)/Z(H)$ infini. Pour des raisons générales, $\Cent(\varphi)^0$ est d'indice fini dans $\Cent(\varphi)$ ; de même $Z(H)^0$ est d'indice fini dans $Z(H)$, si bien que l'on a également $\Cent(\varphi)^0/Z(H)^0$ infini.

Le Lemme \ref{Frob-semi-simple} (combiné à la Remarque qui le suit) nous dit que $\rho \circ \varphi$ est une représentation semi-simple de $\W_F \times \SU(2)$ pour toute représentation fidèle $\rho$ de $H$ dans un espace vectoriel complexe de dimension finie. Cela signifie en particulier que l'adhérence (pour la topologie de Zariski) de $\varphi(\W_F \times \SU(2))$ dans $H$ est de composante neutre réductive, puis que $\Cent(\varphi)^0$ l'est également d'après des résultats classiques de Steinberg.

Le groupe réductif $\Cent(\varphi)^0$ contient donc un tore maximal $T$ et on a la série d'inclusions $Z(H)^0 \subset T \subset \Cent(\varphi)^0$. Ainsi l'image de $\varphi$ centralise le tore $T$, et est donc incluse dans un sous-groupe de Levi, strict si $T$ est plus gros que $Z(H)^0$. Supposons donc par l'absurde que $Z(H)^0=T$. Alors, 
puisqu'un élément semi-simple d'un groupe est toujours conjugué à un élément du tore maximal, on a que 
tout élément semi-simple de $\Cent(\varphi)^0$ est conjugué à un élément de 
$T$ et donc de $Z(H)^0$. Ainsi les éléments semi-simples de $\Cent(\varphi)^0$ sont exactement les éléments de $Z(H)^0$. Or ceux-là sont Zariski-denses, et on obtient $\Cent(\varphi)^0=Z(H)^0$, une contradiction. Donc l'image de $\varphi$ est incluse dans un sous-groupe de Levi strict.
\end{proof}

En appliquant ce résultat aux paramètres de Langlands de $G$, on obtient qu'un paramètre $\varphi$ est discret si, et seulement si $\Cent(\varphi)/Z(\widehat{G})$ est fini, ou encore dans le cas où $Z(\widehat{G})$ est fini, si, et seulement si $\Cent(\varphi)$ est fini.\medskip 

Nous poursuivons par un lemme technique important.

Soit $\Gamma$ un groupe quelconque et soit $V$ un $\C$-espace vectoriel de dimension finie, muni d'une forme bilinéaire symétrique (resp. alternée) non dégénérée $f$ telle que $V \simeq V^*$ (pour l'application naturelle induite par $f$). On suppose que $V$ est également muni d'une action semi-simple de $\Gamma$ compatible à la forme $f$ (\ie cette dernière est $\Gamma$-invariante) : on a alors une représentation de $\Gamma$ dans le groupe orthogonal (resp. symplectique) de $V$ pour la forme $f$.

On peut, par semi-simplicité, décomposer $V$ en somme de composantes $\Gamma$-isotypiques, \ie :
\[
V=M_1 \oplus \cdots \oplus M_r
\]
avec, pour $1\leq i \leq r$, $M_i$ qui est $V_i$-isotypique où $V_i$ est irréductible.
\begin{lemme}\label{invariants-non-dégénérés}
Si $V_i \simeq V_i^*$, alors $M_i$ est non dégénéré pour $f$.

En particulier, si on considère la représentation triviale (autoduale), dont la composante isotypique est formée des invariants sous l'action de $\Gamma$, on a $V^\Gamma$ non dégénéré.
\end{lemme}
\begin{proof}
On a par hypothèse que le morphisme canonique
\[
\begin{array}{ccccc}
\zeta & : & V &\longrightarrow & V^* \\
 & & v & \longmapsto & f(v,\cdot) \\
\end{array}
\]
est bijectif et $\Oo(V)$-équivariant (resp. $\Sp(V)$-équivariant). La restriction de $\zeta$ à chaque $M_i$, notée $\zeta_i$, est donc injective, $\zeta_i : M_i \hookrightarrow V^*$. On peut donc considérer, par restriction et projection :
\[
\begin{array}{ccccc}
\zeta_{i,j} & : & M_i &\longrightarrow & M_j^* \\
 & & v & \longmapsto & f(v,\cdot) \\
\end{array}.
\]

Les morphismes $\zeta,\zeta_i$ et $\zeta_{i,j}$ sont $\Gamma$-équivariants, et donc, comme rappelé au paragraphe \ref{Généralités}, préservent les composantes isotypiques. Ainsi l'image de $\zeta_{i,j}$ est à la fois $V_i$-isotypique et $V_j^*$-isotypique et donc est nulle si $V_i \not\simeq V_j^*$, ce qui revient à dire qu'alors $M_i \perp M_j$.\ps 

On a $\zeta_i=\bigoplus_j \zeta_{i,j}$ injectif et tous les $\zeta_{i,j}$ sont nuls sauf pour l'unique $j_0$ (il en faut un) tel que $V_i \simeq V_{j_0}^*$ et alors $\zeta_{i,j_0}$ est injectif.

Si $j_0=i$ (\ie si $V_i \simeq V_i^*$), alors l'injectivité de $\zeta_{i,j_0}$ nous dit exactement que l'espace $V_i$-isotypique $M_i$ est non dégénéré, ce qui achève la preuve (on note d'ailleurs que, pour des raisons de dimension, $\zeta_{i,i}$ est en fait un isomorphisme).

Poursuivons néanmoins l'analyse : dans le cas où $j_0 \neq i$, on a par bidualité $V_i \simeq V_{j_0}^* \Rightarrow V_{j_0} \simeq V_i^*$ si bien que les deux injections $\zeta_{i,j_0} : M_i \hookrightarrow M_{j_0}^*$ et $\zeta_{j_0,i} : M_{j_0} \hookrightarrow M_i^*$ sont en fait des isomorphismes pour des raisons de dimension. Finalement, on peut écrire
\[
V=\overset{\perp}{\bigoplus_{j \in I_1}} M_j \overset{\perp}{\oplus} \overset{\perp}{\bigoplus_{j \in I_2}} (M_j \oplus M_j^*),
\]
où $I_1$ désigne l'ensemble des indices $j$ tels que $V_j \simeq V_j^*$ et $I_2$ un choix de la moitié des indices dans son complémentaire tel que $\{V_j,V_j^* |\,j \in I_2\}$ parcourt bien l'ensemble des $V_j$ non isomorphes à leur dual.

De plus chaque terme de la somme orthogonale est non dégénéré.
\end{proof}

\begin{lemme}\label{prod_tens_fb}
Soient $A$ et $B$ deux $\C$-espaces vectoriels de dimension finie, munis chacun d'une forme bilinéaire (symétrique ou alternée) non dégénérée. Alors on peut munir canoniquement $A \otimes_\C B$ d'une forme bilinéaire non dégénérée dont la nature est donnée selon la \og règle des signes \fg{} (une forme alternée correspondant au signe $-$).
\end{lemme}
\begin{proof}
Notons $f_A$ et $f_B$ les formes bilinéaires sur $A$ et $B$ respectivement. Alors on définit 
\[
(f_A \otimes f_B)(a\otimes b,a' \otimes b')=f_A (a,a')f_B(b,b')
\]
sur les tenseurs purs, ce qu'on étend par bilinéarité pour obtenir une forme sur $A\otimes_\C B$. Vérifions la non-dégénérescence. Si $f_A$ est symétrique, on peut trouver une base de $A$ qui soit orthogonale (même orthonormée) pour $f_A$ ; et si $f_A$ est alternée, on peut trouver une base de $A$ qui soit symplectique (dite base de Darboux) pour $f_A$. En tous les cas, si l'on note $(a_1, \cdots,a_n)$ une telle base, chaque vecteur de base est orthogonal à tous les vecteurs de la base sauf un. Choisissons de même $(b_1, \cdots,b_m)$ une base de $B$, adaptée à $f_B$.\ps 

Soit maintenant un élément $x$ non nul de $A\otimes_\C B$, que l'on peut écrire $x=\sum_{i,j} \lambda_{ij} a_i \otimes b_j$. Par non-nullité, on trouve un couple $\{i_0,j_0\}$ tel que $\lambda_{i_0,j_0} \neq 0$. Par ce qui précède, il existe un indice $i'_0$ tel que $f_A(a_{i},a_{i'_0})=\delta_{i,i_0}$ où $\delta$ désigne le symbole de Kronecker. De même pour $f_B$ et un indice $j'_0$. Alors $(f_A \otimes f_B)(x,a_{i'_0}\otimes b_{j'_0})=\lambda_{i_0,j_0}\neq 0$ si bien que $\Ker(f_A \otimes f_B)$ est réduit à $\{0\}$ et que la forme est non dégénérée.

Puisqu'on travaille en caractéristique différente de 2,\og alternée \fg{} équivaut à \og antisymétrique \fg et on voit alors immédiatement que si $f_A$ et $f_B$ sont toutes deux symétriques (resp. toutes deux alternées), alors $f_A \otimes f_B$ est symétrique ; et que si l'une est alternée et l'autre symétrique, alors $f_A \otimes f_B$ est alternée.
\end{proof}

\begin{prop}\label{decomposition_par_discret_sp}
Soit $V$ un $\C$-espace vectoriel de dimension finie muni d'une forme alternée non dégénérée $f$. Soit $\Gamma$ un groupe quelconque et soit $\varphi : \Gamma \rightarrow \Sp (V)$ une représentation \emph{semi-simple} de $\Gamma$.

Alors le centralisateur de $\varphi$ est fini si, et seulement si $V \simeq V_1 \perp \cdots \perp V_r$ avec les $V_i$ irréductibles pour l'action de $\Gamma$, deux à deux non isomorphes, et $f_{|V_i}$ non dégénérée.
\end{prop}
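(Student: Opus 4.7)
The plan is to analyze the structure of the centralizer $\Cent(\varphi)$ in $\Sp(V)$ by decomposing $V$ into $\Gamma$-isotypic components and computing the centralizer on each piece using Schur's lemma, then reading off the finiteness condition.

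First, using the semi-simplicity of $\varphi$, I decompose $V$ as a direct sum of isotypic components $V = M_1 \oplus \cdots \oplus M_s$, where $M_i$ is $W_i$-isotypic for pairwise non-isomorphic irreducibles $W_i$ of $\Gamma$. By Lemma \ref{invariants-non-dégénérés}, each $M_i$ with $W_i \simeq W_i^*$ is $f$-non-degenerate, while the remaining isotypic components pair up: if $W_i \not\simeq W_i^*$, there is a unique $W_{i^*} \simeq W_i^*$ in the decomposition, $M_i$ and $M_{i^*}$ are mutually $f$-dual (each totally isotropic), and $M_i \oplus M_{i^*}$ is $f$-non-degenerate. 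Thus $V$ splits as an orthogonal sum of these pieces, and $\Cent(\varphi)$ factors accordingly.

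Second, I compute the centralizer on each factor. Writing $M_i \simeq W_i \otimes U_i$ with $U_i := \Hom_\Gamma(W_i, M_i)$ the multiplicity space, Schur's lemma identifies $\Cent_{\GL(M_i)}(\varphi)$ with $\GL(U_i)$, acting trivially on the $W_i$-factor. On a non-self-dual pair $M_i \oplus M_{i^*}$, preservation of the $f$-pairing forces the $\GL(U_{i^*})$-factor to be determined (via inverse transpose) by the $\GL(U_i)$-factor, leaving a full $\GL(U_i)$. On a self-dual isotypic component, the irreducible $W_i$ carries, up to scalar, a unique non-degenerate $\Gamma$-invariant bilinear form, which is either symmetric (orthogonal type) or alternating (symplectic type). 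By the sign rule of Lemma \ref{prod_tens_fb}, the restriction of $f$ to $W_i \otimes U_i$ is (up to isomorphism) the tensor product of the invariant form on $W_i$ with a non-degenerate form on $U_i$; this auxiliary form is symmetric on $U_i$ if $W_i$ is of symplectic type, and alternating on $U_i$ if $W_i$ is of orthogonal type. Hence the corresponding factor of $\Cent_{\Sp(V)}(\varphi)$ is $\Oo(U_i)$ in the symplectic-type case and $\Sp(U_i)$ in the orthogonal-type case.

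Finally, I read off the equivalence. The centralizer is finite iff each factor is finite: the $\GL(U_i)$-factors (non-self-dual case) and $\Sp(U_i)$-factors (orthogonal-type case) are finite only when trivial, i.e.\ $\dim U_i = 0$, while the $\Oo(U_i)$-factors (symplectic-type case) are finite only when $\dim U_i \leq 1$. Hence $\Cent(\varphi)$ is finite iff no non-self-dual and no orthogonal-type irreducible appears, and each appearing (symplectic-type) irreducible has multiplicity exactly one. In that case the decomposition $V = M_1 \perp \cdots \perp M_r$ realizes the $V_i := M_i \simeq W_i$ as pairwise non-isomorphic irreducible subrepresentations with $f_{|V_i}$ non-degenerate, as required. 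The converse implication is immediate: in the given decomposition, the centralizer preserves each $V_i$ (distinct isotypic components), acts on each by a scalar by Schur, and this scalar must satisfy $\lambda^2 = 1$ to preserve $f_{|V_i}$, so $\Cent(\varphi) = \{\pm 1\}^r$ is finite. The only subtlety, rather than a real obstacle, is the careful application of the sign rule to decide whether each self-dual factor contributes $\Oo$ or $\Sp$; this is exactly the content of Lemma \ref{prod_tens_fb}.
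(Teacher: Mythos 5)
Your proof is correct, but it takes a genuinely different route from the paper's in the hard direction. The paper argues by contradiction: assuming some irreducible stable subspace $U$ is totally isotropic, it carefully reduces (via $V'=U\otimes\Hom_\Gamma(U,V')$, Lemma \ref{prod_tens_fb} and Lemma \ref{invariants-non-dégénérés}) to the case where $V'$ splits as two transverse $\Gamma$-stable Lagrangians, and then exhibits the explicit infinite one-parameter family $\mathrm{id}_W\oplus\lambda\,\mathrm{id}_U\oplus\lambda^{-1}\mathrm{id}_{U'}$ in the centralizer; it then handles multiplicity one by a separate argument inside each isotypic component. You instead compute the \emph{entire} centralizer as a product of classical groups $\GL(U_i)$, $\Oo(U_i)$, $\Sp(U_i)$ indexed by the isotypic blocks, and read off finiteness factor by factor. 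Your approach buys more: it gives the full structure of $\Cent(\varphi)$ (not just its finiteness) and in particular recovers $\Cent(\varphi)=\{\pm1\}^r$ in the finite case in one stroke, whereas the paper's argument is more hands-on and avoids having to identify the symmetry type of the form on each multiplicity space. The one step you should justify explicitly is the factorization $f_{|M_i}=b_{W_i}\otimes c_{U_i}$: Lemma \ref{prod_tens_fb} only constructs a form on a tensor product from forms on the factors, and the converse statement you need — that every $\Gamma$-invariant bilinear form on $W_i\otimes U_i$ is of this shape — follows from Schur's lemma applied to $\Hom_\Gamma(W_i\otimes U_i, W_i^*\otimes U_i^*)\simeq\Hom_\Gamma(W_i,W_i^*)\otimes\Hom(U_i,U_i^*)$, exactly as in the proof of Lemma \ref{sp_tens_orth}. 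With that line added, the argument is complete; the easy direction (Schur forces $\pm\mathrm{id}$ on each $V_i$) coincides with the paper's.
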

\begin{proof}
Supposons que $V$ admette l'écriture mentionnée. Alors un élément du centralisateur de $\varphi$ (\ie un élément de $\Sp(V)$ qui commute à $\varphi(g)$ pour tout $g \in \Gamma$) donne un opérateur d'entrelacement entre $V$ et lui-même et, du fait que les $V_i$ sont irréductibles et deux à deux non isomorphes, un opérateur d'entrelacement entre chaque $V_i$ et lui-même.

Or, le lemme de Schur nous dit qu'un tel opérateur est une homothétie sur $V_i$, celle-ci devant respecter la structure symplectique, c'est finalement $\pm \mathrm{id}_{V_i}$. On a donc $\mathrm{Cent}(\varphi)=\bigoplus_i \{\pm \mathrm{id}_{V_i}\}$, qui est de cardinal $2^r$.

Réciproquement, partons d'un sous-espace $U$ de $V$, stable par $\Gamma$ et irréductible comme $\Gamma$-module. Alors $f_{|U}$ a un noyau stable par $\Gamma$, à savoir $\{0\}$ (auquel cas elle est non dégénérée) ou $U$ (auquel cas elle est nulle). \ps 

Supposons par l'absurde être dans le deuxième cas, alors $U$ est totalement isotrope, \ie $U \subset U^\perp$. L'orthogonal d'un espace $\Gamma$-stable étant $\Gamma$-stable, on peut utiliser la semi-simplicité du $\C[\Gamma]$-module $V$ pour écrire $U^\perp=W\oplus U$ où $W$ est $\Gamma$-stable. On remarque que $\Ker f_{|U^\perp}=(U^\perp)^\perp \cap U^\perp=U$ donc $f$ est non-dégénérée sur $W \simeq U^\perp / \Ker f_{|U^\perp}$.

On peut alors écrire $V=W \oplus W^\perp$ avec $W^\perp$ non dégénéré et $\Gamma$-stable. On a $W \subset U^\perp$ donc $U \subset W^\perp$. Par semi-simplicité, on peut de nouveau écrire $W^\perp=U \oplus U'$ avec $U'$ sous-espace $\Gamma$-stable. Écrivons désormais $V'$ pour $W^\perp$ : on s'est ramené à un espace non dégénéré $V'$ qui admet comme facteur direct un espace totalement isotrope (en fait maximal). Si $U'$ est lui aussi totalement isotrope, on constate que le centralisateur de $\varphi$ contient les éléments de la forme ${\rm id}_W \oplus \lambda {\rm id}_U \oplus \lambda^{-1} {\rm id}_{U'}$, où $\lambda$ appartient à $\C^\times$ ; il ne saurait donc être fini.
Il s'agit maintenant de montrer que l'on peut toujours se ramener à ce cas. \ps 

On a $U^\perp \cap W^\perp=U$ donc, dans $V'$, $U^\perp=U$ ce qui donne $U' \simeq V' / U \simeq V' / U^\perp \simeq U^*$. Ainsi $U'$ est irréductible et $f_{|U'}$ est de nouveau soit nulle, soit non dégénérée. Dans le premier cas, on peut conclure directement d'après ce qui précède ; supposons donc être dans le second cas. L'espace $U'$ est non dégénéré donc l'application $U' \rightarrow (U')^*$ induite par $f$ est injective et donc un isomorphisme. Finalement (par bidualité) on obtient $U \simeq U'$, ce qui signifie que $V'$ est isotypique et contient deux copies de $U$.

On peut alors écrire canoniquement $V'=U \otimes_\C \Hom_\Gamma (U,V')$ comme $\C$-espaces vectoriels, mais aussi comme $\Gamma$-représentations : il suffit de faire agir $\Gamma$ trivialement sur $\Hom_\Gamma (U,V')$. 
Si l'on note $U''$ ce dernier espace, alors il est isomorphe à $\C^2$ comme espace vectoriel et on a $U''=\Hom_\Gamma (U,V') =(\Hom(U,V'))^\Gamma  \simeq (V' \otimes_{\C} U^*)^\Gamma $.  
Or, les espaces $V'$ et $U  \simeq U^*$ étant chacun munis d'une forme bilinéaire alternée non dégénérée, on peut, d'après le Lemme \ref{prod_tens_fb}, munir $V' \otimes_{\C} U^*$ d'une forme bilinéaire symétrique non dégénérée. Et, par le Lemme \ref{invariants-non-dégénérés}, la forme reste non dégénérée sur $(V' \otimes_{\C} U^*)^\Gamma  \simeq U''$.

Finalement, $U''$ est un $\C$-espace vectoriel de dimension 2 muni d'une forme bilinéaire symétrique non dégénérée, on peut donc écrire $U''=\C e_1 \oplus \C e_2$ avec $e_1$ et $e_2$ isotropes. Cette décomposition étant $\Gamma$-stable (l'action est triviale), on peut finalement écrire $V'=(U \otimes_\C \C e_1) \oplus (U \otimes_\C \C e_2)$, somme de deux lagrangiens $\Gamma$-stables, comme souhaité.

Ceci achève de démontrer que \textbf{ sur tout sous-espace stable irréductible, $f$ est non dégénérée}.\ps 

Utilisant ce résultat, on peut écrire par récurrence $V=V_1 \perp \cdots \perp V_r$ avec les $V_i$ irréductibles (là où la semi-simplicité nous donnait une somme directe non orthogonale \emph{a priori} de représentations irréductibles). Soit alors $U$ une représentation irréductible de $\Gamma$ intervenant dans $V$. La composante isotypique $C(U)$ est non dégénérée et on peut comme précédemment écrire $C(U)=U \otimes_{\C[\Gamma]} \Hom_\Gamma(U,V)$ et si ce deuxième facteur est de dimension $>1$ alors il contient une droite isotrope et on trouve dans $C(U)$ (et donc dans $V$) une copie de $U$ qui est dégénérée, ce qui fournit une contradiction.

Ainsi, les $V_i$ sont deux à deux non isomorphes.
\end{proof}

\begin{lemme}\label{sp_tens_orth}
Soit $V$ un espace symplectique muni d'une action linéaire irréductible de $\Gamma$ compatible à la structure symplectique. On suppose que $\Gamma=\Gamma_1 \times \Gamma_2$. Alors $V=V_1 \otimes_\C V_2$ avec $V_i$ irréductible sous l'action de $\Gamma_i$. Mieux, on peut munir chaque $V_i$ d'une structure symplectique (ou orthogonale) compatible à l'action de $\Gamma_i$, les structures sur $V_1$ et $V_2$ étant \og de signe opposé \fg{} (\ie l'une est symplectique, l'autre est orthogonale).
\end{lemme}

\begin{proof}
L'écriture $V=V_1 \otimes_\C V_2$ pour une représentation irréductible d'un groupe produit est standard et unique à isomorphisme près. Puisqu'on a $V \simeq V^* \simeq V_1^* \otimes_\C V_2^*$, on en déduit $V_1 \simeq V_1^*$ et $V_2 \simeq V_2^*$, d'où :
\[
\Hom(V_i,V_i) \simeq V_i \otimes_\C V_i^* \simeq V_i^* \otimes_\C V_i^* \simeq \mathrm{Sym}^2 V_i^* \oplus \Lambda^2 V_i^*,
\]
pour chaque $i$. Ces isomorphismes sont $\Gamma_i$-équivariants, on peut donc considérer les $\Gamma_i$-invariants et on obtient :
\[
\Hom_{\Gamma_i}(V_i,V_i) \simeq (\mathrm{Sym}^2 V_i^*)^{\Gamma
_i} \oplus (\Lambda^2 V_i^*)^{\Gamma_i}.
\]
Or $V_i$ est irréductible donc le lemme de Schur nous dit que $\Hom_{\Gamma_i}(V_i,V_i)$ est de dimension 1. L'un exactement des deux termes de droite est non nul (et de dimension 1), nous fournissant une forme bilinéaire symétrique ou alternée $\Gamma_i$-invariante, non dégénérée et unique à un scalaire près.

La forme sur $V_1 \otimes_\C V_2$ construite par le Lemme \ref{prod_tens_fb} est, à un scalaire près, celle de départ. On doit donc avoir sur $V_1$ et $V_2$ des structures \og de signe opposé \fg{}.
\end{proof}

On veut bien évidemment appliquer les résultats précédents au cas d'un paramètre de Langlands discret pour le groupe $G=\SO_{2n+}(F)$ (étudié au Chapitre \ref{Le groupe paramodulaire}), \ie un morphisme $\varphi : \W_F \times \SU(2) \rightarrow \Sp_{2n}(\C)$. On fait alors apparaître, par la Proposition \ref{decomposition_par_discret_sp} des $V_i$ représentations irréductibles autoduales de $\W_F \times \SU(2)$, que l'on peut en fait écrire $V_i=\xi_i\otimes U_{a_i}$ où $\xi_i$ est une représentation irréductible autoduale de $\W_F$ et $U_{a_i}$ est \emph{la} représentation irréductible (autoduale) de $\SU(2)$ de dimension $a_i$.

Le Lemme \ref{sp_tens_orth} nous dit alors que $\xi_i$ est symplectique quand $U_{a_i}$ est orthogonale, et réciproquement. Or $U_{a_i}=\mathrm{Sym}^{a_i-1}\C^2$ est orthogonale quand $a_i$ est impair (et symplectique quand $a_i$ est pair).\medskip

Finalement, un paramètre discret est la donnée de $\varphi=\bigoplus_i \xi_i \otimes U_{a_i}$ avec $\sum \dim(\xi_i) a_i=2n$, les couples $(\xi_i,a_i)$ deux à deux distincts, $\xi_i$ autoduale symplectique si $a_i$ est impair et autoduale orthogonale si $a_i$ est pair.

\newpage
\chapter{Le groupe paramodulaire}\label{Le groupe paramodulaire}
\section{Rappels sur les formes quadratiques}\label{Rappels_sur_les_FQ}
\begin{prop-def}\label{Prop_def_FQ}
Soit $A$ un anneau commutatif unitaire et soit $M$ un $A$-module libre.

On dit que $q : M \rightarrow A$ est une forme quadratique sur $M$ si :
\begin{itemize} 
\item pour tout $\lambda \in A$ et tout $m \in M$, on a $q(\lambda x)=\lambda^2 q(x)$ ;
\item l'application $(m,n) \mapsto q(m+n)-q(m)-q(n)$ est $A$-bilinéaire (automatiquement symétrique).\ps 
\end{itemize}

On dit alors que $(M,q)$ est un $A$-module quadratique et on note $<\cdot,\cdot>$ la forme bilinéaire associée. On remarque que $2q(x)=<x,x>$.

Le noyau de $q$, noté $\Ker q$, est par définition le noyau de la forme bilinéaire symétrique $<\cdot,\cdot>$.

Si $\mathcal{F}=(x_1,\cdots,x_n)$ est une famille de vecteurs de $M$, la matrice de Gram de $\mathcal{F}$ est $(<x_i,x_j>)_{ij}$ que l'on note ${\rm Gram}(\mathcal{F})$. C'est une matrice symétrique à diagonale dans $2\Ok$.

Le déterminant du module quadratique $(M,q)$ est la classe, dans $A/(A^\times)^2$ du déterminant de ${\rm Gram}(\mathcal{F})$ où $\mathcal{F}$ est une $A$-base de $M$.

On dit que le module quadratique $(M,q)$ est \emph{non dégénéré} si $\det (M,q)$ est un inversible de $A$ (modulo $(A^\times)^2$).

Si ${\rm rg}_A M$ est impair, alors on peut définir le \og demi-déterminant \fg{} $\frac{1}{2}\det$ de $(M,q)$ à valeurs dans $A/(A^\times)^2$. On dit alors que $(M,q)$ est \emph{régulier} si $\frac{1}{2}\det (M,q) \in A^\times$ (modulo $(A^\times)^2$).
\end{prop-def}
\begin{proof}
Il s'agit simplement de vérifier que le déterminant d'un module quadratique ne dépend pas du choix d'une $A$-base. Or, si $\mathcal{B}_1$ et $\mathcal{B}_2$ sont deux $A$-bases de $M$, on a ${\rm Gram}(\mathcal{B}_2)={}^t P \,{\rm Gram}(\mathcal{B}_1) P$ où $P$ est la matrice de passage de $\mathcal{B}_1$ à $\mathcal{B}_2$. Donc $\det {\rm Gram}(\mathcal{B}_2)=(\det P)^2 \det {\rm Gram}(\mathcal{B}_1)$ et on a bien sûr $\det P \in A^\times$.

Pour le dernier point, on renvoie à \cite{Conrad}, Proposition C.1.4. Il est à noter que le demi-déterminant a toutes les \og bonnes propriétés \fg du déterminant (multiplicativité, compatibilité à l'extension des scalaires, etc.) et qu'il vérifie, bien sûr, $2\cdot\frac{1}{2}\det=\det$.
\end{proof}
Par abus, on parlera parfois du déterminant d'une forme quadratique ou de forme quadratique non dégénérée.\medskip

\emph{Remarque 1 :} Quand 2 n'est pas diviseur de zéro dans $A$, il est équivalent de se donner une forme quadratique et une forme bilinéaire symétrique paire (\ie à valeurs dans $2A$).\medskip

\emph{Remarque 2 :} Quand 2 est inversible, la notion de régularité (en dimension impaire) est équivalente à la notion de non dégénérescence.
Quand 2 n'est pas inversible, la notion de régularité correspond à une forme \og minimale \fg{} de non dégénérescence.

\begin{prop-def}\label{Prop-Def_gpe_orthogonal}
Soit $(M,q)$ un $A$-module quadratique. Le groupe orthogonal $\Oo(M,q)$ est le sous-groupe de $\GL_A(M)$ des éléments laissant la forme $q$ invariante : $\Oo(M,q)=\{g \in \GL_A(M) \, | \, q \circ g=q\}$.

En particulier, pour tout $g \in \Oo(M,q)$ et pour tous $x,y \in M$, on a $<g(x),g(y)>=<x,y>$.

On note également $\SO(M,q)$ le sous-groupe des éléments de $\Oo(M,q)$ de déterminant $1$\footnote{On n'a pas en général, selon cette définition, que $\SO(M,q)$ est d'indice deux dans $\Oo(M,q)$, cela sera néanmoins le cas si $A$ est intègre et $2\neq 0$.}.\ps  

Pour chaque $A$-algèbre $B$ commutative, il existe une et une seule forme quadratique sur $M \otimes_A B$, notée $q\otimes B$ telle que $(q\otimes B)(m\otimes 1)=q(m)$ pour tout $m \in M$.

On dispose ainsi d'un foncteur en groupes $B \mapsto \Oo(M\otimes_A B,q\otimes B)$, représentable par un schéma en groupes affine que l'on note $\bm{\Oo_M}$.

On dispose d'un morphisme de schémas en groupes $\bm{\mathrm{O}_{M}} \rightarrow \mathbb{G}_m$ donné par le déterminant, et dont le noyau est noté $\bm{\mathrm{SO}_{M}}$. 

Quand $A$ est de caractéristique $2$, on dispose d'un morphisme $\bm{\mathrm{O}_{M}} \rightarrow \underline{\{\pm 1\}}$ donné par le déterminant de Dickson-Dieudonné (noté $\det_{\rm DD}$), où $ \underline{\{\pm 1\}}$ désigne le schéma en groupes constant sur $A$.


\end{prop-def}
\begin{proof}
Pour l'extension des scalaires, on renvoie à \cite{Bbki_Alg}, Chapitre 9, §3 n\textsuperscript{o}4 , Proposition 3.

Pour la représentabilité du foncteur, on renvoie à \cite{Conrad}, §C.1. 


Le déterminant de Dickson-Dieudonné est défini au Chapitre II, §10 de \cite{Dieudonne}, dans le cas où $A$ est un corps. La définition schématique se trouve dans \cite{Conrad}, §C.2. Pour unifier la présentation, nous choisissons de prendre $\det_{\rm DD}$ à valeurs dans $\underline{\{\pm 1\}}$, plutôt que dans $\underline{\Z/2\Z}$ comme cela est fait \emph{loc. cit.}
\end{proof}

\begin{lemme}\label{lemme_det_DD_transpo}
Soit $(W,r)$ un espace quadratique déployé sur un corps de caractéristique $2$ (donc de dimension paire).
Soit $H$ un plan hyperbolique de $W$. Alors on a $W=H\overset{\perp}{\oplus}H^\perp$.

Soit $g \in \Oo(W,r)$ qui stabilise $H$. Alors $g$ stabilise $H^\perp$ et $\det_{\rm DD}(g)=\det_{\rm DD}(g_{|H})\det_{\rm DD}(g_{|H^\perp})$.

Le plan hyperbolique $H$ contient exactement deux droites isotropes, l'ensemble de ces deux droites est donc laissé stable par $g_{|H}$. On a alors $\det_{\rm DD}(g_{|H})=1$ si $g_{|H}$ stabilise chacune des deux droites et $\det_{\rm DD}(g_{|H})=-1$ si $g_{|H}$ les échange.
\end{lemme}
\begin{proof}
Ces propriétés sont classiques et découlent de manière élémentaire des définitions de \cite{Dieudonne}, Chapitre II, §10.
\end{proof}

\begin{lemme}\label{lemme_det_DD_parabolq_Siegel}
Soit $(W,r)$ un espace quadratique déployé sur un corps de caractéristique $2$ (donc de dimension paire).

Si $X$ est un sous-espace totalement isotrope maximal de $W$, alors les éléments de $\Oo(W,r)$ qui stabilisent $X$ sont de déterminant de Dickson-Dieudonné égal à $1$.
\end{lemme}
\begin{proof}
Notons $k$ le corps de base. On peut considérer l'espace quadratique $(W\otimes_k \bar{k},r\otimes_k \bar{k})$, où $\bar{k}$ est une clôture algébrique de $k$. Alors on peut définir le déterminant de Dickson-Dieudonné pour $\Oo(W\otimes_k \bar{k},r\otimes_k \bar{k})$ et, si l'on a un élément de $\Oo(W,r)$, il est équivalent de calculer son déterminant de Dickson-Dieudonné sur $k$ ou sur $\bar{k}$ (plus exactement dans $\Oo(W,r)$ ou dans $\Oo(W\otimes_k \bar{k},r\otimes_k \bar{k})$).

Travaillons donc sur $\bar{k}$, le déterminant de Dickson-Dieudonné est polynomial (au moins dans sa version à valeurs dans $\{x^2=x\}$, mais cela ne change rien à l'argument) donc est constant sur les composantes connexes de $\Oo(W\otimes_k \bar{k},r\otimes_k \bar{k})$. Il suffit alors de remarquer que les éléments considérés sont les éléments d'\emph{un} sous-groupe parabolique de Siegel (dépendant du choix du sous-espace totalement isotrope maximal), manifestement connexe, ce qui conclut.
\end{proof}
\section{Notations}\label{Notations}
On reprend les notations du paragraphe \ref{Groupes localement profinis}. Soit $F$ un corps local non-archimédien de caractéristique nulle (\ie une extension finie d'un corps $p$-adique). On note $\Ok$ son anneau des entiers et $\pk$ son idéal maximal. On choisit une uniformisante $\varpi$ telle que $\pk=\varpi \Ok$ et la valeur absolue $|\cdot |_F : F \rightarrow \R_+$ est normalisée par $|\varpi|_F=(\#\Ok/\pk)^{-1}$, le corps résiduel $k=\Ok/\pk$ étant bien un corps fini, dont on note $p$ la caractéristique.\ps 

Étant donné un $F$-espace vectoriel $V$ muni d'une forme quadratique $q$, on peut considérer le groupe algébrique sur $F$ des automorphismes de $V$ qui préservent la forme quadratique $q$, noté $\bm{\mathrm{O}_{V}}$, et son sous-groupe algébrique $\bm{\mathrm{SO}_{V}}$ (c'est la Proposition-Définition \ref{Prop-Def_gpe_orthogonal}). Ce dernier groupe algébrique est réductif sur $F$ (et même semi-simple si $\dim V >2$).
On a alors $\Oo(V,q)=\bm{\mathrm{O}_{V}}(F)$ et $\SO(V,q)=\bm{\mathrm{SO}_{V}}(F)$.


Nous nous intéressons ici au $F$-espace vectoriel $V_n$ de dimension $2n+1$ muni de la base $\mathcal{B}_0=(e_1,\cdots,e_n,v_0,f_n,\cdots,f_1)$ et de la forme bilinéaire symétrique définie par :
\begin{align*}
<e_i, f_j> & =\delta_{ij} ,\\
<v_0, v_0> & =2 ,\\
<e_i, e_j> &=<f_i,f_j>=0 ,\\
<e_i, v_0> &=<f_i,v_0>=0.
\end{align*}

On peut alors considérer la forme quadratique associée $q$ définie par $2q(x)=<x,x>$ et on a ainsi muni $V_n$ d'une structure d'espace quadratique sur $F$.\medskip

\emph{Remarque :} Dans le cas $2n+1=1$, dont nous aurons besoin pour la suite, le groupe $\bm{\mathrm{SO}_{V_0}}$ est trivial.\medskip

On observe que l'on a naturellement une décomposition
\begin{equation}\label{dec_canonique_V_n}
V_n=(X \oplus Y) \overset{\perp}{\oplus} F v_0,
\end{equation}
où $X=\overset{\perp}{\bigoplus} F e_i$ et $Y=\overset{\perp}{\bigoplus} F f_i$ sont deux \emph{lagrangiens} (sous-espaces totalement isotropes maximaux) transverses. En particulier, on note que $q$ est d'indice de Witt maximal $n=\lfloor \frac{2n+1}{2} \rfloor$.\medskip

Nous allons maintenant expliquer à quoi correspondent concrètement les sous-groupes présentés au paragraphe \ref{Notations pour les groupes reductifs}.


Le drapeau complet associé au choix de la base $\mathcal{B}_0$ nous permet de définir le sous-groupe (algébrique) de Borel standard, noté $\mathbf{B}$, des éléments de $\bm{\mathrm{SO}_{V_n}}$ qui préservent ce drapeau. Ce sont donc les éléments dont l'écriture matricielle dans la base $\mathcal{B}_0$ est triangulaire supérieure. Il est en fait suffisant qu'ils préservent le drapeau :
\[
\Vect(e_1)\subset \Vect(e_1,e_2) \subset \cdots \subset \Vect(e_1,\cdots,e_n).
\]

On notera $B=\mathbf{B}(F)$ sous-groupe de Borel standard de $G=\bm{\mathrm{SO}_{V_n}}(F)=\SO(V_n,q)$. Le sous-groupe $B$ contient le tore (maximal) de rang $n$ :
\[
T=\mathbf{T}(F)=
\left\{
\begin{pmatrix}
\lambda_1 & & & & & & \\
& \ddots & & & & & \\
& & \lambda_n & & & & \\
& & & 1 & & & \\
& & & & \lambda_n^{-1} & & \\
& & & & & \ddots & \\
& & & & & & \lambda_1^{-1}
\end{pmatrix},
(\lambda_1,\cdots, \lambda_n) \in (F^\times)^n\right\},
\]
si bien que le groupe $G$ est déployé sur $F$.
(On remarque que ceci utilise uniquement le fait que $q$ est non dégénérée et d'indice de Witt maximal, et pas sa forme particulière.)

On peut, plus généralement, définir des sous-groupes paraboliques standard $P$, contenant $B$ et qui préservent un drapeau compatible avec la base $\mathcal{B}_0$, il leur correspond des matrices triangulaires supérieures par blocs. Chaque sous-groupe parabolique admet une factorisation $P=MN$, où $N$ est le radical unipotent et $M$ est un sous-groupe de Levi, uniquement déterminé si on impose $T \subset M$ (ce que nous faisons dans la suite, on parlera de sous-groupe de Levi standard).\ps

Soit donc $\mathcal{P} :n=n_1+\cdots+n_k+\tilde{n}$ une partition de l'entier $n$ (avec éventuellement $\tilde{n}=0$). Le sous-groupe de Levi (standard) $M_\mathcal{P}$ associé à cette partition est constitué des éléments de $G$ qui préservent chacun des sous-espaces $\Vect(e_{n_1+\cdots+n_i+1},\cdots, e_{n_1+\cdots+n_{i+1}})$ pour $i \in \{1,\cdots,k\}$ (en posant $n_0=0$) ainsi que le sous-espace $V_{\tilde{n}}=\Vect(e_{n_1+\cdots+n_k+1},\cdots,e_n,v_0,f_n,\cdots,f_{n_1+\cdots+n_k+1})$.

Le respect des relations d'orthogonalité impose que les matrices des éléments de $M_\mathcal{P}$ dans la base $\mathcal{B}_0$ soient de la forme :

\[
\begin{pmatrix}
C_1 & & & & & & \\
& \ddots & & & & & \\
& & C_k & & & & \\
& & & D & & & \\
& & & & {}^\tau \! C_k^{-1} & & \\
& & & & & \ddots & \\
& & & & & & {}^\tau \! C_1^{-1}
\end{pmatrix}
\]
avec $C_i \in \GL_{n_i}(F)$, $D \in \SO(V_{\tilde{n}},\tilde{q})$ où $\tilde{q}$ est la restriction de $q$ à $V_{\tilde{n}}$, et où le pré-exposant ${}^\tau$ désigne la transposition par rapport à la seconde diagonale\footnote{${}^\tau \! (a_{ij})_{1\leq i,j \leq m}=(a_{m+1-j \; m+1-i})_{1\leq i,j \leq m}$} (qui commute bien à l'inversion, d'où l'absence de parenthèses). On a donc un isomorphisme canonique :
\begin{equation}\label{Levi_SO_temp}
M_{\mathcal{P}}\simeq \GL_{n_1}(F) \times \cdots \times \GL_{n_k}(F) \times \SO(V_{\tilde{n}},\tilde{q})
\end{equation}

On remarque que le tore maximal $T$ correspond au cas où $k=n$ et tous les $n_i$ sont égaux à 1, le groupe spécial orthogonal résiduel étant alors trivial ($\tilde{n}=0$).

%

\begin{defi}
Soit $\mathcal{B}=(\eps_1,\cdots,\eps_n,\upsilon,\varphi_n,\cdots,\varphi_1)$ une base de $V_n$. On dira que $\mathcal{B}$ est \emph{adaptée} si on a les relations suivantes :
\begin{align*}
<\eps_i, \varphi_j> & =\delta_{ij} ,\\
<\upsilon, \upsilon> & =2 ,\\
<\eps_i, \eps_j> &=<\varphi_i,\varphi_j>=0 ,\\
<\eps_i, \upsilon> &=<\varphi_i,\upsilon>=0.
\end{align*}
\end{defi}

La base $\mathcal{B}_0$ est alors une base adaptée et on remarque que le groupe orthogonal $\Oo(V_n,q)$ permute simplement transitivement les bases adaptées.

Si, partant d'une base adaptée quelconque, on réalise les mêmes constructions de sous-groupes standard que précédemment, on obtient donc des groupes qui sont conjugués (par $\Oo(V_n,q)$) à leurs équivalents pour le choix particulier de $\mathcal{B}_0$. Or, puisque l'on a $\Oo(V_n,q) \simeq \{\pm 1\} \times \SO(V_n,q)$ (nous sommes en dimension impaire), lesdits groupes sont en fait conjugués par $\SO(V_n,q)$.
\\


\emph{Remarque 1 :}
On peut se demander en quoi le groupe $G$ dépend de la forme particulière de la forme quadratique $q$. Soit donc une forme quadratique non dégénérée $r$ sur $F^{2n+1}$, on peut considérer le groupe $\SO(F^{2n+1},r)$. S'il est déployé, \ie s'il contient un tore $T_r$ déployé sur $F$ de rang $n$, alors on peut exhiber $n$ droites isotropes, contenues dans $n$ plans hyperboliques en somme directe orthogonale, le tout en somme directe orthogonale avec une droite non isotrope $Fv$. En particulier, la forme $r$ est nécessairement d'indice de Witt maximal, à savoir $n$, et son discriminant est donné par la classe de $(-1)^n r(v)$ dans $F^\times/(F^\times)^2$. 

Nous avons donc montré qu'à une forme quadratique (non dégénérée) d'indice de Witt maximal, on associait un groupe spécial orthogonal déployé, et que réciproquement, si le groupe spécial orthogonal d'une forme quadratique (non dégénérée) était déployé, alors celle-ci était d'indice de Witt maximal. De plus, la caractérisation par le discriminant nous dit qu'on a autant de formes quadratiques d'indice de Witt maximal inéquivalentes que d'éléments de $F^\times/(F^\times)^2$. 

En notant $\alpha=r(v)$, on a $\mathrm{disc}(r)=\mathrm{disc}(\alpha q)$ dans $F^\times/(F^\times)^2$ et, puisqu'il s'agit de deux formes quadratiques sur $F^{2n+1}$ d'indice de Witt maximal, on a $r \sim \alpha q$. Or, le groupe orthogonal est inchangé lorsqu'on multiplie la forme par un scalaire non nul et deux formes équivalentes ont des groupes orthogonaux canoniquement isomorphes. On a donc $\SO(F^{2n+1},r) \simeq \SO(F^{2n+1},\alpha q)= \SO(F^{2n+1},q)$, si bien qu'on a, à isomorphisme près, \emph{un seul} groupe spécial orthogonal déployé.

\paragraph{Notation}

On parlera dans la suite \emph{du} groupe spécial orthogonal déployé de dimension $2n+1$ sur $F$ et l'on notera, par raccourci, $\SO_{2n+1}$ pour $\bm{\SO_{V_n}}$ (en particulier $\SO_{2n+1}(F)$ désigne $\SO(V_n,q)$).

On peut alors réécrire l'isomorphisme \eqref{Levi_SO_temp} en :
\begin{equation}\label{Levi_SO}
M_{\mathcal{P}}\simeq \GL_{n_1}(F) \times \cdots \times \GL_{n_k}(F) \times \SO_{2\tilde{n}+1}(F).
\end{equation} \ps 

\emph{Remarque 2 :} On n'a pas utilisé la structure de corps local $p$-adique de $F$.


\section{Le groupe ${\rm K}_0$}\label{Le groupe K_0}

On se donne une base adaptée de $V_n$ (par exemple $\mathcal{B}_0$) et on considère le $\Ok$-module qu'elle engendre $L_0$ (qui est un \emph{réseau} de $V_n$). 

On peut alors considérer le schéma en groupes orthogonaux  $\bm{\Oo_{L_0}}$ associé au $\Ok$-module quadratique $(L_0,q_{|L_0})$ et le morphisme de schémas en groupes $\det : \bm{\Oo_{L_0}} \rightarrow \bm{\mu_2}$ dont le noyau est, par définition, $\bm{\SO_{L_0}}$. Ce schéma en groupes est adjoint, réductif (et même semi-simple si $n\geq 1$) sur $\Ok$ (\cite{Conrad}, Proposition C.3.10) et c'est un modèle entier de $\bm{\SO_{V_n}}$. On a donc que $\K_0=\bm{\SO_{L_0}}(\Ok)$ est un sous-groupe compact (maximal) hyperspécial de $G=\bm{\SO_{L_0}}(F)=\bm{\SO_{V_n}}(F)$.

\paragraph{Notations}
Pour le choix de la base adaptée $\mathcal{B}_0$, on note encore, par abus de notation, $\SO_{2n+1}$ le schéma en groupes $\bm{\SO_{L_0}}$ sur $\Ok$.

On définit également le schéma en groupes $\bm{\mathcal{B}}$ sur $\Ok$, sous-schéma en groupes de $\bm{\SO_{L_0}}$ préservant les $n$ $\Ok$-modules $\sum_{1 \leq i \leq k} \Ok e_i$, pour $k\in\{1,\cdots,n\}$. On remarque que l'on a $\bm{\mathcal{B}} \times_\Ok F \simeq \mathbf{B}$.

On définit enfin le schéma en groupes $\bm{\mathcal{T}}$ sur $\Ok$, sous-schéma en groupes de $\bm{\SO_{L_0}}$ (et de $\bm{\mathcal{B}}$) préservant les $n$ $\Ok$-modules $\Ok e_k$, pour $k\in\{1,\cdots,n\}$. On remarque que l'on a $\bm{\mathcal{T}} \times_\Ok F \simeq \mathbf{T}$. \newline


La construction de tels groupes hyperspéciaux dépend du choix d'une base adaptée, ils sont donc naturellement conjugués par $G$. 

On dispose de l'application naturelle de réduction modulo $\pk$,  $\pi_0 : \bm{\mathrm{SO}_{L_0}}(\Ok) \rightarrow \bm{\mathrm{SO}_{L_0}}(\Ok/\pk)$. 

\begin{defi}\label{defi Iwahori}
On définit le sous-groupe d'Iwahori (standard) $\mathrm{I}$ comme l'ensemble des éléments de $\K_0$ dont l'image par $\pi_0$ est dans $\bm{\mathcal{B}}(k)$.
\end{defi}

On mentionne enfin une propriété concernant l'intersection avec les sous-groupes de Levi.

\begin{prop}\label{Hypersp_inter_Levi}
Soit $M$ un sous-groupe de Levi standard de $\SO_{2n+1}(F)$, on a par \eqref{Levi_SO} :
\[
M \simeq \GL_{n_1}(F) \times \cdots \times \GL_{n_k}(F) \times \SO_{2\tilde{n}+1}(F),
\]
avec $\tilde{n}=n-(n_1+\cdots + n_k)$.\ps 


Alors $M \cap \K_0=\GL_{n_1}(\Ok) \times \cdots \times \GL_{n_k}(\Ok) \times \SO_{2\tilde{n}+1}(\Ok)$.
\end{prop}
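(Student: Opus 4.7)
The plan is to exploit the compatibility between the block decomposition defining the Levi $M_{\mathcal{P}}$ and the decomposition of the reference lattice $L_0$. Write
\[
L_0 \;=\; L^{(1)} \oplus \cdots \oplus L^{(k)} \;\oplus\; L^{(\tilde n)} \;\oplus\; L^{(k)\,\vee} \oplus \cdots \oplus L^{(1)\,\vee},
\]
where $L^{(i)} = \bigoplus_{j} \Ok\, e_{n_1+\cdots+n_{i-1}+j}$, where $L^{(i)\,\vee}=\bigoplus_{j}\Ok\, f_{\cdots}$ is the analogous $\Ok$-lattice in $Y$, and where $L^{(\tilde n)}=L_0\cap V_{\tilde n}$ is the reference lattice of $V_{\tilde n}$. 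An element $g\in M_{\mathcal{P}}$ is block-diagonal in the basis $\mathcal{B}_0$ with the blocks prescribed by \eqref{Levi_SO_temp}, and since it acts on $V_n$ as a direct sum of its blocks, preservation of $L_0$ is equivalent to the preservation of each direct summand by the corresponding block.

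Next I would translate each of these preservation conditions. The block $C_i\in\GL_{n_i}(F)$ preserves the $\Ok$-lattice $L^{(i)}$ in the basis $(e_{n_1+\cdots+n_{i-1}+1},\dots,e_{n_1+\cdots+n_i})$ if and only if $C_i\in\GL_{n_i}(\Ok)$ (with inverse in $\GL_{n_i}(\Ok)$ automatically, since the determinant is then a unit). The dual block ${}^\tau C_i^{-1}$ acts on $L^{(i)\,\vee}$; since the operations of inversion and transposition with respect to the anti-diagonal both stabilize $\GL_{n_i}(\Ok)$, the condition on this block is automatically equivalent to the same condition $C_i\in\GL_{n_i}(\Ok)$ (as it should be, by the equivariance under orthogonality of $L^{(i)\,\vee}$ and $L^{(i)}$ for the pairing $\langle e_l,f_m\rangle=\delta_{lm}$). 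The middle block $D$ preserves $L^{(\tilde n)}$ and preserves the restriction $\tilde q$; moreover $\det g = \det D$ (the $C_i$ contributions cancel against their transposes-inverses), so $g\in\K_0=\bm{\SO_{L_0}}(\Ok)$ forces $\det D=1$, i.e.\ $D\in\bm{\SO_{L^{(\tilde n)}}}(\Ok)=\SO_{2\tilde n+1}(\Ok)$.

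Combining these equivalences under the isomorphism \eqref{Levi_SO}, we obtain
\[
M\cap \K_0 \;=\; \GL_{n_1}(\Ok) \times \cdots \times \GL_{n_k}(\Ok) \times \SO_{2\tilde{n}+1}(\Ok),
\]
which is the desired identification. The whole argument is in essence bookkeeping on block decomposition; no serious obstacle is expected. The only points that warrant care are that (i)~the anti-diagonal transpose inversion $C\mapsto {}^\tau C^{-1}$ does preserve $\GL_{n}(\Ok)$, so that the condition coming from the lower blocks is redundant, and (ii)~the $\SO$-condition on the full matrix $g$ reduces to the $\SO$-condition on the single middle block $D$, because the contributions of the outer blocks to $\det g$ cancel pairwise.
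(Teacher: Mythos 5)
Your argument is correct; it is exactly the block-by-block bookkeeping that the paper itself dismisses with ``c'est imm\'ediat'', so there is no divergence of method to report. The one point worth phrasing carefully is that $C_i(L^{(i)})\subseteq L^{(i)}$ alone only gives $C_i\in{\rm M}_{n_i}(\Ok)$; it is the simultaneous condition on the dual block ${}^\tau C_i^{-1}$ (or, equivalently, the fact that $g\in\K_0$ is an \emph{automorphism} of $L_0$, hence restricts to an automorphism of each saturated summand) that upgrades this to $C_i\in\GL_{n_i}(\Ok)$ — which you do invoke, so the proof stands.
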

\begin{proof}
C'est immédiat.
\end{proof}\ps 

\emph{Remarque :}
On observe que $\SO_{2\tilde{n}+1}(\Ok)$ est un sous-groupe compact maximal hyperspécial de $\SO_{2\tilde{n}+1}(F)$ (et c'est le seul à conjugaison près). L'intersection avec un sous-groupe de Levi fait donc apparaître un $\K_0$ \og plus petit \fg{} et préserve ainsi le type de sous-groupe, propriété qui s'avérera précieuse dans la suite.

\section{Le groupe $\mathrm{J}$}\label{Le_groupe_J}



La définition du groupe $\J$ (et du groupe $\J^+$ au paragraphe suivant) se trouve dans un article de Benedict Gross (\cite{Gross}). Elle généralise le cas de sous-groupes compacts ouverts canoniques pour $\SO_3\simeq \PGL_2$ et pour $\SO_5 \simeq {\rm PGSp}_4$ et Gross en fait remonter l'idée à des travaux d'Armand Brumer \cite{Brumer}.

Il sera plus naturel pour nous de définir d'abord le groupe $\J$ puis (au paragraphe suivant) un sous-groupe $\J^+$ d'indice 2. Dans le cas classique $\SO_3\simeq \PGL_2$, comme dans le cas $\SO_5 \simeq {\rm PGSp}_4$ étudié par Roberts et Schmidt dans \cite{RS-art} et \cite{RS_book}, c'est plutôt le sous-groupe d'indice 2 qui est défini d'abord (\cf début du §\ref{J^+}). Il existe alors un élément canonique de carré 1 normalisant le groupe en question, dit involution d'Atkin-Lehner, qui nous permet de définir l'analogue du groupe $\J$. (Les \og traductions \fg{} se trouvent dans \cite{Tsai-phd}, Chapter 6.)

Le groupe $\J$ est d'ailleurs noté $J_0(\varpi)$ dans \cite{Gross} et $\J(\pk)$ dans \cite{Tsai-phd} et ces auteurs définissent plus généralement une famille de sous-groupes notés $J_0(\varpi^m)$ (resp. $\J(\pk^m)$) pour $m$ entier naturel. \newline

Pour fixer les idées, commençons par raisonner avec la base $\mathcal{B}_0$, qui nous avait permis de définir le réseau $L_0$.

\begin{defi}
Considérons le réseau $L$ de $V_n$ défini par
\begin{equation}\label{reseau_L}
\begin{array}{rl}
\vspace{3 pt}
L &=\left(\bigoplus(\Ok e_i \oplus \pk f_i)\right)\oplus \pk v_0 \\ \vspace{3 pt}
  &=X_\Ok \oplus \varpi Y_\Ok \oplus \pk v_0 \\ 
  &=X_\Ok + \varpi L_0,\end{array}
\end{equation}
où $X_\Ok=\overset{\perp}{\bigoplus} \Ok e_i$ et $Y_\Ok=\overset{\perp}{\bigoplus} \Ok f_i$.\ps 

On pose $\mathcal{B}'=(e_1,\cdots,e_n, v'_0, f'_n,\cdots, f'_1)$, où $v'_0=\varpi v_0$ et $f'_i=\varpi f_i$ : c'est une $\Ok$-base du réseau $L$ (mais ce n'est pas une $F$-base adaptée de $V_n$).

On note $\mathrm{J}$ (ou $\J_{2n+1}$ si on veut spécifier la dimension de l'espace ambiant) le stabilisateur de $L$, c'est un sous-groupe compact ouvert de $G$, dit sous-groupe \emph{épiparamodulaire}. En particulier, les éléments de $\J$, vus dans la base $\mathcal{B}'$, sont à coefficients dans $\Ok$.
\end{defi}

De même que pour $\K_0$ ci-dessus, la définition de $\J$ dépend du choix d'une base adaptée, mais la classe de conjugaison du groupe $\J$ est, elle, canoniquement définie. Lorsque ce sera nécessaire, on raisonnera sur \og le \fg{} groupe $\J$ particulier associé au choix de la base $\mathcal{B}_0$.

\begin{prop}\label{prop_Iwahori_inclus_J}
Le sous-groupe d'Iwahori $\mathrm{I}$ est inclus dans $\mathrm{J}$.
\end{prop}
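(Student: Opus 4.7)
Let $g \in \mathrm{I}$. By definition, $g \in \bm{\mathrm{SO}_{L_0}}(\Ok)$ and the reduction of $g$ modulo $\pk$ lies in $\bm{\mathcal{B}}(k)$. Writing $M=(m_{ij})_{1\leq i,j\leq 2n+1}$ for the matrix of $g$ in the ordered basis $\mathcal{B}_0=(e_1,\ldots,e_n,v_0,f_n,\ldots,f_1)$, the Iwahori condition will be rephrased as
$$m_{ij} \in \Ok\ \text{for all } i,j, \quad\text{and}\quad m_{ij} \in \pk\ \text{whenever } i>j.$$
Indeed, the Borel $\bm{\mathcal{B}}$ preserves the maximal isotropic flag $V_1 \subset V_2 \subset \cdots \subset V_n$; since its elements preserve $q$, they also preserve the orthogonal flag $V_n^\perp \supset V_{n-1}^\perp \supset \cdots \supset V_1^\perp$, and hence the full flag $V_1 \subset \cdots \subset V_n \subset V_n^\perp \subset \cdots \subset V_1^\perp$. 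In $\mathcal{B}_0$, each $V_k^\perp$ is spanned by the first $2n+1-k$ vectors of $\mathcal{B}_0$, so preserving this maximal flag is equivalent to being upper triangular, which yields the claim after reduction modulo $\pk$.

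The main observation is that $L = D(L_0)$, where $D$ is the $F$-linear automorphism of $V_n$ whose matrix in $\mathcal{B}_0$ is $\mathrm{diag}(1,\ldots,1,\varpi,\ldots,\varpi)$, consisting of $n$ entries equal to $1$ followed by $n+1$ entries equal to $\varpi$. Indeed, applying $D$ to $\mathcal{B}_0$ yields the family $(e_1,\ldots,e_n,\varpi v_0, \varpi f_n,\ldots, \varpi f_1)$, which is an $\Ok$-basis of $L$ by the very definition \eqref{reseau_L}. Consequently, the inclusion $g(L) \subseteq L$ is equivalent to $(D^{-1} g D)(L_0) \subseteq L_0$, i.e., to the matrix $N = D^{-1} M D$ having entries in $\Ok$. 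Since $g^{-1}$ also belongs to $\mathrm{I}$, the same argument applied to $g^{-1}$ will give the reverse inclusion $L \subseteq g(L)$, and thus equality.

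Finally, one computes $n_{ij} = (D_{jj}/D_{ii})\, m_{ij}$ and checks integrality by a short case analysis. When $i$ and $j$ are both $\leq n$, or both $\geq n+1$, we have $D_{ii} = D_{jj}$ and $n_{ij} = m_{ij} \in \Ok$. When $i \leq n < j$, we have $D_{jj}/D_{ii} = \varpi$, so $n_{ij} = \varpi\, m_{ij} \in \pk$. Finally, when $j \leq n < i$, then $i > j$, so the Iwahori condition forces $m_{ij} \in \pk$, whence $n_{ij} = \varpi^{-1} m_{ij} \in \Ok$. No serious obstacle arises: the crux is the exact matching between the ``below-diagonal-in-$\pk$'' shape dictated by the Iwahori and the diagonal conjugation by $D$ coming from the passage from $L_0$ to $L$, which is precisely what is needed to cancel the $\varpi^{-1}$ appearing below the diagonal of $N$.
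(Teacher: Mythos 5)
Your proof is correct. It rests on the same two inputs as the paper's argument — integrality of $g$ (i.e. $g\in\K_0$) and upper-triangularity of its reduction modulo $\pk$ — but packages them differently: the paper observes directly that the Iwahori condition gives $g(X_\Ok)\subset X_\Ok+\varpi L_0$ while $g(\varpi L_0)\subset \varpi L_0$, so that $g$ preserves $L=X_\Ok+\varpi L_0$ in two lines, whereas you conjugate by the diagonal element $D=\mathrm{diag}(1,\cdots,1,\varpi,\cdots,\varpi)$ satisfying $L=D(L_0)$ and check entrywise that $D^{-1}MD$ is integral. Your coordinate computation is essentially the matrix shadow of the paper's lattice containment, so nothing new is needed, but it has the small merit of making visible exactly which blocks of the matrix force the condition $m_{ij}\in\pk$ (only the block $j\leq n<i$), and your preliminary remark that preserving the isotropic flag forces preservation of the full flag (via the orthogonal complements $V_k^\perp$) correctly justifies the upper-triangular description of $\bm{\mathcal{B}}$, a point the paper states without proof in §\ref{Notations}. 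The appeal to $g^{-1}\in\mathrm{I}$ to upgrade $g(L)\subseteq L$ to equality is legitimate since $\mathrm{I}$ is a group (preimage of $\bm{\mathcal{B}}(k)$ under the reduction homomorphism); alternatively $\det g=1$ already forces $g(L)=L$.
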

\begin{proof}
Par définition, on a $\mathrm{I}=\{g \in \mathrm{K}_0 \, | \, \forall i \in \{1,\cdots,n\}, g(e_i) \in \sum_{j\leq i} \Ok e_j \mod \varpi L_0\}$. On a alors immédiatement, pour $g \in \mathrm{I}, g(X_\Ok) \subset X_\Ok + \varpi L_0$ et, bien sûr, $g(\varpi L_0) \subset \varpi L_0$, donc $g$ préserve $L=X_\Ok + \varpi L_0$ (\cf \eqref{reseau_L}) et on a ainsi l'inclusion $\mathrm{I} \subset \mathrm{J}$. 
\end{proof}

\begin{prop}\label{Param_inter_Levi}
Soit $M$ un sous-groupe de Levi standard de $\SO_{2n+1}(F)$, on a par \eqref{Levi_SO} :
\[
M \simeq \GL_{n_1}(F) \times \cdots \times \GL_{n_k}(F) \times \SO_{2\tilde{n}+1}(F),
\]
avec $\tilde{n}=n-(n_1+\cdots + n_k)$.\ps 

Alors $M \cap \J=\GL_{n_1}(\Ok) \times \cdots \times \GL_{n_k}(\Ok) \times \J_{2\tilde{n}+1}$ où $\J_{2\tilde{n}+1}$ correspond au sous-groupe épiparamodulaire de $\SO_{2\tilde{n}+1}(F)$. 
\end{prop}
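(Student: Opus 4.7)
The plan is to exploit the block-diagonal structure of elements of $M$ and decompose the reference lattice $L$ accordingly. For the partition $n = n_1 + \cdots + n_k + \tilde n$, I would introduce, for each $i \in \{1,\ldots,k\}$, the subspaces $W_i = \Vect(e_{n_1+\cdots+n_{i-1}+1},\ldots,e_{n_1+\cdots+n_i})$ and $W_i^* = \Vect(f_{n_1+\cdots+n_i},\ldots,f_{n_1+\cdots+n_{i-1}+1})$. By construction of the standard Levi $M$ via the isomorphism \eqref{Levi_SO}, one has $V_n = \bigoplus_{i=1}^k (W_i \oplus W_i^*) \oplus V_{\tilde n}$ with each summand stabilized by $M$, and on the $i$-th factor the block $C_i$ acts on $W_i$ while $({}^\tau C_i)^{-1}$ acts on $W_i^*$.

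The key step is to check that the lattice $L = X_\Ok \oplus \varpi Y_\Ok \oplus \pk v_0$ splits compatibly with this decomposition as
\[
L = \bigoplus_{i=1}^k \bigl((W_i\cap L) \oplus (W_i^*\cap L)\bigr) \oplus (V_{\tilde n}\cap L),
\]
where $W_i \cap L = \bigoplus \Ok e_\ell$ is the standard $\Ok$-lattice on the $e$-block, $W_i^*\cap L = \varpi\bigoplus \Ok f_\ell$ is $\varpi$ times the standard lattice on the $f$-block, and $V_{\tilde n}\cap L = \tilde X_\Ok \oplus \varpi \tilde Y_\Ok \oplus \pk v_0$ (written with respect to the adapted basis $(e_{n_1+\cdots+n_k+1},\ldots,e_n,v_0,f_n,\ldots,f_{n_1+\cdots+n_k+1})$ of $V_{\tilde n}$). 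By definition, this last summand is precisely the lattice whose stabilizer in $\SO(V_{\tilde n},\tilde q)$ defines the epiparamodular subgroup $\J_{2\tilde n+1}$.

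Having done this, I would conclude by reading off conditions block by block. For $g = (C_1,\ldots,C_k,D) \in M$, the equality $g(L) = L$ is equivalent to $g$ preserving each summand separately. On $W_i$ this says $C_i(\Ok^{n_i}) = \Ok^{n_i}$, i.e.\ $C_i \in \GL_{n_i}(\Ok)$; on $W_i^*$ the condition is that $({}^\tau C_i)^{-1}$ preserves $\varpi\Ok^{n_i}$, which, since multiplication by $\varpi$ is an $\Ok$-linear bijection between $\Ok^{n_i}$ and $\varpi\Ok^{n_i}$, reduces to $({}^\tau C_i)^{-1} \in \GL_{n_i}(\Ok)$ — the same condition as on $W_i$, and hence redundant; on $V_{\tilde n}$ it is exactly the defining condition $D \in \J_{2\tilde n+1}$. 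Combining these three equivalences yields the claimed equality.

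I do not expect any real obstacle here: the proposition is essentially a bookkeeping verification, and the only delicate point is noticing that the $e$-block and $f$-block conditions coincide thanks to the $\varpi$-twist. This ``good intersection'' property, together with its analogues for $\K_0$ and $\J^+$, is precisely what will enable the subsequent dimension-induction arguments.
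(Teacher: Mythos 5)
Your argument is correct and follows essentially the same route as the paper: both proofs exploit the block-diagonal form of elements of $M$ with respect to the decomposition of $L$ into its $e$-blocks, $\varpi$-twisted $f$-blocks, and the central piece $V_{\tilde n}\cap L=\tilde L$, and both reduce the $f$-block condition to $C_i\in\GL_{n_i}(\Ok)$ via the observation that ${}^\tau C_i^{-1}$ integral together with $C_i$ integral is exactly membership in $\GL_{n_i}(\Ok)$. The only cosmetic difference is that you phrase the block conditions as equalities of lattices (making the $f$-block condition literally redundant), whereas the paper phrases them as integrality of $C_i$ and of ${}^\tau C_i^{-1}$ separately and combines them; both are valid.
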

\begin{proof}
Commençons par préciser un peu les notations. On note $n'=n_1+\cdots+n_k$. Alors 
notons $\tilde{L_0}$ le $\Ok$-module libre engendré par $e_{n'+1},\cdots,e_n,v_0,f_n,\linebreak \cdots,f_{n'+1}$. C'est un $\Ok$-module quadratique et un réseau de $\tilde{L_0} \otimes F \simeq F^{2\tilde{n}+1}$, on peut alors considérer le schéma en groupes $\bm{\mathrm{SO}_{\tilde{L_0}}}$, défini sur $\Ok$. Le groupe de ses $F$-points est déployé et on le note, conformément  aux conventions précédentes, $\SO_{2\tilde{n}+1}(F)$.

On peut, comme ci-dessus, définir le sous-réseau $\tilde{L}$ de $\tilde{L_0}$ et regarder son stabilisateur, que l'on note $\J_{\tilde{n}}$.

Rappelons la forme que prennent les éléments du sous-groupe de Levi $M$ {\bfseries dans la base $\mathcal{B}_0$ }:

\[
\begin{pmatrix}
C_1 & & & & & & \\
& \ddots & & & & & \\
& & C_k & & & & \\
& & & D & & & \\
& & & & {}^\tau \! C_k^{-1} & & \\
& & & & & \ddots & \\
& & & & & & {}^\tau \! C_1^{-1}
\end{pmatrix}
\]
avec $C_i \in \GL_{n_i}(F)$ et $D \in \SO_{2\tilde{n}+1}(F)$ où
le pré-exposant ${}^\tau$ désigne la transposition par rapport à la seconde diagonale (qui commute bien à l'inversion, d'où l'absence de parenthèses). La présence simultanée de $C_i$ et ${}^\tau \! C_i^{-1}$ vient du respect des relations d'orthogonalité.

Si l'on s'intéresse maintenant à $M\cap \J$, il s'agit de voir quels sont les éléments de cette forme qui préservent le réseau $L$. {\bfseries Dans la base $\mathcal{B}'$}, on a l'écriture
\[
\begin{pmatrix}
C_1 & & & & & & \\
& \ddots & & & & & \\
& & C_k & & & & \\
& & & D' & & & \\
& & & & {}^\tau \! C_k^{-1} & & \\
& & & & & \ddots & \\
& & & & & & {}^\tau \! C_1^{-1}
\end{pmatrix}.
\]

\noindent (Seul le bloc matriciel central est modifié.) La contrainte sur $C_i$ est donc d'être à coefficients entiers avec ${}^\tau \! C_i^{-1}$ également à coefficients entiers
, \ie $C_i \in \GL_{n_i}(\Ok)$. 
Quant à $D$ (ou $D'$), elle doit préserver le réseau $\tilde{L}$ et en fait le stabiliser puisque la surjectivité générale vers $L$ et la forme de la matrice impliquent que $D$ envoie $\tilde{L}$ surjectivement sur $\tilde{L}$. C'est donc bien un élément de $\J_{2\tilde{n}+1}$.

Réciproquement, étant données des matrices dans les ensembles considérés, la construction de la matrice par blocs correspondante nous donne bien un élément de $M \cap \J$, si bien qu'on a effectivement  $M \cap \J=\GL_{n_1}(\Ok) \times \cdots \times \GL_{n_k}(\Ok) \times \J_{2\tilde{n}+1}$.
\end{proof}

\section{Le groupe $\mathrm{J}^+$}\label{J^+}
Il s'agit maintenant de définir un sous-groupe de $\J$, d'indice 2, que nous noterons $\J^+$. Il est noté $K(\varpi)$ dans \cite{Gross} et $\K(\pk)$ dans \cite{Tsai-phd}. Ce sous-groupe généralise le sous-groupe de congruence $\Gamma_0(\pk)$ pour $\SO_3\simeq\PGL_2$ et le sous-groupe paramodulaire étudié par Roberts et Schmidt dans \cite{RS-art} et \cite{RS_book}, noté $\K(\pk)$, pour $\SO_5\simeq{\rm PGSp_4}$. 

Il faut d'ailleurs noter que ces auteurs étudient en fait le cas plus général d'une famille de sous-groupes $\K(\pk^m)$ pour $m$ entier naturel (à mettre en parallèle avec la famille des $\Gamma_0(\pk^m)$). \newline

Un élément $g$ de $\mathrm{J}$ préserve le réseau $L$. Sa matrice dans la base $\mathcal{B}'$ est à coefficients dans $\Ok$, si bien qu'on peut réduire cette matrice modulo $\pk$ pour obtenir une matrice de $k$. 
L'élément $g$ préserve la forme quadratique $q$ (définie sur $L_0 \supset L$) donc également n'importe lequel de ses multiples scalaires, en particulier $q'=\frac{q}{\varpi}$, qui vérifie $q'(L) \subset \Ok$. 

L'élément $g$, vu comme automorphisme linéaire de $V'_k=\Vect_k(\mathcal{B}')$ préserve donc $q'\otimes k$, on a ainsi un élément de $\mathrm{O}(V'_k,q'\otimes k)$ et même de $\mathrm{SO}(V'_k,q'\otimes k)$ puisque le déterminant \og passe à la réduction modulo $\pk$ \fg. On a donc défini :

\begin{equation}\label{pi_J_vers_SO}
\pi : \J \longrightarrow \mathrm{SO}(V'_k,q'\otimes k).
\end{equation}

Il faut cependant noter que la forme $q' \otimes k$ est dégénérée sur $V'_k$. En effet le vecteur $v'_0$ qui était déjà orthogonal à tous les autres vecteurs de $\mathcal{B'}$ avant réduction, l'est également à lui-même après réduction (on a $q(v'_0)= \varpi^2$ donc $q'(v'_0)=\varpi$ et $(q'\otimes k)(v'_0)=0$). Plus précisément, on a $\Ker (q'\otimes k)=k v'_0$, puisque $q'\otimes k$ est non dégénérée sur l'espace quotient $V'_k /k v'_0$ qui est (isomorphe à) l'espace engendré sur $k$ par les $e_i$ et les $f'_i$. On notera $\bar{q'}$ la forme sur l'espace quotient $V'_k/kv'_0$ et $\bar{g}$ l'endomorphisme correspondant, qui est donc un élément de $\mathrm{O}(V'_k/kv'_0,\bar{q'})$. On a ainsi défini :

\begin{equation}\label{J_vers_O_pair}
\begin{array}{ccccc}
\bar{\pi} & : & \J & \longrightarrow & \mathrm{O}(V'_k/kv'_0,\bar{q'}) \\
 & & g & \longmapsto & \bar{g} \\
\end{array}
\end{equation}
comme composée du morphisme $\pi$ défini ci-dessus suivi de la réduction modulo le noyau de $q'\otimes k$.

On définit alors $d : \mathrm{O}(V'_k/kv'_0,\bar{q'}) \rightarrow \{\pm 1\}$ par $d=\det$ quand ${\rm car}(k) \neq 2$ et par $d=\det_{\rm DD}$ quand ${\rm car}(k)=2$.

\begin{prop-def}\label{prop-def_J+}
On note $\alpha : \J_{2n+1} \rightarrow \{\pm1\}$ le morphisme obtenu en composant $\bar{\pi}$ et $d$.
On note $\J^+_{2n+1}$ (ou $\J^+$) le noyau de ce morphisme, c'est un sous-groupe compact ouvert de $G$, dit sous-groupe \emph{paramodulaire}.
Le morphisme $\alpha$ est surjectif si $2n+1>1$. 
\end{prop-def}

%
\begin{proof}
Il s'agit donc de voir que le morphisme $\alpha$ est surjectif (si $2n+1>1$), quelle que soit la caractéristique. Considérons l'élément $u$ qui échange $e_1$ et $f'_1$, envoie $v'_0$ sur $-v'_0$ et laisse les autres vecteurs de base inchangés. On a $u \in G$, $u$ préserve $L$ et, après réductions, il s'agit d'évaluer le déterminant (resp. le déterminant de Dickson-Dieudonné, \cf Lemme \ref{lemme_det_DD_transpo})
d'un élément qui échange les deux droites isotropes d'un plan hyperbolique (et fixe l'orthogonal dudit plan hyperbolique), qui vaut dans tous les cas $-1$.
\end{proof}\ps


\emph{Remarque 1 :} Si $2n+1=1$, les étapes ci-dessus imposent $\J^+_1=\J_1=\{1\}$.\medskip

\emph{Remarque 2 :} Le noyau de $q'\otimes k$ est préservé par son groupe orthogonal, si bien que $\pi(g)$ agit sur $v'_0$ par un scalaire $\beta \in k^\times$. Le déterminant de $\pi(g)$ est donc donné par le produit de $\beta$ et du déterminant \emph{naïf} de $\bar{g}$. Or, $\pi(g)$ est dans le groupe spécial orthogonal si bien que $\beta=\det (\bar{g})^{-1}=\det (\bar{g})$. En particulier, $\beta \in \{ \pm 1 \}$ et, {\bfseries en caractéristique différente de $2$}, on peut déterminer l'appartenance à $\mathrm{J}^+$ en regardant simplement ce coefficient.\medskip

\emph{Remarque 3 :} Sur le corps fini $k$, il n'y a que deux classes d'équivalence de formes quadratiques non dégénérées qui correspondent au même groupe orthogonal en dimension impaire (les deux formes quadratiques sont d'indice de Witt maximal, on peut employer les mêmes arguments qu'au paragraphe \ref{Notations}) et à deux groupes orthogonaux non isomorphes en dimension paire $2n$. En effet, dans ce cas, les deux formes quadratiques inéquivalentes sont respectivement d'indice de Witt $n$ et $n-1$ et on aura donc un groupe orthogonal déployé (dans le cas de l'indice de Witt maximal) et l'autre non. Dans le cas considéré ici, on sait donc, puisque la forme $\bar{q'}$ est d'indice de Witt $n$, à quel groupe orthogonal on a affaire, ce groupe est noté $\mathrm{O}_{2n}^+(k)$.

On peut résumer la situation par le diagramme suivant :

\begin{equation*}
\begin{tikzcd}
\bar{\pi} : \J \arrow[r,"\pi"] \arrow[drr,"\alpha"] & \mathrm{SO}(V'_k,q'\otimes k)  \arrow{r} & (\{\pm 1\} \times \mathrm{O}_{2n}^+(k))^{\mathrm{det}=1} \arrow[d,dashrightarrow,"1\times d"] \\
 & & \{\pm 1\}
\end{tikzcd}.
\end{equation*}

On a \emph{de facto} légèrement redéfini $\bar{\pi}$ puisque le groupe d'arrivée n'est pas tout à fait le même, quoique isomorphe.
\begin{prop}\label{prop_I_inclus_J+}
Le sous-groupe d'Iwahori $\mathrm{I}$ défini en \ref{defi Iwahori} est non seulement inclus dans $\mathrm{J}$, mais plus précisément est inclus dans $\mathrm{J}^+$.

Cette inclusion est une égalité si $2n+1=3$ (et si $2n+1=1$), et est stricte sinon.
\end{prop}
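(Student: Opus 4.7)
The plan is, after invoking $\mathrm{I} \subset \mathrm{J}$ from Proposition \ref{prop_Iwahori_inclus_J}, to verify that $\alpha(g) = 1$ for every $g \in \mathrm{I}$. The key is a clean matrix analysis. Writing $M_0$ for the matrix of $g$ in $\mathcal{B}_0$, the assumption $g \in \mathrm{I}$ says that $M_0 \bmod \pk$ is upper triangular in the ordering $(e_1, \dots, e_n, v_0, f_n, \dots, f_1)$, with diagonal of the form $(\lambda_1, \dots, \lambda_n, 1, \lambda_n^{-1}, \dots, \lambda_1^{-1})$ coming from the Borel $\bm{\mathcal{B}}(k)$. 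Conjugating by the diagonal change-of-basis matrix $D = \mathrm{diag}(1,\dots,1,\varpi,\dots,\varpi)$ (which sends $\mathcal{B}_0$ to $\mathcal{B}'$) and reducing modulo $\pk$, the upper-right $n \times (n+1)$ block of $D^{-1} M_0 D$ vanishes (its entries are multiplied by $\varpi$). Hence the reduction $\pi(g) \bmod \pk$ has block lower triangular form in $\mathcal{B}'$. Passing to the quotient by $k v'_0$, the resulting $\bar\pi(g)$ stabilizes the totally isotropic maximal subspace $\bar X = \Vect(\bar e_1, \dots, \bar e_n)$ of $(V'_k/kv'_0, \bar{q'})$, and its two $n \times n$ diagonal blocks are upper triangular with diagonals $(\lambda_i)$ and $(\lambda_i^{-1})$ respectively.

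I would then split on the residual characteristic to compute $\alpha(g) = d(\bar\pi(g))$. If $\mathrm{char}(k) \neq 2$, then $d = \det$ and the block lower triangular form gives directly $\det \bar\pi(g) = \prod_i \lambda_i \cdot \prod_i \lambda_i^{-1} = 1$. If $\mathrm{char}(k) = 2$, then $d = \det_{\mathrm{DD}}$; since $\bar\pi(g)$ stabilizes the totally isotropic maximal $\bar X$ in the split quadratic space $(V'_k/kv'_0, \bar{q'})$, Lemma \ref{lemme_det_DD_parabolq_Siegel} yields $\det_{\mathrm{DD}}(\bar\pi(g)) = 1$. In either case $\alpha(g) = 1$, so $g \in \mathrm{J}^+$, and the inclusion $\mathrm{I} \subset \mathrm{J}^+$ is proven.

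The case-by-case strictness statement is handled separately. For $2n+1 = 1$ both groups are trivial. For $2n+1 = 3$, the isomorphism $\mathrm{SO}_3 \simeq \mathrm{PGL}_2$ recalled in the introduction identifies $\mathrm{J}^+$ with the image of $\Gamma_0(\pk)$, which is precisely the image of the standard Iwahori of $\mathrm{PGL}_2$; equivalently, one checks that $\bar\pi(\mathrm{J}^+) = \bar\pi(\mathrm{I})$ inside the abelian group $\mathrm{SO}_2^+(k) \simeq k^\times$ (which is its own Borel). For $2n+1 \geq 5$, I would exhibit an explicit element of $\mathrm{J}^+ \setminus \mathrm{I}$: the double transposition $\sigma$ swapping $e_1 \leftrightarrow e_2$ and $f_1 \leftrightarrow f_2$ (fixing every other basis vector) lies in $\mathrm{SO}_{2n+1}(\Ok) = \mathrm{K}_0$, preserves the lattice $L$ (so $\sigma \in \mathrm{J}$), and its image $\bar\pi(\sigma)$ still stabilizes $\bar X$, hence $\alpha(\sigma) = 1$ by the same argument as above in either residual characteristic; but $\sigma \bmod \pk$ is plainly not upper triangular, so $\sigma \notin \mathrm{I}$.

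The one step demanding care is the change of basis $\mathcal{B}_0 \to \mathcal{B}'$ and the conversion of the upper triangular structure into a block lower triangular structure stabilizing $\bar X$; once this is set up, the inclusion reduces cleanly to a determinant computation in odd residual characteristic and to an invocation of Lemma \ref{lemme_det_DD_parabolq_Siegel} in residual characteristic $2$.
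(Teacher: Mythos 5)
Your proposal follows the paper's strategy closely: reduce modulo $\pk$, exploit the triangular shape coming from $\bm{\mathcal{B}}(k)$, compute a determinant in odd residual characteristic, invoke Lemma \ref{lemme_det_DD_parabolq_Siegel} in characteristic $2$, and exhibit the same double transposition $e_1\leftrightarrow e_2$, $f_1\leftrightarrow f_2$ for strictness when $2n+1\geq 5$. One concrete slip, though: the maximal totally isotropic subspace stabilized by $\bar\pi(g)$ is $\Vect(\bar f'_n,\dots,\bar f'_1)$, \emph{not} $\bar X=\Vect(\bar e_1,\dots,\bar e_n)$. Your own computation shows why: conjugating by $D$ kills the upper-right block (it is multiplied by $\varpi$) but \emph{divides} the lower-left block of $M_0$ by $\varpi$; since those entries lie in $\pk$ (they are below the diagonal) they become arbitrary elements of $\Ok$, so mod $\pk$ the image of $\bar e_j$ acquires $\bar f'$-components. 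A block lower triangular matrix in the ordering $(e,v'_0,f')$ stabilizes the span of the \emph{last} block, so as written your two claims contradict each other. The error is harmless — Lemma \ref{lemme_det_DD_parabolq_Siegel} applies to any maximal totally isotropic subspace, and $\det\begin{pmatrix} A & 0\\ * & C\end{pmatrix}=\det A\det C=1$ regardless — but it should be corrected. (The paper sidesteps the issue in odd characteristic by reading off only the scalar $\beta$ by which $\pi(g)$ acts on $kv'_0$, via Remark~2 of §\ref{J^+}.)

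The other divergence is the case $2n+1=3$. You delegate the equality $\mathrm{I}=\J^+$ to the exceptional isomorphism $\SO_3\simeq\PGL_2$ and the identification of $\J^+$ with $\Gamma_0(\pk)$; the paper only asserts that identification with a citation to Tsai's thesis, and instead proves $\J^+\subseteq\mathrm{I}$ directly: for $g\in\J^+$ one checks $g(e)\in\Ok e+\pk v_0+\pk f$ automatically, $g(v'_0)\in\varpi L_0$ because $\pi(g)$ preserves the radical $kv'_0$, and $g(f')\in\varpi L_0$ precisely because $\det_{\rm DD}(\bar\pi(g))=1$ forces $\bar\pi(g)$ to fix each isotropic line of $k\bar e\oplus k\bar f'$. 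Your fallback argument "$\bar\pi(\J^+)=\bar\pi(\mathrm{I})$ in $\SO_2^+(k)$" is not sufficient on its own: to conclude $\J^+=\mathrm{I}$ from equality of images you must also verify $\Ker\bar\pi\cap\J\subseteq\mathrm{I}$, which is essentially the content of the paper's direct lattice computation. So either cite the translation to $\Gamma_0(\pk)$ explicitly as an external input, or supply the direct argument.
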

\begin{proof}
Soit $g$ un élément de ${\rm I}$. En le réduisant modulo $\pk$, on obtient un élément triangulaire supérieur avec les conditions d'orthogonalité qui imposent que les coefficients diagonaux (réduits) soient de la forme $(\alpha_1, \cdots \alpha_n, \beta, \alpha_n^{-1}, \cdots,\linebreak \alpha_1^{-1})$. La condition de déterminant 1 impose donc $\beta=1$, ce qui suffit à conclure que $g \in \J^+$ si ${\rm car}(k) \neq 2$ d'après la Remarque 2. En caractéristique 2, on a encore $g\in \J^+$ en utilisant le Lemme \ref{lemme_det_DD_parabolq_Siegel}.\ps 


Pour le second point, si $2n+1>3$, considérons l'élément $g$ qui échange $e_1$ et $e_2$, ainsi que $f_1$ et $f_2$ en laissant tous les autres vecteurs de la base $\mathcal{B}_0$ inchangés. Alors $g$ préserve les réseaux $L_0$ et $L$, si bien que c'est un élément de $\J$ (et de $\K_0$) dont on voit immédiatement (c'est un produit d'éléments qui échangent les deux droites isotropes d'un plan hyperbolique et fixent l'orthogonal dudit plan hyperbolique) qu'il est dans $\J^+$. Or $g(e_1) \not\in \Ok e_1 + \pk L_0$, si bien que $g \not\in \mathrm{I}$.\ps 

Dans le cas $2n+1=1$, tous les groupes considérés sont triviaux et il n'y a rien à dire. Dans le cas $2n+1=3$, il faut voir que tout élément $g$ de $\J^+$ est dans $\mathrm{I}$. On notera $e$ pour $e_1$ et $f$ pour $f_1$. Puisque $g$ préserve $L$, on a $g(e) \in \Ok e + \pk v_0 + \pk f$ si bien que la condition qui définit le sous-groupe d'Iwahori $\mathrm{I}$ est vérifiée ; il reste simplement à voir que $g$ appartient à $\K_0$,
\ie $g(v'_0) \in \varpi L_0$ et $g(f') \in \varpi L_0$.

Après réduction modulo $\pk$, $k v'_0$ est le noyau de $q' \otimes k$ et est donc préservé par $\pi(g)$ (\cf \eqref{pi_J_vers_SO}), soit $g(v'_0) \in v'_0+\varpi L_0 \subset \varpi L_0$.

Par ailleurs, le fait que $g$ soit dans $\J^+$ signifie que $\bar{\pi}(g)$ (\cf \eqref{J_vers_O_pair}) est de déterminant (de Dickson-Dieudonné, le cas échéant) égal à 1, \ie qu'il stabilise chacune des deux droites isotropes du plan hyperbolique $ke\oplus kf'$. Cela signifie exactement que $g(f') \in \varpi L_0$ et achève la démonstration.
%
%
%
\end{proof}

\begin{prop}\label{Param+_inter_Levi}
Soit $M$ un sous-groupe de Levi standard de $\SO_{2n+1}(F)$, on a par \eqref{Levi_SO} :
\[
M \simeq \GL_{n_1}(F) \times \cdots \times \GL_{n_k}(F) \times \SO_{2\tilde{n}+1}(F),
\]
avec $\tilde{n}=n-(n_1+\cdots + n_k)$.\ps 

Alors $M \cap \J^+=\GL_{n_1}(\Ok) \times \cdots \times \GL_{n_k}(\Ok) \times \J_{2\tilde{n}+1}^+$ où $\J^+_{2\tilde{n}+1}$ correspond au sous-groupe paramodulaire de $\SO_{2\tilde{n}+1}(F)$.
\end{prop}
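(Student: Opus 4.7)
Le plan est de se ramener à la Proposition \ref{Param_inter_Levi} déjà établie, puis d'analyser le comportement du morphisme $\alpha$ de la Proposition-Définition \ref{prop-def_J+} vis-à-vis de la décomposition par blocs fournie par le sous-groupe de Levi. Si $g \in M \cap \J$ s'écrit sous la forme $(C_1,\ldots,C_k,D)$ avec $C_i \in \GL_{n_i}(\Ok)$ et $D \in \J_{2\tilde{n}+1}$, il suffira de démontrer l'égalité $\alpha(g) = \alpha_{2\tilde{n}+1}(D)$, où $\alpha_{2\tilde{n}+1}$ désigne le morphisme analogue à $\alpha$ mais pour $\SO_{2\tilde{n}+1}(F)$ ; la conclusion en résultera immédiatement en intersectant avec le noyau de $\alpha$.

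La première étape consistera à observer que l'espace $V'_k/kv'_0$ se décompose en somme directe orthogonale $\bigl(\overset{\perp}{\bigoplus}_{i=1}^k H_i\bigr) \overset{\perp}{\oplus} (\tilde{V}'_k/kv'_0)$, où $H_i$ désigne le plan hyperbolique généralisé, de dimension $2n_i$, engendré par les réductions des $e_j$ et $f'_j$ d'indices associés au $i$-ème bloc. Cette décomposition est préservée par $\bar{\pi}(g)$, qui agit sur chaque $H_i$ par la matrice diagonale par blocs de coefficients $\bar C_i$ et ${}^\tau \bar C_i^{-1}$ (stabilisant en particulier le lagrangien $\bar X_{n_i}$ de $H_i$), et sur la partie centrale par $\bar{\pi}_{2\tilde{n}+1}(D)$. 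Par multiplicativité de $d$ sur une telle décomposition orthogonale préservée (étendant immédiatement le Lemme \ref{lemme_det_DD_transpo} si la caractéristique résiduelle vaut $2$, immédiate sinon), $\alpha(g)$ se factorise alors en le produit des contributions $d(\bar\pi(g)|_{H_i})$ et de $\alpha_{2\tilde{n}+1}(D)$.

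Il restera enfin à vérifier que la contribution de chaque bloc \og latéral \fg{} est triviale. En caractéristique résiduelle différente de $2$, cela découle immédiatement de $\det(\bar C_i) \cdot \det({}^\tau \bar C_i^{-1}) = 1$. En caractéristique $2$ -- sans doute le point le plus délicat -- l'action sur $H_i$ stabilise son sous-espace totalement isotrope maximal $\bar X_{n_i}$, et le Lemme \ref{lemme_det_DD_parabolq_Siegel} donne aussitôt $\det_{\rm DD} = 1$. L'obstacle principal sera donc le traitement soigneux du cas de caractéristique résiduelle $2$ via le déterminant de Dickson-Dieudonné, nécessitant l'invocation précise des Lemmes \ref{lemme_det_DD_transpo} et \ref{lemme_det_DD_parabolq_Siegel} ; une fois cela fait, la preuve se ramène à une simple analyse par blocs appuyée sur la Proposition \ref{Param_inter_Levi}.
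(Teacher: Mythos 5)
Votre démonstration est correcte et suit essentiellement la même voie que celle du texte : réduction à la Proposition \ref{Param_inter_Levi}, puis constat que les blocs $\mathrm{diag}(\bar C_i,{}^\tau\bar C_i^{-1})$ ne contribuent pas au morphisme $\alpha$, si bien que tout se joue sur $D$. Vous êtes simplement plus explicite que le texte sur le cas de caractéristique résiduelle $2$ (multiplicativité du déterminant de Dickson-Dieudonné sur la décomposition orthogonale et invocation du Lemme \ref{lemme_det_DD_parabolq_Siegel}), ce qui est un soin bienvenu mais ne change pas l'argument.
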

\begin{proof}
En reprenant les notations de la preuve de la Proposition \ref{Param_inter_Levi}, le déterminant à calculer dans $V'_k/kv'_0$ ne va pas faire intervenir les matrices $C_i$ car $\det({}^\tau \! C_1^{-1})=\det(C_i)^{-1}$, égalité qui demeure après réduction modulo $\pk$. Le calcul ne dépend donc que de la matrice $D$ et un élément diagonal par blocs est dans $\J^+$ si, et seulement si $D$ est dans $\J^+_{2\tilde{n}+1}$.
\end{proof}

\begin{lemme}\label{lemme_BcapJ}
On a :
\[
B \cap \J \subset \J^+.
\]
De plus, l'image par la projection $B \twoheadrightarrow B/N \simeq T$ envoie $B \cap \J$ sur $\bm{\mathcal{T}}(\Ok)$. 
\end{lemme}

\begin{proof}
Considérons un élément $m \in B \cap \J$. Alors dans la base $\mathcal{B}'$, on a :
\[
m=\begin{pmatrix}
\lambda_1 &  &* &* &* &* &* \\
& \ddots & &* &* &* &* \\
& & \lambda_n & &* &* &* \\
& & & 1 & &* &* \\
& & & & \lambda_n ^{-1} & &* \\
& & & & & \ddots & \\
& & & & & & \lambda_1^{-1}
\end{pmatrix}
\]
avec les $\lambda_i$ et les $\lambda_i^{-1}$ dans $\Ok$. Ainsi, les coefficients diagonaux sont dans $\Ok^\times$, et ceux-ci sont inchangés dans la base $\mathcal{B}_0$, ce qui prouve le deuxième point.

Pour le premier point, il faut réduire modulo $\pk$ {\it et} modulo le noyau de la forme désormais dégénérée sur $k$ (ce noyau est canonique et est la droite vectorielle engendrée par $v_0$). On obtient alors un élément de $\mathrm{O}_{2n}^+(k)$ dont il s'agit de connaître le déterminant (resp. le déterminant de Dickson-Dieudonné) pour savoir s'il est dans $\J^+$ ou non. Or l'élément réduit en question est :
\[
\begin{pmatrix}
\overline{\lambda_1} &  &* &* &* &*  \\
& \ddots & &* &* &*  \\
& & \overline{\lambda_n} & &* &*  \\
& & &  \overline{\lambda_n ^{-1}} & &* \\
& & &  & \ddots & \\
& & &  & & \overline{\lambda_1^{-1}}
\end{pmatrix},
\]
 (où la barre désigne la réduction dans $k$), qui est trivialement de déterminant 1 et, si ${\rm car}(k)=2$, de déterminant de Dickson-Dieudonné égal à 1, d'après le Lemme \ref{lemme_det_DD_parabolq_Siegel}.
\end{proof}

\section{Norme spinorielle}\label{Norme spinorielle}

Nous allons voir ici une autre caractérisation du sous-groupe $\mathrm{J}^+$ de $\mathrm{J}$ grâce à la norme spinorielle. Cette caractérisation est, à notre connaissance, originale.\medskip

Considérons d'abord un espace quadratique $(V,q)$ non dégénéré sur $F$, au sens de la Proposition-Définition \ref{Prop_def_FQ}.

Le groupe $\SO (V,q)$ est un groupe spécial orthogonal sur le corps $F$, on peut définir le morphisme \og norme spinorielle \fg{} $\nu : \SO (V,q) \rightarrow F^\times / (F^\times)^2$. Ce morphisme peut être défini à partir de l'algèbre de Clifford de $V$ et du morphisme $\mathrm{GSpin}(V,q) \rightarrow \SO(V,q)$.  On peut aussi définir directement $\nu$ sur les réflexions de $\Oo (V,q)$ en posant $\nu(\tau_x)=q(x)$ où $\tau_x$ désigne la réflexion par rapport au vecteur $x$ (ou à la droite $Fx$), nécessairement non isotrope (sans quoi la réflexion n'est pas définie). On a $\tau_{\lambda x}=\tau_x$ pour $\lambda \in F^\times$ et $q(\lambda x)=\lambda^2 q(x)$ ; $\nu(\tau_x)$ est donc bien défini dans $F^\times / (F^\times)^2$. Tout élément de $\SO (V,q)$ est produit (d'un nombre pair) de réflexions et on peut ainsi calculer $\nu$ sur $\SO (V,q)$, le résultat ne dépendant pas de la décomposition comme produit de réflexions (\cite{OMeara}, §55).\medskip


Étant donné un caractère $\chi$ de $F^\times$ trivial sur $(F^\times)^2$ (on parlera de caractère quadratique), on peut donc considérer le caractère $\chi \circ \nu$ de $\SO (V,q)$ (et de ses sous-groupes). Nous commettrons parfois l'abus de notation consistant à désigner encore ce caractère par $\chi$.

Deux caractères quadratiques de $F^\times$ vont être d'une importance particulière pour la suite : le caractère trivial (d'ordre exactement 1) et le caractère $\eta$, unique caractère non ramifié d'ordre {\it exactement} 2, \ie défini par $\eta_{|\Ok^\times}=1$ et $\eta(\varpi)=-1$. Puisque deux uniformisantes diffèrent par un élément de $\Ok^\times$, on voit en particulier que la définition de $\eta$ ne dépend pas du choix particulier de $\varpi$.

Les caractères $\1$ et $\eta$ sont les seuls caractères quadratiques non ramifiés de $F^\times$.\medskip


On se restreint désormais au seul $F$-espace quadratique $(V_n,q)$ introduit au paragraphe \ref{Notations} et à son groupe spécial orthogonal $\SO (V_n,q)$, noté $\SO_{2n+1}(F)$.

Nous allons démontrer le théorème suivant.

\begin{thm}\label{thm_J+_noyau_norme_spin}
Le morphisme $\eta \circ \nu :\SO_{2n+1}(F) \rightarrow \{\pm 1 \}$, restreint à $\J$ est surjectif quand $2n+1>1$. Son noyau est exactement $\J^+$.
\end{thm}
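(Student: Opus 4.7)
The proof naturally splits into two parts: surjectivity of $\eta\circ\nu|_{\J}$, and coincidence of its kernel with $\J^+$.

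\textbf{Surjectivity.} I would exhibit an explicit element of $\J$ on which $\eta\circ\nu$ takes the value $-1$, using the element $u\in\J\setminus\J^+$ introduced in the proof of Proposition-Définition \ref{prop-def_J+}. This element is the identity off the nondegenerate $3$-dimensional subspace $W=Fe_1\oplus Fv_0 \oplus Ff_1$, and on $W$ sends $e_1\mapsto\varpi f_1$, $f_1\mapsto \varpi^{-1}e_1$, $v_0\mapsto -v_0$. A direct check shows that $u=\tau_{v_0}\circ \tau_{e_1-\varpi f_1}$, and since $q(v_0)=1$ and $q(e_1-\varpi f_1)=-\varpi$, we obtain $\nu(u)\equiv -\varpi\pmod{(F^\times)^2}$. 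As $\eta$ is unramified and trivial on $\Ok^\times$ (in particular at $-1$), it follows that $\eta(\nu(u))=\eta(-1)\eta(\varpi)=-1$, which proves surjectivity.

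\textbf{Coincidence of kernels.} Set $\chi:=\eta\circ\nu|_{\J}$. Both $\alpha$ and $\chi$ are surjective group homomorphisms $\J\to\{\pm1\}$, hence coincide iff $\alpha\cdot\chi\equiv 1$. Since $\alpha(u)\chi(u)=(-1)(-1)=1$ and $u\notin\J^+$, it is enough to show $\chi|_{\J^+}\equiv 1$, i.e. that $\nu(g)$ has even $\varpi$-adic valuation for every $g\in\J^+$. I would combine three ingredients: (i) by Proposition \ref{prop_I_inclus_J+}, $\mathrm{I}\subset\J^+$, and $\mathrm{I}$ is generated by $\bm{\mathcal{T}}(\Ok)$ together with the unipotent root subgroups with $\Ok$-parameters; the spinor norm on these factors lies in $\Ok^\times$ modulo squares (using the formula $\nu(\mathrm{diag}(\lambda_1,\ldots,\lambda_n,1,\lambda_n^{-1},\ldots,\lambda_1^{-1}))=\lambda_1\cdots\lambda_n$ for the torus and the vanishing of $\nu$ on unipotent elements, which lift to the simply connected cover), so $\chi|_{\mathrm{I}}\equiv 1$; (ii) for each proper standard Levi $M$, Proposition \ref{Param+_inter_Levi} identifies $\J^+\cap M$ with $\prod\GL_{n_i}(\Ok)\times\J^+_{2\tilde n+1}$, and the classical formula $\nu(\mathrm{diag}(C,1,{}^\tau C^{-1}))=\det C\in\Ok^\times$ modulo squares, combined with induction on $n$ (base case $\J^+_3=\mathrm{I}$ by Proposition \ref{prop_I_inclus_J+}), handles all such Levi pieces; (iii) an Iwasawa-type generation statement expressing $\J^+$ as the subgroup generated by $\mathrm{I}$ together with its Levi intersections.

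The main obstacle is point (iii), the generation of $\J^+$ from its intersections with proper Levi subgroups and the Iwahori. I would expect this to follow from the Iwasawa factorizations mentioned in the introduction (paragraphe \ref{Factorisations d'Iwasawa}) adapted to $\J^+$ via a parity argument on the length in the affine Weyl group, or alternatively by explicit enumeration of the double cosets $\mathrm{I}\backslash \J^+/\mathrm{I}$ using the combinatorics of lattices containing $L$ up to scaling. Granted (iii), the argument concludes at once: $\chi$ vanishes on a generating set of $\J^+$, hence on all of $\J^+$, and this together with the first step yields $\J^+=\ker(\eta\circ\nu|_{\J})$.
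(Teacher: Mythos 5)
Your surjectivity argument is exactly the paper's: the element $u=\tau_{v_0}\tau_{e_1-\varpi f_1}$ with $\nu(u)=-\varpi$ and $\eta(-\varpi)=-1$. That half is complete.

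The kernel half, however, has a genuine gap at your step (iii), and you have in effect flagged it yourself. Your whole argument rests on the claim that $\J^+$ is generated by the Iwahori $\mathrm{I}$ together with the intersections $\J^+\cap M$ for proper standard Levi subgroups $M$, and you offer only a hope that this follows from ``Iwasawa factorizations adapted to $\J^+$ via a parity argument'' or from an enumeration of $\mathrm{I}\backslash\J^+/\mathrm{I}$. Neither is routine, and the statement itself is doubtful as formulated: the paper shows (Corollaire \ref{facto_J+}) that $\J^+=(\K_0\cap\J)\cdot S^+$ with $S^+=\mathrm{Fix}_{\J}(\varpi v_0)\simeq \SO(X_\Ok\oplus\pk Y_\Ok,q')$, the full special orthogonal group of a rank-$2n$ lattice. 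This subgroup fixes the line $Fv_0$, which is \emph{not} cut out by any proper Levi of $\SO_{2n+1}$ (Levis have the shape $\prod\GL_{n_i}(F)\times\SO_{2\tilde n+1}(F)$), and its elements involve $\varpi$ in an essential way (they mix $X_\Ok$ with $\pk Y_\Ok$), so they do not visibly decompose into Iwahori elements and elements of $\prod\GL_{n_i}(\Ok)\times\J^+_{2\tilde n+1}$. Moreover the paper's own Proposition \ref{prop_G_neq_BJ+} shows the Iwasawa factorization \emph{fails} for $\J^+$ ($B\J^+\subsetneq G$), so the route you point to would require a genuinely new generation lemma, not an adaptation.

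The paper's proof sidesteps generation entirely: it establishes the factorization $\J^+=(\K_0\cap\J)\cdot S^+$ and then bounds the spinor norm on each factor by citing the fact (\cite{Knu}, 6.2.3) that the spinor norm of the orthogonal group of a \emph{regular} $\Ok$-lattice lands in $\Ok^\times/(\Ok^\times)^2$ — applied to $(L_0,q)$ for $\K_0$ and to $(X_\Ok\oplus\pk Y_\Ok,q')$ for $S^+$. Since $\eta$ is unramified, $\eta\circ\nu$ is then trivial on $\J^+$, and the index-$2$ argument with $u$ finishes. If you want to keep your Levi/Iwahori strategy you must actually prove (iii); otherwise the factorization-plus-regularity route is the one that closes the argument. (Two minor points in your (i): the Iwahori factorization has negative root groups with $\pk$-parameters, not $\Ok$-parameters, and triviality of $\nu$ on unipotents is immediate since a unipotent is a square in characteristic zero — but neither affects the conclusion there.)
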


Nous supposons dans toute la suite que $2n+1>1$. Commençons par démontrer la factorisation suivante.

\begin{prop}
On a la factorisation $\mathrm{J}=(\mathrm{K}_0 \cap \mathrm{J}) \cdot (\mathrm{Stab}_\mathrm{J} \; \pk v_0)$.
\end{prop}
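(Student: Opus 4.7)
The plan is to reduce the proposition to a statement about the orbit of $v_0$: given $g\in \mathrm{J}$, we want to exhibit $k\in \mathrm{K}_0\cap \mathrm{J}$ and $s\in \mathrm{Stab}_\mathrm{J}(\mathfrak{p} v_0)$ with $g=ks$, which is equivalent (since an $s\in\mathrm{Stab}_\mathrm{J}(\mathfrak{p}v_0)$ satisfies $s(v_0)=\pm v_0$ by the argument recalled at the start of \S\ref{J^+}) to finding $k\in \mathrm{K}_0\cap \mathrm{J}$ with $k(v_0)=\pm g(v_0)$. So everything reduces to controlling $g(v_0)$ and then producing $k$ explicitly.

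The first key step is to show that $g(v_0)\in L_0$. Since $g$ preserves $L$ and the form, it preserves the dual lattice $L^\sharp=\{v\in V_n\,|\,\langle v,L\rangle\subset \Ok\}$, and an immediate computation gives $\varpi L^\sharp=X_\Ok\oplus \Ok v_0\oplus \varpi Y_\Ok$, a submodule containing $v_0$ and contained in $L_0$. Invariance under $g$ then yields $g(v_0)\in \varpi L^\sharp\subset L_0$. Writing $g(v_0)=x+\beta v_0+\varpi y$ with $x\in X_\Ok,\beta\in\Ok,y\in Y_\Ok$, the relation $q(g(v_0))=1$ simplifies to $\beta^2+\varpi\langle x,y\rangle=1$, which forces $\beta^2\equiv 1\pmod{\pk}$, hence $\beta\equiv\pm 1\pmod{\pk}$.

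Next, one exhibits an explicit element $s_0\in \mathrm{Stab}_\mathrm{J}(\mathfrak{p} v_0)$ with $s_0(v_0)=-v_0$: for instance the isometry fixing $e_i,f_i$ for $i\geq 2$ and acting on the plane $Fe_1\oplus Ff_1$ by $e_1\mapsto \varpi f_1$, $f_1\mapsto \varpi^{-1}e_1$, combined with $v_0\mapsto -v_0$; one checks it lies in $\mathrm{SO}_{2n+1}(F)$ and preserves $L$. Replacing $g$ by $gs_0$ if necessary, we may assume $\beta\equiv 1\pmod{\pk}$, which gives precisely $g(v_0)-v_0\in L$.

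It remains to produce $k\in \mathrm{K}_0\cap \mathrm{J}$ with $k(v_0)=g(v_0)$, and this is the main obstacle. My approach is successive approximation. Modulo $\varpi$, the quotient map $\mathrm{K}_0\to \mathrm{SO}(L_0/\varpi L_0)$ sends $\mathrm{K}_0\cap \mathrm{J}$ onto the Siegel parabolic $P(k)$ stabilizing $\overline{X}$ (by the discussion following Proposition~\ref{Hypersp_inter_Levi} and the calculation that $\mathrm{J}$ preserves $X_\Ok\bmod\varpi L_0$), and the unipotent radical of $P(k)$ acts transitively on $\overline{v_0}+\overline{X}$. Since $\overline{g(v_0)}\in \overline{v_0}+\overline{X}$, lift a suitable $\bar{k}_0\in P(k)$ to $k_0\in \mathrm{K}_0\cap \mathrm{J}$, replace $g$ by $k_0^{-1}g$, and iterate using the root subgroups (Siegel-type and Heisenberg-type, i.e. associated to the short roots $\pm\chi_i$) which lie in $\mathrm{K}_0\cap \mathrm{J}$ and can shift $v_0$ by prescribed elements of $X_\Ok$, of $\varpi Y_\Ok$, and (via their commutators) contribute to the $v_0$-coefficient in $1+\pk$. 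The iteration converges by compactness of $\mathrm{K}_0\cap \mathrm{J}$, or equivalently via a Hensel-type argument applied to the map $\mathrm{K}_0\cap \mathrm{J}\to \{w\in v_0+L:q(w)=1\}$, $k\mapsto k(v_0)$; the hardest input is showing that the tangent space to this orbit map is surjective at each order, which reduces to checking that the root subgroups listed above span the antisymmetric (with respect to $q$) perturbations of $v_0$ into $L/\varpi L$.
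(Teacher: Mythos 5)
Your proposal follows a genuinely different route from the paper's: you reduce the factorisation to the statement that $\K_0\cap\J$ acts transitively on the set of norm-one vectors congruent to $\pm v_0$ modulo $L$, whereas the paper reduces everything modulo $\pk$ and deduces the factorisation from the surjectivity of $\bar{\pi}_{|S}:S\to \Oo_{2n}^+(k)$, itself a consequence of the smoothness of the orthogonal group scheme of the regular $\Ok$-module $(X_\Ok\oplus\pk Y_\Ok,q')$. Your first reduction (that $g=ks$ with $s\in\mathrm{Stab}_\J\,\pk v_0$ amounts to $k(v_0)=\pm g(v_0)$) and your element $s_0$ are correct. But the heart of the argument — the orbit statement — is both unproven and, as you have formulated it, false in residue characteristic $2$. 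You explicitly defer "the hardest input" (surjectivity of the orbit map at each order), and that input is precisely where the content of the proposition lies; asserting that root subgroups "span the antisymmetric perturbations" is not a proof. Worse, the target of your Hensel argument is wrong: every $k\in\K_0$ satisfies $\langle k(v_0),L_0\rangle=\langle v_0,L_0\rangle=2\Ok$, yet $w=v_0+e_1$ lies in $\{w\in v_0+L\,:\,q(w)=1\}$ and has $\langle w,L_0\rangle=\Ok$; when $2\notin\Ok^\times$ this $w$ is therefore not in the $\K_0$-orbit of $v_0$, so the map $k\mapsto k(v_0)$ cannot be onto the set you describe. For the same reason the unipotent radical of the Siegel parabolic does \emph{not} act transitively on $\bar{v_0}+\overline{X}$ in characteristic $2$: the isometry sending $v_0\mapsto v_0+ae_1$ is forced to send $f_1\mapsto f_1-\frac{a}{2}v_0-\frac{a^2}{4}e_1$, which is integral only for $a\in 2\Ok$, so these unipotents only translate $\bar{v_0}$ by $2\overline{X}=\{0\}$. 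Your dual-lattice identity is also off by a factor of $2$ on the $v_0$-line ($\varpi L^\sharp$ contains $\tfrac{1}{2}v_0$), although the conclusion $g(v_0)\in\Ok^\times v_0+L$ can be rescued, e.g.\ by the paper's observation that $\pi(g)$ preserves the radical $k\,v_0'$ of $q'\otimes k$.

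To salvage your approach you would need to (i) identify the correct orbit — which must incorporate the condition $\langle w,L_0\rangle\subset 2\Ok$, i.e.\ $x\in 2X_\Ok$ in the decomposition $g(v_0)=x+\beta v_0+\varpi y$, a condition that is not obviously satisfied by an arbitrary $g\in\J$ and would itself require proof — and (ii) actually establish transitivity on that orbit, with the characteristic-$2$ bookkeeping done throughout. This is considerably more delicate than the paper's argument, which sidesteps the orbit of $v_0$ entirely: there, one only needs that $\Ker\bar{\pi}\subset\K_0\cap\J$ and that $S$ surjects onto $\Oo^+_{2n}(k)$, the latter being a formal consequence of smoothness and valid uniformly in all residue characteristics.
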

\begin{proof}
Il faut en premier lieu caractériser les éléments de $\mathrm{K}_0 \cap \mathrm{J}$ parmi les éléments de $\mathrm{J}$. Raisonnons à l'aide des décompositions $L_0=X_\Ok \oplus Y_\Ok \oplus \Ok v_0$ et $L=X_\Ok \oplus \pk Y_\Ok \oplus \pk v_0$ des réseaux qui servent à définir $\mathrm{K}_0$ et $\mathrm{J}$.\ps 

Soit $g \in \mathrm{K}_0 \cap \mathrm{J}$. L'élément $g$ préserve le réseau $L$ ; traduisons le fait qu'il préserve également le réseau $L_0$ (ou de façon équivalente le réseau $\pk L_0$) par rapport à chacun des termes de la somme directe définissant $L$ :\ps 
\begin{itemize}
\item $g$ envoie $X_\Ok$ dans $L_0$ (ne dit rien de plus puisque $L \subset L_0$) ;\ps 
\item $g$ envoie $\pk Y_\Ok$ dans $\pk L_0=\pk X_\Ok \oplus \pk Y_\Ok \oplus \pk v_0$ ; \ps 
\item $g$ envoie $\pk v_0$ dans $\pk L_0$ (ne dit rien de plus puisque $\pi(g)$ préserve le noyau $k v'_0$ de la forme $q' \otimes k$ et donc $g$ envoie $\pk v_0$ dans $\pk v_0 + \pk L \subset \pk L_0$).\ps 
\end{itemize}

On voit alors qu'une condition nécessaire et suffisante pour qu'un élément de $\J$ appartienne également à $\K_0$ est le fait d'envoyer $\pk Y_\Ok$ dans $\pk L_0$. Ceci se traduit de façon agréable à l'aide du morphisme $\pi$. L'élément $\pi(g)$ est une isométrie de l'espace quadratique $V'_k$, regardons sa matrice dans la base $(v'_0,f'_n,\cdots,f'_1,e_1,\linebreak \cdots, e_n)$ (c'est la base $\mathcal{B}'$ à une permutation circulaire près) : 
\[
\left(\begin{array}{c|c|c}
* & *  & * \\\hline
0 & C & * \\\hline
0 & 0 & D
\end{array}\right)
.\]
Le respect des relations d'orthogonalité entre l'image de $e_i$ et celle de $f'_i$ pour $i \in \{1,\cdots n\}$ impose d'ailleurs que $D={}^\tau \! C^{-1}$ où ${}^\tau$ désigne la transposition par rapport à la seconde diagonale (qui commute bien à l'inversion, d'où l'absence de parenthèses). On a ainsi $\bar{\pi}(g)$ (qui correspond à la matrice sur $V'_k/kv'_0$ et donc aux quatre blocs en bas à droite) égal à
$\left(\begin{array}{c|c}
C & * \\\hline
0 & {}^\tau \! C^{-1}
\end{array}\right),$
matrice de déterminant 1 (et, le cas échéant, de déterminant de Dickson-Dieudonné égal à 1 d'après le Lemme \ref{lemme_det_DD_parabolq_Siegel}) donc l'élément $g$ est dans $\J^+$. 

On a finalement prouvé que $\J \cap \K_0$ est l'image inverse par le morphisme $\bar{\pi}$ de \eqref{J_vers_O_pair} des matrices triangulaires supérieures par blocs dans la $k$-base \linebreak $(\bar{f'_n},\cdots, \bar{f'_1}, \bar{e_1},\cdots,\bar{e_n})$, 
qui sont de la forme
$\left(\begin{array}{c|c}
* & * \\\hline
0 & *
\end{array}\right)$ ou même, plus précisément 
$\left(\begin{array}{c|c}
C & * \\\hline
0 & {}^\tau \! C^{-1}
\end{array}\right)$. De plus, on a  $\J \cap \K_0 \subset \J^+$.\ps 

Pour obtenir la factorisation souhaitée, il suffit donc de voir qu'en modifiant un élément de $\J$ par un élément de $S=\mathrm{Stab}_\mathrm{J} \; \pk v_0$, on peut imposer que son image par $\bar{\pi}$ ait la forme
$\left(\begin{array}{c|c}
* & * \\\hline
0 & *
\end{array}\right)$.

On va faire mieux : on va imposer qu'il ait la forme
$\left(\begin{array}{c|c}
I_n & 0 \\\hline
0 & I_n
\end{array}\right)$, ce pour quoi il suffit de voir que $\J$ et $S$ ont la même image par $\bar{\pi}$. Et on montre ce dernier résultat en montrant que $\bar{\pi}_{|S}$ est surjectif sur $\Oo^+_{2n}(k)$.\ps 

Étudions de plus près le groupe $S$. Un élément de $S$ :\ps 
\begin{itemize}
\item agit sur $\varpi v_0$ par un scalaire, nécessairement $\pm 1$ par préservation de la norme ; \ps 
\item préserve l'orthogonal de $\pk v_0$ à savoir $X_\Ok \oplus \pk Y_\Ok$. \ps 
\end{itemize}
On a donc $S=(\{\pm 1 \} \times \Oo(X_\Ok \oplus \pk Y_\Ok,q'))^{\mathrm{det}=1}$. \medskip 

Si on considère la forme $q'$, elle est non dégénérée sur $(X_\Ok \oplus \pk Y_\Ok) \otimes k$  et donc par lissité (\cite{Conrad}, Theorem C.1.5), l'application de réduction $\Oo(X_\Ok \oplus \pk Y_\Ok,q') \rightarrow \Oo((X_\Ok \oplus \pk Y_\Ok)\otimes k,q' \otimes k)$ est surjective, le groupe d'arrivée s'identifiant d'ailleurs à $\Oo^+_{2n}(k)$ par la Remarque 3 du paragraphe \ref{J^+}. \medskip

Récapitulons. Si l'on considère $g \in \J$, on a $\bar{\pi}(g) \in \Oo^+_{2n}(k)$. Or, par la surjectivité depuis $S$ précédemment mentionnée (qui a pour conséquence que $\bar{\pi}$ est bien surjective au départ de $\J$), on peut trouver $s \in S$ tel que $\bar{\pi}(s)=\bar{\pi}(g)$, soit $gs^{-1} \in \Ker \bar{\pi} \subset (\K_0 \cap \J)$ et $g \in (\K_0 \cap \J) \cdot S$.

On a donc bien la factorisation recherchée (on a même mieux : $\J= (\Ker \bar{\pi})\cdot S$).
\end{proof}

\begin{cor}\label{facto_J+}
On a la factorisation $\mathrm{J}^+=(\mathrm{K}_0 \cap \mathrm{J}) \cdot (\mathrm{Fix}_\mathrm{J} \; \varpi v_0)$.
\end{cor}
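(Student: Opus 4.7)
Je propose de déduire ce Corollaire de la Proposition précédente en identifiant précisément l'intersection $\J^+ \cap S$, où $S = \mathrm{Stab}_\mathrm{J} \; \pk v_0$. Tout élément $s \in S$ se décompose en une paire $(\epsilon, h)$, où $\epsilon \in \{\pm 1\}$ est le scalaire par lequel $s$ agit sur $\varpi v_0$ et où $h \in \Oo(X_\Ok \oplus \pk Y_\Ok, q')$ est la restriction de $s$ à l'orthogonal de $\pk v_0$, soumis à la contrainte $\epsilon \det h = 1$.

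Je calculerai alors $\alpha(s) = d(\bar\pi(s)) = d(\bar h)$ pour en tirer $\alpha(s) = \epsilon$. En caractéristique résiduelle différente de $2$, c'est immédiat puisque $d = \det$ : on a $d(\bar h) = \overline{\det h} = \overline{\epsilon^{-1}} = \epsilon$ (en utilisant que $-1 \neq 1$ dans $k$). En caractéristique résiduelle $2$, cela demandera un argument de compatibilité entre le déterminant (défini sur $\Ok$) et le déterminant de Dickson-Dieudonné (défini sur $k$), à savoir : pour $h \in \Oo(X_\Ok \oplus \pk Y_\Ok, q')(\Ok)$, on a $\det h = 1$ si et seulement si $\det_{\rm DD}(\bar h) = 1$. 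On aboutira ainsi, dans tous les cas, à l'égalité $\J^+ \cap S = \mathrm{Fix}_\mathrm{J} \; \varpi v_0$.

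Le Corollaire en découlera alors immédiatement. Pour $g \in \J^+$, la factorisation de la Proposition précédente donne $g = k \cdot s$ avec $k \in \K_0 \cap \J$ et $s \in S$. Comme $\K_0 \cap \J \subseteq \J^+$ (inclusion déjà établie dans la preuve précédente via le Lemme \ref{lemme_det_DD_parabolq_Siegel}), on aura $s \in \J^+ \cap S = \mathrm{Fix}_\mathrm{J} \; \varpi v_0$, d'où $g \in (\K_0 \cap \J) \cdot (\mathrm{Fix}_\mathrm{J} \; \varpi v_0)$. L'inclusion réciproque est claire, les deux facteurs du produit étant eux-mêmes dans $\J^+$.

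Le principal point technique sera donc la compatibilité, en caractéristique résiduelle $2$, entre le déterminant usuel sur $\Ok$ et le déterminant de Dickson-Dieudonné sur $k$. Ceci reflète la construction uniforme, au-dessus d'une base quelconque, d'un invariant du schéma en groupes orthogonal à valeurs dans $\underline{\{\pm 1\}}$ qui s'identifie au déterminant lorsque $2$ est inversible dans la base.
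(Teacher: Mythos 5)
Votre démonstration est correcte, mais elle suit une route sensiblement différente de celle du texte. Le texte ne réutilise pas l'\emph{énoncé} de la Proposition : il en reprend la \emph{preuve} en remplaçant $S$ par $S^+=\mathrm{Fix}_\mathrm{J}\,\varpi v_0\simeq\SO(X_\Ok\oplus\pk Y_\Ok,q')$, c'est-à-dire qu'il établit (par le même argument de lissité et de relèvement) que $\bar{\pi}_{|S^+}$ surjecte sur $\SO_{2n}^+(k)$, puis conclut par l'argument du noyau, avec la précision $\J^+=(\Ker\bar{\pi})\cdot S^+$. Vous, au contraire, prenez la factorisation $\J=(\K_0\cap\J)\cdot S$ comme une boîte noire et la tronquez au sous-groupe $\J^+$ en identifiant $\J^+\cap S=\mathrm{Fix}_\mathrm{J}\,\varpi v_0$, ce qui évite de refaire l'argument de surjectivité. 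Les deux approches reposent en fait sur le même point délicat en caractéristique résiduelle $2$ : pour $h\in\Oo(X_\Ok\oplus\pk Y_\Ok,q')$, on a $\det h=1$ si, et seulement si $\det_{\rm DD}(\bar{h})=1$ (le texte utilise implicitement ce fait pour l'inclusion $S^+\subset\J^+$, nécessaire à l'inclusion réciproque ; vous l'utilisez pour les deux inclusions de $\J^+\cap S=S^+$). Ce fait est bien vrai et se démontre avec les outils déjà présents dans le texte : le morphisme de Dickson--Dieudonné du schéma en groupes orthogonal sur $\Ok$ (\cite{Conrad}, §C.2) est localement constant, coïncide avec $\det$ sur la fibre générique de caractéristique nulle et induit $\det_{\rm DD}$ sur la fibre spéciale -- c'est l'argument de constance sur les composantes connexes déjà employé au Lemme \ref{lemme_det_DD_parabolq_Siegel}. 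Votre version est plus économique une fois ce lemme de compatibilité acquis ; celle du texte fournit en prime la surjectivité de $\bar{\pi}$ depuis $\J^+$ et l'égalité $\J^+=(\Ker\bar{\pi})\cdot S^+$, réutilisées ensuite pour le calcul de la norme spinorielle.
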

\begin{proof}
On pose $S^+=\mathrm{Fix}_\mathrm{J}\; \varpi v_0$ et on a $S^+=\J^+ \cap \mathrm{Stab}_\mathrm{J} \; \pk v_0 \simeq \SO(X_\Ok \oplus \pk Y_\Ok,q')$. Le même raisonnement que précédemment nous montre que $\bar{\pi}_{|S^+}$ est surjectif sur $\SO^+_{2n}(k)$ (et donc $\bar{\pi}$ est surjectif depuis $\J^+$ \emph{a fortiori}). On conclut comme ci-dessus, avec la même amélioration $\J^+= (\Ker \bar{\pi})\cdot S^+$.
\end{proof}

Évaluons maintenant la norme spinorielle grâce à ces factorisations.
Le $\Ok$-module quadratique $(L_0,q)$ est régulier (au sens de la Proposition-Définition \ref{Prop_def_FQ}) et donc, par \cite{Knu} (6.2.3 p.231), 
$\nu(\K_0) \subset \Ok^\times / (\Ok^\times)^2$. En particulier, le caractère $\eta$ introduit plus haut étant non ramifié, on a $(\eta \circ \nu) (\K_0) =1$.

On a $S^+=\{1\}\times \SO(X_\Ok \oplus \pk Y_\Ok,q')$. Or le $\Ok$-module quadratique $(X_\Ok \oplus \pk Y_\Ok,q')$ est régulier, si bien que, par le même argument\footnote{Il faut remarquer que $q'$ et $q$ diffèrent d'un scalaire de $F^\times$ et que, puisque les éléments considérés sont produits d'un nombre pair de réflexions, les normes spinorielles calculées avec $q$ ou avec $q'$ sont bien égales modulo $(F^\times)^2$.}, $\nu(S^+) \subset \Ok^\times / (\Ok^\times)^2$.\ps

%

Par le Corollaire \ref{facto_J+}, on a donc $\eta \circ \nu$ trivial sur $\J^+$. Le morphisme $\eta \circ \nu$, d'ordre au plus 2 sur $\J$, est trivial sur un sous-groupe d'indice 2 : soit il est trivial sur $\J$ tout entier, soit il est non trivial et son noyau est exactement ce sous-groupe d'indice 2 (à savoir $\J^+$). Pour conclure, il suffit donc d'exhiber un élément de $\J$ dont l'image par $\eta \circ \nu$ vaut $-1$.\ps 

Soit $u$ l'élément qui échange $e_1$ et $\varpi f_1$, agit par $-1$ sur la droite $F v_0$ et laisse les autres $e_i$ et $f_i$ inchangés (c'est le même élément $u$ qui a été introduit au paragraphe \ref{J^+}). Alors $u$ est bien un élément de $\SO_{2n+1}(F)$, il préserve $L$, c'est donc un élément de $\J$. On peut écrire $u$ comme la composée des réflexions $\tau_{v_0}$ et $\tau_{e_1-\varpi f_1}$ (peu importe l'ordre puisque les réflexions commutent). On a donc $\nu(u)=q(v_0).q(e_1- \varpi f_1)=1.(-\varpi)=-\varpi$ et $\eta(-\varpi)=\eta(-1).\eta(\varpi)=1.(-1)=-1$ puisque $-1 \in \Ok^\times$.

Ceci achève la preuve du théorème et nous avons donc bien $\J^+=\Ker (\eta \circ \nu)$.

\begin{prop}\label{prop_norme_spin_Levi}
Soit $M$ un sous-groupe de Levi standard de $\SO_{2n+1}(F)$, on a par \eqref{Levi_SO} :
\[
M \simeq \GL_{n_1}(F) \times \cdots \times \GL_{n_k}(F) \times \SO_{2\tilde{n}+1}(F),
\]
avec $\tilde{n}=n-(n_1+\cdots + n_k)$.\ps 

Un élément de $M$ correspond à une matrice diagonale par blocs : $X=\mathrm{diag}(C_1,\cdots,C_k,D,{}^\tau \! C_k^{-1},\cdots,{}^\tau \! C_1^{-1})$. Alors $\nu(X)= \prod_i \overline{\det(C_i)} \nu(D)$, où l'on note encore $\nu$ la norme spinorielle sur $\SO_{2\tilde{n}+1}(F)$, et où la barre désigne la réduction modulo $(F^\times)^2$.
\end{prop}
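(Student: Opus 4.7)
The spinor norm $\nu:\SO_{2n+1}(F)\to F^\times/(F^\times)^2$ is a group homomorphism, so it suffices to compute it on each factor of $M$ separately, since the elements coming from distinct factors commute in $M$. More precisely, writing $X=X_1\cdots X_k\cdot X_D$ where $X_i$ has only the $C_i$ and ${}^\tau C_i^{-1}$ blocks nontrivial (all other blocks equal to the identity) and $X_D$ is the element with only the $D$-block nontrivial, one has $\nu(X)=\nu(X_D)\prod_i\nu(X_i)$. The block $X_D$ acts by $D\in\SO(V_{\tilde{n}},\tilde q)$ on the subspace $V_{\tilde n}\subset V_n$ and as the identity on its orthogonal complement $X_\Ok^{(k)}\oplus Y_\Ok^{(k)}$ (the span of the outer $e_i,f_i$); since any decomposition of $D$ as product of reflections $\tau_{x_j}$ in $\Oo(V_{\tilde n},\tilde q)$ extends to a decomposition as reflections of $\Oo(V_n,q)$ with the same vectors, one obtains $\nu(X_D)=\nu(D)$ by definition.

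It remains to compute $\nu(X_i)$ for a given $i$. Consider the homomorphism
\[
\chi_i:\GL_{n_i}(F)\longrightarrow F^\times/(F^\times)^2,\qquad C\longmapsto\nu(X_i(C)).
\]
The target being abelian, $\chi_i$ factors through the abelianization of $\GL_{n_i}(F)$. Since for $n_i\geq 2$ the derived group $[\GL_{n_i}(F),\GL_{n_i}(F)]$ equals $\SL_{n_i}(F)$, the character $\chi_i$ factors through the determinant; in the case $n_i=1$ this is trivial. Consequently it is enough to evaluate $\chi_i$ on a single diagonal element, say $C=\mathrm{diag}(\lambda,1,\dots,1)$.

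The corresponding $X_i(C)$ acts as the identity on everything except the hyperbolic plane spanned by $e_j$ (for $j=n_1+\cdots+n_{i-1}+1$) and the corresponding $f_j$, where it sends $e_j\mapsto\lambda e_j$ and $f_j\mapsto\lambda^{-1}f_j$. The key explicit computation in this hyperbolic plane is that, setting $v_1=e_j+f_j$ and $v_2=e_j+\lambda^{-1}f_j$, the composition $\tau_{v_2}\circ\tau_{v_1}$ equals precisely this isometry. This yields
\[
\chi_i(C)=q(v_1)q(v_2)=1\cdot\lambda^{-1}\equiv\lambda\pmod{(F^\times)^2}=\overline{\det C}.
\]
Combining the three steps gives $\nu(X)=\bigl(\prod_i\overline{\det(C_i)}\bigr)\,\nu(D)$, as claimed.

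The only subtlety is the factorization through the determinant: one must be careful that the embedding $\GL_{n_i}\hookrightarrow\SO_{2n+1}$ sending $C\mapsto X_i(C)$ really lands in $\SO$ (not only in $\Oo$), which is clear since $\det C\cdot\det({}^\tau C^{-1})=1$, and that the explicit two-reflection decomposition above gives an element of $\SO$ with the claimed action (a direct verification). I expect no conceptual obstacle: the main work is the elementary but slightly fiddly calculation in the hyperbolic plane showing $\nu(e\mapsto\lambda e,f\mapsto\lambda^{-1}f)\equiv\lambda$.
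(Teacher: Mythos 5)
Your proof is correct, and its overall architecture matches the paper's: split $X$ into commuting block factors using multiplicativity of $\nu$ (the paper phrases this as the spinor norm of an orthogonal sum, citing O'Meara 55:4), identify $\nu(X_D)=\nu(D)$ by extending reflections, and reduce the $\GL_{n_i}$-contribution to an explicit two-reflection computation in a hyperbolic plane. Where you genuinely diverge is in the reduction step for the $\GL_{n_i}$ factor: the paper uses that $\GL_m(F)$ is generated by dilatations and transvections, computes the spinor norm of a dilatation as $\lambda$ via the reflections $\tau_{e_1-\lambda f_1}$ and $\tau_{e_1-f_1}$, and must separately check that transvections have trivial spinor norm (writing $T(x)=T(x/2)^2$). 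You instead observe that $C\mapsto\nu(X_i(C))$ lands in an abelian group, hence factors through $\GL_{n_i}(F)^{\rm ab}\simeq F^\times$ via the determinant (using $[\GL_{n_i}(F),\GL_{n_i}(F)]=\SL_{n_i}(F)$, valid here since $F$ is infinite), which eliminates the transvection computation entirely and leaves only the evaluation on $\mathrm{diag}(\lambda,1,\dots,1)$. Your two-reflection identity is correct as stated: with $v_1=e_j+f_j$, $v_2=e_j+\lambda^{-1}f_j$ one has $q(v_1)=1$, $q(v_2)=\lambda^{-1}$, and $\tau_{v_2}\circ\tau_{v_1}$ is indeed the map $e_j\mapsto\lambda e_j$, $f_j\mapsto\lambda^{-1}f_j$, giving $\lambda^{-1}\equiv\lambda\bmod (F^\times)^2$. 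The abelianization route is marginally cleaner; the paper's generator route is more self-contained in that it only invokes elementary facts about $\GL_m$ already implicit in its setup. No gap either way.
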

\begin{proof}
Au sous-groupe de Levi $M$ correspond le sous-espace quadratique :
\[
(V_1 \oplus V'_1) \overset{\perp}{\oplus} \cdots \overset{\perp}{\oplus} (V_k \oplus V'_k) \overset{\perp}{\oplus} V_{\tilde{n}}
\]
avec $V_i$ et $V'_i$ sous-espaces totalement isotropes transverses de dimension $n_i$ avec $q$ non dégénérée sur $(V_i \oplus V'_i)$ ; et $V_{\tilde{n}}$ espace quadratique de dimension $2\tilde{n}+1$ sur lequel $q$ est d'indice de Witt maximal $\tilde{n}$.

La norme spinorielle d'un élément de $M$ est alors le produit des normes spinorielles définies sur chaque terme de la somme orthogonale (\cite{OMeara}, 55:4). Il suffit donc de comprendre la norme spinorielle d'un élément $\mathrm{diag}(C,{}^\tau \! C^{-1})$ où $C \in \GL_m(F)$ pour pouvoir conclure.\medskip

Or $\GL_m(F)$ est engendré par les transvections et les dilatations, il suffit donc de savoir calculer la norme spinorielle de chaque élément correspondant. Notons $(e_1,\cdots,e_m,f_m,\cdots,f_1)$ une base de l'espace quadratique $W \oplus W'$ de dimension $2m$ associé avec $<e_i,f_j>=\delta_{ij}$ et $<e_i,e_j>=<f_i,f_j>=0$. 

Considérons pour $\lambda \in F- \{0,1\}$ la dilatation $C$ d'hyperplan $\Vect_F(e_2,\cdots,\linebreak e_m)$, de droite $Fe_1$ et de rapport $\lambda$. (Considérer uniquement ces dilatations particulières suffit pour la suite.) Alors l'élément $\mathrm{diag}(C,{}^\tau \! C^{-1})$ de $\SO(W \oplus W')$ peut s'écrire comme la composée des réflexions $\tau_{e_1-\lambda f_1}$ suivie de $\tau_{e_1-f_1}$. En particulier, sa norme spinorielle est égale à $q(e_1-f_1)q(e_1-\lambda f_1)=(-1)(-\lambda)=\lambda$ dans $F^\times/(F^\times)^2$.\ps 

Soient maintenant $i \neq j$ deux indices et considérons la transvection $T(x)=I_m +xE_{i,j}$, où $x \in F$. 
En particulier $T(x)=T(\frac{x}{2})^2$ et $\mathrm{diag}(T(x),{}^\tau \! T(x)^{-1})=\mathrm{diag}(T(\frac{x}{2}),{}^\tau \! T(\frac{x}{2})^{-1})^2$ si bien que sa norme spinorielle est triviale dans $F^\times/(F^\times)^2$.

Le morphisme de $\GL_m(F)$ vers $F^\times/(F^\times)^2$ est donc le déterminant (réduit modulo $(F^\times)^2$).
\end{proof}

\emph{Remarque : } Cette Proposition permet de redémontrer la Proposition \ref{Param+_inter_Levi}. 
Conservons les notations ci-dessus et considérons un élément $X$ de $M \cap \J^+$.
Alors le fait que $X$ préserve le réseau $L$ et sa forme diagonale par blocs impliquent que chaque $C_i$ (pour $1\leq i \leq k$) est à coefficients entiers. On donc $\det(C_i) \in \Ok^\times$, et donc $\eta(\overline{\det(C_i)})=1$. Or, puisque $X$ est dans $\J^+$, on a $\eta\circ \nu(X)=1$, d'où l'on tire $\eta\circ\nu(D)=1$, \ie $D \in \J_{2\tilde{n}+1}^+$.

\section{Factorisations d'Iwasawa}\label{Factorisations d'Iwasawa}
Ces factorisations sont automatiques si l'on définit les groupes $\mathrm{K}_0$ et $\mathrm{J}$ par rapport à l'immeuble de Bruhat-Tits de $\SO_{2n+1}(F)$ (c'est ce qui est fait dans \cite{Tsai-phd}). Nous choisissons néanmoins de les démontrer de façon géométrique en utilisant les réseaux que ces groupes stabilisent.

On commence par le lemme suivant.
\begin{lemme}\label{lemme_decomposition_conjugaison}

Soient $G$ un groupe, ainsi que $H$ et $K$ des sous-groupes de $G$ vérifiant $G=HK$. Alors on a $G=(g_1Hg_1^{-1})(g_2Kg_2^{-1})$ pour tous $g_1,g_2$ dans $G$.

\end{lemme}
\begin{proof}
Si on a $G=HK$, on a aussi, en considérant une décomposition pour l'inverse, $G=KH$. 

Soit donc $h \in H$, on a pour un élément générique $g \in G$ :
\begin{align*}
hgh^{-1}& =h_1 k_1 \\
g		& =(h^{-1}h_1h) (h^{-1}k_1h )
\end{align*}
et on a $h^{-1}Hh=H$ d'où $G=H (h^{-1}Kh)$, et ce pour tout $h \in H$.\ps 

Or, on a $K=k^{-1}Kk$, pour tout $k \in K$ et donc $G=H (kh)^{-1}K(kh)$ si bien qu'on peut conjuguer le groupe $K$ par n'importe quel élément de $KH=G$.

La décomposition est donc valable en remplaçant le second facteur par n'importe quel conjugué, et, en utilisant le fait qu'on peut utiliser la décomposition \og symétrique \fg{}, également en remplaçant le premier facteur par n'importe quel conjugué.
\end{proof}

Il suffit donc dans la suite de raisonner avec le sous-groupe de Borel standard, défini à partir du choix particulier de la base adaptée $\mathcal{B}_0$. Les groupes $\mathrm{K}_0$ et $\mathrm{J}$ sont également définis en fonction de cette base. Puisque le choix d'une autre base adaptée correspond à la conjugaison par un élément de $\SO_{2n+1}(F)$, les factorisations démontrées ne dépendent pas de ce choix. 


\begin{defi}\label{Defi_reseau_type_a}
Soit $(V,q)$ un $F$-espace vectoriel quadratique de dimension $2n+1$, où $q$ est d'indice de Witt maximal $n$. Fixons $a$ dans $\Ok-\{0\}$. Un réseau $L$ de $V$ sera dit \emph{de type $a$} s'il possède une $\Ok$-base
\og a-adaptée \fg $(e_1,\cdots, e_n, v_0, \linebreak f_n, \cdots, f_1)$ avec
\[
\left\{\begin{array}{ll}
<e_i,e_j>&=<e_i,v_0> = 0, \\
<f_i,f_j>&= <f_i,v_0> = 0, \\
<e_i,f_j>&= \delta_{ij}, \\
<v_0,v_0>&= 2a.
\end{array}
\right. 
\]
\end{defi}

En particulier, un réseau $L$ de type $a$ est un réseau entier pair. De plus, si $2a$ est non inversible dans $\Ok$, le noyau de $L/2aL$ est un $\Ok/2a \Ok$-module libre de rang 1, engendré par $v_0$.

On note $\theta : L \rightarrow L^*$ l'application canonique $x \mapsto <x,\cdot>$.

\begin{lemme}\label{Lemme_1}
Soit $L$ un réseau de type $a$ dans $(V,q)$. Alors l'image de $\theta : L \rightarrow L^*$ est le sous-$\Ok$-module des formes $\Ok$-linéaires $\phi : L \rightarrow \Ok$ telles que $\phi(v_0) \equiv 0 \mod 2a$.
\end{lemme}
\begin{proof}
Soit $(e_1^*,\cdots,e_n^*,v_0^*,f_n^*,\cdots,f_1^*)$ la base duale d'une base $a$-adap\-tée de $L$ (on reprend les notations de la Définition \ref{Defi_reseau_type_a}).

L'image de $\theta$ est engendrée par les $\theta(e_i)=f_i^*$, les $\theta(f_i)=e_i^*$ pour $i \in \{1,\cdots,n\}$, et par $\theta(v_0) = 2a v_0^*$, ce qui conclut.
\end{proof}

\begin{lemme}\label{Lemme_2}
Soit $L$ un réseau de type $a$ dans $(V,q)$, avec $\val_\pk (a)=0$ ou $1$. Soit $W$ un sous-espace isotrope de $V$, et $C = W \cap L$. Alors $\theta$ induit une surjection de $L$ sur $C^*$.
\end{lemme}

\begin{proof}
Par construction, $C$ est saturé dans $L$, donc facteur direct comme $\Ok$-module.

Supposons d'abord que $2a$ est inversible dans $\Ok$. Dans ce cas, $\theta$ est bijective d'après le Lemme \ref{Lemme_1}. On conclut par le fait que toute forme linéaire sur $C$ s'étend en une forme linéaire sur $L$ (par 0 sur un supplémentaire de $C$).\ps 

Supposons désormais $2a$ non inversible dans $\Ok$ et fixons une base $a$-adaptée de $L$. On dispose alors d'un vecteur $v_0$ tel que $<v_0,v_0> = 2a$. En particulier $q(v_0) \neq 0$ et $\Ok v_0 \cap C =\{0\}$.

Montrons que $C \oplus \Ok v_0$ est saturé dans $L$. 
Supposons $\varpi^{-1} (c + x v_0) \in L$; on veut montrer $c \in \varpi C$ et $x \in \pk$. Si $x \in \pk$, alors $\varpi^{-1} c \in L \cap C = C$, donc $c \in \varpi C$ : on peut donc supposer $x \in \Ok^\times$ et même $x=1$. Écrivons alors $c = v_0 + \varpi z$ avec $c \in C$ et $z \in L$.
En appliquant $q$, on trouve :
\[
0 = a + \varpi^2 q(z) + \varpi <v_0,z>,
\]
d'où l'on déduit $\varpi | a$ puis, en utilisant le fait que $<v_0,z> \in 2 a \Ok$, $\varpi^2 |a$, ce qui nous fournit une contradiction.

Puisque $C \oplus \Ok v_0$ est bien saturé dans $L$, donc facteur direct dans $L$, toute forme linéaire sur $C$ peut se prolonger en une forme linéaire sur $L$ nulle sur $v_0$. On conclut par le Lemme \ref{Lemme_1}.
\end{proof}

{\bfseries On suppose désormais\footnote{Un réseau de type $\varpi^m$ de $(V,q)$ pour $m$ entier naturel permettrait de définir le groupe $\J(\pk^m)$ selon les notations de \cite{Tsai-phd}. Le fait que la factorisation d'Iwasawa \emph{échoue op. cit.} pour $m>1$ est à rapprocher du fait que le Lemme \ref{Lemme_2} utilise de façon cruciale $m\leq 1$.} $\val_\pk (a)=0$ ou $1$.}

\begin{cor}\label{Corollaire}
Soient $L$ un réseau de type $a$ de $(V,q)$, $W$ un sous-espace isotrope de dimension $r$ de $V$, $C = L \cap W$, et $(\eps_1,\cdots,\eps_r)$ une $\Ok$-base de $C$. 

Alors, il existe une décomposition de $\Ok$-modules $L = (C \oplus D) \overset{\perp}{\oplus} L'$ et $(\varphi_1,\cdots,\varphi_r)$ une $\Ok$-base de $D$, telle que $<\eps_i,\varphi_j> = \delta_{ij}$.
\end{cor}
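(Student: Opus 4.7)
\medskip

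\noindent\textbf{Plan de preuve.} L'idée est que le Lemme \ref{Lemme_2} fournit directement les préimages souhaitées ; le seul travail restant est de vérifier que l'on obtient bien ainsi un sous-module unimodulaire de $L$ (donc facteur direct orthogonal).

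Je commencerais par appliquer le Lemme \ref{Lemme_2} au sous-espace isotrope $W$ : l'application $\theta : L \rightarrow C^*$ est surjective. Notons $(\eps_1^\ast,\dots,\eps_r^\ast)$ la base duale de $(\eps_1,\dots,\eps_r)$ dans $C^\ast$. Par surjectivité, on peut choisir $\varphi_i \in L$ tel que $\theta(\varphi_i) = \eps_i^\ast$, c'est-à-dire $\langle \eps_j,\varphi_i\rangle = \delta_{ij}$. Je poserais alors $D = \Ok\varphi_1 \oplus \cdots \oplus \Ok\varphi_r$ et $F = C + D$.

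Il faudrait ensuite vérifier que la somme définissant $F$ est directe : si $\sum a_i\eps_i + \sum b_i\varphi_i = 0$, en appliquant $\langle \eps_j,\cdot\rangle$ on utilise $\langle \eps_j,\eps_i\rangle = 0$ (car $W$ est isotrope) et $\langle \eps_j,\varphi_i\rangle = \delta_{ij}$ pour obtenir $b_j=0$, puis la liberté de $(\eps_i)$ donne $a_i=0$. Dans la base $(\eps_1,\dots,\eps_r,\varphi_1,\dots,\varphi_r)$ de $F$, la matrice de Gram est
\[
\mathrm{Gram}(F) = \begin{pmatrix} 0 & I_r \\ I_r & M \end{pmatrix},
\]
où $M = (\langle \varphi_i,\varphi_j\rangle)_{ij}$ est symétrique à coefficients dans $\Ok$. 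Un échange de blocs donne $\det\mathrm{Gram}(F) = \pm 1 \in \Ok^\times$, donc $F$ est un sous-$\Ok$-module quadratique \emph{unimodulaire} de $L$.

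Il ne resterait plus qu'à en déduire la décomposition orthogonale. Pour tout $x \in L$, la forme linéaire $y \mapsto \langle x,y\rangle$ sur $F$ est, par unimodularité de $F$, représentée par un unique $f_x \in F$ ; alors $x - f_x \in L \cap F^\perp$, et $F \cap F^\perp = \{0\}$ puisque $F$ est non dégénéré. On obtient ainsi $L = F \overset{\perp}{\oplus} L'$ avec $L' := L\cap F^\perp$, ce qui est la décomposition cherchée. Le seul point délicat, qui se réduit à l'application du Lemme \ref{Lemme_2}, est précisément la surjectivité de $\theta$ sur $C^\ast$ : c'est là que l'hypothèse $\val_\pk(a) \leq 1$ est cruciale (elle est bien en vigueur depuis l'annonce qui précède le Corollaire). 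Tout le reste est un simple calcul d'algèbre linéaire sur $\Ok$.
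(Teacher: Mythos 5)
Votre preuve est correcte pour l'énoncé tel qu'il est écrit, et elle repose sur le même ingrédient clé que celle du texte (la surjectivité de $\theta : L \rightarrow C^*$ donnée par le Lemme \ref{Lemme_2}, d'où les $\varphi_i$ avec $<\eps_i,\varphi_j>=\delta_{ij}$). La divergence est dans la façon d'obtenir le facteur direct orthogonal. Vous gardez les $\varphi_i$ bruts et constatez que la matrice de Gram par blocs $\bigl(\begin{smallmatrix} 0 & I_r \\ I_r & M \end{smallmatrix}\bigr)$ est de déterminant $\pm 1$, donc que $F=C\oplus D$ est unimodulaire, ce qui suffit pour scinder $L=F\overset{\perp}{\oplus}(L\cap F^\perp)$ : c'est propre et un peu plus direct. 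Le texte, lui, corrige d'abord les $\varphi_i$ en $\varphi'_i=\varphi_i+\sum_k x_{ik}\eps_k$ avec $X+{}^tX=-{\rm Gram}(\varphi_1,\cdots,\varphi_r)$ — possible précisément parce que cette matrice de Gram est symétrique à diagonale dans $2\Ok$ — de sorte que $D$ devienne \emph{totalement isotrope} et que $C\oplus D$ soit hyperbolique. Cette normalisation supplémentaire n'est pas gratuite : la Proposition \ref{Proposition} invoque le Corollaire \og avec $D_j$ totalement isotrope \fg, et l'isométrie $g$ qu'on y construit ne préserve la forme sur $C_j\oplus D_j$ que parce que $q$ s'annule sur $D_j$ (sinon il faudrait encore contrôler $<\varphi_i,\varphi_j>$). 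Votre argument démontre donc bien le Corollaire littéral, mais si vous voulez qu'il serve à la suite, il faut ajouter l'étape d'isotropisation (deux lignes avec l'astuce $X+{}^tX=-{\rm Gram}$), ou bien renforcer l'énoncé pour y inclure l'isotropie de $D$.
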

\begin{proof}
On considère la base duale des $\eps_i^* : C \rightarrow \Ok$ pour $i \in \{1,\cdots,r\}$. D'après le Lemme \ref{Lemme_2}, il existe $\varphi_1,\cdots,\varphi_r$ dans $L$ tels que $\theta(\varphi_j)_{|C}=\eps_j^*$, soit encore $<\eps_i,\varphi_j>=\delta_{ij}$.

Pour $X=(x_{ij}) \in {\rm M}_r(\Ok)$, on peut considérer la famille des $\varphi'_i=\varphi_i+\sum_k x_{ik} e_k$. On a alors ${\rm Gram}(\varphi'_1,\cdots,\varphi'_r)={\rm Gram}(\varphi_1,\cdots,\varphi_r)+X+{}^t X$. Or la matrice de Gram de la base $(\varphi_1,\cdots,\varphi_r)$ -- et de n'importe quelle famille en fait, \cf Proposition-Définition \ref{Prop_def_FQ} -- est symétrique, avec des éléments diagonaux dans $2\Ok$. On peut donc trouver $X \in {\rm M}_r(\Ok)$ telle que $X+{}^t X=-{\rm Gram}(\varphi_1,\cdots,\varphi_r)$.

On a alors ${\rm Gram}(\varphi'_1,\cdots,\varphi'_r)=0$ et toujours $<\eps_i,\varphi'_j>=\delta_{ij}$ : le $\Ok$-module $D=\Ok \varphi'_1 \oplus \cdots \oplus \Ok \varphi'_r$ est totalement isotrope et le sous-module $C\oplus D$ de $L$ est hyperbolique. En particulier, en notant $L'=(C  \oplus D)^\perp$, on a bien $L=(C \oplus D) \overset{\perp}{\oplus} L'$.
\end{proof}

\begin{prop}\label{Proposition}
Soient $L_1$ et $L_2$ deux réseaux de type $a$ de $(V,q)$. Soit $W$ un sous-espace totalement isotrope de $V$ et soit $\{0\} \subset W_1 \subset \cdots \subset W_n=W$ un drapeau complet de $W$.

Alors, il existe $g \in \SO(V,q)$ tel que $g(L_1)=L_2$ et, pour tout $i \in \{1,\cdots,n\}, \linebreak g(W_i)=W_i$.
\end{prop}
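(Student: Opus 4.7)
Le plan est le suivant.

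Je vais d'abord construire, pour chaque $j\in\{1,2\}$, une \og base adaptée \fg{} de $L_j$ qui soit de plus compatible avec le drapeau. Posons $C_j=L_j\cap W$, qui est de rang $n$ et saturé dans $L_j$. Pour chaque $i\in\{1,\dots,n\}$, le sous-module $L_j\cap W_i=C_j\cap W_i$ est saturé dans $L_j$ (intersection d'un sous-espace avec un réseau), et donc en particulier dans $C_j$. On peut alors choisir une $\Ok$-base $(\eps^j_1,\dots,\eps^j_n)$ de $C_j$ telle que, pour tout $i$, la famille $(\eps^j_1,\dots,\eps^j_i)$ soit une $\Ok$-base de $C_j\cap W_i$ (et donc aussi une $F$-base de $W_i$). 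C'est une application standard du lemme des diviseurs élémentaires à la filtration saturée.

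Ensuite, j'applique le Corollaire \ref{Corollaire} à $L_j$, $W$ et à la base $(\eps^j_1,\dots,\eps^j_n)$ ainsi choisie. Cela fournit une décomposition $L_j=(C_j\oplus D_j)\overset{\perp}{\oplus} L'_j$ et une $\Ok$-base $(\varphi^j_1,\dots,\varphi^j_n)$ de $D_j$ telle que $<\eps^j_i,\varphi^j_k>=\delta_{ik}$. Le facteur $L'_j$ est un $\Ok$-module libre de rang $1$ sur lequel $q$ ne s'annule pas ; il faut alors vérifier que $L'_j=\Ok v^j_0$ avec $<v^j_0,v^j_0>=2a$. Pour cela, on compare les discriminants : $\det(L_j)\equiv 2a\cdot(-1)^n \pmod{(\Ok^\times)^2}$ par définition du type $a$, tandis que $C_j\oplus D_j$ est hyperbolique de rang $2n$ et de discriminant $(-1)^n$ ; par multiplicativité du déterminant sur une somme orthogonale, on obtient $\det(L'_j)\equiv 2a$, ce qui identifie le générateur $v^j_0$ à un scalaire inversible près. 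Quitte à ajuster le signe, on peut supposer $<v^j_0,v^j_0>=2a$.

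On définit alors $g\in\GL(V)$ par $g(\eps^1_i)=\eps^2_i$, $g(\varphi^1_i)=\varphi^2_i$ pour $i\in\{1,\dots,n\}$ et $g(v^1_0)=\pm v^2_0$ (signe à déterminer). Par construction, les matrices de Gram des bases $(\eps^1_\bullet,v^1_0,\varphi^1_\bullet)$ et $(\eps^2_\bullet,\pm v^2_0,\varphi^2_\bullet)$ coïncident (les deux valant la matrice standard de type $a$), donc $g$ est un élément de $\Oo(V,q)$. L'identité $g(L_1)=L_2$ est immédiate, ainsi que $g(W_i)=W_i$ grâce à l'adaptation au drapeau : les $i$ premiers vecteurs de base engendrent $W_i\cap L_j$ sur $\Ok$ donc $W_i$ sur $F$. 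Enfin, pour obtenir $g\in\SO(V,q)$, on ajuste le signe sur $v^2_0$ : changer $g(v^1_0)=v^2_0$ en $g(v^1_0)=-v^2_0$ multiplie $\det(g)$ par $-1$ sans rien modifier d'autre aux propriétés voulues, ni à la forme du réseau image (car $-v^2_0$ engendre encore $L'_2$).

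L'obstacle principal n'est pas formidable : il réside essentiellement dans la vérification que le facteur $L'_j$ est effectivement engendré par un vecteur de bonne norme, ce qui est assuré par le calcul de discriminants ci-dessus (et qui dépend crucialement de l'hypothèse $\val_\pk(a)\leq 1$ via le Corollaire \ref{Corollaire}). Tout le reste n'est qu'un enchaînement d'applications directes des lemmes précédents.
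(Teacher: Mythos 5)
Votre démonstration est correcte et suit essentiellement la même démarche que celle du texte : décomposition $L_j=(C_j\oplus D_j)\overset{\perp}{\oplus}\Ok v_j$ via le Corollaire \ref{Corollaire}, normalisation de $<v_j,v_j>$ à $2a$ par comparaison de déterminants (la divergence étant un carré d'unité, que l'on absorbe en remplaçant $v_j$ par $\alpha_j^{-1}v_j$ -- ce n'est pas qu'un \og signe \fg), choix d'une base de $C_j$ adaptée au drapeau, et envoi de base sur base. La seule différence est cosmétique : vous explicitez les bases duales de $D_j$ et comparez les matrices de Gram, là où le texte définit l'application $D_1\rightarrow D_2$ abstraitement par transposition des isomorphismes de dualité, ce qui revient au même.
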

\begin{proof}
Posons, pour $j\in\{1,2\}$, $C_j=L_j \cap W$. D'après le Corollaire \ref{Corollaire}, on peut écrire
\[
L_j=(C_j+D_j) \overset{\perp}{\oplus} \Ok v_j,
\]
avec $D_j$ totalement isotrope et de telle sorte que $\theta$ induise des isomorphismes $\theta_1 : C_1 \overset{\sim}{\rightarrow} D_1^*$ et $\theta_2 : C_2 \overset{\sim}{\rightarrow} D_2^*$. \ps 

En considérant le déterminant des $\Ok$-modules quadratiques $(L_j,q)$, on trouve $<v_j,v_j>=2a \alpha_j^2$ avec $\alpha_j \in \Ok^\times$.
On peut donc, quitte à remplacer $v_j$ par $\alpha_j^{-1}v_j$, supposer que $<v_j,v_j>=2a$.

Choisissons une $\Ok$-base $(e_1,\cdots,e_n)$ de $C_1$ telle que $W_i \cap C_1=\Ok e_1 \oplus \cdots \Ok e_i$ pour tout $i \in \{1,\cdots,n\}$. De même, choisissons une $\Ok$-base $(e'_1,\cdots,e'_n)$ de $C_2$ telle que $W_i \cap C_2=\Ok e'_1 \oplus \cdots \Ok e'_i$ pour tout $i \in \{1,\cdots,n\}$.

Soit $h : C_1 \rightarrow C_2$ l'application $\Ok$-linéaire envoyant $e_i$ sur $e'_i$ pour tout $i \in \{1,\cdots,n\}$. C'est, par définition, un isomorphisme $\Ok$-linéaire. La composée
\begin{equation*}
\begin{tikzcd}
D_1 \arrow[r,"{}^t \theta_1"] & C_1^* \arrow[r,"{}^t h^{-1}"] & C_2^* \arrow[r,"{}^t \theta_2^{-1}"] & D_2
\end{tikzcd}
\end{equation*}
nous donne un isomorphisme $\Ok$-linéaire $h' : D_1 \overset{\sim}{\rightarrow} D_2$.

On en déduit un isomorphisme $\Ok$-linéaire $g : L_1 \rightarrow L_2$ envoyant $C_1$ sur $C_2$ via $h$, $D_1$ sur $D_2$ via $h'$ et $v_1$ sur $v_2$. Par construction de $h'$, $g$ est une isométrie, \ie un élément de $\Oo(V,q)$ (et même de $\SO(V,q)$ quitte à remplacer $v_2$ par $-v_2$), qui préserve manifestement $W_i$ pour tout $i \in \{1,\cdots,n\}$.
\end{proof}

\begin{cor}\label{Factorisation G=BK_0}
Soit $B$ un sous-groupe de Borel de $\SO_{2n+1}(F)$ et $\K_0$ un sous-groupe compact hyperspécial de $\SO_{2n+1}(F)$ tel que défini au paragraphe \ref{Le groupe K_0}.

Alors, on a $\SO_{2n+1}(F)=B\K_0$.
\end{cor}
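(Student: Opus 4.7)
The plan is to combine Lemma \ref{lemme_decomposition_conjugaison} with the newly proved Proposition \ref{Proposition} applied to the standard hyperspecial lattice and the standard isotropic flag.

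First, by Lemma \ref{lemme_decomposition_conjugaison}, it suffices to prove $G = BK_0$ for \emph{any} choice of a Borel subgroup $B$ and hyperspecial compact subgroup $K_0$ of $G = \SO_{2n+1}(F)$: all other pairs are obtained by $G$-conjugation, and the factorization is stable under such conjugation. I would therefore fix the standard choices from the base $\mathcal{B}_0$: take $K_0$ the stabilizer of $L_0 = \bigoplus \Ok e_i \oplus \Ok v_0 \oplus \bigoplus \Ok f_i$, and $B$ the stabilizer of the complete flag $W_1 \subset \cdots \subset W_n$ where $W_i = \Vect_F(e_1,\dots,e_i)$, inside the totally isotropic subspace $W = X = \Vect_F(e_1,\ldots,e_n)$.

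Next, observe that $L_0$ is a lattice of type $a = 1$ in the sense of Definition \ref{Defi_reseau_type_a}, since $\langle v_0, v_0 \rangle = 2 = 2a$ with $a = 1$, and in particular $\val_{\pk}(a) = 0 \leq 1$, so the hypotheses of Proposition \ref{Proposition} are met. For an arbitrary $g \in G$, set $L_1 = g(L_0)$ and $L_2 = L_0$. Since $g$ is an isometry, $L_1$ is still a lattice of type $1$ (the image of an $a$-adapted basis is again $a$-adapted). Apply Proposition \ref{Proposition} to $L_1, L_2$ and to the fixed flag $W_1 \subset \cdots \subset W_n$ in $W$: this yields an element $h \in \SO(V_n,q) = G$ such that $h(L_1) = L_2$ and $h(W_i) = W_i$ for all $i \in \{1,\ldots,n\}$.

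The two conclusions translate into group-theoretic statements: the equality $h(g(L_0)) = L_0$ means $hg \in K_0$, while the preservation of the full flag $(W_i)$ means $h \in B$. Writing $g = h^{-1}\cdot (hg)$ with $h^{-1} \in B$ and $hg \in K_0$ gives $g \in BK_0$, which establishes the factorization $G = BK_0$. The substance of the proof is entirely contained in Proposition \ref{Proposition}, so no further obstacle remains; the only point that deserves a line of verification is that the standard Borel $B$ really is the stabilizer of the chosen isotropic flag (this is immediate from the matrix description of $\mathbf{B}$ given in Section~\ref{Notations}) and that $g(L_0)$ is of type $1$.
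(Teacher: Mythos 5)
Your proof is correct and follows essentially the same route as the paper: the paper likewise invokes Lemme \ref{lemme_decomposition_conjugaison} (in the discussion just before the corollary) to reduce to the standard $B$ and $\K_0$, then applies Proposition \ref{Proposition} to the two type-$1$ lattices $L_0$ and $g(L_0)$ with the canonical complete flag of $X$, obtaining $b\in B$ with $b(L_0)=g(L_0)$ and concluding $b^{-1}g\in\K_0$. The only (immaterial) difference is that you apply the proposition with the roles of the two lattices swapped, producing $h$ with $hg\in\K_0$ rather than $b^{-1}g\in\K_0$.
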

\begin{proof}
Il suffit de considérer le réseau $L_0$ défini \emph{loc. cit.} (qui est un réseau de type 1 de $(V_n,q)$) et le sous-espace totalement isotrope $X$ de \eqref{dec_canonique_V_n} avec son drapeau complet canonique. Alors, si $g$ est un élément de $\SO_{2n+1}(F)$, $g(L_0)$ est également un réseau de type 1 et donc, par la Proposition \ref{Proposition}, on trouve $b \in \SO_{2n+1}(F)$ tel que $b(L_0)=g(L_0)$ avec de plus $b$ qui préserve le drapeau
\[
\Vect(e_1)\subset \Vect(e_1,e_2) \subset \cdots \subset \Vect(e_1,\cdots,e_n),
\]
c'est-à-dire que $b$ est dans le sous-groupe de Borel standard $B$ de $\SO_{2n+1}(F)$. Donc $b^{-1}g$ stabilise le réseau $L_0$, \ie $b^{-1}g\in\K_0$ et finalement $g \in B\mathrm{K}_0$.
\end{proof}

\begin{cor}\label{Factorisation G=BJ}
Soit $B$ un sous-groupe de Borel de $\SO_{2n+1}(F)$ et $\J$ un sous-groupe compact épiparamodulaire de $\SO_{2n+1}(F)$ tel que défini au paragraphe \ref{Le_groupe_J}.

Alors, on a $\SO_{2n+1}(F)=B\J$.
\end{cor}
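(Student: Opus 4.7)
Le plan est de reproduire la stratégie de la preuve du Corollaire \ref{Factorisation G=BK_0} ($G=B\K_0$), en remplaçant le réseau $L_0$ par le réseau $L$ définissant $\J$. Pour cela, il faut d'abord ramener l'énoncé au cas d'un sous-groupe de Borel et d'un sous-groupe épiparamodulaire \emph{particuliers} : puisque deux sous-groupes de Borel (resp. deux sous-groupes épiparamodulaires) sont conjugués dans $\SO_{2n+1}(F)$, le Lemme \ref{lemme_decomposition_conjugaison} nous autorise à ne traiter que le sous-groupe de Borel standard $B$ associé à la base $\mathcal{B}_0$ et le sous-groupe épiparamodulaire $\J$ stabilisateur du réseau $L=X_\Ok \oplus \varpi Y_\Ok \oplus \pk v_0$.

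L'étape-clé est d'observer que $L$ devient un réseau \emph{de type $\varpi$} au sens de la Définition \ref{Defi_reseau_type_a} lorsque l'on remplace la forme $q$ par la forme proportionnelle $q'=q/\varpi$. En effet, considérons la $\Ok$-base $(e_1,\cdots,e_n,v'_0,\varpi f_n,\cdots,\varpi f_1)$ de $L$, où $v'_0=\varpi v_0$. Les relations $\langle e_i,\varpi f_j\rangle_{q'}=\varpi^{-1}\cdot\varpi\delta_{ij}=\delta_{ij}$ et $\langle v'_0,v'_0\rangle_{q'}=\varpi^{-1}\cdot 2\varpi^2=2\varpi$, les autres produits scalaires étant nuls, montrent que cette base est $\varpi$-adaptée pour $q'$. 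Comme multiplier $q$ par un scalaire non nul ne change pas le groupe orthogonal, on a $\SO(V_n,q')=\SO(V_n,q)=\SO_{2n+1}(F)$, et la forme $q'$ est elle aussi d'indice de Witt maximal $n$ avec $\val_\pk(\varpi)=1$ : les hypothèses de la Proposition \ref{Proposition} sont donc remplies pour le quadruplet $((V_n,q'),L,g(L),X)$ muni du drapeau complet standard $\Vect(e_1)\subset\cdots\subset\Vect(e_1,\cdots,e_n)$.

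Soit alors $g\in \SO_{2n+1}(F)$. Le réseau $g(L)$ est encore de type $\varpi$ pour $q'$ (puisque $g$ est une isométrie de $q'$), donc par la Proposition \ref{Proposition} appliquée à $(V_n,q')$, il existe $b \in \SO(V_n,q')=\SO_{2n+1}(F)$ tel que $b(L)=g(L)$ et tel que $b$ préserve le drapeau complet de $X$ ci-dessus. Cette dernière condition signifie exactement que $b$ appartient au sous-groupe de Borel standard $B$. Par construction, $b^{-1}g$ stabilise $L$, donc $b^{-1}g \in \J$, et finalement $g=b(b^{-1}g)\in B\J$.

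Le principal obstacle -- qui n'en est plus un une fois la bonne observation faite -- est de reconnaître le bon \og type \fg{} du réseau $L$ : la dissymétrie entre $X_\Ok$ et $\varpi Y_\Ok$ dans la définition de $L$ rend celui-ci impair pour $q$, mais le remplacement de $q$ par $q/\varpi$ rétablit l'existence d'une base $\varpi$-adaptée. La condition $\val_\pk(a)\leq 1$ dans la Proposition \ref{Proposition} (cruciale pour le Lemme \ref{Lemme_2}) est ici précisément saturée, ce qui explique pourquoi la factorisation d'Iwasawa n'a plus lieu pour les sous-groupes paramodulaires supérieurs $\J(\pk^m)$ avec $m\geq 2$.
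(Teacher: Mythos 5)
Votre preuve est correcte et suit essentiellement la même démarche que celle du texte : on remplace $q$ par $q'=q/\varpi$, on observe que $L$ est alors un réseau de type $\varpi$ (avec $\val_\pk(\varpi)=1$), et on applique la Proposition \ref{Proposition} exactement comme pour le Corollaire \ref{Factorisation G=BK_0}. Votre vérification explicite de la base $\varpi$-adaptée et la remarque finale sur la saturation de la condition $\val_\pk(a)\leq 1$ ne font qu'expliciter ce que le texte laisse implicite.
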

\begin{proof}
Il faut maintenant considérer l'espace quadratique $(V_n,q')$ où $q'$ est la forme quadratique $\frac{q}{\varpi}$ introduite au paragraphe \ref{J^+}. Le réseau $L$ défini \emph{loc. cit.} est alors un réseau de type $\varpi$ de $(V_n,q')$. On est donc encore redevable de la Proposition \ref{Proposition} et la suite du raisonnement est la même que pour le Corollaire précédent.
\end{proof}

Remarquons enfin que l'on ne peut pas espérer une factorisation avec le sous-groupe paramodulaire.
\begin{prop}\label{prop_G_neq_BJ+}
Si $2n+1>1$, alors $B \J^+ \subsetneq \SO_{2n+1}(F)$.
\end{prop}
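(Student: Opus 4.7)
The plan is to exhibit an element of $\J\setminus \J^+$ (which exists by Proposition-Définition \ref{prop-def_J+} since $2n+1>1$) that cannot be written as $bj^+$ with $b\in B$ and $j^+ \in \J^+$. Two different roads lead to the same contradiction, and both rely crucially on Lemma \ref{lemme_BcapJ}, which says $B \cap \J \subset \J^+$.

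The quickest route is the following. Suppose by contradiction that $B\J^+=\SO_{2n+1}(F)$, and pick any $g \in \J\setminus \J^+$ (non-empty by Proposition-Définition \ref{prop-def_J+}). Write $g=b j^+$ with $b\in B$ and $j^+\in \J^+$. Then $b=g(j^+)^{-1}$ lies in $\J$ (both factors do), so $b \in B\cap \J$. By Lemma \ref{lemme_BcapJ}, $b\in \J^+$, which forces $g=bj^+ \in \J^+$, a contradiction. I expect no real obstacle here since everything is already in place; the only care needed is to cite the surjectivity of $\alpha:\J\to\{\pm 1\}$ in dimension $>1$.

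An alternative (more structural) argument uses the spinor-norm characterization of $\J^+$ given by Theorem \ref{thm_J+_noyau_norme_spin}. The composite $\eta\circ \nu:\SO_{2n+1}(F) \to \{\pm 1\}$ is a group homomorphism, trivial on $\J^+$. On the Borel $B=TN$, the unipotent radical $N$ is divisible modulo squares (every unipotent is a square), hence $\nu(N)=1$ in $F^\times/(F^\times)^2$, and on $T$ Proposition \ref{prop_norme_spin_Levi} gives $\nu(\mathrm{diag}(\lambda_1,\dots,\lambda_n,1,\lambda_n^{-1},\dots,\lambda_1^{-1}))=\prod_i\lambda_i \bmod (F^\times)^2$. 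Thus $\eta\circ \nu$ is \emph{not} trivial on $B$ (take $\lambda_1=\varpi$). However, the element $u\in \J\setminus \J^+$ constructed in the proof of Theorem \ref{thm_J+_noyau_norme_spin} cannot produce the necessary sign in $B\J^+$ because the only way to get $\eta\circ\nu(bj^+)=-1$ forces $b \notin B\cap \J$, etc. This viewpoint is a bit heavier but explains conceptually \emph{why} the factorisation fails: the character $\eta\circ\nu$ detects a genuine obstruction.

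In the end, the first (one-paragraph) argument suffices and is what I would write up. The only subtlety is making sure we use $2n+1>1$, which enters precisely to guarantee $\J\neq \J^+$ via Proposition-Définition \ref{prop-def_J+}.
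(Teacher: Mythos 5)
Your first argument is exactly the paper's proof: take $j^-\in\J\setminus\J^+$ (non-empty since $\alpha$ is surjective for $2n+1>1$), write $j^-=bj^+$, observe $b=j^-(j^+)^{-1}\in B\cap\J\subset\J^+$ by Lemme \ref{lemme_BcapJ}, and conclude $j^-\in\J^+$, a contradiction. The proposal is correct and takes essentially the same approach as the paper.
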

\begin{proof}
Dans le cas $2n+1=1$, tous les groupes sont triviaux. Si $2n+1>1$, alors il existe des éléments de $\J$ qui ne sont pas dans $\J^+$, comme l'élément $u$ introduit au paragraphe \ref{J^+}. Soit donc $j^-$ un tel élément et supposons disposer d'une factorisation $\SO_{2n+1}(F)=B \J^+$. On a alors $j^-=bj^+$, soit $b=j^-(j^+)^{-1}$. Or, d'après le Lemme \ref{lemme_BcapJ}, on a $B \cap \J \subset \J^+$ donc $j^-(j^+)^{-1} \in \J^+$ et finalement $j^- \in \J^+$, contradiction.
\end{proof}

\chapter{Représentations avec des invariants paramodulaires}\label{Représentations avec des invariants paramodulaires}
L'objectif de ce Chapitre est de classifier, pour $n$ quelconque, toutes les représentations du groupe $\SO_{2n+1}(F)$ (tel qu'étudié au Chapitre \ref{Le groupe paramodulaire}) ayant des invariants paramodulaires.

\begin{defi}
Soit $(\pi,V)$ une représentation de $\SO_{2n+1}(F)$. On pose :
\begin{align*}
\pi^{(\J,+)}&=\{v \in V \,|\, \pi(j)(v)=v, \forall j \in \J\}, \\
\pi^{(\J,-)}&=\{v \in V \,|\, \pi(j)(v)=\eta\circ \nu(j)v, \forall j \in \J\}.
\end{align*}
On parlera respectivement de $(\J,+)$-invariants (ou de $\J$-invariants) et de $(\J,-)$-variants.
\end{defi}

\begin{lemme}\label{lemme_decomposition_inv_param}
Soit $(\pi,V)$ une représentation de $\SO_{2n+1}(F)$. On dénote par $\pi^{\J^+}$ l'espace (éventuellement nul) de ses invariants sous l'action du sous-groupe $\J^+$.
Alors, on a la décomposition :
\[
\pi^{\J^+}=\pi^{(\J,+)} \oplus \pi^{(\J,-)}.
\]


De plus, en posant $\pi'=\eta \otimes \pi$, 
l'application $v \mapsto 1 \otimes v$ induit des isomorphismes (de $\J$-représentations) $\pi^{(\J,+)} \simeq (\pi')^{(\J,-)}$ et $\pi^{(\J,-)} \simeq (\pi')^{(\J,+)}$.

\end{lemme}
\begin{proof}
L'action de $\J$ sur $\pi^{\J^+}$ se décompose selon les deux caractères de $\J/\J^+\simeq \Z/2\Z$. Au caractère trivial correspond l'espace $\pi^{(\J,+)}=\pi^\J$. 
Le Théorème \ref{thm_J+_noyau_norme_spin} identifie le caractère non trivial comme étant $\eta \circ \nu$ restreint à $\J$, si bien qu'il lui correspond l'espace $\pi^{(\J,-)}$.

Le dernier point découle de la définition de la torsion (étant entendu que tordre par $\eta$, c'est tordre par $\eta \circ \nu$, fait déjà mentionné au début du paragraphe \ref{Norme spinorielle}).
\end{proof}

{\bfseries Ainsi, disposant d'une représentation ayant des invariants (non triviaux) par $\J^+$, on pourra supposer, quitte à tordre par $\eta$, qu'elle a des invariants par $\J$.}

Achevons ces préliminaires par un lemme facile mais bien utile.
\begin{lemme}\label{lemme_inv_induite_induisante}
Soit $P=MN$ un sous-groupe parabolique standard de $G$. Soit $(\sigma,W)$ une représentation lisse de $M=\GL_{n_1}(F) \times \cdots \times \GL_{n_k}(F) \times \SO_{2\tilde{n}+1}(F)$. Alors on a un isomorphisme linéaire :
\begin{align*}
(\ii_P^G \sigma)^\J & \simeq \sigma^{\J \cap M}.
\end{align*}

En particulier, si $\sigma$ est de la forme $\sigma_1 \boxtimes \cdots \boxtimes \sigma_k \boxtimes \tau$, où $\sigma_i$ est une représentation lisse de $\GL_{n_i}(F)$ pour tout $i$ et $\tau$ une représentation lisse de $\SO_{2\tilde{n}+1}(F)$, alors :
\begin{align*}
(\ii_P^G \sigma)^\J &\simeq \sigma_1^{\GL_{n_1}(\Ok)} \otimes \cdots \otimes \sigma_k^{\GL_{n_k}(\Ok)} \otimes \tau^{\J_{2\tilde{n}+1}}.
\end{align*}

\end{lemme}
\begin{proof}
Il suffit de considérer l'application $\alpha : f \mapsto f(1)$ de $(\ii_P^G \sigma)^\J$ vers $\sigma^{\J \cap M}$ -- on a bien $\sigma(j)f(1)=f(j)=f(1)$. La factorisation $G=P\J$ (qui découle de la décomposition $G=B\J$ du Corollaire \ref{Factorisation G=BJ}) montre que si $f(1)=0$, alors $f(g)=f(pj)=\sigma(p) f(1)=0$ pour tout $g \in G$, si bien que $\alpha$ est injective.\ps 

Pour voir que $\alpha$ est surjective, on se donne $v \in \sigma^{\J \cap M}$ et on définit $f_v$ qui à $g=pj$ associe $\sigma(p) v$. Il faut déjà voir que $f_v$ est bien définie. Soit donc $g=p_1j_1=p_2j_2$ un élément de $G$, 
alors $\sigma(p_2^{-1}p_1)v=\sigma(j_2j_1^{-1})v=v$ par hypothèse sur $v$ et donc la valeur de $f_v$ en $g$ ne dépend pas de la décomposition. La fonction $f_v$ est bien lisse (elle est invariante par translations à droite par le sous-groupe compact ouvert $\J$) et on a $f_v(pgj)=\sigma(p)f_v(g)$ pour tous $p\in P, g\in G, j \in \J$ et donc $f_v \in (\ii_P^G \sigma)^\J$, ce qui conclut.\ps

Le deuxième isomorphisme est une conséquence de la Proposition \ref{Param_inter_Levi}.
%
%
\end{proof}

\section{Foncteur de Jacquet des séries discrètes}
Colette Mœglin et Marko Tadi\'c ont classifié les séries discrètes des groupes classiques $p$-adiques (\cite{Moeg_ser_dis} \cite{Moeg-Tad}), ce qui fait intervenir le calcul des foncteurs de Jacquet de ces séries discrètes par rapport aux sous-groupes paraboliques standard. Ces classifications ont été faites sans faire appel à la correspondance de Langlands locale pour les groupes classiques, démontrée par James Arthur (\cite{Art13}).
Il sera commode de suivre la belle exposition de ces résultats par Bin Xu (\cite{BinXu}) et par Hiraku Atobe (\cite{Atobe}).\medskip

On travaille ici uniquement dans le cas où $G$ est le groupe déployé $\SO_{2n+1}(F)$ étudié au Chapitre \ref{Le groupe paramodulaire} et on s'intéresse aux paramètres de Langlands correspondants (à valeurs dans $\widehat{G}=\Sp_{2n}(\C)$ donc).

Soit donc $\varphi$ un tel paramètre \emph{discret}. D'après la fin du paragraphe \ref{Paramètres discrets}, on a donc $\varphi=\bigoplus_i \xi_i \otimes U_{a_i}$ avec $\sum \dim(\xi_i) a_i=2n$, les couples $(\xi_i,a_i)$ deux à deux distincts, $\xi_i$ autoduale symplectique si $a_i$ est impair et autoduale orthogonale si $a_i$ pair.

Suivant Mœglin, on définit alors $\Jord (\varphi)$ comme l'ensemble des couples $(\xi_i,a_i)$ -- la représentation $U_{a_i}$ de $\SU(2)$ est en effet complètement caractérisée par sa dimension $a_i$.
On peut d'ailleurs, en posant $d_i=\dim(\xi_i)$ utiliser la correspondance de Langlands locale pour $\GL_{d_i}$ pour identifier $\xi_i$ à une représentation irréductible du groupe de Weil $\W_F$ de dimension $d_i$, identification que l'on fera par la suite.

On peut définir de même $\Jord(\varphi)$ pour un paramètre de Langlands non discret, en prenant garde au fait qu'il faut alors considérer un multi-ensemble plutôt qu'un ensemble.

On sait que dans le cas d'un paramètre discret, le quotient $\Cent(\varphi)/Z(\widehat{G})$ est fini et donc égal à son groupe des composantes connexes $\mathcal{S}_\varphi$. Par ailleurs, $Z(\widehat{G})=Z(\Sp_{2n}(\C))=\{\pm 1 \}$. Il est donc équivalent de parler de centralisateur fini.\medskip

Soit maintenant $\pi$ une représentation lisse irréductible de $G$.
Soit $d \leq n$ et considérons le sous-groupe de Levi standard 
\begin{equation}\label{Levi_SO_2n+1}
M_d=\GL_d(F) \times \SO_{2(n-d)+1}(F)
\end{equation}
de $G$ et le sous-groupe parabolique standard associé $P_d=M_dN_d$. Alors le foncteur de Jacquet (normalisé) $\rr_{P_d}^G \pi$ est une représentation (lisse, admissible, de longueur finie) de $M_d$. On peut regarder la semi-simplifiée de cette représentation :
\begin{equation}\label{rpg_ss}
(\rr_{P_d}^G \pi)^\mathrm{ss}=\bigoplus_i \tau_i \otimes \sigma_i
\end{equation}
où $\tau_i$ (resp. $\sigma_i$) est une représentation irréductible de $\GL_d(F)$ (resp. de \linebreak$\SO_{2(n-d)+1}(F)$). On pose alors, pour $\rho$ une représentation supercuspidale \emph{unitaire} de $\GL_d(F)$, et pour $x \in \R$ :
\begin{equation*}
\Jac_{\rho|\cdot|^x}=\bigoplus
\limits_{\substack{i \\ \tau_i \simeq \rho|\cdot|^x}}
\sigma_i.
\end{equation*}


On remarque que, quand $\rho$ parcourt l'ensemble des supercuspidales unitaires et $x$ parcourt $\R$, $\rho|\cdot|^x$ parcourt bien l'ensemble des représentations supercuspidales de $\GL_d(F)$. En revanche, dès que $d>1$, il existe des représentations irréductibles non supercuspidales et la donnée des $\Jac_{\rho|\cdot|^x}$ est moins complète que celle de $(\rr_{P_d}^G \pi)^\mathrm{ss}$. Pour $d=1$, on a bien :
\begin{equation}\label{jac_pour_gl1}
(\rr_{P_1}^G \pi)^\mathrm{ss}=\bigoplus_i \chi_i \otimes \Jac_{\chi_i}.
\end{equation}

(L'ensemble d'indices est \emph{a priori} plus petit qu'en \eqref{rpg_ss} puisqu'on a regroupé les indices correspondant à des caractères $\chi_i$ isomorphes).
Il reste la question de la semi-simplifiée pour laquelle la proposition suivante précise les choses.

%

%
%

\begin{prop}\label{proposition_GC_ss}
Soient $G$ et $H$ les $F$-points de deux groupes réductifs sur $F$.
Soit $V$ une $\C$-représentation lisse, de longueur finie, de $G\times H$.
Soit $r$ une $\C$-représentation \emph{supercuspidale} de $G$. On définit alors les propriétés suivantes d'une $\C$-représentation lisse de longueur finie $W$ de $G \times H$ :
\begin{itemize}
\item $\mathcal{P}_r$ : tout sous-quotient irréductible de $W$ est de la forme $X \boxtimes Y$ avec $X$ isomorphe à $r$.
\item $\mathcal{P}'_r$ : tout sous-quotient irréductible de $W$ est de la forme $X \boxtimes Y$ avec $X$ non isomorphe à $r$.
\end{itemize}
Il existe une unique décomposition en somme directe $V = A \oplus B$ telle que $A$ satisfait $\mathcal{P}_r$, et $B$ satisfait $\mathcal{P}'_r$.
\end{prop}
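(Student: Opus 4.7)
The plan rests on the classical fact that any irreducible supercuspidal representation $r$ of a reductive $p$-adic group $G$ is both projective and injective in the category $\mathrm{Rep}(G)$ of smooth $G$-representations (see, e.g., \cite{Ren}). The strategy is to transfer the splitting problem for the $(G \times H)$-module $V$ to the underlying $G$-module structure, exploiting the fact that any subrepresentation of $V$ defined canonically from its $G$-structure is automatically preserved by $H$ (since the actions of $G$ and $H$ commute in $G \times H$) and hence is a genuine $(G \times H)$-subrepresentation.

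Concretely, I would define $A \subset V$ to be the sum of all $G$-subrepresentations of $V$ isomorphic to $r$, and $B \subset V$ to be the sum of all $G$-subrepresentations of $V$ having no $G$-subrepresentation isomorphic to $r$. Both are well-defined $G$-subrepresentations: for $B$, one argues via projectivity of $r$ applied to the standard exact sequence $0 \to B_1 \cap B_2 \to B_1 \oplus B_2 \to B_1 + B_2 \to 0$ to show that any $r \hookrightarrow B_1 + B_2$ lifts to $r \hookrightarrow B_1$ or $r \hookrightarrow B_2$. Both $A$ and $B$ are automatically $H$-stable by the observation above. Injectivity of $r$ yields a $G$-module decomposition $V = A \oplus C$ for some $G$-submodule $C$; projectivity of $r$ then ensures that any $G$-submodule of $C$ isomorphic to $r$ would lift to a copy of $r$ in $V$ transverse to $A$, contradicting the maximality of $A$. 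Hence $C \subset B$, and together with $A \cap B = 0$ this gives $V = A \oplus B$.

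It remains to verify the two properties and uniqueness. Any irreducible $(G \times H)$-subquotient $X \boxtimes Y$ of $A$, viewed as a $G$-module, equals $X^{(\dim Y)}$; being a $G$-subquotient of the $r$-isotypic module $A$, it is forced to be $r$-isotypic, hence $X \simeq r$, so $A$ satisfies $\mathcal{P}_r$. Symmetrically, using projectivity to promote any $r$-subquotient of $B$ to an $r$-subrepresentation (which is forbidden by construction), one obtains $\mathcal{P}'_r$ for $B$. For uniqueness, given $V = A' \oplus B'$ with the prescribed properties, the vanishing $\mathrm{Ext}^1_G(r,r) = 0$ (a consequence of projectivity of $r$) implies that the $G$-restriction of $A'$ remains $r$-isotypic, whence $A' \subset A$; symmetrically $B' \subset B$, forcing equality. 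The main technical input, and the only place where the supercuspidality of $r$ is used essentially, is the projectivity and injectivity of $r$ in $\mathrm{Rep}(G)$; everything downstream is elementary book-keeping.
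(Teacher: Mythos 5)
Your argument hinges on the claim that an irreducible supercuspidal $r$ is projective and injective in the whole category $\mathrm{Rep}(G)$, and in particular that $\mathrm{Ext}^1_G(r,r)=0$. This is false as soon as the centre of $G$ is non-compact, which is precisely the situation in which the proposition is applied (with $G=\GL_d(F)$, and in fact $d=1$, where the supercuspidals are the characters of $F^\times$). The correct statement of Bernstein is that $r$ is projective and injective in the subcategory of smooth representations on which the centre acts by $\omega_r$; unramified twists of the central character produce genuine non-split self-extensions. Concretely, take $G=F^\times$, $H=\{1\}$, $r=\mathbf{1}$, and $V$ the two-dimensional smooth representation on which $F^\times$ acts through $\val : F^\times \to \Z$ by $\left(\begin{smallmatrix}1&1\\0&1\end{smallmatrix}\right)$. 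Then $\mathrm{Ext}^1_G(r,r)\neq 0$; your $A$ (the sum of the subrepresentations isomorphic to $r$) is the unique $F^\times$-fixed line, which is \emph{not} a direct summand, so the splitting $V=A\oplus C$ you extract from ``injectivity'' does not exist. Worse, your $A$ is the wrong object: the decomposition asserted by the proposition must have $A=V$ and $B=0$ here, since both Jordan--Hölder factors of $V$ are trivial. The property $\mathcal{P}_r$ concerns \emph{subquotients}, so $A$ must contain all iterated (possibly non-split) self-extensions of the objects $r\boxtimes Y$, not merely the $r$-isotypic part of the socle. The same false $\mathrm{Ext}$-vanishing underlies your uniqueness argument.

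What is actually available, and what the paper uses, is the much weaker fact that a supercuspidal quotient of an admissible finite-length representation is also a subrepresentation (Renard, Lemme VI.3.6, applied after taking $K_H$-invariants to reduce to an admissible $G$-module). With this one shows that in a Jordan--Hölder filtration of $V$ a $\mathcal{P}_r$-factor sitting above a $\mathcal{P}'_r$-factor can always be moved below it; by induction one produces two exact sequences $0\to A\to V\to B'\to 0$ and $0\to B\to V\to A'\to 0$ with $A,A'$ satisfying $\mathcal{P}_r$ and $B,B'$ satisfying $\mathcal{P}'_r$, and concludes $V=A\oplus B$ by a length count. Uniqueness follows from the elementary observation that every homomorphism between a finite-length $\mathcal{P}_r$-object and a $\mathcal{P}'_r$-object vanishes. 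If you wish to keep your strategy, you must replace ``projectivity in $\mathrm{Rep}(G)$'' by this sub/quotient property and redefine $A$ as a generalized isotypic component; at that point you essentially recover the paper's proof.
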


\begin{proof} \textbf{Unicité.} 
On vérifie de suite que tout homomorphisme entre deux représentations de longueur finie vérifiant $\mathcal{P}_r$ pour l'une, et $\mathcal{P}'_r$ pour l'autre, est nul. Ainsi, si on a $A \oplus B = A' \oplus B'$ avec $A,A'$ vérifiant $\mathcal{P}_r$ et $B,B'$ vérifiant $\mathcal{P}'_r$, alors les projections $A \rightarrow B'$ et $B \rightarrow A'$ sont nulles. On en déduit $A$ inclus dans $A'$, $B$ inclus dans $B'$, puis par symétrie $A=A'$ et $B=B'$.\ps 

\textbf{Existence.} On va montrer que

(a) $V$ est dans une suite exacte $0 \rightarrow A \rightarrow V \rightarrow B' \rightarrow 0$ avec $A$ vérifiant $\mathcal{P}_r$ et $B'$ vérifiant $\mathcal{P}'_r$.

(b) $V$ est dans une suite exacte $0 \rightarrow B \rightarrow V \rightarrow A' \rightarrow 0$ avec $A'$ vérifiant $\mathcal{P}_r$ et $B$ vérifiant $\mathcal{P}'_r$.

On conclura ainsi : $B \cap A$ est un sous-objet de $A$ et de $B$, donc satisfait $\mathcal{P}_r$ et $\mathcal{P}'_r$, et est de longueur finie, donc nul. La projection $V \rightarrow B'$ injecte donc $B$ dans $B'$. Mais $B$ et $B'$ ont même semi-simplifiée à isomorphisme près par le théorème de Jordan-Hölder (\cite{Lang}, III, §8). En particulier, elles ont même longueur, et l'injection de $B$ dans $B'$ induite par $V \rightarrow B'$ est en fait un isomorphisme. Cela entraîne $V = A \oplus B$.

On remarque par ailleurs que si l'on montre (a) alors on montre (b), en remplaçant $r$ et $V$ par leur contragrédiente. Il suffit donc de montrer (a), ce que nous faisons en utilisant le lemme suivant :

\begin{lemme}
(On garde les notations de la Proposition.)
Supposons disposer d'une suite exacte $0 \rightarrow C \rightarrow V \rightarrow D \rightarrow 0$ avec $C,D$ irréductibles, et $D$ vérifiant $\mathcal{P}_r$.

Alors on a également
une suite exacte $0 \rightarrow C' \rightarrow V \rightarrow D' \rightarrow 0$ avec $C',D'$ irréductibles, et $C'$ vérifiant $\mathcal{P}_r$.
\end{lemme}
\begin{proof}
La représentation $C$ est irréductible : elle s'écrit donc $X_1 \boxtimes Y_1$ avec $X_1$ irréductible de $G$ et $Y_1$ irréductible de $H$. De même, $D=X_2 \boxtimes Y_2$ et, comme elle vérifie $\mathcal{P}_r$, on a $X_2 \simeq r$.

On souhaite oublier l'action de $H$ et considérer ces représentations comme des $\C[G]$-modules. Il faut toutefois faire attention au fait que ces $\C[G]$-modules ne sont pas admissibles, et envisager les choses un peu différemment.
La lissité de $Y$ nous fournit un sous-groupe compact ouvert $K_H$ de $H$ tel que $Y^{K_H} \neq \{0\}$. La considération des $K_H$-invariants nous fournit la suite exacte :
\[
0 \rightarrow X_1 \boxtimes Y_1^{K_H} \rightarrow V^{\{1\}\times K_H} \rightarrow X_2 \boxtimes Y_2^{K_H} \rightarrow 0,
\]
que l'on voit comme une suite exacte de $\C[G]$-modules, admissibles désormais. On peut alors utiliser le Lemme VI.3.6 de \cite{Ren} pour dire que $r$, supercuspidale et quotient de $V^{\{1\}\times K_H}$, en est aussi une sous-$G$-représentation. C'est \emph{a fortiori} une sous-$G$-représentation de $V$ et la composante $r$-isotypique $V_r$ de $V$ est non nulle. Mais $V_r$ est aussi stable par $H$. On peut ainsi trouver une sous-représentation irréductible de $G \times H$ dans $V_r$, nécessairement de la forme $r \boxtimes X$, \ie vérifiant $\mathcal{P}_r$.
\end{proof}
\emph{(Retour à la démonstration de la Proposition)} Soit donc maintenant une filtration
\[
\{0\} = V_0 \subset V_1 \subset \cdots \subset V_n = V
\] avec $V_i/V_{i-1}$ irréductible comme $G \times H$-module pour tout $i$.
Supposons que l'on ait un indice $i$ tel que $V_i/V_{i-1}$ vérifie $\mathcal{P}_r$ et $V_{i-1}/V_{i-2}$ vérifie $\mathcal{P}'_r$. Alors en appliquant le lemme à la suite exacte :
\[
0 \rightarrow V_{i-1}/V_{i-2} \rightarrow V_{i}/V_{i-2} \rightarrow V_{i}/V_{i-1} \rightarrow 0,
\]
on peut modifier $V_{i-1}$ de sorte que ce soit l'inverse. Si aucun des $V_i/V_{i-1}$ ne vérifie $\mathcal{P}_r$, alors $V = B$ vérifie $\mathcal{P}'_r$, ce qui, en prenant $A=0$, achève la preuve.

Sinon, on peut supposer que $V_1/V_0=V_1$ est $\mathcal{P}_r$. Par récurrence sur la longueur de $V$, on peut alors trouver $A_0$ ($\mathcal{P}_r$) et $B$ ($\mathcal{P}'_r$) tels que la suite :
\[
0 \rightarrow A_0 \rightarrow V/V_1 \rightarrow B_0 \rightarrow 0
\]
soit exacte.
En définissant alors $A$ comme l'extension naturelle de $A_0$ par $V_1$ (vérifiant toujours $\mathcal{P}_r$), on a bien une suite exacte
\[
0 \rightarrow A \rightarrow V \rightarrow B_0 \rightarrow 0
\]
avec les propriétés recherchées.
\end{proof}

On veut évidemment appliquer cette proposition à $\rr_{P_d}^G \pi$, le cas le plus intéressant étant celui de $P_1$ pour lequel on fait apparaître des caractères $\chi_i$ de $\GL_1(F)$ qui sont bien tous supercuspidaux. La longueur finie de $\rr_{P_1}^G \pi$ nous donne, par itération de la Proposition \ref{proposition_GC_ss} :
\[
\rr_{P_1}^G \pi=\bigoplus_i V(\chi_i),
\]
où $V(\chi_i)$ est telle que tout sous-quotient irréductible est de la forme $\chi_i \otimes Y$ (avec $Y$ représentation irréductible de $\SO_{2n-1}(F)$). On remarque que $V(\chi_i)$ est plus grande \emph{a priori} que la composante $\chi_i$-isotypique $V_{\chi_i}$ de la $\GL_1(F)$-représentation $V$, qui ne considère que les \emph{sous-représentations} de la forme $\chi_i\otimes Y$. Par identification avec l'écriture \eqref{jac_pour_gl1}, on a $V(\chi_i)^\mathrm{ss}=\Jac_{\chi_i}$.\medskip

Revenons à notre représentation irréductible $\pi$ et soit $\varphi=\Ll(\pi)$ son paramètre de Langlands. On a également un caractère $\eps$ du groupe $\mathcal{S}_\varphi$, qui caractérise $\pi$ à l'intérieur du paquet $\Pi_\varphi$. Le choix d'une représentation supercuspidale unitaire $\rho$ de $\GL_d(F)$ (ou de façon équivalente d'une représentation irréductible de dimension $d$ de $\W_F$) implique le choix du sous-groupe de Levi $M_d$ (au sens de \eqref{Levi_SO_2n+1}) et on peut alors considérer le foncteur de Jacquet correspondant et donc la quantité $\Jac_{\rho |\cdot|^x}$ (pour $x$ réel).

\begin{lemme} \emph{(M\oe{}glin-Tadi\'c, \cite{Moeg-Tad}, Lemma 3.6)}\label{Ato4.1} 

Soit $\pi$ une série discrète de $G$ et soit $\varphi$ son paramètre de Langlands. Alors
\[
\Jac_{\rho|\cdot|^x} \neq 0 \Rightarrow (\rho,2x+1) \in \Jord(\varphi)
\]
En particulier $x \in \frac{1}{2}\mathbb{Z}$ et $x \geq 0$.
\end{lemme}
\begin{proof}
On renvoie à \cite{BinXu}, Lemma 7.2 pour la preuve. 
\end{proof}

On peut en fait préciser les conditions de non nullité de $\Jac_{\rho|\cdot|^x}$. Introduisons auparavant une notation supplémentaire. Soit $\varphi$ un paramètre discret de $G$, d'après la Proposition \ref{decomposition_par_discret_sp}, on a : $\varphi=\bigoplus_{i=1}^r \varphi_i$ avec chaque $\varphi_i$ autoduale symplectique irréductible et les $\varphi_i$ deux à deux distinctes. On obtient alors immédiatement que $\Cent(\varphi)=\{\pm1\}^r$. 
On peut fixer une $\Z/2\Z$-base $(\alpha_{\varphi_i})_{1\leq i \leq r}$ de ce dernier groupe en associant $\alpha_{\varphi_i}$ à $\varphi_i$.

Par ailleurs, $\mathcal{S}_\varphi$ vaut (ici) $\Cent(\varphi)/\{\pm 1\}$. Un caractère de $\mathcal{S}_\varphi$ est donc la donnée d'un signe pour chaque $\alpha_{\varphi_i}$ tel que le produit des signes fasse 1 (\ie le caractère de $\Cent(\varphi)$ doit être trivial sur l'élément diagonal $\sum_i \alpha_i$). Étant donné $\eps$ un élément de $\widehat{\mathcal{S}_\varphi}$, on peut donc parler de sa valeur en $\alpha_{\varphi_i}$, que l'on notera $\eps(\alpha_{\varphi_i})$.

Enfin, si $\varphi=\bigoplus_i \sigma_i \otimes U_{a_i}$ est un paramètre de Langlands, on notera 
\begin{equation} \label{notation_varphi_ominus}
\varphi \ominus (\sigma_j \otimes U_{a_j})
\end{equation}
pour $\bigoplus_{i\neq j} \sigma_i \otimes U_{a_i}$.


\begin{thm}\label{Ato4.2} \emph{(\cite{Moeg-Tad} ; \cite{Moeg-courtenote} p.9)} 

Soit $\pi$ une série discrète de $G$ et soit $\varphi$ son paramètre de Langlands. On suppose que $(\rho,2x+1) \in \Jord(\varphi)$ et on note $\varphi_-=\varphi \ominus \rho \otimes U_{2x+1} \oplus \rho \otimes U_{2x-1}$. Alors $\Jac_{\rho|\cdot|^x} \neq 0$ exactement quand l'une des trois conditions suivantes est satisfaite :
\begin{enumerate}
\item $x>1/2$ et $(\rho,2x-1) \not\in \Jord(\varphi)$, si bien que $\varphi_-$ est un paramètre discret : c'est le paramètre de Langlands de $\Jac_{\rho|\cdot|^x}$ ;
\item $x>1/2$ et $(\rho,2x-1) \in \Jord(\varphi)$ avec $\eps(\alpha_{\rho \otimes U_{2x+1}})=\eps(\alpha_{\rho \otimes U_{2x-1}})$, auquel cas $\varphi_-$ est le paramètre de Langlands (non discret) de $\Jac_{\rho|\cdot|^x}$ ;
\item $x=1/2$ et $\eps(\alpha_{\rho \otimes U_{2}})=1$, auquel cas $\varphi_-$ est le paramètre de Langlands (discret) de $\Jac_{\rho|\cdot|^x}$.
\end{enumerate}
On peut en fait préciser le caractère $\eps_-$ correspondant à $\Jac_{\rho|\cdot|^x}$, si bien que cette dernière représentation est entièrement caractérisée.
\end{thm}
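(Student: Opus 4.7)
The plan is to use the Mœglin--Tadić inductive classification of discrete series of $G = \SO_{2n+1}(F)$: each discrete series $\pi$ associated to the data $(\varphi, \eps)$ is realized as the unique irreducible subrepresentation of some $\ii_{P_d}^G(\rho|\cdot|^x \boxtimes \pi_-)$, where $d = \dim \rho$, $P_d$ is the parabolic with Levi $M_d = \GL_d(F) \times \SO_{2(n-d)+1}(F)$, and $\pi_-$ is an irreducible representation of $\SO_{2(n-d)+1}(F)$ attached to data $(\varphi_-, \eps_-)$ derived from $(\varphi, \eps)$. The theorem inverts this construction via the Jacquet functor: by Frobenius reciprocity \eqref{réciprocité_de_Frobenius_foncteur_Jacquet}, the non-vanishing of $\Jac_{\rho|\cdot|^x}$ is equivalent to the existence of an irreducible subrepresentation of $\rr_{P_d}^G \pi$ of the form $\rho|\cdot|^x \boxtimes \sigma$; combined with Proposition \ref{proposition_GC_ss} and Lemma \ref{Ato4.1}, this reduces the problem to controlling which reductions of $(\varphi, \eps)$ actually occur.

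I would then proceed by case analysis. In Case 1 ($x > 1/2$ and $(\rho, 2x-1) \notin \Jord(\varphi)$), the modified parameter $\varphi_- = \varphi \ominus (\rho \otimes U_{2x+1}) \oplus (\rho \otimes U_{2x-1})$ has distinct Jordan blocks, hence remains discrete, and $\Jac_{\rho|\cdot|^x}$ is identified with the discrete series in $\Pi_{\varphi_-}$ whose character $\eps_-$ is computed directly from $\eps$. In Case 2 ($x > 1/2$ and $(\rho, 2x-1) \in \Jord(\varphi)$), the block $(\rho, 2x-1)$ appears with multiplicity two in $\varphi_-$, so $\varphi_-$ is no longer discrete; the key step is to show that, after a second reduction removing $(\rho, 2x-1)$ and inducing back, $\pi$ is a subquotient precisely when $\eps(\alpha_{\rho \otimes U_{2x+1}}) = \eps(\alpha_{\rho \otimes U_{2x-1}})$, which follows from a direct computation of Jacquet modules via the Bernstein--Zelevinsky geometric lemma applied to the relevant induced representation. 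Case 3 ($x = 1/2$) is the boundary case in which $(\rho, 0)$ is not a genuine Jordan block; the reduction removes $(\rho, 2)$ outright, and the condition $\eps(\alpha_{\rho \otimes U_2}) = 1$ appears because only under this constraint does $\pi$ embed (rather than merely occur as a subquotient) in $\ii_{P_d}^G(\rho|\cdot|^{1/2} \boxtimes \pi_-)$.

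The main obstacle is the precise identification of $\eps_-$ from $\eps$ across the reduction, and in particular determining the correct sign in the character conditions of Cases 2 and 3. In Case 2 the component group $\mathcal{S}_{\varphi_-}$ differs in structure from $\mathcal{S}_\varphi$, so the transfer of characters requires a careful bookkeeping tied to the Mœglin--Tadić normalization of the local Langlands correspondence for $\SO_{2n+1}$, and ultimately to the endoscopic character identities underlying Arthur's classification. Ruling out the opposite sign in Case 3 is likewise delicate: one must verify that when $\eps(\alpha_{\rho \otimes U_2}) = -1$, the representation $\pi$ appears only as a proper quotient of the induced representation, so that Frobenius reciprocity forces $\Jac_{\rho|\cdot|^{1/2}} = 0$. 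Once this character bookkeeping is settled, the three claimed parameters $\varphi_-$ follow immediately from the Mœglin--Tadić construction.
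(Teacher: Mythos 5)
The paper does not actually prove this statement: the proof given is a one-line citation to Bin Xu's Lemma 7.3 (the result itself being quoted from M\oe{}glin--Tadi\'c and M\oe{}glin's note). So there is no internal argument to compare yours against; the theorem is imported as a black box, exactly like Lemma \ref{Ato4.1} just before it.

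Judged on its own terms, your sketch correctly reconstructs the strategy of the cited references: realize $\pi$ as the unique irreducible subrepresentation of an induced representation prescribed by the M\oe{}glin--Tadi\'c data, invert via Frobenius reciprocity \eqref{réciprocité_de_Frobenius_foncteur_Jacquet} together with Proposition \ref{proposition_GC_ss} (which lets you pass from subquotients of the semisimplification to honest subrepresentations of the $\rho|\cdot|^x$-isotypic summand), and compute Jacquet modules with the geometric lemma. But as a proof it has a genuine gap, and you have in fact located it yourself: the entire content of the theorem is the three $\eps$-conditions, and your argument defers precisely those. Saying that Case 2 "follows from a direct computation via the Bernstein--Zelevinsky geometric lemma" and that Case 3 requires one to "verify that when $\eps(\alpha_{\rho\otimes U_2})=-1$ the representation appears only as a proper quotient" is a restatement of what must be proved, not a proof; the sign conditions are not extractable from the geometric lemma alone but come from the inductive definition of the $\eps$-function in the M\oe{}glin--Tadi\'c classification (equivalently, from the endoscopic character identities normalizing the correspondence of Theorem \ref{LLC_SO}). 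Without that input your case analysis cannot distinguish the admissible reductions from the forbidden ones, which is why the thesis — like your proposal ultimately must — simply cites the literature for this step.
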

\begin{proof}
On renvoie à \cite{BinXu}, Lemma 7.3 pour la preuve. 
\end{proof}

La combinaison des Lemme \ref{Ato4.1} et Théorème \ref{Ato4.2} nous dit que, dans le cas d'une série discrète, $\Jac_{\rho|\cdot|^x}$ est soit nul, soit irréductible. On obtient alors le corollaire suivant.

\begin{cor}\label{gl1_jac_ss}
Soit $M_1=\GL_1(F)\times \SO_{2n-1}(F)$ sous-groupe de Levi de $G$ et $P_1$ le sous-groupe parabolique correspondant. Alors $\rr_{P_1}^G \pi$ est semi-simple.
\end{cor}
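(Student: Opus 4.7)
The plan is to combine the decomposition obtained from Proposition \ref{proposition_GC_ss} with the structural information on $\Jac_{\chi}$ just obtained for discrete series.

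First, since $r_{P_1}^G \pi$ is a smooth representation of $M_1 = \GL_1(F) \times \SO_{2n-1}(F)$ of finite length, and since every irreducible representation of $\GL_1(F)$ is a (supercuspidal) character, iterating Proposition \ref{proposition_GC_ss} over the countably many characters $\chi$ of $F^\times$ appearing in irreducible subquotients gives a canonical decomposition
\begin{equation*}
r_{P_1}^G \pi = \bigoplus_i V(\chi_i),
\end{equation*}
where each $V(\chi_i)$ has the property that every irreducible subquotient is of the form $\chi_i \boxtimes Y$. By identification with the semisimplification \eqref{jac_pour_gl1}, one has $V(\chi_i)^{\mathrm{ss}} \simeq \chi_i \boxtimes \Jac_{\chi_i}$.

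Next, I would invoke the remark stated just above the corollary: by the combination of Lemma \ref{Ato4.1} and Theorem \ref{Ato4.2}, for $\pi$ a discrete series, $\Jac_{\chi_i}$ is either zero or irreducible. In the first case, $V(\chi_i) = 0$; in the second, $V(\chi_i)$ has length $1$, hence is itself irreducible. In either case $V(\chi_i)$ is semisimple, and therefore so is the direct sum $r_{P_1}^G \pi$.

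There is no real obstacle here: the substantive work has been carried out in the cited results of M\oe{}glin--Tadi\'c, which control precisely when $\Jac_{\chi}$ is non-zero and identify it explicitly by its Langlands parameter. The only thing to make sure of is that the decomposition of Proposition \ref{proposition_GC_ss}, \emph{a priori} formulated for a finite family of supercuspidals, extends to the (at most countable) family of characters actually appearing in $r_{P_1}^G \pi$; this is harmless because only finitely many $V(\chi_i)$ are non-zero, by finite length of $r_{P_1}^G \pi$ (a consequence of the admissibility and finite length preservation properties of $r_{P_1}^G$ recalled in Chapter \ref{Chapitre_Représentations}).
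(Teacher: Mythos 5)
Your proof is correct and follows essentially the same route as the paper: decompose $\rr_{P_1}^G\pi=\bigoplus_i V(\chi_i)$ by iterating Proposition \ref{proposition_GC_ss}, identify $V(\chi_i)^{\mathrm{ss}}$ with $\Jac_{\chi_i}$, and conclude from the fact (Lemme \ref{Ato4.1} and Théorème \ref{Ato4.2}) that each $\Jac_{\chi_i}$ is zero or irreducible, so each $V(\chi_i)$ has length at most $1$. Your extra remark justifying the passage from a finite to a countable family of supercuspidal characters via finite length is a harmless refinement of what the paper leaves implicit.
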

\begin{proof}
On a vu à la suite de la Proposition \ref{proposition_GC_ss} que $\rr_{P_1}^G \pi=\bigoplus_i V(\chi_i)$ avec $V(\chi_i)^\mathrm{ss}=\Jac_{\chi_i}$. Or $\Jac_\chi$ étant nul ou irréductible, on a $V(\chi_i)$ de longueur 0 ou 1, il est donc nul ou irréductible. Finalement, $\rr_{P_1}^G \pi=\bigoplus \chi \otimes \Jac_\chi$ est semi-simple.
\end{proof}

\section{Invariants paramodulaires des séries discrètes}
Il nous faut commencer par un résultat concernant les invariants paramodulaires des séries principales non ramifiées.

\begin{lemme}\label{lemme_inv_param_induites}
Soit $\chi$ un caractère non ramifié de $T$ et soit la représentation induite $\ii_B^G \chi$. Alors les espaces $(\ii_B^G \chi)^{(\J,+)}$ et $(\ii_B^G \chi)^{(\J,-)}$ sont tous deux de dimension 1, si bien que $\dim(\ii_B^G \chi)^{\J^+}=2$.
\end{lemme}
\begin{proof}
Pour $\eps\in\{+,-\}$, on a :
\[
(\ii_B^G \chi)^{(\J,\eps)}=\left\lbrace f \in {\rm C}^\infty (G,\C) \middle| f(bgj)= \delta_B(b)^{1/2}\chi(b) \alpha_\eps(j)f(g), \forall b \in B, \forall g \in G \right\rbrace
\]
avec $\alpha_+$ le caractère trivial et $\alpha_-$ le caractère $\eta \circ \nu$.
Dans les deux cas, la factorisation $G=B\J$ (du Corollaire \ref{Factorisation G=BJ}) nous dit que $f \in (\ii_B^G \chi)^{(\J,\eps)}$ est complètement déterminée par $f(1)$ : l'application $f \mapsto f(1)$ est une injection de $(\ii_B^G \chi)^{(\J,\eps)}$ dans $\C$.\ps 

Vérifions la surjectivité de cette application : soit donc $\lambda \in \C$, on définit 
\[
\begin{array}{ccccl}
f & : & G &\longrightarrow & \mathbb{C} \\
 & & bj & \longmapsto & \delta_B(b)^{1/2}\chi(b) \alpha_\eps(j)\lambda \\
\end{array}.
\]
Il faut vérifier que l'application est bien définie, \ie que la valeur de $f$ en un point $g$ ne dépend pas de l'écriture $g=bj$ choisie. Supposons donc avoir $b_1j_1=b_2j_2$, soit $b_2^{-1}b_1=j_2j_1^{-1}$. On remarque alors que $\delta_B$ est trivial sur le groupe compact $\J$ et, par le Lemme \ref{lemme_BcapJ}, que le caractère $\alpha_\eps$ et le caractère $\chi$ sont triviaux sur $B \cap \J$.

L'application $f$ ainsi définie est lisse (elle est invariante par translations à droite par le sous-groupe compact ouvert $\J^+$) et elle est évidemment dans $(\ii_B^G \chi)^{(\J,\eps)}$. Finalement $(\ii_B^G \chi)^{(\J,\eps)}$ est bien de dimension 1.
\end{proof}

\begin{lemme}\label{inv_param_Steinberg}
La représentation de Steinberg $\St_{\SO_{2n+1}(F)}$ de $\SO_{2n+1}(F)$ n'a pas de $\J$-invariants si $2n+1>1$ et n'a pas de $\J^+$-invariants si $2n+1>3$. De plus, $\St_{\SO_3(F)}^{\J^+}$ est de dimension 1.
\end{lemme}
\begin{proof}
On note $G=\SO_{2n+1}(F)$ et on a $\St_G=\Ind_B^G \1 / (\sum_{B \varsubsetneq P} \Ind_P^G \1) $, la somme portant sur les sous-groupes paraboliques de $G$ (standard, à cause de la condition qu'ils contiennent $B$). On prendra garde au fait qu'il s'agit ici d'induites non normalisées ne faisant pas intervenir la fonction modulaire.

Le Lemme \ref{lemme_inv_param_induites} nous dit\footnote{Le lemme travaille avec des induites normalisées, mais le résultat demeure, quitte à tordre par $\delta_B^{-1/2}$.} que la représentation $\Ind_B^G \1$ admet des $\J$-invariants (\ie des $(\J,+)$-invariants) de dimension 1. Or, si $2n+1>1$, $B \varsubsetneq G$ et $\Ind_G^G \1=\1$ apparaît dans ce par quoi on quotiente : représentation triviale qui a également des $\J$-invariants de dimension 1. Par exactitude de la prise d'invariants par un sous-groupe compact ouvert (Lemme \ref{5.2.1 de DeB}), les $\J$-invariants de $\St_G$ sont donc de dimension nulle.

En ce qui concerne les $\J^+$-invariants, il suffit d'après le Lemme \ref{lemme_decomposition_inv_param} de regarder simultanément les $(\J,+)$-invariants et les $(\J,-)$-variants : on a  $\pi^{\J^+}=\pi^{(\J,+)} \oplus \pi^{(\J,-)}$.

On sait déjà que, pour $2n+1>1$, il n'y a pas $(\J,+)$-invariants, considérons donc les $(\J,-)$-variants de $\St_G$ ou, de façon équivalente, les $(\J,+)$-invariants de $\eta \otimes \St_G= \Ind_B^G \eta / (\sum_{B \varsubsetneq P} \Ind_P^G \eta)$, où $\eta$ est l'unique caractère quadratique non ramifié non trivial introduit au paragraphe \ref{Norme spinorielle}.\ps 

Le Lemme \ref{lemme_inv_param_induites} nous donne que la représentation $\Ind_B^G \eta$ admet des $\J$-invariants de dimension 1. Si $2n+1>3$, alors le sous-groupe parabolique $P_n$ correspondant au sous-groupe de Levi $M_n=\GL_n(F) \times \SO_1(F)$ contient strictement le sous-groupe de Borel $B$. Or le Lemme \ref{lemme_inv_induite_induisante} nous donne $(\Ind_{P_n}^G \eta)^\J \simeq \eta_{|M_n}^{M_n \cap \J}$. Or $M_n \cap \J=\GL_n(\Ok) \times \{1\}$ et, par la Proposition \ref{prop_norme_spin_Levi}, $\nu_{|M_n \cap \J}=\det$, si bien que $ \nu(M_n \cap \J) \subset  \Ok^\times / (\Ok^\times)^2$. D'où, finalement $\eta_{|M_n \cap \J}$ trivial et $(\Ind_{P_n}^G \eta)^\J$ de dimension 1.

Par le même argument d'exactitude de la prise d'invariants, on en déduit qu'il n'y pas de $\J$-invariants (non triviaux) dans $\eta \otimes \St_G$ et finalement que $\St_G$ n'admet pas de $\J^+$-invariants si $2n+1>3$.

Il reste à conclure dans le cas $2n+1=3$. Or la Proposition \ref{prop_I_inclus_J+} nous dit que dans ce cas, on est en fait en train de prendre les invariants par le sous-groupe d'Iwahori standard, dont on sait qu'ils sont de dimension 1 (on peut penser à l'isomorphisme exceptionnel $\SO_3 \simeq \mathrm{PGL}_2$, commenté au §\ref{Isomorphisme exceptionnel}).
\end{proof}

\begin{cor}\label{cor_ser_disc_so(3)}
Les seules séries discrètes de $\SO_3(F)$ ayant des $\J^+$-invariants sont $\St_{\SO_3(F)}$ et $\eta \otimes \St_{\SO_3(F)}$.

La seule série discrète de $\SO_3(F)$ ayant des $\J$-invariants est $\eta \otimes \St_{\SO_3(F)}$.
\end{cor}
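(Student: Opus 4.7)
L'idée clef est d'exploiter la Proposition \ref{prop_I_inclus_J+}, qui affirme qu'en dimension $3$ on a l'égalité $\J^+ = \mathrm{I}$ où $\mathrm{I}$ désigne le sous-groupe d'Iwahori standard. Les représentations de $\SO_3(F)$ ayant des $\J^+$-invariants sont donc exactement celles ayant des vecteurs fixes sous l'Iwahori. Par le théorème de Borel-Casselman, une telle représentation irréductible est un sous-quotient d'une série principale non ramifiée $\ii_B^G \chi$ (ce dernier point peut aussi être vu directement via l'isomorphisme exceptionnel $\SO_3 \simeq \PGL_2$).

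La stratégie sera alors de classifier, parmi les séries discrètes de $\SO_3(F) \simeq \PGL_2(F)$, celles qui sont sous-quotients d'une induite $\ii_B^G \chi$ avec $\chi$ non ramifié. Or, pour $\PGL_2(F)$, l'induite $\ii_B^G \chi$ est soit irréductible (et alors c'est une représentation tempérée mais non série discrète, voire non tempérée selon la norme de $\chi$), soit de longueur $2$ admettant la représentation de Steinberg (éventuellement tordue) comme sous-quotient de série discrète. Puisque l'on travaille dans $\PGL_2$, il n'y a que deux caractères non ramifiés compatibles avec la trivialité du caractère central (\ie tels que $\chi^2 = 1$) : le caractère trivial et le caractère $\eta$. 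Les seules séries discrètes candidates sont donc $\St_{\SO_3(F)}$ et $\eta \otimes \St_{\SO_3(F)}$.

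Il ne reste plus qu'à distinguer, parmi ces deux représentations, celles qui ont réellement des $\J^+$-invariants (et lesquelles ont des $\J$-invariants), ce que nous fournit le Lemme \ref{inv_param_Steinberg} : $\St_{\SO_3(F)}^{\J^+}$ est de dimension $1$ (et donc aussi $(\eta \otimes \St_{\SO_3(F)})^{\J^+}$ par torsion) ; de plus, $\St_{\SO_3(F)}$ n'a pas de $\J$-invariants puisque $2n+1 = 3 > 1$. Par le Lemme \ref{lemme_decomposition_inv_param}, la décomposition $\pi^{\J^+} = \pi^{(\J,+)} \oplus \pi^{(\J,-)}$ et la correspondance par torsion $\pi^{(\J,+)} \simeq (\eta \otimes \pi)^{(\J,-)}$ entraînent alors que $\St_{\SO_3(F)}$ a des $(\J,-)$-variants de dimension $1$, et que $\eta \otimes \St_{\SO_3(F)}$ a des $\J$-invariants de dimension $1$ (et pas de $(\J,-)$-variants). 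On conclut ainsi que les deux représentations ont bien des $\J^+$-invariants, tandis que seule $\eta \otimes \St_{\SO_3(F)}$ possède des $\J$-invariants.

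Le seul point un peu délicat est l'invocation propre du théorème de Borel-Casselman pour réduire au cas des sous-quotients de séries principales non ramifiées ; l'intégralité de l'argumentation découle ensuite de la structure très explicite de la série principale de $\PGL_2$ et des résultats déjà démontrés.
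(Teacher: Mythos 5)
Votre démonstration est correcte et repose sur les mêmes ingrédients essentiels que celle du texte ($\J^+=\mathrm{I}$ en dimension $3$, le plongement de Casselman dans une série principale non ramifiée, la structure explicite des induites de $\PGL_2$, puis le Lemme \ref{inv_param_Steinberg} combiné au Lemme \ref{lemme_decomposition_inv_param} pour trancher entre $\J^+$- et $\J$-invariants). La différence est d'organisation : le texte passe par la correspondance de Langlands locale et énumère les trois formes possibles d'un paramètre discret de $\SL_2(\C)$ ($\chi\oplus\chi^{-1}$, non discret ; $\tau$ irréductible, donnant une supercuspidale que l'argument d'Iwahori exclut ; $\chi\otimes U_2$, donnant $\chi\otimes\St$ avec $\chi$ forcément non ramifié quadratique), tandis que vous court-circuitez cette énumération en invoquant directement Borel--Casselman : toute représentation à vecteurs Iwahori-fixes est sous-quotient d'une série principale non ramifiée, et les seuls sous-quotients de carré intégrable de telles induites pour $\PGL_2$ sont $\St$ et $\eta\otimes\St$. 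Votre variante est légèrement plus élémentaire (elle évite la paramétrisation de Langlands et traite implicitement le cas supercuspidal, puisqu'une supercuspidale n'est sous-quotient d'aucune induite parabolique propre) ; en revanche, l'énumération par paramètres du texte est celle qui se prête à la récurrence menée ensuite pour $\SO_5$ et au-delà (Propositions \ref{prop_ser_disc_so5_inv_param} et \ref{ser_disc_>4_pas_d'inv}). Seule imprécision mineure de rédaction : la condition $\chi^2=1$ que vous invoquez porte sur le caractère quadratique $\mu$ qui tord la Steinberg (caractère central de $\mu\otimes\St$ égal à $\mu^2$), et non sur le caractère induisant lui-même ; cela ne change rien à la conclusion.
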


\begin{proof}
Raisonnons à l'aide de la correspondance de Langlands locale. À une telle série discrète doit correspondre, d'après \eqref{rep_disc_param_disc} un paramètre discret $\varphi$ de $\Sp_2(\C)=\SL_2(\C)$.\ps 
\begin{itemize}
\item Le cas $\varphi=\chi \oplus \chi^{-1}$ où $\chi$ est un caractère de $\W_F$ est exclu car ce paramètre se factorise par le tore et donc n'est pas discret.\ps 
\item Si $\varphi=\tau$ où $\tau$ est une représentation irréductible de $\W_F$ (nécessairement autoduale et symplectique), le paquet de Langlands associé contient un seul élément $\pi$ supercuspidal (on utilise encore l'isomorphisme $\SO_3 \simeq \mathrm{PGL}_2$). La représentation $\pi$ ne peut avoir d'invariants par le groupe $\J^+=\mathrm{I}$ où $\mathrm{I}$ désigne le sous-groupe d'Iwahori standard (Proposition \ref{prop_I_inclus_J+}) car on aurait alors (par \cite{Cass-art}, Proposition 2.6) que $\pi$ se plonge dans une série principale non ramifiée, ce qui contredit sa supercuspidalité.\ps 
\item Si $\varphi=\chi\otimes U_2$ avec $\chi$ nécessairement autodual donc quadratique, le paquet de Langlands associé contient un seul élément, qui est $\chi \otimes \St_{\SO_3(F)}$. Si cette dernière représentation a des $\J^+$-invariants, alors de même elle se plonge dans une série principale non ramifiée, donc $\chi$ est non ramifié. Or il existe exactement deux caractères quadratiques non ramifiés de $\W_F^{\rm ab} \simeq F^\times$ : le caractère trivial et le caractère $\eta$.\ps 
\end{itemize}

On a donc identifié $\St_{\SO_3(F)}$ et $\eta \otimes \St_{\SO_3(F)}$, qui ont bien des $\J^+$-invariants.\ps 

Par ailleurs, le groupe $\J$ agit par un signe sur les $\J^+$-invariants (car $\J^+$ est d'indice 2 dans $\J$). On sait que ce signe est \og $-$ \fg{} pour la représentation $\St_{\SO_3(F)}$ (sans quoi elle aurait des $\J$-invariants, ce qui n'est pas le cas d'après le Lemme \ref{inv_param_Steinberg}). Comme $\eta \circ \nu$ est \emph{le} caractère non trivial de $\J / \J^+=\Z/2\Z$, ce dernier groupe (et donc $\J$) agit par un signe \og + \fg{} sur les $\J^+$-invariants de $\eta \otimes \St_{\SO_3(F)}$, ce qui revient à dire qu'elle a en fait des $\J$-invariants.
\end{proof}

Pour traiter le cas $2n+1 \geq 5$, nous commençons par un lemme technique. 

\begin{lemme}\label{lemme_r_P1_G_pi}
Soit $\pi$ une série discrète de $\SO_{2n+1}(F)$ avec $2n+1 \geq 5$. On suppose que $\pi^\J \neq \{0\}$.

Alors, en notant $P_1=M_1N_1$ le sous-groupe parabolique standard, où $M_1=\GL_1 \times \SO_{2n-1}$, on a que $\rr_{P_1}^G (\pi)$ est une représentation non nulle de $M_1$. Mieux, 
\begin{equation*}
\rr_{P_1}^G (\pi)=\bigoplus_{i \in I} \chi_i \otimes \alpha_i,
\end{equation*}
où les $\chi_i$ sont des caractères non ramifiés de $F^\times$ et les $\alpha_i$ sont des représentations irréductibles de $\SO_{2n-1}(F)$ telles que $\alpha_i^{\J_{2n-1}}\neq \{0\}$, et $\alpha_i=\Jac_{\chi_i}$.
\end{lemme}
\begin{proof}
On a $\mathrm{I}$ inclus dans $\J$ par la Proposition \ref{prop_Iwahori_inclus_J} (et même dans $\J^+$ par la Proposition \ref{prop_I_inclus_J+}) où $\mathrm{I}$ désigne le sous-groupe d'Iwahori standard. La représentation $\pi$ a donc des invariants par $\mathrm{I}$, si bien que, par la Proposition 2.6 de \cite{Cass-art}, elle se plonge dans une série principale non ramifiée. Il existe $\psi$ caractère du tore maximal standard $T$ tel que $\pi \hookrightarrow \ii_B^G \psi $, soit, par réciprocité de Frobenius, $\rr_B^G \pi \twoheadrightarrow \psi$. En particulier, le foncteur de Jacquet (normalisé) par rapport à $B$ est non nul, et donc, par transitivité du foncteur de Jacquet, il en est de même pour tous les sous-groupes paraboliques standard, en particulier pour $P_1$.

Or, $\pi$ étant irréductible, elle est $G$-engendrée par n'importe quel vecteur non nul, en particulier par ses $\J$-invariants. Puisque $G=P_1\J$, le quotient $\rr_{P_1}^G (\pi)$ est donc $M_1$-engendré par ses $(\J \cap M_1)$-invariants, et il en est de même de tous les quotients irréductibles de $\rr_{P_1}^G (\pi)$. Or, on a vu au Corollaire \ref{gl1_jac_ss} que $\rr_{P_1}^G (\pi)$ était semi-simple, donc chaque composant irréductible peut être vu comme un quotient et possède en particulier des $(\J \cap M_1)$-invariants (qui l'engendrent). Finalement :
\begin{equation}\label{r_p_G_pi}
\rr_{P_1}^G (\pi)=\bigoplus_{i \in I} \chi_i \otimes \alpha_i,
\end{equation}
où $\chi_i$ est un caractère de $\GL_1$ et $\alpha_i$ une représentation irréductible de $\SO_{2n-1}$, tels que $(\chi_i \otimes \alpha_i)^{\J \cap M_1}=\chi_i^{\Ok^\times} \otimes \alpha_i^{\J_{2n-1}} \neq \{0\}$ pour tout $i$ (en utilisant la Proposition \ref{Param_inter_Levi}). En particulier, tous les caractères $\chi_i$ sont non ramifiés.
On a alors $\alpha_i=\Jac_{\chi_i}$ (on a bien vu au Théorème \ref{Ato4.2} qu'il y avait au plus un constituant irréductible dans $\Jac_{\chi_i}$).
\end{proof}

\begin{prop}\label{prop_ser_disc_so5_inv_param}
Il n'existe pas de série discrète de $\SO_5(F)$ ayant des invariants paramodulaires.
\end{prop}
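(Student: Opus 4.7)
The plan is to argue by contradiction, exhausting the discrete Langlands parameters of $\SO_5(F)$. Let $\pi$ be a discrete series of $\SO_5(F)$ with $\pi^{\J^+} \neq \{0\}$. By Lemma~\ref{lemme_decomposition_inv_param}, after twisting by $\eta$ if necessary, we may assume $\pi^\J \neq \{0\}$. Lemma~\ref{lemme_r_P1_G_pi} then produces a non-zero decomposition $\rr_{P_1}^G \pi = \bigoplus_i \chi_i \otimes \alpha_i$, where the $\chi_i$ are unramified characters of $F^\times$ (so $\chi_i \in \{1, \eta\}$) and each $\alpha_i = \Jac_{\chi_i}$ is an irreducible representation of $\SO_3(F)$ with non-zero $\J_3$-invariants. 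The goal is to show, using the M\oe{}glin-Tadi\'c analysis (Theorem~\ref{Ato4.2}), that no discrete Langlands parameter of $\SO_5$ is compatible with both conditions.

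Following §\ref{Paramètres discrets}, I would first enumerate the six families of discrete parameters $\varphi = \bigoplus_i \xi_i \otimes U_{a_i}$ with $\sum_i \dim(\xi_i) a_i = 4$: (a) $\xi \otimes U_1$ with $\xi$ a 4-dimensional symplectic irreducible; (b) $\xi \otimes U_2$ with $\xi$ a 2-dimensional orthogonal irreducible; (c) $\chi \otimes U_4$ with $\chi$ a quadratic character; (d) $\xi_1 \otimes U_1 \oplus \xi_2 \otimes U_1$ with $\xi_1 \neq \xi_2$ both 2-dimensional symplectic irreducibles; (e) $\chi_1 \otimes U_2 \oplus \chi_2 \otimes U_2$ with $\chi_1 \neq \chi_2$ quadratic characters; (f) $\chi \otimes U_2 \oplus \xi \otimes U_1$ with $\chi$ a quadratic character and $\xi$ a 2-dimensional symplectic irreducible. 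Cases (a), (b), (d) carry no 1-dimensional Jordan block, so Lemma~\ref{Ato4.1} forces $\Jac_\psi = 0$ for every character $\psi$ of $F^\times$; hence $\rr_{P_1}^G \pi = 0$, contradicting Lemma~\ref{lemme_r_P1_G_pi}.

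Case (c) is excluded directly, since $\pi = \chi \otimes \St_{\SO_5(F)}$ and $\St_{\SO_5(F)}^{\J^+} = 0$ by Lemma~\ref{inv_param_Steinberg} (a property preserved by twisting by $\eta$, via Lemma~\ref{lemme_decomposition_inv_param}). In case (f), $\chi$ must be unramified (else no unramified 1-dimensional Jordan block exists); Theorem~\ref{Ato4.2}(3) then either gives $\Jac_{\chi|\cdot|^{1/2}} = 0$, contradicting Lemma~\ref{lemme_r_P1_G_pi}, or identifies its parameter as $\xi \otimes U_1$, i.e. a supercuspidal representation of $\SO_3(F)$; such a representation has no Iwahori-invariants (being otherwise embedded in an unramified principal series), and hence no $\J_3$-invariants (using $\mathrm{I} \subset \J$ from Proposition~\ref{prop_Iwahori_inclus_J}), again yielding a contradiction.

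The most delicate case is (e). Both $\chi_1, \chi_2$ must be unramified, so $\{\chi_1, \chi_2\} = \{1, \eta\}$. The centralizer computation gives $\mathcal{S}_\varphi \simeq \mathbb{Z}/2\mathbb{Z}$, and any character $\eps \in \widehat{\mathcal{S}_\varphi}$ satisfies $\eps(\alpha_{\chi_1 \otimes U_2}) \cdot \eps(\alpha_{\chi_2 \otimes U_2}) = 1$. If both $\eps$-values equal $-1$, Theorem~\ref{Ato4.2}(3) yields $\Jac_{\chi_j|\cdot|^{1/2}} = 0$ for $j = 1, 2$, so $\rr_{P_1}^G \pi = 0$, a contradiction. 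If both equal $+1$, the two Jacquet modules are non-zero with parameters $\chi_2 \otimes U_2$ and $\chi_1 \otimes U_2$ respectively, i.e. they are $\chi_2 \otimes \St_{\SO_3(F)}$ and $\chi_1 \otimes \St_{\SO_3(F)}$. By Corollary~\ref{cor_ser_disc_so(3)}, only the $\eta$-twist of $\St_{\SO_3(F)}$ admits $\J_3$-invariants; since exactly one of $\chi_1, \chi_2$ equals $1$, one of the two $\alpha_i$ fails to have $\J_3$-invariants, again contradicting Lemma~\ref{lemme_r_P1_G_pi}. The main technical subtlety of the proof is thus concentrated in case (e), where one must track precisely which characters of $\widehat{\mathcal{S}_\varphi}$ are consistent with the $\J_3$-invariance of all Jacquet components.
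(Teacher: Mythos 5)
Your proof is correct and follows essentially the same route as the paper: reduce to $\J$-invariants, apply Lemma \ref{lemme_r_P1_G_pi}, and run through the discrete parameters of $\Sp_4(\C)$ via Lemma \ref{Ato4.1} and Theorem \ref{Ato4.2}, with the Steinberg and $\SO_3$ results settling the residual cases. One small point in your favour: your enumeration includes the parameter $\xi \otimes U_2$ with $\xi$ a $2$-dimensional orthogonal irreducible, which the paper's list of five cases omits — it is disposed of exactly as your cases (a), (b), (d), since $\Jord(\varphi)$ then contains no character.
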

\begin{proof} 

Selon la Remarque qui suit le Lemme \ref{lemme_decomposition_inv_param}, on peut supposer qu'il s'agit ici d'invariants par $\J$. Supposons donc disposer d'une série discrète $(\pi,V)$ de $\SO_5(F)$ telle que $\pi^\J \neq 0$.

Notons $\varphi$ le paramètre de Langlands de $\pi$. On sait, par \eqref{rep_disc_param_disc}, que c'est un paramètre discret si bien que l'on est dans une des cinq situations suivantes :
\begin{enumerate}
\item $\varphi=\theta$ avec $\theta$ autoduale symplectique irréductible ;
\item $\varphi=\tau \oplus \tau'$ avec $\tau$ et $\tau'$ autoduales symplectiques irréductibles distinctes ;
\item $\varphi=(\chi \otimes U_2) \oplus \tau' $, avec $\chi$  autodual \ie quadratique et $\tau'$ autoduale symplectique irréductible ;
\item $\varphi=\chi \otimes U_4$, avec $\chi$ quadratique ;
\item $\varphi=(\chi \otimes U_2) \oplus (\chi' \otimes U_2) $, avec $\chi$ et $\chi'$ quadratiques distincts.
\end{enumerate}

Or, le Lemme \ref{lemme_r_P1_G_pi} implique que $\rr_{P_1}^G(\pi)$ est non nul et s'écrit sous la forme $\bigoplus_{i \in I} \chi_i \otimes \alpha_i$, où les $\chi_i$ sont des caractères non ramifiés. Cela signifie donc qu'il existe au moins un $i$ -- et un caractère $\chi_i$ -- tel que $\alpha_i=\Jac_{\chi_i} \neq 0$. Or on sait que $\Jac_{\chi_i} \neq 0$ implique $(\rho,2x+1) \in \Jord(\varphi)$ où l'on a écrit $\chi_i=\rho |\cdot|^x$ avec $\rho$ caractère unitaire. \medskip

Ceci exclut donc les deux premiers cas qui ne font apparaître aucun caractère.

Pour le troisième cas, le seul $\chi_i$ qui puisse donner lieu à un $\Jac_{\chi_i}$ non nul est $\chi_i=\chi |\cdot|^{1/2}$, pour lequel on a $\Jac_{\chi |\cdot|^{1/2}}$ effectivement non nul d'après le Théorème \ref{Ato4.2}. Mieux, on connaît son paramètre de Langlands, qui est alors $\tau'$. On a donc $\rr_{P_1}^G(\pi)=\chi|\cdot|^{1/2} \otimes \pi'$ où $\pi'$ est une représentation de $\SO_3(F)$ de paramètre de Langlands $\tau'$. D'après le Lemme \ref{lemme_r_P1_G_pi}, $\pi'$ possède des invariants par $\J_3$ : 
c'est donc, d'après le Corollaire \ref{cor_ser_disc_so(3)}, $\eta \otimes \St_{\SO_3(F)}$. Or le paramètre de Langlands de cette dernière représentation est $\eta \otimes U_2 \neq \tau'$, ce qui nous fournit une contradiction.

Dans le quatrième cas, il est bien connu que le paquet de Langlands associé est un singleton contenant la seule représentation $\chi \otimes \St_{\SO_5(F)}$ qui n'a pas de $\J$-invariants d'après le Lemme \ref{inv_param_Steinberg}.

Enfin, dans le cinquième cas, puisqu'on sait que $\chi$ et $\chi'$ doivent être quadratiques, non ramifiés et distincts, on a $\{\chi,\chi'\}=\{\1,\eta\}$. Le Théorème \ref{Ato4.2} nous dit que $\Jac_{\eta|\cdot|^{1/2}} \neq 0$. Mieux, on connaît son paramètre de Langlands, c'est $\1 \otimes U_2$, qui est le paramètre de Langlands de la Steinberg de $\SO_3(F)$. Or cette représentation n'a pas de $\J_3$-invariants d'après le Corollaire \ref{cor_ser_disc_so(3)}.

Finalement, tous les cas sont exclus et il n'existe pas de série discrète de $\SO_5(F)$ avec des invariants paramodulaires. 
\end{proof}

\begin{prop}\label{ser_disc_>4_pas_d'inv}
Il n'existe pas de série discrète de $\SO_{2n+1}(F)$ pour $2n+1 \geq 5$ ayant des invariants paramodulaires.
\end{prop}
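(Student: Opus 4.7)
The plan is to proceed by induction on $n$, the base case $2n+1 = 5$ being Proposition \ref{prop_ser_disc_so5_inv_param}. Let $n \geq 3$ and assume the statement for every $\SO_{2m+1}(F)$ with $2 \leq m < n$. Suppose for contradiction that $\pi$ is a discrete series of $\SO_{2n+1}(F)$ with $\pi^{\J^+} \neq \{0\}$; by Lemma \ref{lemme_decomposition_inv_param}, after possibly replacing $\pi$ by $\eta \otimes \pi$ we may assume $\pi^{\J} \neq \{0\}$.

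First I constrain the Langlands parameter $\varphi = \Ll(\pi)$. Since $\mathrm{I} \subset \J^+$ by Proposition \ref{prop_I_inclus_J+}, $\pi$ has nonzero Iwahori invariants; by \cite[Proposition~2.6]{Cass-art} it embeds in an unramified principal series, so by compatibility of $\Ll$ with parabolic induction, $\varphi$ is trivial on the inertia subgroup $I_F$. Writing $\varphi = \bigoplus_j \xi_j \otimes U_{a_j}$ with distinct $(\xi_j, a_j)$ as in \S\ref{Paramètres discrets}, each irreducible $\xi_j$ of $\W_F$ is unramified, hence one-dimensional. Autoduality forces $\xi_j$ quadratic (so $\xi_j \in \{\1, \eta\}$), and $a_j$ must be even since a one-dimensional space carries no symplectic form.

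Next I invoke Lemma \ref{lemme_r_P1_G_pi} to produce an unramified character $\chi$ of $F^\times$ and an irreducible $\alpha$ of $\SO_{2n-1}(F)$ with $\alpha^{\J_{2n-1}} \neq \{0\}$ and $\alpha = \Jac_\chi$. Lemma \ref{Ato4.1} writes $\chi = \rho |\cdot|^x$ with $(\rho, 2x+1) \in \Jord(\varphi)$, so by the previous paragraph $\rho \in \{\1, \eta\}$ and $a := 2x+1$ is even with $a \geq 2$. Cases~1 and~3 of Theorem \ref{Ato4.2} give $\alpha$ discrete with nonzero $\J_{2n-1}$-invariants, which the inductive hypothesis forbids (since $2n - 1 \geq 5$). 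This leaves Case~2, in which $\alpha$ is tempered but not discrete, with Langlands parameter $\varphi_-$ having $(\rho, a-2)$ of multiplicity $2$.

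The main obstacle is Case~2. Here Arthur's description of tempered $L$-packets for split $\SO_{2n-1}$ realizes $\alpha$ as an irreducible direct summand of a unitarily induced representation
\[
\ii_Q^{\SO_{2n-1}(F)}\bigl(\St(\rho, a-2) \otimes \pi_0\bigr),
\]
where $Q$ has Levi $\GL_{a-2}(F) \times \SO_{2n - 2a + 3}(F)$ and $\pi_0$ is a discrete series of $\SO_{2n - 2a + 3}(F)$ with parameter $\varphi \ominus (\rho \otimes U_a) \ominus (\rho \otimes U_{a-2})$. By Lemma \ref{lemme_inv_induite_induisante}, the $\J_{2n-1}$-invariants of this induced representation identify with $\St(\rho, a-2)^{\GL_{a-2}(\Ok)} \otimes \pi_0^{\J_{2n-2a+3}}$. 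The Case~2 hypothesis $x > 1/2$, combined with the evenness of $a$, gives $a \geq 4$ and $a - 2 \geq 2$; the twisted Steinberg $\St(\rho, a-2)$ of $\GL_{a-2}(F)$ therefore has no $\GL_{a-2}(\Ok)$-invariants, so $\alpha^{\J_{2n-1}} = \{0\}$, a contradiction. This closes the induction. The crucial underlying input, noted in the introduction, is that an unramified discrete series of $\GL_m(F)$ forces $m = 1$.
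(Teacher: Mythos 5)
Your proof is correct and follows essentially the same route as the paper's: reduction to $\J$-invariants, induction via Lemma \ref{lemme_r_P1_G_pi} and Theorem \ref{Ato4.2}, with the non-discrete case killed by realizing $\Jac_\chi$ inside an induction from $\GL_{2x-1}(F)\times\SO_{2a+1}(F)$ and observing that the twisted Steinberg of $\GL_{2x-1}(F)$ (with $2x-1\geq 2$ even) has no hyperspecial invariants. The only cosmetic difference is that you make the evenness of the $a_j$ explicit up front, whereas the paper records it only at the moment it is needed.
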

\begin{proof}

On se ramène ici aussi (toujours en utilisant le Lemme \ref{lemme_decomposition_inv_param}) au cas d'une série discrète $\pi$ ayant des $\J$-invariants. Il s'agit de se ramener par récurrence au cas où $2n+1=5$ en propageant des invariants (épi)paramodulaires. Soit $\varphi$ le paramètre de Langlands (discret) de $\pi$.


On peut encore écrire, suivant le Lemme \ref{lemme_r_P1_G_pi} :
\[
\rr_{P_1}^G (\pi)=\bigoplus_i \chi_i \otimes \Jac_{\chi_i},
\]
avec les conditions sur la non nullité de $\Jac_{\chi_i}$ données par le Théorème \ref{Ato4.2}. Mieux, on sait qu'alors $\Jac_{\chi_i}$ a des invariants par $\J_{2n-1}$ et on connaît son paramètre de Langlands par le Théorème \ref{Ato4.2}, $\varphi_-=\varphi\ominus \rho \otimes U_{2x+1} \oplus \rho \otimes U_{2x-1}$ avec les notations de \eqref{notation_varphi_ominus} et $\rho=\chi_i|\cdot|^{-x}$. 
 Si ce paramètre est discret, alors on a affaire (toujours par \eqref{rep_disc_param_disc}) à une série discrète de $\SO_{2n-1}(F)$ qui a des invariants paramodulaires, ce qui fait fonctionner la récurrence.\ps 

Il reste donc à montrer que le cas où le paramètre de $\Jac_{\chi_i}$ n'est pas discret est exclu.

D'après le Théorème \ref{Ato4.2}, ce cas ne se produit que si $\varphi=(\rho \otimes U_{2x+1}) \oplus (\rho \otimes U_{2x-1}) \oplus R$ avec $2x-1>0$ et $\rho=\chi_i|\cdot|^{-x}$. On a alors bien 
$\varphi_-=(\rho \otimes U_{2x-1}) \oplus (\rho \otimes U_{2x-1}) \oplus R$ paramètre non discret de $\Sp_{2n-2}(\C)$.
Notons $H$ le sous-groupe de Levi de $\Sp_{2n-2}(\C)$ défini par $H=\GL_{2x-1}(\C) \times \Sp_{2a}(\C)$ avec $2a=\dim R$ et $2a+2\times(2x-1)=2n-2$. On rappelle que la représentation $\rho \otimes U_{2x-1}$ est de dimension $2x-1$, qui est un entier pair. Le paramètre $\varphi_-$ est donc à valeurs dans le groupe $H$ et
on note $\phi_-$ ce même paramètre corestreint au groupe $H$. Alors $\phi_-$ est un paramètre, manifestement discret, pour le groupe $L=\GL_{2x-1}(F) \times \SO_{2a+1}(F)$ (qui est un sous-groupe de Levi de $\SO_{2n-1}(F)$) et on a bien $\widehat{L}=H$ (avec le même raccourci de notation qu'au début du paragraphe \ref{Paramètres de Langlands}).

On peut donc considérer le paquet de Langlands $\Pi(\phi_-)$ composé de séries discrètes pour $L$. Or $\phi_-=\varphi_1 \times \varphi_2$ avec $\varphi_1= \rho \otimes U_{2x-1}$ et $\varphi_2=R$ donc 
\[
\Pi(\phi_-)=\{ \pi_1 \boxtimes \pi_2 \ | \pi_1 \in \Pi(\varphi_1), \pi_2 \in \Pi(\varphi_2)\}.
\]

Or $\Pi(\varphi_1)$ contient un seul élément (c'est un paquet pour $\GL_{2x-1}(F)$) qui est la représentation de Steinberg de $\GL_{2x-1}(F)$ tordue par le caractère $\rho$ ; $\Pi(\varphi_2)$ est quant à lui un paquet de séries discrètes de $\SO_{2a+1}(F)$. On sait alors comment construire les représentations tempérées de $\SO_{2n-1}(F)$ correspondant au paramètre (tempéré) $\varphi_-$ à partir des séries discrètes de $L$ correspondant au paramètre (discret) $\phi_-$ : c'est l'induction parabolique qui est en jeu ((\cite{waldspurger_2003} Proposition III.4.1). On a donc :
\[
\Jac_{\chi_i} \hookrightarrow \boldsymbol{i}_L^{\SO_{2n-1}(F)} ((\rho \otimes \St_{\GL_{2x-1}(F)}) \boxtimes \pi_2).
\]
Les $\J_{2n-1}$-invariants s'injectent tout autant, ils sont non nuls pour $\Jac_{\chi_i}$ donc non nuls pour l'induite.
Par le Lemme \ref{lemme_inv_induite_induisante}, la représentation induisante a alors des $(\J_{2n-1} \cap L)$-invariants 
et donc, par la Proposition \ref{Param_inter_Levi}, $(\rho \otimes \St_{\GL_{2x-1}(F)})$ a des $\GL_{2x-1}(\Ok)$-invariants. Or, un paramètre discret et non ramifié pour $\GL_n$ n'existe que pour $n=1$. Puisque $2x-1$ est un entier pair strictement positif, nous obtenons donc une contradiction, ce qui exclut finalement bien le cas où $\Jac_{\chi_i}$ n'est pas discret.
\end{proof}

Nous avons finalement démontré le théorème suivant.
\begin{thm}\label{thm_pcpl_ser_disc}
Les seules séries discrètes de $\SO_{2n+1}(F)$ (pour $n$ variable) ayant des $\J^+$-invariants sont :
\begin{itemize}
\item la représentation triviale de $\SO_1(F)$ ;
\item la représentation $\St_{\SO_3(F)}$ ;
\item la représentation $\eta \otimes \St_{\SO_3(F)}$.\ps 
\end{itemize}

Les seules séries discrètes de $\SO_{2n+1}(F)$ (pour $n$ variable) ayant des $\J$-invariants sont :
\begin{itemize}
\item la représentation triviale de $\SO_1(F)$ ;
\item la représentation $\eta \otimes \St_{\SO_3(F)}$.\ps 
\end{itemize}

Dans tous les cas, les invariants considérés sont de dimension 1.
\end{thm}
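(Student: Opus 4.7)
Le plan est d'assembler les résultats accumulés dans ce chapitre pour traiter séparément chaque dimension $2n+1$ selon une trichotomie simple.

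D'abord, pour $2n+1=1$, le groupe $\SO_1(F)$ est trivial (Remarque du §\ref{Notations}) et les sous-groupes $\J^+_1 = \J_1 = \{1\}$ (Remarque 1 du §\ref{J^+}). La seule représentation irréductible est la représentation triviale, qui est trivialement une série discrète et possède un espace d'invariants par $\J$ (et par $\J^+$) de dimension 1.

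Ensuite, pour $2n+1=3$, on invoque directement le Corollaire \ref{cor_ser_disc_so(3)}, qui identifie exactement deux séries discrètes ayant des $\J^+$-invariants, à savoir $\St_{\SO_3(F)}$ et $\eta \otimes \St_{\SO_3(F)}$, parmi lesquelles seule la seconde a des $\J$-invariants. Pour la dimension des espaces d'invariants : le Lemme \ref{inv_param_Steinberg} donne $\dim \St_{\SO_3(F)}^{\J^+}=1$ ; comme $\J$ agit sur cette droite par un signe (puisque $[\J:\J^+]=2$), la décomposition $\pi^{\J^+}=\pi^{(\J,+)} \oplus \pi^{(\J,-)}$ du Lemme \ref{lemme_decomposition_inv_param} montre que cette droite correspond soit à $(\J,+)$ soit à $(\J,-)$. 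Comme $\St_{\SO_3(F)}^{(\J,+)}=\St_{\SO_3(F)}^{\J} = \{0\}$ d'après le Lemme \ref{inv_param_Steinberg}, elle correspond à $(\J,-)$, et l'isomorphisme $\pi^{(\J,-)} \simeq (\eta \otimes \pi)^{(\J,+)}$ du même Lemme \ref{lemme_decomposition_inv_param} donne alors $(\eta \otimes \St_{\SO_3(F)})^\J$ de dimension 1.

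Enfin, pour $2n+1 \geq 5$, la Proposition \ref{ser_disc_>4_pas_d'inv} affirme directement qu'il n'existe aucune série discrète ayant des $\J^+$-invariants (donc \emph{a fortiori} ayant des $\J$-invariants puisque $\J^+ \subset \J$), éliminant toute contribution en dimension $\geq 5$. La combinaison de ces trois cas fournit le Théorème.

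Il n'y a pas d'obstacle majeur ici : tout le travail difficile a été fait en amont, notamment dans la Proposition \ref{ser_disc_>4_pas_d'inv} (qui utilisait la classification de M\oe{}glin-Tadi\'c des foncteurs de Jacquet et les factorisations d'Iwasawa pour réduire par récurrence au fait qu'il n'y a pas de série discrète non ramifiée pour $\GL_n$ en dehors de $n=1$) et dans le Corollaire \ref{cor_ser_disc_so(3)} (qui utilisait l'isomorphisme $\SO_3 \simeq \PGL_2$ et le résultat de Casselman excluant des invariants d'Iwahori pour les supercuspidales). Le Théorème \ref{thm_pcpl_ser_disc} est donc essentiellement une synthèse de ces résultats.
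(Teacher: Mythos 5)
Votre démonstration est correcte et suit exactement la même démarche que le texte, qui énonce d'ailleurs ce théorème comme simple synthèse des résultats précédents (« Nous avons finalement démontré le théorème suivant ») : cas trivial $2n+1=1$, Corollaire \ref{cor_ser_disc_so(3)} pour $2n+1=3$, et Proposition \ref{ser_disc_>4_pas_d'inv} pour $2n+1\geq 5$. Le calcul des dimensions via le Lemme \ref{lemme_decomposition_inv_param} et la torsion par $\eta$ est également celui du texte.
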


\section{Représentations tempérées}\label{Représentations tempérées}
\subsection{Représentations non ramifiées}\label{Représentations non ramifiées}
Il est utile de rappeler ici les résultats concernant les représentations non ramifiées, ou $\K_0$-sphériques, \ie ayant des invariants par le sous-groupe compact maximal hyperspécial $\K_0$.

\begin{prop}\label{ser_disc_nr}
La seule série discrète de $\SO_{2n+1}(F)$ (pour $n$ variable) ayant des $\K_0$-invariants est la représentation triviale de $\SO_1(F)$.
\end{prop}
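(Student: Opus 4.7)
The plan is to give a direct argument using the classification of discrete Langlands parameters established at the end of Section \ref{Paramètres discrets}, combined with the compatibility \eqref{LLC_resp_nr} of the correspondence with non-ramification. The structure is much more rigid than in Proposition \ref{ser_disc_>4_pas_d'inv}, so one does \emph{not} need to resort to an induction on $2n+1$ via Jacquet functors.

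First, recall that by the Langlands compatibility \eqref{LLC_resp_nr}, a representation $\pi \in \mathrm{Irr}(\SO_{2n+1}(F))$ is $\K_0$-sphérique si et seulement si son paramètre $\Ll(\pi)$ est non ramifié, c'est-à-dire trivial sur $I_F \times \SU(2)$. Pour une série discrète, \eqref{rep_disc_param_disc} donne de plus que $\Ll(\pi)$ est discret. Il suffit donc de classifier les paramètres $\varphi : \W_F \times \SU(2) \to \Sp_{2n}(\C)$ qui sont à la fois discrets et non ramifiés.

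Les paramètres discrets ont été décrits explicitement à la fin du §\ref{Paramètres discrets} : $\varphi = \bigoplus_i \xi_i \otimes U_{a_i}$ avec $(\xi_i, a_i)$ deux à deux distincts, $\sum \dim(\xi_i) a_i = 2n$, et $\xi_i$ autoduale symplectique lorsque $a_i$ est impair (autoduale orthogonale sinon). La non-ramification de $\varphi$ impose deux conditions simultanées : chaque $\xi_i$ est non ramifiée, et chaque $a_i = 1$ (puisque $U_{a_i}$ est triviale sur $\SU(2)$ si et seulement si $a_i = 1$). Ainsi $a_i = 1$ est impair, ce qui force $\xi_i$ à être \emph{symplectique}. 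Or une représentation irréductible non ramifiée de $\W_F$ se factorise par $\W_F/I_F \simeq \Z$, qui est abélien : c'est donc un caractère, et donc de dimension 1. L'étape cruciale -- mais facile -- est alors de remarquer qu'un espace vectoriel de dimension 1 sur $\C$ ne porte aucune forme bilinéaire alternée non nulle, donc ne saurait être symplectique.

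On en déduit que la somme $\bigoplus_i \xi_i \otimes U_{a_i}$ est vide, donc que $2n = 0$ et que $\varphi$ est le paramètre trivial pour $\SO_1$. Réciproquement, la représentation triviale de $\SO_1(F) = \{1\}$ est trivialement une série discrète $\K_0$-sphérique (tous les groupes sont triviaux). La principale difficulté est purement conceptuelle : identifier qu'aucune induction ni aucun argument de foncteur de Jacquet n'est requis, la contradiction provenant directement de l'incompatibilité entre \og symplectique \fg{} et \og dimension $1$ \fg.
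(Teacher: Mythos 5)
Votre démonstration est correcte et suit essentiellement la même voie que celle du texte : le paramètre, devant être à la fois discret et non ramifié, est une somme de caractères non ramifiés (les $a_i$ valant $1$), et la contrainte symplectique (issue de la discrétude et du Lemme \ref{sp_tens_orth}) est incompatible avec la dimension $1$, d'où $n=0$. Vous explicitez simplement le pas que le texte laisse implicite, à savoir qu'un caractère ne peut être symplectique.
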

\begin{proof}
Cette proposition est \emph{parallèle} au Théorème \ref{thm_pcpl_ser_disc}. Il suffit de considérer le paramètre de Langlands $\varphi$ d'une telle représentation, qui doit être à la fois discret et non ramifié. C'est donc une somme de caractères autoduaux (quadratiques) distincts, et $\varphi$ est à valeurs dans $\Sp_{2n}(\C)$. Cela impose $n=0$ et on a bien affaire à la représentation triviale du groupe trivial $\SO_1(F)$.
\end{proof}

La proposition suivante est bien connue, nous en donnons néanmoins une démonstration. 
\begin{prop}\label{thm_rep_sph_temp}
Soit $(\pi,V)$ une représentation tempérée irréductible de $G$. On suppose que $\pi^{\K_0} \neq \{0\}$. Alors $\pi \simeq \ii_B^G \chi$ où $\chi$ est un caractère non ramifié unitaire du tore $T$.

De plus, $\pi$ est la seule représentation dans son paquet de Langlands, \ie en notant $\varphi=\Ll(\pi)$, $\Pi_\varphi$ est un singleton.
\end{prop}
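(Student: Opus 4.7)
Voici le plan que je suivrais.

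Première étape : ramener $\pi$ à une sous-représentation d'une induite parabolique de série discrète. Puisque $\pi$ est tempérée irréductible, on sait (par exemple par \cite{waldspurger_2003}, Proposition III.4.1, déjà cité en \ref{Caractère central pour représentation locale}) qu'il existe un sous-groupe parabolique standard $P=MN$ et une série discrète $\sigma$ de $M$ telle que $\pi \hookrightarrow \ii_P^G \sigma$. Par exactitude du foncteur des $\K_0$-invariants (Lemme \ref{5.2.1 de DeB}), on a $(\ii_P^G \sigma)^{\K_0} \neq \{0\}$.

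Deuxième étape : traduire ces $\K_0$-invariants en invariants pour $\sigma$. Le point-clé est un analogue du Lemme \ref{lemme_inv_induite_induisante} pour $\K_0$, à savoir l'isomorphisme $(\ii_P^G \sigma)^{\K_0} \simeq \sigma^{M \cap \K_0}$, qui repose sur la factorisation d'Iwasawa $G=P\K_0$ du Corollaire \ref{Factorisation G=BK_0} (la démonstration est identique, mot pour mot, à celle du Lemme \ref{lemme_inv_induite_induisante}). Par la Proposition \ref{Hypersp_inter_Levi}, $M \cap \K_0 = \GL_{n_1}(\Ok) \times \cdots \times \GL_{n_k}(\Ok) \times \SO_{2\tilde{n}+1}(\Ok)$. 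Or, $\sigma$ est une série discrète de $M$, donc s'écrit $\sigma_1 \boxtimes \cdots \boxtimes \sigma_k \boxtimes \tau$ où $\sigma_i$ est une série discrète de $\GL_{n_i}(F)$ et $\tau$ une série discrète de $\SO_{2\tilde{n}+1}(F)$. La non-nullité de $\sigma^{M \cap \K_0}$ impose que chaque $\sigma_i$ admet des invariants hyperspéciaux et que $\tau$ en admet aussi. Le fait classique (analogue à la Proposition \ref{ser_disc_nr} pour $\GL_n$, qui se démontre de même par l'analyse du paramètre de Langlands : une série discrète non ramifiée de $\GL_n$ n'existe que pour $n=1$) force $n_i=1$ pour tout $i$, et la Proposition \ref{ser_disc_nr} elle-même impose $\tilde{n}=0$. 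Donc $M=T$ et $\sigma=\chi$ est un caractère non ramifié du tore, unitaire puisque $\sigma$ est une série discrète du tore.

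Troisième étape : obtenir l'isomorphisme $\pi \simeq \ii_B^G \chi$. On a donc $\pi \hookrightarrow \ii_B^G \chi$ avec $\chi$ unitaire non ramifié. La représentation $\ii_B^G \chi$ est unitaire (Proposition \ref{prop_induction_normalisée}), donc complètement réductible, et l'analogue pour $\K_0$ du Lemme \ref{lemme_inv_param_induites} donne $\dim (\ii_B^G \chi)^{\K_0} = 1$. Comme $\pi^{\K_0} \neq \{0\}$ est contenu dans cet espace de dimension $1$, $\pi$ est l'unique composante irréductible de $\ii_B^G \chi$ admettant des vecteurs sphériques. L'irréductibilité de $\ii_B^G \chi$ pour $\chi$ unitaire non ramifié (un théorème classique dû à Casselman pour les groupes déployés, utilisant que les racines symplectiques n'ont pas de singularité en les paramètres unitaires) assure alors $\pi \simeq \ii_B^G \chi$.

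Quatrième étape : le caractère de singleton du paquet. On note $\varphi=\Ll(\pi)$ son paramètre de Langlands ; par la compatibilité \eqref{LLC_resp_nr}, $\varphi$ est non ramifié, \emph{i.e.} trivial sur $I_F \times \SU(2)$. Par \eqref{rep_temp_param_temp}, $\varphi$ est tempéré. L'image de $\varphi$ est donc engendrée par $\varphi(\Fr)$, un élément semi-simple (car borné) de $\Sp_{2n}(\C)$. L'observation cruciale est que $\Sp_{2n}(\C)$ est simplement connexe, donc le centralisateur d'un élément semi-simple y est connexe (théorème classique de Steinberg). Par conséquent $\Cent(\varphi) = \Cent(\varphi(\Fr))$ est connexe, et $\mathcal{S}_\varphi = \Cent(\varphi)/\Cent(\varphi)^0 Z(\widehat{G})$ est trivial. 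Par le Théorème \ref{LLC_SO}, $\Pi_\varphi$ est indexé par $\widehat{\mathcal{S}_\varphi}$ et est donc un singleton. L'obstacle principal serait de justifier rigoureusement l'irréductibilité de $\ii_B^G \chi$ (Étape 3), mais en l'espèce la conclusion cherchée ne dépend peut-être pas vraiment de cet isomorphisme strict : le fait que $\pi$ est l'unique sous-quotient sphérique de $\ii_B^G \chi$ suffit pour la plupart des applications ultérieures.
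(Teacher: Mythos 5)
Votre démonstration suit, pour les trois premières étapes, exactement la même route que celle du texte : plongement de $\pi$ dans une induite de série discrète par la classification de Langlands, transfert des $\K_0$-invariants à la représentation induisante via la factorisation $G=P\K_0$ et la Proposition \ref{Hypersp_inter_Levi}, puis réduction à $M=T$ et $\sigma=\chi$ non ramifié unitaire. Deux remarques. D'une part, l'irréductibilité de $\ii_B^G\chi$ n'est pas un fait général pour les séries principales unitaires non ramifiées d'un groupe déployé quelconque (le R-groupe peut être non trivial) ; le texte l'attribue à Keys, repris par Li, en utilisant précisément que $\SO_{2n+1}$ est adjoint et semi-simple — votre référence à Casselman est donc imprécise, même si le fait est vrai ici. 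D'autre part, votre quatrième étape emprunte une route réellement différente de celle du texte pour la trivialité de $\mathcal{S}_\varphi$ : là où le texte calcule explicitement $\Cent(\varphi)=\Sp_{2a}(\C)\times\Sp_{2b}(\C)\times\prod_i\GL_{m_i}(\C)$ et constate sa connexité, vous invoquez le théorème de Steinberg (connexité du centralisateur d'un élément semi-simple dans un groupe simplement connexe) appliqué à $\varphi(\Fr)$, qui engendre topologiquement l'image du paramètre non ramifié. Votre argument est plus conceptuel et vaut immédiatement pour tout paramètre non ramifié d'un groupe dont le dual est simplement connexe ; celui du texte a l'avantage d'exhiber le centralisateur, calcul qui est réutilisé ailleurs. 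Notez enfin que votre étape 4 permet de combler elle-même la lacune que vous signalez à l'étape 3 : l'induite unitaire étant semi-simple et toutes ses sous-représentations irréductibles appartenant au même paquet $\Pi_\varphi$ (qui est un singleton), la multiplicité un de M\oe{}glin donne l'irréductibilité — c'est exactement l'argument de la Remarque qui suit la Proposition dans le texte.
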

\begin{proof}
Par la classification de Langlands (\cite{waldspurger_2003} Proposition III.4.1), il existe un sous-groupe de Levi standard $M=\GL_{n_1}(F)\times \cdots \times \GL_{n_r}(F) \times \SO_{2m+1}(F)$ avec $2(n_1+\cdots+n_r)+2m+1=2n+1$ et $P=MN$ sous-groupe parabolique standard ainsi que $\delta_i$ série discrète de $\GL_{n_i}(F)$ (pour $i \in \{1,\cdots,r\}$) et $\tau$ série discrète de $\SO_{2m+1}(F)$ tels que :
\[
\pi \hookrightarrow \ii_P^G (\delta_1 \boxtimes \cdots \boxtimes \delta_r \boxtimes \tau).
\]
Or $\pi$ a des $\K_0$-invariants donc l'induite également et la représentation induisante a des $\K_0\cap M$-invariants. On sait par ailleurs (Proposition \ref{Hypersp_inter_Levi}) que :
\[
\K_0 \cap M=\prod_{i=1}^r \GL_{n_i}(\Ok) \times \SO_{2m+1}(\Ok)
\]
donc $\delta_i$ est non ramifiée pour tout $i$ et $\tau$ également.
Une série discrète de $\GL_{n_i}(F)$ non ramifiée est un caractère 
unitaire donc les $\delta_i$ sont des caractères non ramifiés, notés $\chi_i$, et $n_i=1$. Quant à $\tau$, la Proposition \ref{ser_disc_nr} nous dit que c'est la représentation triviale de $\SO_1(F)$.

On a ainsi $m=0$ et $n_i=1$ pour tout $i$, si bien que l'on est en train d'induire le caractère $\chi_1 \boxtimes \cdots \boxtimes \chi_n$ non ramifié de $T$.

Il reste à voir que cette induite est irréductible : c'est l'objet d'un résultat de David Keys \cite{Keys1982} repris par Jian-Shu Li au Corollaire 2.6 de \cite{Li1992}, utilisant le fait que $G$ est adjoint et semi-simple.
\end{proof}

\emph{Remarque :} On peut également montrer l'irréductibilité de l'induite totale en utilisant la théorie d'Arthur. On induit unitairement un caractère unitaire donc la représentation induite est unitaire, partant semi-simple. Il suffit alors de montrer que l'induite ne possède qu'une sous-représentation irréductible pour conclure (par le résultat de multiplicité 1 de M\oe{}glin). Or on sait par la classification de Langlands pour les représentation tempérées que toutes les sous-représentations irréductibles de $\ii_B^G (\chi_1 \boxtimes \cdots \boxtimes \chi_n)$ sont dans le même paquet $\Pi_\varphi$ avec 
\[
\varphi=(\chi_1 \oplus \cdots \oplus \chi_n) \oplus (\chi_n^{-1} \oplus \cdots \oplus \chi_1^{-1}),
\]
où l'on a encore noté $\chi_i$ le caractère du groupe de Weil-Deligne correspondant (c'est la théorie du corps de classes).

Ce paquet ne contient pas d'autre élément et les éléments sont indexés par les caractères du groupe $\mathcal{S}_\varphi$ du Théorème \ref{LLC_SO}. Pour conclure, il faut et il suffit donc de montrer que ce dernier groupe est trivial : il n'y aura alors qu'un seul élément dans $\Pi_\varphi$ et donc qu'une seule sous-représentation irréductible dans l'induite qui sera finalement irréductible.

Pour déterminer le centralisateur de l'image du paramètre dans $\Sp_{2n}(\C)$, il faut isoler, parmi les caractères $\chi_i$, ceux qui sont à valeurs dans $\{\pm 1\}$, \ie de carré 1. Il y a exactement deux tels caractères : le caractère trivial et le caractère $\eta$ introduit au paragraphe \ref{Norme spinorielle}.  On a donc, après regroupement, $\varphi= (2a) \mathbf{1} \oplus (2b) \eta \oplus \bigoplus_i m_i (\chi_i \oplus \chi_i^{-1})$ avec $a+b+\sum_i m_i=n$ et
\[
\Cent(\varphi)=\Sp_{2a}(\C) \times \Sp_{2b}(\C) \times \prod_i \GL_{m_i} (\C).
\]
En particulier, ce centralisateur est connexe, si bien que $\mathcal{S}_\varphi$ est le groupe trivial.\newline

Terminons ce paragraphe par la précision suivante sur les invariants des séries principales non ramifiées.

\begin{lemme} \label{lemme_inv_sph_induites}
Soit $\chi$ un caractère non ramifié de $T$ et soit la représentation induite $\ii_B^G \chi$. Alors l'espace $(\ii_B^G \chi)^{\K_0}$ est de dimension 1.
\end{lemme}
\begin{proof}
Ce lemme est \emph{parallèle} au Lemme \ref{lemme_inv_param_induites} et se démontre avec les mêmes ingrédients, à l'aide de la décomposition $G=B\K_0$ du Corollaire \ref{Factorisation G=BK_0}.
\end{proof}

\subsection{Représentations \og paramodulaires \fg{} }
\begin{thm}\label{thm_rep_temp}
Soit $(\pi,V)$ une représentation tempérée irréductible de $G$. On suppose que $\pi^{\J^+} \neq \{0\}$. Alors \ps 
\begin{itemize}
\item soit $\pi$ est non ramifiée et $\pi \simeq \ii_B^G \chi$ où $\chi$ est un caractère non ramifié unitaire du tore $T$ ;\ps 
\item soit $\pi \simeq \ii_P^G (\psi \otimes \alpha \St_{\SO_3(F)})$ avec $P=MN$ où $M=\GL_1^{n-1}\times \SO_3$, $\psi$ un caractère non ramifié unitaire de $\GL_1^{n-1}$ et $\alpha \in \{\1,\eta\}$. \ps 
\end{itemize}

Dans tous les cas, $\pi$ est la seule représentation dans son paquet de Langlands, \ie en notant $\varphi=\Ll(\pi)$, $\Pi_\varphi$ est un singleton.
\end{thm}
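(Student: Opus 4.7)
The plan is to follow the structure of Proposition~\ref{thm_rep_sph_temp} (handling the unramified case), but replacing hyperspecial invariants by paramodular invariants and invoking Theorem~\ref{thm_pcpl_ser_disc} at the crucial place. First I would apply the Langlands classification for tempered representations (\cite{waldspurger_2003}, Prop.~III.4.1) to produce a standard Levi $M=\GL_{n_1}(F)\times\cdots\times\GL_{n_r}(F)\times\SO_{2m+1}(F)$, a standard parabolic $P=MN$, discrete series $\delta_i$ of $\GL_{n_i}(F)$, and $\tau$ of $\SO_{2m+1}(F)$, with an embedding
\[
\pi\hookrightarrow\ii_P^G(\delta_1\boxtimes\cdots\boxtimes\delta_r\boxtimes\tau).
\]
The game is then to pull back the condition $\pi^{\J^+}\neq\{0\}$ so as to force the inducing datum to match one of the two forms in the statement.

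By Lemma~\ref{lemme_decomposition_inv_param}, $\pi^{\J^+}=\pi^{(\J,+)}\oplus\pi^{(\J,-)}$, and these two summands are respectively the $\J$-invariants of $\pi$ and of $\eta\otimes\pi$. I would treat the two cases in parallel by applying Lemma~\ref{lemme_inv_induite_induisante} either to $\pi$ or to $\eta\otimes\pi$; in the twisted case one first rewrites $\eta\otimes\ii_P^G\sigma=\ii_P^G((\eta|_M)\otimes\sigma)$ and uses Proposition~\ref{prop_norme_spin_Levi} to decompose $\eta|_M=(\eta\circ\det)\boxtimes\cdots\boxtimes(\eta\circ\det)\boxtimes\eta$, noting that $\eta\circ\det$ is trivial on each $\GL_{n_i}(\Ok)$. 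Combined with Proposition~\ref{Param_inter_Levi}, this forces each $\delta_i$ to admit $\GL_{n_i}(\Ok)$-invariants, hence $n_i=1$ and $\delta_i=\chi_i$ is an unramified unitary character of $F^\times$, while either $\tau$ or $\eta\otimes\tau$ is a discrete series of $\SO_{2m+1}(F)$ with non-zero $\J_{2m+1}$-invariants. Theorem~\ref{thm_pcpl_ser_disc} then leaves only three possibilities for $\tau$: the trivial representation of $\SO_1(F)$ (giving $m=0$ and the first alternative of the statement), or $\alpha\otimes\St_{\SO_3(F)}$ with $\alpha\in\{\1,\eta\}$ (giving $m=1$ and the second).

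It remains to upgrade the embeddings into equalities and to prove that $\Pi_\varphi$ is a singleton. In the unramified case this is exactly Proposition~\ref{thm_rep_sph_temp}, which rests on the Keys--Li irreducibility criterion and a centralizer computation. In the second case I would compute the Langlands parameter
\[
\varphi=\bigoplus_i(\chi_i\oplus\chi_i^{-1})\oplus(\alpha\otimes U_2)
\]
and its centralizer in $\Sp_{2n}(\C)$: regrouping the $\chi_i$'s according to whether $\chi_i\in\{\1,\eta\}$ (each contributing a symplectic factor $\Sp_{2a}(\C)$, $\Sp_{2b}(\C)$) or $\chi_i^2\neq\1$ (paired with $\chi_i^{-1}$ and contributing a $\GL$-factor), and observing that the symplectic summand $\alpha\otimes U_2$ contributes an extra factor $\Oo_1(\C)=\{\pm 1\}$, one finds that the only disconnected piece of $\Cent(\varphi)$ is absorbed by $Z(\widehat{G})=\{\pm I\}$ (since $-I$ has component $-1$ in the last factor). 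Hence $\mathcal{S}_\varphi$ is trivial, $|\Pi_\varphi|=1$, and the triviality of the corresponding Knapp--Stein $R$-group forces the tempered induction to be irreducible and equal to $\pi$. The main obstacle is precisely this last computation: one must track carefully the symplectic-vs-orthogonal type of each irreducible summand of $\varphi$ and the image of $Z(\widehat{G})$ across the decomposition, which is what genuinely distinguishes the second case from the classical one of Proposition~\ref{thm_rep_sph_temp}.
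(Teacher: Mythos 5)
Your proposal is correct and follows essentially the same route as the paper's proof: Langlands classification, the twist by $\eta$ via Proposition~\ref{prop_norme_spin_Levi} combined with Lemmas~\ref{lemme_decomposition_inv_param} and~\ref{lemme_inv_induite_induisante} to force unramified characters on the $\GL$-factors and paramodular invariants on the orthogonal factor, Theorem~\ref{thm_pcpl_ser_disc} to pin down $\tau$, and the computation that $\Cent(\varphi)$ is a connected group times $\{\pm I_2\}$ absorbed by $Z(\Sp_{2n}(\C))$, so $\mathcal{S}_\varphi$ is trivial and the induced representation is irreducible.
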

\begin{proof}
Par la classification de Langlands (\cite{waldspurger_2003} Proposition III.4.1), il existe un sous-groupe de Levi standard $M=\GL_{n_1}(F)\times \cdots \times \GL_{n_r}(F) \times \SO_{2m+1}(F)$ avec $2(n_1+\cdots+n_r)+2m+1=2n+1$ et $P=MN$ sous-groupe parabolique standard ainsi que $\delta_i$ série discrète de $\GL_{n_i}(F)$ (pour $i \in \{1,\cdots,r\}$) et $\tau$ série discrète de $\SO_{2m+1}(F)$ tels que :
\[
\pi \hookrightarrow \ii_P^G (\delta_1 \boxtimes \cdots \boxtimes \delta_r \boxtimes \tau).
\]

On a alors également, par la Proposition \ref{prop_norme_spin_Levi} :
\[
\eta \otimes \pi \hookrightarrow \ii_P^G ((\eta\circ\det)\delta_1 \boxtimes \cdots \boxtimes (\eta\circ\det)\delta_r \boxtimes (\eta\circ\nu)\tau).
\]

Or, puisque $\pi$ a des $\J^+$-invariants, une des deux représentations $\pi$ ou $\eta\otimes\pi$ a des $\J$-invariants. Or ces $\J$-invariants s'injectent dans les induites respectives et, par le Lemme \ref{lemme_inv_induite_induisante}, correspondent à des $\J\cap M$-invariants de la représentation induisante.\ps 

Dans le premier cas, on a donc que chaque $\delta_i$ a des invariants (non triviaux) par $\GL_{n_i}(\Ok)$ et $\tau$ a des invariants par $\J_{2m+1}$.

Dans le second cas, on a que chaque $(\eta\circ\det)\delta_i$ a des invariants (non triviaux) par $\GL_{n_i}(\Ok)$ et $(\eta\circ\nu)\tau$ a des invariants par $\J_{2m+1}$. Or $(\eta\circ\det)$ est trivial sur $\GL_{n_i}(\Ok)$ donc chaque $\delta_i$ a encore des invariants par $\GL_{n_i}(\Ok)$.

Une série discrète de $\GL_{n_i}(F)$ non ramifiée est un caractère unitaire de $\GL_1(F)$ donc les $\delta_i$ sont des caractères non ramifiés de $F^\times$, notés $\chi_i$, et $n_i=1$.  Quant à $\tau$, c'est soit la représentation triviale de $\SO_1(F)$, soit $\St_{\SO_3(F)}$, soit $\eta \St_{\SO_3(F)}$ (Théorème \ref{thm_pcpl_ser_disc}).\ps




Il reste à voir que ces induites sont irréductibles. D'après les résultats d'Arthur, il suffit de montrer que $\mathcal{S}_\varphi$ est trivial avec
\[
\varphi=(\chi_1 \oplus \cdots \oplus \chi_r) \oplus \varphi_\tau \oplus (\chi_r^{-1} \oplus \cdots \oplus \chi_1^{-1}),
\]
où $\varphi_\tau=\Ll(\tau)$ et on l'on a encore noté $\chi_i$ le caractère du groupe de Weil-Deligne correspondant (c'est la théorie du corps de classes).

%
%

Le cas où $\tau$ est la représentation triviale est traité par la Proposition \ref{thm_rep_sph_temp}.

Il reste donc à considérer le cas où $\tau$ est la représentation $\alpha \St_{\SO_3(F)}$ avec $\alpha \in \{\1,\eta\}$.
On a alors $\varphi=(\chi_1 \oplus \cdots \oplus \chi_{n-1}) \oplus (\alpha\otimes U_2) \oplus (\chi_{n-1}^{-1} \oplus \cdots \oplus \chi_1^{-1})$ avec tous les $\chi_i$ non ramifiés.

Comme dans la Remarque qui suit la Proposition \ref{thm_rep_sph_temp}, les caractères non ramifiés vont faire apparaître des composantes du type $\GL_{m}(\C)$ ou $\Sp_{2a}(\C)$ dans le centralisateur, qui sont connexes. Reste à considérer $\alpha \otimes U_2$, représentation irréductible autoduale symplectique, dont le centralisateur est $\{\pm I_2\}$. Le centralisateur de $\varphi$ est donc le produit de $\{\pm I_2\}$ avec un groupe connexe et $\mathcal{S}_\varphi$ est à nouveau trivial (puisqu'on quotiente encore par le centre de $\Sp_{2n}(\C)$).
\end{proof}

On connaît les dimensions précises des invariants dans le premier cas du Théorème par le Lemme \ref{lemme_inv_param_induites}. Il nous faut encore gérer le second.

\begin{lemme}\label{lemme_inv_param_ind_St}
Soit $P=MN$ le sous-groupe parabolique standard avec $M=\GL_1(F)^{n-1} \times \SO_3(F)$. Soit $\psi$ un caractère non ramifié unitaire de $\GL_1(F)^{n-1}$ et $\alpha \in \{\1,\eta\}$.

Alors l'espace $(\ii_P^G (\psi \otimes \alpha \St_{\SO_3(F)}))^{(\J,\eps_\alpha)}$ est de dimension 1 et l'espace $(\ii_P^G (\psi \otimes \alpha \St_{\SO_3(F)}))^{(\J,-\eps_\alpha)}$ est nul, où $\eps_\1=-$ et $\eps_\eta=+$.
\end{lemme}
\begin{proof}
La traduction du Lemme \ref{lemme_inv_induite_induisante} dans le contexte présent nous donne :
\[
(\ii_P^G (\psi \otimes \alpha \St_{\SO_3(F)})^{(\J,\eps)} \simeq \psi^{(\Ok^\times)^{n-1}} \otimes (\alpha \St_{\SO_3(F)})^{(\J_3,\eps)}.
\]

Le premier facteur du membre de droite est une droite vectorielle puisque $\psi$ est non ramifié.
Enfin, le Corollaire \ref{cor_ser_disc_so(3)}, combiné au Lemme \ref{inv_param_Steinberg} nous indique que $(\alpha \St_{\SO_3(F)})^{(\J_3,\eps_\alpha)}$ est de dimension 1, tandis que $(\alpha \St_{\SO_3(F)})^{(\J_3,-\eps_\alpha)}$ est nul.
\end{proof}

\newpage
\chapter{Conducteur}\label{Conducteur}
Il s'agit maintenant d'introduire la notion de conducteur, qui va quantifier la ramification d'une représentation. Les résultats concernant le produit tensoriel seront cruciaux pour notre étude puisqu'ils vont nous permettre de généraliser au cas du conducteur $p$, les calculs qui étaient effectués dans \cite{Chen-Lannes} en conducteur 1. Le contrôle du conducteur d'un produit tensoriel est d'ailleurs un des éléments-clés de l'énoncé de finitude de \cite{Chen-HM}. Ce dernier article utilise les résultats de \cite{BH_cond} qui démontrent l'inégalité voulue \og côté automorphe\fg. \ps 

Nous introduisons ici le conducteur uniquement \og côté galoisien \fg et c'est par la correspondance de Langlands locale que cela définit le conducteur \og côté automorphe\fg.
\section{Conducteur d'une représentation galoisienne}\label{Conducteur d'Artin}
Comme précédemment, $F$ est un corps local non-archimédien de caractéristique nulle, $k$ son corps résiduel (fini), de caractéristique $p$. Pour la clarté, nous écrirons dans cette partie $k_F$ au lieu de $k$ quand cela paraîtra nécessaire.

Soit de plus $K$ une extension galoisienne finie de $F$. On désigne par $\mathcal{O}_K$ l'anneau des entiers de $K$ et par $\mathfrak{p}_K$ son unique idéal maximal. Le corps résiduel $k_K=\mathcal{O}_K/\mathfrak{p}_K$ est une extension finie de $k_F$. On pose $G=\Gal(K/F)$.
\subsection{Groupes de ramification}\label{Groupes de ramification}
Nous suivons ici la présentation de \cite{Se}, chapitre IV.

Il est facile de voir qu'un élément de $G$ stabilise $\mathcal{O}_K$ et $\mathfrak{p}_K$. Il induit donc un automorphisme du quotient $k_K$ qui fixe point par point $k_F$, \ie un élément de $\Gal(k_K/k_F)$. De la même façon, un élément de $G$ stabilise $\pkk^j$ pour $j>1$ et induit un automorphisme de l'anneau $\Okk / \pkk^{j}$. Précisons ceci.
\begin{defi}
Soit $i \geq -1$ un entier. On définit $G_i$ le $i$-ème groupe de ramification supérieure comme l'ensemble des éléments de $G$ agissant trivialement sur $\Okk / \pkk^{i+1}$.
\end{defi}
Puisque $G$ agit sur l'anneau quotient $\Okk / \pkk^{i+1}$ par automorphismes d'anneaux, le groupe $G_i$ est \emph{distingué} dans $G$. Par ailleurs les $G_i$ forment une suite décroissante.

\begin{ex}
On a $G_{-1}=G$ et $G_0=I_K$ (sous-groupe d'inertie)
\end{ex}

Nous avons $\Okk=\Ok [x]$ pour un certain élément primitif $x$ (\cite{Se} Chap. III, §6, Proposition 12). On a alors le
\begin{lemme}
Soit $s \in G$ et $i \geq -1$. Les conditions suivantes sont équivalentes :
\begin{enumerate}[(i)]
\item $s$ appartient à $G_i$,
\item $v_K(s(a)-a) \geq i+1$ pour tout $a \in \Okk$,
\item $v_K(s(x)-x) \geq i+1$.
\end{enumerate}
où $v_K$ désigne la valuation discrète canonique.
\end{lemme}
\begin{proof}
L'équivalence de \textit{(i)} et \textit{(ii)} est une simple traduction. Pour voir que \textit{(i)} et \textit{(iii)} sont équivalents, il suffit de remarquer que l'image $x_i$ de $x$ dans $\Okk / \pkk^{i+1}$ engendre $\Okk / \pkk^{i+1}$ comme $\Ok$-algèbre. Ainsi $s$ opère trivialement sur $\Okk / \pkk^{i+1}$ si, et seulement si $s(x_i)=x_i$.
\end{proof}

On en déduit alors immédiatement que $G_i=\{1\}$ pour $i$ assez grand.\medskip

Mentionnons sans démonstration quelques résultats concernant les quotients $G_i/G_{i+1}$ (\cite{Se} Chap. IV,§ 2).

Pour $i\geq 0$, le quotient $G_i/G_{i+1}$ s'injecte dans $U_i/U_{i+1}$, où $U_i$ est le sous-groupe des éléments de $\Okk^\times$ congrus à 1 modulo $\pkk^i$. Or on sait que le groupe $U_i/U_{i+1}$ est isomorphe au groupe multiplicatif $(k_K^\times,\cdot)$ pour $i=0$ (donc cyclique d'ordre premier à $p$), et au groupe additif $(k_K,+)$ (un $k_F$-espace vectoriel de dimension finie) pour $i>0$. Ainsi, $G_0/G_1$ est d'ordre premier à $p$ (et cyclique), alors que les $G_i/G_{i+1}$ pour $i>0$ sont des $p$-groupes (produits directs de groupes cycliques d'ordre $p$), ainsi donc que $G_1$ (appelé groupe d'inertie sauvage), qui est alors l'unique $p$-Sylow de $G$.

\subsection{Définition et premières propriétés}
Soit maintenant $V$ une représentation complexe de dimension finie (que l'on supposera toujours non nulle) du groupe de Galois fini $G$ de l'extension galoisienne finie $K/F$. Son conducteur d'Artin est par définition l'idéal $\pk^{\aar(V)}$, où l'exposant (dit \emph{exposant d'Artin} de $V$) $\aar(V)$ est
\[
\aar(V) = \frac{1}{|G_0|}  \sum_{i\geq 0} |G_i|\, \mathrm{codim} (V^{G_i}),
\]
où $V^{G_i}$ désigne le sous-espace de $V$ constitué des $G_i$-invariants de $V$, et \linebreak $\mathrm{codim} (V^{G_i})= \dim V - \dim V^{G_i}$. Cette quantité est bien définie car on a $G_i =\{1\}$ pour $i$ suffisamment grand et que dans ce cas on a $\mathrm{codim} (V^{G_i})=0$ (la somme est donc finie).

On a toujours $\aar(V) \geq \mathrm{codim} (V^{G_0})$, avec égalité si, et seulement si $V^{G_1}=V$ (\emph{i.e.} $V$ est \og modérément ramifiée \fg{}), autrement dit, si $V$ se factorise en une représentation de $G/G_1$). En particulier, on a $\aar(V) = 0$ si, et seulement si $V$ est non ramifiée, \emph{i.e.} $V^{G_0} = V$. On a aussi $\aar(V^\vee)=\aar(V)$ où $V^\vee$ désigne la représentation duale de $V$ (il n'y a pas de subtilité ici puisqu'il s'agit de la représentation d'un groupe fini) et, 
pour deux représentations $V$ et $W$ de $G$, $\aar(V \oplus W) = \aar(V)+ \aar(W)$.\medskip

Le résultat suivant se trouve dans la thèse de Guy Henniart (\cite{Hen} Théorème 7.15 p. 38), nous en détaillons néanmoins la preuve avec des notations légèrement différentes.

Notons $i(V)$ le plus grand entier $i$ tel que $G_i$ agisse non trivialement dans $V$ (et on pose $i(V) = -1$ si $G_i$ agit trivialement pour tout $i$).

On note aussi $f(j)$ la somme des $|G_0/G_k|^{-1}$ pour $0 \leq k\leq j$ (de sorte que $f(j)-1$ est aussi la valeur en $j$ de la fonction de Herbrand, selon \cite{Se} Chap. IV, §3). On a $f(-1)=0$ par convention, et $f(j) \geq 1$ si $j \geq 0$.

\begin{prop}\label{Exposant Henniart}
Supposons $V$ irréductible. Alors $\aar(V) = \dim V  \times f(i(V))$.
\end{prop}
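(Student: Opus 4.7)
The plan is to exploit the irreducibility of $V$ combined with the fact that each $G_i$ is \emph{normal} in $G$, so that each subspace $V^{G_i}$ is $G$-stable and hence, by Schur's lemma / irreducibility, must be either $0$ or $V$.

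More precisely, I would proceed as follows. Fix $i \geq 0$. Since $G_i \triangleleft G$ (as recalled in \S\ref{Groupes de ramification}), for $g \in G$, $h \in G_i$ and $v \in V^{G_i}$ one has $g^{-1}hg \in G_i$, hence $h(g\cdot v) = g(g^{-1}hg \cdot v) = g\cdot v$; thus $V^{G_i}$ is a $G$-subrepresentation of $V$. Since $V$ is irreducible, either $V^{G_i} = V$ (which happens exactly when $G_i$ acts trivially on $V$, i.e.\ when $i > i(V)$) or $V^{G_i} = 0$ (which happens when $0 \leq i \leq i(V)$). Therefore
\[
\mathrm{codim}(V^{G_i}) = \begin{cases} \dim V & \text{if } 0 \leq i \leq i(V),\\ 0 & \text{if } i > i(V).\end{cases}
\]

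Substituting into the defining formula
\[
\aar(V) = \frac{1}{|G_0|} \sum_{i\geq 0} |G_i|\, \mathrm{codim}(V^{G_i}),
\]
only the indices $0 \leq i \leq i(V)$ contribute, and each contributes $|G_i|\cdot \dim V$. We obtain
\[
\aar(V) = \dim V \cdot \sum_{i=0}^{i(V)} \frac{|G_i|}{|G_0|} = \dim V \cdot \sum_{i=0}^{i(V)} \frac{1}{|G_0/G_i|} = \dim V \cdot f(i(V)),
\]
which is the desired equality.

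There is really no obstacle here: the entire content of the proposition is the observation that normality of the ramification filtration turns the dimension of invariants into a $0/1$ dichotomy on irreducible representations, after which the formula collapses to a finite geometric-type sum which is exactly $f(i(V))$ by definition. (Note that we also confirm $\aar(V) = 0$ in the unramified case, corresponding to $i(V) = -1$ and $f(-1) = 0$, consistent with the convention recalled above.)
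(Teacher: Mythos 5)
Your proof is correct, and it is in fact shorter than the one in the paper. Both arguments rest on the same two ingredients — normality of the $G_i$ in $G$ and irreducibility of $V$ — but you combine them in the most direct way: $V^{G_i}$ is a $G$-subrepresentation because $G_i \triangleleft G$, so irreducibility forces $V^{G_i} \in \{0, V\}$, and the definition of $i(V)$ (together with the fact that the filtration $(G_i)$ is decreasing) tells you which alternative holds for each $i \geq 0$. The paper instead fixes $i = i(V)$, observes that the action of $G_i$ then factors through the abelian quotient $G_i/G_{i+1}$, decomposes $V|_{G_i}$ into characters, and runs a Clifford-type argument: the $G$-translates of the isotypic component $V(c)$ of a nontrivial character $c$ are the isotypic components of the conjugate characters $c'$, all nontrivial, and they span $V$ by irreducibility; hence $V|_{G_i}$ has no trivial constituent and $V^{G_j} = 0$ for all $j \leq i(V)$. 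The two conclusions are identical — since $V|_{G_i}$ is semisimple, your dichotomy $V^{G_i}=0$ is exactly the statement that $V|_{G_i}$ contains no trivial constituent — so nothing is lost; the paper's longer route only makes explicit the finer structure of $V|_{G_{i(V)}}$ as a sum of conjugate nontrivial characters, which is not needed for the formula. Your treatment of the degenerate case $i(V) = -1$ (empty sum, $f(-1) = 0$) also agrees with the paper's.
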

\begin{proof}
On commence par exclure le cas où $V$ est la représentation triviale, auquel cas $i(V)=-1$ et les deux membres de l'égalité sont égaux à 0. Réciproquement, si $i(V)=-1$, $G$ agit trivialement et, par irréductibilité, $V$ est la représentation triviale.\ps 

Posons $i = i(V)$. L'action de $G_i$ se factorise à travers le groupe $G_i/G_{i+1}$, abélien (car $i\geq 0$) d'après les résultats de la fin du paragraphe \ref{Groupes de ramification}.

La $G_i$-représentation $V$ est donc somme de caractères de $G_i/G_{i+1}$, l'un au moins d'entre eux, disons $c$, étant non trivial. Notons $V(c)$ la composante isotypique de $c$. Par irréductibilité de $V$, on a $V = \sum_{g \in G} g \cdot V(c)$, et d'autre part $g \cdot V(c)$ est stable par $G_i$ : c'est en fait $V(c')$ où $c'$ est le conjugué extérieur du caractère $c$ de $G_i$ par $g^{-1}$. Il est donc de la forme $V(c')$ avec $c'$ éventuellement égal à $c$, mais en tout cas $c' \neq \1$.

Ainsi on constate que $V$, vue comme $G_i$-représentation, est somme de caractères non triviaux : $V^{G_i} = \{0\}$ et, plus généralement, $V^{G_j} = \{0\}$ pour $j \leq i$. D'autre part on a $V^{G_{i+1}} = V$. On a donc bien $\aar(V) = \frac{1}{|G_0|} \sum_{i\leq i(V)} |G_i| \dim V = \dim V \times f(i(V))$
\end{proof}
\begin{cor}\label{cor522}
Supposons encore $V$ irréductible. Si $V^{G_0} \neq V$, alors $\aar(V) \geq \dim V$ avec égalité si, et seulement si $V^{G_1}=V$ ($V$ est modérément ramifiée)
\end{cor}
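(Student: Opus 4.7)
The plan is to deduce this corollary directly from Proposition \ref{Exposant Henniart}, which gives the clean formula $\aar(V) = \dim V \times f(i(V))$ for $V$ irreducible. The essential point is to translate the hypothesis $V^{G_0} \neq V$ into a statement about $i(V)$, and then to analyze the values of the function $f$.

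First I would observe that the argument given in the proof of Proposition \ref{Exposant Henniart} actually shows more than irreducibility plus $f(i(V))$: it shows that for any $j \leq i(V)$, the $G_j$-representation $V$ is a sum of non-trivial characters of $G_j/G_{j+1}$ (or, more generally, has no invariants under $G_j$). In particular, if $V^{G_0} \neq V$, then $G_0$ acts non-trivially on $V$, and by irreducibility together with the argument of the previous proposition one has in fact $V^{G_0} = \{0\}$. This translates to $i(V) \geq 0$.

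Next, I would compute $f$ at small integers. By definition $f(0) = |G_0/G_0|^{-1} = 1$, and for any $j \geq 1$ one has
\[
f(j) = 1 + \sum_{k=1}^{j} |G_0/G_k|^{-1} > 1.
\]
Combined with the formula $\aar(V) = \dim V \cdot f(i(V))$ and the fact that $i(V) \geq 0$, this immediately gives $\aar(V) \geq \dim V$.

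For the equality case, the strict inequality $f(j) > 1$ for $j \geq 1$ shows that equality $\aar(V) = \dim V$ holds if and only if $i(V) = 0$, i.e.\ $G_1$ acts trivially on $V$, which is precisely the condition $V^{G_1} = V$. There is no real obstacle here; the only point that requires a moment's care is making sure the hypothesis $V^{G_0} \neq V$ indeed forces $i(V) \geq 0$ (rather than merely $i(V) \geq -1$), which follows from the irreducibility argument recalled above.
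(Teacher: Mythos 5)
Your proof is correct and follows essentially the same route as the paper: translate the hypothesis into $i(V)\geq 0$, apply Proposition \ref{Exposant Henniart}, and use that $f(0)=1$ while $f(j)>1$ for $j\geq 1$ (the paper phrases this as $f$ being strictly increasing). The only superfluous step is your appeal to irreducibility to get $V^{G_0}=\{0\}$: the hypothesis $V^{G_0}\neq V$ already means $G_0$ acts non-trivially, which is exactly $i(V)\geq 0$ by the definition of $i(V)$.
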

\begin{proof}
L'hypothèse signifie que $i(V) \geq 0$. Il suffit alors d'appliquer la proposition précédente. Par ailleurs, la fonction $f$ étant strictement croissante, on a bien $f(i(V))=1 \Leftrightarrow i(V)=0 \Leftrightarrow V^{G_1}=V $.
\end{proof}
\subsection{Conducteur d'un produit tensoriel}
Notre objectif est de majorer la norme du conducteur du produit tensoriel de deux représentations. Nous allons raisonner, comme précédemment, sur l'exposant d'Artin.

\begin{lemme}\label{Exposant produit tensoriel d'irr}
Soient $V$ et $V'$ deux représentations irréductibles de $G$. Si $i(V) \leq i(V')$ (ce que l'on peut toujours supposer quitte à échanger les rôles de $V$ et $V'$), on a l'inégalité $\aar(V \otimes V') \leq (\dim V) \aar(V').$ 
\end{lemme}
\begin{proof}
Notons $i_0=i(V')$. Alors $G_{i_0}$ agit trivialement sur $V'$ (par définition) et sur $V$ (car $i(V)\leq i_0$), donc sur $V\otimes V'$. Ainsi
\begin{align*}
\aar(V \otimes V') &= \frac{1}{|G_0|}  \sum_{i\geq 0} |G_i|\, \mathrm{codim} ((V \otimes V')^{G_i}) \\
				   &=  \frac{1}{|G_0|}  \sum_{i\leq i_0} |G_i|\, \mathrm{codim} ((V \otimes V')^{G_i}) \\
				   &\leq \frac{1}{|G_0|}  \sum_{i\leq i_0} |G_i| \dim V \dim V' \\
				   &\leq (\dim V \dim V') f(i(V')) \\
				   &\leq (\dim V) \aar(V').
\end{align*}
%
On remarque qu'on a en fait seulement utilisé l'irréductibilité de $V'$.
\end{proof}

\begin{lemme}
On ne suppose plus $V$ irréductible. On a l'inégalité $\aar(V) \leq (\dim V) f(i(V))$, avec égalité si, et seulement si tous les facteurs irréductibles $W$ de $V$ ont même indice $i(W)$ (en particulier si $V$ est irréductible donc).
\end{lemme}
\begin{proof}
On décompose $V$ en somme d'irréductibles (représentation complexe d'un groupe fini) :
\[
V= \bigoplus_{j=1}^k W_j.
\]

Alors
$$
\aar(V)=\sum_{j=1}^k \aar(W_j)=\sum_{j=1}^k \dim W_j f(i(W_j))
$$
en utilisant la Proposition \ref{Exposant Henniart} pour chacune des représentations irréductibles $W_j$. Or, on a évidemment $i(W_j) \leq i(V)$ et la fonction $f$ est croissante donc $\aar(V) \leq (\sum_{j=1}^k \dim W_j) f(i(V))$, soit $\aar(V) \leq (\dim V) f(i(V))$.\ps 

Supposons maintenant que tous les facteurs irréductibles ont même indice $i(W_j)$. Alors la décomposition en somme directe nous assure que $i(V)$ est égal à cet indice commun et on a donc bien $\aar(V) = (\dim V) f(i(V))$. 

Dans le cas contraire, on a un facteur irréductible, disons $W_1$ tel que $i(W_1) < i(V)$ (puisqu'on voit qu'en fait $i(V)=\max_j i(W_j)$). Alors, en utilisant le fait que la fonction $f$ est strictement croissante, on voit que l'inégalité précédente est stricte.
\end{proof}

On en arrive à la proposition suivante :
\begin{prop}\label{exposant d'un produit tensoriel}
Soient $V$ et $V'$ deux représentations de $G$. Alors $$\aar(V \otimes V') \leq \aar(V) \dim V' + \aar(V') \dim V - \min (\aar(V),\aar(V')).$$
\end{prop}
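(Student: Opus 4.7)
The desired inequality is symmetric in $V$ and $V'$, so I may assume without loss of generality that $i(V) \leq i(V')$, where $i(W)$ denotes the largest integer $i$ such that $G_i$ acts non-trivially on $W$ (the quantity introduced just before Proposition~\ref{Exposant Henniart}). Write $d = \dim V$, $d' = \dim V'$, and $c_i(W) = \mathrm{codim}(W^{G_i})$, so that $\aar(W) = \frac{1}{|G_0|} \sum_{i\geq 0} |G_i|\, c_i(W)$.

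The key elementary fact is the tautological inclusion $V^{G_i} \otimes V'^{G_i} \subseteq (V \otimes V')^{G_i}$, valid for any $G_i$. This yields $\dim(V \otimes V')^{G_i} \geq (d - c_i(V))(d' - c_i(V'))$, and expanding gives term-by-term
\begin{equation*}
c_i(V \otimes V') \leq d\, c_i(V') + d'\, c_i(V) - c_i(V)\, c_i(V').
\end{equation*}
Weighting by $|G_i|/|G_0|$ and summing over $i \geq 0$, I obtain
\begin{equation*}
\aar(V \otimes V') \leq d \, \aar(V') + d' \, \aar(V) - \frac{1}{|G_0|} \sum_{i \geq 0} |G_i| \, c_i(V) \, c_i(V').
\end{equation*}

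The remaining step is to show that the subtracted sum is at least $\aar(V)$; since $\aar(V) \geq \min(\aar(V), \aar(V'))$, this will finish the proof. Here is where the assumption $i(V) \leq i(V')$ is essential: for any $i$ with $c_i(V) > 0$ one has $i \leq i(V) \leq i(V')$, hence $c_i(V') \geq 1$ (it is a non-negative integer), and therefore $c_i(V)\, c_i(V') \geq c_i(V)$. Summing with weights $|G_i|/|G_0|$ gives exactly $\aar(V)$, closing the argument. No step is serious: all obstacles reduce to the inclusion of invariants noted above and to the symmetry reduction, and I do not need the more elaborate Lemma~\ref{Exposant produit tensoriel d'irr} or the decomposition of $V,V'$ into irreducibles.
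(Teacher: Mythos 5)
Your proof is correct, and it takes a genuinely different, more elementary route than the paper's. The paper first treats the case where $V$ and $V'$ are irreducible: it combines Lemma~\ref{Exposant produit tensoriel d'irr} (the bound $\aar(V\otimes V')\leq (\dim V)\,\aar(V')$ when $i(V)\leq i(V')$) with the exact formula $\aar(W)=\dim W\cdot f(i(W))$ of Proposition~\ref{Exposant Henniart} and the identity $\max(\alpha,\beta)=\alpha+\beta-\min(\alpha,\beta)$, then passes to the general case by bilinearity of $(V,V')\mapsto\aar(V\otimes V')$ together with the subadditivity $\min(a_1,a')+\min(a_2,a')\geq\min(a_1+a_2,a')$. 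You instead work directly from the definition of the exponent, via the inclusion $V^{G_i}\otimes V'^{G_i}\subseteq (V\otimes V')^{G_i}$ and the pointwise estimate $c_i(V)\,c_i(V')\geq c_i(V)$, which rests only on the fact that the filtration $(G_i)$ is decreasing, so that $c_i(V)>0$ forces $i\leq i(V)\leq i(V')$ and hence $c_i(V')\geq 1$. All your steps check out, including the symmetry reduction and the final appeal to $\aar(V)\geq\min(\aar(V),\aar(V'))$; the degenerate case $i(V)=-1$ causes no trouble since then every $c_i(V)$ vanishes. Your argument is self-contained — no decomposition into irreducibles, no Herbrand function — and your intermediate bound, with $\frac{1}{|G_0|}\sum_i|G_i|\,c_i(V)\,c_i(V')$ subtracted, is in fact at least as sharp as the stated one. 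What the paper's route buys is the reuse of the irreducible-case computations it needs elsewhere anyway; yours buys brevity and transparency.
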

\begin{proof}
Commençons par le cas où $V$ et $V'$ sont irréductibles. Alors, on peut reformuler le résultat du Lemme \ref{Exposant produit tensoriel d'irr} sous la forme suivante :
$$
\aar(V \otimes V') \leq (\dim V) (\dim V') \max(f(i(V)),f(i(V'))).
$$

Or, pour deux réels $\alpha$ et $\beta$, $\max(\alpha,\beta)=\alpha+\beta-\min(\alpha,\beta)$. Ceci nous donne, en utilisant que $\aar(V)=(\dim V) f(i(V))$ (irréductibilité de $V$) :
$$
\aar(V \otimes V') \leq  \aar(V) (\dim V') + \aar(V') (\dim V) - (\dim V) (\dim V') \min(f(i(V)),f(i(V'))).
$$

Pour conclure, il suffit donc de voir que $\min(df,d'f') \leq dd' \min (f,f')$ avec des notations évidentes, ce qui est vrai puisque $d$ et $d'$ sont des entiers supérieurs ou égaux à 1 (on a exclu depuis le début du paragraphe la représentation nulle).

La proposition est donc démontrée dans le cas où $V$ et $V'$ sont irréductibles.
L'application (symétrique) $(V,V') \mapsto \aar(V\otimes V')$ est bilinéaire : par définition, $\aar((U_1\oplus U_2)\otimes V')=\aar((U_1\otimes V')\oplus(U_2\otimes V'))=\aar(U_1\otimes V')\oplus\aar(U_2\otimes V')$. Pour déduire le cas général du cas irréductible, il suffit alors de constater que 
\[
\min(a_1,a') +\min(a_2,a') \geq \min (a_1+a_2,a')
\]
pour tout triplet d'entiers naturels $(a_1,a_2,a')$.
\end{proof}

\section{Conducteur d'une représentation du groupe de Weil}\label{Conducteur d'une représentation du groupe de Weil}

On peut étendre la définition du conducteur au cas d'une représentation continue du groupe de Galois absolu $\rho : \Gal(\overline{F} / F) \rightarrow \GL(V)$ de dimension finie. Par continuité, $\rho$ se factorise à travers $\Gal(K / F)$ où $K$ est une extension \emph{galoisienne finie} de $F$ et on note $\rho_K$ le morphisme induit. On pose alors $\aar(\rho)=\aar(\rho_K)$ et on vérifie que cette définition ne dépend pas du choix de l'extension $K$. Ceci vaut également pour une représentation (continue, de dimension finie) de $\W_F$ de type galoisien (\cf § \ref{Groupe de Weil}).\ps 

Pour les autres cas, on fait appel à la Proposition \ref{rep_WF_rep_type_galoisien} : soit $\rho$ une représentation \emph{irréductible} $\rho$ de $\W_F$, il existe alors $\chi$ un caractère non ramifié tel que $\rho \otimes \chi$ soit de type galoisien. On pose alors $\aar(\rho)=\aar(\rho \otimes \chi)$ et on vérifie que cela ne dépend pas du choix de $\chi$. 
On a d'ailleurs toujours $\aar(\rho^\vee)=\aar(\rho)$.
Par additivité, l'exposant d'Artin est alors défini pour toutes les représentations semi-simples du groupe de Weil (qui sont les seules que nous considérons).

Enfin, étant données deux représentations $(\rho,V)$ et $(\rho',V')$ du groupe de Weil, on peut trouver $\chi$ et $\chi'$ caractères non ramifiés tels que $(\rho \otimes \chi,V)$ et $(\rho' \otimes \chi',V')$ soient de type galoisien. Chacune se factorise par une extension galoisienne finie de $F$ et, en raisonnant dans l'extension produit (qui est toujours galoisienne finie), les résultats sur l'exposant d'Artin du produit tensoriel demeurent.

\section{Conducteur d'une représentation du groupe de Weil-Deligne}\label{Conducteur d'une représentation du groupe de Weil-Deligne}

Nous considérons ici le groupe de Weil-Deligne de $F$, $\WD_F$, comme le produit direct du groupe de Weil $\W_F$ et du groupe compact $\SU(2)$. Les représentations irréductibles de ce second groupe sont entièrement caractérisées par leur dimension. John Tate définit dans \cite{Tate-Corvallis} un exposant d'Artin \og élargi \fg{} pour les représentations de Weil-Deligne, représentations qui font intervenir un élément nilpotent plutôt que le groupe $\SU(2)$. Reprenons d'abord ses définitions.
\begin{defi}\label{def_rep_de_WD}
Une \emph{représentation de Weil-Deligne} du groupe de Weil $\W_F$ est la donnée d'un triplet $(\rho, V, N)$ où
\begin{enumerate}
\item $(\rho, V)$ est une représentation linéaire complexe de dimension finie du groupe de Weil $\W_K$,
\item $N$ est un endomorphisme nilpotent de $V$,
\item pour tout $w \in \W_K$, $\rho(w)N\rho(w)^{-1}=|w|N$.
\end{enumerate}
\end{defi}

Pour $(\rho,V,N)$ une telle représentation de Weil-Deligne, on définit :
\[
\aar_\mathrm{WD}((\rho,V,N))=\aar_\mathrm{W}((\rho,V))+ \dim V^I - \dim V_N^I
\]
où $(\rho,V)$ désigne la représentation du groupe de Weil sous-jacente, $\aar_\mathrm{W}$ son exposant d'Artin tel que défini au paragraphe précédent et où $V_N=\Ker N$.\medskip

Sans trop détailler la traduction classique entre représentations de Weil-Deligne et représentations de $\W_F \times \SU(2)$ (exposée avec précision dans \cite{GR}), voyons quelle formule on obtient pour une représentation de ce dernier groupe.

Toutes les représentations considérées seront semi-simples, il suffit donc, par additivité de l'exposant d'Artin, de le définir sur une représentation irréductible, \ie sur un certain $V=X\otimes U_d$ où $X$ est une représentation irréductible de $\W_F$ et $d$ est un entier strictement positif. L'élément nilpotent correspondant à la représentation $U_d$ est essentiellement un bloc de Jordan de taille $d$ ; le noyau associé est donc de dimension 1. On a ainsi :
\begin{align*}
V^I& =X^I \otimes U_d, \\
\dim(V^I) &=\dim X^I \times d, \\
\dim(V_N^I)&=\dim X^I.
\end{align*}

Il reste à comprendre $\aar_\mathrm{W}(X\otimes U_d)$. Or $X\otimes U_d$, vue comme représentation du groupe de Weil $\W_F$, consiste simplement en $d$ copies de la représentation $X$, d'où $\aar_\mathrm{W}(X\otimes U_d)=\aar_\mathrm{W}(X^{\oplus d})=d\aar_\mathrm{W}(X)$. Finalement, pour une représentation irréductible $V=X\otimes U_d$, on a :
\begin{equation}\label{awd_def}
\aar_\mathrm{WD}(X\otimes U_d)=d\aar_\mathrm{W}(X)+ (d-1)\dim X^I.
\end{equation}

On peut en fait être plus spécifique : si $\aar_\mathrm{W}(X)=0$, alors $X^I=X$ et la représentation non ramifiée $X$ se factorise donc en une représentation de $\W_F/I \simeq \Z$. Puisqu'elle est irréductible, elle est en fait de dimension 1 et $X$ est donc un caractère non ramifié. Si, inversement, $\aar_\mathrm{W}(X)>0$, la démonstration de la Proposition \ref{Exposant Henniart} nous montre que $X^I=\{0\}$. Finalement, pour $X\otimes U_d$ une représentation \emph{irréductible} de $\W_F \times \SU(2)$, on a 
\begin{equation}\label{awd_irr}
\aar_\mathrm{WD}(X\otimes U_d)=
\begin{cases}
d-1 &\text{ si } \aar_\mathrm{W}(X)=0 \\
d\aar_\mathrm{W}(X) &\text{ si } \aar_\mathrm{W}(X)>0
\end{cases}.
\end{equation}

On remarque par ailleurs que la formule \eqref{awd_def} vaut encore si $X$ n'est pas supposée irréductible (par additivité), 
et que $\aar_{\WD}((X\otimes U_d)^\vee)=\aar_{\WD}(X^\vee \otimes U_d)=\aar_{\WD}(X\otimes U_d)$.

\begin{prop}\label{exposant_produit_tensoriel_WD}
Soient $V$ et $V'$ deux représentations semi-simples de $\W_F \times \SU(2)$. Alors 
\[
\aar_\mathrm{WD}(V \otimes V') \leq \aar_\mathrm{WD}(V) \dim V' + \aar_\mathrm{WD}(V') \dim V - \min (\aar_\mathrm{WD}(V),\aar_\mathrm{WD}(V')).
\]
\end{prop}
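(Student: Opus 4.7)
\noindent Mon plan reprend la structure de la démonstration de la Proposition \ref{exposant d'un produit tensoriel} pour les représentations du seul groupe de Weil, en l'adaptant au cadre $\W_F \times \SU(2)$. La première étape consisterait à se ramener au cas où $V$ et $V'$ sont irréductibles : l'application $(V,V') \mapsto \aar_\WD(V \otimes V')$ est additive dans chaque variable par définition, et pour l'inégalité il suffit d'invoquer, comme à la fin de la preuve de la Proposition \ref{exposant d'un produit tensoriel}, la majoration élémentaire $\min(a_1 + a_2, c) \leq \min(a_1, c) + \min(a_2, c)$ pour des entiers positifs. Ainsi on peut écrire $V = X \otimes U_d$ et $V' = Y \otimes U_e$ avec $X, Y$ représentations \emph{irréductibles} de $\W_F$.

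\noindent Dans un second temps, j'utiliserais la décomposition de Clebsch-Gordan $U_d \otimes U_e \simeq \bigoplus_{k=0}^{\min(d,e)-1} U_{|d-e|+1+2k}$, qui fournit $V \otimes V' = \bigoplus_k (X \otimes Y) \otimes U_{m_k}$ avec $m_k = |d-e|+1+2k$. L'additivité de $\aar_\WD$ et la formule \eqref{awd_def} appliquée à chaque terme (en prenant garde que $X \otimes Y$ n'est pas irréductible en général, mais que \eqref{awd_def} reste valide pour des représentations semi-simples de $\W_F$) donnent alors la formule close
\[
\aar_\WD(V \otimes V') = de \cdot \aar_\W(X \otimes Y) + (de - \min(d,e)) \dim(X \otimes Y)^I,
\]
les deux sommes $\sum_k m_k = de$ et $\sum_k (m_k - 1) = de - \min(d,e)$ se calculant directement. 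Introduisant la quantité $\tilde f(W) = \aar_\WD(W)/\dim W$ pour toute représentation semi-simple non nulle $W$, le c\oe{}ur de la démonstration serait l'établissement de l'\emph{inégalité centrale}
\[
\tilde f(V \otimes V') \leq \max(\tilde f(V), \tilde f(V')).
\]

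\noindent Pour établir cette inégalité centrale, je raffinerais le Lemme \ref{Exposant produit tensoriel d'irr} en observant que $\mathrm{codim}((X \otimes Y)^{G_i}) \leq \mathrm{codim}((X \otimes Y)^{G_0}) = \dim X \dim Y - \dim(X \otimes Y)^I$ pour tout $i \geq 0$, ce qui donne $\aar_\W(X \otimes Y) \leq (\dim X \dim Y - \dim(X \otimes Y)^I) \cdot \max(f(i(X)), f(i(Y)))$. Je procèderais ensuite par analyse des cas de ramification. \textbf{Cas (i)}, $X$ et $Y$ tous deux ramifiés : ici $M := \max(\tilde f(V), \tilde f(V')) = \max(f(i(X)), f(i(Y))) \geq 1$ (Corollaire \ref{cor522}), et la majoration raffinée combinée à la formule ci-dessus donne $\tilde f(V \otimes V') \leq M - \frac{\dim(X\otimes Y)^I}{\dim X \dim Y}[M - (1 - 1/\max(d,e))]$, où la correction est positive ou nulle puisque $M \geq 1 > 1 - 1/\max(d,e)$. \textbf{Cas (ii)}, exactement un ramifié : la tordue par un caractère non ramifié préserve le conducteur et l'irréductibilité, d'où $\dim(X \otimes Y)^I = 0$ et l'égalité $\tilde f(V \otimes V') = \tilde f(V)$ (où $V$ désigne le facteur ramifié). \textbf{Cas (iii)}, les deux non ramifiés (des caractères) : un calcul direct donne $\tilde f(V \otimes V') = 1 - 1/\max(d,e) = M$.

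\noindent Enfin, la déduction de l'énoncé serait immédiate. Quitte à permuter $V$ et $V'$, je suppose $\tilde f(V) \leq \tilde f(V')$, ce qui par l'inégalité centrale donne $\aar_\WD(V \otimes V') \leq \dim V \cdot \dim V' \cdot \tilde f(V') = \dim V \cdot \aar_\WD(V')$. Comme $\dim V' \geq 1$, on a $\min(\aar_\WD(V), \aar_\WD(V')) \leq \aar_\WD(V) \leq \aar_\WD(V) \cdot \dim V'$, d'où
\[
\dim V \cdot \aar_\WD(V') \leq \aar_\WD(V) \dim V' + \aar_\WD(V') \dim V - \min(\aar_\WD(V), \aar_\WD(V')),
\]
ce qui conclut. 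La difficulté principale réside dans le cas (i) de l'étape 3, car le terme $\dim(X \otimes Y)^I$ peut être non trivial et la borne brute issue de la Proposition \ref{exposant d'un produit tensoriel} appliquée telle quelle à $\aar_\W(X \otimes Y)$ peut être trop lâche : c'est précisément le raffinement de la borne du Lemme \ref{Exposant produit tensoriel d'irr} par $(\dim X \dim Y - \dim(X\otimes Y)^I) \cdot M$ plutôt que $\dim X \dim Y \cdot M$ qui absorbe cet écart.
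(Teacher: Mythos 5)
Votre démonstration est correcte et suit pour l'essentiel la même voie que celle du texte : réduction au cas irréductible par additivité, décomposition de Clebsch--Gordan, puis discussion selon la ramification de $X$ et de $Y$, le cas doublement ramifié reposant sur le même raffinement clé, à savoir que le terme $\dim(X\otimes Y)^I$ s'absorbe dans la somme définissant $\aar_\mathrm{W}(X\otimes Y)$ (le texte regroupe $\aar_\mathrm{W}(X\otimes Y)+\dim(X\otimes Y)^I$ et majore le tout par $(\dim X)(\dim Y)\max(f(i(X)),f(i(Y)))$, là où vous majorez chaque $\mathrm{codim}((X\otimes Y)^{G_i})$ par $\mathrm{codim}((X\otimes Y)^{G_0})$ : c'est le même calcul). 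Votre conditionnement via la quantité normalisée $\aar_\mathrm{WD}/\dim$ et l'inégalité intermédiaire en $\max$, qui rend la déduction finale immédiate, est un simple reconditionnement, au demeurant élégant, de l'argument du texte.
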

\begin{proof}
Par semi-simplicité, le cas général se déduit du cas irréductible de la même manière que dans la preuve de la Proposition \ref{exposant d'un produit tensoriel}. 
Il suffit donc de prouver l'inégalité pour $V$ et $V'$ irréductibles. Notons donc $V=X\boxtimes U_d$ et $V'=Y \boxtimes U_e$ où $X,Y$ sont des représentations irréductibles de $\W_F$ et $d,e$ deux entiers strictement positifs (on choisit ici la notation du produit tensoriel extérieur pour des raisons de lisibilité).

On a $V\otimes V'=(X\otimes Y) \boxtimes (U_d \otimes U_e)$ et 
\begin{equation}\label{regle_prod_tens_rep_sym}
U_d \otimes U_e\simeq U_{d+e-1} \oplus U_{d+e-3} \oplus \cdots \oplus U_{|d-e|+1}=\bigoplus_{k=0}^{\min(d,e)-1} U_{|d-e|+1+2k}
\end{equation}
(règle sur le produit tensoriel de représentations symétriques de $\SU(2)$).\ps 

Nous avons trois cas à considérer selon que aucune, une ou deux des représentations du groupe de Weil $X$ et $Y$ sont ramifiées.\ps 

\textbf{Premier cas :} $X$ et $Y$ sont non ramifiées.

Ce sont alors des caractères, que l'on note $\chi$ et $\psi$, et on peut, sans nuire à la généralité, supposer que $d \geq e$.
\begin{align*}
\aar_\mathrm{WD}(V \otimes V')&=\aar_\mathrm{WD}\left(\chi \psi \boxtimes (\bigoplus_{k=0}^{e-1} U_{d-e+1+2k})\right) \\
&=\sum_{k=0}^{e-1} [(d-e+1+2k)-1] \\
&=(d-1)e
\end{align*}
en utilisant \eqref{awd_irr} ($\chi \psi$ est non ramifié). Or $d-1=\aar_\mathrm{WD}(\chi \boxtimes U_d)$ et $e=\dim(\psi \boxtimes U_e)$. Pour conclure, il suffit donc de voir que $\aar_\mathrm{WD}(\psi \boxtimes U_e)=e-1$, $\dim(\chi \boxtimes U_d)=d$ puis que $(e-1)d - \min(d-1,e-1) \geq 0$, ce qui est bien le cas puisque $d\geq 1$.\ps 

\textbf{Deuxième cas :} $X$ est non ramifiée et $Y$ est ramifiée.

Comme ci-dessus, notons $\chi$ pour $X$. On remarque déjà que $\aar_\mathrm{W}(\chi \otimes Y)=\aar_\mathrm{W}(Y)$.
\begin{align*}
\aar_\mathrm{WD}(V \otimes V')&=\aar_\mathrm{WD}\left((\chi \otimes Y) \boxtimes \left(\bigoplus_{k=0}^{\min(d,e)-1} U_{|d-e|+1+2k}\right)\right) \\
&=\sum_{k=0}^{\min(d,e)-1} \aar_\mathrm{WD} \left((\chi \otimes Y) \boxtimes U_{|d-e|+1+2k}\right) \\
&=\sum_{k=0}^{\min(d,e)-1} (|d-e|+2k+1) \aar_\mathrm{W}(\chi \otimes Y)  \\
&=de \aar_\mathrm{W}(Y).
\end{align*}
Or, toujours d'après \eqref{awd_irr}, $e\aar_\mathrm{W}(Y)=\aar_\mathrm{WD}(Y \boxtimes U_e)$, et $d=\dim(\chi \boxtimes U_d)$. Pour conclure, il suffit donc de voir que $\aar_\mathrm{WD}(\chi \boxtimes U_d)=d-1$, $\dim(Y \boxtimes U_e)=e\dim(Y)$ puis que $(d-1)e\dim(Y) \geq \min(d-1,e\aar_\mathrm{W}(Y))$, ce qui est bien le cas puisque $e\geq 1$ et $\dim(Y)\geq 1$.\ps 

\textbf{Troisième cas :} $X$ et $Y$ sont ramifiées.

On peut, sans nuire à la généralité, supposer que $d \geq e$.
\begin{align*}
\aar_\mathrm{WD}(V \otimes V')&=\aar_\mathrm{WD}\left((X\otimes Y) \boxtimes (\bigoplus_{k=0}^{e-1} U_{d-e+1+2k})\right) \\
&=\sum_{k=0}^{e-1} \aar_\mathrm{WD}((X\otimes Y) \boxtimes U_{d-e+1+2k}) \\
&=\sum_{k=0}^{e-1}\left( (d-e+1+2k)\aar_\mathrm{W}(X\otimes Y) + \dim (X\otimes Y)^I (d-e+2k) \right) \\
&\leq \left(\aar_\mathrm{W}(X\otimes Y) + \dim (X\otimes Y)^I\right)\sum_{k=0}^{e-1} (d-e+1+2k) \\
&\leq de\left(\aar_\mathrm{W}(X\otimes Y) + \dim (X\otimes Y)^I\right).
\end{align*}
Pour gérer la quantité entre parenthèses, il faut revenir à la définition de $\aar_\mathrm{W}$. On peut, quitte à tordre $X$ et $Y$ chacune par un caractère non ramifié, supposer qu'elles sont de type galoisien. Comme expliqué en fin du paragraphe \ref{Conducteur d'une représentation du groupe de Weil}, on peut trouver une extension galoisienne finie $K/F$ associée au produit tensoriel des représentations et on revient alors aux calculs du paragraphe \ref{Conducteur d'Artin} dont on reprend les notations. Alors
\begin{align*}
\aar_\mathrm{W}((X\otimes Y) + \dim (X\otimes Y)^I
&= \sum_{i\geq 0} \frac{|G_i|}{|G_0|} \mathrm{codim} ((X \otimes Y)^{G_i})+ \dim (X\otimes Y)^I \\
&= \sum_{i\geq 1} \frac{|G_i|}{|G_0|} \mathrm{codim} ((X \otimes Y)^{G_i})+ \dim (X\otimes Y) \\
&\leq \sum_{i\geq 1} \frac{|G_i|}{|G_0|} (\dim X)(\dim Y)+ \dim (X\otimes Y) \\
&\leq \sum_{i\geq 0} \frac{|G_i|}{|G_0|} (\dim X)(\dim Y)
\end{align*}
en utilisant $G_0=I$ et le même raisonnement que dans la preuve de la Proposition \ref{exposant d'un produit tensoriel}. Cette preuve nous dit également que, si $\aar_\mathrm{W}(Y)\geq \aar_\mathrm{W}(X)$, alors cette dernière quantité est égale à $(\dim X) \aar_\mathrm{W}(Y)$, et à $(\dim Y) \aar_\mathrm{W}(X)$ dans le cas contraire.\ps 

Si $\aar_\mathrm{W}(Y)\geq \aar_\mathrm{W}(X)$, alors on a montré que $\aar_\mathrm{WD}(V \otimes V') \leq de(\dim X) \aar_\mathrm{W}(Y)$, quantité que l'on reconnaît être égale à $\dim(X\boxtimes U_d) \aar_\mathrm{WD}(Y\boxtimes U_e)$. Pour conclure, il suffit donc de voir que $\aar_\mathrm{WD}(X \boxtimes U_d)=d\aar_\mathrm{W}(X)$, $\dim(Y \boxtimes U_e)=e(\dim Y)$ puis que $e(\dim Y)d\aar_\mathrm{W}(X) \geq \min(d\aar_\mathrm{W}(X),e\aar_\mathrm{W}(Y))$, ce qui est bien le cas puisque $e\geq 1$ et $\dim(Y)\geq 1$.

Le cas où $\aar_\mathrm{W}(Y)\geq \aar_\mathrm{W}(X)$ se gère de la même façon.
\end{proof}

\section{Conducteur des représentations locales}\label{Conducteur des représentations locales}

\begin{defi}\label{Cond_rep_locales_def}
Soit $\pi$ une représentation lisse, admissible, irréductible de $\GL_n(F)$. Alors la correspondance de Langlands locale pour $\GL_n$ (Théorème \ref{LLC_GLn}) lui associe un paramètre de Langlands $\Ll(\pi)$, (classe de conjugaison de) représentation de $\WD_F$ dans $\GL_n(\C)$. On définit l'exposant d'Artin de $\pi$ comme étant celui de $\Ll(\pi)$ -- dont on voit immédiatement qu'il ne dépend pas du choix de représentant dans la classe de conjugaison. 
La compatibilité de la correspondance de Langlands locale à la dualité nous donne d'ailleurs $\aar(\pi^\vee)=\aar(\pi)$.\ps 

Étant donnée une paire $(\pi,\pi')$ de représentations lisses, admissibles, irréductibles de $\GL_n(F)$ et $\GL_{n'}(F)$ respectivement, on définit l'exposant d'Artin de la paire $\pi \times \pi'$ par :
\[
\aar(\pi \times \pi')=\aar_{\WD}(\Ll(\pi) \otimes \Ll(\pi')).
\]
\end{defi}

%

On hérite donc en particulier l'inégalité (dite inégalité d'Henniart) :
\begin{equation}\label{exposant_prod_tens_rep_locales}
\aar(\pi \times \pi') \leq n' \aar(\pi)  + n \aar(\pi') - \min (\aar(\pi),\aar(\pi'))
\end{equation}
de la Proposition \ref{exposant_produit_tensoriel_WD}.\medskip

Considérons le cas plus général d'une représentation $\pi$ lisse, admissible, irréductible du groupe $G$ des $F$-points d'un groupe algébrique $\mathbf{G}$ réductif défini et déployé sur $F$. La correspondance de Langlands locale nous fournit alors (conjecturalement, en toute généralité) un paramètre de Langlands, à partir duquel on veut définir l'exposant d'Artin de $\pi$.

Il y a toutefois une subtilité : on fait ici apparaître un morphisme de $\WD_F$ dans $\widehat{G}$ et non pas dans un $\GL(V)$ où $V$ est un $\C$-espace vectoriel de dimension finie. C'est uniquement dans ce dernier cadre que l'exposant d'Artin d'une représentation du groupe de Weil-Deligne a été défini, il faut donc choisir un plongement de $\widehat{G}$ dans un groupe linéaire et de ce plongement dépend le calcul du conducteur.\ps

Précisons les choses dans le cas qui nous intéresse du groupe déployé $G=\SO_{2n+1}(F)$.

\begin{defi}\label{defi_aar_rep_tauto}
Soit $G$ le groupe $\SO_{2n+1}(F)$ (déployé) et soit $\pi$ une représentation lisse, admissible, irréductible de $G$. On a alors $\widehat{G}=\Sp_{2n}(\C)$ et, notant $\tau$ la représentation tautologique $\widehat{G} \hookrightarrow \GL_{2n}(\C)$, on définit $\aar(\Ll(\pi))$ (et donc $\aar(\pi)$) comme étant $\aar(\tau \circ \Ll(\pi))$.
\end{defi}



Nous cherchons désormais à classifier les paramètres de Langlands d'exposant d'Artin 0 ou 1 et à en déduire l'ensemble des représentations de $G$ de conducteur $\Ok$ (resp. $\pk$).

\subsection{Paramètres non ramifiés de $\SO_{2n+1}(F)$}\label{Paramètres non ramifiés}

On s'intéresse d'abord au cas d'un paramètre non ramifié, \ie d'exposant d'Artin nul.
\begin{prop}\label{prop_parametres_1}
Soit $\varphi$ un paramètre de Langlands de $G=\SO_{2n+1}(F)$ de conducteur $\Ok$. Alors il existe $n$ nombres complexes $s_1,\cdots,s_n$ avec $\re(s_1)\geq \cdots \geq \re(s_n)\geq 0$, correspondant à $n$ caractères non ramifiés $\chi_i=|\cdot|^{s_i}$ de $\W_F^{\rm ab}\simeq F^\times$, tels que $\varphi=\chi_1 \oplus \cdots \oplus \chi_n \oplus \chi_n^{-1} \oplus \cdots \oplus \chi_1^{-1}$.\ps 

Le paquet de Langlands $\Pi_\varphi$ contient alors un seul élément, à savoir le quotient de Langlands de $\ii_B^G (\chi_1 \boxtimes \cdots \boxtimes \chi_n)$.\ps  

Si $\varphi$ est supposé de plus tempéré, alors tous les $s_i$ sont de partie réelle nulle et le quotient de Langlands est égal à l'induite tout entière.
\end{prop}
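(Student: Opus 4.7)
\emph{Plan de démonstration.} L'idée est d'exploiter la formule explicite \eqref{awd_def} pour l'exposant d'Artin d'une représentation du groupe de Weil-Deligne, combinée à la structure symplectique de $\tau \circ \varphi$.

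Premièrement, je décomposerais $\tau \circ \varphi$, représentation semi-simple de $\W_F \times \SU(2)$ de dimension $2n$, en somme directe d'irréductibles de la forme $X \otimes U_d$, chaque $X$ irréductible de $\W_F$. Par additivité de $\aar_\mathrm{WD}$, l'hypothèse $\aar(\tau \circ \varphi)=0$ force, sur chaque composante, $d\,\aar_\W(X) + (d-1)\dim X^I = 0$. Les deux termes étant positifs, on a $\aar_\W(X) = 0$ (donc $X$ est non ramifiée et $X^I = X \neq 0$) et $d = 1$. Puisqu'une représentation irréductible non ramifiée de $\W_F$ se factorise par $\W_F/I \simeq \Z$, c'est un caractère. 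On obtient ainsi $\tau \circ \varphi \simeq \bigoplus_{i=1}^{2n} \chi_i$ pour $2n$ caractères non ramifiés $\chi_i$, chacun de la forme $|\cdot|^{s_i}$ pour un certain $s_i \in \C$ (bien défini modulo $2i\pi/\log q_F$).

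Deuxièmement, la représentation $\tau \circ \varphi$ est autoduale symplectique car $\varphi$ est à valeurs dans $\Sp_{2n}(\C)$. Le multi-ensemble $\{\chi_1,\ldots,\chi_{2n}\}$ est donc stable par $\chi \mapsto \chi^{-1}$, et l'on peut apparier ses éléments sous la forme $\{\chi_i, \chi_i^{-1}\}_{1\leq i \leq n}$. Dans chaque paire, je choisirais le représentant de partie réelle positive ou nulle pour $s_i$, puis je réordonnerais pour obtenir $\re(s_1) \geq \cdots \geq \re(s_n) \geq 0$, ce qui donne l'écriture annoncée.

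Troisièmement, pour déterminer le paquet, je calculerais $\mathcal{S}_\varphi$. Regroupons les caractères $\chi_i$ en classes sous la relation $\chi \sim \chi^{-1}$ : les classes \og non autoduales \fg{} (où $\chi \neq \chi^{-1}$) contribuent des facteurs $\GL_{m_\chi}(\C)$ à $\Cent(\varphi)$, tandis que les classes formées d'un caractère quadratique (c'est-à-dire trivial ou $\eta$) contribuent un facteur $\Sp_{2a}(\C)$, car la sous-représentation correspondante hérite de la structure symplectique. Ces facteurs étant tous connexes, $\Cent(\varphi) = \Cent(\varphi)^0$ et donc $\mathcal{S}_\varphi = \Cent(\varphi)/\Cent(\varphi)^0 Z(\widehat{G})$ est trivial. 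Par le Théorème \ref{LLC_SO}, le paquet $\Pi_\varphi$ est alors un singleton.

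Enfin, j'identifierais cet unique élément via la classification de Langlands. Le paramètre $\varphi$ se factorise par le sous-groupe de Levi $\widehat{M}=(\C^\times)^n$ de $\widehat{G}$ correspondant au sous-groupe de Borel $B$, et la représentation tempérée de $M \simeq (F^\times)^n$ associée à ses composantes \og unitaires \fg{} après extraction des parties réelles positives $\re(s_i) \geq 0$ est le caractère $\chi_1 \boxtimes \cdots \boxtimes \chi_n$ : l'élément du paquet est donc le quotient de Langlands de $\ii_B^G(\chi_1 \boxtimes \cdots \boxtimes \chi_n)$. Dans le cas tempéré (équivalence \eqref{rep_temp_param_temp}), l'image de $\varphi$ dans $\widehat{G}$ est bornée, ce qui impose $|\chi_i|=1$ pour tout $i$, soit $\re(s_i)=0$. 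L'induite $\ii_B^G(\chi_1 \boxtimes \cdots \boxtimes \chi_n)$ est alors unitaire donc semi-simple, et l'unicité (singleton) du paquet de Langlands tempéré correspondant force cette induite à être irréductible, confondue avec son quotient de Langlands. L'étape la plus subtile est la détermination exacte du paquet dans la classification non tempérée, mais le singleton $\mathcal{S}_\varphi$ combiné à la classification de Langlands standard suffit.
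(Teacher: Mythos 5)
Votre démonstration suit essentiellement la même route que celle du texte : annulation de l'exposant d'Artin via \eqref{awd_irr}/\eqref{awd_def} pour forcer des caractères non ramifiés avec $d=1$, appariement $\{\chi_i,\chi_i^{-1}\}$ par la structure symplectique, trivialité de $\mathcal{S}_\varphi$ par connexité du centralisateur (calcul que le texte délègue à la preuve du Théorème \ref{thm_rep_temp} et que vous explicitez, ce qui est très bien), puis classification de Langlands. Sur tous ces points l'argument est correct.

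Le seul endroit à consolider est la dernière étape du cas tempéré. Vous écrivez que l'induite est unitaire donc semi-simple, et que \og l'unicité (singleton) du paquet \fg{} force l'irréductibilité. Or semi-simplicité plus le fait que tous les constituants irréductibles soient isomorphes entre eux (singleton du paquet) ne donne pas l'irréductibilité : l'induite pourrait \emph{a priori} être somme de plusieurs copies du même irréductible. Il manque un énoncé de multiplicité un. C'est précisément ce que le texte signale : soit on invoque le résultat de multiplicité un de M\oe{}glin pour conclure que l'induite ne possède qu'une seule sous-représentation irréductible, soit on cite directement l'irréductibilité des séries principales non ramifiées unitaires pour un groupe adjoint semi-simple (Keys, repris par Li), comme dans la Proposition \ref{thm_rep_sph_temp}. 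Avec cet ingrédient supplémentaire, votre démonstration est complète.
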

\begin{proof}
La semi-simplicité de $\varphi$ nous permet d'écrire $\varphi=\bigoplus_i (X_i \otimes U_{d_i})$ avec $X_i$ représentation irréductible de $\W_F$ et $d_i$ entier strictement positif. Le calcul de l'exposant d'Artin se fait donc en utilisant \eqref{awd_irr} pour chaque terme, ledit exposant devant être nul. Cela donne $X_i$ caractère non ramifié et $d_i=1$ pour tout $i$. Le paramètre $\varphi$ se factorise donc par le tore standard de $\Sp_{2n}(\C)$ et le respect de la structure symplectique impose : $\varphi=\chi_1 \oplus \cdots \oplus \chi_n \oplus \chi_n^{-1} \oplus \cdots \oplus \chi_1^{-1}$ avec tous les $\chi_i$ non ramifiés.\ps 

Le calcul du groupe $\mathcal{S}_\varphi$ est le même que celui qui a été effectué dans la preuve du Théorème \ref{thm_rep_temp} : il est trivial, si bien que le paquet de Langlands associé au paramètre $\varphi$ ne contient qu'un élément.

Chaque $\chi_i$, non ramifié, est égal à $|\cdot|^{s_i}$ pour un certain nombre complexe $s_i$ dont la partie réelle est uniquement déterminée. On peut alors, quitte à échanger $\chi_i$ et $\chi_i^{-1}$, supposer que toutes ces parties réelles sont positives et, quitte à permuter les $\chi_i$, supposer que $\re(s_1) \geq \cdots \geq \re(s_n)$.
Puisque $\varphi$ se factorise par le tore standard de $\Sp_{2n}(\C)$, on a, par la classification de Langlands, que la représentation associée est l'unique quotient irréductible (dit quotient de Langlands) de $\ii_B^G (\chi_1 \boxtimes \cdots \boxtimes \chi_n)$ où $B$ est le sous-groupe de Borel standard de $G$ correspondant au sous-groupe de Levi (tore maximal déployé en fait) $T$. 
\end{proof}

%
%

Nous avons donc exposé la caractérisation complète (et classique) des représentations tempérées (voir la Remarque {\it infra} pour le cas non tempéré) de conducteur $\Ok$, qui traduit bien  que les représentations non ramifiées correspondent aux paramètres non ramifiés \eqref{LLC_resp_nr}.

\begin{thm}
Soit $\pi$ une représentation lisse irréductible tempérée de $G=\SO_{2n+1}(F)$ de conducteur $\Ok$. Alors il existe $n$ nombres complexes $s_1,\cdots,s_n$ imaginaires purs, correspondant à $n$ caractères non ramifiés $\chi_i=|\cdot|^{s_i}$ de $\W_F^{\rm ab}\simeq F^\times$, tels que $\pi$ soit égal à $\ii_B^G (\chi_1 \boxtimes \cdots \boxtimes \chi_n)$.

La représentation $\pi$ est alors la seule dans son paquet de Langlands et elle admet des $\K_0$-invariants non triviaux $\pi^{\K_0}$ de dimension 1.

Réciproquement, une représentation lisse irréductible tempérée de $G$ telle que $\pi^{\K_0}\neq \{0\}$ est de cette forme (et est de conducteur $\Ok$).
\end{thm}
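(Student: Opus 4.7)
Mon plan est de déduire le théorème des résultats déjà établis en combinant la classification des paramètres non ramifiés (Proposition \ref{prop_parametres_1}) avec la classification des représentations tempérées sphériques (Proposition \ref{thm_rep_sph_temp}) et les compatibilités imposées à la correspondance de Langlands locale pour $\SO_{2n+1}$.

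Pour le sens direct, je partirais de $\pi$ tempérée irréductible de conducteur $\Ok$. D'après la Définition \ref{defi_aar_rep_tauto}, on a $\aar(\tau\circ\Ll(\pi))=0$, \ie le paramètre de Langlands $\varphi=\Ll(\pi)$ est non ramifié. La compatibilité \eqref{rep_temp_param_temp} nous dit par ailleurs que $\varphi$ est tempéré. La Proposition \ref{prop_parametres_1} fournit alors directement la décomposition $\varphi=\chi_1\oplus\cdots\oplus\chi_n\oplus\chi_n^{-1}\oplus\cdots\oplus\chi_1^{-1}$ avec les $\chi_i=|\cdot|^{s_i}$ non ramifiés, les $s_i$ étant imaginaires purs par tempérance, et indique que le paquet $\Pi_\varphi$ est un singleton, réduit à l'induite $\ii_B^G(\chi_1\boxtimes\cdots\boxtimes\chi_n)$ (qui est irréductible, égale à son quotient de Langlands dans le cas tempéré). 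La représentation $\pi$ est donc bien de la forme annoncée.

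Le fait que $\pi^{\K_0}$ soit de dimension 1 s'obtient ensuite en invoquant directement le Lemme \ref{lemme_inv_sph_induites}, qui donne $\dim(\ii_B^G\chi)^{\K_0}=1$ pour tout caractère non ramifié $\chi$ du tore $T$.

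Pour la réciproque, je considèrerais une représentation $\pi$ lisse irréductible tempérée de $G$ telle que $\pi^{\K_0}\neq\{0\}$. La Proposition \ref{thm_rep_sph_temp} fournit immédiatement que $\pi\simeq\ii_B^G\chi$ où $\chi$ est un caractère non ramifié unitaire de $T$. Le paramètre de Langlands associé est alors non ramifié --- soit par construction via la classification dans la preuve de la Proposition \ref{prop_parametres_1}, soit par la compatibilité \eqref{LLC_resp_nr} --- et donc $\aar(\pi)=\aar(\tau\circ\Ll(\pi))=0$, ce qui signifie que le conducteur vaut bien $\Ok$.

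Il n'y a donc essentiellement pas d'obstacle : tous les ingrédients ayant été préparés, le théorème se présente comme une synthèse des deux classifications (côté paramètres et côté représentations sphériques), rendue possible par la définition \og galoisienne \fg{} du conducteur et la compatibilité de la correspondance de Langlands locale pour $\SO_{2n+1}$ aux propriétés de tempérance et de non-ramification.
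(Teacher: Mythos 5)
Votre démonstration est correcte et suit essentiellement la même voie que celle du texte, qui se contente de combiner la Proposition \ref{prop_parametres_1} avec la Proposition \ref{thm_rep_sph_temp}. Vous explicitez en outre, à juste titre, le recours au Lemme \ref{lemme_inv_sph_induites} pour la dimension $1$ des invariants, point laissé implicite dans la preuve du texte.
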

\begin{proof}
Nous combinons les résultats de la Proposition \ref{prop_parametres_1} avec la Proposition \ref{thm_rep_sph_temp}.
\end{proof}

\subsection{Paramètres de conducteur $\pk$ de $\SO_{2n+1}(F)$}\label{Paramètres de conducteur p}

\begin{prop}\label{prop_parametres_p}
Soit $\varphi$ un paramètre de Langlands de $G=\SO_{2n+1}(F)$ de conducteur $\pk$. Alors il existe $n-1$ nombres complexes $s_1,\cdots,s_{n-1}$ avec $\re(s_1)\geq \cdots \geq \re(s_{n-1})\geq 0$, correspondant à $n-1$ caractères non ramifiés $\chi_i=|\cdot|^{s_i}$ de $\W_F^{\rm ab}\simeq F^\times$, et $\alpha \in \{\1,\eta\}$ tels que $\varphi=\bigoplus_{i=1}^{n-1} (\chi_i \oplus \chi_i^{-1}) \oplus (\alpha \otimes U_2)$.

Le paquet de Langlands $\Pi_\varphi$ contient alors un seul élément, à savoir le quotient de Langlands de $\ii_P^G (\chi_1 \boxtimes \cdots \boxtimes \chi_{n-1} \boxtimes \alpha \St_{\SO_3(F)})$. 

Si $\varphi$ est supposé de plus tempéré, alors tous les $s_i$ sont de partie réelle nulle et le quotient de Langlands est égal à l'induite tout entière.
\end{prop}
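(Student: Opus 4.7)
My plan is to mirror the proof of Proposition \ref{prop_parametres_1}, with an extra case analysis to locate the single source of ramification.

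First, by semi-simplicity, write $\varphi=\bigoplus_i X_i\otimes U_{d_i}$ with $X_i$ an irreducible representation of $\W_F$ and $d_i\geq 1$, and plug this into the formula \eqref{awd_irr}. Each summand contributes $d_i-1$ if $X_i$ is unramified, and $d_i\,\aar_{\W}(X_i)$ if $X_i$ is ramified; the latter is at least $d_i\dim X_i\geq 1$ by Corollary \ref{cor522}. Since $\aar_{\WD}(\tau\circ\varphi)=1$, exactly one summand contributes, and it contributes~$1$. There are thus only two possibilities: either (a) a single summand $\alpha\otimes U_2$ with $\alpha$ an unramified character, the other summands being unramified characters with $d_i=1$; or (b) a single summand $X\otimes U_1=X$ with $X$ a ramified character of exponent~$1$, all others unramified characters with $d_i=1$.

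Next, I apply symplectic self-duality of $\tau\circ\varphi$ (i.e.\ $\varphi$ with values in $\Sp_{2n}(\C)$), combined with Proposition \ref{decomposition_par_discret_sp} and Lemma \ref{sp_tens_orth}, to rule out case (b). A ramified character $\chi$ is never self-dual (being of infinite order modulo trivialities) unless $\chi^2=\1$, but then $\chi$ would have to appear with even multiplicity in a symplectic representation of dimension~$2$; in the self-dual case $\chi\not\simeq\chi^{-1}$, the pair $\chi\oplus\chi^{-1}$ must appear, contributing $2\aar_{\W}(\chi)\geq 2$ to the conductor, contradicting $\aar_{\WD}(\tau\circ\varphi)=1$. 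In case (a), $U_2$ is irreducible symplectic, so by Lemma \ref{sp_tens_orth} the unramified character $\alpha$ must be self-dual, hence $\alpha^2=\1$; combined with unramified, this gives $\alpha\in\{\1,\eta\}$. The remaining unramified-character summands contribute a symplectic representation of $\W_F$ factoring through the standard torus of $\Sp_{2n-2}(\C)$, and exactly as in Proposition \ref{prop_parametres_1} they pair as $\bigoplus_{i=1}^{n-1}(\chi_i\oplus\chi_i^{-1})$, which can be reordered so that $\re(s_1)\geq\cdots\geq\re(s_{n-1})\geq 0$.

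For the statement on the packet, I compute $\mathcal{S}_\varphi$ exactly as in the Remark following Proposition \ref{thm_rep_sph_temp} and in the proof of Theorem \ref{thm_rep_temp}: grouping isomorphic summands, the pairs $\chi_i\oplus\chi_i^{-1}$ with $\chi_i\not\simeq\chi_i^{-1}$ contribute $\GL$-factors to $\Cent(\varphi)$, the self-dual unramified pieces contribute $\Sp$-factors, and the irreducible symplectic summand $\alpha\otimes U_2$ contributes $\{\pm I_2\}$. The product is therefore connected up to the central $\{\pm 1\}=Z(\Sp_{2n}(\C))$, so $\mathcal{S}_\varphi$ is trivial and $\Pi_\varphi$ is a singleton; by the Langlands classification this element is the Langlands quotient of $\ii_P^G(\chi_1\boxtimes\cdots\boxtimes\chi_{n-1}\boxtimes\alpha\St_{\SO_3(F)})$.

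Finally, if $\varphi$ is tempered, its image is bounded, which forces each $\chi_i$ unitary ($\re(s_i)=0$). Then $\alpha\St_{\SO_3(F)}$ is tempered on $\SO_3(F)$ and $\chi_1\boxtimes\cdots\boxtimes\chi_{n-1}$ is a unitary unramified character of the $\GL_1^{n-1}$-factor, so the full induced representation is irreducible by Theorem \ref{thm_rep_temp} and equals the Langlands quotient. The main obstacle in the above is the symplectic exclusion in case (b); the rest is a routine transposition of the arguments already used for Propositions \ref{prop_parametres_1}, \ref{thm_rep_sph_temp} and Theorem \ref{thm_rep_temp}.
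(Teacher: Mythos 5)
Votre démonstration est correcte et suit essentiellement la même démarche que celle du texte : même décomposition via \eqref{awd_irr} en deux cas, même exclusion du cas d'un caractère ramifié par l'autodualité symplectique (qui force l'apparition d'un second caractère ramifié et donc un exposant $\geq 2$), même détermination de $\alpha\in\{\1,\eta\}$, et même calcul de $\mathcal{S}_\varphi$ pour conclure que le paquet est un singleton. (Notez seulement le lapsus \og in the self-dual case $\chi\not\simeq\chi^{-1}$ \fg, qui devrait être \og non-self-dual \fg.)
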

\begin{proof}
La semi-simplicité de $\varphi$ nous permet encore d'écrire $\varphi=\linebreak\bigoplus_i (X_i \otimes U_{d_i})$ avec $X_i$ représentation irréductible de $\W_F$ et $d_i$ entier strictement positif. Le calcul de l'exposant d'Artin se fait à nouveau en utilisant \eqref{awd_irr} pour chaque terme. On a donc $\aar_\mathrm{WD}(X_i \otimes U_{d_i})=0$ (ce qui équivaut à $X_i$ caractère non ramifié et $d_i=1$ d'après le paragraphe précédent) pour tous les termes sauf un exactement (pour l'indice $i_0$ disons) pour lequel l'exposant d'Artin vaut 1.\ps 

D'après \eqref{awd_irr} toujours, on peut atteindre $\aar_\mathrm{WD}(X_{i_0} \otimes U_{d_{i_0}})=1$ de deux façons distinctes :
\begin{itemize}
\item soit $X_{i_0}$ est un caractère non ramifié et $d_{i_0}=2$ ;
\item soit $\aar_\mathrm{W}(X_{i_0})=1$ et $d_{i_0}=1$.\ps 
\end{itemize}

Supposons être dans le deuxième cas, alors, $X_{i_0}$ étant irréductible ramifiée, le Corollaire \ref{cor522} nous dit que $\aar_\mathrm{W}(X_{i_0})\geq \dim X_{i_0}$ et donc que $X_{i_0}$ est un caractère (ramifié). Le paramètre $\varphi$ se factorise par le tore standard de $\Sp_{2n}(\C)$ et, comme dans la preuve de la Proposition \ref{prop_parametres_1}, le respect de la structure symplectique implique $\varphi=\chi_1 \oplus \cdots \oplus \chi_n \oplus \chi_n^{-1} \oplus \cdots \oplus \chi_1^{-1}$. Or $\aar_\mathrm{W}(\chi^{-1})=\aar_\mathrm{W}(\chi^\vee)=\aar_\mathrm{W}(\chi)$ pour tout caractère $\chi$, en particulier pour notre caractère ramifié $\chi_{i_0}$ si bien que le paramètre $\varphi$ fait apparaître deux caractères ramifiés, contradiction.\ps 

Examinons maintenant le premier cas : on a $\chi_{i_0} \otimes U_2$ avec $\chi_{i_0}$ qui doit être à valeurs dans $Z(\Sp_2(\C))=\{\pm I_2\}$ et non ramifié, donc $\chi_{i_0}$ est soit le caractère trivial, soit le caractère $\eta$. Finalement, on a
\[
\varphi=\bigoplus_{i=1}^{n-1} (\chi_i \oplus \chi_i^{-1}) \oplus (\alpha \otimes U_2)
\]
avec $\alpha \in \{\1,\eta\}$ et les $\chi_i$ non ramifiés.

Le calcul du groupe $\mathcal{S}_\varphi$ dans ce cas est le même que celui qui a été effectué dans la preuve du Théorème \ref{thm_rep_temp} : il est trivial, si bien que le paquet de Langlands associé au paramètre $\varphi$ ne contient qu'un élément.

Le paramètre $\varphi$ se factorise par le sous-groupe de Levi $\GL_1^{n-1}(\C) \times \Sp_2(\C)$ de $\Sp_{2n}(\C)$ correspondant au sous-groupe de Levi $M=\GL_1^{n-1}(F) \times \SO_3(F)$ de $G$. Puisqu'on reconnaît dans $\alpha \otimes U_2$ le paramètre de Langlands de $\alpha \St_{\SO_3(F)}$ (seule représentation du paquet associé) et quitte à permuter les $\chi_i$ (et à échanger $\chi_i$ et $\chi_i^{-1}$), la représentation considérée est donc l'unique quotient irréductible (quotient de Langlands) de l'induite $\ii_P^G (\chi_1 \boxtimes \cdots \boxtimes \chi_{n-1} \boxtimes \alpha \St_{\SO_3(F)})$, où $P$ est le sous-groupe parabolique standard associé à $M$.
\end{proof}

Nous avons donc exposé la caractérisation complète (et nouvelle dans ce degré de généralité) des représentations tempérées de conducteur $\pk$. Ce résultat sera d'ailleurs légèrement amélioré (dans le sens de la Conjecture formulée par Gross dans \cite{Gross} et rappelée en Introduction) au Théorème \ref{thm_J_variants_eps}.

\begin{thm}\label{thm_pcpl_temp}
Soit $\pi$ une représentation lisse irréductible tempérée de $G=\SO_{2n+1}(F)$ de conducteur $\pk$. Alors il existe $n-1$ nombres complexes $s_1,\cdots,s_{n-1}$ imaginaires purs, correspondant à $n$ caractères non ramifiés $\chi_i=|\cdot|^{s_i}$ de $\W_F^{\rm ab}\simeq F^\times$, et $\alpha \in \{\1,\eta\}$ tels que $\pi$ soit égal à $\ii_P^G (\chi_1 \boxtimes \cdots \boxtimes \chi_{n-1} \boxtimes \alpha \St_{\SO_3(F)})$.\ps 

La représentation $\pi$ est alors la seule dans son paquet de Langlands et elle admet des $\J^+$-invariants non triviaux $\pi^{\J^+}$ de dimension 1.\ps 

Réciproquement, une représentation lisse irréductible tempérée de $G$ telle que $\pi^{\J^+}\neq \{0\}$ et $\pi^{\K_0}=\{0\}$ est de cette forme (et est de conducteur $\pk$).
\end{thm}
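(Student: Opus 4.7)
Le plan est de démontrer les deux implications en combinant la Proposition \ref{prop_parametres_p} (qui caractérise les paramètres de Langlands de conducteur $\pk$) et le Théorème \ref{thm_rep_temp} (qui classifie les représentations tempérées ayant des $\J^+$-invariants), les deux résultats étant en quelque sorte \emph{complémentaires}.

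Pour le sens direct, je partirais de $\pi$ tempérée irréductible de conducteur $\pk$ et j'appliquerais la Proposition \ref{prop_parametres_p} à son paramètre $\varphi=\Ll(\pi)$ : on obtient immédiatement la forme $\varphi=\bigoplus_{i=1}^{n-1}(\chi_i \oplus \chi_i^{-1}) \oplus (\alpha \otimes U_2)$ annoncée (avec les $s_i$ imaginaires purs puisque $\varphi$ est tempéré par \eqref{rep_temp_param_temp}), le fait que $\Pi_\varphi$ est un singleton, et l'identification $\pi=\ii_P^G (\chi_1 \boxtimes \cdots \boxtimes \chi_{n-1} \boxtimes \alpha \St_{\SO_3(F)})$ -- dans le cas tempéré, le quotient de Langlands coïncide avec l'induite tout entière d'après la Proposition \ref{prop_parametres_p}. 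La dimension des $\J^+$-invariants de $\pi$ découle alors du Lemme \ref{lemme_inv_param_ind_St} (qui affirme que $\pi^{(\J,\eps_\alpha)}$ est de dimension $1$ tandis que $\pi^{(\J,-\eps_\alpha)}$ est nul), combiné avec la décomposition $\pi^{\J^+}=\pi^{(\J,+)} \oplus \pi^{(\J,-)}$ du Lemme \ref{lemme_decomposition_inv_param}.

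Pour le sens réciproque, il s'agirait d'invoquer le Théorème \ref{thm_rep_temp} qui fournit deux possibilités : soit $\pi \simeq \ii_B^G \chi$ (cas non ramifié), soit $\pi \simeq \ii_P^G(\psi \otimes \alpha \St_{\SO_3(F)})$. Le premier cas serait exclu via le Lemme \ref{lemme_inv_sph_induites}, qui donne $(\ii_B^G \chi)^{\K_0}$ de dimension $1$, contredisant l'hypothèse $\pi^{\K_0}=\{0\}$. On est donc dans le second cas, et il reste à vérifier que le conducteur de $\pi$ est bien $\pk$. Or le paramètre $\varphi=\Ll(\pi)$ a la forme $\bigoplus_{i=1}^{n-1}(\chi_i \oplus \chi_i^{-1}) \oplus (\alpha \otimes U_2)$, et la formule \eqref{awd_irr} donne alors immédiatement que les termes $\chi_i \oplus \chi_i^{-1}$ (non ramifiés avec $d=1$) contribuent $0$, tandis que $\alpha \otimes U_2$ (avec $\alpha$ non ramifié et $d=2$) contribue $d-1=1$. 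On obtient ainsi $\aar(\pi,\tau)=1$, soit un conducteur égal à $\pk$.

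La démonstration ne présente pas de réelle difficulté technique : tous les ingrédients substantiels (classification des paramètres via la semi-simplicité et le calcul via \eqref{awd_irr}, classification des représentations tempérées à invariants paramodulaires via la classification de Langlands et le Théorème \ref{thm_pcpl_ser_disc}, trivialité du groupe $\mathcal{S}_\varphi$ assurant l'irréductibilité de l'induite) ont déjà été établis. Le point le plus subtil, qui a été traité dans la preuve du Théorème \ref{thm_rep_temp}, est la vérification de l'irréductibilité de l'induite totale $\ii_P^G(\chi_1 \boxtimes \cdots \boxtimes \chi_{n-1} \boxtimes \alpha \St_{\SO_3(F)})$, laquelle repose sur le calcul de $\Cent(\varphi)$ et sa connexité (à un facteur $\{\pm I_2\}$ central près).
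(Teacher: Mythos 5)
Votre démonstration est correcte et suit essentiellement la même voie que celle du texte, qui se contente de combiner la Proposition \ref{prop_parametres_p} avec les Théorèmes \ref{thm_pcpl_ser_disc} et \ref{thm_rep_temp} ; vous explicitez simplement les deux implications et l'usage des Lemmes \ref{lemme_decomposition_inv_param}, \ref{lemme_inv_param_ind_St} et \ref{lemme_inv_sph_induites} pour les dimensions d'invariants et l'exclusion du cas non ramifié. Rien à redire.
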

\begin{proof}
Nous combinons les résultats de la Proposition \ref{prop_parametres_p} avec les Théorèmes \ref{thm_pcpl_ser_disc} et \ref{thm_rep_temp}.
\end{proof}

\emph{Remarque :} Dans le cas d'une représentation non tempérée, on sait par les résultats de Casselman \cite{Cass-art} qu'il y a encore équivalence entre le fait d'être de conducteur $\Ok$ et le fait d'avoir des $\K_0$-invariants non triviaux (et encore de dimension 1). Nous ne savons pas si l'équivalence analogue entre le fait d'être de conducteur $\pk$ et le fait d'avoir des $\J^+$-invariants non triviaux vaut encore pour une représentation non tempérée. 


%

\section{Lien avec les facteurs epsilon}\label{Lien avec les facteurs epsilon}
\subsection{Rappels}\label{Rappels}
Nous rappelons ici les résultats de \cite{Tate-Corvallis} pour introduire les facteurs epsilon de représentations (locales) du groupe de Weil et du groupe de Weil-Deligne.

Le groupe additif du corps local $F$ admet une mesure de Haar invariante (à gauche et à droite puisque le groupe est abélien), que l'on normalise en imposant que le volume de l'anneau des entiers $\Ok$ soit égal à 1. Notons $\dd x$ cette mesure de Haar -- désormais uniquement déterminée.

Fixons par ailleurs un caractère additif de $F$, de niveau 0 (\ie trivial sur $\Ok$ et non trivial sur $\pk^{-1}$), noté $\psi$. On peut penser, dans le cas de $\Q_p$, à $\psi :x \mapsto e^{-2i\pi\{x\}_p}$, où $\{x\}_p$ désigne la partie fractionnaire (vue comme nombre rationnel) de $x$, définie par $\{x\}_p=\sum_{n<0}a_n p^{-n}$ si $x=\sum_{n}a_n p^{-n}$. On a alors $x-\{x\}_p \in \Z_p$ puis $\{x+y\}_p\equiv \{x\}_p +\{y\}_p$ modulo $\Z_p$ (et donc modulo $\Z$), ce qui permet de voir que $\psi$ est bien un morphisme, de niveau 0.\medskip

Le Théorème 3.4.1 de \cite{Tate-Corvallis} nous assure l'existence d'un complexe non nul $\eps(V,\psi,\dd x)$ pour chaque représentation continue $(\rho,V)$ du groupe de Weil $\W_F$, telle que $V \mapsto \eps(V,\psi,\dd x)$ soit additive, ce qui permet de la définir pour des représentations virtuelles. On demande alors que  $V \mapsto \eps(V,\psi,\dd x)$ soit inductive en degré 0 et que, dans le cas où $V$ est de dimension 1, elle coïncide avec une définition \emph{ad hoc} pour les caractères.

Puisque $\dd x$ et $\psi$ sont fixés, on ne mentionnera plus la dépendance de $\eps$ en ces deux objets et l'on notera $\eps(V)$. On peut alors définir une \emph{fonction} (holomorphe) $\eps$ de la variable complexe $t$ en posant $\eps(t,V)=\eps(V \otimes |\cdot|^t)$. En particulier, $\eps(0,V)=\eps(V)$. Nous prenons la convention -- dite de Langlands au paragraphe 3.6 de \cite{Tate-Corvallis} -- et considérons plutôt la variable $s=t+\frac{1}{2}$, si bien que la constante $\eps(V)$ correspond à la valeur en $s=\frac{1}{2}$ de la fonction $s \mapsto \eps(s,V)$. Les deux résultats essentiels que nous utiliserons sont :\ps 
\begin{itemize}
\item $\eps(\chi)=1$, si $\chi$ est un caractère non ramifié (3.2.6.1 de \cite{Tate-Corvallis}) ; \ps 
\item $\eps(s,V)=\eps(V)q^{\aar(V)(\frac{1}{2}-s)}$, où $q$ désigne le cardinal du corps résiduel de $F$ (3.4.5 de \cite{Tate-Corvallis}).
\end{itemize}
La combinaison de ces deux résultats nous donne d'ailleurs $\eps(s,\chi)=1$ pour un caractère non ramifié $\chi$.\medskip

Tate étend ces définitions aux représentations de Weil-Deligne, telles qu'introduites à la Définition \ref{def_rep_de_WD}. On a alors, pour $(\rho,V,N)$ une telle représentation :
\[
\eps(s,(\rho,V,N))=\eps_\mathrm{W}(s,(\rho,V))\det\left(-q^{-s}\rho(\Fr)_{|V^I/V_N^I}\right),
\]
où $\eps_\mathrm{W}$ désigne le facteur epsilon de la représentation du groupe de Weil tel que défini ci-dessus, $\Fr$ un morphisme de Frobenius, $V_N= \Ker N$ et l'exposant $I$ indique les invariants par le sous-groupe d'inertie.

Nous travaillons avec des représentations du groupe de Weil-Deligne $\W_F \times \SU(2)$, dans le langage desquels nous voulons exprimer ces facteurs epsilon. On rappelle qu'on ne considère que des représentations semi-simples, il suffit donc, par additivité, de considérer le cas des représentations irréductibles et, d'après l'analyse effectuée au paragraphe \ref{Conducteur d'une représentation du groupe de Weil-Deligne}, il y a deux cas à considérer :\ps 
\begin{itemize}
\item celui d'une représentation $X \otimes U_d$ où $X$ est un caractère non ramifié ;\ps 
\item celui d'une représentation $X \otimes U_d$ où $X$ est irréductible ramifiée (et alors $X^I=\{0\}$).\ps 
\end{itemize}

À nouveau, nous renvoyons à \cite{GR} pour les détails de la traduction entre triplets $(\rho,V,N)$ et représentations de $\W_F \times \SU(2)$. Nous obtenons :\ps 
\begin{itemize}
\item si $X$ est un caractère non ramifié, $\eps(s,X \otimes U_d)=(-X(\Fr))^{d-1}q^{(d-1)(\frac{1}{2}-s)}$ ;\ps 
\item si $X$ est irréductible ramifiée, $\eps(s,X \otimes U_d)=\eps(s,X)^d$.\ps 
\end{itemize}

Avec \eqref{awd_irr}, on obtient bien $\eps(s,X\otimes U_d)=\eps(X \otimes U_d)q^{\aar_\mathrm{WD}(X \otimes U_d)(\frac{1}{2}-s)}$, où
\begin{equation}\label{eps_rep_wd}
\eps(X \otimes U_d)=
\begin{cases}
(-X(\Fr))^{d-1} &\text{ si } \aar_\mathrm{W}(X)=0 \\
\eps(X)^d &\text{ si } \aar_\mathrm{W}(X)>0
\end{cases}.
\end{equation}

\subsection{Calcul pour les représentations étudiées}\label{Calcul pour les représentations étudiées}
On fait maintenant l'analogue pour les facteurs epsilon de ce qui a été fait pour le conducteur au paragraphe \ref{Conducteur des représentations locales}.
\begin{defi}
Soit $\pi$ une représentation lisse, admissible, irréductible de $\GL_n(F)$. Alors la correspondance de Langlands locale pour $\GL_n$ (Théorème \ref{LLC_GLn}) lui associe un paramètre de Langlands $\Ll(\pi)$, (classe de conjugaison de) représentation de $\WD_F$ dans $\GL_n(\C)$. On définit le facteur espilon de $\pi$ comme étant celui de $\Ll(\pi)$ -- dont on voit immédiatement qu'il ne dépend pas du choix de représentant dans la classe de conjugaison.

Étant donnée une paire $(\pi,\pi')$ de représentations lisses, admissibles, irréductibles de $\GL_n(F)$ et $\GL_{n'}(F)$ respectivement, on définit l'exposant d'Artin de la paire $\pi \times \pi'$ par :
\[
\eps(\pi \times \pi')=\eps(\Ll(\pi) \otimes \Ll(\pi')).
\]
\end{defi}
\begin{defi}
Soit $\pi$ une représentation lisse, admissible, irréductible de $G=\SO_{2n+1}(F)$. Alors la correspondance de Langlands locale pour $\SO_{2n+1}$ (Théorème \ref{LLC_SO}) lui associe un paramètre de Langlands $\Ll(\pi)$, (classe de conjugaison de) représentation de $\WD_F$ dans $\Sp_{2n}(\C)$. On définit le facteur espilon de $\pi$ comme étant celui de $\tau \circ \Ll(\pi)$ où $\tau$ désigne la représentation tautologique de $\Sp_{2n}(\C)$ dans $\GL_{2n}(\C)$.
\end{defi}

On peut maintenant calculer les facteurs epsilon des représentations que l'on a fait apparaître aux paragraphes \ref{Paramètres non ramifiés} et \ref{Paramètres de conducteur p}.

Dans le cas d'une représentation de conducteur $\Ok$, on a un paramètre non ramifié que l'on peut écrire 
$\varphi=\chi_1 \oplus \cdots \oplus \chi_n \oplus \chi_n^{-1} \oplus \cdots \oplus \chi_1^{-1}$ avec tous les $\chi_i$ non ramifiés. Pour chacun de ces caractères, on a $\eps(\chi_i)=1$ et même $\eps(s,\chi_i)=1$, d'où, par additivité, $\eps(\varphi)=1$ (et $\eps(s,\varphi)=1$). On connaît donc le facteur local d'une telle représentation.

Dans le cas d'une représentation de conducteur $\pk$, on fait apparaître un paramètre $\varphi=\bigoplus_{i=1}^{n-1} (\chi_i \oplus \chi_i^{-1}) \oplus (\alpha \otimes U_2)$
avec $\alpha \in \{\1,\eta\}$ et les $\chi_i$ non ramifiés. Les caractères non ramifiés ne vont pas contribuer et on doit juste déterminer ce qu'il se passe pour $\alpha \otimes U_2$.\ps 

Commençons par remarquer que le caractère non ramifié $\eta$ peut s'écrire comme $|\cdot|^{t_0}$ avec $t_0=-\frac{i\pi}{\log q}$ donc $\eps(\eta \otimes U_2)=\eps(t_0+\frac{1}{2},U_2)=\eps(U_2)q^{\aar_\mathrm{WD}(U_2)(-t_0)}$. Or $\aar_\mathrm{WD}(U_2)=1$, si bien que $\eps(\eta \otimes U_2)=-\eps(U_2)$.

Il suffit donc de connaître $\eps(U_2)=\eps(\1 \otimes U_2)$. Le caractère $\1$ étant non ramifié, la formule \eqref{eps_rep_wd} nous dit qu'il faut considérer $(-\1(\Fr))^{2-1}=-1$. On a donc 
\begin{equation*}
\eps(\alpha \otimes U_2)=
\begin{cases}
-1 &\text{ si } \alpha=\1 \\
1 &\text{ si } \alpha=\eta
\end{cases}.
\end{equation*}

On a finalement le résultat suivant.
\begin{prop}
Soit $\pi$ une représentation de $G$ de conducteur $\pk$. Alors, en reprenant les notations de la Proposition \ref{prop_parametres_p}, $\pi$ est le quotient de Langlands de $\ii_P^G (\chi_1 \boxtimes \cdots \boxtimes \chi_{n-1} \boxtimes \alpha \St_{\SO_3(F)})$.\ps 

On a alors $\alpha=\1$ si, et seulement si $\eps(\pi)=-1$ et $\alpha=\eta$ si, et seulement si $\eps(\pi)=1$.
\end{prop}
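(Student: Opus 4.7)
The plan is to compute $\eps(\pi)$ directly from its Langlands parameter, using the computations already set up in paragraph \ref{Calcul pour les représentations étudiées}. By the Proposition \ref{prop_parametres_p}, the parameter of $\pi$ (composed with the tautological representation $\tau : \Sp_{2n}(\C) \hookrightarrow \GL_{2n}(\C)$) is
\[
\tau \circ \Ll(\pi) = \bigoplus_{i=1}^{n-1}(\chi_i \oplus \chi_i^{-1}) \oplus (\alpha \otimes U_2),
\]
with $\chi_i$ unramified and $\alpha \in \{\1, \eta\}$. The first step is to invoke the additivity of the facteur epsilon (recalled at the start of §\ref{Rappels}) to factor
\[
\eps(\pi) = \prod_{i=1}^{n-1} \eps(\chi_i)\eps(\chi_i^{-1}) \cdot \eps(\alpha \otimes U_2).
\]

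Next, I would observe that each $\chi_i$ is unramified, so by the first of the two key results recalled in §\ref{Rappels} (namely $\eps(\chi)=1$ when $\chi$ is non ramifié), each factor $\eps(\chi_i)\eps(\chi_i^{-1})$ equals $1$. This reduces the computation to $\eps(\pi) = \eps(\alpha \otimes U_2)$, and reduces the proposition to the two explicit identities $\eps(\1 \otimes U_2) = -1$ and $\eps(\eta \otimes U_2) = +1$, which are exactly what was established at the end of §\ref{Calcul pour les représentations étudiées}. For the first, formula \eqref{eps_rep_wd} applied to the unramified character $\1$ with $d=2$ gives $\eps(\1 \otimes U_2)=(-\1(\Fr))^{2-1}=-1$. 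For the second, writing $\eta = |\cdot|^{t_0}$ with $t_0 = -i\pi/\log q$ gives
\[
\eps(\eta \otimes U_2) = \eps(U_2)\, q^{\aar_{\WD}(U_2)(-t_0)} = (-1)\cdot(-1) = 1,
\]
using $\aar_{\WD}(U_2)=1$.

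There is no real obstacle here: the statement is essentially a bookkeeping corollary of (i) the fact, already built into Definition \ref{defi_aar_rep_tauto} and the analogous definition for epsilon factors, that invariants of $\pi$ are read off the composition $\tau \circ \Ll(\pi)$; (ii) multiplicativity of $\eps$ in direct sums; (iii) the triviality of $\eps$ for unramified characters; and (iv) the two explicit computations of $\eps(\alpha \otimes U_2)$ carried out earlier. Combining these yields $\eps(\pi)=-1$ exactly when $\alpha = \1$ and $\eps(\pi)=+1$ exactly when $\alpha = \eta$, as claimed.
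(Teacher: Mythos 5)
Votre démonstration est correcte et suit exactement la même démarche que le texte : additivité du facteur epsilon sur la décomposition $\tau\circ\Ll(\pi)=\bigoplus_i(\chi_i\oplus\chi_i^{-1})\oplus(\alpha\otimes U_2)$, trivialité de $\eps$ sur les caractères non ramifiés, puis les deux calculs explicites $\eps(\1\otimes U_2)=-1$ (via \eqref{eps_rep_wd}) et $\eps(\eta\otimes U_2)=+1$ (en écrivant $\eta=|\cdot|^{t_0}$), qui sont précisément ceux menés au §\ref{Calcul pour les représentations étudiées} dont la Proposition est présentée comme conséquence immédiate. Rien à redire.
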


En combinant avec le Lemme \ref{lemme_inv_param_ind_St}, on obtient le :
\begin{thm}\label{thm_J_variants_eps}
Soit $\pi$ une représentation lisse irréductible tempérée de $G$ de conducteur $\pk$. Alors $\pi$ a des invariants paramodulaires de dimension 1. Plus précisément, elle a des $(\J,+)$-invariants si, et seulement si $\eps(\pi)=1$ et des $(\J,-)$-variants si, et seulement si $\eps(\pi)=-1$.
\end{thm}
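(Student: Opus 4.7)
Le plan est d'assembler trois ingrédients déjà établis : le Théorème \ref{thm_pcpl_temp} (qui décrit explicitement les représentations irréductibles tempérées de conducteur $\pk$), la Proposition immédiatement précédant l'énoncé (qui calcule $\eps(\pi)$ en fonction du paramètre $\alpha \in \{\1,\eta\}$), et le Lemme \ref{lemme_inv_param_ind_St} (qui calcule la dimension des $(\J,\pm)$-(in)variants d'une induite du type considéré).

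Premièrement, par le Théorème \ref{thm_pcpl_temp}, une représentation lisse irréductible tempérée $\pi$ de $G=\SO_{2n+1}(F)$ de conducteur $\pk$ s'écrit \emph{exactement} sous la forme $\pi \simeq \ii_P^G(\chi_1 \boxtimes \cdots \boxtimes \chi_{n-1} \boxtimes \alpha \St_{\SO_3(F)})$ avec $P=MN$, $M=\GL_1(F)^{n-1}\times \SO_3(F)$, $\psi = \chi_1 \boxtimes \cdots \boxtimes \chi_{n-1}$ un caractère non ramifié unitaire, et $\alpha \in \{\1,\eta\}$. Il faut ici simplement vérifier que les caractères $\chi_i$ sont unitaires (ce qui découle du caractère tempéré) pour pouvoir appliquer le Lemme \ref{lemme_inv_param_ind_St}.

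Deuxièmement, la Proposition précédant immédiatement l'énoncé a établi le dictionnaire $\alpha=\1 \Longleftrightarrow \eps(\pi)=-1$ et $\alpha=\eta \Longleftrightarrow \eps(\pi)=+1$, qui résulte du calcul direct des facteurs epsilon effectué au §\ref{Calcul pour les représentations étudiées} à partir de \eqref{eps_rep_wd} pour la représentation $\alpha\otimes U_2$ du groupe de Weil-Deligne.

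Troisièmement, j'applique le Lemme \ref{lemme_inv_param_ind_St} : l'espace $(\ii_P^G(\psi\otimes \alpha\St_{\SO_3(F)}))^{(\J,\eps_\alpha)}$ est de dimension $1$ et l'espace $(\ii_P^G(\psi\otimes \alpha\St_{\SO_3(F)}))^{(\J,-\eps_\alpha)}$ est nul, avec la convention $\eps_\1=-$ et $\eps_\eta=+$. Conjointement avec le Lemme \ref{lemme_decomposition_inv_param} qui donne la décomposition $\pi^{\J^+}=\pi^{(\J,+)}\oplus \pi^{(\J,-)}$, ceci montre déjà que $\dim \pi^{\J^+}=1$. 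Il reste à combiner les deux correspondances : si $\eps(\pi)=+1$ alors $\alpha=\eta$, d'où $\eps_\alpha=+$, et $\pi$ a donc des $(\J,+)$-invariants (de dimension $1$) et pas de $(\J,-)$-variants ; si $\eps(\pi)=-1$ alors $\alpha=\1$, $\eps_\alpha=-$, et $\pi$ a des $(\J,-)$-variants (de dimension $1$) et pas de $(\J,+)$-invariants. Ceci achève la démonstration.

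Le cœur du travail ayant déjà été fait dans les trois chapitres précédents, il n'y a pas ici d'obstacle majeur ; la seule subtilité consiste à maintenir la cohérence entre les conventions de signe de $\eps_\alpha$ (introduit dans le Lemme \ref{lemme_inv_param_ind_St}) et celles du facteur epsilon local $\eps(\pi)$ (calculé via la représentation tautologique $\tau$ à la Définition \ref{defi_aar_rep_tauto}), ce qui ne demande que la vérification $\eps(\1\otimes U_2)=-1$ et $\eps(\eta\otimes U_2)=+1$ déjà effectuée.
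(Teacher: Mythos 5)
Votre démonstration est correcte et suit exactement la même démarche que le texte : le théorème y est obtenu « en combinant » la Proposition qui le précède (dictionnaire entre $\alpha\in\{\1,\eta\}$ et $\eps(\pi)$) avec le Lemme \ref{lemme_inv_param_ind_St} (dimensions des $(\J,\pm)$-(in)variants de l'induite), le Théorème \ref{thm_pcpl_temp} garantissant que $\pi$ est bien l'induite entière avec des caractères unitaires. Les vérifications supplémentaires que vous signalez (unitarité des $\chi_i$, cohérence des conventions de signe) sont justes et déjà couvertes par les énoncés cités.
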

On peut donc lire le \og signe \fg{} des invariants paramodulaires sur le facteur epsilon local.

C'est un cas particulier de la Conjecture de Gross (\cite{Gross}) rappelée dans l'Introduction. Cette Conjecture porte sur le conducteur $\pk^m$ pour tout entier naturel $m$ et fait donc appel à un groupe noté $K_0(\varpi^m)$ \emph{loc. cit.} et $\K(\pk^m)$ dans \cite{Tsai-phd}, qui généralise notre groupe $\J^+$ (correspondant à $m=1$). La conjecture est démontrée par Tsai pour $m$ quelconque mais uniquement dans le cas où $\pi$ est supposée supercuspidale générique. \medskip 

Donnons une dernière formulation de nos résultats qui sera utile dans la Deuxième Partie.

\begin{cor}\label{cor_temp_dim_invariants_général}
Soit $(\pi,V)$ une représentation tempérée irréductible de $G$.\ps 

On suppose que $\pi^{\K_0} \neq \{0\}$ (ou, de façon équivalente, que $\pi$ est de conducteur $\Ok$). Alors $\dim \pi^{\K_0}=1$ et $\dim \pi^{\J^+}=2$. Plus précisément $\dim \pi^{(\J,+)}=\dim \pi^{(\J,-)}=1$.\ps 

On suppose que $\pi^{\J^+} \neq \{0\}$. Soit $\pi^{\K_0} \neq \{0\}$ et on relève du cas précédent, soit $\pi^{\K_0}=\{0\}$ et alors, il est équivalent de supposer que $\pi$ est de conducteur $\pk$. Dans ce deuxième cas $\dim \pi^{\J^+}=1$, plus précisément $\dim \pi^{(\J,\eps)}=1$ et $\dim \pi^{(\J,-\eps)}=0$, où $\eps=\eps(\pi)$.
\end{cor}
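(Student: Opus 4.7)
L'approche consiste à assembler les résultats déjà établis dans ce Chapitre et au Chapitre \ref{Représentations avec des invariants paramodulaires} : pour chaque type de représentation envisagée, on sait déjà qu'elle est une induite parabolique d'une forme particulière, et les dimensions des invariants peuvent être calculées à l'aide du Lemme \ref{lemme_inv_induite_induisante} et de la connaissance des invariants des représentations induisantes.

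Traitons d'abord le cas où $\pi^{\K_0} \neq \{0\}$. Par la Proposition \ref{thm_rep_sph_temp}, on a $\pi \simeq \ii_B^G \chi$ pour un caractère non ramifié unitaire $\chi$ du tore $T$. Le Lemme \ref{lemme_inv_sph_induites} donne alors immédiatement $\dim \pi^{\K_0}=1$. D'autre part, le Lemme \ref{lemme_inv_param_induites} donne $\dim \pi^{(\J,+)}=\dim \pi^{(\J,-)}=1$, d'où, par le Lemme \ref{lemme_decomposition_inv_param}, $\dim \pi^{\J^+}=2$.

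Pour le second cas, supposons $\pi^{\J^+} \neq \{0\}$ et $\pi^{\K_0}=\{0\}$. Par le Théorème \ref{thm_rep_temp}, $\pi$ est isomorphe à $\ii_P^G (\psi \otimes \alpha \St_{\SO_3(F)})$ avec $P=MN$ où $M=\GL_1^{n-1}\times \SO_3$, $\psi$ un caractère non ramifié unitaire de $\GL_1^{n-1}$ et $\alpha \in \{\1,\eta\}$. Il faut en particulier vérifier l'équivalence avec la condition \og $\pi$ de conducteur $\pk$ \fg, qui découle du Théorème \ref{thm_pcpl_temp}. Le Lemme \ref{lemme_inv_param_ind_St} nous fournit alors directement $\dim \pi^{(\J,\eps_\alpha)} =1$ et $\dim \pi^{(\J,-\eps_\alpha)}=0$, où $\eps_\1=-$ et $\eps_\eta=+$.

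Il reste à identifier $\eps_\alpha$ avec $\eps(\pi)$. Or le calcul mené au paragraphe \ref{Calcul pour les représentations étudiées} montre exactement que $\eps(\pi)=-1$ lorsque $\alpha=\1$ et $\eps(\pi)=+1$ lorsque $\alpha=\eta$ (les caractères non ramifiés de $\psi$ ne contribuent pas au facteur epsilon et seul le facteur $\alpha \otimes U_2$ intervient). Ceci est précisément l'énoncé du Théorème \ref{thm_J_variants_eps}, qui permet de conclure $\dim \pi^{(\J,\eps)}=1$ et $\dim \pi^{(\J,-\eps)}=0$ avec $\eps=\eps(\pi)$. Aucune difficulté technique nouvelle n'apparaît ici : tout le travail a été fait en amont, et l'obstacle principal, résolu par la Proposition \ref{ser_disc_>4_pas_d'inv}, était d'exclure l'existence de séries discrètes inattendues ayant des invariants paramodulaires en dimension $2n+1\geq 5$.
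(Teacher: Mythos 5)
Your proof is correct and follows exactly the route the paper intends: the corollary is stated there as a reformulation obtained by combining Proposition \ref{thm_rep_sph_temp}, Théorème \ref{thm_rep_temp} and Théorème \ref{thm_pcpl_temp} with the dimension counts of Lemmes \ref{lemme_inv_sph_induites}, \ref{lemme_inv_param_induites}, \ref{lemme_inv_param_ind_St} and the sign identification of Théorème \ref{thm_J_variants_eps}, which is precisely the assembly you carry out. No gap.
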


\newpage
\part{Étude globale}

\newpage
\chapter{Représentations automorphes cuspidales algébriques de $\GL_n$ de conducteur premier}\label{Chapitre_Représentations_automorphes...}

%

Nous allons maintenant introduire et étudier des objets globaux, dépendant d'un corps de nombres. {\bfseries Dans toute la suite, nous travaillons uniquement sur le corps $\Q$ des rationnels.}

Les enjeux de ce chapitre sont multiples. Il va d'abord s'agir de définir précisément chacun des termes des Théorèmes \ref{thm_2_w17}, \ref{thm_2_w19} et \ref{thm_p} de l'Introduction. Ce sera fait dans les paragraphes \ref{Représentations automorphes cuspidales}, \ref{Algébricité} et \ref{Conducteur premier}.

En vue des applications aux Chapitres \ref{La_formule explicite de Riemann-Weil-Mestre} et \ref{Chapitre_calculs}, nous considérons au §\ref{Fonctions $L$ et facteurs epsilon} les fonctions $L$ (ou $\Lambda$) et les facteurs epsilon associés aux représentations locales et globales. Au-delà des seuls rappels en la matière, nous calculons ces quantités dans les cas particuliers correspondant aux représentations étudiées.

Enfin, puisque les représentations \emph{autoduales} jouent un rôle prépondérant (à la fois à cause de leurs \og meilleures propriétés \fg{} et parce que ce sont celles que l'on trouve en pratique, \cf Théorèmes \ref{thm_2_w17} et \ref{thm_p}), nous présentons au paragraphe \ref{Alternative symplectique-orthogonale} l'alternative symplectique-orthogonale pour les représentations autoduales de $\GL_n$ selon la théorie d'Arthur.


\section{Représentations automorphes cuspidales de $\GL_n$ sur $\Q$}\label{Représentations automorphes cuspidales}
Nous rappelons ici sans démonstration des résultats de \cite{BJ-Corvallis}.

On fixe un caractère de Hecke $\omega : \Q^\times \backslash \aq^\times \rightarrow \C^\times$ et on peut alors définir l'espace des formes automorphes pour $\GL_n(\aq)$ de caractère central $\omega$, noté $\mathcal{A}^\omega(\GL_n)$. Cet espace a une structure de $(\mathfrak{gl}_n(\R),\Oo_n(\R))$-module (ou module de Harish-Chandra\footnote{Cette dernière terminologie a l'avantage de ne pas spécifier le choix du compact maximal, choix qui n'intervient pas en fait dans la plupart des énoncés.}) et une structure de $\GL_n(\mathbb{A}_f)$-module compatibles, où l'on note $\mathbb{A}_f=\prod'_p \qp$ l'ensemble des adèles finis (avec bien sûr $\aq=\R \times\mathbb{A}_f$).

On considère alors le sous-$(\mathfrak{gl}_n(\R),\Oo_n(\R))\times \GL_n(\mathbb{A}_f)$-module constitué des formes automorphes \emph{paraboliques} ou \emph{cuspidales}, noté $\mathcal{A}^\omega_\mathrm{cusp}(\GL_n)$, qui a la propriété remarquable d'être semi-simple.

\begin{defi}
Une représentation automorphe cuspidale $\pi$ de $\GL_n$ sur $\Q$ de caractère central $\omega$ est un constituant irréductible de l'espace $\mathcal{A}^\omega_\mathrm{cusp}(\GL_n)$.
\end{defi}

Si $\omega$ est un caractère de Hecke, alors $\omega  | \cdot |_{\mathbb{A}_\mathbb{Q}}^s$ où $s$ est un nombre complexe quelconque, en est encore un (on utilise le fait que les éléments de $\mathbb{Q}^\times$ plongés diagonalement dans $\mathbb{A}^\times_\mathbb{Q}$ sont de norme adélique 1).
On a, de façon simple, $\mathcal{A}^{\omega  | \cdot |_{\mathbb{A}_\mathbb{Q}}^s}(\GL_n) \simeq \mathcal{A}^{\omega}(\GL_n) \otimes |\det|^{\frac{s}{n}}$, donc, quitte à tordre par une puissance du déterminant, on peut supposer que $\omega_{|\mathbb{R}_{>0}}=1$. Cela motive la définition suivante.
\begin{defi}\label{defi_rep_centree}
Une représentation automorphe cuspidale de $\GL_n$ sur $\Q$ est \emph{centrée} si son caractère central est trivial sur $\R_{>0}$.
\end{defi}

Soit donc $\pi$ une représentation automorphe cuspidale de $\GL_n$ sur $\Q$. Nous avons classiquement la décomposition en produit tensoriel restreint :
\begin{equation}\label{prod_tens_res}
\pi \simeq \pi_\infty \otimes {\bigotimes_p}' \pi_p,
\end{equation}
où $\pi_\infty$ est un $(\mathfrak{gl}_n(\R),\Oo_n(\R))$-module admissible irréductible (\ie chaque représentation irréductible continue de $\Oo_n(\R)$ n'intervient qu'un nombre fini de fois dans $\pi_\infty$) et les $\pi_p$ sont des représentations admissibles (au sens de la Définition \ref{defi_rep_lisse_admissible}) irréductibles de $\GL_n(\Q_p)$, non ramifiées pour presque tout $p$. C'est d'ailleurs ce qui permet de définir le produit tensoriel restreint : pour chaque $p$ tel que $\pi_p$ est non ramifiée, on a une droite vectorielle distinguée $\pi_p^{\GL_n(\Z_p)}$.

Terminons par un résultat classique.
\begin{prop}\label{prop_613}
Soit $\pi$ une représentation automorphe cuspidale de $\GL_n$ sur $\Q$. Alors il existe une unique représentation automorphe cuspidale de $\GL_n$ sur $\Q$, notée $\pi^\vee$ et dite représentation contragrédiente (ou duale) de $\pi$, et vérifiant $(\pi^\vee)_v=(\pi_v)^\vee$ pour toute place $v$.\footnote{La notion de contragrédiente \emph{locale} a été définie dans le cas $p$-adique au paragraphe \ref{Généralités}, elle est classique pour la place archimédienne et nous ne la rappelons pas.}
\end{prop}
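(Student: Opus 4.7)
The plan is to construct $\pi^\vee$ explicitly from $\pi$ using the Chevalley involution on $\GL_n$, then deduce uniqueness from strong multiplicity one for $\GL_n$.

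First, for the existence part. Given $\pi$ realized as an irreducible subrepresentation of $\mathcal{A}^\omega_\mathrm{cusp}(\GL_n)$, I would consider the involution $\theta$ on $\GL_n(\aq)$ defined by $\theta(g) = {}^t g^{-1}$. Transport of structure along $\theta$ provides an automorphism of the space of automorphic forms: to $\varphi \in \mathcal{A}^\omega(\GL_n)$ I associate $\tilde\varphi(g) = \varphi({}^t g^{-1})$. A direct check shows that $\tilde\varphi$ is still automorphic (using $\theta(\GL_n(\Q))=\GL_n(\Q)$), that its central character is $\omega^{-1}$, and that the cuspidality condition is preserved (the unipotent radical of any standard parabolic is sent by $\theta$ to the unipotent radical of the ``opposite'' standard parabolic, so the vanishing of constant terms is preserved). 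Thus $\varphi \mapsto \tilde\varphi$ defines an isomorphism of $(\mathfrak{gl}_n(\R),\Oo_n(\R))\times \gf$-modules between $\mathcal{A}^\omega_\mathrm{cusp}(\GL_n)$ and $\mathcal{A}^{\omega^{-1}}_\mathrm{cusp}(\GL_n)$, where the action on the target is twisted by $\theta$. The image of $\pi$ under this map is an irreducible cuspidal subrepresentation, which I will denote $\pi^\vee$.

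It remains to identify the local components. By the product decomposition \eqref{prod_tens_res}, the transport by $\theta$ is compatible with the restricted tensor product factorisation, so that $(\pi^\vee)_v$ is, as a representation of $\GL_n(\Q_v)$, the pullback $\pi_v \circ \theta_v$ where $\theta_v(g) = {}^t g^{-1}$. The key local fact I would invoke is that for any irreducible admissible representation $\sigma$ of $\GL_n(\Q_v)$ (archimedean or $p$-adic), one has $\sigma \circ \theta_v \simeq \sigma^\vee$; this is a standard consequence of the uniqueness of Whittaker models (or can be seen directly from the fact that both representations have the same character on regular semisimple elements, since $\theta$ is conjugate to $g \mapsto g^{-1}$ under the Weyl group element $w_n$, the antidiagonal matrix of ones). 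Applied at each place, this yields $(\pi^\vee)_v \simeq (\pi_v)^\vee$.

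For uniqueness, suppose $\pi'$ is another cuspidal automorphic representation of $\GL_n$ satisfying $(\pi')_v \simeq (\pi_v)^\vee$ for every $v$. Then at every place, and in particular at almost every place (those where both $\pi$ and $\pi'$ are unramified), the local components of $\pi'$ and of the $\pi^\vee$ constructed above agree. The strong multiplicity one theorem of Jacquet-Shalika-Piatetski-Shapiro (\cite{PS_mult_1}, \cite{JS}), mentioned just before this proposition, then forces $\pi' = \pi^\vee$ as subspaces of $\mathcal{A}_\mathrm{cusp}$ (and not merely as abstract representations). The main technical point is therefore simply to verify the local identity $\sigma \circ \theta_v \simeq \sigma^\vee$, and this is classical; no part of the argument is really an obstacle, the content being the invocation of strong multiplicity one for the uniqueness clause.
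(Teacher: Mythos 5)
The paper gives no proof of this proposition — it is introduced as \emph{un résultat classique} and stated without argument — so there is nothing to compare your proof against except the standard literature, and your argument is indeed the standard one: transport of structure along the outer automorphism $\theta(g)={}^t g^{-1}$ gives existence (with automorphy, central character $\omega^{-1}$ and cuspidality checked exactly as you do), the local identity $\sigma\circ\theta_v\simeq\sigma^\vee$ identifies the local components, and strong multiplicity one gives uniqueness. Your proof is correct. Two small imprecisions in the justification of the local step are worth flagging. First, $\theta$ is \emph{not} globally conjugate to inversion by the antidiagonal Weyl element: $w_n\,{}^tg^{-1}w_n^{-1}=g^{-1}$ holds on the diagonal torus but fails for general $g$ (and inversion is an anti-automorphism, so no inner twist of $\theta$ can equal it). What the character argument actually needs — and all it needs — is that for each regular semisimple $g$, the elements ${}^tg^{-1}$ and $g^{-1}$ have the same characteristic polynomial and are therefore conjugate in $\GL_n(F)$; since distribution characters of irreducible admissible representations are determined by their restriction to the regular semisimple locus and determine the representation, this yields $\sigma\circ\theta_v\simeq\sigma^\vee$. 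Second, the phrase \og uniqueness of Whittaker models \fg{} is not the usual source of this identity; the standard reference is the Gelfand--Kazhdan theorem (and its archimedean analogue), whose proof is the distribution/character comparison you sketch as the alternative. Neither point affects the validity of the argument.
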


\section{Algébricité}\label{Algébricité}

Il nous faut commencer par quelques brefs rappels sur la correspondance de Langlands dans le cas local archimédien.

On pose, suivant \cite{Tate-Corvallis}, $\W_\C=\C^\times$ et $\W_\R=\C^\times \coprod j\C^\times$ avec $j^2=-1$ et $jzj^{-1}=\bar{z}$ pour $z\in \C^\times$ -- ainsi $\W_\R$ est une extension de $\Gal(\C/\R)$ par $\W_\C$. 

On introduit le morphisme de groupes $\eps_{\C / \R} : \W_\R \rightarrow \{\pm 1\}$ qui envoie les éléments de $\C^\times$ sur 1 et ceux de $j \C^\times$ sur $-1$.
Par ailleurs, on peut prolonger le morphisme \og module au carré \fg{} de $\C^\times$ à $\W_\R$ en imposant $|z|=z\overline{z}$ pour $z \in \C^\times$ et $|j|=1$. On a alors un morphisme de groupes $|\cdot | : \W_\R \rightarrow \R_+^\times$ et on peut  voir que $\eps_{\C / \R} \times | \cdot |$ donne un isomorphisme de groupes $\W_\R^\ab \xrightarrow{\sim} \R ^\times$.

Cela définit, dans le cas local archimédien, l'isomorphisme d'Artin (à rapprocher de \eqref{application d'Artin-groupe de Weil} dans le cas non archimédien) ${\rm Art}_K :K^\times \simeq \W_K^\ab$ (avec $K=\R$ ou $\C$).\ps

On s'intéresse ici au groupe $\GL_n(\R)$. Un paramètre de Langlands de $\GL_n(\R)$ est une classe de conjugaison de morphisme admissible (\ie continu et d'image constituée d'éléments semi-simples) $\varphi:\W_\R \rightarrow \GL_n(\C)$. Nous notons $\Phi(\GL_n)$ l'ensemble de tels paramètres.

Un $(\mathfrak{gl}_n(\R),\Oo_n(\R))$-module admissible irréductible $U$ admet un caractère infinitésimal ${\rm inf}(U)$ que l'on peut voir, suivant Harish-Chandra et Langlands (on renvoie à \cite{Lgl_Euler_products}), comme une classe de conjugaison semi-simple de $\mathrm{M}_n(\C)$. On appelle \emph{poids} de $U$ les valeurs propres de cette classe de conjugaison semi-simple. Par abus de langage, on parlera des poids d'une représentation automorphe cuspidale $\pi$ de $\GL_n$ pour désigner les poids de sa composante archimédienne $\pi_\infty$. On note 
${\rm Irr}(\GL_n)$ l'ensemble des classes d'équivalence de $(\mathfrak{gl}_n(\R),\Oo_n(\R))$-modules irréductibles.

\begin{thm} \emph{Correspondance de Langlands locale archimédienne pour $\GL_n$ (\cite{Lgl_73}, \cite{Knapp_Archimedean})} \label{LLC_arch}

La construction de \emph{\cite{Lgl_73}} définit une bijection canonique :
\[
\Ll : \mathrm{Irr}(\GL_n) \longrightarrow \Phi(\GL_n)
\]
telle que, pour tout $U \in {\rm Irr}(\GL_n)$ :
\begin{enumerate}
\item (dualité) $\Ll(U^\vee) \simeq \Ll(U)^\vee$ ;
\item (caractère central) $\det \Ll(U)=\omega_U \circ {\rm Art}_\R^{-1}$, où $\omega_U:\R^\times \rightarrow \C^\times$ désigne le caractère central de $U$;
\item (caractère infinitésimal) écrivant\footnote{\label{notation_evoc_Lgl}En restreignant $\Ll(U)$ au sous-groupe $\W_\C \simeq \C^\times$ d'indice 2, on obtient une somme de $n$ caractères de $\C^\times$. Or un tel caractère est de la forme $\chi_{s,m} : re^{i\theta} \mapsto r^s e^{im\theta}$ où $s \in \C$ et $m \in \Z$. On a donc $\chi_{s,m}(z)=|z|^s(\frac{z}{|z|})^m=|z|^{s-m}z^m$, que l'on peut encore écrire, suivant la notation évocatrice de Langlands, $z^\frac{s+m}{2}\bar{z}^\frac{z-m}{2}$ avec $|z|=(z\bar{z})^\frac{1}{2}$. Tout caractère de $\C^\times$ s'écrit alors, selon cette convention, $z \mapsto z^\lambda \bar{z}^\mu$ avec $\lambda, \mu$ deux nombres complexes tels que $\lambda-\mu \in \Z$.}
\begin{equation}\label{déc_algébricité}
\Ll(U)_{|\C^\times}=\bigoplus_{i=1}^n z^{\lambda_i} \bar{z}^{\mu_i}
\end{equation}
 avec $\lambda_i- \mu_i \in \Z$, les poids de $U$ soient les $\lambda_i$.
\end{enumerate}
\end{thm}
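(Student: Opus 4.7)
The plan is to follow Langlands' strategy in \cite{Lgl_73} (and as exposed by Knapp): first classify $\mathrm{Irr}(\GL_n)$ by reducing everything to a small list of building blocks (characters of $\R^\times$ and discrete series of $\GL_2(\R)$), then attach a Langlands parameter to each building block by hand, and finally verify the three compatibility conditions. Throughout, one systematically reduces statements about $U$ to statements about its building blocks via the compatibility of $\Ll$ with parabolic induction.

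First I would invoke the Langlands classification theorem for real reductive groups: every $U \in \mathrm{Irr}(\GL_n)$ is isomorphic to the unique irreducible Langlands quotient $J(P,\tau,\nu)$ of a standard module $\mathrm{Ind}_P^{\GL_n(\R)}(\tau \otimes e^\nu)$, where $P = MN$ is a standard parabolic, $\tau$ is an irreducible tempered representation of $M$ and $\nu$ lies in the open positive chamber; moreover the triple $(P,\tau,\nu)$ is unique up to the usual equivalence. Next, every irreducible tempered $\tau$ of $M$ is a constituent of a unitary parabolic induction of a discrete series of a further Levi, so one is reduced to understanding discrete series of Levis of $\GL_n(\R)$. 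A standard Harish-Chandra argument (rank of the maximal compact) shows that $\GL_m(\R)$ has no discrete series modulo center for $m \geq 3$, so the relevant Levis are isomorphic to products of $\GL_1(\R)$ and $\GL_2(\R)$.

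Then I would handle the two building blocks explicitly. Characters $\chi$ of $\GL_1(\R) = \R^\times$ correspond bijectively to characters of $\W_\R^{\ab}$ via the Artin isomorphism $\mathrm{Art}_\R$, and this defines $\Ll$ on them. Discrete series of $\GL_2(\R)$ are, up to twist by a character, the representations $D_k$ of lowest $\Oo_2(\R)$-type $k+1$ for $k \geq 1$; to each such $D_k \otimes \chi$ I attach the $2$-dimensional irreducible parameter $\mathrm{Ind}_{\W_\C}^{\W_\R}(\chi_{s,k})$ (in the notation of footnote~\ref{notation_evoc_Lgl}), with the twist by $\chi$ adjusted so that the determinant matches the central character. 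For a general $U = J(P,\tau,\nu)$ with $M \simeq \prod \GL_{n_i}(\R)$ ($n_i \in \{1,2\}$), write $\tau$ as a product of tempered representations of the factors (each a discrete series or a unitarily induced representation from $\GL_1^2$), form the direct sum $\bigoplus \Ll(\tau_i)$ of the corresponding parameters, and twist the $i$-th summand by $\nu$ seen as a character of $\W_\R$ via $\mathrm{Art}_\R$. Bijectivity then follows from the uniqueness part of the Langlands classification together with the injectivity of the attachment on building blocks.

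The three compatibilities are verified in turn. Duality and central character are first checked on the building blocks (where they reduce to straightforward manipulations: for characters, via the Artin map; for $D_k$, via the fact that $D_k^\vee \simeq D_k \otimes \omega_{D_k}^{-1}$ and that $\mathrm{Ind}_{\W_\C}^{\W_\R}$ intertwines dualization in the appropriate way), and then propagate to general $U$ because parabolic induction is compatible with direct sum of parameters and with dualization, and because forming $\det$ of a direct sum multiplies determinants. The main obstacle is the infinitesimal character compatibility: one must show that the $\lambda_i$ appearing in \eqref{déc_algébricité} really are the eigenvalues of $\mathrm{inf}(U)$ under the Harish-Chandra isomorphism. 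For characters this is immediate, but for $D_k$ it requires an explicit computation of the action of the Casimir (or of the whole center of the enveloping algebra) on $D_k$ and matching it with the character of the $2$-dimensional parameter restricted to $\W_\C \simeq \C^\times$. The remaining point is to check that these infinitesimal characters are preserved by parabolic induction and by passage to the Langlands quotient, which is a standard consequence of the fact that $\mathrm{inf}$ is additive under parabolic induction and invariant under taking subquotients. Canonicity of $\Ll$, finally, is the content of \cite{Lgl_73} and reflects the fact that the parametrization of $D_k$ by its lowest $K$-type and of standard modules by the triple $(P,\tau,\nu)$ leave no ambiguity.
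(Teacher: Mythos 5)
Votre schéma de démonstration est correct, mais notez que la thèse ne démontre pas ce théorème : elle le cite comme résultat connu (\cite{Lgl_73}, \cite{Knapp_Archimedean}), et votre argument reproduit précisément la stratégie de ces références — classification de Langlands ramenant tout aux caractères de $\R^\times$ et aux séries discrètes de $\GL_2(\R)$ (absence de séries discrètes pour $\GL_m(\R)$, $m\geq 3$), paramètres attachés aux blocs élémentaires via ${\rm Art}_\R$ et $\Ind_{\W_\C}^{\W_\R}$, puis vérification des compatibilités par réduction aux blocs et additivité sous induction parabolique. Rien à redire sur le fond ; seule la compatibilité au caractère infinitésimal pour les séries discrètes demande le calcul explicite que vous signalez, et c'est bien là que réside le seul contenu non formel de la vérification.
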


\begin{defi}
Un $(\mathfrak{gl}_n(\R),\Oo_n(\R))$-module irréductible est dit \emph{strictement algébrique} si ses poids sont dans $\Z$.



Une représentation automorphe cuspidale $\pi$ sur $\Q$ est dite \emph{strictement algébrique} si sa composante archimédienne $\pi_\infty$ l'est.
\end{defi}

En particulier, si $U$ est strictement algébrique, alors les $\lambda_i$ (et partant les $\mu_i$) selon l'écriture \eqref{déc_algébricité} pour $\Ll(U)$, sont tous entiers. Par ailleurs, puisque pour $z\in \C^\times$, on a $jzj^{-1}=\bar{z}$ dans $\W_\R$, les multi-ensembles des $\lambda_i$ et des $\mu_i$ sont égaux. 

\begin{prop} Lemme de pureté de Clozel \emph{(\cite{Clozel}, Lemme 4.9)}

Soit $\pi$ une représentation automorphe cuspidale \emph{strictement algébrique} de $\GL_n$ sur $\Q$. Notons $\pi_\infty$ sa composante archimédienne. Alors la quantité $\lambda_i +\mu_i$ dans la décomposition \eqref{déc_algébricité} pour $\Ll(\pi_\infty)$ est constante.

\end{prop}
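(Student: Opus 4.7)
The plan is to follow Clozel's original argument, combining the identity $\pi^\vee \simeq \bar{\pi}$ valid for unitary cuspidal representations with the Jacquet--Shalika analytic bound at the archimedean place. The first observation is that twisting by $|\det|^t$ replaces $\lambda_i$ by $\lambda_i + t$ and $\mu_i$ by $\mu_i + t$, hence shifts $\lambda_i + \mu_i$ uniformly by $2t$; constancy of $\lambda_i + \mu_i$ is therefore invariant under such twists. A direct calculation from Theorem \ref{LLC_arch}(2), using the equality of multisets $\{\lambda_i\} = \{\mu_i\}$ which follows from $j \in \W_\R$ conjugating $\C^\times$, yields $\omega_{\pi,\infty}(x) = x^{S/n}$ on $\R_{>0}$ with $S = \sum_i \lambda_i \in \Z$. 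Twisting $\pi$ by $|\det|^{-S/n}$ produces a cuspidal automorphic representation $\pi'$ (in general no longer strictly algebraic) with $\omega_{\pi',\infty}|_{\R_{>0}} = 1$, so $\pi'$ is unitary; in particular $\sum_i \lambda_i' = \sum_i \mu_i' = 0$ for the new weights.

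For unitary cuspidal $\pi'$ one has $\pi'^\vee \simeq \bar{\pi'}$, as both identify with the Hermitian dual of the unitary $L^2$-model of $\pi'$. Via Theorem \ref{LLC_arch}(1) this gives $\Ll(\pi'^\vee_\infty)|_{\C^\times} = \bigoplus_i z^{-\lambda_i'}\bar{z}^{-\mu_i'}$, and a short computation of the complex conjugate of the character $re^{i\theta} \mapsto r^s e^{im\theta}$ shows that passing to the conjugate representation sends the pair of exponents $(\lambda_i',\mu_i')$ to $(\overline{\mu_i'},\overline{\lambda_i'})$, so $\Ll(\bar{\pi'}_\infty)|_{\C^\times} = \bigoplus_i z^{\overline{\mu_i'}}\bar{z}^{\overline{\lambda_i'}}$. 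Equating the two multisets produces a permutation $\sigma$ of $\{1,\dots,n\}$ such that $\lambda_{\sigma(i)}' = -\overline{\mu_i'}$ and $\mu_{\sigma(i)}' = -\overline{\lambda_i'}$. Setting $w_i' := \lambda_i' + \mu_i'$, this forces $w_{\sigma(i)}' = -\overline{w_i'}$, so the multiset $\{\mathrm{Re}(w_i')\}$ is stable under negation.

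The main obstacle is to upgrade this negation-symmetry to genuine constancy of the $w_i'$, for which a nontrivial analytic input is indispensable. Since $\pi'$ is unitary cuspidal, the Jacquet--Shalika bound (sharpened by Luo--Rudnick--Sarnak) applied to the archimedean component yields $|\mathrm{Re}(w_i')| < 1/2$ for every $i$. Back-substituting $w_i' = w_i - 2S/n$, where $w_i := \lambda_i + \mu_i \in \Z$ (using strict algebraicity of the original $\pi$ and $\mu_i \in \Z$ from $\{\lambda_i\} = \{\mu_i\}$), this reads $|w_i - 2S/n| < 1/2$. But an open interval of length $1$ contains at most one integer, so the integers $w_i$ must all coincide, establishing the purity of $\pi$. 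All steps apart from the Jacquet--Shalika-type bound are formal consequences of the Langlands correspondence, the definition of the contragredient, and the unitary identification $\pi^\vee \simeq \bar{\pi}$.
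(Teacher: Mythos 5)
The paper itself offers no proof here (it cites Clozel, Lemme 4.9), so I am judging your argument on its own terms. Your overall strategy — unitarize by a twist, exploit $\pi'^{\vee}\simeq\overline{\pi'}$, and invoke an archimedean Jacquet--Shalika-type bound — is indeed Clozel's route, and the formal steps (the action of conjugation and duality on the exponents, the negation-symmetry of the multiset $\{w_i'\}$) are correct. The gap is in the crucial numerical bound. In the normalization of \eqref{déc_algébricité}, a summand $z^{\lambda}\bar z^{\mu}$ is tempered iff $\mathrm{Re}(\lambda+\mu)=0$, and twisting by $|\cdot|^{e}$ (with $|\cdot|=z\bar z$ on $\C^\times$) shifts \emph{both} $\lambda$ and $\mu$ by $e$, hence shifts $\lambda+\mu$ by $2e$. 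The generic-unitary (Jacquet--Shalika) bound controls the Langlands exponents $e_i$ by $|\mathrm{Re}\,e_i|<1/2$, which therefore only gives $|\mathrm{Re}(\lambda_i'+\mu_i')|<1$, not $<1/2$; even Luo--Rudnick--Sarnak gives $|\mathrm{Re}(\lambda_i'+\mu_i')|\le 1-2/(n^2+1)$. (Sanity check on $\GL_2(\R)$: the complementary series $|\cdot|^{e}\boxplus|\cdot|^{-e}$, $0<e<1/2$, has $\C^\times$-exponents $(e,e)$ and $(-e,-e)$, so $w=\pm 2e$ fills out $(-1,1)$.) With the correct bound, your final step collapses: an open interval of length $2$ about $2S/n$ can contain two integers, and the negation-symmetry of $\{w_i'\}$ still allows the configuration $w_i'\in\{\pm 1/2\}$, i.e. $w_i\in\{m,m+1\}$, which your argument does not exclude.

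The missing ingredient is to use the duality symmetry on the individual exponents, not only on their sums: combining $\{\lambda_i'\}=\{\mu_i'\}$ (from $jzj^{-1}=\bar z$) with $\{\lambda_i'\}=\{-\mu_i'\}$ (from $\pi'^{\vee}\simeq\overline{\pi'}$) gives $\{\lambda_i\}=\{2S/n-\lambda_i\}$ as multisets, and since all $\lambda_i\in\Z$ by strict algebraicity this forces $2S/n\in\Z$; then $w_i'=w_i-2S/n\in\Z\cap(-1,1)=\{0\}$ and purity follows. (Clozel's own argument instead uses the precise shape of the generic unitary dual — complementary series pair a discrete series with \emph{itself} at exponents $\pm e$ — together with integrality of the $\lambda_i$ to force $2e\in\Z$.) A minor further slip: from $\det\Ll(\pi_\infty)=\omega_{\pi_\infty}\circ\mathrm{Art}_\R^{-1}$ one gets $\omega_{\pi_\infty}(x)=x^{S}$ on $\R_{>0}$ (not $x^{S/n}$); the twist by $|\det|^{-S/n}$ you use is nevertheless the correct one, since it multiplies the central character by $x^{-S}$.
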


\begin{prop-def}\label{defi_alg_centrée}
Une représentation automorphe cuspidale $\pi$ de $\GL_n$ sur $\Q$ est dite \emph{algébrique} si $\pi_\infty$ est unitaire et si les poids de $\pi_\infty$ sont tous dans $\Z$ ou tous dans $\frac{1}{2}\Z-\Z$. On a alors (par le lemme de pureté de Clozel) ${\rm inf}(\pi_\infty)=\mathrm{diag}(\pm \frac{w_1}{2},\cdots,\pm \frac{w_n}{2})$, avec les $w_i$ entiers naturels de la même parité et $w_1 \geq w_2 \geq \cdots \geq w_n \geq 0$ si on veut. Le \emph{poids motivique} de $\pi$, noté ${\rm w}(\pi)$, est alors égal à $w_1$.

On dit de plus que $\pi$ est \emph{régulière} (resp. \emph{très régulière}) si $i\neq j \Rightarrow w_i \neq w_j$ (resp. $|w_i-w_j|>1$).
\end{prop-def}
Puisque ce sont des propriétés qui ne dépendent que de $\pi_\infty$, on dira aussi, en tant que de besoin, que $\pi_\infty$ est algébrique (resp. algébrique régulière, resp. algébrique très régulière).

En particulier, une représentation automorphe cuspidale algébrique de $\GL_n$ sur $\Q$ est \emph{centrée} au sens de la Définition \ref{defi_rep_centree}.\newline

Nous introduisons encore quelques définitions qui nous seront utiles au §\ref{Facteur local}. Par la discussion précédente, il est opportun de s'intéresser aux représentations de $\W_\R$ qui sont triviales sur le sous-groupe $\R_{>0}$.
Notons $\xi$ le caractère unitaire $z \mapsto \frac{z}{|z|}$ (soit encore $z \mapsto z^\frac{1}{2}\bar{z}^{-\frac{1}{2}}$ selon la convention rappelée en note \ref{notation_evoc_Lgl} \emph{supra}).
\begin{prop}

Les caractères de $\W_\R$ triviaux sur le sous-groupe $\R_{>0}$ sont le caractère trivial et le caractère $\eps_{\C/\R}$ d'ordre $2$ et trivial sur $\W_\C$.

On pose $I_n=\Ind_{\W_\C}^{\W_\R}\xi^n$ pour $n \in \Z$. Alors
\begin{itemize}
\item $I_n \simeq I_{-n}$ ;
\item $I_0 \simeq \1 \oplus \eps_{\C/\R}$ ;
\item $I_n$ est irréductible pour $n>0$.
\end{itemize}

La donnée des deux caractères précédents et des représentations $I_n$ pour $n>0$ est exhaustive pour les représentations irréductibles de $\W_\R$ triviales sur $\R_{>0}$.

Les autres représentations irréductibles sont obtenues à partir de celles-ci en les tordant par une puissance (complexe) de la norme.
\end{prop}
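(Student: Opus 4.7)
Le plan est de traiter les énoncés dans l'ordre suivant : d'abord classifier les caractères, puis étudier les $I_n$ (isomorphismes et irréductibilité), puis démontrer l'exhaustivité par restriction à $\W_\C$, et enfin ramener le cas général au cas trivial sur $\R_{>0}$ par torsion.

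D'abord, pour les caractères : un caractère de $\W_\R$ se factorise à travers l'abélianisé $\W_\R^\ab$, que les rappels précédents identifient à $\R^\times$ via $\eps_{\C/\R}\times |\cdot|$. Un caractère continu de $\R^\times$ trivial sur $\R_{>0}$ se factorise donc à travers $\R^\times / \R_{>0} \simeq \{\pm 1\}$, ce qui donne exactement deux possibilités : le caractère trivial $\1$ et le caractère signe, lequel correspond à $\eps_{\C/\R}$. Cela épuise le cas des représentations irréductibles de dimension $1$.

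Ensuite, pour les $I_n$ : la conjugaison $z \mapsto jzj^{-1} = \bar z$ envoie $\xi^n(z) = (z/|z|)^n$ sur $\xi^n(\bar z) = \xi^{-n}(z)$, de sorte que les caractères $\xi^n$ et $\xi^{-n}$ de $\W_\C$ sont conjugués sous $\W_\R/\W_\C$. Par propriété classique de l'induction depuis un sous-groupe distingué, on en déduit immédiatement $I_n \simeq I_{-n}$. Le cas $n=0$ donne $I_0 = \Ind_{\W_\C}^{\W_\R} \1$, qui, $\W_\R/\W_\C$ étant d'ordre $2$, est la représentation régulière de ce quotient, donc est isomorphe à $\1 \oplus \eps_{\C/\R}$. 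Pour $n\neq 0$, on a $\xi^n \not\simeq \xi^{-n}$ comme caractères de $\W_\C$ (car $\xi$ est d'ordre infini, donc $\xi^{2n} \neq 1$), et le critère de Mackey pour l'induction depuis un sous-groupe d'indice $2$ garantit l'irréductibilité de $I_n$.

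Il reste l'exhaustivité. Soit $\rho$ une représentation irréductible de $\W_\R$ triviale sur $\R_{>0}$. Comme $[\W_\R : \W_\C] = 2$, la restriction $\rho_{|\W_\C}$ est soit irréductible (donc un caractère, et alors $\rho$ elle-même est un caractère, traité ci-dessus), soit somme de deux caractères conjugués de $\W_\C$, auquel cas la réciprocité de Frobenius fournit $\rho \simeq \Ind_{\W_\C}^{\W_\R} \chi$ pour un certain caractère $\chi$ de $\C^\times$. Le sous-groupe $\R_{>0}$ étant central dans $\W_\R$, la condition \og $\rho$ triviale sur $\R_{>0}$ \fg{} force $\chi_{|\R_{>0}} = 1$. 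En écrivant $\chi(z) = z^\lambda \bar z^\mu$ avec $\lambda - \mu \in \Z$, cette condition équivaut à $\lambda + \mu = 0$, d'où $\chi = \xi^{2\lambda}$ avec $2\lambda \in \Z$, et donc $\rho \simeq I_n$ pour un certain $n \in \Z$. Pour la dernière assertion, étant donnée une représentation irréductible quelconque $\rho$ de $\W_\R$, le sous-groupe central $\R_{>0}$ agit par un caractère continu $r \mapsto r^s$ avec $s \in \C$ ; comme $|\cdot|_{|\R_{>0}} : r \mapsto r^2$, la torsion $\rho \otimes |\cdot|^{-s/2}$ est triviale sur $\R_{>0}$, et le classement précédent s'applique.

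L'obstacle principal est essentiellement notationnel : il s'agit de bien suivre les conventions introduites (convention évocatrice de Langlands $z^\lambda \bar z^\mu$, restriction de la norme $|\cdot|$ à $\R_{>0} \subset \C^\times = \W_\C$, etc.), mais aucun argument profond n'intervient ; toute la preuve repose sur la théorie élémentaire de l'induction depuis un sous-groupe distingué d'indice $2$ et sur la description explicite des caractères de $\C^\times$.
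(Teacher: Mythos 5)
Votre démonstration est correcte et suit l'argument standard (factorisation des caractères par $\W_\R^\ab\simeq\R^\times$, théorie de Clifford/Mackey pour le sous-groupe distingué $\W_\C$ d'indice $2$, puis torsion par une puissance de la norme pour se ramener au cas trivial sur le sous-groupe central $\R_{>0}$). Le texte énonce d'ailleurs cette proposition sans démonstration, comme un fait classique sur les représentations de $\W_\R$ ; il n'y a donc pas de preuve du papier à laquelle comparer, et la vôtre n'appelle aucune réserve.
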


Puisqu'on s'intéresse aux représentations semi-simples, il est naturel de considérer le groupe libre sur $\Z$ engendré par de telles représentations.

\begin{defi}\label{defi_G(R)_alg_K_infty}
On note $K_\infty$ l'anneau de Grothendieck sur $\Z$ de la catégorie des représentations de $\W_\R$ semi-simples, continues, complexes de dimension finie, triviales sur le sous-groupe $\R_{>0}$ de $\W_\R$.
\end{defi}

On constate immédiatement les identités :
\begin{itemize}
\item $I_n \otimes \eps_{\C / \R} \simeq I_n$,
\item $\eps_{\C / \R} \otimes \eps_{\C / \R} \simeq \mathbf{1}$,
\item $I_n \otimes I_m \simeq I_{n+m} \oplus I_{|n-m|}$,
\end{itemize}
et on remarque que chacune des $\mathbf{1},\, \eps_{\C / \R},\, I_n$ pour $n>0$, est autoduale et qu'il en est donc de même pour tous les éléments de $K_\infty$. 

\begin{cor}\label{cor_alg_duale_memes_poids}
Soit $\pi$ une représentation automorphe cuspidale algébrique de $\GL_n$ sur $\Q$. Alors $\pi^\vee$ est également algébrique et a les mêmes poids que $\pi$. (En fait, $\pi$ et $\pi^\vee$ ont des composantes archimédiennes isomorphes.)
\end{cor}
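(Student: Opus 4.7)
The plan is to reduce the global statement to the archimedean local statement via Proposition \ref{prop_613}, which gives $(\pi^\vee)_\infty = (\pi_\infty)^\vee$. I will in fact prove the stronger parenthetical claim, namely $\pi_\infty^\vee \simeq \pi_\infty$, from which the algebraicity of $\pi^\vee$ and the equality of weights follow tautologically.

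The heart of the argument is to show that the Langlands parameter $\mathcal{L}(\pi_\infty)$ is autodual, by decomposing it into irreducibles drawn from the list $\{\mathbf{1}, \eps_{\C/\R}, I_n : n > 0\}$ which, as recorded in the paper just before the corollary, are all autodual. First, since $\pi_\infty$ is unitary, $\mathcal{L}(\pi_\infty)$ takes values in a unitary subgroup, hence decomposes into unitary irreducible representations of $\W_\R$. These are exactly the $|\cdot|^{is}$, the $|\cdot|^{is}\,\eps_{\C/\R}$, and the $I_n\otimes|\cdot|^{is}$, for $n \geq 1$ and $s \in \R$. Next, the algebraicity of $\pi$ combined with Clozel's purity lemma gives ${\rm inf}(\pi_\infty)=\mathrm{diag}(\pm w_1/2,\ldots,\pm w_n/2)$, so that, with the paper's conventions, the restriction to $\C^\times$ has the symmetric form $\mathcal{L}(\pi_\infty)_{|\C^\times} = \bigoplus_i z^{\lambda_i}\bar{z}^{-\lambda_i}$.

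I then match this with the list of unitary irreducibles restricted to $\C^\times$: the characters $|\cdot|^{is}$ and $|\cdot|^{is}\eps_{\C/\R}$ give $z^{is/2}\bar{z}^{is/2}$, while $I_n\otimes|\cdot|^{is}$ gives $z^{n/2+is/2}\bar{z}^{-n/2+is/2}\oplus z^{-n/2+is/2}\bar{z}^{n/2+is/2}$. In each case, demanding the form $z^\lambda\bar{z}^{-\lambda}$ forces $s = 0$. Hence each irreducible component of $\mathcal{L}(\pi_\infty)$ lies in $\{\mathbf{1},\eps_{\C/\R}, I_n : n > 0\}$, and $\mathcal{L}(\pi_\infty)$ is therefore autodual as a sum of autoduals. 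The compatibility of the archimedean correspondence with duality (Theorem \ref{LLC_arch}, item 1) together with its bijectivity then yields $\mathcal{L}(\pi_\infty^\vee)\simeq \mathcal{L}(\pi_\infty)^\vee \simeq \mathcal{L}(\pi_\infty)$, hence $\pi_\infty^\vee \simeq \pi_\infty$.

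The only mildly technical step is the case-by-case elimination of the imaginary parameter $s$ in the unitary irreducibles; all other ingredients are formal consequences of results already stated (Proposition \ref{prop_613}, Theorem \ref{LLC_arch}, the purity lemma, and the classification of irreducibles of $\W_\R$ trivial on $\R_{>0}$ preceding the corollary). The only conceptual subtlety is to note that the paper's convention (as made explicit after Proposition-Definition \ref{defi_alg_centrée}) implicitly makes every algebraic representation centered, which is precisely what makes the purity constant vanish and produces the symmetric form $z^\lambda\bar{z}^{-\lambda}$.
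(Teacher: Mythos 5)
Your overall strategy is the one the paper intends: reduce to the archimedean place via Proposition \ref{prop_613}, show that $\Ll(\pi_\infty)$ is an effective sum of the representations $\1$, $\eps_{\C/\R}$, $I_n$ (this is precisely the content of Lemma \ref{lemme_pi_infty_i_w}, quoted from \cite{Chen-HM}, which the paper places just after the corollary), and conclude from the autoduality of these building blocks together with the duality-compatibility and bijectivity of Theorem \ref{LLC_arch}. Your endgame is also correct: centeredness kills the purity constant, giving $\mu_i=-\lambda_i$, and the matching of restrictions to $\C^\times$ then pins down each constituent.

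There is, however, one step that is false as stated: \og since $\pi_\infty$ is unitary, $\Ll(\pi_\infty)$ takes values in a unitary subgroup, hence decomposes into unitary irreducible representations of $\W_\R$ \fg. Unitarity of an irreducible Harish-Chandra module does not imply that its Langlands parameter has bounded image; that is the \emph{tempered} condition. Complementary series of $\GL_2(\R)$ have parameter $|\cdot|^{s}\oplus|\cdot|^{-s}$ with $s$ real nonzero, and the trivial representation of $\GL_2(\R)$, with parameter $|\cdot|^{1/2}\oplus|\cdot|^{-1/2}$, is unitary with visibly non-unitary constituents. As written, your case analysis therefore silently excludes the irreducibles $\eps_{\C/\R}^{a}\otimes|\cdot|^{s}$ and $I_n\otimes|\cdot|^{s}$ with $s\notin i\R$, and in particular never rules out a constituent such as $|\cdot|^{1/2}$, whose presence would destroy autoduality.

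The repair is immediate and makes the offending claim unnecessary: by the proposition preceding the corollary, \emph{every} irreducible of $\W_\R$ is a twist of $\1$, $\eps_{\C/\R}$ or $I_n$ by $|\cdot|^{s}$ for some $s\in\C$, and your matching argument works verbatim with $s$ arbitrary complex. A character contributes $z^{s}\bar z^{s}$ to the restriction, which can equal some $z^{\lambda}\bar z^{-\lambda}$ only if $s=\lambda=-\lambda=0$; and $I_n\otimes|\cdot|^{s}$ contributes $z^{n/2+s}\bar z^{-n/2+s}$, which has the required form only if $s=0$. So drop the appeal to unitarity of the constituents (unitarity of $\pi_\infty$ is still needed, but only through the centeredness of $\pi$, i.e., to make the purity constant vanish), and the proof is complete.
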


Pour des besoins futurs (\cf §\ref{Un résultat de finitude}), il est utile d'introduire la filtration suivante.

\begin{defi}\emph{(\cite{Chen-HM}, Definition 3.2)}\label{defin_K_infty_leq_w}

Soit $w$ un entier naturel. On définit $K_\infty^{\leq w}$ comme le sous-groupe de $K_\infty$ engendré par
\begin{itemize}
\item les $I_v$ avec $0< v \leq w$ et $v$ impair si $w$ est impair ;
\item les $I_v$ avec $0< v \leq w$ et $v$ pair ainsi que $\1$ et $\eps_{\C/\R}$ si $w$ est pair.
\end{itemize}
Ainsi $K_\infty^{\leq w}$ est un $\Z$-module libre de rang $\frac{w+1}{2}$ si $w$ est impair et de rang $\frac{w}{2}+2$ si $w$ est pair.

On dira qu'un élément de $K_\infty^{\leq w}$ est \emph{effectif} s'il est combinaison linéaire à coefficients positifs des vecteurs de base sus-cités.
\end{defi}

D'ores et déjà, nous avons la traduction suivante.

\begin{lemme}\emph{(\cite{Chen-HM}, Lemma 3.8)}\label{lemme_pi_infty_i_w}

Soit $\pi$ une représentation automorphe cuspidale algébrique de $\GL_n$ sur $\Q$ de poids motivique $w$. Alors $\Ll(\pi_\infty)$ appartient à $K_\infty^{\leq w}$ (et est effectif).
\end{lemme}

\section{Conducteur premier}\label{Conducteur premier}
Nous définissons le conducteur de façon \emph{ad hoc} à l'aide de la décomposition \eqref{prod_tens_res}.

\begin{defi}\label{defi_conducteur_global}
Soit $\pi$ une représentation automorphe cuspidale de $\GL_n$ sur $\Q$. On pose :
\begin{equation}\label{conducteur-global}
{\rm N}(\pi)=\prod_p p^{\mathrm{a}(\pi_p)}=\prod_p p^{\mathrm{a}(\Ll(\pi_p))},
\end{equation}
où $\pi_p$ est défini par la décomposition \eqref{prod_tens_res} et où $\Ll(\pi_p)$ est la représentation de $\WD_{\qp}$ associée à $\pi_p$ par la correspondance de Langlands locale pour $\GL_n(\Q_p)$. La deuxième égalité découle de la Définition \ref{Cond_rep_locales_def}.
\end{defi}

Ce produit est bien défini car $\aar(\pi_p)=0$ pour presque tout $p$.

\begin{cor}\label{cor_cond_contragrediente}
Soit $\pi$ une représentation automorphe cuspidale de $\GL_n$ sur $\Q$. Alors ${\rm N}(\pi^\vee)={\rm N}(\pi)$.
\end{cor}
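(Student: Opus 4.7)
The plan is to reduce the global statement to its local analogue at every prime, place by place, using the definition of the global conductor as the product of local conductors.

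First, I would invoke Proposition \ref{prop_613}, which guarantees that the contragredient $\pi^\vee$ is again an automorphic cuspidal representation and, crucially, that it factors as $(\pi^\vee)_p = (\pi_p)^\vee$ at every finite place $p$. Combined with the definition \eqref{conducteur-global} of the global conductor, this reduces the problem to showing the local equality $\aar((\pi_p)^\vee) = \aar(\pi_p)$ for every prime $p$.

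For the local step, I would argue through the local Langlands correspondence. By Definition \ref{Cond_rep_locales_def}, $\aar(\pi_p) = \aar(\Ll(\pi_p))$ where $\Ll(\pi_p)$ is the associated Weil--Deligne representation of $\W_{\qp} \times \SU(2)$. Theorem \ref{LLC_GLn} asserts compatibility of the local Langlands correspondence for $\GL_n$ with dualization, so $\Ll((\pi_p)^\vee) \simeq \Ll(\pi_p)^\vee$. It then suffices to recall that the Artin exponent of a (semi-simple) Weil--Deligne representation is invariant under dualization: this was observed at the end of §\ref{Conducteur d'une représentation du groupe de Weil} and §\ref{Conducteur d'une représentation du groupe de Weil-Deligne} (since $\aar_\mathrm{W}(X^\vee) = \aar_\mathrm{W}(X)$ for any irreducible representation $X$ of $\W_F$, and the formula \eqref{awd_def} for $X\otimes U_d$ only involves $\aar_\mathrm{W}(X)$ and $\dim X^I$, both invariant under dualization).

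Assembling these pieces gives $\aar((\pi^\vee)_p) = \aar((\pi_p)^\vee) = \aar(\pi_p)$ for every prime $p$, and the equality of global conductors follows immediately by taking the product over $p$. There is no real obstacle here: the statement is essentially a bookkeeping corollary of (i) the local-global factorization of the contragredient and (ii) the local duality invariance of the Artin exponent already established in the first part of the thesis.
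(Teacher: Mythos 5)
Your proof is correct and follows exactly the same route as the paper's: reduce to the local equality $\aar((\pi_p)^\vee)=\aar(\pi_p)$ via $(\pi^\vee)_p \simeq (\pi_p)^\vee$ (Proposition \ref{prop_613}) and the duality-invariance of the Artin exponent already noted in Chapter \ref{Conducteur}. You have merely spelled out the references that the paper's one-line proof leaves implicit.
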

\begin{proof}
Il suffit d'utiliser le fait que $(\pi^\vee)_p \simeq (\pi_p)^\vee$ et que $\aar((\pi_p)^\vee)=\aar(\pi_p)$.
\end{proof}

Dans le reste de ce paragraphe, on s'intéresse aux propriétés particulières des représentations automorphes cuspidales de $\GL_n$ sur $\Q$ de conducteur $p$ premier qui sont celles des Théorèmes \ref{thm_2_w17}, \ref{thm_2_w19} et \ref{thm_p}.

Soit donc $\pi$ une telle représentation.
Selon la formule \eqref{conducteur-global}, cela signifie que $\Ll(\pi_\ell)$ est d'exposant d'Artin nul pour $\ell\neq p$ et que $\Ll(\pi_p)$ est d'exposant d'Artin égal à 1.

Il s'agit alors d'identifier quels sont les paramètres de Langlands de $\GL_n(\Q_\ell)$ qui sont d'exposant d'Artin 0 ou 1 -- travail similaire à ce qui a été fait en \ref{Conducteur des représentations locales}. Ici, les paramètres sont à valeurs dans $\widehat{{\GL_n}}(\C)=\GL_n(\C)$. 


\begin{prop-def}\label{prop_types_I_et_II}
Les paramètres de Langlands de $\GL_n(\Q_\ell)$ d'exposant d'Artin nul sont de la forme $\varphi=\chi_1 \oplus \cdots \oplus \chi_n$, où les $\chi_i$ sont des caractères non ramifiés.

Les paramètres de Langlands de $\GL_n(\Q_\ell)$ d'exposant d'Artin égal à 1 sont de deux types :
\begin{enumerate}[(I)]
\item $\varphi=\chi_1 \oplus \cdots \oplus \chi_{n-2} \oplus (\psi \otimes U_2)$, où les $\chi_i$ \emph{et} $\psi$ sont des caractères non ramifiés.
\item $\varphi=\chi_1 \oplus \cdots \oplus \chi_{n-1} \oplus \psi$ où les $\chi_i$ sont des caractères non ramifiés, et où $\psi$ est un caractère \emph{modérément ramifié} (\ie $\psi_{|\Z_\ell^\times}$ non trivial et $\psi_{|1+\ell\Z_\ell}$ trivial). 
\end{enumerate}
Une représentation automorphe cuspidale de $\GL_n$ sur $\Q$ de conducteur $p$ est donc non ramifiée en toutes les places $\ell \neq p$ et sera dite de type (I) ou (II) selon la forme de son paramètre de Langlands (ramifié) en $p$.
\end{prop-def}
\begin{proof}
Soit donc $\varphi : \W_{\Q_\ell} \times \SU(2) \rightarrow\GL_n(\C)$ un tel paramètre. Par semi-simplicité, on a la décomposition en somme directe de représentations irréductibles : $\varphi=\bigoplus_i \sigma_i \otimes U_{a_i}$. Alors $\aar(\varphi)=\sum_i \aar(\sigma_i \otimes U_{a_i})$ et il suffit de déterminer à quelle condition on a un exposant d'Artin égal à 0 ou 1 pour une représentation \emph{irréductible} $(\sigma_i \otimes U_{a_i})$ de $\W_{\Q_\ell} \times \SU(2)$.

La formule \eqref{awd_irr} nous dit qu'on ne peut atteindre un exposant nul qu'avec un caractère non ramifié du groupe de Weil (et la représentation triviale de $\SU(2)$). Or un caractère de $\W_{\Q_\ell}$ trivial sur le sous-groupe d'inertie $I_{\Q_\ell}$ correspond, modulo la théorie du corps de classes local sous sa forme \eqref{application d'Artin-groupe de Weil}, à un caractère de $\Q_\ell^\times$ trivial sur $\Z_\ell^\times$.

La formule \eqref{awd_irr} nous dit encore que l'exposant 1 peut être atteint soit avec $\sigma_i$ caractère non ramifié et $a_i=2$ (I), soit avec un caractère $\sigma_i$ d'exposant d'Artin égal à 1 (vu comme représentation du groupe de Weil) et $a_i=1$ (II). Ce dernier cas correspond, par la théorie du corps de classes, à un caractère modérément ramifié au sens de l'énoncé : c'est la compatibilité des filtrations donnée par le Corollaire 3 de \cite{Se}, XV §2.
\end{proof}

Quoique immédiat, le lemme suivant nous sera très utile pour la suite.
\begin{lemme}\label{lemme_cond_2_implique_type_I}
Une représentation automorphe de $\GL_n$ sur $\Q$ de conducteur 2 est nécessairement de type (I).
\end{lemme}
\begin{proof}
On a $1+2\Z_2=\Z_2^\times$ donc il n'existe pas de caractère modérément ramifié de $\Q_2^\times$.
\end{proof}

\begin{lemme}\label{lemme_carac_central_triv}
Soit $\pi$ une représentation automorphe cuspidale de $\GL_n$ sur $\Q$. On suppose que $\pi$ est de conducteur 1 ou de conducteur $p$ et de type (I). Alors le caractère central $\omega_\pi$ est trivial sur $\widehat{\Z}^\times$. En particulier, si $\pi$ est centrée, alors $\omega_\pi$ est trivial. 
\end{lemme}
\begin{proof}
On a $\omega_\pi=\omega_\infty \prod_\ell \omega_{\pi_\ell}$ avec des notations évidentes. En tout nombre premier $\ell$ tel que $\pi_\ell$ est non ramifiée, $\omega_{\pi_\ell}$ est trivial sur $\Z_\ell^\times$. Dans le cas où $\pi_p$ est ramifiée de type (I), son caractère central $\omega_{\pi_p}$ est donné par le déterminant du paramètre de Langlands (c'est une propriété de la correspondance de Langlands locale pour $\GL_n$, rappelée au Théorème \ref{LLC_GLn}) et on voit alors qu'il s'agit encore d'un caractère trivial sur $\Z_p^\times$.

La dernière assertion découle de la factorisation $\aq^\times=\Q^\times \times (\R_{>0} \times \widehat{\Z}^\times)$.
\end{proof}

Dans la suite, on s'intéresse uniquement à ces deux classes de représentations automorphes cuspidales \emph{algébriques} (donc centrées) de $\GL_n$ sur $\Q$ (conducteur 1 et conducteur $p$ de type (I)), que l'on peut alors voir comme des représentations de $\PGL_n$ sur $\Q$.

\begin{cor}\label{cor_GL_2_autoduale}
Soit $\pi$ une représentation automorphe cuspidale algébrique de $\GL_2$ sur $\Q$. On suppose que $\pi$ est de conducteur 1 ou de conducteur $p$ et de type (I). Alors $\pi$ est autoduale.
\end{cor}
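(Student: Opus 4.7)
The plan is to reduce the self-duality to the triviality of the central character, exploiting the dimension $n=2$. Recall the well-known fact that for any irreducible admissible representation $\sigma$ of $\GL_2$ (local or global), one has $\sigma^\vee \simeq \sigma \otimes \omega_\sigma^{-1}$ where $\omega_\sigma$ denotes the central character. This follows from the identity $g^{-1} = \frac{1}{\det g}\begin{pmatrix} d & -b \\ -c & a\end{pmatrix}$ for $g = \begin{pmatrix} a & b \\ c & d\end{pmatrix} \in \GL_2$, which shows that the transpose-inverse automorphism of $\GL_2$ coincides, up to an inner automorphism, with twisting by $\det^{-1}$.

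Applying this observation globally to $\pi$, I would combine it with Proposition \ref{prop_613} (uniqueness of the contragredient) to obtain $\pi^\vee \simeq \pi \otimes \omega_\pi^{-1}$ as automorphic representations of $\GL_2$ over $\Q$. The conclusion then follows from a triviality result on the central character: since $\pi$ is algebraic, it is in particular centered (Proposition-Definition \ref{defi_alg_centrée}), so $\omega_\pi$ is trivial on $\R_{>0}$; and by Lemma \ref{lemme_carac_central_triv}, under the hypothesis of conductor $1$ or conductor $p$ of type (I), the restriction of $\omega_\pi$ to $\widehat{\Z}^\times$ is also trivial. Combining these with the factorization $\aq^\times = \Q^\times \times (\R_{>0} \times \widehat{\Z}^\times)$ used in the proof of that lemma yields $\omega_\pi = 1$, hence $\pi^\vee \simeq \pi$.

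There is essentially no obstacle here; the statement is a direct corollary of the preceding lemma together with the special feature of $\GL_2$ that duality and twisting by the inverse central character coincide. The only subtlety to check is that the hypothesis of type (I) (and not type (II)) is really needed: a ramified character $\psi$ appearing in the local parameter at $p$ of type (II) would force $\omega_{\pi_p}$ to be ramified, so the argument genuinely relies on the type (I) assumption via Lemma \ref{lemme_carac_central_triv}.
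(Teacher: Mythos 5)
Your proof is correct and is essentially the paper's argument unpacked: the paper simply views $\pi$ as a representation of $\PGL_2$ (legitimate because Lemma \ref{lemme_carac_central_triv} makes the central character trivial) and notes that $g \mapsto {}^t g^{-1}$ is then an inner automorphism, which is exactly your identity $\sigma^\vee \simeq \sigma \otimes \omega_\sigma^{-1}$ specialized to $\omega_\sigma = 1$. Nothing further is needed.
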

\begin{proof}
On voit $\pi$ comme une représentation de $\PGL_2$ sur $\Q$ et on utilise le fait que l'automorphisme $g \mapsto {}^t g^{-1}$ est alors intérieur.
\end{proof}

\section{Fonctions $L$ et facteurs epsilon}\label{Fonctions $L$ et facteurs epsilon}
Cette partie poursuit deux objectifs :
\begin{itemize}
\item rappeler comment sont définis en général les fonctions $L$ (ou $\Lambda$) et les facteurs epsilon de paires ;
\item les calculer dans les cas particuliers qui nous intéressent (ce qui sera utile pour les Chapitres \ref{La_formule explicite de Riemann-Weil-Mestre} et \ref{Chapitre_calculs}).
\end{itemize}
\subsection{Fonctions $\Lambda$ et facteurs epsilon globaux}\label{Fonctions Lambda et facteurs epsilon globaux}

Soit $\pi$ une représentation automorphe cuspidale algébrique de $\GL_n$ sur $\Q$. Nous rappelons dans la suite comment lui associer une fonction $\Lambda$ qui sera un produit de fonctions $L$ locales :
\begin{equation}\label{fonction_Lambda_1_rep}
\Lambda(s,\pi)=L_\infty(s,\pi) \prod_p L_p(s,\pi),
\end{equation}
le produit étant défini et convergent pour $\re(s)>>0$, le tout s'étendant en une fonction méromorphe sur $\C$ (nous donnons des énoncés précis dans ce même §\ref{Fonctions Lambda et facteurs epsilon globaux} au paragraphe {\bfseries Paires}).

Chaque fonction $L$ locale ne dépend en fait que de la composante locale associée, selon la décomposition \eqref{prod_tens_res}. Ainsi $L_\infty(s,\pi)=L_\infty(s,\pi_\infty)$ et il s'agit alors de définir une telle fonction $L_\infty$ à partir de la donnée d'un $(\mathfrak{gl}_n(\R),\Oo_n(\R))$-module admissible irréductible algébrique. C'est l'objet du paragraphe \ref{Facteur local}.

De même, pour chaque $p$ premier, nous avons $L_p(s,\pi)=L_p(s,\pi_p)$ et nous rappelons au paragraphe \ref{Facteurs locaux} comment définir une fonction $L_p$ associée à une représentation admissible irréductible de $\GL_n(\qp)$.

\subsubsection{Facteur epsilon}\label{Facteurs_epsilon_712}
Si $\pi$ est une représentation automorphe cuspidale de $\GL_n$ sur $\Q$, il existe une équation fonctionnelle reliant la fonction $\Lambda$ de $\pi$ à celle de sa contragrédiente $\pi^\vee$ :
\[
\Lambda(s,\pi)=\eps(\pi){\rm N}(\pi)^{\frac{1}{2}-s}\Lambda(1-s,\pi^\vee),
\]
où ${\rm N}(\pi)$ est le conducteur de $\pi$ introduit en \eqref{conducteur-global}, et $\eps(\pi)$ est un complexe non nul.

{\bfseries Nous adoptons la convention qui consiste à faire systématiquement apparaître le conducteur.} Ainsi le facteur epsilon est-il réellement une constante (correspondant à la dénomination anglaise \emph{root number}), contrairement au cas de la convention qui consiste à appeler facteur epsilon la \emph{fonction} $s \mapsto \eps(\pi){\rm N}(\pi)^{\frac{1}{2}-s}$.

Selon la formule \eqref{conducteur-global}, le conducteur de $\pi$ se décompose en produit de quantités locales. Il en est de même pour le facteur epsilon \emph{global} $\eps(\pi)$ qui se décompose en produit de facteurs epsilon \emph{locaux} $\eps(\pi_v)$ qui ne dépendent (presque) que de la composante locale $\pi_v$.\ps 

Soyons plus précis en suivant \cite{Tate-Corvallis} §3.5.

On se donne, pour chaque nombre premier $p$, le caractère additif $\psi_p$ de $\qp$ défini par $\psi_p:x_p \mapsto e^{-2i\pi\{x_p\}_p}$ (déjà introduit au paragraphe \ref{Rappels}). À la place archimédienne, on se donne le caractère additif $\psi_\infty$ de $\R$ défini par $\psi_\infty:x_\infty \mapsto e^{+2i\pi x_\infty}$.

On peut alors former le caractère $\psi=\psi_\infty\prod_p \psi_p$ des adèles de $\Q$, dont on voit immédiatement qu'il est trivial sur $\Q$ (plongé diagonalement dans $\aq$).

Il nous faut également une mesure de Haar $\dd x$ sur $\aq$ telle que $\int_{\aq/\Q} \dd x=1$. Nous prenons à chaque place finie la mesure de Haar $\dd x_p$ sur $\qp$ normalisée par le fait que le volume de $\zp$ soit égal à 1 (c'est la mesure définie au paragraphe \ref{Rappels}), et à la place archimédienne la mesure de Lebesgue $\dd x_\infty$ standard. La mesure produit $\dd x=\prod_v \dd x_v$ vérifie alors les conditions requises.

On peut alors, pour chaque place finie $p$ définir un facteur epsilon local $\eps_p(\pi_p,\psi_p,\dd x_p)$ selon les rappels du paragraphe \ref{Rappels}. Pour la place archimédienne, nous aurons une définition \emph{ad hoc} dans le paragraphe \ref{Facteur local} pour les représentations algébriques. On peut alors former le produit :
\[
\eps(\pi,\psi,\dd x)=\prod_v \eps(\pi_v,\psi_v,\dd x_v),
\]
qui a la propriété remarquable de ne pas dépendre des choix globaux (raisonnables) de caractère additif et de mesure de Haar. \textbf{Nous fixons pour toute la suite} ces choix globaux, si bien que chaque facteur epsilon local ne dépend plus que de la composante locale : nous écrirons ainsi $\eps(\pi_v)$ pour $\eps(\pi_v,\psi_v,\dd x_v)$ étant entendu que $\psi_v$ et $\dd x_v$ sont tels que définis ci-dessus.

\subsubsection{Paires}\label{Paires globales}

On se donne maintenant deux représentations automorphes cuspidales unitaires (par exemple algébriques) $\pi$ et $\pi'$ de $\GL_n$ (resp. $\GL_{n'}$) sur $\Q$ et on raisonne sur la paire\footnote{Pour être parfaitement exact, il faudrait parler de multi-paire puisqu'on veut pouvoir considérer le cas où $\pi=\pi'$.} $\{\pi,\pi'\}$. Tous les objets introduits généralisent les précédents (avec une seule représentation) : il suffit de considérer la paire $\{\pi,\1\}$.

Les fonctions $\Lambda$ de paires (ou fonctions $\Lambda$ de Rankin-Selberg) ont été définies par Jacquet, Piatetski-Shapiro et Shalika dans \cite{JS} et \cite{JPSS83}, à qui l'on doit également les propriétés énoncées (sauf mention contraire). Pour cette partie, nous nous sommes servi de l'article de synthèse \cite{Cog_book}.

De même qu'en \eqref{fonction_Lambda_1_rep}, on utilise la décomposition en produit tensoriel restreint \eqref{prod_tens_res} pour définir la fonction $\Lambda$ de la paire $\{\pi,\pi'\}$ :
\begin{equation}\label{prod_eul+archim}
\Lambda(s, \pi \times \pi')=L_\infty(s, \pi_\infty \times \pi'_\infty) \prod_p L_p(s, \pi_p \times \pi'_p).
\end{equation}

Ce produit eulérien converge pour $\re(s)>1$ et la fonction $\Lambda$ se prolonge analytiquement en une fonction méromorphe du plan complexe.

Nous savons plus précisément par l'article de M\oe{}glin et Waldspurger \cite{MW} que cette fonction est entière sauf dans le cas où $\pi' \simeq \pi^\vee$ (où $\pi^\vee$ désigne la représentation contragrédiente de $\pi$) et qu'alors il y a deux pôles simples en $s=0$ et $s=1$.

On a de plus l'équation fonctionnelle attendue :
\begin{equation}\label{eq_fonctionnelle_paire_globale}
\Lambda(s, \pi \times \pi')=\eps(\pi \times \pi'){\rm N}(\pi \times \pi')^{\frac{1}{2}-s}\Lambda(1-s, \pi^\vee \times (\pi')^\vee),
\end{equation}
où ${\rm N}(\pi \times \pi')$ est le conducteur de la paire $\{\pi,\pi'\}$ (défini à partir des exposants d'Artin place par place, de manière analogue à ce qui a été fait en \eqref{conducteur-global}) et $\eps(\pi \times \pi')$ le facteur epsilon global, produit des facteurs espilon locaux.\medskip

\emph{Remarque :} La substitution $s \leftrightarrow 1-s$ dans l'équation ci-dessus impose, avec le fait que $(\pi^\vee)^\vee \simeq \pi$, que le conducteur de la paire $\{\pi,\pi'\}$ est égal à celui de la paire $\{\pi^\vee,(\pi')^\vee\}$, ce qui est bien le cas.

\subsection{À la place archimédienne}
\label{Facteur local}

Soit $\pi$ une représentation automorphe cuspidale \emph{algébrique} de $\GL_n$ sur $\Q$. Il s'agit de lui associer un facteur local pour la place archimédienne $L_\infty(s,\pi)$ fonction holomorphe définie pour $\re(s)>>0$ et qui ne dépend en fait que de $\pi_\infty$. Plutôt que de définir ce facteur local directement, nous utilisons la correspondance de Langlands locale et sa compatibilité aux fonctions $L$ : nous aurons donc une définition explicite des facteurs \og côté galoisien \fg et utiliserons la correspondance pour en déduire une \emph{définition} des facteurs \og côté automorphe \fg. La même remarque préliminaire s'applique aux facteurs epsilon.\medskip


Nous suivons \cite{Chen-Lannes} Chap. VIII § 2.20 et posons :
\begin{enumerate}
\item $\eps(\mathbf{1})=1$ et $\Gamma(s,\mathbf{1})=\Gamma_\R (s)$,
\item $\eps(I_n)=i^{n+1}$ et $\Gamma(s,I_n)=\Gamma_\C (s+ \frac{n}{2})$ pour $n \geq 0$
\end{enumerate}
avec
\[
\Gamma_\R(s)=\pi^{-\frac{s}{2}} \Gamma(\frac{s}{2}) \; \text{ et }  \; \Gamma_\C(s)=2 (2\pi)^{-s} \Gamma(s)
\]
où $s \mapsto \Gamma(s)$ est la fonction Gamma d'Euler. On remarque que l'on a, via la formule de duplication, $\Gamma_\C(s)=\Gamma_\R(s) \Gamma_\R (s+1)$.

On impose alors que, pour tous $\rho, \rho' \in K_\infty$ (\cf Définition \ref{defi_G(R)_alg_K_infty}) :
\[
\eps(\rho \oplus \rho')=\eps(\rho)\eps(\rho') \; \text{ et }  \; \Gamma(s,\rho \oplus \rho')=\Gamma(s, \rho) \Gamma(s, \rho')
\]
Ceci permet en particulier, en utilisant $I_0 \simeq \mathbf{1} \oplus \eps_{\C / \R}$, de voir que $\eps(\eps_{\C / \R})=i$ et $\Gamma(s,\eps_{\C / \R})=\Gamma_\R (s+1)$.

On peut alors définir la fonction $\rho \mapsto \Gamma(s,\rho)$ sur tout 
$K_\infty$. 
 Pour $\pi$ et $\pi'$ deux représentations automorphes cuspidales algébriques, on \emph{définit}
\begin{align*}
L_\infty (s, \pi \times \pi')&=\Gamma(s, \Ll(\pi) \otimes \Ll(\pi')) ; \\
\eps_\infty (\pi \times \pi')&=\eps(\Ll(\pi) \otimes \Ll(\pi')).
\end{align*}
Par ailleurs, $\eps_\infty (\pi \times \pi')$ est un nombre complexe non nul, et plus précisément une puissance de $i$.

\subsection{Aux places finies}
\label{Facteurs locaux}
Comme dans le cas archimédien, même si l'on travaille avec des objets globaux, la définition des facteurs locaux ne fait intervenir que les composantes locales (y compris pour le facteur epsilon puisqu'on a fixé une fois pour toutes un caractère additif et une mesure locaux à chaque place au paragraphe \ref{Facteurs_epsilon_712}). Soit donc $p$ un nombre premier (\ie une place finie).

La même remarque préliminaire qu'au paragraphe \ref{Facteur local} s'applique : nous donnons une définition explicite des facteurs \og côté galoisien\fg et utilisons la correspondance pour en déduire une \emph{définition} des facteurs \og côté automorphe \fg. Plus précisément, il nous faut donc rappeler, étant donnée une représentation semi-simple $\varphi$ du groupe de Weil-Deligne de $\qp$, comment lui associer une fonction $L$ locale. On veut la propriété d'additivité suivante :
\[
L_p(s,\varphi \oplus \varphi')=L_p(s,\varphi)L_p(s,\varphi').
\]

Par semi-simplicité, on peut donc se contenter de définir une telle fonction $L$ dans le cas où $\varphi$ est irréductible, \ie de la forme $\sigma \otimes U_a$ où $\sigma$ est une représentation irréductible de $\W_{\qp}$ et où $a$ est un entier naturel non nul.

Plus classiquement (dans \cite{Tate-Corvallis} notamment), les facteurs sont définis pour des représentations de Weil-Deligne, au sens de la Définition \ref{def_rep_de_WD}. Soit donc $(\rho, V,N)$ une telle représentation. On pose :
\[
L_p(s,(\rho, V,N))=\det(1-p^{-s}\rho(\Fr)_{|V_N^I})^{-1},
\]
où $\Fr$ désigne un morphisme de Frobenius (au sens du paragraphe \ref{Groupe de Weil}), $V_N$ désigne le noyau de l'endomorphisme nilpotent $N$ et l'exposant $I$ le sous-espace des invariants pour l'action du sous-groupe d'inertie.

Comme au paragraphe \ref{Rappels}, nous utilisons l'article de Benedict Gross et Mark Reeder \cite{GR} pour donner la \og traduction \fg{} suivante dans le contexte d'une représentation irréductible $\varphi=\sigma\otimes U_a$ de $\WD_{\qp}$ :
\begin{equation}\label{defi_fonction_L_WD}
L_p(s,\varphi)=\det(1-p^{-s-\frac{a-1}{2}}\,\Fr_{|\sigma^I})^{-1}.
\end{equation}
\smallskip


Dans un objectif pédagogique, nous commençons par considérer les fonctions $L$ locales associées à une seule représentation, le cas des paires étant abordé \emph{infra}.

Soit donc $\pi_p$ une représentation admissible irréductible de $\GL_n(\qp)$, que l'on suppose d'abord non ramifiée.
Alors $\Ll(\pi_p) = \chi_1 \oplus \cdots \oplus \chi_n$, où les $\chi_i$ sont des caractères non ramifiés (c'est la Proposition-Définition \ref{prop_types_I_et_II} en exposant d'Artin nul).
Chaque caractère $\chi_i$, vu comme caractère de $\qp^\times$ non ramifié, contribue pour un facteur $(1-\chi_i(p)p^{-s})^{-1}$.

 La fonction $L$ associée à cette représentation (du groupe de Weil de $\qp$ en fait ici) est :
\begin{equation}\label{fonction_L_rep_loc_nr}
L_p(s,\Ll(\pi_p))=\prod_{i=1}^n \frac{1}{1-\chi_i(p)p^{-s}},
\end{equation}
et cela \emph{définit} la fonction $L_p(s,\pi_p)$.

Si l'on note $c_p(\pi_p)$ la classe de conjugaison semi-simple dans $\GL_n(\C)$ de ${\rm diag}(\chi_1(p),\cdots,\chi_n(p))$, on retrouve le paramètre de Satake d'une représentation non ramifiée et on a bien $L_p(s,\pi_p)=\det(1-p^{-s}c_p(\pi_p))^{-1}$.

Le seul autre cas que nous ayons à considérer est celui d'une représentation $\varpi_p$ de $\GL_n(\qp)$ de conducteur $p$ et de type (I) selon la Proposition-Définition \ref{prop_types_I_et_II}, \ie dont le paramètre de Langlands est de la forme
\[
\varphi=\chi_1 \oplus \cdots \oplus \chi_{n-2} \oplus (\psi \otimes U_2),
\]
où les $\chi_i$ \emph{et} $\psi$ sont des caractères non ramifiés. Il nous faut donc calculer la fonction $L$ associée à la représentation irréductible $\psi \otimes U_2$, selon la formule \eqref{defi_fonction_L_WD} :
\[
L_p(s,\psi \otimes U_2)=(1-\psi(p)p^{-s-\frac{1}{2}})^{-1}.
\]

Nous avons donc dans ce cas :
\begin{equation}\label{fonction_L_rep_loc_ram_type_I}
L_p(s,\varpi_p)=\left(\prod_{i=1}^{n-2} \frac{1}{1-\chi_i(p)p^{-s}}\right) \frac{1}{1-\psi(p)p^{-s-\frac{1}{2}}}.
\end{equation}

On note, par analogie avec le paramètre de Satake, $d_p(\varpi_p)$ la classe de conjugaison semi-simple  dans $\GL_{n-1}(\C)$ de ${\rm diag}(\chi_1(p),\cdots,\chi_{n-2}(p),\psi(p)p^{-\frac{1}{2}})$ -- remarquer que c'est une classe de conjugaison dans $\GL_{\bm{n-1}}(\C)$ pour une représentation de $\GL_{\bm{n}}(\qp)$. On a alors $L_p(s,\varpi_p)=\det(1-p^{-s}d_p(\varpi_p))^{-1}$. 

%

\subsubsection{Fonctions de paires}\label{Fonctions de paires}

Il nous faut maintenant considérer les fonctions $L$ locales associées à une \emph{paire} de représentations et donc, en dernier lieu, comprendre le produit tensoriel de représentations de $\WD_{\qp}$.

Soient donc $\pi_p$ et $\pi'_p$ deux représentations admissibles irréductibles de \linebreak $\GL_m(\qp)$ et $\GL_{m'}(\qp)$ respectivement, non ramifiées. Soient $\varpi_p$ et $\varpi'_p$ deux représentations admissibles irréductibles de $\GL_n(\qp)$ et $\GL_{n'}(\qp)$ respectivement, de conducteur $p$ et de type (I).

Nous fixons les notations suivantes :

\begin{equation}\label{Notations_pi,varpi_p}
\begin{array}{rl}\vspace{3 pt}
\Ll(\pi_p)&=\chi_1 \oplus \cdots \oplus\chi_m ,\\ \vspace{3 pt}
\Ll(\pi'_p)&=\chi'_1 \oplus \cdots \oplus\chi'_{m'} ,\\ \vspace{3 pt}
\Ll(\varpi_p)&=\eta_1 \oplus \cdots \oplus\eta_{n-2} \oplus (\psi \otimes U_2) ,\\ \vspace{3 pt}
\Ll(\varpi'_p)&=\eta'_1 \oplus \cdots \oplus\eta'_{n'-2} \oplus (\psi' \otimes U_2).
\end{array}
\end{equation}

\begin{prop}\label{prop_calcul_fonction_L_locale}
Avec les notations précédentes, nous avons :
\begin{align*}
L_p(\pi_p\times \pi'_p)&=\frac{1}{\det(1-p^{-s}c_p(\pi_p)\otimes c_p(\pi'_p))} ;\\
L_p(\pi_p\times \varpi_p)&=\frac{1}{\det(1-p^{-s}c_p(\pi_p)\otimes d_p(\varpi_p))} ;\\
L_p(\varpi_p\times \varpi'_p)&=\frac{1}{\det(1-p^{-s}d_p(\varpi_p)\otimes d_p(\varpi'_p))}\cdot \frac{1}{1-\psi(p)\psi'(p)p^{-s}}.
\end{align*}
\end{prop}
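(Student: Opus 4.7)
Les trois formules reposent sur la même mécanique : la définition $L_p(s,\pi\times\pi') = L_p(s,\Ll(\pi_p)\otimes\Ll(\pi'_p))$, combinée à l'additivité $L_p(s,\varphi\oplus\varphi')=L_p(s,\varphi)L_p(s,\varphi')$ qui nous ramène au cas irréductible, puis à la formule \eqref{defi_fonction_L_WD}. L'ingrédient combinatoire-clé sera la règle \eqref{regle_prod_tens_rep_sym} de décomposition $U_d\otimes U_e$, intervenant seulement dans le troisième cas.

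Je commencerai par le \textbf{premier cas} : la décomposition $\Ll(\pi_p)\otimes\Ll(\pi'_p)=\bigoplus_{i,j}\chi_i\chi'_j$ est une somme de caractères non ramifiés. La formule \eqref{defi_fonction_L_WD} (avec $a=1$) donne pour chacun le facteur $(1-\chi_i(p)\chi'_j(p)p^{-s})^{-1}$. Le produit de ces facteurs est exactement $\det(1-p^{-s}c_p(\pi_p)\otimes c_p(\pi'_p))^{-1}$, les valeurs propres de $c_p(\pi_p)\otimes c_p(\pi'_p)$ étant les $\chi_i(p)\chi'_j(p)$. Le \textbf{deuxième cas} s'obtient de manière analogue : on a
\[
\Ll(\pi_p)\otimes\Ll(\varpi_p)=\bigoplus_{i,j}\chi_i\eta_j\ \oplus\ \bigoplus_i (\chi_i\psi)\otimes U_2.
\]
Les termes non ramifiés contribuent $\prod_{i,j}(1-\chi_i(p)\eta_j(p)p^{-s})^{-1}$ ; chaque terme $(\chi_i\psi)\otimes U_2$ contribue, d'après \eqref{defi_fonction_L_WD} avec $a=2$, le facteur $(1-\chi_i(p)\psi(p)p^{-s-1/2})^{-1}$. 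On reconnaît alors les valeurs propres de $c_p(\pi_p)\otimes d_p(\varpi_p)$, à savoir $\chi_i(p)\eta_j(p)$ et $\chi_i(p)\psi(p)p^{-1/2}$, ce qui donne la formule annoncée.

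Le \textbf{troisième cas} est le plus intéressant car c'est là qu'apparaît le facteur exceptionnel $(1-\psi(p)\psi'(p)p^{-s})^{-1}$. En développant
\[
\Ll(\varpi_p)\otimes\Ll(\varpi'_p)=\bigoplus_{i,j}\eta_i\eta'_j\ \oplus\ \bigoplus_i(\eta_i\psi')\otimes U_2\ \oplus\ \bigoplus_j(\eta'_j\psi)\otimes U_2\ \oplus\ (\psi\psi')\otimes(U_2\otimes U_2),
\]
la règle \eqref{regle_prod_tens_rep_sym} donne $U_2\otimes U_2\simeq U_3\oplus U_1$, d'où un terme supplémentaire $(\psi\psi')\otimes U_3$ (d'où le facteur $(1-\psi(p)\psi'(p)p^{-s-1})^{-1}$ via \eqref{defi_fonction_L_WD} avec $a=3$) \emph{et} un terme $\psi\psi'$ (d'où le facteur séparé $(1-\psi(p)\psi'(p)p^{-s})^{-1}$). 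Les trois premières familles et le terme en $U_3$ fournissent précisément les valeurs propres de $d_p(\varpi_p)\otimes d_p(\varpi'_p)$ : on retrouve respectivement $\eta_i(p)\eta'_j(p)$, $\eta_i(p)\psi'(p)p^{-1/2}$, $\eta'_j(p)\psi(p)p^{-1/2}$, et $\psi(p)\psi'(p)p^{-1}$. Regroupant ces contributions dans un unique déterminant, il ne reste plus que le facteur isolé $(1-\psi(p)\psi'(p)p^{-s})^{-1}$, ce qui est la formule annoncée.

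Aucune étape ne présente de difficulté réelle ; le seul point à ne pas manquer est d'appliquer correctement \eqref{regle_prod_tens_rep_sym} dans le troisième cas, car c'est elle qui rend compte du facteur \emph{supplémentaire} qui distingue structurellement ce cas des deux précédents.
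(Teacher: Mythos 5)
Votre esquisse est correcte et suit exactement la même démarche que la démonstration du texte : décomposition du produit tensoriel des paramètres de Langlands, application de \eqref{defi_fonction_L_WD} terme à terme, puis, dans le troisième cas, utilisation de $U_2\otimes U_2\simeq U_1\oplus U_3$ pour isoler le facteur supplémentaire $(1-\psi(p)\psi'(p)p^{-s})^{-1}$ après avoir reconnu les valeurs propres de $d_p(\varpi_p)\otimes d_p(\varpi'_p)$. Rien à redire.
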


\begin{proof}
Il suffit de regarder la représentation du groupe de Weil-Deligne obtenue par tensorisation.

Dans le premier cas, nous avons :
\[
\Ll(\pi_p) \otimes \Ll(\pi'_p)= \bigoplus_{i=1}^m \bigoplus_{j=1}^{m'} \chi_i \chi'_j.
\]
C'est donc encore une somme de caractères non ramifiés, on en déduit :
\[
L_p(\pi_p\times \pi'_p)=\prod_{i=1}^m\prod_{j=1}^{m'} \frac{1}{1-\chi_i(p)\chi'_j(p)p^{-s}}.
\]

Dans le deuxième cas, nous avons :
\[
\Ll(\pi_p) \otimes \Ll(\varpi_p)= \bigoplus_{i=1}^m \bigoplus_{j=1}^{n-2} \chi_i \eta_j \oplus \bigoplus_{i=1}^m (\chi_i \psi) \otimes U_2.
\]
Les contributions de chacun de ces termes ont été calculées précédemment et on obtient :
\begin{align*}
L_p(\pi_p\times \varpi_p)&=\left(\prod_{i=1}^m\prod_{j=1}^{n-2} \frac{1}{1-\chi_i(p)\eta_j(p)p^{-s}}\right) \prod_{i=1}^m \frac{1}{1-\chi_i(p)\psi_j(p)p^{-s-\frac{1}{2}}} \\
&=\prod_{i=1}^m\left[\left(\prod_{j=1}^{n-2} \frac{1}{1-\chi_i(p)\eta_j(p)p^{-s}}\right) \frac{1}{1-\chi_i(p)\psi(p)p^{-s-\frac{1}{2}}} \right].
\end{align*}

Dans le troisième cas, nous avons :
\[
\Ll(\varpi_p) \otimes \Ll(\varpi'_p)= \bigoplus_{i=1}^{n-2} \bigoplus_{j=1}^{n'-2} \eta_i \eta'_j \oplus \bigoplus_{i=1}^{n-2} \eta_i \psi' \otimes U_2 \oplus \bigoplus_{j=1}^{n'-2} \eta'_j \psi \otimes U_2 \oplus (\psi \psi' \otimes (\1 \oplus U_3)),
\]
en utilisant $U_2 \otimes U_2=U_1 \oplus U_3$ (\cf \eqref{regle_prod_tens_rep_sym}). \ps

On pose alors
\begin{align*}
L_p^{\rm I}(s)&= \prod_{i=1}^{n-2} \prod_{j=1}^{n'-2} \frac{1}{1-\eta_i(p)\eta'_j(p) p^{-s}} ;\\
L_p^{\rm II}(s)&= \prod_{i=1}^{n-2} \frac{1}{1-\eta_i(p)\psi'(p) p^{-\frac{1}{2}-s}} \prod_{j=1}^{n'-2} \frac{1}{1-\eta'_j(p)\psi(p) p^{-\frac{1}{2}-s}} ;\\
L_p^{\rm III}(s)&= \frac{1}{1-\psi(p)\psi'(p) p^{-s}} \cdot \frac{1}{1-\psi(p)\psi'(p) p^{-1-s}}
\end{align*}

puis
\[
L_p(s,\varpi_p \times \varpi'_p)=L_p^{\rm I}(s)L_p^{\rm II}(s)L_p^{\rm III}(s).
\]

On voit alors que le deuxième facteur de $L_p^{\rm III}$ combiné à $L_p^{\rm I}$ et $L_p^{\rm II}$ nous donne $(\det(1-p^{-s}d_p(\varpi_p)\otimes d_p(\varpi'_p))^{-1}$. Il reste donc le premier facteur de $L_p^{\rm III}$.
\end{proof}

\subsubsection{Facteurs epsilon locaux}\label{Facteurs epsilon locaux}

Il nous faut enfin déterminer les facteurs epsilon locaux en $p$ d'une représentation et d'une paire de représentations (le premier cas relevant du second si l'on veut en considérant la représentation triviale).
Nous raisonnons de nouveau \og côté galoisien\fg sur les paramètres de Langlands.

\begin{prop}\label{prop_facteur_eps_p_paire}
Avec les notations \eqref{Notations_pi,varpi_p} et, en notant $\omega_{\mu}$ 
le caractère central d'une représentation locale $\mu$, on a :
\begin{align*}
\eps_p(\pi \times \pi')&=1 ; \\
\eps_p(\pi \times \varpi)&= (-1)^m \psi^m(p)\, \omega_{\pi_p}(p) ; \\
\eps_p(\varpi \times \varpi')&=(-1)^{n+n'}\psi(p)^{n'-2}\psi'(p)^{n-2}\,\omega_{\varpi_p}(p)\,\omega_{\varpi'_p}(p).
\end{align*}
\end{prop}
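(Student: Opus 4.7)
The strategy is to transport the computation to the Galois side via the local Langlands correspondence for $\GL_n$ (Théorème \ref{LLC_GLn}), compute the tensor product decompositions (already done in the proof of Proposition \ref{prop_calcul_fonction_L_locale}), and then apply additivity of $\eps_p$ together with the explicit formula \eqref{eps_rep_wd}. Throughout, we use that for any non-ramified character $X$ of $\W_{\qp}$, one has $\eps(X)=1$ (paragraphe \ref{Rappels}), and that compatibility of the local Langlands correspondence with the central character (Théorème \ref{LLC_GLn}) combined with the class field theory identification gives $\omega_{\pi_p}(p)=\prod_i \chi_i(p)$ and $\omega_{\varpi_p}(p)=\prod_i \eta_i(p)\cdot \psi(p)^2$ (the factor $\psi(p)^2$ arising because $\det(\psi\otimes U_2)=\psi^2$, as $U_2$ maps into $\mathrm{SL}_2(\C)$).

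For the first identity, $\Ll(\pi_p)\otimes\Ll(\pi'_p)=\bigoplus_{i,j}\chi_i\chi'_j$ is a sum of non-ramified characters, so $\eps_p(\pi\times\pi')=\prod_{i,j}\eps(\chi_i\chi'_j)=1$.

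For the second identity, the decomposition
\[
\Ll(\pi_p) \otimes \Ll(\varpi_p)= \bigoplus_{i=1}^m \bigoplus_{j=1}^{n-2} \chi_i \eta_j \;\oplus\; \bigoplus_{i=1}^m (\chi_i \psi) \otimes U_2
\]
shows that the sum over non-ramified characters $\chi_i\eta_j$ contributes $1$, while each $(\chi_i\psi)\otimes U_2$ contributes $-(\chi_i\psi)(p)=-\chi_i(p)\psi(p)$ by \eqref{eps_rep_wd}. The product over $i=1,\dots,m$ equals $(-1)^m \psi(p)^m \prod_i\chi_i(p)=(-1)^m\psi(p)^m\,\omega_{\pi_p}(p)$, as claimed.

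For the third identity, the tensor product recalled in the proof of Proposition \ref{prop_calcul_fonction_L_locale} has four types of summands. The direct sum $\bigoplus_{i,j}\eta_i\eta'_j$ of non-ramified characters contributes $1$; the terms $\eta_i\psi'\otimes U_2$ and $\eta'_j\psi\otimes U_2$ contribute $-\eta_i(p)\psi'(p)$ and $-\eta'_j(p)\psi(p)$ respectively by \eqref{eps_rep_wd}; finally $\psi\psi'\otimes U_1$ contributes $1$ while $\psi\psi'\otimes U_3$ contributes $(-\psi(p)\psi'(p))^{2}=\psi(p)^{2}\psi'(p)^{2}$. Multiplying all four contributions gives
\[
\eps_p(\varpi\times\varpi')=(-1)^{n-2+n'-2}\,\psi(p)^{n'-2+2}\,\psi'(p)^{n-2+2}\,\prod_i\eta_i(p)\prod_j\eta'_j(p),
\]
and the expression of $\omega_{\varpi_p}(p)\omega_{\varpi'_p}(p)$ recalled above yields precisely the stated formula $(-1)^{n+n'}\psi(p)^{n'-2}\psi'(p)^{n-2}\,\omega_{\varpi_p}(p)\,\omega_{\varpi'_p}(p)$. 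The main bookkeeping to watch is the $\psi^2$ piece coming from $\det(\psi\otimes U_2)$ and the $U_3$ factor in $U_2\otimes U_2$; everything else is routine.
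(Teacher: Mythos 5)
Your proof is correct and follows essentially the same route as the paper: transfer to the Galois side, reuse the tensor-product decompositions from Proposition \ref{prop_calcul_fonction_L_locale}, apply additivity of $\eps_p$ with formula \eqref{eps_rep_wd}, and identify $\prod_i\chi_i(p)$ (resp. $\psi(p)^2\prod_i\eta_i(p)$) with the value of the central character via the compatibility in Théorème \ref{LLC_GLn}. The only cosmetic difference is that you apply \eqref{eps_rep_wd} directly to $\psi\psi'\otimes U_3$, whereas the paper reaches the same value $\psi(p)^2\psi'(p)^2$ by writing the unramified character as a power of the norm; the bookkeeping and the final identities agree.
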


\begin{proof}
Puisque $\pi$ et $\pi'$ non ramifiées en $p$, $\Ll(\pi_p) \otimes \Ll(\pi'_p)$ est une somme de caractères non ramifiés. Or, d'après les rappels du paragraphe \ref{Rappels} qui s'appliquent ici encore, le facteur epsilon d'un caractère non ramifié est trivial et, par additivité, $\eps_p(\pi \times \pi')=1$.

Avant de considérer les deux autre paires, calculons déjà $\eps_p(\varpi)=\eps(\Ll(\varpi_p))$. On a : 
\[
\Ll(\varpi_p)=\eta_1 \oplus \cdots \oplus\eta_{n-2} \oplus (\psi \otimes U_2),
\]
avec les $\eta_i$ et $\psi$ non ramifiés. Par additivité, on ne doit en fait considérer que $\eps(\psi \otimes U_2)$. Le même calcul qu'au paragraphe \ref{Calcul pour les représentations étudiées} en écrivant le caractère non ramifié $\psi$ comme une puissance de la norme nous donne : $\eps(\psi \otimes U_2)=\psi(p)\eps(U_2)$. Or $\eps(U_2)$ a été calculé \emph{loc. cit.} et vaut $-1$. Nous avons finalement :
\begin{equation}\label{eps_local_varpi_p}
\eps_p(\varpi)=-\psi(p).
\end{equation}

Il nous reste à considérer le cas de la paire $\{\pi,\varpi\}$ et celui de la paire $\{\varpi,\varpi'\}$. Dans le premier cas, nous avons :
\[
\Ll(\pi_p) \otimes \Ll(\varpi_p)= \bigoplus_{i=1}^m \bigoplus_{j=1}^{n-2} \chi_i \eta_j \oplus \bigoplus_{i=1}^m (\chi_i \psi) \otimes U_2,
\]
et donc, par le calcul précédent :
\begin{align*}
\eps_p(\pi \times \varpi)&=\prod_{i=1}^m(-\chi_i\psi)(p) \\
						&=(-1)^m \psi^m(p) \prod_{i=1}^m \chi_i(p).
\end{align*}
On remarque alors que le dernier produit est en fait égal à $\det \Ll(\pi_p)(p)$, soit encore à $\omega_{\pi_p}(p)$ par la compatibilité de la correspondance de Langlands pour $\GL_n$ au caractère central (\cf Théorème \ref{LLC_GLn}).\medskip

Dans le second cas, nous avons :
\[
\Ll(\varpi_p) \otimes \Ll(\varpi'_p)= \bigoplus_{i=1}^{n-2} \bigoplus_{j=1}^{n'-2} \eta_i \eta'_j \oplus \bigoplus_{i=1}^{n-2} \eta_i \psi' \otimes U_2 \oplus \bigoplus_{j=1}^{n'-2} \eta'_j \psi \otimes U_2 \oplus (\psi \psi' \otimes (\1 \oplus U_3)).
\]

Il nous faut donc calculer $\eps(\psi\psi' \otimes U_3)$. Si l'on écrit $\psi=|\cdot|^t$ et $\psi'=|\cdot|^{t'}$, alors $\eps(\psi\psi' \otimes U_3)=\eps(|\cdot|^{t+t'} \otimes U_3)=\eps(U_3)p^{\aar_\mathrm{WD}(U_3)(-(t+t'))}$.

Or $\aar_\mathrm{WD}(U_3)=2$ et $\eps(U_3)=(-\1(\Fr))^{3-1}=1$ d'après \eqref{eps_rep_wd} et finalement $\eps(\psi\psi' \otimes U_3)=\psi(p)^2\psi'(p)^2$. Nous avons donc :
\begin{align*}
\eps_p(\varpi \times \varpi')&=\psi(p)^2\psi'(p)^2\prod_{i=1}^{n-2}(-\eta_i\psi')(p) \prod_{j=1}^{n'-2} (-\eta'_j\psi)(p) \\
						&=(-1)^{n+n'}\psi(p)^{n'-2}\psi'(p)^{n-2}\left(\psi(p)^2\prod_{i=1}^{n-2}\eta_i(p)\right) \left(\psi'(p)^2\prod_{j=1}^{n'-2}\eta'_j(p)\right) \\
						&=(-1)^{n+n'}\psi(p)^{n'-2}\psi'(p)^{n-2}\,\omega_{\varpi_p}(p)\,\omega_{\varpi'_p}(p).
\end{align*}
\end{proof}

\section{Alternative symplectique-orthogonale}\label{Alternative symplectique-orthogonale}
Soit $\mathbf{G}$ un groupe algébrique défini et quasi-déployé sur $\Q$ appartenant à une des familles de groupes classiques $\Sp_{2n}, \SO_{2n}$ et $\SO_{2n+1}$. 

On a introduit au paragraphe \ref{Paramètres de Langlands} la notion de groupe dual et en particulier des $\C$-points de ce groupe dual, qui permettent de définir les paramètres de Langlands. Notons $\mathrm{Std}_{\widehat{G}}$ la représentation standard de $\widehat{G}$ dans $\GL_n(\C)$, ce qui d'ailleurs fixe l'entier $n$ que l'on notera $\mathrm{n}(\widehat{G})$. 
 On a ainsi $\mathrm{n}(\widehat{G})=2n$ pour $\mathbf{G}=\SO_{2n}$ ou $\mathbf{G}=\SO_{2n+1}$ et $\mathrm{n}(\widehat{G})=2n+1$ pour $\mathbf{G}=\Sp_{2n}$.


\begin{thm} \emph{(Arthur, \cite{Art13} Theorem 1.4.1 \emph{et} Theorem 1.4.2)} \label{thm_alt_sp_orth}

Soit $n$ un entier strictement positif et soit $\pi$ une représentation automorphe cuspidale \emph{autoduale} de $\GL_n$ sur $\Q$. Alors il existe un groupe classique quasi-déployé $\bm{\mathbf{G}_\pi}$, unique à isomorphisme près, tel que $\mathrm{n}(\widehat{G_\pi})=n$
et une représentation automorphe $\tilde{\pi}$ discrète de $\bm{\mathbf{G}_\pi}$ telle que $\mathrm{Std}_{\widehat{G_\pi}} \circ \Ll(\tilde{\pi}_v)$ soit égale (comme classe de conjugaison) à $\Ll(\pi_v)$ pour toute place $v$ de $\Q$.
\end{thm}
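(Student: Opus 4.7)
The plan is to prove this theorem by the method of Arthur developed in \cite{Art13}, which rests on a simultaneous induction comparing the twisted trace formula of $\GL_n$ with the stable trace formulas of the quasi-split classical groups. I will sketch the conceptual architecture rather than reproduce the enormous technical apparatus.

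First, I would reformulate the statement in terms of twisted endoscopy. Fix the involution $\theta$ of $\GL_n$ given by $g \mapsto J_n\, {}^t\! g^{-1} J_n^{-1}$ for an appropriate antidiagonal $J_n$. A cuspidal representation $\pi$ of $\GL_n(\mathbb{A}_\Q)$ is self-dual if and only if $\pi \simeq \pi \circ \theta$, equivalently if and only if $\pi$ extends (in two ways) to a representation $\widetilde{\pi}$ of the semi-direct product $\GL_n(\mathbb{A}_\Q) \rtimes \langle\theta\rangle$; its twisted character $\Theta_{\widetilde{\pi},\theta}$ is the fundamental invariant. The classical groups $\Sp_{2m}$, $\SO_{2m+1}$, $\SO_{2m}^{\pm}$ appearing in the statement are exactly the (elliptic) twisted endoscopic groups of $(\GL_n,\theta)$ with $\mathrm{n}(\widehat{G}) = n$, and the standard representation $\mathrm{Std}_{\widehat{G}}$ is the map of $L$-groups attached to each endoscopic datum.

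Next, the existence of $\mathbf{G}_\pi$ and $\widetilde{\pi}$ would follow from comparing the spectral expansions of two trace formulas: the $\theta$-twisted invariant trace formula of Arthur for $\GL_n$, and the stable trace formula for each quasi-split classical group $\mathbf{G}$ with $\mathrm{n}(\widehat{G}) = n$. The geometric sides are related by Langlands--Kottwitz--Shelstad transfer of orbital integrals, for which the fundamental lemma (Ngô, Waldspurger, Laumon--Ngô) is the crucial input. On the spectral side, Arthur's stable multiplicity formula attaches to each ``global parameter'' $\psi$ of $\mathbf{G}$ a contribution involving a packet and an explicit sign character; summing over $\psi$ and transferring, one matches the contribution of the self-dual cuspidal $\pi$ on the $\GL_n$ side with a single parameter $\psi_\pi$ on exactly one group $\mathbf{G}_\pi$. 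Concretely, one would proceed by induction on $n$: at each step, subtract the contributions of parameters that factor through proper Levi subgroups (handled by the induction hypothesis and by the Moeglin--Waldspurger classification of the discrete spectrum of $\GL_n$), leaving the simple cuspidal parameters to be identified.

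For uniqueness of $\mathbf{G}_\pi$, the argument is essentially local-global. At almost every (unramified) place $v$, the Satake parameter of $\pi_v$ is a semisimple class in $\GL_n(\mathbb{C})$ which is self-dual; if $\mathrm{Std}_{\widehat{G}} \circ \Ll(\widetilde{\pi}_v)$ realizes this class as lying in a symplectic (resp.\ orthogonal) subgroup, this property is essentially rigid and detected globally by the location of the pole at $s=1$ of $L(s,\pi,\mathrm{Sym}^2)$ versus $L(s,\pi,\Lambda^2)$ (Shahidi, Kim--Shahidi), which is an invariant of $\pi$ alone. The finer distinction between $\SO_{2m+1}$ and $\Sp_{2m}$ (when $n=2m$), or between split and non-split forms of $\SO_{2m}$, is pinned down respectively by the central character of $\pi$ and by its quadratic ``discriminant'' character obtained from the determinant of the Langlands parameter.

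The main obstacle is the inductive stabilization itself: the stable multiplicity formula for classical groups and the stabilization of the twisted trace formula for $\GL_n$ are not proved separately but intertwined, and each inductive step requires controlling delicate cancellations between spectral contributions of non-tempered parameters and geometric contributions of singular orbital integrals. This is the core difficulty occupying most of \cite{Art13}, and the completion of the argument further relies on the supplementary work of Moeglin on local packets (cf.\ the references preceding Theorem \ref{LLC_SO}) and, in the $\SO_{2m}$ case not fully addressed in \emph{loc.\ cit.}, on Taïbi's refinement \cite{Taibi_cpctmult} mentioned in the introduction.
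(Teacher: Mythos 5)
Le texte de la thèse ne démontre pas cet énoncé : il est cité tel quel d'Arthur (\cite{Art13}, Theorems 1.4.1 et 1.4.2) et utilisé comme boîte noire. Votre esquisse doit donc être confrontée à l'argument d'Arthur lui-même, et à ce niveau de généralité elle en restitue fidèlement l'architecture : reformulation de l'autodualité via le groupe tordu $\GL_n \rtimes \langle\theta\rangle$, identification des groupes classiques quasi-déployés avec $\mathrm{n}(\widehat{G})=n$ comme données endoscopiques tordues elliptiques, comparaison de la formule des traces tordue de $\GL_n$ avec les formules des traces stables de ces groupes (reposant sur le lemme fondamental), et récurrence sur $n$ isolant les paramètres simples cuspidaux après soustraction des contributions des Levi propres via Mœglin--Waldspurger. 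C'est bien la stratégie de \cite{Art13}, et votre dernier paragraphe identifie correctement la difficulté centrale (l'intrication de la stabilisation et de la formule de multiplicité stable).

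Un point de votre discussion d'unicité est cependant erroné. Pour $n=2m$, les deux candidats sont $\SO_{2m+1}$ (de dual $\Sp_{2m}(\C)$) et les formes quasi-déployées de $\SO_{2m}$ (de dual $\SO_{2m}(\C)$) ; le groupe $\Sp_{2m}$ a pour dual $\SO_{2m+1}(\C)$, donc $\mathrm{n}(\widehat{G})=2m+1$, et n'apparaît que pour $n$ impair, où il est alors l'unique candidat (comme le rappelle d'ailleurs le texte juste après l'énoncé). La \emph{distinction fine} que vous attribuez au caractère central n'est donc pas celle entre $\SO_{2m+1}$ et $\Sp_{2m}$, mais l'alternative symplectique/orthogonale entre $\SO_{2m+1}$ et $\SO_{2m}^{\pm}$, déjà tranchée par le pôle en $s=1$ de $L(s,\pi,\Lambda^2)$ contre celui de $L(s,\pi,\mathrm{Sym}^2)$ que vous invoquez dans la phrase précédente ; le caractère central (égal au déterminant du paramètre) sert ensuite à sélectionner la forme quasi-déployée de $\SO_{2m}$ dans le cas orthogonal. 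Cette confusion de groupes ne compromet pas l'architecture de la preuve, mais tel qu'écrit le paragraphe d'unicité oppose la mauvaise paire de groupes.
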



On dit alors que $\pi$ est orthogonale si $\widehat{G_\pi}$ est (isomorphe à) un groupe spécial orthogonal et symplectique si $\widehat{G_\pi}$ est (isomorphe à) un groupe symplectique.
Si $n$ est impair, alors  la seule possibilité est $\bm{\mathbf{G}_\pi}=\Sp_{n-1}$, et $\pi$ est orthogonale. 

\begin{prop} \emph{(\cite{Chen-Lannes}, Proposition 3.3)} \label{CL VIII 3.3}

Soit $\pi$ une représentation automorphe cuspidale autoduale algébrique de $\GL_n$. On suppose que $\pi_\infty$ possède au moins un poids sans multiplicité. Alors $\pi$ est symplectique si, et seulement si son poids motivique $\mathrm{w}(\pi)$ est impair.
\end{prop}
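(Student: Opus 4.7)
The plan is to read off the symplectic/orthogonal character of $\pi$ directly from its archimedean parameter $\Ll(\pi_\infty)\in K_\infty$, and to transfer this into the global statement via Arthur's theorem. By Lemma \ref{lemme_pi_infty_i_w}, the algebraicity of $\pi$ writes $\Ll(\pi_\infty)$ as an effective integral combination of $\1$, $\eps_{\C/\R}$ and the $I_w$ with $w\geq 1$. By Proposition-Définition \ref{defi_alg_centrée}, the integers $w_i$ sharing the same parity forces the parity of each $w$ appearing in such a decomposition to be that of $w(\pi)=w_1$; note furthermore that the weight $0$ occurs only when $n$ is odd, in which case $w(\pi)$ is necessarily even.

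First I would compute the symplectic/orthogonal sign of each irreducible summand of elements of $K_\infty$. The characters $\1$ and $\eps_{\C/\R}$ are trivially orthogonal. For $w\geq 1$, pick a basis $(e_1,e_2)$ of $I_w$ with $e_1$ spanning the $\xi^w$-eigenline for $\C^\times$ and $e_2=j\cdot e_1$; since $j^2=-1$ and $\xi^w(-1)=(-1)^w$, the element $j$ acts by $\bigl(\begin{smallmatrix} 0 & (-1)^w \\ 1 & 0 \end{smallmatrix}\bigr)$. Any $W_\R$-invariant bilinear form $B$ on $I_w$ satisfies $B(e_1,e_1)=B(e_2,e_2)=0$ (by $\C^\times$-equivariance), and $j$-invariance forces $B(e_2,e_1)=(-1)^w B(e_1,e_2)$. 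Hence $I_w$ is \emph{orthogonal} when $w$ is even, and \emph{symplectic} when $w$ is odd.

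Next I would exploit the multiplicity-one hypothesis: there exists an irreducible summand $\sigma$ of $\Ll(\pi_\infty)$ appearing with multiplicity exactly $1$ (if $\sigma=I_w$, the corresponding multiplicity-one weight is $\pm w/2$; if $\sigma\in\{\1,\eps_{\C/\R}\}$, the multiplicity-one weight is $0$). By Theorem \ref{thm_alt_sp_orth}, writing $\Ll(\pi_v)=\mathrm{Std}_{\widehat{G_\pi}}\circ\Ll(\tilde\pi_v)$ places $\Ll(\pi_\infty)$ inside $\widehat{G_\pi}$, so $\Ll(\pi_\infty)$ preserves a nondegenerate bilinear form on its underlying space, symplectic if $\pi$ is symplectic and symmetric if $\pi$ is orthogonal. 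Applying Lemma \ref{invariants-non-dégénérés} to this form (with $\Gamma=W_\R$) shows that its restriction to the isotypic component $\sigma$ is nondegenerate, and Schur's Lemma then forces this restriction to have the type (symmetric or alternating) computed in the previous step.

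Combining the two observations: $\pi$ is symplectic if and only if $\sigma$ is symplectic, \ie if and only if $\sigma=I_w$ with $w$ odd, and the same-parity remark from the first paragraph then gives $\pi$ symplectic $\iff w(\pi)$ odd. In the remaining case ($\sigma=\1$, $\eps_{\C/\R}$, or $I_w$ with $w$ even), both $\sigma$ is orthogonal and $w(\pi)$ is even, so $\pi$ is orthogonal and $w(\pi)$ is even, again matching the claim. The only nontrivial external input is Arthur's Theorem \ref{thm_alt_sp_orth}; the remaining steps are direct computations of invariant bilinear forms on irreducibles of $W_\R$ combined with a bookkeeping argument on weights, so I do not expect any serious obstacle.
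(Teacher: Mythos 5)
Your proof is correct. Bear in mind that the paper does not actually reprove this statement: its ``proof'' consists of observing that the argument of \cite{Chen-Lannes}, Proposition 3.3 nowhere uses the conductor-one hypothesis and hence applies verbatim. What you have written out — put $\Ll(\pi_\infty)$ inside $\widehat{G_\pi}$ via Theorem \ref{thm_alt_sp_orth}, use the multiplicity-one weight to isolate an irreducible isotypic component on which the invariant form stays nondegenerate (Lemma \ref{invariants-non-dégénérés} plus Schur), and compute that $I_w$ is symplectic exactly for $w$ odd while $\1$ and $\eps_{\C/\R}$ are orthogonal — is essentially that argument, reconstructed from the paper's own lemmas, and all the steps check out (the computation of the action of $j$ on $I_w$ and the resulting sign $B(e_2,e_1)=(-1)^wB(e_1,e_2)$ is right). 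One parenthetical slip: it is not true that the weight $0$ occurs only when $n$ is odd (e.g.\ $\1\oplus\eps_{\C/\R}\oplus I_2$ has $n=4$); but the only fact you actually use is that if $0$ occurs as a weight then all weights are integral, so $\mathrm{w}(\pi)$ is even, and that does follow from Clozel purity as packaged in Proposition-Définition \ref{defi_alg_centrée}.
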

\begin{proof}
La démonstration \emph{loc. cit.} suppose que le conducteur est égal à 1, ce qui n'intervient pas dans la preuve, elle s'applique donc \emph{verbatim}.
\end{proof}

\begin{prop}\label{type I sp}
Soit $\pi$ une représentation automorphe cuspidale autoduale de $\GL_n$ de conducteur $p$ et de type (I). Alors $\pi$ est symplectique et $n$ est pair.
\end{prop}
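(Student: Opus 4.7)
I would argue by contradiction, assuming $\pi$ is orthogonal and deriving a contradiction from the shape of the local parameter at $p$. By Theorem \ref{thm_alt_sp_orth}, if $\pi$ is orthogonal then $\widehat{G_\pi}$ is a special orthogonal group, and its standard representation realizes it as a subgroup of $\GL_n(\C)$ preserving a non-degenerate \emph{symmetric} bilinear form $b$ on $\C^n$. In particular the local parameter $\Ll(\pi_p) : \WD_{\qp} \to \GL_n(\C)$ takes values in this orthogonal subgroup and preserves $b$.

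Type (I) at $p$, combined with autoduality of $\pi$, yields
\[
\Ll(\pi_p) \;=\; \chi_1 \oplus \cdots \oplus \chi_{n-2} \oplus (\psi \otimes U_2),
\]
with $\chi_1, \ldots, \chi_{n-2}, \psi$ unramified characters of $\W_{\qp}$. Since $\psi \otimes U_2$ is the unique two-dimensional irreducible constituent (all the $\chi_i$ being one-dimensional), the isomorphism $\Ll(\pi_p) \simeq \Ll(\pi_p)^\vee$ forces $(\psi \otimes U_2)^\vee \simeq \psi \otimes U_2$, equivalently $\psi^2 = 1$. Hence $\psi \otimes U_2$ is an irreducible self-dual summand of $\Ll(\pi_p)$ whose isotypic component inside $\Ll(\pi_p)$ is itself.

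Now Lemma \ref{invariants-non-dégénérés}, applied to the semisimple action of $\WD_{\qp}$ on $\Ll(\pi_p)$, tells us that the symmetric invariant form $b$ restricts non-degenerately to the isotypic component $\psi \otimes U_2$. On the other hand, by Schur the space of $\WD_{\qp}$-invariant bilinear forms on the irreducible representation $\psi \otimes U_2$ is one-dimensional; and by Lemma \ref{prod_tens_fb}, applied to the alternating form on $U_2$ preserved by $\SU(2)$ and a (necessarily symmetric) non-degenerate form on the one-dimensional $\psi$, this unique invariant form is \emph{alternating}. Consequently no non-zero symmetric invariant form exists on $\psi \otimes U_2$, contradicting the non-degeneracy of $b_{|\psi \otimes U_2}$. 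Therefore $\pi$ cannot be orthogonal and, by the dichotomy of Theorem \ref{thm_alt_sp_orth}, is symplectic. The parity claim then follows from the remark after that theorem: when $n$ is odd the only possibility is $\mathbf{G}_\pi = \Sp_{n-1}$, which produces an \emph{orthogonal} $\pi$; hence $\pi$ symplectic forces $n$ even.

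The delicate point is the local-global input: to extract the contradiction purely from the ramified place, one needs the assertion that an orthogonal global $\pi$ really does endow $\Ll(\pi_p)$ with a preserved symmetric form. This is essentially built into Arthur's alternative (Theorem \ref{thm_alt_sp_orth}); once granted, the remainder of the argument is just a direct application of the two lemmas cited above, exploiting the fact that a single Jordan block $U_2$ with an unramified quadratic twist is intrinsically symplectic.
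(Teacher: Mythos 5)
Your proof is correct and follows essentially the same route as the paper: invoke Theorem \ref{thm_alt_sp_orth} to make $\Ll(\pi_p)$ factor through $\mathrm{Std}_{\widehat{G_\pi}}$, and observe that the unique constituent $\psi\otimes U_2$ of the type (I) parameter is intrinsically symplectic, so the parameter cannot land in an orthogonal group. The paper compresses this last step into the remark that $\psi\otimes U_2$ \emph{\og est le seul de son type \fg}, whereas you justify it carefully via Lemmes \ref{invariants-non-dégénérés} and \ref{prod_tens_fb}; the parity argument is also the intended one.
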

Cette Proposition explique pourquoi la famille de groupes classiques $\SO_{2n+1}$ joue un rôle prépondérant dans la suite de notre travail.
\begin{proof}
D'après le Théorème \ref{thm_alt_sp_orth}, il existe un groupe classique $\bm{\mathbf{G}_\pi}$ et une représentation discrète $\tilde{\pi}$ associés à $\pi$. On sait de plus que $\Ll(\pi_p)$ est égale, comme classe de conjugaison, à $\mathrm{Std}_{\widehat{G_\pi}} \circ \Ll(\tilde{\pi}_p)$. Or on connaît exactement $\Ll(\pi_p)$ qui est égal à $\chi_1 \oplus \cdots \oplus \chi_{n-2} \oplus (\psi \otimes U_2)$, où les $\chi_i$ \emph{et} $\psi$ sont des caractères non ramifiés. En particulier, le terme $(\psi \otimes U_2)$ est le seul de son type et $\Ll(\pi_p)$ ne peut donc pas se factoriser par un groupe orthogonal. Donc $\widehat{G_\pi}$ est nécessairement égal à $\Sp_{2m}(\C)$ et $\pi$ est ainsi une représentation symplectique avec $\bm{\mathbf{G}_\pi}=\SO_{2m+1},\, \widehat{G_\pi}=\Sp_{2m}(\C)$ et $n=2m$. 
\end{proof}

\begin{cor}\label{cor_654}
Soit $w$ un entier \emph{pair} et soit $V$ un élément de $K_\infty^{\leq w}$ sans multiplicité. Alors si $\pi$ est une représentation automorphe cuspidale algébrique de conducteur $p$ et de type (I) telle que $\Ll(\pi_\infty) = V$, $\pi$ n'est pas autoduale. On a alors $\pi^\vee \not\simeq \pi$ qui vérifie également $\Ll((\pi^\vee)_\infty) = V$.

En particulier le nombre de (classes d'isomorphie de) représentations automorphes cuspidales algébriques $\pi$ de conducteur $p$, et de type (I) vérifiant $\Ll(\pi_\infty)= V$ est pair.
\end{cor}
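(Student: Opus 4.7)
The plan is to combine Proposition \ref{type I sp} with Proposition \ref{CL VIII 3.3} to rule out self-duality, and then to pair each $\pi$ with $\pi^\vee$ to obtain the parity statement.

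First, suppose for contradiction that some such $\pi$ is self-dual. Since $V \in K_\infty^{\leq w}$ is without multiplicity and $\Ll(\pi_\infty) = V$, every weight of $\pi_\infty$ occurs exactly once; in particular, at least one weight of $\pi_\infty$ has multiplicity one, so the hypothesis of Proposition \ref{CL VIII 3.3} is satisfied. By Proposition \ref{type I sp}, a self-dual automorphic cuspidal representation of $\GL_n$ on $\Q$ of conductor $p$ and type (I) is necessarily symplectic (this is exactly where the asymmetric factor $\psi \otimes U_2$ in the local parameter at $p$ is used: it forces $\widehat{G_\pi}$ to be a symplectic group). By Proposition \ref{CL VIII 3.3}, the motivic weight $\mathrm{w}(\pi)$ must then be odd. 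But $\Ll(\pi_\infty) = V$ belongs to $K_\infty^{\leq w}$ with $w$ even, so $\mathrm{w}(\pi) \leq w$ is constrained to be even (since $K_\infty^{\leq w}$ for $w$ even and $w$ odd involve different generators, cf.\ Definition \ref{defin_K_infty_leq_w}, and the weights of $V$ determine the parity). This is the required contradiction, so $\pi \not\simeq \pi^\vee$.

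Next, I must verify that $\pi^\vee$ lies in the same set. By Proposition \ref{prop_613}, $\pi^\vee$ is again automorphic cuspidal on $\GL_n$; by Corollary \ref{cor_cond_contragrediente}, it has the same conductor $p$. Since every element of $K_\infty$ is self-dual, we have $\Ll((\pi^\vee)_\infty) \simeq \Ll(\pi_\infty)^\vee \simeq V^\vee = V$ (this also follows from Corollary \ref{cor_alg_duale_memes_poids}, which says $\pi^\vee$ is algebraic with the same archimedean component as $\pi$). Finally, type (I) is preserved under duality: if $\Ll(\pi_p) = \chi_1 \oplus \cdots \oplus \chi_{n-2} \oplus (\psi \otimes U_2)$ with all characters unramified, then $\Ll((\pi^\vee)_p) = \chi_1^{-1} \oplus \cdots \oplus \chi_{n-2}^{-1} \oplus (\psi^{-1} \otimes U_2)$ is still of the form described in Proposition-Definition \ref{prop_types_I_et_II}\,(I).

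The involution $\pi \mapsto \pi^\vee$ therefore acts without fixed points on the set of (isomorphism classes of) automorphic cuspidal algebraic representations of $\GL_n$ over $\Q$ of conductor $p$, type (I), and such that $\Ll(\pi_\infty) = V$. The cardinality of this set is consequently even. The only routine verification is the parity compatibility between $w$ being even and the generators of $K_\infty^{\leq w}$ forcing $\mathrm{w}(\pi)$ to be even; this is the one place where one must be careful, but it is a direct reading of Definition \ref{defin_K_infty_leq_w}.
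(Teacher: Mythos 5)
Your proof is correct and uses exactly the same two ingredients as the paper (Proposition \ref{type I sp} and Proposition \ref{CL VIII 3.3}), merely running the contradiction in the opposite order — the paper deduces ``orthogonale'' from the even motivic weight and contradicts the symplectic conclusion of Proposition \ref{type I sp}, while you deduce ``symplectique, donc poids motivique impair'' and contradict the evenness of $w$. The additional verifications you spell out (conductor, type (I) and archimedean component preserved under $\pi \mapsto \pi^\vee$, fixed-point-freeness of the involution) are left implicit in the paper but are exactly what is needed.
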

\begin{proof}
Supposons $\pi$ autoduale. Alors la Proposition \ref{CL VIII 3.3} nous dit que $\pi$ est orthogonale, ce qui contredit la Proposition \ref{type I sp}.
\end{proof}
\subsection{Facteur epsilon}
\begin{prop} \emph{(Arthur \cite{Art13}, Theorem 1.5.3)} \label{facteur_epsilon_alternative_sp_orth}

Soient $\pi$ et $\pi'$ deux représentations automorphes cuspidales autoduales de $\GL_m$ et $\GL_n$ respectivement. Alors, si $\pi$ et $\pi'$ sont du même type (\ie toutes deux orthogonales ou toutes deux symplectiques), le facteur global $\eps(\pi \times \pi')$ qui apparaît dans l'équation fonctionnelle de la fonction $\Lambda$ de paire :
\[
\Lambda(s, \pi \times \pi')=\eps(\pi \times \pi'){\rm N}(\pi \times \pi')^{\frac{1}{2}-s}\Lambda(1-s, \pi \times \pi'),
\]
est égal à 1.
\end{prop}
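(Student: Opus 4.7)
The plan is to proceed in two stages. First, I would establish the structural fact that the Langlands parameter $\Ll(\pi) \otimes \Ll(\pi')$ is self-dual of orthogonal type. By Theorem \ref{thm_alt_sp_orth}, the hypothesis that $\pi$ and $\pi'$ are both orthogonal (resp. both symplectic) means that at every place $v$ the local parameters $\Ll(\pi_v)$ and $\Ll(\pi'_v)$ preserve non-degenerate symmetric (resp. alternating) bilinear forms. The tensor product of two symmetric forms, or of two alternating forms, is symmetric and non-degenerate (the non-degeneracy being precisely Lemma \ref{prod_tens_fb}), so $\Ll(\pi_v) \otimes \Ll(\pi'_v)$ is orthogonally self-dual at every place; in particular $(\pi \times \pi')^\vee \simeq \pi^\vee \times (\pi')^\vee \simeq \pi \times \pi'$.

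From this self-duality, the equality $\eps(\pi \times \pi')^2 = 1$ follows immediately: writing the functional equation \eqref{eq_fonctionnelle_paire_globale} for $\pi \times \pi'$ and replacing $s$ by $1 - s$, and using the isomorphism just noted together with ${\rm N}(\pi \times \pi') = {\rm N}(\pi^\vee \times (\pi')^\vee)$, one obtains $\Lambda(s, \pi \times \pi') = \eps(\pi \times \pi')^2 \, \Lambda(s, \pi \times \pi')$. Hence $\eps(\pi \times \pi') \in \{\pm 1\}$, which is the ``easy half''. Note that this first stage uses only the self-duality of $\pi$ and $\pi'$ individually, and not the matching of their types.

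The hardest step, by far, is ruling out $\eps(\pi \times \pi') = -1$: this is where the matching of types really enters. I would follow Arthur's strategy by lifting $\pi$ and $\pi'$ via Theorem \ref{thm_alt_sp_orth} to discrete automorphic representations $\tilde{\pi}$ on $\bm{\mathbf{G}_\pi}$ and $\tilde{\pi}'$ on $\bm{\mathbf{G}_{\pi'}}$, and expressing $\Lambda(s,\pi \times \pi')$ as the Langlands $L$-function attached to the representation $\mathrm{Std}_{\widehat{G_\pi}} \otimes \mathrm{Std}_{\widehat{G_{\pi'}}}$ of $\widehat{G_\pi} \times \widehat{G_{\pi'}}$ composed with the product parameter. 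The plan would then be to identify the global root number with the sign appearing in Arthur's multiplicity formula, where it must be $+1$ for a discrete automorphic representation of the relevant endoscopic group to actually occur. The technical core of this final step is Arthur's comparison of stable and twisted-endoscopic trace formulas in \cite{Art13}; an alternative but comparably substantial route would be to prove the vanishing of the root number directly via an analogue in the Weil--Deligne setting of Deligne's expression for the global root number of an orthogonal Galois representation in terms of its second Stiefel--Whitney class. Either way, this third stage is the main obstacle, and it is the reason the statement is cited from \cite{Art13} rather than proved \emph{ab initio}.
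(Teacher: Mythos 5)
Your proposal is essentially sound, but it is worth being clear that the paper offers no proof of this statement at all: it is quoted verbatim as Arthur's Theorem 1.5.3, and everything that follows in the paper is explicitly described as a \emph{consistency check} — a direct computation, via Proposition \ref{prop_facteur_eps_p_paire} and the archimedean identity $\eps(I_w\otimes I_{w'})=i^{w+w'+1}i^{w-w'+1}=i^{2(w+1)}=1$ for $w$ odd, showing that the product of the local epsilon factors equals $1$ for the specific pairs (algebraic, symplectic, regular, of conductor $1$ or $p$ of type (I)) that actually occur in the paper. So in those special cases the paper effectively verifies the proposition place by place, whereas you argue structurally: your first two stages (orthogonal self-duality of the tensor parameter via Lemma \ref{prod_tens_fb}, then $\eps(\pi\times\pi')^2=1$ by iterating the functional equation) are correct and are a genuine addition, and you rightly defer the sign to Arthur. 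Two caveats. First, your proposed mechanism for the hard step is slightly circular: in Arthur's multiplicity formula the root numbers of \emph{same-type} pairs do not appear at all — precisely because Theorem 1.5.3, established beforehand through the comparison of stable and twisted trace formulas, guarantees they equal $1$ — so one cannot read the sign off the multiplicity formula. Second, the Deligne/Stiefel--Whitney alternative is only available for genuine Galois (or Weil--Deligne) representations and does not transfer to automorphic parameters without the global correspondence, so it should be presented as heuristic rather than as a comparable route. Neither caveat is a gap, since you ultimately cite Arthur exactly as the paper does.
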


La suite de ce paragraphe est une forme de longue remarque consistant à vérifier que, dans les cas étudiés, les formules de la Proposition \ref{prop_facteur_eps_p_paire} sont bien cohérentes avec ce résultat.


D'après les classifications de \cite{Chen-Lannes} (Théorème F), la seule représentation algébrique orthogonale de conducteur 1 et de poids motivique inférieur à 21 est la représentation triviale. De plus, d'après la Proposition \ref{type I sp}, une représentation autoduale de conducteur $p$ et de type (I) est nécessairement symplectique. Nous allons donc considérer le seul cas du facteur epsilon d'une paire de représentations symplectiques (en dimension paire donc), que l'on supposera de plus régulières (ce qui sera le cas en pratique).\medskip

Soient $\pi$ et $\pi'$ deux représentations automorphes cuspidales de $\GL_{2m}$ et $\GL_{2m'}$ respectivement, algébriques, symplectiques, régulières et de conducteur 1. La Proposition \ref{prop_facteur_eps_p_paire} nous donne déjà $\eps_v(\pi \times \pi')=1$ pour toute place finie $v$.


Par l'hypothèse de régularité et par la Proposition \ref{CL VIII 3.3}, $\pi$ est de poids motivique impair, et on a $\Ll(\pi_\infty)=I_{w_1} \oplus \cdots \oplus I_{w_m}$ avec $w_1 > \cdots >w_m$ impairs (c'est le Lemme \ref{lemme_pi_infty_i_w}). Il en est de même pour $\pi'_\infty$ ; pour déterminer $\eps_\infty(\pi \times \pi')$, il faut donc calculer $\eps(I_w \otimes I_{w'})$ avec $w,w'$ impairs.

Supposons, sans nuire à la généralité $w \geq w'$. Alors, par un résultat déjà mentionné au paragraphe \ref{Facteur local}, nous avons :
\[
I_w \otimes I_{w'}=I_{w+w'} \oplus I_{w-w'},
\]
puis
\begin{align*}
\eps(I_w \otimes I_{w'})&=\eps(I_{w+w'}) \eps(I_{w-w'}) \\
						&=i^{w+w'+1} i^{w-w'+1} \\
						&=i^{2(w+1)} \\
						&=1,
\end{align*}
car $w$ est impair.

Ainsi, dans le cas de deux représentations automorphes cuspidales $\pi$ et $\pi'$, algébriques, symplectiques, régulières et de conducteur 1, nous avons $\eps_v(\pi \times \pi')=1$ à toutes les places et donc bien $\eps(\pi \times \pi')=1$ comme attendu.\newline

Considérons maintenant la paire $\{\pi,\varpi\}$ où $\varpi$ est une représentation automorphe cuspidale de $\GL_{2n}$ de conducteur $p$ et de type (I) -- et encore algébrique, symplectique, régulière. Alors tout se passe de la même façon aux places $v$ différentes de $p$ où l'on a encore $\eps_v(\pi \times \varpi)=1$. Pour conclure, il suffit de regarder ce qu'il se passe à la place $p$.

Il nous faut maintenant être plus précis sur la forme des paramètres de Langlands en $p$. On a vu que $\Ll(\pi_p)$ était une somme de caractères non ramifiés, mais par le Théorème \ref{thm_alt_sp_orth}, ce paramètre est symplectique, ce qui impose que chaque caractère non ramifié apparaît avec son inverse, nous avons finalement :
\[
\Ll(\pi_p)=\chi_1 \oplus \cdots \oplus \chi_m \oplus \chi_m^{-1} \oplus \cdots \oplus \chi_1^{-1}.
\]

Quant à $\Ll(\varpi_p)$, c'est une somme de $2n-2$ caractères non ramifiés et d'un terme $\alpha \otimes U_2$, où $\alpha$ est encore un caractère non ramifié. Or, là aussi, le fait d'être symplectique impose, d'une part, que $\alpha$ est à valeurs dans $\{ \pm 1 \}$ et donc $\alpha \in \{ \1,\eta \}$, et d'autre part que les autres caractères non ramifiés apparaissent chacun avec leur inverse. Finalement, nous avons :
\[
\Ll(\varpi_p)=\bigoplus_{j=1}^{n-1} (\psi_j \oplus \psi_j^{-1}) \oplus (\alpha \otimes U_2).
\]

Ainsi, la Proposition \ref{prop_facteur_eps_p_paire} nous donne :
\[
\eps_p(\pi \times \varpi)=(-1)^{2m} \alpha^{2m}(p) \prod_{i=1}^m \chi_i(p) \chi_i^{-1}(p) =1,
\]
ce qui conclut encore dans ce cas. \newline

Considérons enfin le cas de deux représentations automorphes cuspidales de $\GL_{2n}$ et $\GL_{2n'}$ respectivement, $\varpi$ et $\varpi'$, algébriques, symplectiques, régulières,  de conducteur $p$ et de type (I).
Là encore, il suffit de regarder ce qu'il se passe à la place $p$ pour conclure. On a :
\begin{align*}
\Ll(\varpi_p)&=\bigoplus_{j=1}^{n-1} (\psi_j \oplus \psi_j^{-1}) \oplus (\alpha \otimes U_2), \\
\Ll(\varpi'_p)&=\bigoplus_{j=1}^{n'-1} (\psi'_j \oplus {\psi'_j}^{-1}) \oplus (\alpha' \otimes U_2),
\end{align*}
avec tous les caractères intervenant non ramifiés et $\alpha,\alpha' \in \{\1,\eta\}$.

La Proposition \ref{prop_facteur_eps_p_paire} nous donne alors :
\[
\eps_p(\varpi \times \varpi')=(-1)^{2n+2n'} \alpha^{2n'}(p) {\alpha'}^{2n}(p) \prod_{i=1}^{n-1} \psi_i(p) \psi_i^{-1}(p) \prod_{j=1}^{n'-1} \psi'_j(p) {\psi'_j}^{-1}(p),
\]
avec tous les facteurs égaux à 1, ce qui conclut.


\newpage
\chapter{Théorie d'Arthur pour $\SO_{2n+1}$}\label{Théorie d'Arthur pour SO-2n+1}
La théorie d'Arthur pour paramétrer les représentations des groupes classiques est subtile et demande de nombreuses précautions techniques. Comme nous ne l'utilisons que dans des cas qui simplifient considérablement les énoncés, nous nous contentons ici de rappels dans un cadre \emph{ad hoc}.

Nous aurons besoin de considérer deux types de groupes, présentés au paragraphe \ref{Groupes étudiés et leurs représentations}.
Dans le {\bfseries Cas 1} (déployé), tous les résultats énoncés se trouvent dans le livre de James Arthur \cite{Art13}. Le Chapitre 9 de ce même ouvrage discute de la généralisation au cas des formes intérieures des groupes classiques et donc en particulier au cas des groupes classiques non quasi-déployés, mais ne la démontre pas. 
Sous certaines hypothèses supplémentaires, Olivier Taïbi a pu démontrer les résultats souhaités dans \cite{Taibi_cpctmult}, à l'excellente introduction duquel nous renvoyons. Nous nous contentons ici de remarquer que le {\bfseries Cas 2} relève des résultats démontrés par Taïbi, si bien que nous pourrons appliquer le formalisme d'Arthur indifféremment aux deux cas.

\section{Groupes étudiés et leurs représentations}\label{Groupes étudiés et leurs représentations}

Soit $(V,q)$ un espace quadratique non dégénéré de dimension impaire $2n+1$ sur $\Q$ et soit $\mathbf{G}=\bm{\SO_V}$ le groupe (algébrique) spécial orthogonal associé. Pour chaque place $v$, on notera $V_v$ l'espace quadratique $(V \otimes_\Q \Q_v, q\otimes_\Q \Q_v)$ et on allégera les notations en ne mentionnant pas systématiquement la forme quadratique associée. On fait l'hypothèse que $q \otimes_\Q \Q_p$ est d'indice de Witt maximal $n$ pour tout nombre premier $p$. Ainsi le groupe spécial orthogonal local associé $\bm{\SO_{V_p}}$ est déployé sur $\qp$ pour tout nombre premier $p$ et on peut utiliser les résultats de la Première Partie. Il reste alors à examiner ce qu'il se passe à l'unique place archimédienne. Nous aurons deux cas à considérer :
\begin{itemize}
\item \textbf{Cas 1} : la forme quadratique $q \otimes_\Q \R$ est de signature $(n+1,n)$ (ou l'inverse) donc d'indice de Witt maximal et le groupe spécial orthogonal associé $\bm{\SO_{V_\infty}} \simeq \SO_{n+1,n}/\R \simeq \SO_{n,n+1}/\R$ est encore déployé.  
\item \textbf{Cas 2} : la forme quadratique $q \otimes_\Q \R$ est de signature $(2n+1,0)$ (resp. $(0,2n+1)$) donc définie positive (resp. définie négative) et le groupe spécial orthogonal associé $\bm{\SO_{V_\infty}} \simeq \SO_{2n+1,0}/\R \simeq \SO_{0,2n+1}/\R$ est compact. \newline
\end{itemize}

On trouve le {\bfseries Cas 1} pour tout $n$, il suffit de considérer l'espace quadratique $\Q^{2n+1}$ muni de la forme $x\mapsto x_1x_2+\cdots+x_{2n-1}x_{2n}+x_{2n+1}^2$. Le {\bfseries Cas 2} ne peut en revanche se produire que si $2n+1 \equiv \pm 1 \mod 8$ (\cf Proposition \ref{ImQp}) \footnote{Nous définirons au §\ref{Conjectures pour le poids motivique $21$} un {\bf Cas 3} qui se produit quand $2n+1 \equiv \pm 3 \mod 8$, pour lequel les énoncés sont conjecturaux.}.\newline

Nous pouvons maintenant faire quelques rappels sur les représentations automorphes de $\mathbf{G}$, étant entendu que $\mathbf{G}$ relève d'un des deux cas ci-dessus. De même qu'au paragraphe \ref{Représentations automorphes cuspidales}, nous renvoyons à \cite{BJ-Corvallis}.

Le centre de $\mathbf{G}$ est ici trivial, il n'est donc pas question de caractère central. On peut considérer l'espace $\mathcal{A}^2(\mathbf{G})$ des formes automorphes de carré intégrable pour $\mathbf{G}$ et sa partie discrète. Une représentation automorphe discrète est alors un des constituants irréductibles de cette partie discrète. On a ainsi
\begin{equation}\label{A_disc_SO}
\mathcal{A}_{\rm disc}(\mathbf{G})=\bigoplus_{\pi \in \Pi_{\rm disc}} \m(\pi) \pi,
\end{equation}
où $\m(\pi)$ est la multiplicité de $\pi$ dans cette décomposition, qui est finie et strictement positive par définition de $\Pi_{\rm disc}(\mathbf{G})$.

On peut également définir l'ensemble $\mathcal{A}_{\rm cusp}(\mathbf{G})$ des formes automorphes \emph{paraboliques} (ou \emph{cuspidales}) comme sous-espace de $\mathcal{A}^2(\mathbf{G})$. On a alors $\mathcal{A}_{\rm cusp}(\mathbf{G}) \subset \mathcal{A}_{\rm disc}(\mathbf{G})$ et on note $\Pi_{\rm cusp}(\mathbf{G})$ l'ensemble des représentations intervenant dans la décomposition de $\mathcal{A}_{\rm cusp}(\mathbf{G})$. 

Si $\pi$ est une représentation automorphe discrète de $\mathbf{G}$, on a encore la décomposition en produit tensoriel restreint :
\begin{equation}\label{prod_tens_res_so}
\pi \simeq \pi_\infty \otimes {\bigotimes_p}' \pi_p,
\end{equation}
où $\pi_\infty$ est un module de Harish-Chandra irréductible unitaire\footnote{Dans le {\bfseries Cas 2}, c'est simplement une représentation irréductible unitaire de dimension finie du groupe compact $\mathbf{G}(\R)$.} de $\mathbf{G}(\R)$  et les $\pi_p$ sont des représentations admissibles irréductibles unitaires de $\mathbf{G}(\Q_p)$, non ramifiées pour presque tout $p$. 

Le module de Harish-Chandra $\pi_\infty$ admet un caractère infinitésimal $\mathrm{inf}(\pi_\infty)$ que l'on peut voir, via l'isomorphisme de Harish-Chandra, comme une classe de conjugaison semi-simple dans $\mathfrak{sp}_{2n}(\C)$, dont les $2n$ valeurs propres sont encore appelées poids. En particulier, le multi-ensemble des poids est stable par $X \mapsto -X$.

\begin{defi}\label{defi_alg_reg_so_2n+1}
Soit $U$ un module de Harish-Chandra irréductible unitaire de $\mathbf{G}(\R)$. On dit que $U$ est \emph{algébrique} si ses poids sont dans $\frac{1}{2}\Z-\Z$, ce qui revient à demander que, pour tous poids $x,y$ de $U$, $x-y \in \Z$, ou encore à ce que $\mathrm{inf}(U)=\mathrm{diag}(\pm \frac{a_1}{2},\cdots,\pm \frac{a_n}{2})$, avec les $a_i$ entiers naturels impairs.

Un module de Harish-Chandra algébrique $U$ de $\mathbf{G}(\R)$ sera dit \emph{régulier} (resp. \emph{très régulier}) si tous ses poids sont distincts (resp. si $i\neq j \Rightarrow |a_i-a_j|>2$).
\end{defi}


Par abus de langage, on parlera encore des poids d'une représentation automorphe discrète $\pi$ (pour les poids de $\pi_\infty$), de son algébricité et de sa régularité.

\begin{lemme}\label{lemme_disc_cusp}
\emph{(Wallach, \cite{Wallach} Theorem 4.3)}

Soit $\pi$ une représentation automorphe discrète de $\mathbf{G}$.
On suppose que $\pi_\infty$ est une série discrète. Alors $\pi$ est en fait cuspidale.
\end{lemme}

On note $\Pi_{\rm alg}(\mathbf{G})$ l'ensemble des représentations automorphes discrètes algébriques de $\mathbf{G}$.
Si $\underline{a}=(a_1,\cdots,a_n)$ est un $n$-uplet d'entiers naturels impairs rangés par ordre décroissant, alors on note $\Pi_{\underline{a}}(\mathbf{G})$ l'ensemble des représentations automorphes discrètes algébriques dont les poids sont $(\pm \frac{a_1}{2},\cdots,\pm \frac{a_n}{2})$. 


\section{Paramètre d'Arthur global}\label{Paramètre d'Arthur global}
On conserve les notations du paragraphe précédent, en particulier $\mathbf{G}$ relève d'un des deux {\bfseries Cas} exposés ci-dessus.

\begin{defi}
Un paramètre d'Arthur global symplectique de dimension $2n$ est une somme formelle\footnote{Il serait équivalent de parler d'une collection de $(\pi_i,d_i)$.}
\[
\psi=\bigoplus_i \pi_i[d_i],
\]
où les $\pi_i$ sont des représentations automorphes cuspidales \emph{autoduales} de $\GL_{n_i}$ et les $d_i$ des entiers naturels strictement positifs tels que :

\emph{(GAP 1)} \; $\sum_i n_i d_i=2n$ ;

\emph{(GAP 2)} \; si $\pi_i$ est symplectique (resp. orthogonale, au sens du paragraphe \ref{Alternative symplectique-orthogonale}), alors $d_i$ est impair (resp. pair);

\emph{(GAP 3)} \; les couples $(\pi_i,d_i)$ sont distincts.

On dit que $\psi$ est \emph{générique} (resp. \emph{tempéré}) si tous les $d_i$ sont égaux à 1 (resp. et si, de plus, chaque représentation $\pi_i$ est tempérée à toutes les places).
\end{defi}

\emph{Remarque 1 :} Dans le formalisme général des paramètres d'Arthur globaux, il existe une condition supplémentaire de compatibilité des caractères centraux, à savoir $\prod_i \omega_{\pi_i}^{d_i}=1$, qui est ici automatiquement remplie à cause de (GAP 2).\medskip

\emph{Remarque 2 :} Si la conjecture de Ramanujan généralisée pour $\GL_n$ est vraie, alors tout paramètre générique est automatiquement tempéré. Nous utiliserons dans ce texte les deux notions distinctement. \newline

Il y a alors deux questions à examiner.
\begin{itemize}
\item Étant donnée $\pi$ une représentation automorphe discrète algébrique de $\mathbf{G}$, on veut lui associer un paramètre d'Arthur global symplectique, noté $\psi(\pi)$. Outre l'existence et l'unicité d'un tel paramètre (Théorème \ref{thm_Arthur_Taibi}), on cherche à savoir dans quelle mesure les propriétés de $\pi$ correspondent à des propriétés de son paramètre d'Arthur global $\psi(\pi)$ (§\ref{Paramètres locaux associés à un paramètre global} et §\ref{Invariants aux places finies}).
\item À l'inverse, construisant un paramètre d'Arthur global à partir de représentations automorphes cuspidales autoduales du groupe linéaire, on peut se demander s'il existe une représentation automorphe discrète de $\mathbf{G}$ dont c'est le paramètre : c'est l'objet de la formule de multiplicité d'Arthur (§\ref{Formule de multiplicité dans le cas tempéré}).
\end{itemize}


\section{Paramètres locaux associés à un paramètre global}\label{Paramètres locaux associés à un paramètre global}
Soit donc $\psi=\bigoplus_i \pi_i[d_i]$ un paramètre d'Arthur global symplectique de dimension $2n$.
Arthur lui associe des \emph{paramètres d'Arthur locaux} en utilisant la correspondance de Langlands locale pour les groupes linéaires. En effet, pour toute place $v$, $\pi_{i,v}$ admet un paramètre de Langlands $\Ll(\pi_{i,v})$ et on peut alors construire
\[
\psi_v=\bigoplus_i \Ll(\pi_{i,v}) \boxtimes \mathrm{Sym}^{d_i-1}
\]
représentation de $\WD_{\Q_v} \times \SL_2(\C)$, à valeurs dans $\GL_{2n}(\C)$ d'après (GAP 1).
La condition (GAP 2), combinée au fait que ${\rm Sym}^k$ est orthogonale (resp. symplectique) si $k$ est pair (resp. impair), et au Théorème \ref{thm_alt_sp_orth} nous dit que chacun de ces paramètres d'Arthur locaux se factorise par la représentation standard du groupe symplectique $\Sp_{2n}(\C) \rightarrow \GL_{2n}(\C)$. Ainsi, à un paramètre d'Arthur global $\psi$, symplectique de dimension $2n$, on a associé des paramètres locaux :
\begin{equation}\label{param_Arthur_locaux_sp}
\psi_v : \WD_{\Q_v} \times \SL_2(\C) \longrightarrow \Sp_{2n}(\C)
\end{equation}
de manière unique, à conjugaison près.\medskip



Dans le cas particulier où $\psi$ est générique, 
on remarque que chaque $\psi_v$ est alors un paramètre de Langlands pour $\SO_{2n+1}(\Q_v)$. On peut alors considérer le paquet \emph{de Langlands} $\Pi_{\psi_v}$, tel que défini au Théorème \ref{LLC_SO} quand $v=p$ est un nombre premier, ou tel qu'il est défini par Langlands \cite{Lgl_73} lorsque $v=\infty$. Dans le cas général, il est encore possible de définir des paquets locaux associés à chaque $\psi_v$ dits \emph{paquets d'Arthur locaux} et notés $\Pi_{\psi_v}(\mathbf{G})$. 
\begin{itemize}
\item Dans le {\bfseries Cas 1}, cela est fait par Arthur dans \cite{Art13}.
\item Dans le {\bfseries Cas 2}, la construction d'Arthur définit encore des paquets aux places finies (les groupes locaux $\mathbf{G}(\Q_p)$ sont encore déployés) ; à la place archimédienne, \cite{AMR} et \cite{Taibi_cpctmult} montrent que les paquets définis par Adams et Johnson \cite{Adams-Johnson} ont les propriétés requises.
\end{itemize}
Néanmoins le seul résultat dont nous aurons besoin dans ce qui suit concerne la place archimédienne.\medskip

{\bfseries Notation :}  Soit $E=\{a_1,\cdots,a_{n} \}$ un multi-ensemble de nombres complexes et soit $d$ un entier naturel non nul. On définit $E[d]$ le multi-ensemble des $x+k$ où $x$ parcourt $E$ et $k$ parcourt $\left\lbrace \frac{d-1}{2},\frac{d-3}{2},\cdots,\frac{3-d}{2},\frac{1-d}{2} \right\rbrace$. C'est un multi-ensemble à $nd$ éléments.

\begin{prop}\label{prop_AMRT} 
Soit $\mathbf{G}$ un groupe spécial orthogonal sur $\Q$ en $2n+1$ variables, relevant d'un des \emph{{\bfseries Cas}} du §\ref{Groupes étudiés et leurs représentations}.

Soit $\psi=\bigoplus_i \pi_i[d_i]$ un paramètre d'Arthur global symplectique de dimension $2n$. Alors tous les éléments du paquet $\Pi_{\psi_\infty}(\mathbf{G})$ ont le même caractère infinitésimal : c'est la classe de conjugaison semi-simple dans $\mathfrak{sp}_{2n}(\C)$ donnée par l'union des $P_i[d_i]$, où $P_i$ désigne le multi-ensemble des poids de $\pi_{i,\infty}$. 
\end{prop}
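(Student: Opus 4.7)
The plan is to reduce the statement to the explicit computation of the infinitesimal character attached to $\psi_\infty$, after invoking the key property (built into the construction of the archimedean packets in each of the two Cases) that all elements of $\Pi_{\psi_\infty}(\mathbf{G})$ share the same infinitesimal character, and that this character is determined by $\psi_\infty$ alone.

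First I would recall the relevant packet construction at the archimedean place. In the \textbf{Cas 1}, $\mathbf{G}(\R)$ is quasi-split and Arthur's construction in \cite{Art13} produces $\Pi_{\psi_\infty}(\mathbf{G})$ together with the property that each constituent $\pi_\infty$ has infinitesimal character computed from the composite $\psi_\infty \circ \Delta : \W_\R \to \Sp_{2n}(\C)$, where $\Delta : \W_\R \to \W_\R \times \SL_2(\C)$ is the diagonal map $w \mapsto (w, \mathrm{diag}(|w|^{1/2}, |w|^{-1/2}))$; this is the standard Arthur--SL$_2$ shift. In the \textbf{Cas 2}, $\mathbf{G}(\R)$ is compact, and the packets $\Pi_{\psi_\infty}(\mathbf{G})$ are those of Adams--Johnson \cite{Adams-Johnson}; by the results of \cite{AMR} (and \cite{Taibi_cpctmult}), they coincide with the packets whose existence is required by Arthur's formalism, and hence again all of their members share the same infinitesimal character, determined by the same recipe $\psi_\infty \circ \Delta$. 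Thus it suffices to show that $\psi_\infty \circ \Delta$, viewed in $\mathfrak{sp}_{2n}(\C)$, has eigenvalues equal to the multiset $\bigsqcup_i P_i[d_i]$.

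Next, I would simply compute. By construction,
\[
\psi_\infty \;=\; \bigoplus_i \mathcal{L}(\pi_{i,\infty}) \boxtimes \mathrm{Sym}^{d_i-1}.
\]
Composing with $\Delta$ gives a direct sum of representations of $\W_\R$:
\[
\psi_\infty \circ \Delta \;=\; \bigoplus_i \mathcal{L}(\pi_{i,\infty}) \otimes \bigl(\mathrm{Sym}^{d_i-1}\bigr|_{\mathrm{diag}(|\cdot|^{1/2},|\cdot|^{-1/2})}\bigr).
\]
The inner factor is, by an elementary computation on $\mathrm{Sym}^{d_i-1}$ of the standard representation, equal to $\bigoplus_{k} |\cdot|^{k}$ where $k$ runs over $\{(d_i-1)/2, (d_i-3)/2, \ldots, (1-d_i)/2\}$. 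Restricting to $\C^\times \subset \W_\R$ and extracting the exponents $\lambda$ in the decomposition $z^\lambda \bar z^\mu$ (which by definition give the entries of the infinitesimal character on $\mathfrak{gl}_{2n}(\C)$, and hence on $\mathfrak{sp}_{2n}(\C)$ by the factorisation \eqref{param_Arthur_locaux_sp}), we find that each weight $x \in P_i$ contributes the family $\{x+k : k \in \{(d_i-1)/2,\ldots,(1-d_i)/2\}\}$, i.e.\ exactly the multiset $P_i[d_i]$ in the notation fixed just before the statement.

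Finally I would check that the resulting multiset is indeed stable under $X \mapsto -X$ (so that it does define a conjugacy class in $\mathfrak{sp}_{2n}(\C)$ and not merely in $\mathfrak{gl}_{2n}(\C)$): this follows because each $\pi_i$ is autodual, so $P_i = -P_i$ as multisets, while the shift set $\{(d_i-1)/2,\ldots,(1-d_i)/2\}$ is tautologically symmetric. The main (conceptual) obstacle is not the computation itself, which is routine, but rather the citation and correct invocation of the compatibility of the archimedean packets with infinitesimal characters in the non-quasi-split \textbf{Cas 2}, where one must appeal specifically to the identification of Arthur's (hypothetical) packets with those of Adams--Johnson proved in \cite{AMR} and \cite{Taibi_cpctmult}; once this identification is in hand, the remainder of the argument is an immediate unwinding of definitions.
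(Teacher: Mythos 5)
Your proposal is correct, and it coincides with what the paper implicitly relies on: the proposition is stated there without proof, as a direct consequence of the archimedean packet constructions cited just before it (Arthur in the \textbf{Cas 1}; Adams--Johnson packets, identified with Arthur's via \cite{AMR} and \cite{Taibi_cpctmult}, in the \textbf{Cas 2}), all of whose members share the infinitesimal character of $\psi_\infty\circ\Delta$. Your explicit computation of the eigenvalues of $\psi_\infty\circ\Delta$ restricted to $\C^\times$, yielding the multiset $\bigcup_i P_i[d_i]$, together with the symmetry check coming from the autoduality of the $\pi_i$, is exactly the standard unwinding of this fact.
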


Par abus, on parlera encore des poids de $\psi$ (ou de $\psi_\infty$).

\begin{thm}\label{thm_Arthur_Taibi} \emph{(Arthur, Taïbi)}

Soit $\mathbf{G}$ un groupe spécial orthogonal sur $\Q$ en $2n+1$ variables, relevant d'un des \emph{{\bfseries Cas}} du §\ref{Groupes étudiés et leurs représentations}.
Soit $\pi$ une représentation automorphe discrète de $\mathbf{G}$. Alors il existe un unique paramètre d'Arthur global, symplectique de dimension $2n$, noté $\psi(\pi)$ vérifiant 
\begin{equation}\label{lien_rep_param_Arthur_paquets_locaux}
\pi={\bigotimes_v}' \pi_v, \text{ avec } \pi_v \in \Pi_{\psi_v}(\mathbf{G}) \text{ pour tout } v,
\end{equation}
où $\psi=\psi(\pi)$.

Si on suppose de plus $\pi$ algébrique très régulière, alors $\psi(\pi)=\bigoplus_i \pi_i[1]$ est générique et chaque $\pi_i$ est algébrique très régulière.
\end{thm}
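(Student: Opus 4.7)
The plan is to derive Theorem \ref{thm_Arthur_Taibi} essentially as a direct consequence of the global endoscopic classification for classical groups, together with a short combinatorial argument using Proposition \ref{prop_AMRT} for the refinement under the algebraic very regular hypothesis.

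First, I would invoke the classification in both \textbf{Cas}. In \textbf{Cas 1}, $\mathbf{G}$ is the split group $\SO_{n+1,n}/\Q$, to which Arthur's Main Global Theorem (\cite{Art13}, Theorem 1.5.2) applies directly: every $\pi \in \Pi_{\mathrm{disc}}(\mathbf{G})$ sits in \eqref{A_disc_SO} indexed by a unique discrete global Arthur parameter, which for odd orthogonal groups takes the form of a symplectic parameter of dimension $2n$ in our sense (here (GAP~2) is automatic since $\mathrm{Std}_{\widehat{G}}$ is $2n$-dimensional symplectic, and the $d_i$'s parities are forced by the symplectic/orthogonal alternative of Theorem \ref{thm_alt_sp_orth}). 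In \textbf{Cas 2}, $\mathbf{G}$ is an inner form of $\SO_{n+1,n}/\Q$ that is split at every finite place and compact at the archimedean place; this is precisely the setting of Taïbi's endoscopic classification (\cite{Taibi_cpctmult}, Theorem 4.0.1), which provides the analogous statement. The uniqueness of $\psi(\pi)$ and the compatibility \eqref{lien_rep_param_Arthur_paquets_locaux}, namely that the factors $\pi_v$ of the restricted tensor product \eqref{prod_tens_res_so} lie in the local Arthur packets $\Pi_{\psi_v}(\mathbf{G})$, are part of the content of those theorems.

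Second, I would establish the refinement under the algebraic very regular hypothesis. Write $\psi(\pi) = \bigoplus_i \pi_i[d_i]$ and suppose, for contradiction, that some $d_{i_0} \geq 2$. By Proposition \ref{prop_AMRT}, the multi-set of poids of $\pi_\infty$ equals $\bigsqcup_i P_i[d_i]$, where $P_i$ denotes the multi-set of poids of $\pi_{i,\infty}$. Picking any poids $x \in P_{i_0}$, the shifted values $x + \tfrac{d_{i_0}-1}{2}$ and $x + \tfrac{d_{i_0}-3}{2}$ both belong to $P_{i_0}[d_{i_0}]$, hence to the poids of $\pi_\infty$, and they differ by exactly $1$. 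This contradicts Definition \ref{defi_alg_reg_so_2n+1}, which requires $|a_i - a_j| > 2$ for distinct poids $\pm a_i/2, \pm a_j/2$, that is, differences strictly greater than $1$ among poids. Therefore all $d_i = 1$ and $\psi(\pi)$ is generic.

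Third, once $\psi(\pi) = \bigoplus_i \pi_i[1]$, the poids of $\pi_\infty$ decompose as $\bigsqcup_i P_i$. Each $P_i$ is a sub-multi-set of the poids of $\pi_\infty$ and therefore inherits the algebraic very regular property: the elements of $P_i$ lie in $\tfrac{1}{2}\Z$, share a common class mod $\Z$ (by Clozel's purity lemma applied to $\pi$ and transmitted to each $\pi_i$ via the fact that all poids of $\pi_\infty$ are in $\tfrac{1}{2}\Z - \Z$), and are pairwise distinct with differences $>1$; by Definition \ref{defi_alg_centrée}, each $\pi_i$ is algebraic very regular. The main (and essentially only) obstacle is that the existence and uniqueness of $\psi(\pi)$ rests entirely on the very deep global results of \cite{Art13} and \cite{Taibi_cpctmult}; our contribution is limited to checking that \textbf{Cas 1} and \textbf{Cas 2} fall within the scope of those works and to the elementary combinatorial argument on poids that forces genericity.
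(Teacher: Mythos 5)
Your proposal follows exactly the route of the paper: the existence, the uniqueness and the local--global compatibility \eqref{lien_rep_param_Arthur_paquets_locaux} of $\psi(\pi)$ are quoted from \cite{Art13} (Theorem 1.5.2) in \textbf{Cas 1} and from \cite{Taibi_cpctmult} (Theorem 4.0.1) in \textbf{Cas 2}, and the genericity under the very regular hypothesis is deduced from Proposition \ref{prop_AMRT} by the observation that a factor $[d_{i_0}]$ with $d_{i_0}\geq 2$ produces two poids differing by exactly $1$. The paper packages this observation in an unproved combinatorial lemma whose hypothesis is precisely that distinct elements of the multiset of poids differ by more than $1$; you spell the argument out, which is welcome.

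There is, however, one step that does not go through as written, and it is the very step that is left implicit when the paper applies its lemma. Definition \ref{defi_alg_reg_so_2n+1} only requires $|a_i-a_j|>2$ for $i\neq j$; it says nothing about the distance between a poids $a_i/2$ and its opposite $-a_i/2$, which equals $a_i$. If the smallest $a_i$ equals $1$, the two \emph{distinct} poids $1/2$ and $-1/2$ differ by exactly $1$, so ``très régulier'' does \emph{not} imply that all distinct poids differ by more than $1$, and your contradiction evaporates precisely for a summand of the form $\chi[2]$ with $\chi$ a quadratic Hecke character. Concretely, a Saito--Kurokawa parameter such as $\Delta_{17}\oplus \1[2]$ for $\SO_5$ has infinitesimal character $\{\pm\tfrac{17}{2},\pm\tfrac{1}{2}\}$, which is très régulier in the sense of the definition, yet is not generic. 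Your argument (and the paper's lemma) is complete whenever all $a_i\geq 3$, which is the situation in the paper's actual applications (compare the hypothesis $j\neq 0$ in Corollaire \ref{cor_dim_s_k_j}); to obtain the statement in the generality claimed you must either strengthen ``très régulier'' so as to exclude the poids $\pm\tfrac{1}{2}$, or rule out the summands $\1[2]$ and $\eta[2]$ by a separate device.
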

\begin{proof}
Nous renvoyons à \cite{Art13}, Theorem 1.5.2 et \cite{Taibi_cpctmult}, Theorem 4.0.1 (voir aussi Remark 4.0.2).

Le deuxième point découle de la Proposition \ref{prop_AMRT} combinée au lemme combinatoire suivant.
\end{proof}
\begin{lemme}
Soit $I$ un ensemble d'indices. On se donne, pour chaque $i \in I$, un multi-ensemble de nombres complexes $E_i$ et un entier naturel non nul $d_i$. On pose $E=\bigcup_I E_i[d_i]$.

Si les éléments de $E$ sont distincts, alors chaque $E_i$ est un ensemble (sans multiplicité).

Si pour $x,y \in E, x \neq y \Rightarrow |x-y|>1$, alors chaque $E_i$ est un ensemble et tous les $d_i$ sont égaux à 1.
\end{lemme}

Retenons en particulier que chaque composante locale de $\pi$ est dans le paquet d'Arthur local associé à $\psi_v$ (et en dernier lieu, associé à $\psi$).
Plus précisément, selon le formalisme d'Arthur, étant donné $\mathbf{G}$ un groupe spécial orthogonal sur $\Q$, relevant d'un des {\bfseries Cas} du §\ref{Groupes étudiés et leurs représentations} on définit un paquet d'Arthur global $\Pi_\psi(\mathbf{G})$ associé à un paramètre d'Arthur global $\psi$ et défini comme suit.
\begin{equation*}
\Pi_\psi(\mathbf{G})=\left\lbrace {\bigotimes_v}' \pi_v \, \middle| {\begin{array}{l} \pi_v \in \Pi_{\psi_v}(\mathbf{G}) \text{ pour tout } v \\
\pi_v \text{ non ramifiée pour presque tout } v \end{array}} \right\rbrace,
\end{equation*}
ce que nous abrégeons par la notation (classique) $\Pi_\psi(\mathbf{G})={\bigotimes}'\Pi_{\psi_v}(\mathbf{G})$. Le Théorème \ref{thm_Arthur_Taibi} se reformule en disant (sous les mêmes hypothèses) qu'il existe un unique $\psi$ paramètre d'Arthur global, symplectique 
tel que $\pi \in \Pi_{\psi}(\mathbf{G})$. \newline

Nous rappelons maintenant un résultat crucial dû à Sug-Woo Shin \cite{Shin} et Ana Caraiani \cite{Caraiani}
\begin{thm}\label{thm_shin_caraiani}
Soit $\pi$ une représentation automorphe cuspidale algébrique autoduale régulière de $\GL_n$ sur $\Q$. Alors $\pi_v$ est tempérée pour toute place $v$ (\ie $\pi$ vérifie la conjecture de Ramanujan généralisée).
\end{thm}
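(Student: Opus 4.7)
The plan is to follow the strategy of Shin and Caraiani, which is fundamentally geometric: attach to $\pi$ a compatible system of $\ell$-adic Galois representations realized in the cohomology of a suitable Shimura variety, exploit Deligne's proof of the Weil conjectures to obtain purity, and then translate purity back to temperedness of the local components of $\pi$.

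First I would reduce to the case where $\pi$ descends to a cohomological representation on a unitary group. Concretely, pick a CM quadratic extension $E/\mathbb{Q}$ that is split at every prime where $\pi$ is ramified, and use cyclic automorphic base change (Arthur--Clozel) to produce an automorphic representation $\Pi$ of $\GL_n(\mathbb{A}_E)$ that is conjugate self-dual, cuspidal, cohomological and regular algebraic, and whose local components at places of $E$ above a given rational place $v$ encode those of $\pi_v$. Then, using the descent/twisted endoscopic transfer of Labesse together with Arthur's endoscopic classification, realize $\Pi$ as a transfer from an automorphic representation $\sigma$ of a unitary group $U$ attached to $E/\mathbb{Q}$ that is compact at infinity modulo one factor. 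The regularity hypothesis ensures that $\sigma_\infty$ is a (limit of) discrete series, which is the key input for cohomological realization.

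Next I would attach a Galois representation. Using the work of Kottwitz, Harris--Taylor, Morel, Shin, and Chenevier--Harris on Shimura varieties associated with $U$, one obtains a continuous semisimple representation
\[
\rho_{\pi,\ell}:\Gal(\overline{\mathbb{Q}}/E)\longrightarrow \GL_n(\overline{\mathbb{Q}_\ell})
\]
that is unramified outside a finite set $S$ (containing $\ell$ and the ramified places of $\Pi$) and such that for every finite place $w\notin S$ of $E$ above a rational prime $p\neq\ell$, the local Langlands parameter of $\Pi_w$ matches (up to a fixed Tate twist) the Weil--Deligne representation attached to $\rho_{\pi,\ell}|_{W_{E_w}}$. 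At this point the main obstacle enters: one must verify that $\rho_{\pi,\ell}$ is \emph{pure} of weight $w=n-1$ at every finite place of $E$, not merely at unramified ones. At unramified places this is essentially Deligne's purity theorem applied to the étale cohomology of the relevant Shimura variety (after Matsushima's formula isolates the $\sigma$-isotypic piece). At ramified places this is the deep input due to Caraiani, obtained via a careful geometric study of the bad reduction of the compact unitary Shimura varieties (Newton stratification, Rapoport--Zink spaces, nearby cycles) allowing one to promote purity from the good reduction case and conclude full local-global compatibility including the monodromy filtration.

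Finally I would convert purity into the Ramanujan conclusion. At each finite prime $p$ of $\mathbb{Q}$ and each place $w\mid p$ of $E$, purity of the Weil--Deligne representation associated to $\rho_{\pi,\ell}|_{W_{E_w}}$ means, by standard monodromy-weight arguments, that the Frobenius eigenvalues on each graded piece of the monodromy filtration are Weil numbers of the expected weight; this is precisely the statement that the Langlands parameter of $\Pi_w$ is tempered. Because $\pi_p$ is the restriction of scalars of $\Pi_w$ (for $w\mid p$ split in $E$) or is read off from $\Pi_w$ when $w$ is inert, temperedness of $\Pi_w$ at all $w\mid p$ forces temperedness of $\pi_p$; varying $p$, one can always arrange for $E$ to be split at $p$. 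Temperedness at $\infty$ is immediate from the hypotheses: a regular algebraic cuspidal representation of $\GL_n(\R)$ has $\pi_\infty$ cohomological and in the tempered $L$-packet associated to its infinitesimal character, as one sees directly from Vogan--Zuckerman or from the classification in \cite{Lgl_73} combined with the regularity of the weights. This completes the plan; the genuinely hard step is the local-global compatibility at ramified primes, which is the content of Caraiani's contribution and cannot be avoided when $\pi$ is allowed to be ramified (as is the case of interest in the present work, where the conductor is $p$).
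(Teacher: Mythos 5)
Your overall route coincides with the paper's: the theorem is not reproved but obtained by reducing, via Arthur--Clozel quadratic base change, to the case of a conjugate self-dual, regular algebraic, cuspidal representation over a CM field, to which the results of Shin and Caraiani apply; the long middle portion of your proposal (unitary groups, Shimura varieties, purity, local-global compatibility at ramified places) is a sketch of the \emph{proofs} of those cited results, which the paper -- legitimately -- treats as a black box.

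The one point where the paper actually has to argue, and where your proposal is silent, is the \emph{cuspidality} of the base change $\Pi$. For a quadratic extension $E/\Q$ with associated order-two Hecke character $\chi$, the Arthur--Clozel base change of a cuspidal $\pi$ is cuspidal if and only if $\pi \not\simeq \pi \otimes \chi$; otherwise it is an isobaric sum of two smaller cuspidal pieces and Shin--Caraiani cannot be invoked for $\Pi$ itself. You simply assert that $\Pi$ is cuspidal. The paper secures this by choosing an auxiliary prime $q$ at which $\pi$ is unramified and requiring $K$ to be ramified at $q$ (and split at the prime $p$ under study): then $\pi_q \otimes \chi_q$ is ramified while $\pi_q$ is not, so $\pi \not\simeq \pi\otimes\chi$. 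As it happens, your own choice of $E$ (imaginary quadratic, split at all ramified primes of $\pi$) forces the same conclusion -- any imaginary quadratic field is ramified at some finite prime, which under your splitting conditions is necessarily a prime where $\pi$ is unramified -- but this is exactly the argument you needed to write down, and it is the entire non-citation content of the paper's proof. A secondary remark: temperedness at the archimedean place is not quite "immediate"; it requires Clozel's purity lemma together with the classification of unitary generic representations of $\GL_n(\R)$ (or, as in the paper's formulation, it is simply part of the statement imported from Shin and Caraiani after base change).
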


\emph{Remarque :} Les énoncés sus-cités s'appliquent au sens strict à des représentations automorphes cuspidales algébriques régulières $\Pi$ de $\GL_n$ sur un corps CM $F$ telles que $\Pi^\vee \simeq \Pi^c$, où $\Pi^c$ désigne le conjugué extérieur de $\Pi$ par la conjugaison complexe du corps CM $F$.
On peut néanmoins se ramener à ce cas en utilisant le changement de base d'Arthur-Clozel, une méthode qui remonte au moins aux travaux de Harris et Taylor \cite{HT} (voir par exemple le §4.3 de l'article \cite{CHT} pour une discussion détaillée). Concrètement, on peut faire comme suit. Fixons $\pi$ comme dans l'énoncé et, étant donné $p$ un nombre premier, on va montrer que $\pi_p$ est tempérée. Pour cela, on choisit un nombre premier auxiliaire $q$ tel que $\pi_q$ est non ramifiée, puis un corps quadratique imaginaire $K$ tel que $p$ est décomposé dans $K$ et ramifié en $q$ (de tels $q$ et $K$ existent évidemment).
Notons $\chi$ le caractère de Hecke de $\Q$ d'ordre 2 correspondant à l'extension quadratique $K/\Q$. Par hypothèse sur $K$ et $q$, les représentations $\Pi_q \otimes \chi_q$ et $\Pi_q$ ne sont pas isomorphes (la première est ramifiée, pas la seconde). D'après le théorème principal de \cite{Arthur-Clozel}, on en déduit que si $\Pi$ désigne la représentation obtenue par changement de base de $\pi$ à $K$, alors $\Pi$ est cuspidale. Elle vérifie trivialement $\Pi^c \simeq \Pi$, et aussi $\Pi^\vee \simeq \Pi$ (à cause de cette mème hypothèse sur $\pi$). Ainsi, $\Pi$ est cohomologique à l'unique place archimédienne de $K$, car $\pi_\infty$ est algébrique régulière.
D'après \cite{Shin} et \cite{Caraiani}, $\Pi_v$ est tempérée pour tout $v$. Mais si $v$ est l'une des deux places de $K$ au dessus de $p$, alors $\GL_n(\Q_p)$ s'identifie à $\GL_n(K_v)$ et via cette identification on a $\Pi_v \simeq \pi_p$, si bien que $\pi_p$ est tempérée.

\begin{cor}\label{cor_shin_caraiani}
Soit $\mathbf{G}$ un groupe spécial orthogonal sur $\Q$, relevant d'un des \emph{{\bfseries Cas}} du §\ref{Groupes étudiés et leurs représentations}. Soit $\pi$ une représentation automorphe discrète de $\mathbf{G}$ algébrique très régulière. Alors $\pi_v$ est tempérée pour toute place $v$.
\end{cor}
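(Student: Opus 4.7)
The plan is to combine the two preceding theorems directly, using the very regular hypothesis as the bridge. First, I would apply Theorem \ref{thm_Arthur_Taibi} to $\pi$: since $\pi$ is algebraic very regular discrete, there exists a unique global Arthur parameter $\psi(\pi)$ attached to $\pi$, and the theorem tells us moreover that this parameter is \emph{generic}, i.e.\ $\psi(\pi) = \bigoplus_i \pi_i[1]$ where each $\pi_i$ is a self-dual cuspidal automorphic representation of some $\GL_{n_i}$ over $\Q$ which is itself algebraic very regular (in particular, algebraic and regular in the sense of Proposition-Définition \ref{defi_alg_centrée}).

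Next I would invoke Theorem \ref{thm_shin_caraiani} of Shin--Caraiani, which applies to each $\pi_i$ since the $\pi_i$ are cuspidal algebraic regular and self-dual on general linear groups: each $\pi_{i,v}$ is tempered at every place $v$ of $\Q$. Via the local Langlands correspondence for $\GL_{n_i}(\Q_v)$ (Theorem \ref{LLC_GLn} for finite $v$, Theorem \ref{LLC_arch} for $v=\infty$), combined with the compatibility \eqref{rep_temp_param_temp} between temperedness of a representation and boundedness of its Langlands parameter, each $\Ll(\pi_{i,v})$ is a tempered Langlands parameter, i.e.\ has bounded image.

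Now fix a place $v$ and consider the local Arthur parameter
\[
\psi_v \;=\; \bigoplus_i \Ll(\pi_{i,v}) \boxtimes \mathrm{Sym}^{0} \;=\; \bigoplus_i \Ll(\pi_{i,v})
\]
attached to the generic global parameter $\psi(\pi)$ (here the $\SL_2(\C)$-factor is trivial because all $d_i=1$). Since each summand has bounded image, $\psi_v$ is itself a bounded, hence \emph{tempered}, Langlands parameter of $\mathbf{G}(\Q_v)$, factoring through $\Sp_{2n}(\C)$ as in \eqref{param_Arthur_locaux_sp}. For a tempered parameter the local Arthur packet $\Pi_{\psi_v}(\mathbf{G})$ coincides with the tempered Langlands packet, and all its members are tempered irreducible admissible representations of $\mathbf{G}(\Q_v)$ (this is built into the construction of Arthur \cite{Art13} in the \textbf{Cas 1}, and of Adams--Johnson as used in \cite{AMR} and \cite{Taibi_cpctmult} in the \textbf{Cas 2}). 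Finally, by Theorem \ref{thm_Arthur_Taibi}, equation \eqref{lien_rep_param_Arthur_paquets_locaux}, we have $\pi_v \in \Pi_{\psi_v}(\mathbf{G})$, whence $\pi_v$ is tempered, as desired.

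The only substantive step is invoking Shin--Caraiani; everything else is a bookkeeping exercise in the Arthur/Taïbi dictionary plus the generic-implies-tempered-packet principle. No obstacle is expected, since the very regular hypothesis was engineered precisely so that the global parameter is generic and each constituent $\pi_i$ inherits algebraicity and regularity, which are exactly the hypotheses of Theorem \ref{thm_shin_caraiani}.
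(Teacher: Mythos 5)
Your proof is correct and follows exactly the paper's route: Théorème \ref{thm_Arthur_Taibi} gives a generic parameter $\psi(\pi)=\bigoplus_i \pi_i$ with each $\pi_i$ algébrique très régulière (donc régulière et autoduale), Shin--Caraiani (Théorème \ref{thm_shin_caraiani}) makes each $\pi_{i,v}$ tempérée, hence $\psi_v$ is a tempered Langlands parameter and $\pi_v\in\Pi_{\psi_v}(\mathbf{G})$ is tempered. The paper states this in one line; your write-up is just the same argument with the bookkeeping made explicit.
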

\begin{proof}
C'est immédiat en utilisant le Théorème \ref{thm_Arthur_Taibi} (le paramètre $\psi(\pi)$ est alors non seulement générique, mais encore tempéré).
\end{proof}
Il est d'ailleurs remarquable que la seule connaissance de ce qu'il se passe à la place archimédienne affecte ce qu'il se passe à toutes les places. \newline

{\bfseries Notation :} Dans l'écriture d'un paramètre d'Arthur global, lorsque $d_i=1$, nous notons $\pi_i$ plutôt que $\pi_i[1]$. Ainsi un paramètre générique s'écrit $\psi=\bigoplus_i \pi_i$.

\section{Invariants aux places finies}\label{Invariants aux places finies}

Ici encore $\mathbf{G}$ relève d'un des deux {\bfseries Cas} du paragraphe \ref{Groupes étudiés et leurs représentations}. On remarque que, pour toute place finie $p$, $\mathbf{G}\otimes_\Q \qp$ s'identifie au $\qp$-groupe algébrique que nous avons noté $\SO_{2n+1}$ au paragraphe \ref{Notations}. En particulier, cela a bien un sens de parler de sous-groupe compact maximal hyperspécial (\cf §\ref{Le groupe K_0}), de sous-groupe épiparamodulaire (\cf §\ref{Le_groupe_J}) et de sous-groupe paramodulaire (\cf §\ref{J^+}).

\begin{prop}\label{prop_dim_invariants_so}
Soit $\pi$ une représentation automorphe discrète algébrique très régulière de $\mathbf{G}$ et soit $\psi(\pi)=\bigoplus_i \pi_i$ son paramètre d'Arthur global (qui existe bien et qui est générique d'après le Théorème \ref{thm_Arthur_Taibi}).

Soit $p$ un nombre premier et $K_p$ un sous-groupe compact ouvert de $\mathbf{G}(\qp)$ tels que $\pi_p^{K_p} \neq 0$.
\begin{itemize}
\item si $K_p$ est un sous-groupe compact maximal hyperspécial $\K_0(p)$, alors tous les $\pi_i$ sont non ramifiés en $p$.
\item si $K_p$ est un sous-groupe paramodulaire $\J^+(p)$, alors tous les $\pi_i$ sont non ramifiés en $p$, sauf éventuellement un, qui est, localement en $p$, d'exposant d'Artin égal à 1.
\end{itemize}
Mieux, on connaît la dimension de l'espace $\pi_p^{K_p}$ des invariants.
\begin{itemize}
\item Si tous les $\pi_i$ sont non ramifiés en $p$, alors $\dim \pi_p^{\K_0(p)}=\dim \pi_p^{(\J(p),+)}=\dim \pi_p^{(\J(p),-)}=1$.
\item Si l'un des $\pi_i$ exactement vérifie $\aar(\pi_{i,p})=1$, 
 disons $\pi_1$, alors $\dim \pi_p^{(\J(p),\eps)}=1$ et $\dim \pi_p^{\K_0(p)}=\dim \pi_p^{(\J(p),-\eps)}=0$, où $\eps=\eps_p(\pi_1)$ est le signe local en $p$ de la représentation $\pi_1$.
\end{itemize}
\end{prop}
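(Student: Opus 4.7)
La stratégie que je propose consiste à combiner la compatibilité local-globale de la paramétrisation d'Arthur (Théorème \ref{thm_Arthur_Taibi}) avec les résultats locaux des Chapitres \ref{Représentations avec des invariants paramodulaires} et \ref{Conducteur} de la Première Partie. Je commencerai par remarquer que, puisque $\pi$ est algébrique très régulière, le Théorème \ref{thm_Arthur_Taibi} nous dit que $\psi(\pi)=\bigoplus_i\pi_i$ est générique, chaque $\pi_i$ étant automorphe cuspidale autoduale algébrique très régulière. Le Corollaire \ref{cor_shin_caraiani} garantit alors la tempérance de $\pi_{i,p}$ pour tout $i$ et tout $p$, ce qui entraîne que le paramètre local $\psi_p=\bigoplus_i\Ll(\pi_{i,p})$ (\cf \eqref{param_Arthur_locaux_sp}) est un paramètre de Langlands \emph{tempéré} à valeurs dans $\Sp_{2n}(\C)$, et que $\pi_p\in\Pi_{\psi_p}(\mathbf{G})$ est une représentation tempérée de $\mathbf{G}(\qp)\simeq\SO_{2n+1}(\qp)$.

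Dans le premier cas, c'est-à-dire $K_p=\K_0(p)$ avec $\pi_p^{\K_0(p)}\neq 0$, la Proposition \ref{thm_rep_sph_temp} montre que $\pi_p$ est non ramifiée et est de la forme $\ii_B^G\chi$ pour $\chi$ caractère non ramifié unitaire du tore. La Définition \ref{defi_aar_rep_tauto} donne alors $\aar(\psi_p)=0$, et la Proposition \ref{prop_parametres_1} nous dit que $\psi_p$ s'écrit $\chi_1\oplus\cdots\oplus\chi_n\oplus\chi_n^{-1}\oplus\cdots\oplus\chi_1^{-1}$ avec $\chi_j$ non ramifié. Puisque chaque $\Ll(\pi_{i,p})$ est un facteur direct autodual de $\psi_p$, c'est une somme de caractères non ramifiés, si bien que chaque $\pi_i$ est non ramifiée en $p$ par la correspondance de Langlands locale pour $\GL_n$ (Théorème \ref{LLC_GLn}).

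Dans le second cas, c'est-à-dire $K_p=\J^+(p)$ avec $\pi_p^{\J^+(p)}\neq 0$, si $\pi_p^{\K_0(p)}\neq 0$ on se ramène au cas précédent ; sinon, le Théorème \ref{thm_pcpl_temp} impose que $\pi_p$ est de conducteur $\pk$, et la Proposition \ref{prop_parametres_p} donne la forme explicite
\[
\psi_p=\bigoplus_{j=1}^{n-1}(\chi_j\oplus\chi_j^{-1})\oplus(\alpha\otimes U_2)
\]
avec les $\chi_j$ non ramifiés et $\alpha\in\{\1,\eta\}$. Le point crucial est alors que $\alpha\otimes U_2$ est l'\emph{unique} composante irréductible de $\psi_p$ d'exposant d'Artin non nul ; étant irréductible et autoduale symplectique, elle ne peut être \og scindée \fg{} entre plusieurs $\Ll(\pi_{i,p})$ et doit figurer dans un seul d'entre eux (disons $\Ll(\pi_{1,p})$). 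Les autres $\Ll(\pi_{j,p})$ sont alors des sommes de caractères non ramifiés, d'où $\pi_j$ non ramifiée en $p$ pour $j\neq 1$ et $\aar(\pi_{1,p})=1$.

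Pour les dimensions, j'appliquerai directement le Corollaire \ref{cor_temp_dim_invariants_général} à la représentation tempérée $\pi_p$, ce qui donne les énoncés annoncés dans chacun des deux sous-cas. Pour l'identification du signe $\eps=\eps_p(\pi_1)$, j'invoquerai le Théorème \ref{thm_J_variants_eps} qui identifie l'action de $\J$ sur $\pi_p^{\J^+}$ au facteur epsilon local $\eps(\pi_p)$ ; par additivité du facteur epsilon le long de la décomposition $\psi_p=\bigoplus_i\Ll(\pi_{i,p})$, et comme $\eps_p$ d'un caractère non ramifié vaut $1$ (\cf §\ref{Facteurs_epsilon_712} et §\ref{Rappels}), ce signe se réduit à $\eps_p(\pi_1)$. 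La seule subtilité technique à gérer est celle de la normalisation globale des caractères additifs et mesures locaux fixée au §\ref{Facteurs_epsilon_712} afin que l'égalité $\eps(\pi_p)=\prod_i\eps_p(\pi_{i,p})$ ait bien le sens attendu ; cette subtilité est résolue par construction puisque ces choix sont communs à toutes les composantes locales intervenant.
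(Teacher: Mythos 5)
Votre démonstration est correcte et suit essentiellement la même route que celle du texte : tempérance de $\pi_p$ via le Corollaire \ref{cor_shin_caraiani}, classification locale des représentations tempérées de conducteur $\Ok$ ou $\pk$ (Propositions \ref{prop_parametres_1} et \ref{prop_parametres_p}) pour localiser l'unique facteur ramifié $\alpha\otimes U_2$ dans un seul $\Ll(\pi_{i,p})$, dimensions d'invariants par le Corollaire \ref{cor_temp_dim_invariants_général}, et identification du signe par les résultats du §\ref{Lien avec les facteurs epsilon}. Votre remarque sur l'additivité du facteur epsilon et la trivialité de $\eps_p$ sur les caractères non ramifiés explicite utilement un point laissé implicite dans le texte, sans en changer la substance.
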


\begin{proof}
On rappelle que, par le Corollaire \ref{cor_shin_caraiani}, $\pi_p$ est tempérée.

Supposons d'abord que $\pi_p$ a des invariants par $\K_0(p)$.

Le Corollaire \ref{cor_temp_dim_invariants_général} nous donne directement les dimensions d'invariants. De plus, combiné avec la Proposition \ref{prop_parametres_1}, il nous
 indique qu'alors $\Ll(\pi_p)=\bigoplus_j (\chi_j \oplus \chi_j^{-1})$ où les $\chi_j$, pour $j$ variant de 1 à $n$, sont des caractères non ramifiés de $\W_{\qp}$ (ou de $\qp^\times$). Or $\Ll(\pi_p)=\psi_p=\bigoplus_i \Ll(\pi_{i,p})$. Ainsi chaque $\Ll(\pi_{i,p})$ est lui-même une somme de caractères non ramifiés et est donc un paramètre non ramifié. Finalement tous les $\pi_i$ sont non ramifiés en $p$.


Supposons maintenant que $\pi_p$ a des invariants paramodulaires. Alors le même Corollaire \ref{cor_temp_dim_invariants_général}, combiné à la Proposition \ref{prop_parametres_p}
 nous indique deux possibilités. La première correspond à la situation précédente où tous les $\pi_i$ sont non ramifiés en $p$. La seconde correspond au paramètre de Langlands $\Ll(\pi_p)=\bigoplus_j (\chi_j \oplus \chi_j^{-1}) \oplus (\alpha \otimes U_2)$ où les $\chi_j$, pour $j$ variant de 1 à $n-1$, sont des caractères non ramifiés de $\W_{\qp}$ et où $\alpha \in \{\1,\eta\}$. Dans ce dernier cas, un et un seul des $\pi_i$, disons $\pi_1$, contient ce terme $\alpha \otimes U_2$ dans son paramètre local en $p$. Nous avons donc tous les $\pi_i$ non ramifiés, sauf un, qui est d'exposant d'Artin égal à 1 et donc de conducteur $pk$ avec $p \nmid k$. La représentation $\pi_{1,p}$ étant symplectique, on peut lui appliquer les résultats du paragraphe \ref{Lien avec les facteurs epsilon}. On sait alors déterminer $\alpha$ à partir du signe local $\eps_p(\pi_1)$ et on a bien (avec le Corollaire \ref{cor_temp_dim_invariants_général}) les dimensions d'invariants voulues.
\end{proof}

Nous avons immédiatement une réciproque.
\begin{defi}
Soit $\psi=\bigoplus \pi_i$ un paramètre d'Arthur global symplectique générique. On définit le conducteur de $\psi$, noté ${\rm N}(\psi)$ comme le produit des ${\rm N}(\pi_i)$.
\end{defi}

\begin{prop}\label{prop_def_param_sporadique}

Soit $\mathbf{G}$ un groupe spécial orthogonal sur $\Q$, relevant d'un des \emph{{\bfseries Cas}} du §\ref{Groupes étudiés et leurs représentations}.

Soit $\psi=\bigoplus \pi_i$ un paramètre d'Arthur global symplectique \emph{tempéré}. On suppose que le conducteur de $\psi$ est sans facteur carré. 

Soit $\pi$ un élément du paquet d'Arthur global $\Pi_\psi(\mathbf{G})$ (sans hypothèse d'automorphie, qui sera gérée par le Théorème \ref{thm_mult_Arthur}). Alors ${\rm N}(\pi)={\rm N}(\psi)$ et $\pi_p$ admet des $\J^+(p)$-invariants (resp. des $\K_0(p)$-invariants) non triviaux pour tout $p$ (resp. pour tout $p \nmid {\rm N}(\psi)$), ce que l'on peut traduire de façon globale en disant que 
$\pi$ admet des invariants par le sous-groupe compact ouvert $\mathsf{K}({\rm N}(\psi))$ de $\mathbf{G}(\aq)$ avec $\mathsf{K}({\rm N}(\psi))=\prod_p K_p$, où $K_p$ est paramodulaire (au sens du §\ref{J^+}) si $p \mid {\rm N}(\psi)$ et hyperspécial sinon (au sens du §\ref{Le groupe K_0}).
\end{prop}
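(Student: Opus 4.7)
Par hypothèse, $\psi$ est tempéré, si bien que $\psi_v$ est, pour toute place $v$, un paramètre de Langlands tempéré de $\SO_{2n+1}(\Q_v)$ ; aux places finies $p$, le paquet d'Arthur local $\Pi_{\psi_p}(\mathbf{G})$ coïncide alors avec le paquet de Langlands associé à $\psi_p$ au sens du Théorème \ref{LLC_SO}. D'après \eqref{lien_rep_param_Arthur_paquets_locaux}, la composante $\pi_p$ y appartient. Le plan consiste à identifier explicitement $\pi_p$ à l'aide des résultats locaux de la Première Partie, en traitant séparément les places où $\psi$ est ramifié et celles où il ne l'est pas.

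Fixons un nombre premier $p$. L'hypothèse \og ${\rm N}(\psi) = \prod_i {\rm N}(\pi_i)$ sans facteur carré \fg{} entraîne l'alternative suivante : soit $p \nmid {\rm N}(\psi)$ et chaque $\pi_{i,p}$ est non ramifiée, soit $p \mid {\rm N}(\psi)$, auquel cas il existe un unique indice $i_0$ vérifiant $\aar(\pi_{i_0,p}) = 1$, les autres $\pi_{i,p}$ étant non ramifiées. Dans le premier cas, $\psi_p = \bigoplus_i \Ll(\pi_{i,p})$ est une somme de caractères non ramifiés de $\W_{\qp}$ ; c'est donc un paramètre tempéré non ramifié de $\SO_{2n+1}(\qp)$ décrit par la Proposition \ref{prop_parametres_1}, et la Proposition \ref{thm_rep_sph_temp} (ou, plus directement, le Corollaire \ref{cor_temp_dim_invariants_général}) fournit $\dim \pi_p^{\K_0(p)} = 1$ et $\aar(\pi_p) = 0$.

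Dans le second cas, la Définition \ref{defi_aar_rep_tauto} et l'additivité de l'exposant d'Artin donnent $\aar(\psi_p) = \sum_i \aar(\Ll(\pi_{i,p})) = 1$. Ainsi $\psi_p$ est un paramètre tempéré de $\SO_{2n+1}(\qp)$ de conducteur $\pk$ ; la Proposition \ref{prop_parametres_p} l'écrit sous la forme $\bigoplus_{j=1}^{n-1}(\chi_j \oplus \chi_j^{-1}) \oplus (\alpha \otimes U_2)$ avec $\alpha \in \{\1, \eta\}$, et le Théorème \ref{thm_pcpl_temp} identifie $\pi_p$ comme l'unique élément de $\Pi_{\psi_p}$, avec $\dim \pi_p^{\J^+(p)} = 1$ et $\aar(\pi_p) = 1$.

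Il reste à recoller ces informations locales : on en déduit $\aar(\pi_p) = v_p({\rm N}(\psi))$ pour tout $p$, d'où ${\rm N}(\pi) = {\rm N}(\psi)$, et le produit tensoriel restreint des invariants locaux fournit les invariants globaux de $\pi$ par $\mathsf{K}({\rm N}(\psi))$. Il n'y a pas, à proprement parler, d'\og étape principale \fg{} cachée dans cette argumentation : toute la difficulté technique est déjà contenue dans les résultats locaux de la Première Partie (Propositions \ref{prop_parametres_1} et \ref{prop_parametres_p}, Théorèmes \ref{thm_rep_sph_temp} et \ref{thm_pcpl_temp}, Corollaire \ref{cor_temp_dim_invariants_général}), dont la mise en \oe{}uvre globale est ici essentiellement formelle, l'hypothèse que $\psi$ est tempéré et de conducteur sans facteur carré garantissant précisément que chaque paquet local $\Pi_{\psi_p}(\mathbf{G})$ à une place finie est un singleton du type étudié localement.
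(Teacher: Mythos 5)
Votre démonstration est correcte et suit exactement la voie que le texte laisse implicite : la proposition y est présentée sans preuve, comme réciproque « immédiate » de la Proposition \ref{prop_dim_invariants_so}, reposant sur les mêmes ingrédients locaux (le paramètre $\psi_p$ tempéré d'exposant d'Artin $0$ ou $1$, puis les Propositions \ref{prop_parametres_1} et \ref{prop_parametres_p} et le Corollaire \ref{cor_temp_dim_invariants_général}). Votre rédaction explicite proprement cette réduction place par place ; il n'y a rien à redire.
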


\section{Formule de multiplicité dans le cas générique}\label{Formule de multiplicité dans le cas tempéré}
Soit $\psi=\bigoplus_i \pi_i[d_i]$ un paramètre d'Arthur global. On note $I$ l'ensemble d'indexation des $(\pi_i,d_i)$ et on pose $\mathcal{C}_\psi=\{\pm 1 \}^I$. Pour $i \in I$, on note $s_i$ l'élément de $\mathcal{C}_\psi$ tel que la $j$-ème composante de $s_i$ est $-1$ si $j=i$ et $1$ sinon. Les $s_i$ pour $i \in I$ engendrent alors le 2-groupe fini $\mathcal{C}_\psi$.\ps 


On peut définir un caractère $\eps_\psi : \mathcal{C}_\psi \rightarrow \{\pm 1 \}$ par :
\[
\eps_\psi(s_i)=\prod_{j\neq i} \eps(\pi_i \times \pi_j)^{\min(d_i,d_j)},
\]
où $\eps(\pi_i \times \pi_j)$ est le facteur epsilon de la paire $\{\pi_i,\pi_j\}$ qui intervient dans l'équation fonctionnelle \eqref{eq_fonctionnelle_paire_globale} de la fonction $\Lambda$ correspondante.

On s'intéresse ici au cas tempéré, où tous les $d_i$ sont égaux à 1, et où tous les $\pi_i$ sont donc symplectiques au sens du paragraphe \ref{Alternative symplectique-orthogonale}. La Proposition \ref{prop_facteur_eps_p_paire} nous indique qu'alors $\eps(\pi_i \times \pi_j)=1$ pour tous $i$ et $j$ si bien que le caractère $\eps_\psi$ est trivial.\medskip

Nous avons vu au paragraphe \ref{Paramètres locaux associés à un paramètre global} qu'au paramètre d'Arthur global $\psi$ et au groupe spécial orthogonal $\mathbf{G}$ (relevant d'un des {\bfseries Cas} du §\ref{Groupes étudiés et leurs représentations}) était associé un paquet d'Arthur global $\Pi_\psi(\mathbf{G})$. Nous allons maintenant associer un caractère à chaque élément de ce paquet.

Soit donc $\pi={\bigotimes}'\pi_v$ un élément de $\Pi_\psi(\mathbf{G})$. On a, pour tout $v$, $\pi_v \in \Pi_{\psi_v}(\mathbf{G})$ où $\Pi_{\psi_v}(\mathbf{G})$ est le paquet d'Arthur local associé au paramètre d'Arthur local $\psi_v$ -- lui-même déduit du paramètre d'Arthur global $\psi$, selon \eqref{param_Arthur_locaux_sp}. Dans le cas qui nous intéresse où tous les $d_i$ sont égaux à 1, on a vu au paragraphe \ref{Paramètres locaux associés à un paramètre global} que les paquets locaux étaient en fait des paquets de Langlands. On sait alors par le Théorème \ref{LLC_SO} (Arthur-M\oe{}glin) qu'à toute place finie $v=p$, les éléments du paquet de Langlands $\Pi_{\psi_p}$ sont indexés par les caractères du groupe (abélien fini) $\mathcal{S}_{\psi_p}$. Ainsi, à notre élément $\pi$ du paquet global $\Pi_\psi(\mathbf{G})$, est associé à chaque place finie un caractère $\chi_p$ de $\mathcal{S}_{\psi_p}$, correspondant à $\pi_p$. Les choses se passent de la même façon à la place archimédienne, et nous expliciterons à la Proposition \ref{prop_calcul_chi_infty} le calcul de ce caractère. Il est encore possible, dans le cas non générique d'associer de tels caractères à chaque place, ce que nous ne détaillons pas.\medskip

La dernière étape consiste à définir un morphisme \emph{de localisation} $\mathcal{C}_\psi \rightarrow \mathcal{S}_{\psi_v}$ pour toute place $v$, ce qui nous permettra de voir (par composition) chacun des caractères $\chi_v$ comme un caractère de $\mathcal{C}_\psi$. On a, selon \eqref{param_Arthur_locaux_sp}, $\psi_v : \WD_{\Q_v}  \rightarrow \Sp_{2n}(\C)$, qui se factorise en fait par le sous-groupe diagonal par blocs $\bigoplus_i \Sp_{n_id_i}(\C)$. On peut alors définir, pour chaque $i$, l'élément $\sigma_i=\bigoplus_{j\neq i} {\rm id}_{n_jd_j} \oplus (-{\rm id}_{n_id_i})$. Ces éléments centralisent manifestement $\psi_v$ et on peut alors définir le morphisme de localisation $\mathcal{C}_\psi \rightarrow \mathcal{S}_{\psi_v}$ qui envoie $s_i$ sur la classe de $\sigma_i$ dans $\mathcal{S}_{\psi_v}=\Cent(\psi_v)/Z(\Sp_{2n}(\C))$.

\begin{thm} \emph{Formule de multiplicité d'Arthur (\cite{Art13}, Theorem 1.5.2 et \cite{Taibi_cpctmult}, Theorem 4.0.1)} \label{thm_mult_Arthur}

Soit $\mathbf{G}$ un groupe spécial orthogonal sur $\Q$ en $2n+1$ variables, relevant d'un des \emph{{\bfseries Cas}} du §\ref{Groupes étudiés et leurs représentations}.

Soit $\psi=\bigoplus_i \pi_i[d_i]$ un paramètre d'Arthur global symplectique de dimension $2n$ et soit $\pi \in \Pi_{\psi}(\mathbf{G})$. Notons $\chi_v$ le caractère de $\mathcal{S}_{\psi_v}$ associé à $\pi_v$.

Alors $\pi$ est automorphe discrète (et de multiplicité un dans la décomposition du spectre discret) si, et seulement si les caractères $\eps_\psi$ et $\prod_v \chi_v$ sont égaux sur $\mathcal{C}_\psi$. 
On remarque que le premier caractère (trivial dans le cas générique) ne dépend que du paramètre global $\psi$, tandis que le second dépend de la représentation $\pi$.
\end{thm}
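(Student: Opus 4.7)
Nous commencerions par observer que ce théorème constitue précisément le résultat central de \cite{Art13} dans le \textbf{Cas 1} et son extension par \cite{Taibi_cpctmult} dans le \textbf{Cas 2}; le rôle de notre \og preuve \fg{} serait donc d'exposer la stratégie globale de ces travaux. L'idée fondatrice est la comparaison, via la stabilisation de Langlands-Kottwitz-Shelstad, entre deux formules des traces : celle, ordinaire, de $\mathbf{G}$ et celle, tordue, du groupe $\GL_{2n}$ relativement à son automorphisme extérieur $\theta : g \mapsto J\,{}^tg^{-1}J^{-1}$. Les paramètres d'Arthur globaux symplectiques de dimension $2n$ indexent exactement les composantes $\theta$-discrètes du spectre automorphe de $\GL_{2n}$ (\cf Théorème \ref{thm_Arthur_Taibi}), ce qui permet, après avoir fixé $\psi$, d'isoler les contributions correspondantes des deux côtés.

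L'étape préliminaire consiste à définir et caractériser les paquets d'Arthur locaux $\Pi_{\psi_v}(\mathbf{G})$ avec leur indexation par $\widehat{\mathcal{S}_{\psi_v}}$. Aux places finies, où $\mathbf{G}(\qp)$ est déployé, ceci est l'\oe{}uvre d'Arthur, complétée par M\oe{}glin, via des identités endoscopiques locales reliant les caractères distributions des $\pi_v \in \Pi_{\psi_v}(\mathbf{G})$ (pondérés par $\chi_v(s)$ pour $s\in\mathcal{S}_{\psi_v}$) aux transferts spectraux vers les groupes endoscopiques elliptiques de $\mathbf{G}$. À la place archimédienne, le \textbf{Cas 1} relève encore de la théorie de Shelstad; dans le \textbf{Cas 2} (absent de \cite{Art13}), on substitue aux paquets d'Arthur les paquets cohomologiques d'Adams-Johnson \cite{Adams-Johnson}, la compatibilité endoscopique requise étant démontrée dans \cite{AMR} et \cite{Taibi_cpctmult}.

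La partie la plus substantielle -- et que l'on anticiperait comme l'obstacle principal -- est la comparaison globale proprement dite. En isolant les contributions du paramètre $\psi$ dans chacune des deux formules des traces stabilisées, puis en appliquant une analyse de Fourier sur le 2-groupe abélien fini $\mathcal{C}_\psi$ (dont le dual indexe, du côté de $\GL_{2n}$, les paquets globaux candidats), on obtient que la multiplicité cherchée vaut
\[
\m(\pi)=\frac{1}{|\mathcal{C}_\psi|}\sum_{s\in\mathcal{C}_\psi}\eps_\psi(s)\,\prod_v \chi_v(s),
\]
quantité qui, par orthogonalité des caractères, est nulle ou égale à $1$, et qui vaut $1$ exactement lorsque $\prod_v\chi_v = \eps_\psi$ sur $\mathcal{C}_\psi$. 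Le signe global $\eps_\psi$, trivial dans le cas générique (en conformité avec la Proposition \ref{facteur_epsilon_alternative_sp_orth}), apparaît \emph{in fine} comme issu des facteurs de transfert de Kottwitz-Shelstad pondérant les contributions géométriques des deux formules des traces, et s'identifie au produit explicite de facteurs epsilon globaux de paires qui figure dans la définition de $\eps_\psi$.

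Le passage du \textbf{Cas 1} au \textbf{Cas 2}, esquissé au Chapitre 9 de \cite{Art13} mais démontré en bonne et due forme par Taïbi, repose sur la stabilisation complète de la formule des traces pour la forme intérieure compacte $\mathbf{G}$, puis sur sa comparaison à celle de la forme quasi-déployée intérieure associée, moyennant les facteurs de transfert de Kaletha. Le caractère compact de $\mathbf{G}(\R)$ simplifie partiellement la situation (les représentations archimédiennes sont de dimension finie), mais requiert pour le calcul explicite des caractères $\chi_\infty$ l'identification des paquets d'Adams-Johnson avec ceux définis par Arthur sur la forme quasi-déployée -- point qui constituera l'objet de la Proposition \ref{prop_calcul_chi_infty} à laquelle nous renverrons.
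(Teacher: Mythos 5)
Le texte ne démontre pas ce théorème : il le cite tel quel de \cite{Art13} (Theorem 1.5.2) et de \cite{Taibi_cpctmult} (Theorem 4.0.1), la seule « preuve » étant le renvoi figurant dans l'intitulé. Votre proposition n'est donc pas en concurrence avec un argument du texte ; c'est un survol de la stratégie des références citées, ce qui est ici la seule attitude raisonnable, et ce survol est globalement fidèle (comparaison des formules des traces stabilisées ordinaire et tordue, identités endoscopiques locales indexant les paquets par $\widehat{\mathcal{S}_{\psi_v}}$, analyse de Fourier sur le $2$-groupe donnant une multiplicité égale à $0$ ou $1$, substitution des paquets d'Adams--Johnson à l'archimédien dans le \textbf{Cas 2} via \cite{AMR} et \cite{Taibi_cpctmult}).

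Deux imprécisions méritent d'être signalées. D'abord, la formule de multiplicité d'Arthur est une somme sur $\mathcal{S}_\psi$ (quotient de $\Cent(\psi)$ par $Z(\widehat{G})$ et la composante neutre) et non sur $\mathcal{C}_\psi=\{\pm1\}^I$ ; l'énoncé du texte en termes de $\mathcal{C}_\psi$ n'est cohérent que parce que $\eps_\psi$ et $\prod_v\chi_v$ passent tous deux au quotient, point qu'il faudrait mentionner si l'on voulait que votre formule
\[
\m(\pi)=\frac{1}{|\mathcal{C}_\psi|}\sum_{s\in\mathcal{C}_\psi}\eps_\psi(s)\,\prod_v \chi_v(s)
\]
coïncide avec celle d'Arthur. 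Ensuite, le caractère $\eps_\psi$ ne provient pas des facteurs de transfert pondérant le côté géométrique, mais du côté spectral de la formule des traces : il apparaît dans la normalisation des opérateurs d'entrelacement globaux (la « global intertwining relation »), où les facteurs epsilon de paires entrent via les équations fonctionnelles des fonctions $L$ qui normalisent ces opérateurs. Ce sont des points de détail dans une esquisse, mais le second attribue le signe au mauvais versant de la comparaison.
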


Dans la suite, on s'intéresse à la situation suivante.

\begin{prop}\label{prop_chi_p_trivial}
Soit $\mathbf{G}$ un groupe spécial orthogonal sur $\Q$, relevant d'un des \emph{{\bfseries Cas}} du §\ref{Groupes étudiés et leurs représentations}.
Soit $\psi=\bigoplus_i \pi_i$ un paramètre d'Arthur global symplectique tempéré de conducteur ${\rm N}(\psi)$ sans facteur carré.

Alors, pour tout $p$ premier, le paquet local $\Pi_{\psi_p}(\mathbf{G})$ est un singleton $\{\mu\}$ et le caractère $\chi_p$ de $\mathcal{S}_{\psi_p}$ associé à $\mu$ est trivial.
\end{prop}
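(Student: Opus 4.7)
The plan is to reduce everything to the local classification of tempered parameters of exponent $0$ or $1$ already established in Propositions \ref{prop_parametres_1} and \ref{prop_parametres_p}, and to observe that in both of these classifications the component group $\mathcal{S}_{\psi_p}$ turns out to be trivial, which immediately forces the associated character $\chi_p$ to be trivial as well.

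First, I would check that at each finite place $p$, the local parameter $\psi_p$ is a genuine Langlands parameter of $\SO_{2n+1}(\qp)$ (identifying $\mathbf{G}\otimes_\Q \qp$ with the split group as remarked at the beginning of §\ref{Invariants aux places finies}). Since $\psi$ is tempered, each $\pi_{i,p}$ is tempered by Theorem \ref{thm_shin_caraiani}, hence $\Ll(\pi_{i,p})$ has bounded image, and so does $\psi_p = \bigoplus_i \Ll(\pi_{i,p})$; also all $d_i=1$ so there is no $\SL_2(\C)$-part. Thus $\Pi_{\psi_p}(\mathbf{G})$ is the Langlands packet (not merely an Arthur packet) associated to $\psi_p$.

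Next I would exploit the hypothesis that ${\rm N}(\psi)=\prod_i {\rm N}(\pi_i)$ is squarefree. By definition of the global conductor (Definition \ref{defi_conducteur_global}), this means that at each prime $p$, the total Artin exponent $\sum_i \aar(\pi_{i,p})$ equals $v_p({\rm N}(\psi))\in\{0,1\}$. Hence $\aar(\psi_p)\le 1$. Applying Proposition \ref{prop_parametres_1} if $\aar(\psi_p)=0$, or Proposition \ref{prop_parametres_p} if $\aar(\psi_p)=1$, we conclude that $\psi_p$ has precisely the shape
\[
\psi_p = \bigoplus_j (\chi_j \oplus \chi_j^{-1}) \quad\text{or}\quad \psi_p = \bigoplus_j(\chi_j\oplus\chi_j^{-1}) \oplus (\alpha\otimes U_2),
\]
with all $\chi_j$ unramified characters of $\W_{\qp}$ and $\alpha\in\{\1,\eta\}$. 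Both of those propositions already include the statement that $\Pi_{\psi_p}$ is a singleton $\{\mu\}$, because the centralizer of the image of $\psi_p$ in $\Sp_{2n}(\C)$ is connected modulo the centre (a product of groups of type $\GL_{m}(\C)$, $\Sp_{2a}(\C)$, together with a possible $\{\pm I_2\}$ factor coming from $\alpha\otimes U_2$ which gets killed by $Z(\Sp_{2n}(\C))$). Consequently $\mathcal{S}_{\psi_p}$ is the trivial group, and the unique character $\chi_p$ it carries is automatically trivial.

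There is really no serious obstacle here: the heart of the argument is the local analysis carried out in the first part of the thesis (Propositions \ref{prop_parametres_1} and \ref{prop_parametres_p}), and the only bookkeeping task is to ensure that the squarefree global conductor translates into an Artin exponent $\le 1$ at each prime, which is immediate from multiplicativity of conductors. If anything, the most delicate point is to make sure that the identification of the local group $\mathbf{G}\otimes_\Q\qp$ with the split $\SO_{2n+1}/\qp$ is made compatibly with the conventions fixing the $L$-group and the component group $\mathcal{S}_{\psi_p}$; but this is handled uniformly for both of the Cases of §\ref{Groupes étudiés et leurs représentations}, as stressed in §\ref{Invariants aux places finies}.
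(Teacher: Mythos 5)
Your proposal is correct and follows essentially the same route as the paper: squarefreeness of ${\rm N}(\psi)$ gives $\aar(\psi_p)=\sum_i\aar(\pi_{i,p})\le 1$, and Propositions \ref{prop_parametres_1} and \ref{prop_parametres_p} then yield both the singleton packet and the triviality of $\mathcal{S}_{\psi_p}$. One small correction: you do not need (and cannot in general invoke) Theorem \ref{thm_shin_caraiani} to get temperedness of the $\pi_{i,p}$, since the $\pi_i$ are not assumed algebraic regular here — temperedness at every place is already part of the definition of a \emph{tempered} global Arthur parameter, which is exactly what the paper's proof uses.
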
\label{prop_752}
\begin{proof}
Il suffit de constater que le paramètre (de Langlands) local, $\psi_p=\bigoplus_i \Ll(\pi_{i,p})$ vérifie $\aar(\psi_p)=\sum_i \aar(\pi_{i,p}) \leq 1$, pour tout $p$ (et est tempéré car les $\pi_{i,p}$ le sont par hypothèse sur $\psi$). Le résultat suit alors des Propositions \ref{prop_parametres_1} et \ref{prop_parametres_p}.
\end{proof}

Tout se passe donc comme dans le cas du conducteur 1 étudié dans \cite{Chen-Ren}, le seul caractère à considérer est celui de l'unique place archimédienne. En ce qui concerne cette unique place archimédienne, il faut être un peu plus vigilant car l'application d'indexation $\widehat{\mathcal{S}_{\psi_\infty}} \rightarrow \Pi_{\psi_\infty}$ du paquet de Langlands n'est plus nécessairement bijective (comme dans le cas $p$-adique), mais elle est toujours surjective. Nous allons donc déterminer ce qu'il se passe dans les cas qui nous intéressent.

\begin{prop}\label{prop_calcul_chi_infty}
Soit $\psi=\bigoplus_i \pi_i$ un paramètre d'Arthur global symplectique de dimension $2n$, générique. On suppose que $\psi_\infty$ est algébrique régulier, \ie est de la forme $I_{w_1} \oplus \cdots \oplus I_{w_n}$ avec $w_1 > \cdots > w_n>0$ entiers impairs. 
 On pose $J_i$ l'ensemble des $i\in\{1,\cdots,n\}$ tels que $\frac{w_i}{2}$ est un poids\footnote{$-\frac{w_i}{2}$ est alors également un poids de $\pi_i$ du fait qu'une représentation algébrique est supposée centrée.} de $\pi_i$. Ainsi les $J_i$ définissent une partition de $\{1,\cdots,n\}$. \ps 

On définit ci-dessous un $n$-uplet de signes $\underline{\eps}=(\eps_1,\cdots,\eps_n)$. La valeur de $\chi_\infty$ sur un générateur $s_i$ de $\mathcal{C}_\psi$ est alors égale au produit des signes $\eps_{j}$ pour $j$ parcourant $J_i$. \ps 
%

\begin{itemize}
\item Pour le groupe déployé $\SO_3\simeq \PGL_2$, le paquet $\Pi_{\psi_\infty}(\SO_3)$ est réduit à un seul élément et $\underline{\eps}=(+)$.
\item Pour le groupe déployé $\SO_5\simeq {\rm PGSp}_4$, le paquet $\Pi_{\psi_\infty}(\SO_5)$ contient deux éléments : la série discrète générique et la série discrète holomorphe. C'est la seconde qui nous intéresse, pour laquelle on a $\underline{\eps}=(-,-)$.
\item Pour le groupe $\SO_{2n+1}$ compact à l'infini (c'est le \emph{{\bfseries Cas 2}} du §\ref{Groupes étudiés et leurs représentations}), le paquet $\Pi_{\psi_\infty}(\SO_{2n+1})$ est réduit à un seul élément et $\underline{\eps}=((-1)^n,\cdots,+,-)$. 
\end{itemize}

\end{prop}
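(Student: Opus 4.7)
La stratégie consiste à analyser d'abord la structure de $\mathcal{S}_{\psi_\infty}$ et celle de l'application de localisation $\mathcal{C}_\psi \to \mathcal{S}_{\psi_\infty}$, avant de traiter séparément chacun des trois cas énoncés. Puisque $\psi_\infty \simeq \bigoplus_{j=1}^n I_{w_j}$ est une somme de $n$ représentations irréductibles symplectiques de $\W_\R$ deux à deux non isomorphes (par l'hypothèse de régularité $w_1 > \cdots > w_n$), la Proposition \ref{decomposition_par_discret_sp} appliquée à $\Gamma = \W_\R$ fournit $\Cent(\psi_\infty) = \{\pm I_2\}^n$, d'où $\mathcal{S}_{\psi_\infty} \simeq \{\pm 1\}^n / \langle (-1,\ldots,-1) \rangle$. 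Un caractère $\chi_\infty$ de ce groupe s'identifie donc à un $n$-uplet $\underline{\eps} = (\eps_1,\ldots,\eps_n) \in \{\pm 1\}^n$ vérifiant $\prod_{j=1}^n \eps_j = 1$, et l'application de localisation envoie le générateur $s_i$ de $\mathcal{C}_\psi$ sur la classe de l'élément valant $-1$ exactement aux coordonnées indexant les constituants archimédiens $I_{w_j}$ de $\Ll(\pi_{i,\infty})$, c'est-à-dire pour $j \in J_i$. La formule $\chi_\infty(s_i) = \prod_{j \in J_i} \eps_j$ en découle, et il reste à déterminer $\underline{\eps}$ dans chaque situation.

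Pour le premier cas ($\SO_3 \simeq \PGL_2$), on a $n=1$, donc $\mathcal{S}_{\psi_\infty}$ est trivial, et le paquet $\Pi_{\psi_\infty}$ se réduit à l'unique série discrète de $\PGL_2(\R)$ de poids $w_1 + 1$ ; le caractère associé est trivialement $\underline{\eps} = (+)$. Pour le second cas ($\SO_5 \simeq \mathrm{PGSp}_4$), la paramétrisation de Shelstad pour le paramètre discret régulier $I_{w_1} \oplus I_{w_2}$ fournit un paquet $\Pi_{\psi_\infty}$ à deux éléments, indexé par $\widehat{\mathcal{S}_{\psi_\infty}} = \{(+,+), (-,-)\}$ ; la série discrète générique est classiquement associée au caractère trivial $(+,+)$ et la série discrète holomorphe (celle qui nous intéresse globalement) au caractère non trivial, d'où $\underline{\eps} = (-,-)$.

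Pour le troisième cas ($\mathbf{G}(\R)$ compact), le paquet de Langlands associé à un paramètre algébrique régulier est un singleton contenant l'unique représentation irréductible de dimension finie de $\mathbf{G}(\R)$ de caractère infinitésimal fixé. La paramétrisation d'Adams-Johnson (\cite{Adams-Johnson}), telle qu'explicitée dans \cite{AMR} et rappelée dans \cite{Taibi_cpctmult}, fournit alors $\eps_j = (-1)^{n-j+1}$, soit $\underline{\eps} = ((-1)^n,\ldots,+,-)$. La compatibilité $\prod_j \eps_j = (-1)^{n(n+1)/2} = +1$ (qui doit être satisfaite puisque $\chi_\infty$ est un caractère de $\mathcal{S}_{\psi_\infty}$, non de $\{\pm 1\}^n$) se vérifie grâce à la contrainte $2n+1 \equiv \pm 1 \pmod 8$ de la Proposition \ref{ImQp}, laquelle force $n \equiv 0$ ou $3 \pmod 4$.

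La principale difficulté réside dans la fixation cohérente des normalisations entre les diverses paramétrisations (Shelstad aux places archimédiennes pour les cas 1 et 2, Adams-Johnson pour le cas 3) et les conventions de la théorie d'Arthur telle que formulée au Théorème \ref{thm_mult_Arthur} : choix d'une donnée de Whittaker compatible, d'un épinglage, et d'une identification explicite des groupes $\mathcal{S}_{\psi_v}$ avec leur dual. Ces normalisations étant suivies dans les références citées, chaque cas se ramène ensuite à la lecture du caractère d'une représentation parfaitement identifiée (série discrète générique ou holomorphe de $\mathrm{PGSp}_4(\R)$, représentation de dimension finie du groupe compact), ce qui conclut la démonstration de la Proposition.
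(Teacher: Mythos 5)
Votre démonstration est correcte et suit essentiellement la même voie que celle du texte : après avoir explicité la structure de $\Cent(\psi_\infty)=\{\pm1\}^n$, l'identification des caractères de $\mathcal{S}_{\psi_\infty}$ avec les $n$-uplets de signes de produit $1$ et le morphisme de localisation, la détermination effective de $\underline{\eps}$ dans chaque cas est, comme dans le texte, renvoyée aux calculs classiques des paquets archimédiens (Langlands--Shelstad pour $\SO_3$ et $\SO_5$ déployés, Adams--Johnson pour le cas compact, exposés dans \cite{Chen-Ren} et \cite{Chen-Lannes}). Le texte se contente de ces citations ; vous explicitez en plus le cadre structurel et la vérification de cohérence $\prod_j\eps_j=(-1)^{n(n+1)/2}=1$ via $2n+1\equiv\pm1\bmod 8$, ce qui est un complément bienvenu mais ne change pas l'argument.
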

\begin{proof}
Dans tous les cas, il faut examiner le paquet (de Langlands) $\Pi_{\psi_\infty}(\mathbf{G})$ qui est un paquet de séries discrètes.
Les groupes $\SO_3$ et $\SO_5$ déployés sur $\Q$ relèvent bien sûr du {\bfseries Cas 1} du §\ref{Groupes étudiés et leurs représentations} et les résultats, classiques, sont exposés dans \cite{Chen-Ren}, §4.1 et §4.2. En ce qui concerne le {\bfseries Cas 2}, nous renvoyons à \cite{Chen-Lannes}, chapitre VIII, §5.5. 
\end{proof}

%
%
%
%

\section{Paramètres à considérer}
\begin{cor}\label{cor_761}
Soit $(\pi_i)$ une collection de représentations automorphes cuspidales algébriques régulières symplectiques de $\GL_{n_i}$ (les $n_i$ sont donc tous pairs) telle que :
\begin{itemize}
\item pour tout $i$, le conducteur de $\pi_i$ est sans facteur carré ;
\item les conducteurs des $\pi_i$ sont deux à deux premiers entre eux ;
\item les ensembles de poids des $\pi_i$ sont deux à deux disjoints.
\end{itemize}
On peut alors construire le paramètre d'Arthur global symplectique de dimension $2n=\sum_i n_i$, manifestement tempéré, $\psi=\bigoplus_i \pi_i$, et de conducteur ${\rm N}(\psi)$ sans facteur carré.

Soit $\mathbf{G}$ un groupe spécial orthogonal sur $\Q$ en $2n+1$ variables, relevant d'un des \emph{{\bfseries Cas}} du §\ref{Groupes étudiés et leurs représentations}.
Soit $\pi$ \emph{l'unique élément d'intérêt} du paquet d'Arthur global $\Pi_\psi(\mathbf{G})$ (les paquets locaux $p$-adiques sont des singletons selon la Proposition \ref{prop_chi_p_trivial} et le paquet local archimédien également, sauf dans le cas $\SO_5$ où l'on choisit pour $\pi_\infty$ la série discrète holomorphe). Alors $\pi$ est automorphe discrète (et de multiplicité 1) si, et seulement si on est dans un des cas suivants :
\begin{itemize}
\item pour le groupe déployé $\SO_3\simeq \PGL_2$, $\psi=\pi^w$ ;
\item pour le groupe déployé $\SO_5\simeq {\rm PGSp}_4$, $\psi=\pi^{w,v}$ avec $w>v$ ;
\item pour le groupe $\SO_7$ compact à l'infini,
\[
\psi= \begin{cases}
\pi^{w,v,u}\\
\pi^{w,u}\oplus\pi^v
\end{cases}
\]
avec $w>v>u$ ;
\item pour le groupe $\SO_9$ compact à l'infini, 
\[
\psi= \begin{cases}
\pi^{w,v,u,t}\\
\pi^{w,u}\oplus\pi^{v,t} \\
\pi^w \oplus \pi^{v,t} \oplus \pi^u \\
\pi^w \oplus \pi^{v,u,t} \\
\pi^{w,v,t} \oplus \pi^u
\end{cases}
\]
avec $w>v>u>t$ ;
\end{itemize}
où $\pi^{x_1,\cdots,x_r}$ désigne une représentation automorphe cuspidale algébrique symplectique de $\GL_{2r}$ arbitraire de poids $(\pm \frac{x_1}{2},\cdots, \pm \frac{x_r}{2})$.

De plus, $\pi$ admet des $\mathsf{K}({\rm N}(\psi))$-invariants (avec les notations de la Proposition-Définition \ref{prop_def_param_sporadique}).
\end{cor}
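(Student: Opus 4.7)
The plan is to apply Arthur's multiplicity formula (Theorem~\ref{thm_mult_Arthur}) to the candidate $\pi$ and then enumerate, for each group $\mathbf{G}$, those parameters $\psi$ for which the automorphy condition is satisfied. First I would verify that $\psi = \bigoplus_i \pi_i$ satisfies the axioms (GAP~1)--(GAP~3): conditions (GAP~1) and (GAP~2) are immediate (the dimensions sum to $2n$, each $d_i=1$ is odd, each $\pi_i$ is symplectic); condition (GAP~3) follows from the pairwise disjointness of the weight sets, which forces the $\pi_{i,\infty}$, and hence the $\pi_i$ themselves, to be pairwise non-isomorphic. Note that $\psi$ is generic and tempered (by Theorem~\ref{thm_shin_caraiani}) and has squarefree conductor, since the $\mathrm{N}(\pi_i)$ are squarefree and pairwise coprime.

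Next I would trivialize both sides of the multiplicity equation at all places except $\infty$. Since all the $\pi_i$ are symplectic, Proposition~\ref{facteur_epsilon_alternative_sp_orth} yields $\eps(\pi_i\times\pi_j)=1$ for all $i\neq j$, so the character $\eps_\psi$ on $\mathcal{C}_\psi$ is trivial. Since $\mathrm{N}(\psi)$ is squarefree, Proposition~\ref{prop_chi_p_trivial} ensures that at every finite prime $p$ the local Arthur packet $\Pi_{\psi_p}(\mathbf{G})$ is a singleton and the attached character $\chi_p$ is trivial. Arthur's multiplicity formula then reduces to the single equation $\chi_\infty = 1$ on $\mathcal{C}_\psi$.

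The third step is to compute $\chi_\infty$ explicitly via Proposition~\ref{prop_calcul_chi_infty}. For each generator $s_i$ of $\mathcal{C}_\psi$, $\chi_\infty(s_i) = \prod_{j \in J_i} \eps_j$, where $J_i$ is the set of indices $j$ such that $\frac{w_j}{2}$ is a weight of $\pi_i$ and $\underline{\eps}$ is $(+)$ for $\SO_3$, $(-,-)$ for $\SO_5$ (with the holomorphic discrete series at infinity), $(-,+,-)$ for $\SO_7$, and $(+,-,+,-)$ for $\SO_9$. The automorphy condition $\chi_\infty(s_i)=1$ for every $i$ thus becomes a parity-type constraint on the ordered partition of $\{1,\dots,n\}$ determined by the $J_i$'s.

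Finally, I would enumerate all such partitions case by case: for $\SO_3$ only $\psi=\pi^w$ can arise; for $\SO_5$ a direct check eliminates $\pi^w\oplus\pi^v$ (which gives $\chi_\infty(s_1)=-$) and retains $\pi^{w,v}$; for $\SO_7$ going through the partitions of $\{w,v,u\}$ yields the two survivors $\pi^{w,v,u}$ and $\pi^{w,u}\oplus\pi^v$; for $\SO_9$ one checks every partition of $\{w,v,u,t\}$ and finds exactly the five listed. The last assertion about $\mathsf{K}(\mathrm{N}(\psi))$-invariants is then an immediate application of Proposition~\ref{prop_def_param_sporadique}. The only mildly tedious step is the $\SO_9$ enumeration (on the order of fifteen partitions to test), but it is entirely mechanical; there is no conceptual obstacle, as all substantive input has been packaged into the results cited above.
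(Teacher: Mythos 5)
Your proposal is correct and follows essentially the same route as the paper: the proof there also reduces Arthur's multiplicity formula to the single condition $\chi_\infty=1$ (using that $\eps_\psi$ is trivial in the generic symplectic case and that the $p$-adic characters $\chi_p$ are trivial by Proposition \ref{prop_chi_p_trivial}), computes $\chi_\infty$ on the generators $s_i$ via the sign vectors of Proposition \ref{prop_calcul_chi_infty}, and enumerates the admissible partitions case by case. Your sign vectors $(-,+,-)$ for $\SO_7$ and $(+,-,+,-)$ for $\SO_9$ agree with the formula $\underline{\eps}=((-1)^n,\cdots,+,-)$ of that proposition, so the enumeration goes through exactly as in the paper.
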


\begin{proof}
Il suffit de considérer tous les paramètres d'Arthur possibles. Hormis le cas $\SO_5$, le paquet d'Arthur global est un singleton, et dans le cas $\SO_5$ (où le paquet d'Arthur global est un doubleton), on sait encore à quel élément du paquet d'Arthur global on s'intéresse. Il suffit alors de calculer $\chi_\infty$ pour l'unique élément d'intérêt du paquet d'Arthur global selon la Proposition \ref{prop_calcul_chi_infty} et l'on retient alors, suivant le Théorème \ref{thm_mult_Arthur}, les éléments pour lesquels ce caractère $\chi_\infty$ est trivial. On retrouve les \emph{tempered cases} de \cite{Chen-Ren} §4.2 (pour $\SO_5$), §5.3 (pour $\SO_7$) et §6.2 (pour $\SO_9$).
\end{proof}


La combinaison des Théorème \ref{thm_Arthur_Taibi}, Proposition \ref{prop_dim_invariants_so} et Corollaire \ref{cor_761} nous donne alors le résultat suivant. 
\begin{cor}\label{cor_762}
Soit $\mathbf{G}$ un groupe spécial orthogonal sur $\Q$ en $2n+1$ variables, relevant d'un des \emph{{\bfseries Cas}} du §\ref{Groupes étudiés et leurs représentations}.
Soit $N$ un entier naturel non nul sans facteur carré.

Il y a {\bfseries autant} de représentations automorphes discrètes algébriques {\bfseries très régulières} de $\mathbf{G}$ de poids $\underline{w}=(\pm\frac{w_1}{2},\cdots,\pm\frac{w_n}{2})$ avec $w_1>\cdots>w_n$ et admettant des $\mathsf{K}(N)$-invariants que de paramètres d'Arthur globaux $\psi$ symplectiques de dimension $2n$ tempérés, tels que considérés au Corollaire \ref{cor_761} avec $\psi_\infty$ de poids $\underline{w}$ et ${\rm N}(\psi)=N$.

Il y a {\bfseries au moins autant} de représentations automorphes discrètes algébriques {\bfseries régulières} de $\mathbf{G}$  de poids $\underline{w}=(\pm\frac{w_1}{2},\cdots,\pm\frac{w_n}{2})$ avec $w_1>\cdots>w_n$ et admettant des $\mathsf{K}(N)$-invariants que de paramètres d'Arthur globaux $\psi$ symplectiques de dimension $2n$ tempérés, tels que considérés au Corollaire \ref{cor_761} avec $\psi_\infty$ de poids $\underline{w}$ et ${\rm N}(\psi)=N$.


\end{cor}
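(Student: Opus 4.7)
La stratégie consiste à construire une application injective $\psi \mapsto \pi$ des paramètres d'Arthur globaux symplectiques tempérés énumérés au Corollaire \ref{cor_761} vers les représentations automorphes discrètes considérées, puis à en démontrer la surjectivité dans le cas très régulier. L'inégalité « au moins autant » découlera de la seule injectivité, tandis que l'égalité reposera sur l'inversion fournie par le Théorème \ref{thm_Arthur_Taibi} (Arthur-Taïbi), combiné au Corollaire \ref{cor_shin_caraiani} (via Shin-Caraiani) pour forcer la tempérance.

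Commençons par la direction \emph{paramètre} $\to$ \emph{représentation}. À chaque paramètre $\psi$ satisfaisant les hypothèses du Corollaire \ref{cor_761}, ce corollaire associe un « unique élément d'intérêt » $\pi$ du paquet d'Arthur global $\Pi_\psi(\mathbf{G})$, automorphe discrète et de multiplicité un. La Proposition \ref{prop_AMRT} identifie les poids de $\pi_\infty$ à l'union des poids des $\pi_{i,\infty}$, soit précisément $\{\pm w_j/2\}$ ; ainsi $\pi$ est algébrique de poids $\underline{w}$, et régulière (très régulière si $|w_i - w_j| > 2$ pour $i \neq j$). La Proposition-Définition \ref{prop_def_param_sporadique} garantit l'existence de $\mathsf{K}(N)$-invariants. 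L'injectivité de $\psi \mapsto \pi$ résulte immédiatement de l'unicité affirmée dans le Théorème \ref{thm_Arthur_Taibi}, ce qui établit déjà l'inégalité voulue dans le cas régulier.

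Pour la surjectivité dans le cas \emph{très régulier}, on part d'une telle $\pi$ et on lui applique le Théorème \ref{thm_Arthur_Taibi}, qui fournit un paramètre $\psi(\pi) = \bigoplus_i \pi_i$ nécessairement \emph{générique}, avec chaque $\pi_i$ algébrique très régulière — la stricte régularité sert ici, via le lemme combinatoire qui suit le Théorème \ref{thm_Arthur_Taibi}, à exclure simultanément les multiplicités $d_i > 1$ et les coïncidences de poids entre les $\pi_i$. Le Corollaire \ref{cor_shin_caraiani} fournit alors la tempérance globale de $\psi$. La Proposition \ref{prop_dim_invariants_so}, appliquée place par place selon la nature hyperspéciale (pour $p \nmid N$) ou paramodulaire (pour $p \mid N$) de $\mathsf{K}(N)_p$, force les conducteurs ${\rm N}(\pi_i)$ à être deux à deux premiers entre eux, sans facteur carré, et à produit égal à $N$. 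Toutes les hypothèses du Corollaire \ref{cor_761} sont alors vérifiées, et $\pi$ provient bien d'un tel $\psi$. Dans le cas seulement régulier, rien n'exclut \emph{a priori} que $\psi(\pi)$ contienne un facteur $\pi_i[d_i]$ avec $d_i > 1$, d'où la limitation à une inégalité.

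L'obstacle principal attendu concerne le cas $\mathbf{G} = \SO_5$ déployé, où le paquet $\Pi_{\psi_\infty}(\mathbf{G})$ contient deux éléments (série discrète générique et série discrète holomorphe) : il faudra vérifier finement, via la formule de multiplicité d'Arthur (Théorème \ref{thm_mult_Arthur}) et le calcul du caractère $\chi_\infty$ de la Proposition \ref{prop_calcul_chi_infty}, que les représentations comptées à gauche correspondent bien à l'« élément d'intérêt » choisi au Corollaire \ref{cor_761}, et non à la série discrète générique qui relèverait de paramètres $\psi$ non listés. Une vigilance analogue sera nécessaire pour bien articuler la notion de conducteur local (via la Proposition \ref{prop_dim_invariants_so}) avec l'existence de $\mathsf{K}(N)_p$-invariants, qui peut provenir soit d'une vraie invariance paramodulaire à $p$, soit d'une invariance hyperspéciale.
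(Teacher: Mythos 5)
Votre démonstration suit exactement la route que le texte se contente d'esquisser (la preuve du corollaire y tient en une ligne : « combinaison des Théorème \ref{thm_Arthur_Taibi}, Proposition \ref{prop_dim_invariants_so} et Corollaire \ref{cor_761} ») : injection paramètre $\to$ représentation via le Corollaire \ref{cor_761} et la Proposition-Définition \ref{prop_def_param_sporadique}, puis surjectivité dans le cas très régulier via Arthur--Taïbi, Shin--Caraiani (Corollaire \ref{cor_shin_caraiani}) et la Proposition \ref{prop_dim_invariants_so}, l'inégalité seule subsistant dans le cas régulier à cause des paramètres éventuellement non génériques. Le seul point à surveiller est votre affirmation que l'existence de $\mathsf{K}(N)$-invariants « force \dots\ le produit égal à $N$ » : la Proposition \ref{prop_dim_invariants_so} ne donne que $\prod_i {\rm N}(\pi_i) \mid N$, de sorte que l'égalité annoncée suppose tacitement de lire « admettant des $\mathsf{K}(N)$-invariants » comme « de conducteur exactement $N$ » (sans quoi il faudrait sommer le décompte de droite sur les diviseurs de $N$) --- imprécision déjà présente dans l'énoncé du corollaire, que vous reproduisez plutôt que vous ne l'introduisez.
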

\begin{proof}
Dans le second cas, les paramètres du Corollaire \ref{cor_761} correspondent bien aux représentations automorphes discrètes recherchées, il se peut néanmoins qu'il existe des paramètres \emph{non génériques} correspondant à ces mêmes représentations.
\end{proof}

\newpage
\chapter{Lien avec des objets classiques}\label{Lien avec des objets classiques}

\section{Formes modulaires}
\subsection{Isomorphisme exceptionnel}\label{Isomorphisme exceptionnel}
On considère $\Z^3$ réalisé comme l'ensemble des matrices de ${\rm M}_2(\Z)$ de trace nulle. On a alors une forme quadratique naturelle donnée par l'opposé du déterminant $q: \begin{pmatrix}
a & b \\
c & -a
\end{pmatrix}  \mapsto a^2+bc$. La conjugaison permet de faire agir $\GL_2(\Z)$ sur ${\rm M}_2(\Z)^{{\rm tr}=0}$, en préservant le déterminant. On a donc un morphisme de groupes $\GL_2(\Z) \rightarrow \Oo({\rm M}_2(\Z)^{{\rm tr}=0},q)$, plus précisément $\PGL_2(\Z) \rightarrow \SO({\rm M}_2(\Z)^{{\rm tr}=0},q)$ et en fait mieux : un \emph{isomorphisme de schémas en groupes} : $\PGL_2 \simeq \SO_3$, où $\SO_3$ désigne donc le schéma en groupes déployé sur $\Z$, relevant du {\bf Cas 1} du §\ref{Groupes étudiés et leurs représentations}.

On remarque d'ailleurs que selon cet isomorphisme et, reprenant les notations du Chapitre \ref{Le groupe paramodulaire}, $\PGL_2(\zp)$ s'envoie sur un sous-groupe hyperspécial de $\SO_3(\qp)$ tandis que les sous-groupes d'Iwahori s'envoient sur des sous-groupes paramodulaires et les normalisateurs de ces sous-groupes d'Iwahori sur des sous-groupes épiparamodulaires. Les détails de cette \og traduction \fg{}  se trouvent dans \cite{Tsai-phd}, §6.1.

À cause de cet isomorphisme exceptionnel, on s'intéresse maintenant à $\PGL_2$. Une représentation automorphe de $\PGL_2$ est une représentation automorphe de $\GL_2$ de caractère central trivial. Les représentations automorphes discrètes non cuspidales de $\PGL_2$ sont bien connues, elles sont de dimension 1 et nous ne les considérons pas.

La théorie d'Arthur pour les représentations automorphes cuspidales est tautologique puisque, par l'isomorphisme exceptionnel $\PGL_2 \simeq \SO_3$, les représentations considérées sont bien symplectiques au sens du Théorème \ref{thm_alt_sp_orth}. De façon plus élémentaire, la condition de caractère central trivial nous indique que les paramètres locaux sont tous à valeurs dans $\SL_2(\C)=\Sp_2(\C)$. Ainsi le paramètre d'Arthur d'une représentation automorphe cuspidale de $\SO_3$ est simplement son image par l'isomorphisme exceptionnel. On remarque enfin que la multiplicité 1 (au sens de la décomposition \eqref{A_disc_SO}) est simplement le résultat classique de multiplicité 1 pour $\GL_2$ (en cohérence avec le Théorème \ref{thm_mult_Arthur}).

\subsection{En niveau $\Gamma_0(N)$ avec $N$ sans facteur carré}\label{En niveau Gamma_0_N}

Le lien entre formes modulaires pour des sous-groupes de congruence de $\SL_2(\Z)$ et formes automorphes adéliques pour $\GL_2$ est classique. Nous renvoyons à \cite{Gelbart_book} pour ce lien ainsi que pour les définitions usuelles suivantes.\ps 

%
%
Si $\Gamma$ est un sous-groupe de $\SL_2(\Z)$, on note ${\rm S}_{k}(\Gamma)$ le $\C$-espace vectoriel des formes modulaires paraboliques (ou cuspidales) de poids (modulaire) $k$ pour le groupe $\Gamma$.
On s'intéresse ici à une seule classe de sous-groupes.
Soit $N$ un entier naturel non nul, que l'on suppose sans facteur carré. On pose
\begin{equation*}
\Gamma_0(N)=\left\lbrace \begin{pmatrix}
a & b \\
c & d
\end{pmatrix} \in \SL_2(\Z) \; \middle| \; c \in N\Z \right\rbrace,
\end{equation*}
et l'on remarque que $\Gamma_0(1)=\SL_2(\Z)$ (\emph{full modular group}).

Il est possible d'associer à chaque forme modulaire parabolique normalisée de ${\rm S}_k(\Gamma_0(N))$ une représentation automorphe cuspidale algébrique $\pi$ de $\GL_2$, de caractère central trivial (donc de $\PGL_2$ si l'on veut), de conducteur divisant $N$. Plus précisément,\smallskip
\begin{itemize}
\item $\pi_\infty$ est isomorphe à la série discrète holomorphe ${\rm D}_{k}$ \emph{de poids $k$} (au sens classique), algébrique de poids $\{\pm\frac{k-1}{2}\}$, et on a $\Ll({\rm D}_{k})=I_{k-1}$ ; \smallskip
\item $\pi_p$ admet des invariants par $\PGL_2(\zp)$ pour $p\nmid N$ ;\smallskip
\item $\pi_p$ admet des invariants par un sous-groupe d'Iwahori ${\rm I}_p$ pour $p\mid N$.
\end{itemize}
\smallskip

On peut d'ailleurs regrouper les deux dernières propriétés en une seule propriété adélique : $\pi_f$ admet des invariants non triviaux par le sous-groupe compact ouvert $K_f(N)=\prod_p K_p(N) $ avec $K_p(N)=\PGL_2(\zp)$ si $p\nmid N$ et$K_p(N)={\rm I}_p$ si $p \mid N$, où ${\rm I}_p$ désigne un sous-groupe d'Iwahori fixé de $\GL_2(\Q_p)$.\medskip


En fixant une droite de plus haut poids dans ${\rm D}_k$, on a finalement un isomorphisme d'espaces vectoriels :
\begin{equation}\label{isom_S_k_Gamma}
{\rm S}_{k}(\Gamma_0(N))	\simeq \bigoplus\limits_{\substack{\pi_\infty \simeq \mathrm{D}_{k} \\ \pi_f^{K_f(N)}\neq 0}} \m(\pi) \pi_f^{K_f(N)}
\simeq \bigoplus\limits_{\substack{\pi_\infty \simeq \mathrm{D}_{k} \\ \pi_f^{K_f(N)}\neq 0}}  \left(\bigotimes_p \pi_p^{K_p(N)} \right).
\end{equation}


(On utilise le fait que $\m(\pi)=1$).
La dimension de l'espace de droite dépend alors du nombre de $\pi$ intervenant dans la somme et de la dimension de $\bigotimes_p \pi_p^{K_p(N)}$, \ie de celle de chaque $\pi_p^{K_p(N)}$.

\begin{prop}\label{prop_811}
Soit $p$ un nombre premier, et soit $\varpi$ une représentation admissible irréductible tempérée de $\PGL_2(\qp)$.

On suppose que $\varpi^{\PGL_2(\zp)}\neq \{0 \}$. Alors $\varpi$ est non ramifiée et $\dim(\varpi^{\PGL_2(\zp)})=1$.

On suppose que $\varpi^{{\rm I}(\zp)}\neq \{0 \}$. Alors
\begin{itemize}
\item soit $\varpi$ est non ramifiée et $\dim(\varpi^{{\rm I}(\zp)})=2$,
\item soit $\varpi=\alpha \St_{\PGL_2}$ avec $\alpha \in \{\1,\eta\}$ et $\dim(\varpi^{{\rm I}(\zp)})=1$.
\end{itemize}
\end{prop}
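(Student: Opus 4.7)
L'idée est d'utiliser systématiquement l'isomorphisme exceptionnel $\PGL_2\simeq \SO_3$ rappelé au §\ref{Isomorphisme exceptionnel} pour se ramener aux résultats démontrés au Chapitre \ref{Représentations avec des invariants paramodulaires} et au Théorème \ref{thm_pcpl_temp}. Via cet isomorphisme, $\PGL_2(\zp)$ s'identifie à un sous-groupe compact hyperspécial $\K_0$ de $\SO_3(\qp)$, et un sous-groupe d'Iwahori $\mathrm{I}$ de $\PGL_2(\qp)$ s'identifie à un sous-groupe paramodulaire $\J^+$ de $\SO_3(\qp)$ (la coïncidence $\mathrm{I}=\J^+$ pour $2n+1=3$ est précisément la Proposition \ref{prop_I_inclus_J+}). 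La représentation $\varpi$ devient ainsi une représentation lisse irréductible tempérée de $\SO_3(F)$ avec $F=\qp$.

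Pour la première assertion, on part de $\varpi^{\K_0}\neq\{0\}$. Puisque $\varpi$ est tempérée, la Proposition \ref{thm_rep_sph_temp} affirme que $\varpi$ est isomorphe à $\ii_B^{\SO_3(F)}\chi$ où $\chi$ est un caractère non ramifié unitaire du tore standard $T$ (ici de rang $1$). En particulier $\varpi$ est non ramifiée, et le Lemme \ref{lemme_inv_sph_induites} (ou le Corollaire \ref{cor_temp_dim_invariants_général}) fournit $\dim \varpi^{\K_0}=1$.

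Pour la seconde assertion, on applique le Théorème \ref{thm_rep_temp} à $\varpi$ tempérée irréductible avec $\varpi^{\J^+}\neq\{0\}$. Deux possibilités apparaissent. Soit $\varpi\simeq \ii_B^{\SO_3(F)}\chi$ avec $\chi$ non ramifié unitaire : on relève alors du cas précédent, $\varpi$ est non ramifiée, et le Corollaire \ref{cor_temp_dim_invariants_général} donne $\dim\varpi^{\J^+}=\dim\varpi^{(\J,+)}+\dim\varpi^{(\J,-)}=1+1=2$. Soit $\varpi\simeq \ii_P^{\SO_3(F)}(\psi\otimes \alpha\St_{\SO_3(F)})$ avec $\psi$ caractère non ramifié unitaire de $\GL_1(F)^{n-1}$ et $\alpha\in\{\1,\eta\}$ ; mais ici $n=1$, donc le facteur $\GL_1^{n-1}$ est vide, le sous-groupe parabolique $P$ coïncide avec $\SO_3(F)$ tout entier, et l'induite s'identifie à $\alpha\St_{\SO_3(F)}$. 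En utilisant à nouveau le Corollaire \ref{cor_temp_dim_invariants_général} (second cas), on obtient $\dim\varpi^{\J^+}=1$.

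Le seul point délicat est de s'assurer que la traduction via l'isomorphisme exceptionnel respecte bien les classes de conjugaison des sous-groupes compacts concernés, mais ceci résulte directement de \cite{Tsai-phd} §6.1 et de la Proposition \ref{prop_I_inclus_J+}. Tout le reste n'est qu'application directe des résultats généraux déjà établis pour $\SO_{2n+1}(F)$ spécialisés à $n=1$.
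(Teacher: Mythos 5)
Votre démonstration est correcte et suit exactement la voie indiquée par le texte : la traduction, via l'isomorphisme exceptionnel $\PGL_2\simeq\SO_3$, des résultats du §\ref{Représentations tempérées} (Proposition \ref{thm_rep_sph_temp}, Théorème \ref{thm_rep_temp}, Corollaire \ref{cor_temp_dim_invariants_général}), spécialisés à $2n+1=3$. Vous ne faites qu'expliciter les détails que le texte laisse au lecteur ; rien à redire.
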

\begin{proof}
Ce sont des résultats très classiques, que l'on peut également voir comme la \emph{traduction}, via l'isomorphisme exceptionnel $\SO_3 \simeq \PGL_2$ des résultats du §\ref{Représentations tempérées} (voir aussi le Corollaire \ref{cor_temp_dim_invariants_général}). 
\end{proof}

Pour simplifier la discussion, et puisque ce sont les seuls cas que nous aurons à considérer en pratique, restreignons-nous maintenant à $N=1$ et $N=p$.

\begin{defi}
Soit $N\in\{1\}\cup \mathscr{P}$ où $\mathscr{P}$ désigne l'ensemble des nombres premiers. Soit $m$ un entier naturel.

On note $\Pi_{m}^N(\PGL_2)$ l'ensemble des représentations automorphes cuspidales algébriques $\pi$ de $\PGL_2$ de poids $\{\pm \frac{m}{2}\}$ (et donc telles que $\Ll(\pi_\infty)= I_m$), de conducteur $N$.
\end{defi}

\begin{cor}\label{cor_dim_s_k}
Pour tout entier $k\geq 2$, on a :
\begin{align*}
&\dim {\rm S}_{k}(\SL_2(\Z))=\left|\Pi_{k-1}^1(\PGL_2)\right| ;\\
&\dim {\rm S}_{k}(\Gamma_0(p))=2\left|\Pi_{k-1}^1(\PGL_2)\right| + \left|\Pi_{k-1}^p(\PGL_2)\right|.
\end{align*}
\end{cor}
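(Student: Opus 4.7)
\medskip

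\noindent\textbf{Plan de preuve.} Le point de départ est l'isomorphisme \eqref{isom_S_k_Gamma}, qui décompose $\mathrm{S}_k(\Gamma_0(N))$ comme somme directe, indexée par les représentations automorphes cuspidales $\pi$ de $\PGL_2$ telles que $\pi_\infty \simeq \mathrm{D}_k$ et $\pi_f^{K_f(N)} \neq \{0\}$, des espaces $\bigotimes_p \pi_p^{K_p(N)}$. Puisque $k\geq 2$, la représentation $\mathrm{D}_k$ est algébrique de poids $\{\pm\tfrac{k-1}{2}\}$ et régulière (les deux poids sont distincts). Par le Corollaire \ref{cor_GL_2_autoduale}, toute telle $\pi$ est autoduale. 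On peut alors appliquer le Théorème \ref{thm_shin_caraiani} de Shin--Caraiani : chaque composante locale $\pi_p$ est tempérée. Cela nous autorise à utiliser la Proposition \ref{prop_811} pour calculer chacune des dimensions $\dim \pi_p^{K_p(N)}$.

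\medskip

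Pour le cas $N=1$, on a $K_p(1)=\PGL_2(\zp)$ pour tout $p$, et pour chaque $\pi$ qui contribue, la Proposition \ref{prop_811} nous dit que $\pi_p$ est non ramifiée en chaque place et que $\dim \pi_p^{\PGL_2(\zp)}=1$. Les $\pi$ qui contribuent sont donc exactement celles de conducteur $1$ et de composante archimédienne $\mathrm{D}_k$, c'est-à-dire les éléments de $\Pi_{k-1}^1(\PGL_2)$, chacun contribuant pour une droite. On obtient ainsi la première égalité.

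\medskip

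Pour le cas $N=p$, on sépare la somme \eqref{isom_S_k_Gamma} suivant le comportement de $\pi_p$. D'après la Proposition \ref{prop_811}, deux cas se présentent : soit $\pi_p$ est non ramifiée (de conducteur $1$) et $\dim \pi_p^{\mathrm{I}(\zp)}=2$, soit $\pi_p \simeq \alpha \St_{\PGL_2}$ avec $\alpha\in\{\mathbf{1},\eta\}$ et $\dim \pi_p^{\mathrm{I}(\zp)}=1$ (dans ce dernier cas $\pi$ est de conducteur $p$). Aux places $\ell \neq p$, on a $K_\ell(p)=\PGL_2(\Z_\ell)$ et $\dim \pi_\ell^{K_\ell(p)}=1$ pour toute $\pi$ qui contribue (donc $\pi$ est non ramifiée hors de $p$). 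Les $\pi$ du premier type sont donc en bijection avec $\Pi_{k-1}^1(\PGL_2)$, et contribuent chacune pour $2$ ; les $\pi$ du second type sont en bijection avec $\Pi_{k-1}^p(\PGL_2)$, et contribuent chacune pour $1$. On en déduit la seconde égalité.

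\medskip

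\noindent\textbf{Principaux obstacles.} L'argument est essentiellement une combinatoire des résultats déjà disponibles ; la seule subtilité est de vérifier rigoureusement que les $\pi$ du second type ($\pi_p \simeq \alpha \St$ avec $\alpha\in\{\mathbf{1},\eta\}$) sont bien comptées par $|\Pi_{k-1}^p(\PGL_2)|$. Pour cela, il faut observer que $\alpha\St_{\PGL_2}$ est bien une représentation admissible irréductible de $\PGL_2(\qp)$ (c'est-à-dire à caractère central trivial : $\eta$ étant quadratique, $\eta^2=\mathbf{1}$ et le caractère central est préservé), de conducteur exactement $p$ (au sens de la Définition \ref{defi_conducteur_global}, via le paramètre de Langlands local) ; la Proposition-Définition \ref{prop_types_I_et_II} garantit en outre que c'est là l'unique forme possible pour une représentation locale de $\PGL_2(\qp)$ de conducteur $p$ ayant des invariants d'Iwahori. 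Autrement dit, les deux types ($\alpha=\mathbf{1}$ et $\alpha=\eta$) épuisent les possibilités et correspondent exactement aux éléments de $\Pi_{k-1}^p(\PGL_2)$.
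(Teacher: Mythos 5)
Votre démonstration est correcte et suit exactement la voie du texte : le corollaire s'obtient en prenant les dimensions dans l'isomorphisme \eqref{isom_S_k_Gamma}, les dimensions locales étant fournies par la Proposition \ref{prop_811}. Vous explicitez simplement (à juste titre) les points que la preuve du texte laisse implicites, notamment la temporalité via Shin--Caraiani pour appliquer la Proposition \ref{prop_811} et l'identification des $\pi$ avec $\pi_p\simeq\alpha\St$ aux éléments de $\Pi_{k-1}^p(\PGL_2)$.
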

\begin{proof}
On considère simplement les dimensions dans \eqref{isom_S_k_Gamma}.
\end{proof}
Dans le cas de ${\rm S}_{k}(\Gamma_0(p))$, il existe une théorie des formes anciennes et nouvelles, chaque type correspondant à un des termes de la somme.
On retrouve ainsi la décomposition 
\[
\dim {\rm S}_{k}(\Gamma_0(p))=\dim {\rm S}_{k}^{\rm old}(\Gamma_0(p))+\dim {\rm S}_{k}^{\rm new}(\Gamma_0(p)),
\]
où $\dim {\rm S}_{k}^{\rm old}(\Gamma_0(p))=2 \dim {\rm S}_{k}(\SL_2(\Z))$.

Puisque l'on peut, via la théorie classique, accéder directement à $\dim {\rm S}_{k}^{\rm new}(\Gamma_0(p))$, nous connaîtrons ainsi $\left|\Pi_{k-1}^p(\PGL_2)\right|$. 

\subsubsection{Signe}\label{Signe}
Selon la théorie classique toujours, les formes modulaires nouvelles de niveau $\Gamma_0(p)$ admettent un signe d'Atkin-Lehner. On dispose alors des espaces ${\rm S}_{k}^{\mathrm{new,+}}(\Gamma_0(p))$ et ${\rm S}_{k}^{\mathrm{new,-}}(\Gamma_0(p))$ Ce signe est lié à l'involution du même nom et donc à l'action du normalisateur d'un sous-groupe d'Iwahori. Selon la traduction donnée au §\ref{Isomorphisme exceptionnel}, il s'agit en fait de considérer l'action de $\J$ sur l'espace $\varpi^{\J^+}$ des $\J^+$-invariants d'une représentation $\varpi$ de $\SO_3(\qp)$. Les résultats sont alors donnés par le Lemme \ref{lemme_inv_param_ind_St} et le Théorème \ref{thm_J_variants_eps}, et nous avons le \og dictionnaire \fg suivant :

\begin{center}
\begin{tabular}{ | *{4}{c|}}
   \hline
   \multirow{2}{*}{Caractère $\alpha$}  & Signe & Variants & Facteur epsilon \\
   & d'Atkin-Lehner & paramodulaires & local \\
   \hline
   $\1$ & $-$ & $(\J,-)$ & $-1$ \\
   \hline
   $\eta$ & $+$ & $(\J,+)$ & $1$ \\
   \hline
\end{tabular}
\end{center}

On peut alors définir $\Pi_{k-1}^{2,+}(\PGL_2)$ comme l'ensemble des représentations cuspidales de $\PGL_2$ de conducteur 2, de poids motivique $k-1$ et de facteur epsilon local en 2 égal à 1. Le \emph{dictionnaire} nous donne alors $\dim {\rm S}_{k}^{\mathrm{new,+}}(\Gamma_0(2))=\left|\Pi_{k-1}^{2,+}(\PGL_2)\right|$. On fait de même pour $\Pi_{k-1}^{2,-}(\PGL_2)$.

On a d'ailleurs, pour tout $m$ :
\[
\Pi_{m}^{2}(\PGL_2)=\Pi_{m}^{2,+}(\PGL_2) \coprod \Pi_{m}^{2,-}(\PGL_2).
\]

\section{Formes modulaires de Siegel}
\subsection{Un autre isomorphisme exceptionnel}\label{Un autre somorphisme exceptionnel}
On dispose encore d'un \emph{isomorphisme de schémas en groupes} : $\PGSp_4 \simeq \SO_5$, où $\SO_5$ désigne le schéma en groupes déployé sur $\Z$, relevant du {\bf Cas 1} du §\ref{Groupes étudiés et leurs représentations}. Nous ne détaillons pas ici cet isomorphisme exceptionnel, mais renvoyons à \cite{Tsai-phd}, §6.2 (qui le démontre au niveau des $k$-points, où $k$ est un corps).

On remarque d'ailleurs que selon cet isomorphisme et, reprenant les notations du Chapitre \ref{Le groupe paramodulaire}, $\PGSp_4(\zp)$ s'envoie sur un sous-groupe hyperspécial de $\SO_5(\qp)$ tandis que les sous-groupes paramodulaires de niveau $p$, $\K(p)$ (au sens de \cite{RS_book}) s'envoient sur des sous-groupes paramodulaires $\J^+(p)$ tels qu'introduits au paragraphe \ref{J^+} et les sur-groupes de ces sous-groupes paramodulaires (contenant \og l'élément d'Atkin-Lehner \fg{}) sur des groupes épiparamodulaires $\J(p)$ tels qu'introduits au paragraphe \ref{Le_groupe_J}. Les détails de cette \og traduction \fg{}  se trouvent dans \cite{Tsai-phd}, §6.2.

À cause de cet isomorphisme exceptionnel, on s'intéresse maintenant à $\PGSp_4$. Une représentation automorphe de $\PGSp_4$ est une représentation automorphe de ${\rm GSp}_4$ de caractère central trivial. On a une notion de conducteur qui vient de celle que l'on a développée pour $\SO_5$ (localement) à la Définition \ref{defi_aar_rep_tauto}.

\subsection{En niveau $\Gamma^{\rm para}(N)$ avec $N$ sans facteur carré}\label{En niveau Gamma_para_N}

Le lien entre formes modulaires de Siegel de degré 2 pour le groupe modulaire $\Sp_4(\Z)$ et formes automorphes adéliques pour ${\rm PGSp}_4$ est classique et est détaillé par exemple dans \cite{Asgari-Schmidt}. Nous considérons aussi le cas du sous-groupe paramodulaire (au sens de \cite{RS-art}, \ie vu comme sous-groupe de $\Sp_4(\Q)$) de niveau $N$ (sans facteur carré), $\Gamma^{\rm para}(N)$ pour lequel les mêmes arguments que \cite{Asgari-Schmidt} s'appliquent (le point-clé étant qu'on a encore un caractère central trivial).\ps

Soit donc $N$ un entier naturel non nul, que l'on suppose sans facteur carré. On pose :
\[
\Gamma^{\rm para}(N)= \begin{pmatrix}
\Z 	& N\Z	& \Z	& \Z \\
\Z 	& \Z		& \Z	& N^{-1}\Z \\
\Z 	& N\Z	& \Z	& \Z \\
N\Z	& N\Z	& N\Z	& \Z \\
\end{pmatrix} \cap \Sp_4(\Q)
\]
et l'on remarque que $\Gamma^{\rm para}(1)=\Sp_4(\Z)$.

On choisit deux entiers naturels $j,k$ avec $j$ pair et $k\geq 3$. On note alors ${\rm S}_{j,k}(\Gamma^{\rm para}(N))$ le $\C$-espace vectoriel des formes modulaires de Siegel paraboliques (ou cuspidales) pour le groupe $\Gamma^{\rm para}(N)$ à coefficients dans la représentation ${\rm Sym}^j \otimes \det^k$ de $\GL_2(\C)$.

Il est possible d'associer à chaque forme modulaire de Siegel parabolique propre pour les opérateurs de Hecke de ${\rm S}_{j,k}(\Gamma^{\rm para}(N))$ une représentation automorphe cuspidale $\pi$ de $\GSp_4$, de caractère central trivial (donc de $\PGSp_4$ si l'on veut) et de conducteur divisant $N$. 
Plus précisément, \smallskip
\begin{itemize}
\item $\pi_\infty$ est isomorphe à la série discrète holomorphe ${\rm D}_{j,k}$, vérifiant $\Ll({\rm D}_{j,k})=I_{j+2k-3}\oplus I_{j+1}$ ; \smallskip 
\item $\pi_p$ admet des invariants par $\PGSp_4(\zp)$ pour $p\nmid N$ ; \smallskip
\item $\pi_p$ admet des invariants par le sous-groupe paramodulaire $\K(p)$ (au sens de \cite{RS_book}) pour $p\mid N$.
\end{itemize}
\smallskip

On peut d'ailleurs regrouper les deux dernières propriétés en une seule propriété adélique : $\pi_f$ admet des invariants non triviaux par le sous-groupe compact ouvert $K_f(N)=\prod_p K_p(N) $ avec $K_p(N)=\PGSp_4(\zp)$ si $p\nmid N$ et $K_p(N)=\K(p)$ si $p \mid N$.

On a, de façon analogue à \eqref{isom_S_k_Gamma} :
\begin{equation}\label{isom_S_jk_Gamma_para}
{\rm S}_{j,k}(\Gamma^{\rm para}(N))	\simeq \bigoplus\limits_{\substack{\pi_\infty \simeq \mathrm{D}_{j,k} \\ \pi_f^{K_f(N)}\neq 0}} \m(\pi) \left(\bigotimes_p \pi_p^{K_p(N)} \right),
\end{equation}
et la dimension de l'espace de droite dépend encore du nombre de $\pi$ intervenant dans la somme et de la dimension de $\bigotimes_p \pi_p^{K_p(N)}$, \ie de celle de chaque $\pi_p^{K_p(N)}$ (on a toujours $\m(\pi)=1$ par les travaux d'Arthur pour le groupe $\SO_5$).

Pour simplifier la discussion, et puisque ce sont les seuls cas que nous aurons à considérer en pratique, restreignons-nous ici encore à $N=1$ et $N=p$.

\begin{defi}
Soit $N\in\{1\}\cup \mathscr{P}$ où $\mathscr{P}$ désigne l'ensemble des nombres premiers. Soient $w\geq v$ deux entiers naturels.

On note $\Pi_{w,v}^N(\PGSp_4)$ l'ensemble des représentations automorphes cuspidales $\pi$ de $\PGSp_4$ telles que $\pi_\infty$ est la série discrète holomorphe vérifiant $\Ll(\pi_\infty)= I_w\oplus I_v$), de conducteur $N$.
\end{defi}

\begin{cor}\label{cor_dim_s_k_j}
Soient $j,k$ deux entiers naturels avec $j$ pair et $k\geq 3$. Alors
$$\dim {\rm S}_{j,k}(\Sp_4(\Z))=\left|\Pi_{j+2k-3,\,j+1}^1(\PGSp_4)\right|.$$
Si l'on suppose de plus que $j\neq 0$ et $k>3$, alors 
$$\dim {\rm S}_{j,k}(\Gamma^{\rm para}(p))=2\left|\Pi_{j+2k-3,\,j+1}^1(\PGSp_4)\right| + \left|\Pi_{j+2k-3,\,j+1}^p(\PGSp_4)\right|.$$
\end{cor}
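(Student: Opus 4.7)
Le plan est de procéder comme pour le Corollaire \ref{cor_dim_s_k}, en partant de la décomposition \eqref{isom_S_jk_Gamma_para} et en calculant la dimension de chaque facteur local. La multiplicité $\m(\pi)$ est toujours égale à $1$ d'après la théorie d'Arthur pour $\SO_5$ (via l'isomorphisme exceptionnel $\PGSp_4 \simeq \SO_5$ du §\ref{Un autre somorphisme exceptionnel}), si bien que :
\[
\dim {\rm S}_{j,k}(\Gamma^{\rm para}(N)) = \sum_{\substack{\pi_\infty \simeq {\rm D}_{j,k} \\ \pi_f^{K_f(N)} \neq 0}} \prod_p \dim \pi_p^{K_p(N)}.
\]
Il s'agit alors de comprendre la contribution locale en chaque place $p$.

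Pour le cas $N=1$, la propriété $K_p(1)=\PGSp_4(\Z_p)$ pour tout $p$ implique que les représentations $\pi$ qui contribuent sont exactement celles de conducteur $1$ (donc dans $\Pi_{j+2k-3,j+1}^1(\PGSp_4)$), chacune contribuant pour $\dim \pi_p^{\PGSp_4(\Z_p)}=1$ par la théorie des représentations non ramifiées (voir Proposition \ref{thm_rep_sph_temp} et Lemme \ref{lemme_inv_sph_induites}, applicables via $\PGSp_4 \simeq \SO_5$), ce qui donne la première formule.

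Pour le cas $N=p$, aux places $\ell \neq p$ on a encore $K_\ell=\PGSp_4(\Z_\ell)$ et la contribution est $1$ quand $\pi_\ell$ est non ramifiée et $0$ sinon. À la place $p$, il faut analyser $\dim \pi_p^{\K(p)}$, où $\K(p)$ correspond (via $\PGSp_4 \simeq \SO_5$) au sous-groupe paramodulaire $\J^+(p)$ du §\ref{J^+}. C'est ici que l'on invoque le Corollaire \ref{cor_temp_dim_invariants_général}, qui donne exactement la dichotomie attendue : si $\pi_p$ est non ramifiée, $\dim \pi_p^{\J^+(p)}=2$, et si $\pi_p$ est de conducteur $\pk$, $\dim \pi_p^{\J^+(p)}=1$. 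On obtient ainsi, par décomposition selon le conducteur en $p$ :
\[
\dim {\rm S}_{j,k}(\Gamma^{\rm para}(p)) = 2\left|\Pi_{j+2k-3,\,j+1}^1(\PGSp_4)\right| + \left|\Pi_{j+2k-3,\,j+1}^p(\PGSp_4)\right|.
\]

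Le point délicat est que le Corollaire \ref{cor_temp_dim_invariants_général} exige que $\pi_p$ soit tempérée. Pour le garantir, on utilise le Corollaire \ref{cor_shin_caraiani}, qui demande que la composante archimédienne corresponde à un caractère infinitésimal très régulier. Les poids de $\pi_\infty$ étant $\{\pm\frac{j+2k-3}{2},\pm\frac{j+1}{2}\}$, la très régularité équivaut à $|j+2k-3-(j+1)| > 2$, soit $k > 3$ — exactement l'hypothèse faite. L'hypothèse supplémentaire $j \neq 0$ sert à éviter les représentations dites CAP (\emph{cuspidal associated to parabolic}, type relèvements de Saito-Kurokawa), qui apparaissent uniquement pour $j=0$ et dont le paramètre d'Arthur global n'est pas tempéré : sans cette hypothèse, des représentations supplémentaires pourraient contribuer à ${\rm S}_{j,k}(\Gamma^{\rm para}(p))$ sans satisfaire aux conclusions du Corollaire \ref{cor_temp_dim_invariants_général}. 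Moyennant ces restrictions, tous les paramètres d'Arthur globaux intervenant sont de la forme tempérée considérée au Corollaire \ref{cor_761}, et l'argument local précédent s'applique sans obstruction.
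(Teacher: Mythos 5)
Votre démonstration est correcte et suit essentiellement la même démarche que celle du texte : décomposition \eqref{isom_S_jk_Gamma_para}, multiplicité $1$, invariants sphériques de dimension $1$ (ce qui, pour la première formule, ne requiert aucune hypothèse de tempérance, donc les références aux énoncés tempérés sont superflues ici), puis tempérance de $\pi_p$ via le Corollaire \ref{cor_shin_caraiani} et calcul des invariants paramodulaires par le Corollaire \ref{cor_temp_dim_invariants_général}. Votre justification de l'hypothèse $j\neq 0$ par l'exclusion des relèvements de Saito--Kurokawa est simplement l'explicitation de ce que le texte condense dans \og très régulière \fg{} : pour $j=0$ les poids $\pm\frac{1}{2}$ apparaissent et ne diffèrent que de $1$, si bien que le lemme combinatoire suivant le Théorème \ref{thm_Arthur_Taibi} n'interdit plus le paramètre non générique $\pi\oplus\1[2]$, qui est exactement le cas CAP que vous décrivez.
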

\begin{proof}
On considère simplement les dimensions dans \eqref{isom_S_jk_Gamma_para}, dont on garde les notations.

Pour la première égalité, on utilise alors que $\pi_\ell^{\K_0(\ell)}$ est de dimension 1 puisque $\pi_\ell$ est non ramifiée pour tout $\ell$ (sans autre hypothèse sur $\pi_\ell$).

Pour la seconde égalité, on a encore $\pi_\ell^{\K_0(\ell)}$ de dimension 1 pour $\ell \neq p$. L'hypothèse supplémentaire sur $j$ et $k$ entraîne que $\pi_\infty$ est très régulière et donc en particulier que $\pi_p$ est tempérée par le Corollaire \ref{cor_shin_caraiani} ($\pi_\ell$ aussi pour $\ell \neq p$ d'ailleurs). Le résultat découle alors des calculs de dimensions d'invariants de la Première Partie (voir le Corollaire \ref{cor_temp_dim_invariants_général}).
\end{proof}

\subsubsection{Signe}\label{Signe_4}
Dans le cas des formes modulaires de Siegel de degré 2 et de niveau paramodulaire $\Gamma^{\rm para}(p)$, Roberts et Schmidt ont développé (et démontré) une théorie des formes anciennes et nouvelles, ces dernières étant dotées d'un \emph{signe d'Atkin-Lehner} étendant ainsi la théorie des formes modulaires classiques (voir \cite{RS_book}).

Le \emph{dictionnaire} du paragraphe \ref{Signe} est d'ailleurs toujours valable.\bigskip

Nous utilisons pour calculer $\dim {\rm S}_{j,k}(\Gamma^{\rm para}(p))$ les formules de dimensions de \cite{Ibu-Kita}\footnote{En fait, sauf pour la Conjecture \ref{thm_2_w21}, on aurait pu utiliser les formules de \cite{Ibu_seul} qui correspondent à des poids $w>v$ avec $v$ impair et $w>v+4$. Ce ne sont néanmoins pas celles que nous avons implémentées.}. Ces formules ne permettent hélas pas d'accéder à ce signe d'Atkin-Lehner.
Nous disposons cependant de deux manières (plus une troisième conjecturale) de \emph{capturer} ce signe.

Le paramètre d'Arthur d'une représentation automorphe cuspidale très régulière $\pi_{\PGSp_4}$ de $\PGSp_4$ de niveau paramodulaire $\Gamma^{\rm para}(p)$ est une représentation automorphe cuspidale $\pi_{\GL_4}$ (très régulière autoduale symplectique) de $\GL_4$ (c'est le Corollaire \ref{cor_761}) de conducteur 1 ou $p$. En particulier, le signe local en $p$ des deux représentations automorphes cuspidales en question coïncide.
\begin{itemize}
\item Ce signe local intervient dans la fonction $L$ de $\pi_{\GL_4}$. Nous verrons au Chapitre \ref{La_formule explicite de Riemann-Weil-Mestre} comment exploiter cette information, ce qui nous permettra, connaissant l'existence d'une certaine $\pi_{\PGSp_4}$ (nouvelle), de tester le signe de $\pi_{\GL_4}$ par la formule explicite, et parfois d'en interdire un des deux, ce qui conclura (voir un exemple au paragraphe \ref{En_poids_motivique 15}).
\item Une telle $\pi_{\GL_4}$ intervient également dans le paramètre d'Arthur de certaines représentations automorphes cuspidales très régulières de $\SO_7$ ou $\SO_9$ ({\bf Cas 2}), toujours selon le Corollaire \ref{cor_761}. Nous disposons pour ces deux groupes de formules de dimensions \emph{signées} (voir la Proposition \ref{dim_inv_2_so}) qui nous permettent alors également de déterminer le signe local de $\pi_{\GL_4}$ (et partant celui de $\pi_{\PGSp_4}$ si l'on veut). Un exemple se trouve au paragraphe \ref{En_poids_motivique 19}.
\item Selon les conjectures globales d'Arthur dans un {\bf Cas 3} présentées au paragraphe \ref{Heuristique pour m=5}, et une formule de multiplicité d'Arthur également conjecturale, on peut obtenir un analogue du Corollaire \ref{cor_761} pour le $\SO_5$ du {\bf Cas 3}. Cela nous permet alors, en utilisant les tables \cite{tabledimSOchenevier} d'accéder encore (par simple lecture) au signe local de $\pi_{\GL_4}$.\smallskip
\end{itemize}

Dans tous les cas où l'on obtient ainsi un signe, ces différentes méthodes donnent des résultats cohérents.

%
%
%
%

\section{Formes automorphes de conducteur $2$ pour ${\rm SO}_{2n+1}$ sur $\Q$}\label{Formes automorphes de conducteur 2_GC}
L'ensemble de cette section (jusqu'au paragraphe \ref{Interprétations automorphes} inclus) correspond à des résultats inédits de Gaëtan Chenevier. Comme nous les utilisons de façon cruciale pour notre travail, ce dernier nous a aimablement fourni la présentation et les démonstrations correspondantes, qui se voient donc incluses dans la présente thèse.
\subsection{Rappels sur les réseaux unimodulaires impairs}

	On fixe un entier $m \geq 1$ 
et on se place dans l'espace euclidien standard $\R^m$ de dimension $m$, 
dont on note $x \cdot y\,=\,\sum_i x_iy_i$ le produit scalaire. 
On rappelle qu'un r\'eseau $L \subset \R^m$ est dit {\it entier} 
si l'on a $x \cdot y \in \Z$ pour tous $x,y \in L$. 
Le {\it d\'eterminant} de $L$ est 
le d\'eterminant d'une matrice de Gram (quelconque) d'une $\Z$-base de $L$, 
c'est aussi le carr\'e du covolume de $L$ 
et on le note $\det L$. 
On dit que le r\'eseau $L \subset \R^m$ est unimodulaire 
si on a $\det L=1$. 
Enfin, on dit que $L$ est {\it pair} 
si on a $x \cdot x \in 2\Z$ pour tout $x \in L$, 
impair sinon. 
Par exemple, le {\it r\'eseau standard} 
\begin{equation} \label{remdefIm}  {\rm I}_m := \Z^m \end{equation}
est trivialement entier, unimodulaire et impair.
\ps\ps

\begin{defi}\label{deflm}  
On note $\mathcal{L}_m$ l'ensemble des r\'eseaux entiers unimodulaires impairs de $\R^m$. 
Il est muni d'une action naturelle du groupe orthogonal euclidien ${\rm O}(m)$, 
et on pose $\mathcal{X}_m = {\rm O}(m) \backslash \mathcal{L}_m$.
\end{defi}

La th\'eorie de la r\'eduction (Hermite, Minkowski) montre que $|\mathcal{X}_m|< \infty$ : 
il n'y a qu'un nombre fini de classes d'isom\'etrie 
de r\'eseaux unimodulaires impairs de rang $m$. 
Par les travaux de nombreux auteurs (dont Witt, Kneser, Niemeier, 
Conway-Sloane, Borcherds), leur classification est connue pour $m \leq 25$ : 
nous renvoyons \`a  \cite[Ch. 16]{conwaysloane} pour un \'etat de l'art. 
Dans cette th\`ese, le r\'esultat de classification suivant, 
d\'emontr\'e par exemple dans \cite[\S 106.F]{OMeara}, suffira. 
Il sera commode d'introduire d'abord le r\'eseau 
\begin{equation} \label{defDm} 
{\rm D}_m = \{ (x_1,\cdots,x_m) \in \Z^m\, \, |\, \, \sum_i x_i \equiv 0 \bmod 2\}, 
\end{equation}
\noindent qui est entier, pair et de d\'eterminant $4$, 
ainsi que le r\'eseau unimodulaire pair
\begin{equation}  
{\rm E}_8 = {\rm D}_8 + \Z e, \, \, \, \, e = \frac{1}{2}(1,1,\cdots,1).
\end{equation}

\begin{prop}\label{classXn9} Pour $m \leq 8$, tout r\'eseau entier unimodulaire impair de $\R^m$ 
est isom\'etrique \`a ${\rm I}_m$. 
Autrement dit, on a $\mathcal{X}_m = \{ {\rm I}_m\}$. 
Pour $m=9$, il existe \`a isom\'etrie pr\`es 
exactement deux r\'eseaux unimodulaires impairs de rang $m$ : 
le r\'eseau ${\rm I}_9$ et le r\'eseau $\Z \times {\rm E}_8$ 
{\rm (}aussi not\'e ${\rm I}_1 \oplus {\rm E}_8${\rm )}.
\end{prop}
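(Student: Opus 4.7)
The plan is to prove both parts of the proposition by induction on $m$, the inductive step being provided by an orthogonal splitting attached to a unit vector. All the genuine content is then concentrated in a single auxiliary assertion: for $m \leq 9$, every $L \in \mathcal{L}_m$ contains a vector of norm $1$.

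\textbf{Step 1 (splitting lemma).} First I would record the following elementary fact: if $L$ is an integer unimodular lattice in $\R^m$ and $v \in L$ satisfies $v \cdot v = 1$, then
\[
L \,=\, \Z v\, \overset{\perp}{\oplus}\, (L \cap v^\perp),
\]
and $L \cap v^\perp$ is itself integer unimodular in the euclidean space $v^\perp \simeq \R^{m-1}$. Indeed, for any $x \in L$ the scalar $n := x \cdot v$ is an integer, so $x = n v + (x - n v)$ with $x - n v \in L \cap v^\perp$, and multiplicativity of the determinant along an orthogonal sum gives $\det(L \cap v^\perp) = 1$. In particular, if $L$ is odd then $\Z v \overset{\perp}{\oplus} (L \cap v^\perp)$ is automatically odd, but $L \cap v^\perp$ itself may be either odd or even.

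\textbf{Step 2 (induction, assuming the existence of a unit vector).} The case $m = 0$ is trivial. For $1 \leq m \leq 8$, picking a unit vector $v$ and splitting gives $L \simeq \Z \overset{\perp}{\oplus} L'$ with $L'$ integer unimodular of rank $m - 1 \in \{0,\ldots,7\}$. Since positive definite even unimodular lattices exist only in ranks divisible by $8$, the lattice $L'$ is forced to be odd; by the induction hypothesis $L' \simeq {\rm I}_{m-1}$, and therefore $L \simeq {\rm I}_m$. For $m = 9$ the same splitting yields $L'$ integer unimodular of rank $8$: if $L'$ is odd, then $L' \simeq {\rm I}_8$ by the case $m = 8$ and $L \simeq {\rm I}_9$; if $L'$ is even, then the classical uniqueness of ${\rm E}_8$ as the positive definite even unimodular lattice of rank $8$ gives $L' \simeq {\rm E}_8$ and $L \simeq {\rm I}_1 \oplus {\rm E}_8$.

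\textbf{Step 3 (the main obstacle: existence of a unit vector).} The entire content of the proposition is in this step, and it is emphatically not a formal consequence of unimodularity -- already in rank $12$ there exist odd integer unimodular lattices with minimum $\geq 2$. For $m \leq 7$ the argument is clean: Hermite's inequality bounds the minimum $\mu(L)$ of a unimodular rank-$m$ lattice by the Hermite constant $\gamma_m$, and the known values $\gamma_m < 2$ for $m \leq 7$ force $\mu(L) = 1$ (since $\mu(L)$ is a positive integer). For $m = 8$ one uses $\gamma_8 = 2$ with equality characterizing ${\rm E}_8$: since $L$ is odd it is not isometric to the even lattice ${\rm E}_8$, so $\mu(L) < 2$ and hence $\mu(L) = 1$. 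The genuinely delicate case is $m = 9$. Here the cleanest path is the Minkowski--Siegel mass formula
\[
\mathrm{m}(\mathcal{L}_9) \,=\, \sum_{[L] \in \mathcal{X}_9} \frac{1}{|\mathrm{O}(L)|},
\]
whose left hand side is computable from Siegel's explicit product; using $|\mathrm{O}({\rm I}_m)| = 2^m\, m!$ and the known orders of $\mathrm{O}({\rm E}_8)$ and $\mathrm{O}({\rm I}_1 \oplus {\rm E}_8)$, one checks that the two contributions of ${\rm I}_9$ and ${\rm I}_1 \oplus {\rm E}_8$ already exhaust $\mathrm{m}(\mathcal{L}_9)$, which forces the full classification and, \emph{a fortiori}, the existence of a unit vector in any member of $\mathcal{L}_9$. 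An alternative and more elementary route (closer to what is actually done in \cite[\S 106.F]{OMeara}) analyses the root sublattice generated by vectors of norm $2$ and performs a finite case analysis on the possible root systems of rank $\leq 9$; this is longer but avoids the mass formula. Either way, this is where the real work lies, and the rest of the proof is routine.
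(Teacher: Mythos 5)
The paper does not actually prove this proposition: it simply cites \cite[\S 106.F]{OMeara}, so there is no in-text argument to compare yours against. Judged on its own, your reduction is sound and well organised. Step 1 is the standard orthogonal splitting along a norm-one vector, Step 2 correctly uses the non-existence of positive definite even unimodular lattices in ranks $1$ through $7$ to force $L'$ odd, and the uniqueness of ${\rm E}_8$ in rank $8$; for $m\leq 8$ the existence of a unit vector via $\gamma_m<2$ (and the Blichfeldt--Vet\v{c}inkin equality case $\gamma_8=2$ characterising ${\rm E}_8$) is a complete and rather elegant argument. One small omission: the proposition asserts there are \emph{exactly} two classes in rank $9$, so you should also observe that ${\rm I}_9\not\simeq {\rm I}_1\oplus {\rm E}_8$ (e.g.\ they have $18$ and $2$ vectors of norm $1$ respectively, as the paper itself notes in Example \ref{exemplen79}).

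The one place where your proof is not yet a proof is the case $m=9$ of Step 3, which you yourself flag as carrying all the content. The mass-formula route is legitimate, but the assertion that $1/|{\rm O}({\rm I}_9)|+1/|{\rm O}({\rm I}_1\oplus{\rm E}_8)|$ exhausts ${\rm m}(\mathcal{L}_9)$ is precisely the computation that needs to be done (Siegel's local densities for the odd genus in dimension $9$, together with $|{\rm O}({\rm E}_8)|=696729600$); as written it is asserted, not verified. Note also that if you do carry out that computation, it already yields the full rank-$9$ classification on its own, so the induction of Step 2 becomes redundant in that case --- the splitting argument is only doing work if you instead establish the existence of a unit vector by the root-system analysis, which is exactly the content of the O'Meara reference the paper cites. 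So the structure is right, but the hard case is still outsourced.
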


\subsection{Genre des réseaux unimodulaires impairs de $\R^m$}

Soit $A$ un anneau commutatif. 
Un {\it $A$-module bilin\'eaire} de rang $m$ 
est la donn\'ee d'un $A$-module libre $M$ de rang $n$ 
et d'une forme bilin\'eaire sym\'etrique $M \times M \rightarrow A$, $(x,y) \mapsto x\cdot y$. 
Lorsque $2$ est inversible dans $A$, 
il est \'equivalent de se donner un $A$-module quadratique au sens du §\ref{Rappels_sur_les_FQ}, 
mais pas en g\'en\'eral. 
De mani\`ere similaire au cas quadratique, 
un $A$-module bilin\'eaire $M$ poss\`ede un d\'eterminant $\det M \in A/ (A^{\times})^2$. 
De plus, $M$ est dit non d\'eg\'en\'er\'e si on a $\det M \in A^\times$ 
ou, ce qui revient au m\^eme, si l'application naturelle $M \rightarrow {\rm Hom}_A(M,A)$, 
$x \mapsto (y \mapsto x \cdot y)$, est bijective.
Il y a une notion \'evidente de somme directe orthogonale de deux modules bilin\'eaires, 
d'extension des scalaires d'un module bilin\'eaire, 
d'isom\'etrie entre modules bilin\'eaires, 
etc.\ps

Tout r\'eseau entier $L$ de $\R^m$, 
muni du produit scalaire de $\R^m$, 
est un $\Z$-module bilin\'eaire, 
qui est non d\'eg\'en\'er\'e si, et seulement si $L \in \mathcal{L}_m$. 
Par exemple, 
deux \'el\'ements de $\mathcal{L}_m$ sont isom\'etriques (au sens bilin\'eaire) 
si, et seulement s'ils ont m\^eme classe dans $\mathcal{X}_m$. 
Un fait remarquable est que pour $p$ premier et $L \in \mathcal{L}_m$, 
le $\Z_p$-module bilin\'eaire $L \otimes \Z_p$ ne d\'epend pas du choix de $L$  
(on dit aussi que les \'el\'ements de $\mathcal{L}_m$ forment un unique {\it genre}, 
au sens de Gauss):

\begin{prop} \label{genusIm}
Pour tout $L$ dans $\mathcal{L}_m$ 
et tout nombre premier $p$, les $\Z_p$-modules bilin\'eaires $L \otimes \Z_p$ et ${\rm I}_m \otimes \Z_p$ 
sont isom\'etriques. 
\end{prop}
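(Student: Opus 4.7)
Fix $L \in \mathcal{L}_m$ and a prime $p$; we want to exhibit an isometry $L \otimes \Z_p \cong {\rm I}_m \otimes \Z_p$ of $\Z_p$-bilinear modules. As a preliminary, both modules are non-degenerate of rank $m$ and their determinants lie in $\Z_p^\times$ with the same class in $\Z_p^\times/(\Z_p^\times)^2$, namely $1$, because $\det L = 1$ over $\Z$ by assumption. So the question reduces to a purely local comparison of two unimodular $\Z_p$-bilinear modules of the same rank and trivial determinant, treated separately according to the residue characteristic.

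For $p$ odd, the element $2$ is a unit in $\Z_p$, so non-degenerate bilinear $\Z_p$-modules coincide with non-degenerate quadratic $\Z_p$-modules, and the classical classification applies: by Witt-style diagonalisation and Hensel lifting from the classification over $\F_p$ (recall that $\F_p^\times/(\F_p^\times)^2 = \Z_p^\times/(\Z_p^\times)^2$ has order $2$), two such modules of the same rank are isometric if and only if they have the same determinant in $\Z_p^\times/(\Z_p^\times)^2$. Both $L \otimes \Z_p$ and ${\rm I}_m \otimes \Z_p$ satisfy these hypotheses, so we are done at every odd $p$.

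For $p = 2$, the argument is genuinely more delicate since local rank and determinant do \emph{not} suffice: for instance, $\langle 1,1\rangle$ and $\langle 3,3\rangle$ over $\Z_2$ share both invariants yet fail to be isometric (as one sees by computing the Hilbert symbol $(3,3)_2 = -1 \neq (1,1)_2$). The crucial extra input is the hypothesis that $L$ is \emph{odd}: some $x \in L$ has $x \cdot x$ odd, hence $x \in L \otimes \Z_2$ satisfies $x \cdot x \in \Z_2^\times$, so $L \otimes \Z_2$ is an odd unimodular $\Z_2$-bilinear module, just like ${\rm I}_m \otimes \Z_2$. The plan is then to invoke the classification of odd unimodular $\Z_2$-bilinear modules (cf.\ \cite[§93]{OMeara}): one splits off an orthogonal summand $\Z_2 x$ (which is a direct factor because $x \cdot x$ is a unit), iterates to diagonalise $L \otimes \Z_2 \cong \langle u_1, \ldots, u_m\rangle$ with $u_i \in \Z_2^\times$, and uses standard Witt-type identities over $\Z_2$ (such as $\langle a\rangle \perp \langle b\rangle \cong \langle a+b\rangle \perp \langle ab(a+b)\rangle$ when $a+b$ is a unit) to normalise to $\langle 1,\ldots,1\rangle$; the compatibility of the finer invariants (beyond rank and determinant) is forced by the global hypothesis, namely that $L$ embeds in a positive-definite $\Z$-unimodular lattice, via the Hasse--Witt product formula applied to $L \otimes \Q$.

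The main obstacle is clearly the $p=2$ case. At odd primes the proof is essentially just Hensel's lemma plus diagonalisation, but at $p=2$ the bare local classification is insufficient, and the argument must genuinely exploit the global unimodularity and oddness of $L$ (together with positive-definiteness at the archimedean place) to pin down the Hasse--Witt invariant of $L \otimes \Q_2$ and conclude. This is why the result is often stated as a theorem about the \emph{genus} of ${\rm I}_m$ rather than proved by pure local classification.
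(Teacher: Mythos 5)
You should first be aware that the paper does not actually prove this proposition: it cites Conway--Sloane, Chapter 15 and Table 15.4, so your sketch is being measured against the standard literature argument rather than against anything in the text. Your overall strategy --- reduce to a local comparison, settle $p$ odd by rank and determinant via diagonalisation and Hensel, and settle $p=2$ by combining the $2$-adic classification of odd unimodular lattices with the Hasse--Witt product formula --- is exactly the standard route; the odd-$p$ case is complete and your counterexample $\langle 1,1\rangle \not\cong \langle 3,3\rangle$ over $\Z_2$ is correct.

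There are, however, two problems at $p=2$. First, the Witt identity you invoke, $\langle a\rangle \perp \langle b\rangle \cong \langle a+b\rangle \perp \langle ab(a+b)\rangle$ \emph{when $a+b$ is a unit}, is vacuous in this setting: for $a,b \in \Z_2^\times$ the sum $a+b$ is always even, so this identity can never be used to normalise a diagonal unimodular form over $\Z_2$. Second, and more seriously, the sentence ``the compatibility of the finer invariants is forced by the global hypothesis via the Hasse--Witt product formula'' asserts the key step instead of proving it. The product formula, combined with the odd-$p$ and archimedean cases, only yields $L\otimes \Q_2 \cong {\rm I}_m \otimes \Q_2$ as quadratic \emph{spaces}; to pass to $L\otimes\Z_2 \cong {\rm I}_m\otimes\Z_2$ you still need (i) the classification of odd unimodular $\Z_2$-bilinear modules by rank, determinant in $\Z_2^\times/(\Z_2^\times)^2$ and oddity (trace of a diagonalisation mod $8$), and (ii) the check that for such modules the oddity is determined by the rank, the determinant and the Hasse invariant of the underlying $\Q_2$-form (this follows by writing $\langle u_1,\dots,u_m\rangle$ and expressing all three quantities in terms of the residues of the $u_i$ modulo $8$, but it is exactly the nontrivial content at $p=2$). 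As written, your argument jumps from rational equivalence to integral equivalence. With (i) and (ii) supplied, or with a precise citation of the corresponding statement in O'Meara \S 93 or Conway--Sloane Ch.\ 15, the proof closes.
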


C'est un r\'esultat bien connu : 
voir par exemple  \cite[Chap. 15]{conwaysloane} et la Table 15.4 {\it loc. cit.} 
Les propositions classiques suivantes d\'ecrivent  la structure de ${\rm I}_m \otimes \Z_p$ 
pour $m$ impair, 
qui est le cas dont nous aurons besoin.

\begin{prop} \label{ImQp} 
On suppose $m=2n+1$ impair et l'on fixe $p$ un nombre premier.
L'espace quadratique ${\rm I}_m \otimes \Q_p$ est d'indice de Witt $n$, 
sauf pour $p=2$ et $m \equiv \pm 3 \bmod 8$ auquel cas il est d'indice de Witt $n-1$.
\end{prop}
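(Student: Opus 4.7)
L'idée est de déterminer l'indice de Witt de la forme $q = \langle 1, 1, \ldots, 1 \rangle$ de rang $m = 2n+1$ sur $\mathbb{Q}_p$ en invoquant la classification locale des formes quadratiques non dégénérées sur $\mathbb{Q}_p$ par leur dimension, leur discriminant et leur invariant de Hasse--Witt. D'abord, toute forme quadratique non dégénérée de dimension $\geq 5$ sur $\mathbb{Q}_p$ étant isotrope (conséquence classique de Hasse--Minkowski local), le noyau anisotrope de $q$ est de dimension au plus $4$. Comme ce noyau est de dimension impaire (de même parité que $q$), il est de dimension $1$ ou $3$, et l'indice de Witt de $q$ vaut donc $n$ ou $n-1$.

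Pour trancher, on comparera $q$ à la forme \og d'indice maximal \fg{} $\tilde{q} = n H \oplus \langle d\rangle$, où $H$ est le plan hyperbolique et $d$ est imposé par l'égalité des discriminants : puisque $\mathrm{disc}(\tilde{q}) = (-1)^n d$ et $\mathrm{disc}(q)=1$, on prend $d = (-1)^n$. Par le théorème de classification, $q$ est d'indice de Witt maximal $n$ si, et seulement si $q \simeq \tilde{q}$, ce qui équivaut alors à l'égalité des invariants de Hasse. D'une part, $c(q) = \prod_{i<j}(1,1)_p = 1$ puisque $(1,1)_p = 1$ toujours. D'autre part, en combinant la formule $c(V\oplus W) = c(V)\, c(W)\, (\det V, \det W)_p$, une récurrence immédiate fournissant $c(nH) = (-1,-1)_p^{\lfloor n/2\rfloor}$, et une courte analyse selon la parité de $n$, on obtient $c(\tilde{q}) = (-1,-1)_p^{\lfloor (n+1)/2\rfloor}$.

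La conclusion suit en comparant ces deux quantités. Pour $p$ impair, $-1$ est somme de deux carrés dans $\mathbb{Z}_p^\times$ (par Hensel à partir du cas résiduel), donc $(-1,-1)_p = 1$ et l'égalité $c(q) = c(\tilde{q})$ est automatique : l'indice de Witt vaut toujours $n$. Pour $p=2$, l'identité classique $(-1,-1)_2 = -1$ montre au contraire que $c(q) = c(\tilde{q})$ si, et seulement si $\lfloor (n+1)/2\rfloor$ est pair, ce qui arrive exactement pour $n \equiv 0, 3 \pmod 4$, \ie pour $m \equiv \pm 1 \pmod 8$. La partie délicate de la démonstration est la tenue comptable des invariants de Hasse et l'analyse de parité modulo $8$ dans le cas $p=2$ ; tout le reste repose sur des faits standard concernant les formes quadratiques sur les corps $p$-adiques.
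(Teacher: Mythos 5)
Votre démonstration est correcte et suit essentiellement la même voie que celle du texte : on compare la forme $\langle 1,\ldots,1\rangle$, de discriminant et d'invariant de Hasse trivialement égaux à $1$, à la forme déployée $n{\rm H} \overset{\perp}{\oplus} \langle (-1)^n\rangle$ via la classification des formes quadratiques sur $\Q_p$, et votre exposant $\lfloor (n+1)/2\rfloor$ a bien la même parité que le $\frac{n(n-1)}{2} + n\,[n \text{ impair}]$ obtenu dans le texte en développant ${\rm H} \simeq \langle 1\rangle \overset{\perp}{\oplus} \langle -1\rangle$.
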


Nous donnons une d\'emonstration pour la commodit\'e du lecteur. 
Pour un anneau commutatif $A$, 
on notera ${\rm H}(A)$ le $A$-module bilin\'eaire non d\'eg\'en\'er\'e $A e  \oplus A f$ 
avec $e\cdot e=f\cdot f=0$ et $e \cdot f=1$ (plan hyperbolique sur $A$). 
Pour $a \in A$ on notera aussi $\langle a \rangle$ 
le $A$-module bilin\'eaire $A e$ avec $e\cdot e=a$. 
\ps\ps

\begin{proof} Nous allons utiliser la classification des formes quadratiques sur $\Q_p$, 
pour laquelle nous renvoyons au Cours d'arithm\'etique de Serre \cite[Chap. IV]{serre}.
Consid\'erons le $\Q_p$-module bilin\'eaire\footnote{Pour coller aux conventions de Serre, 
il faut munir $V_p$ de la forme quadratique $q(x) = x\cdot x$, 
contrairement \`a notre convention du §\ref{Rappels_sur_les_FQ} 
qui serait $q(x) = \frac{x\cdot x}{2}$. 
C'est de toute fa\c{c}on sans cons\'equence car $q$ et $2q$ ont m\^eme indice de Witt.}  
$V_p = {\rm I}_m \otimes \Q_p$.  
Par d\'efinition, on a $\det V_p = 1$ (dans $\qp/(\qp^\times) ^2$)
car $V_p$ poss\`ede une base orthogonale constitu\'ee de vecteurs $v$ tels que $v\cdot v=1$. 
Pour la m\^eme raison, l'invariant de Hasse de $V_p$, not\'e $\epsilon(V_p)$, 
est aussi trivialement égal à $1$. 
Posons $$W_p = {\rm H}(\Q_p)^{\oplus n} \overset{\perp}{\oplus} \langle (-1)^n \rangle.$$ 
Il s'agit de montrer que  $W_p \simeq V_p$ 
si, et seulement si $p \neq 2$ ou $m \equiv \pm 1 \bmod 8$. 
En effet, une forme quadratique non d\'eg\'en\'er\'ee de dimension impaire $2n+1$ sur $\Q_p$
est d'indice de Witt $n$ ou $n-1$. \ps

On a $\det W_p = 1 = \det V_p$ 
donc, par la classification des formes quadratiques sur $\Q_p$, 
il est \'equivalent de demander $\epsilon(W_p)=\epsilon(V_p)=1$. 
Mais on a ${\rm H}(\Q_p) \simeq \langle 1 \rangle \overset{\perp}{\oplus} \langle -1 \rangle $, 
et donc (en notant $\left(\frac{a,b}{\Q_p}\right)$ le symbole de Hilbert de $\{a,b\}$ sur $\Q_p$) 
  $$\epsilon(W_p) =  
  \left(\frac{-1,-1}{\Q_p}\right)^{\frac{n(n-1)}{2}} \left(\frac{-1,(-1)^n}{\Q_p} \right)^n.$$ 
  Le signe $\left(\frac{-1,(-1)^r}{\Q_p} \right)$ vaut $1$, 
  sauf pour $p=2$ et $r \equiv 1 \bmod 2$. 
 On a donc $\epsilon(W_p)=1$ pour $p>2$, $\epsilon(W_2)=1$ pour $n \equiv 0, 3 \bmod 4$, et $\epsilon(W_2)=-1$ sinon.
\end{proof}

Notons $P$ le $\Z_2$-module bilin\'eaire $\Z_2 e \oplus \Z_2 f$ 
avec $e\cdot e=f\cdot f=2$ et $f\cdot f=1$. 

\begin{prop}\label{genreIm} On suppose $m=2n+1$ impair et l'on fixe $p$ premier. 
\begin{itemize}
\item[(i)] Pour $p\neq 2$ ou $m\equiv \pm 1 \bmod 8$, 
le $\Z_p$-module bilin\'eaire ${\rm I}_m\otimes \Z_p$ est isom\'etrique \`a 
${\rm H}(\Z_p)^{\oplus n}  \overset{\perp}{\oplus} \langle (-1)^n \rangle$ ;\ps
\item[(ii)] Pour $m \equiv \pm 3 \bmod 8$ 
on a ${\rm I}_m\otimes \Z_2 \simeq {\rm H}(\Z_2)^{\oplus n-1} \overset{\perp}{\oplus} P  \overset{\perp}{\oplus} \langle 3(-1)^{n-1}\rangle$.
\end{itemize}
\end{prop}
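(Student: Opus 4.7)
The plan is to apply the classification of $\Z_p$-bilinear unimodular modules of odd rank, matching invariants on both sides of each claimed isometry. In each case I will simply verify that both modules have the same rank, the same determinant in $\Z_p^\times/(\Z_p^\times)^2$, and (for $p=2$) the same parity, and then invoke the classification.

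First I would treat the case $p$ odd (the first half of (i)). Here two unimodular $\Z_p$-bilinear modules of the same rank are isometric if and only if they have the same determinant in $\Z_p^\times/(\Z_p^\times)^2$, a classical result that can be proved by iterated splitting of rank-one orthogonal summands. The module ${\rm I}_m \otimes \Z_p$ has rank $m = 2n+1$ and determinant $1$, while $H(\Z_p)^{\oplus n} \overset{\perp}{\oplus} \langle (-1)^n \rangle$ has rank $2n+1 = m$ and determinant $(-1)^n \cdot (-1)^n = 1$ (since $H(\Z_p)$ has determinant $-1$). The two modules thus agree on all invariants and (i) follows for $p \neq 2$.

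For $p = 2$ I would invoke the finer classification which says that two \emph{odd} unimodular $\Z_2$-bilinear modules of the same rank and same determinant in $\Z_2^\times/(\Z_2^\times)^2$ are isometric. Here \emph{odd} means that $x \mapsto x \cdot x$ takes some value outside $2\Z_2$. This refinement is necessary because $H(\Z_2)$ is \emph{even} whereas $\langle 1 \rangle \oplus \langle -1 \rangle$ is odd, so these two forms are not $\Z_2$-isometric despite becoming so over $\Q_2$. In both cases (i) and (ii), the left-hand side ${\rm I}_m \otimes \Z_2$ is odd (trivially, $e_1 \cdot e_1 = 1$) with determinant $1$. In case (i) the right-hand side $H(\Z_2)^{\oplus n} \overset{\perp}{\oplus} \langle (-1)^n \rangle$ is odd (from its rank-one summand) with determinant $(-1)^n \cdot (-1)^n = 1$. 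In case (ii), the module $H(\Z_2)^{\oplus n-1} \overset{\perp}{\oplus} P \overset{\perp}{\oplus} \langle 3(-1)^{n-1} \rangle$ is odd (again from its rank-one summand) and has determinant $(-1)^{n-1} \cdot 3 \cdot 3(-1)^{n-1} = 9 \equiv 1 \pmod{(\Z_2^\times)^2}$, since $9 \equiv 1 \bmod 8$. The classification then yields the claimed isometries.

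The main obstacle is the invocation of the $\Z_2$-classification, which requires some care (it amounts to the Jordan decomposition of $\Z_2$-bilinear modules). An alternative, more hands-on approach is to proceed inductively by splitting off hyperbolic planes (or $P$-summands) using Hensel's lemma: for $p$ odd, given a primitive isotropic vector $v$ (which exists in $\Z_p^m$ for $m \geq 3$ since $x^2+y^2+z^2=0$ is solvable in $\F_p$ and lifts by Hensel) and any $w$ with $v \cdot w=1$ (such $w$ exists by unimodularity), one adjusts $w \mapsto w - \tfrac{w \cdot w}{2} v$ to land in $H(\Z_p)$, then iterates. For $p=2$ this adjustment fails precisely when $w \cdot w$ is odd, and the case distinction $m \equiv \pm 1$ versus $m \equiv \pm 3 \pmod 8$ appears naturally through Proposition~\ref{ImQp}, which controls the Witt index over $\Q_2$ and hence the maximum number of $H(\Z_2)$-summands one can split off before hitting an anisotropic kernel of dimension $1$ (case (i)) or $3$ (case (ii), realized by $P \overset{\perp}{\oplus} \langle 3(-1)^{n-1}\rangle$).
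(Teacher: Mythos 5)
Your treatment of $p$ odd is fine: for odd $p$, rank and determinant in $\Z_p^\times/(\Z_p^\times)^2$ do classify non-degenerate $\Z_p$-bilinear modules, and your determinant computations are correct. (The paper instead argues self-containedly by splitting off hyperbolic planes, using the Witt index from Proposition \ref{ImQp}; both routes are legitimate here.)

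The $p=2$ case, however, rests on a false classification. It is \emph{not} true that two odd unimodular $\Z_2$-bilinear modules of the same rank and the same determinant in $\Z_2^\times/(\Z_2^\times)^2$ are isometric: for instance $\langle 1,1,1\rangle$ and $\langle 1,3,3\rangle$ are both odd, unimodular, of rank $3$ and of square determinant, yet they are not even $\Q_2$-isometric (their Hasse invariants differ, since $(3,3)_2=-1$). The defect is visible inside your own argument: the right-hand sides of (i) and (ii) have the same rank, are both odd, and both have determinant $1$ by your computation, so your criterion would force ${\rm H}(\Z_2)^{\oplus n}\overset{\perp}{\oplus}\langle(-1)^n\rangle \simeq {\rm H}(\Z_2)^{\oplus n-1}\overset{\perp}{\oplus}P\overset{\perp}{\oplus}\langle 3(-1)^{n-1}\rangle$ — which is absurd, as these have different Witt indices over $\Q_2$ (this is exactly the dichotomy of Proposition \ref{ImQp}). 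The missing invariant is precisely the $\Q_2$-isometry class (equivalently the oddity/trace mod $8$), and supplying it is where the real content of the proposition lies.

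Your fallback ``hands-on'' sketch is indeed essentially the paper's proof, but as written it still has a gap at $p=2$: you cannot split hyperbolic planes directly off the odd lattice ${\rm I}_m\otimes\Z_2$, because the adjustment $w\mapsto w-\tfrac{w\cdot w}{2}v$ requires $w\cdot w\in 2\Z_2$ (e.g.\ $\langle 1,-1\rangle$ is $\Q_2$-hyperbolic but not isometric to ${\rm H}(\Z_2)$). The paper's key move is to first split off $\Z_2 e$ with $e=(1,\dots,1)$, using $e\cdot e=m\in\Z_2^\times$; the complement $M=e^\perp$ is \emph{even} (since $(\sum x_i)^2\equiv\sum x_i^2\bmod 2$), the adjustment then always works on $M$, and Proposition \ref{ImQp} controls how many copies of ${\rm H}(\Z_2)$ come out, the residual piece in case (ii) being the unique even non-degenerate anisotropic rank-$2$ lattice $P$. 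Without this even-complement step your induction does not run.
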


Faisons une remarque avant d'entamer la d\'emonstration. 
Parall\`element au cas des r\'eseaux de $\R^m$, 
un $\Z_2$-module bilin\'eaire $M$ sera dit {\it pair} 
si on a $x \cdot x \equiv 0 \bmod 2$ pour tout $x$ dans $M$, 
et {\it impair} sinon. 
Par exemple, ${\rm I}_m \otimes \Z_2$ est impair, 
et ${\rm H}(\Z_2)$ et $P$ sont pairs.  
On v\'erifie ais\'ement qu'\`a isom\'etrie pr\`es,
$P$ est l'unique $\Z_2$-module bilin\'eaire pair, non d\'eg\'en\'er\'e,
de rang $2$, et sans vecteur isotrope non nul (voir aussi \cite[\S 93]{OMeara}).

\begin{proof} Soit $M$ un $\Z_p$-module bilin\'eaire non d\'eg\'en\'er\'e, 
suppos\'e pair si $p=2$. 
On suppose que $M$ poss\`ede un vecteur isotrope non nul. 
Observons d'abord que l'on a 
une isom\'etrie $M \simeq {\rm H}(\Z_p) \overset{\perp}{\oplus} M'$, 
o\`u $M'$ est un $\Z_p$-module bilin\'eaire 
(n\'ecessairement non d\'eg\'en\'er\'e, pair si $p=2$). 
En effet, soit $v$ dans $M-\{0\}$ tel que $v\cdot v=0$. 
On peut supposer $v$ primitif. 
Comme $M$ est non d\'eg\'en\'er\'e, 
il existe un vecteur $w \in M$ v\'erifiant $v \cdot w=1$. 
Quitte \`a remplacer $w$ par $w- \frac{w\cdot w}{2} v$, 
ce qui est loisible car $M$ est pair si $p=2$, 
et $2 \in \Z_p^\times$ sinon, 
on peut supposer $w\cdot w=0$. 
Ainsi, $\Z_p v \oplus \Z_p w$ est isom\'etrique \`a ${\rm H}(\Z_p)$, 
qui est non d\'eg\'en\'er\'e sur $\Z_p$, 
ce qui conclut.  \ps 

Montrons maintenant la Proposition. 
Si $p$ est impair, on pose $M={\rm I}_m \otimes \Z_p$. 
Si $p=2$, 
on constate que l'\'el\'ement $e=(1,1,\cdots.,1)$ de ${\rm I}_m$ v\'erifie 
$e\cdot e = m \in \Z_2^\times$, 
de sorte que l'on a 
\begin{equation} \label{ecarIm} {\rm I}_m \otimes \Z_2  = \Z_2 e \overset{\perp}{\oplus} M \end{equation}
en posant $M = e^\perp$. 
De plus, $M$ est pair car $(\sum_i x_i)^2 \equiv \sum_i x_i \bmod 2$. 
Dans tous les cas relevant de l'assertion (i), d'apr\`es la Proposition \ref{ImQp}, 
$M \otimes \Q_p$ est donc d'indice de Witt $n$. 
En appliquant $n$ fois successivement 
l'observation du premier paragraphe de cette d\'emonstration, 
on en d\'eduit $M \simeq {\rm H}(\Z_p)^n \overset{\perp}{\oplus} N$, 
avec n\'ecessairement $N=\{0\}$ si $p=2$, 
et $N \simeq \langle (-1)^n \rangle$ si $p\neq 2$. 
Cela conclut le (i). Dans le cas (ii), l'indice de Witt de $M \otimes \Q_2$ est $n-1$,
et le même argument montre $M \simeq {\rm H}(\Z_2)^{n-1} \overset{\perp}{\oplus} N$
avec $N$ pair, de rang $2$, non d\'eg\'en\'er\'e, sans vecteur isotrope.
On a donc $N \simeq P$, et $\det N = \det P =3$.
\end{proof}

\subsection{Le réseau pair d'un $L \in \mathcal{L}_m$, avec $m$ impair, et le groupe ${\rm O}(L)^+$}

Si $L$ est un r\'eseau entier quelconque de $\R^m$, 
on constate que $L$ admet un plus grand sous-r\'eseau pair
d\'efini par $L_{\rm even} = \{ x \in L \, \, |\, \, x\cdot x \equiv 0 \bmod 2\}$.
Le point est que $L \rightarrow \Z/2\Z, \,x \mapsto x\cdot x \bmod 2$ est {\it lin\'eaire}.
Par exemple, on a 
\begin{equation}\label{ImevenDm} ({\rm I}_m)_{\rm even} = {\rm D}_m.\end{equation}
On a toujours $|L/L_{\rm even}|\leq 2$ et $L = L_{\rm even}$ si et seulement si $L$ est pair. 
Ces d\'efinitions et remarques s'appliquent \emph{verbatim}
au cas o\`u $L$ est remplac\'e par un $\Z_2$-module bilin\'eaire. 
On a en tous les cas $ L \otimes \Z_p = L_{\rm even} \otimes \Z_p$ pour $p\neq 2$, et 
$$ (L \otimes \Z_2)_{\rm even} = L_{\rm even} \otimes \Z_2.$$

\begin{prop-def}\label{defKL} Soit $L \in \mathcal{L}_m$ avec $m$ impair. 
On a $L/L_{\rm even} \simeq \Z/2\Z$ et
$L_{\rm even} \otimes \Z_2 \simeq ({\rm I}_m \otimes \Z_2)_{\rm even} \simeq {\rm D}_m \otimes \Z_2$. 
En particulier, $L_{\rm even} \otimes \Z/4\Z$ est d\'eg\'en\'er\'e, et son noyau, que l'on notera $\kappa(L)$,
est isomorphe \`a $\Z/4\Z$.\end{prop-def}

\begin{proof} La premi\`ere assertion d\'ecoule de la Proposition \ref{genusIm} et de \eqref{ImevenDm} (et vaut pour tout $m$).
On peut donc supposer $L={\rm I}_m$ et $L_{\rm even}={\rm D}_m$. Posons $e=(1,\cdots,1) \in {\rm I}_m$. On a $e\cdot e=m \in \Z_2^\times$ et on a d\'ej\`a vu en \eqref{ecarIm}
que l'on a ${\rm I}_m \otimes \Z_2 = \Z_2 e \overset{\perp}{\oplus} M$ avec $M$ pair et non d\'eg\'en\'er\'e. En prenant les parties paires, on en d\'eduit
${\rm D}_m \otimes \Z_2 \,=\, \Z_2 \,2e\, \overset{\perp}{\oplus} M$ : le noyau de ${\rm D}_m \otimes \Z/4\Z$ 
est donc libre de rang $1$ sur $\Z/4\Z$, engendr\'e par la classe de $2e=(2,2,\cdots,2)$.
\end{proof}

Pour $L \in \mathcal{L}_m$, on notera ${\rm O}(L)$ le groupe (fini) d'isom\'etries de $L$, 
i.e. des \'el\'ements $g \in {\rm O}(m)$ v\'erifiant $g(L)=L$. 
On note ${\rm SO}(L)$ le sous-groupe des $g \in {\rm O}(L)$ avec $\det g=1$. 
Dans le cas $m$ impair on a ${\rm O}(L) = \{ \pm 1\} \times {\rm SO}(L)$. 
Toujours dans ce cas, et conform\'ement \`a la d\'efinition ci-dessus,
tout \'el\'ement de ${\rm O}(L)$ induit un automorphisme du groupe $\kappa(L) \simeq \Z/4\Z$. 
Comme ${\rm Aut}(\kappa(L))=\{\pm 1\}$ (canoniquement) on a d\'efini un morphisme de groupes
\begin{equation} \chi_L : {\rm O}(L) \rightarrow \{ \pm 1\}.\end{equation}
Le morphisme $\chi_L$ est toujours surjectif car on a $\chi_L(-1)=-1$.

\begin{defi} \label{defetaL}
Soit $L \in \mathcal{L}_m$ avec $m$ impair. 
On note ${\rm O}(L)^+$  le noyau du morphisme $\chi_L$ ci-dessus,
et l'on pose ${\rm SO}(L)^+= {\rm O}(L)^+ \cap {\rm SO}(L)$.\par
On dira que $L$ est {\rm ambivalent} si on a ${\rm SO}(L)/{\rm SO}(L)^+ \simeq \Z/2\Z$ 
ou, ce qui revient au m\^eme, si $\chi_L({\rm SO}(L)) = \{ \pm 1\}$.
\end{defi}

Donnons deux exemples:

\begin{ex} \label{exemplen79}
\begin{itemize}
\item[(i)] Posons $L={\rm I}_m$. 
Les \'el\'ements $x \in L$ tels que $x\cdot x=1$ sont les $\pm \eps_i$, $i=1,\cdots,m$, 
o\`u $(\eps_1,\cdots,\eps_m)$ d\'esigne la base canonique de $\R^m$. 
Le groupe ${\rm O}({\rm I}_m)$ coïncide donc avec le groupe
$\{ \pm 1\}^m \rtimes \got{S}_m$ des permutations sign\'ees des $\eps_i$.  
Supposons $m$ impair et fixons $a\sigma \in {\rm O}({\rm I}_m)$ avec $a=(a_i) \in \{ \pm 1\}^m$ et $\sigma \in \got{S}_m$. Montrons que $\chi_L( a \sigma) = \prod_{i=1}^m a_i$. 
En effet, le groupe $\kappa({\rm I}_m)$ est engendr\'e par $e=(2,2,\cdots,2) \in {\rm D}_m \otimes \Z/4\Z$. 
D'une part, ce vecteur est manifestement invariant par $\got{S}_m$ et, d'autre part,  si $s_i \in {\rm O}(L)$ d\'esigne la réflexion orthogonale par rapport \`a $\eps_i$, 
on a $s_i(e)-e=-4 \eps_i \notin 4 {\rm D}_m$, ce qui conclut. 
En particulier, ${\rm O}({\rm I}_m)^+$ 
est le sous-groupe de ${\rm O}({\rm I}_m)$ 
constitu\'e des permutations sign\'ees ayant un nombre pair de $-1$, 
et donc ${\rm I}_m$ est ambivalent pour $m\neq 1$.\ps
\item[(ii)] Posons $L\,=\,{\rm I}_1\, \oplus\, {\rm E}_8 \,=\, \Z \,\eps_1 \,\overset{\perp}{\oplus} \,{\rm E}_8$ 
(voir Proposition \ref{classXn9}). Les seuls \'el\'ements $x$ de $L$ avec $x\cdot x=1$ sont $\pm \eps_1$ car ${\rm E}_8$ est pair, 
on a donc 
\begin{equation} \label{idI1E8} {\rm O}(L) = {\rm O}({\rm I}_1) \times {\rm O}({\rm E}_8) = \{\pm 1\} \times {\rm O}({\rm E}_8).\end{equation}
On a aussi $L_{\rm even} \,=\, \Z \,2 \eps_1 \,\oplus\, {\rm E}_8$, 
et donc $\kappa(L) \,=\, \Z/4\Z\, 2\eps_1$. 
Ainsi, le morphisme $\chi_L : {\rm O}(L) \rightarrow \{\pm 1\}$ est 
la premi\`ere projection dans \eqref{idI1E8},
et donc ${\rm O}(L)^+=1 \times {\rm O}({\rm E}_8)$. 
Comme $\det {\rm O}({\rm E}_8) = \{ \pm 1\}$ (r\'eflexions orthogonales par rapports aux ``racines''), 
${\rm I}_1 \oplus {\rm E}_8$ est ambivalent. 
\end{itemize}
\end{ex}
 
Dans le reste de ce paragraphe nous allons d\'egager les quelques \'enonc\'es 
qui nous permettront de faire le lien avec les groupes paramodulaire et épiparamodulaire
\'etudi\'es au Chapitre \ref{Le groupe paramodulaire}. Supposant toujours $m=2n+1$ impair, 
la Proposition \ref{genreIm} entra\^ine
\begin{equation} 
\label{2adicDm}
{
{\rm D}_m \otimes \Z_2 = ({\rm I}_m \otimes \Z_2)_{\rm even} \simeq
\left\{ \begin{array}{ll}  {\rm H}(\Z_2)^n \overset{\perp}{\oplus} \langle (-1)^{n} 4 \rangle & {\textrm{si}}\,\, m \equiv \pm 1 \bmod 8, 
\\  \\ {\rm H}(\Z_2)^{n-1} \overset{\perp}{\oplus} P \overset{\perp}{\oplus} \langle (-1)^{n-1} 12  \rangle  & {\textrm{si}}\,\, m \equiv \pm 1 \bmod 8. \end{array} \right.
}
\end{equation}
\ps 
En effet, il suffit d'observer que ${\rm H}(\Z_2)$ et $P$ sont pairs, 
et que l'on a $\langle a \rangle_{\rm even} \simeq \langle 4a \rangle$ pour $a \in \Z_2^\times$. 
On a montr\'e le :

\begin{cor} \label{cortype2}
\label{type12} Soient $L \in \mathcal{L}_m$, avec $m \equiv \pm 1 \bmod 8$, et $p$ un nombre premier.
Au sens\footnote{Au sens strict, pour se ramener au $\Q_p$-espace quadratique $V_n$ fix\'e au Chapitre \ref{Le groupe paramodulaire}, 
dont le d\'eterminant est $(-1)^n$, 
il faut multiplier la forme $x\cdot y$ sur $L \otimes \Q_p$ par $(-1)^n$.} 
de la Définition \ref{Defi_reseau_type_a}, le r\'eseau $L_{\rm even} \otimes \Z_p$ de $L \otimes \Q_p$ 
est de type $1$ si $p$ est impair, de type $2$ si $p=2$.
\end{cor}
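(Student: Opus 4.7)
The plan is to reduce the statement directly to the two structure theorems already available, namely Proposition \ref{genusIm} (all lattices in $\mathcal{L}_m$ share a common genus) and Proposition \ref{genreIm}/identity \eqref{2adicDm} (explicit $\Z_p$-normal form for $\mathrm{I}_m \otimes \Z_p$ and $\mathrm{D}_m \otimes \Z_2$). Once these are in hand, a basis of the type demanded by Definition \ref{Defi_reseau_type_a} is read off from the hyperbolic-plane decomposition. I treat the cases $p$ odd and $p=2$ separately.

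For $p$ odd, I would first observe that $L_{\rm even}\otimes\Z_p = L\otimes\Z_p$: indeed, by Proposition-Définition \ref{defKL} the quotient $L/L_{\rm even}$ has order $2$, which is killed after $\otimes\Z_p$. Invoking Proposition \ref{genusIm} gives an isometry $L\otimes\Z_p \simeq \mathrm{I}_m\otimes\Z_p$, and Proposition \ref{genreIm}(i) then supplies $\mathrm{I}_m\otimes\Z_p \simeq \mathrm{H}(\Z_p)^{\oplus n}\overset{\perp}{\oplus} \langle (-1)^n\rangle$. Taking the standard basis $(e,f)$ of each hyperbolic plane and a generator $v$ of the rank-one summand yields a basis $(\eps_1,\dots,\eps_n,v_0,\varphi_n,\dots,\varphi_1)$ satisfying all the orthogonality relations of Definition \ref{Defi_reseau_type_a}, with $v_0\cdot v_0 = (-1)^n$. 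After the footnote rescaling by $(-1)^n$ (which preserves each hyperbolic plane via the $\Z_p$-basis change $\varphi_i\mapsto (-1)^n\varphi_i$), one gets $v_0\cdot v_0 = 1$, i.e.\ $2a = 1$ with $a = 1/2 \in \Z_p^\times$. This realises the "type $1$" property in the sense that $\val_p(a)=0$, which is the meaningful invariant throughout the paper (cf.\ the convention surrounding Lemma \ref{Lemme_2}).

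For $p=2$, I would first identify $L_{\rm even}\otimes\Z_2$ with $(L\otimes\Z_2)_{\rm even}$, since the even-part construction commutes with flat base change (this is the kernel of the linear map $x\mapsto x\cdot x\bmod 2$). Transporting via Proposition \ref{genusIm} gives $L_{\rm even}\otimes\Z_2 \simeq (\mathrm{I}_m\otimes\Z_2)_{\rm even} = \mathrm{D}_m\otimes\Z_2$. The hypothesis $m\equiv\pm 1\bmod 8$ places us in the first line of \eqref{2adicDm}, hence $\mathrm{D}_m\otimes\Z_2 \simeq \mathrm{H}(\Z_2)^{\oplus n}\overset{\perp}{\oplus}\langle (-1)^n\cdot 4\rangle$. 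The same reading-off of a basis from this decomposition gives $v_0\cdot v_0 = (-1)^n\cdot 4$; after the rescaling by $(-1)^n$ this becomes $v_0\cdot v_0 = 4 = 2\cdot 2$, so $a=2$ exactly and in particular $\val_2(a)=1$, which is the content of "type $2$".

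The main obstacle is purely one of bookkeeping between the Euclidean bilinear form $x\cdot y$ on $L\otimes\Q_p$ and the specific quadratic space $V_n$ fixed in Chapter \ref{Le groupe paramodulaire}; the footnote anticipates this discrepancy by prescribing the multiplicative rescaling by $(-1)^n$. What has to be checked, and causes no real difficulty, is that this unit rescaling preserves the structure of each hyperbolic summand while transforming the anisotropic summand $\langle (-1)^n\rangle$ (respectively $\langle (-1)^n\cdot 4\rangle$) into $\langle 1\rangle$ (resp.\ $\langle 4\rangle$), which is exactly what the type-$a$ normalisation requires.
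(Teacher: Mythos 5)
Your proof is correct and follows essentially the same route as the paper, which also deduces the corollary immediately from Proposition \ref{genusIm}, Proposition \ref{genreIm} and the identity \eqref{2adicDm}, reading off an $a$-adapted basis from the hyperbolic decomposition (with $L_{\rm even}\otimes\Z_p = L\otimes\Z_p$ for $p$ odd and $(L\otimes\Z_2)_{\rm even}=L_{\rm even}\otimes\Z_2$ for $p=2$). Your remark that the unit normalisation of $a$ only matters through $\val_p(a)$ reflects the same convention the paper itself adopts via the footnote.
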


Pour $L \in \mathcal{L}_m$, le morphisme $\chi_L$ de la D\'efinition \ref{defetaL} 
s'\'etend naturellement en un morphisme de groupes
\begin{equation}\label{locetaL} \chi_L : {\rm SO}(L \otimes \Z_2) \rightarrow {\rm Aut}(\kappa(L))=\{\pm 1\}. \end{equation}
On note $\nu_L : {\rm SO}(L \otimes \Q_2) \rightarrow \Q_2^\times/(\Q_2^{\times})^2$ la norme spinorielle, $\val_2 : \Q_2^\times \rightarrow \Z$ la valuation $2$-adique, et on pose $\eta_L(g) =  (-1)^{\val_2(\nu_L(g))}$ pour $g \in {\rm SO}(L \otimes \Q_2)$.

\begin{prop} \label{chiLetaL} Soit $L \in \mathcal{L}_m$ avec $m$ impair. On a 
$\chi_L(g)=\eta_L(g)$ pour tout $g \in {\rm SO}(L \otimes \Z_2)$,  et \eqref{locetaL} est surjective si $m>1$.
\end{prop}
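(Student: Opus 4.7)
The strategy is to deduce this local statement from Theorem \ref{thm_J+_noyau_norme_spin} of the first part of the thesis, by identifying ${\rm SO}(L \otimes \Z_2)$ with the epiparamodular group $\J$ of §\ref{Le_groupe_J} and then matching the two characters $\chi_L$ and $\eta_L$ with the characters $\alpha$ and $\eta \circ \nu$ studied there.

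The first step is the identification ${\rm SO}(L\otimes\Z_2) = \J$. By Corollary \ref{cortype2}, $L_{\rm even} \otimes \Z_2$ is a type-$2$ lattice in $V := L \otimes \Q_2$, at least when $m\equiv\pm 1 \bmod 8$; the case $m\equiv\pm 3 \bmod 8$ admits a parallel treatment based on the decomposition \eqref{2adicDm} (with the anisotropic plane $P$ in place of one hyperbolic plane), to which the first part's arguments apply \emph{mutatis mutandis}. Thus $\J$ is by definition the stabilizer of $L_{\rm even}\otimes\Z_2$ in ${\rm SO}(V)$, and any element of ${\rm SO}(L\otimes\Z_2)$ preserves $L_{\rm even}\otimes\Z_2$ (since the even part is canonical), so ${\rm SO}(L\otimes\Z_2)\subset \J$. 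The reverse inclusion follows from the explicit formula
\[ L\otimes\Z_2 \;=\; (L_{\rm even}\otimes\Z_2)^* \cap \{ x \in V \mid x\cdot x \in \Z_2 \}, \]
which by the uniqueness in Proposition \ref{genusIm} reduces to the case $L = I_m$; there it boils down to the observation that the ``half-integer'' coset $\tfrac{1}{2}(1,\ldots,1)+ \Z^m$ of $D_m^*$ has no element of integer norm when $m$ is odd (since its norms lie in $\tfrac{m}{4}+\Z$). Both factors on the right-hand side are manifestly stable under $\J$, hence $\J \subset {\rm SO}(L\otimes\Z_2)$.

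Under this identification, $\eta_L$ is by construction the restriction to $\J$ of the character $\eta\circ\nu$ studied in §\ref{Norme spinorielle}. The core of the proof is then to identify $\chi_L$ with the character $\alpha$ of Proposition-Définition \ref{prop-def_J+}. In the type-$2$ description $L_{\rm even}\otimes\Z_2 = (\bigoplus \Z_2 e_i \oplus \Z_2 f_i)\overset{\perp}{\oplus}\Z_2 v_0$ with $v_0\cdot v_0 = \pm 4$, Proposition-Définition \ref{defKL} gives $\kappa(L) = (\Z/4\Z)\cdot\overline{v_0}$, so $\chi_L(g) = +1$ iff $g(v_0) \equiv v_0 \bmod 4(L_{\rm even}\otimes\Z_2)$. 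On the other hand, $\alpha(g) = \det_{\rm DD}(\bar\pi(g))$ is obtained from the reduction of $g$ modulo $\varpi = 2$, quotiented by the radical line $kv'_0$ of the reduced form (where $v'_0 = 2v_0$). The equality $\chi_L(g) = \alpha(g)$ would be checked directly on reflections $\tau_x$ for anisotropic $x \in V$ (enough by Cartan-Dieudonné), using $\nu_L(\tau_x) = x\cdot x$ and the fact (Lemma \ref{lemme_det_DD_transpo}) that $\det_{\rm DD}$ of an element preserving a hyperbolic plane detects whether it swaps the two isotropic lines. Granting $\chi_L = \alpha$, Theorem \ref{thm_J+_noyau_norme_spin} immediately yields $\chi_L = \alpha = \eta\circ\nu = \eta_L$ on $\J$.

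Finally, the surjectivity of \eqref{locetaL} for $m > 1$ follows at once from that of $\alpha$ established in Proposition-Définition \ref{prop-def_J+}. More concretely, by Proposition \ref{genusIm} one reduces to $L = I_m$, and then the rotation $g \in {\rm SO}(I_m)$ defined by $\epsilon_1 \mapsto -\epsilon_2$, $\epsilon_2 \mapsto \epsilon_1$, and $\epsilon_i\mapsto \epsilon_i$ for $i\geq 3$ satisfies $\chi_{I_m}(g) = -1$ by Example \ref{exemplen79}(i). The main obstacle is the identification $\chi_L = \alpha$ of the previous paragraph: these two characters are built from a priori different structures, namely a mod-$4$ action on a cyclic group of order $4$ on the one hand, and a mod-$2$ Dickson-Dieudonné determinant on the other, and bridging them requires a careful geometric comparison of $L_{\rm even}\otimes\Z_2$ modulo $2$ and modulo $4$.
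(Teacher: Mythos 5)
Votre réduction au cadre de la Première Partie ne fonctionne telle quelle que pour $m \equiv \pm 1 \bmod 8$, et c'est là le premier vrai trou. Toute la théorie du groupe épiparamodulaire et le Théorème \ref{thm_J+_noyau_norme_spin} sont développés pour le groupe \emph{déployé} $\SO_{2n+1}(F)$, c'est-à-dire pour une forme d'indice de Witt maximal ; or, d'après la Proposition \ref{genreIm}, lorsque $m \equiv \pm 3 \bmod 8$ l'espace $L \otimes \Q_2$ contient le plan anisotrope $P$ et n'est pas déployé, de sorte que le Corollaire \ref{cortype2} ne s'applique plus et qu'il n'existe pas (dans cette thèse) de théorème de norme spinorielle pour ce groupe non déployé — c'est précisément ce que la discussion du \og Cas 3 \fg{} au §\ref{Conjectures pour le poids motivique $21$} laisse à l'état conjectural. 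Votre \emph{mutatis mutandis} masque donc un énoncé non démontré. La preuve du texte contourne l'obstacle par une stabilisation : on plonge ${\rm I}_m \otimes \Z_2$ dans ${\rm I}_{m+r}\otimes \Z_2$ avec $r$ pair et $m+r \equiv \pm 1 \bmod 8$, on étend $g$ par l'identité, et on vérifie que $\chi$ et $\eta$ sont toutes deux compatibles à cette extension (le point non trivial étant $\chi_{{\rm I}_m}(g) = \chi_{{\rm I}_{m+r}}(\widetilde{g})$, qui se teste sur le générateur $2e_{m+r} = 2e_m + 2e_r$ de $\kappa({\rm I}_{m+r})$).

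Le second trou est que l'identification $\chi_L = \alpha$, que vous désignez vous-même comme \og the main obstacle \fg, n'est pas effectuée : votre \og would be checked directly on reflections \fg{} reste au conditionnel, et l'argument esquissé demande en outre de savoir que ${\rm SO}(L\otimes\Z_2)$ est engendré par des réflexions \emph{préservant le réseau}, ce qui n'est ni immédiat ni invoqué. La preuve du texte procède autrement et plus directement : partant de $\eta_L(g) = \det_{\rm DD}(\bar g)$ (Théorème \ref{thm_J+_noyau_norme_spin}), elle remonte le déterminant de Dickson--Dieudonné de $M'\otimes\Z/2\Z$ à $M'$ sur $\Z/4\Z$ par \og constance \fg{} (le module $M'$ étant non dégénéré sur $\Z/4\Z$), puis utilise la congruence $\det_{\rm DD}(g') \equiv \det g' \bmod 4$ tirée de \cite[Chap. 2]{Chen-Lannes}, et conclut par $1 = \det g \equiv \chi_L(g)\det g' \bmod 4$, où $\chi_L(g)$ est par définition l'action de $g$ sur le noyau $\kappa(L)$ de $M\otimes\Z/4\Z$. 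C'est exactement le pont mod $2$ / mod $4$ dont vous constatez l'absence dans votre rédaction. Votre construction de l'élément de $\chi_L$-image $-1$ pour la surjectivité est en revanche correcte (et équivalente à celle du texte).
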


\begin{proof}  
On voit $M= L_{\rm even}\otimes \Z_2$ comme un $\Z_2$-module quadratique. 
Le noyau de $M \otimes \Z/4\Z$ est $\kappa(L)$, on a donc
une suite exacte ${\rm O}(M)$-stable
$0 \rightarrow K(L) \rightarrow M \otimes \Z/4\Z \rightarrow M' \rightarrow 0$ 
avec $M'$ un $\Z/4\Z$-module quadratique non d\'eg\'en\'er\'e. Pour $g \in {\rm O}(M)$, 
on note $g'$ l'\'el\'ement de ${\rm O}(M')$ induit par $g$, et $\bar{g}$ l'\'el\'ement de ${\rm O}(M' \otimes \Z/2\Z)$
d\'eduit par r\'eduction modulo $2$ de $g'$. \ps
Montrons d'abord la formule pour $\chi_L$ sous l'hypoth\`ese $m \equiv \pm 1 \bmod 8$. 
D'apr\`es le Corollaire \ref{cortype2}, ${\rm SO}(M)$ est un sous-groupe \'epiparamodulaire de
${\rm SO}(M \otimes \Q_2)$.
D'apr\`es le Théorème \ref{thm_J+_noyau_norme_spin}, on a donc $\eta_L(g)=\det_{\rm DD}(\bar{g})$ 
(o\`u $\det_{\rm DD}$ est le d\'eterminant de Dickson-Dieudonn\'e de $\bar{g}$, voir §\ref{Rappels_sur_les_FQ}).
Mais $M'$ \'etant non d\'eg\'en\'er\'e sur $\Z/4\Z$, la ``constance'' du d\'eterminant de Dickson-Dieudonn\'e
entra\^ine $\det_{\rm DD}(\bar{g})=\det_{\rm DD}(g')$. D'autre part, on a 
$\det_{\rm DD}(g')\,\equiv\, \det g' \,\, \bmod 4$, d'apr\`es 
le diagramme central de \cite[Chap. 2]{Chen-Lannes} p. 26. On conclut car $1=\det g \equiv  \chi_L(g) \det g' \bmod 4$. \ps
Supposons maintenant $m$ impair quelconque. On peut supposer $L \otimes \Z_2 = {\rm I}_m \otimes \Z_2$.
Soit $r$ un entier pair tel que $m+r \equiv \pm 1 \bmod 8$. On pose
$$L' \,:=\, {\rm I}_m \otimes \Z_2 \overset{\perp}{\oplus} {\rm I}_r \otimes \Z_2 = {\rm I}_{m+r} \otimes \Z_2.$$
Tout \'el\'ement $g \in {\rm SO}(L \otimes \Z_2)$ s'\'etend  en 
un \'el\'ement $\widetilde{g}$ de ${\rm SO}(L')$ par l'identit\'e sur ${\rm I}_r \otimes \Z_2$.
On a \'evidemment $\eta_L(g) = \eta_{L'}(\widetilde{g})$. 
D'apr\`es le cas pr\'ec\'edent, on sait que $\eta_{L'}(\widetilde{g}) = \chi_{L'}(\widetilde{g})$.
Il ne reste qu'\`a montrer $\chi_L(g)= \chi_{L'}(\widetilde{g})$.
Pour cela, on consid\'ere l'\'el\'ement $e_s=(1,1,\cdots,1)$ de ${\rm I}_s$ ;
on rappelle que $2e_s$ engendre $\kappa({\rm I}_s)$ pour $s$ impair.
On a $e_{m+r}=e_m + e_r$. 
Par d\'efinition de $\chi_L$, on a $\widetilde{g}( 2 e_m) - \chi_L(g) \,2 e_m \in 4{\rm D}_m \otimes \Z_2$.
De mani\`ere triviale, on a aussi 
$\widetilde{g}( 2 e_r) - \chi_L(g) \,2e_r \,= \, (1-\chi_L(g)) \,2 e_r \in 4 {\rm D}_r \otimes \Z_2$, car $r$ est pair. 
On a bien montr\'e $\widetilde{g}( 2 e_{m+r} )\, \equiv \,\chi_L(g) \,2e_{m+r} \,\bmod \,4 {\rm D}_{m+r} \otimes \Z_2$. \ps 

	Pour la surjectivit\'e de \eqref{locetaL} pour $m>1$, consid\'erons par exemple une d\'ecomposition 
$L \,=\, \Z_2 e\, \overset{\perp}{\oplus} (\Z_2 u \oplus \Z_2 v) \overset{\perp}{\oplus} N$ avec $e\cdot e \in \Z_2^\times$, 
$u\cdot u=v\cdot v$, $u\cdot v=1$, et $N$ réseau pair, donn\'ee par la Proposition \ref{genreIm}. On d\'efinit un \'el\'ement 
$g \in {\rm SO}(L)$ en posant $g(e)=-e$, $g(u)=v$, $g(v)=u$ et $g_{|N}= {\rm id}$.
L'\'egalit\'e d\'ej\`a vue $\kappa(L) \,= \,\Z/4\Z \,2e$ entra\^ine $\chi_L(g)=-1$.
\end{proof}


\begin{lemme}
\label{OdmegalOim}
Si $m$ est impair, l'application $g \mapsto g_{|{\rm D}_m \otimes \Z_2}$ induit un isomorphisme de groupes 
${\rm O}({\rm I}_m \otimes \Z_2) \isomo {\rm O}({\rm D}_m \otimes \Z_2)$.
\end{lemme}

\begin{proof} Il est clair que toute isom\'etrie d'un $\Z_2$-module bilin\'eaire $L$
pr\'eserve $L_{\rm even}$, de sorte que le morphisme de l'\'enonc\'e
est bien d\'efini. Il est injectif car on a $L \subset L[1/2]=L_{\rm even}[1/2]$. 
Reste \`a justifier sa surjectivit\'e.
Rappelons que si $M$ est un sous-$\Z_2$-module de $V={\rm I}_m \otimes \Q_2$,
son {\it dual} est le $\Z_2$-module $M^\sharp = \{ x \in V\, \, |\, \, x\cdot M \subset \Z_2\}$.
Il est stable par toute isom\'etrie de $M$.
Dans le cas $L= {\rm I}_m \otimes \Z_2$, on constate $L=L^\sharp$ et, pour $M=L_{\rm even} = {\rm D}_m \otimes \Z_2$, 
 $$M^\sharp \,=\, L \,+ \,\Z_2 \,e/2 \, = \,M \,+\, \Z_2 \,\eps_1 \,+ \,\Z_2 \,e/2,$$
 avec $e=(1,\cdots,1)$ et $\eps_1=(1,0,\cdots,0)$. On en d\'eduit
\begin{equation} \label{residuMeven} M_{\rm even}^\sharp/M_{\rm even} = \langle e/2 \rangle \simeq \Z/4\Z \end{equation}
car on a $e-\eps_1 \in M$ pour $m$ impair. 
Tout $\Z_2$-sous-module bilin\'eaire de $V$ contenant $M$ \'etant inclus dans $M^\sharp$, 
on en d\'eduit que le seul tel sous-module $N$ avec $M \subsetneq N$ est $N=L$,
qui est donc n\'ecessairement stable par ${\rm O}(M)$, 
ce qui est la surjectivit\'e cherch\'ee.
\end{proof}

\emph{Remarque :} La formule \eqref{residuMeven} montre que, pour $L  \in \mathcal{L}_m$ avec $m$ impair, 
le morphisme $\chi_L$ coïncide avec le morphisme naturel ${\rm O}(L \otimes \Z_2) \rightarrow 
{\rm Aut}(M^\sharp/M)=\{ \pm 1\}$, o\`u $M = L_{\rm even}$.

\subsection{Réseaux unimodulaires impairs marqués et fonctions de réseaux}
\label{reseauxunimmarques}

Supposons encore $m$ impair. Soit $L \in \mathcal{L}_m$.
Nous appellerons {\it marquage} de $L$ 
la donn\'ee d'un g\'en\'erateur $\kappa$ du groupe $\kappa(L) \simeq \Z/4\Z$ (Proposition \ref{defKL}). 
Chaque $L$ a exactement deux marquages, 
de la forme $\kappa$ et $-\kappa$. 
Tout \'el\'ement $g \in {\rm O}(m)$ induit une isom\'etrie $L \rightarrow g(L)$, 
et donc un isomorphisme de groupes ab\'eliens $\kappa(L) \isomo \kappa(g(L))$ encore not\'e $g$ par abus.

\newcommand{\tlm}{\widetilde{\mathcal{L}_m}}
\begin{prop-def} \label{deftlm} Pour $m$ impair, on note $\tlm$ l'ensemble des couples $(L,\kappa)$ avec $L \in \mathcal{L}_m$
et $\kappa$ un marquage de $L$. Il est muni d'une involution naturelle $\tau$, d\'efinie par $\tau(L,\kappa)=(L,-\kappa)$, ainsi que d'une action naturelle de ${\rm O}(m)$ qui commute \`a $\tau$, d\'efinie par $g(L,\kappa)=(g(L),g(\kappa))$ pour $g \in {\rm O}(m)$. Par construction, le stabilisateur de $(L,\kappa) \in \tlm$ dans ${\rm O}(m)$ est le groupe ${\rm O}(L)^+$ de la D\'efinition \ref{defetaL}.
\end{prop-def}

La d\'efinition suivante est importante.
On entendra par {\it repr\'esentation} de ${\rm SO}(m)$ 
une repr\'esentation continue sur un espace vectoriel complexe de dimension finie
que nous noterons en g\'en\'eral $U$. Pour $\Gamma$ un sous-groupe de ${\rm SO}(m)$ on pose 
$U^\Gamma = \{ u \in U \, |\, \gamma u=u,\,\, \forall \gamma \in \Gamma\}$ ({\it $\Gamma$-invariants de $U$}).

\begin{prop-def} 
\label{definvariantsunim}
Si $U$ est une repr\'esentation de ${\rm SO}(m)$ ($m$ impair) on pose
$${\rm S}_U(m) = \{  f : \mathcal{L}_m \rightarrow U\,\,|\,\, f(g(L)) = g.f(L),\,\,\,\forall L \in \mathcal{L}_m, \,\,\forall g \in {\rm SO}(m)\},$$
$$\widetilde{{\rm S}}_U(m) = \{  f : \tlm \rightarrow U\,\,|\,\, f(g(L,\kappa)) = g.f(L,\kappa),\,\,\,\forall (L,\kappa) \in \widetilde{\mathcal{L}_m}, \,\,\forall g \in {\rm SO}(m)\}.$$
Ce sont des $\C$-espaces vectoriels de dimension finie.
\end{prop-def}

L'assertion sera \'evidente sur les formules \eqref{SU+concret} et \eqref{SUconcret} plus bas. 
Observons que $\tau$ induit une involution de 
$\widetilde{{\rm S}}_U(m)$, d\'efinie par $\tau(f)(L,\kappa)=f(L,-\kappa)$, 
de sorte que l'on a une d\'ecomposition en somme directe
\begin{equation}\label{Spm} \widetilde{{\rm S}}_U(m) = {\rm S}_U(m)^+ \oplus {\rm S}_U(m)^- \end{equation}
o\`u ${\rm S}_U(m)^\eps$ est le sous-espace des fonctions $f \in \widetilde{{\rm S}}_U(m)$ v\'erifiant 
$f(L,\eps \kappa)=\eps f(L,\kappa)$. On a donc une identification \'evidente 
\begin{equation}\label{SUplus} {\rm S}_U(m) = {\rm S}_U(m)^+.\end{equation}

{\it Description concr\`ete:} Soient $L_1,\cdots ,L_h$ des repr\'esentants de $\mathcal{X}_m$, 
\ie des classes d'isom\'etrie de r\'eseaux unimodulaires de rang $m$. 
Tout \'el\'ement $f \in {\rm S}_U(m)$ est uniquement d\'etermin\'e par ses $h$ valeurs 
$f(L_i) \in U$ pour $i\in\{1,\cdots,h\}$, 
qui doivent de plus satisfaire $f(L_i) \in U^{{\rm SO}(L_i)}$.
Il est \'evident que ce sont les seules contraintes sur les $f(L_i)$, 
de sorte que l'on a un isomorphisme
\begin{equation}
\label{SU+concret}
{\rm S}_U(m) \isomo \prod_{i=1}^h U^{{\rm SO}(L_i)}  
\end{equation}
De m\^eme, si $\kappa_1,\cdots,\kappa_n$ sont des marquages respectifs fix\'es des $L_i$, 
tout \'el\'ement $f \in \widetilde{{\rm S}}_U(m)$ est uniquement d\'etermin\'e par ses $2h$ valeurs 
$f(L_i,\pm \kappa_i) \in U$ pour $i\in\{1, \cdots,h\}$, 
qui doivent de plus satisfaire $f(L_i,g(\kappa_i))=g.f(L_i,\kappa_i)$ pour tout $i\in\{1,\cdots,h\}$ 
et $g \in {\rm SO}(L_i)$. 
Si $L_i$ est ambivalent, $f(L_i,- \kappa_i)$ se d\'eduit de $f(L_i,\kappa_i)$, 
et ce dernier est un \'el\'ement arbitraire de $U^{{\rm SO}(L_i)^+}$. 
Si $L_i$ n'est pas ambivalent, $f(L_i,\kappa_i)$ et 
$f(L_i,-\kappa_i)$ sont deux \'el\'ements arbitraires de $U^{{\rm SO}(L_i)^+}$. 
Pour $L \in \mathcal{L}_m$, posons ${\rm t}(L)=1$ si $L$ est ambivalent, 
et ${\rm t}(L)=2$ sinon. On a donc un isomorphisme
\begin{equation}
\label{SUconcret}
\widetilde{{\rm S}}_U(m) \isomo \prod_{i=1}^h (U^{{\rm SO}(L_i)^+})^{{\rm t}(L_i)} 
\end{equation}

\subsection{Dimension des espaces ${\rm S}_{U}(m)^\pm$.}

Des formules exactes pour $\dim {\rm S}_{U}(m)^{\pm}$ ont \'et\'e d\'etermin\'ees
par Chenevier pour tout $m\leq 23$, en appliquant les algorithmes de l'article \cite{chniemeier}: 
nous allons rappeler plus bas ce r\'esultat.
D'apr\`es les formules  \eqref{Spm}, \eqref{SUplus}, \eqref{SU+concret} et \eqref{SUconcret}, il s'agit 
de d\'eterminer $\dim U^{\Gamma}$ avec $\Gamma={\rm SO}(L)$ et $\Gamma={\rm SO}(L)^+$, 
lorsque $L$ parcourt les classes d'isom\'etrie de r\'eseaux unimodulaires impairs de rang $\leq 23$.
Ainsi qu'il est expliqu\'e {\it loc. cit.}, l'ingr\'edient crucial est de lister les {\it masses caract\'eristiques} 
de ces $\Gamma$, \ie  les polyn\^omes caract\'eristiques de leurs \'el\'ements et leurs multiplicit\'es. 
Dans cette th\`ese, seuls les cas $m=7$ et $m=9$ joueront un r\^ole, aux 
Chapitres \ref{La_formule explicite de Riemann-Weil-Mestre} et \ref{Chapitre_calculs} (voir néanmoins le traitement de la Conjecture \ref{thm_2_w21} au paragraphe \ref{Conjectures pour le poids motivique $21$}). Dans ces cas, les groupes ${\rm SO}(L)$ et ${\rm SO}(L)^+$ 
ont \'et\'e d\'etermin\'es concr\`etement dans l'Exemple \ref{exemplen79}, et le listage ci-dessus 
se d\'eduit du \S 3.2 de \cite{chniemeier} et de la Table C.12. {\it loc. cit.}
\ps

\'Etant donn\'e que l'on a, pour $\ast\in\{+,-\}$, la d\'ecomposition 
${\rm S}_{U \oplus U'}(m)^\ast\simeq {\rm S}_U(m)^\ast \oplus {\rm S}_{U'}(m)^\ast$,
on peut supposer que $U$ est une repr\'esentation irr\'eductible de ${\rm SO}(m)$ 
avec, rappelons-le, $m=2n+1$ impair.
La th\'eorie classique de Cartan-Weyl classifie ces repr\'esentations 
en terme de leur plus haut poids 
ou, ce qui revient au m\^eme, par leur caract\`ere infinit\'esimal ; 
et la th\'eorie de Langlands les met en bijection avec 
les classes de conjugaisons de morphismes discrets et semi-simples 
${\rm W}_\R \rightarrow {\rm Sp}_{2n}(\C)$ 
(\og param\`etres de Langlands de ${\rm SO}(2n+1)$ \fg). 
Avec les notations du §\ref{Algébricité}, 
et d'apr\`es la Proposition \ref{decomposition_par_discret_sp}, 
un tel param\`etre est de la forme $\oplus_{i=1}^n I_{w_i}$ 
pour une unique suite d\'ecroissante d'entiers impairs positifs $w_1>w_2>\cdots>w_n>0$. 
Le dictionnaire, 
qui d\'ecoule de la compatibilit\'e de la correspondance de Langlands 
au caract\`ere infinit\'esimal, 
est le suivant. On note ${\rm W}_{m}$ l'ensemble des $n$-uplets 
$\underline{w}=(w_1,w_2,\cdots,w_n)$ o\`u les $w_i$ sont des entiers impairs 
tels que $w_1>w_2>\cdots>w_n$, avec $m=2n+1$.

\begin{defi} \label{defUw} 
Pour tout $\underline{w} \in {\rm W}_{2n+1}$ il existe une repr\'esentation 
irr\'eductible $U_{\underline{w}}$, unique \`a isomorphisme pr\`es, 
v\'erifiant les conditions \'equivalentes suivantes:
\begin{itemize}
\item[(i)] dans les conventions standard pour ${\rm SO}(2n+1)$, le plus haut poids de $U_{\underline{w}}$ est la suite d\'ecroissante des entiers $w'_1\geq w'_2 \geq \cdots \geq w'_n\geq 0$ avec $w'_i=\frac{w_i-2n+2i-1}{2}$, \ps 
\item[(ii)] le caract\`ere infinit\'esimal de $U_{\underline{w}}$ est ${\rm diag}(\pm \frac{w_1}{2}, \pm \frac{w_2}{2}, \cdots, \pm \frac{w_n}{2})$,\ps 
\item[(iii)] le param\`etre de Langlands de $U_{\underline{w}}$ est $\oplus_{i=1}^n I_{w_i}$.\ps 
\end{itemize}
\end{defi}


\begin{thm} {\rm (Chenevier)} On suppose $m=2n+1 \leq 23$. Les entiers $\dim {\rm S}_{U_{\underline{w}}}(m)^\pm$,  pour  $\underline{w} \in {\rm W}_m$ avec $w_1 \leq 21$, sont donn\'es dans la table 
\emph{\cite{tabledimSOchenevier}}.
\end{thm}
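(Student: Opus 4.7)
The plan is to reduce the computation of $\dim {\rm S}_{U_{\underline{w}}}(m)^{\pm}$ to an explicit character-theoretic calculation over finite groups, in the spirit of \cite{chniemeier}. First, combining \eqref{Spm}, \eqref{SUplus}, \eqref{SU+concret} and \eqref{SUconcret}, one obtains
\[
\dim {\rm S}_{U}(m)^+ = \sum_{L \in \mathcal{X}_m} \dim U^{{\rm SO}(L)}, \qquad \dim {\rm S}_U(m)^- = \sum_{L \in \mathcal{X}_m} \bigl( {\rm t}(L)\,\dim U^{{\rm SO}(L)^+} - \dim U^{{\rm SO}(L)} \bigr),
\]
so everything is reduced to computing the dimensions of the $\Gamma$-invariants in $U$ for $\Gamma \in \{{\rm SO}(L),{\rm SO}(L)^+\}$ when $L$ ranges over a set of representatives of $\mathcal{X}_m$. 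For $m \leq 23$, these classes are classified (Proposition \ref{classXn9} for $m \leq 9$, and the references given in \cite{conwaysloane} for the full range), and one knows the ambivalence type ${\rm t}(L)$; in the cases $m=7,9$ relevant to the rest of the thesis, the groups were already computed explicitly in Example \ref{exemplen79}.

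Second, for each such finite subgroup $\Gamma \subset {\rm SO}(m)$ and each irreducible representation $U_{\underline{w}}$ with $\underline{w} \in {\rm W}_m$, one uses the character formula
\[
\dim U_{\underline{w}}^{\Gamma} = \frac{1}{|\Gamma|} \sum_{g \in \Gamma} \chi_{U_{\underline{w}}}(g),
\]
together with the Weyl character formula for ${\rm SO}(2n+1)$, which expresses $\chi_{U_{\underline{w}}}(g)$ as an explicit rational function of the eigenvalues of $g$. In particular, $\chi_{U_{\underline{w}}}(g)$ depends only on the characteristic polynomial of $g$ acting on $\R^m$. One is therefore led to determine, for each $\Gamma$, the \emph{mass characteristic} of $\Gamma$, namely the finite list of pairs $(P(T),m_P)$ where $P(T) \in \Z[T]$ ranges over the characteristic polynomials of the elements of $\Gamma$ and $m_P = |\Gamma|^{-1}\,\#\{g \in \Gamma \mid \det(T\,{\rm id}-g)=P\}$. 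Given these data, $\dim U_{\underline{w}}^{\Gamma}$ is obtained by summing the Weyl character values weighted by the $m_P$, and one runs over all $\underline{w}$ with $w_1 \leq 21$ to fill the table.

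The substantial step—and the main obstacle—is thus the determination of the mass characteristics of all the groups ${\rm SO}(L)$ and ${\rm SO}(L)^+$ for $L$ ranging over representatives of $\mathcal{X}_m$, $m \leq 23$. For ${\rm SO}(L)$ this relies on the algorithms of \cite{chniemeier} (in particular Table C.12 \emph{loc.\,cit.}), which handle the analogous problem for even unimodular lattices by a mixture of Smith--Minkowski mass formulas, the Kneser neighbour method, and explicit enumeration of conjugacy classes. Extending these to the odd unimodular case requires organising the data for each $L \in \mathcal{X}_m$; the key new ingredient beyond the even case is the character $\chi_L$ of Definition \ref{defetaL}, which by Proposition \ref{chiLetaL} is the reduction modulo $2$ of the $2$-adic valuation of the spinor norm. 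Knowing $\chi_L$ on every conjugacy class of ${\rm SO}(L)$ allows one to separate ${\rm SO}(L)^+$ from ${\rm SO}(L)$ and to compute its mass characteristic as a sub-sum of the one for ${\rm SO}(L)$. Once these finite-group data are tabulated, the remaining computation of $\dim U_{\underline{w}}^{\Gamma}$ for all $\underline{w}$ with $w_1 \leq 21$ is a finite—if substantial—mechanical evaluation that produces the table cited in \cite{tabledimSOchenevier}.
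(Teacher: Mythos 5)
Votre démarche est essentiellement celle du texte : réduction via \eqref{Spm}, \eqref{SUplus}, \eqref{SU+concret} et \eqref{SUconcret} au calcul de $\dim U^{{\rm SO}(L)}$ et $\dim U^{{\rm SO}(L)^+}$, puis évaluation par la formule des caractères à partir des \emph{masses caractéristiques} de ces groupes finis, le caractère $\chi_L$ (identifié à la norme spinorielle par la Proposition \ref{chiLetaL}) servant à isoler ${\rm SO}(L)^+$ dans ${\rm SO}(L)$ — c'est exactement le schéma que le texte attribue aux algorithmes de \cite{chniemeier}. Les deux formules de dimension que vous écrivez pour ${\rm S}_U(m)^{\pm}$ sont correctes, et le reste est la même réduction à un calcul fini explicitement tabulé dans \cite{tabledimSOchenevier}.
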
\ps

\emph{Remarque :}
Tout r\'eseau $L \in \mathcal{L}_m$ poss\'edant une racine, 
c'est-\`a-dire un vecteur $v \in L$ v\'erifiant $v \cdot v=2$, 
est ambivalent, car si $s$ d\'esigne la sym\'etrie orthogonale par rapport \`a cette racine, 
alors on a : $-s \in {\rm SO}(L)$ et $\chi_L(-s)=\eta_L(-s)=-1$ par la Proposition \ref{chiLetaL}. 
En dimension $1<m\leq 23$, il r\'esulte de la classification que tout $L \in \mathcal{L}_m$ poss\`ede des racines, except\'e une unique classe d'isom\'etrie
pour $m=23$ : celle du {\rm r\'eseau de Leech court}. 
Ce r\'eseau est non ambivalent : son groupe sp\'ecial orthogonal est le groupe simple de Conway ${\rm Co}_2$.

\subsection{Interprétations automorphes}\label{Interprétations automorphes}
 
Soient $m \equiv \pm 1\bmod 8$,  
$V$ le $\Q$-espace vectoriel quadratique ${\rm I}_m \otimes \Q$ 
et $\mathbf{G}= \bm{\SO_V}$ le $\Q$-groupe alg\'ebrique sp\'ecial orthogonal de $V$. 
On sp\'ecifie ici les objets automorphes \'evoqu\'es au §\ref{Groupes étudiés et leurs représentations} 
dans le cas du groupe $\mathbf{G}$. 
D'apr\`es la Proposition \ref{ImQp}, l'espace $V$ relève du
 {\bf Cas 2} consid\'er\'e dans le paragraphe {\it loc. cit.} 
L'inclusion ${\rm I}_m \subset \R^m$ permet d'identifier naturellement 
l'espace quadratique r\'eel $V \otimes \R$ \`a l'espace euclidien $\R^m$, et donc $\mathbf{G}(\R)$ \`a ${\rm SO}(m)$.
\ps

Le groupe $\mathbf{G}(\R)$ \'etant compact, 
la notion de forme automorphe pour $\mathbf{G}$ \cite{BJ-Corvallis} est 
particuli\`erement \'el\'ementaire (voir par exemple \cite{grossalg}). 
En effet, par d\'efinition, l'espace des formes automorphes de $\mathbf{G}$ est l'espace 
$\mathcal{A}(\mathbf{G})$ de toutes les fonctions $\mathbf{G}(\Q) \backslash \mathbf{G}(\aq) \rightarrow \C$ 
qui sont d'une part invariantes \`a droite par un sous-groupe compact ouvert de $\mathbf{G}(\af)$, 
et d'autre part qui engendrent, pour l'action de $\mathbf{G}(\R)$ par translations \`a droite, 
une repr\'esentation de dimension finie de $\mathbf{G}(\R)$. 
L'espace vectoriel $\mathcal{A}(\mathbf{G})$ est muni d'une repr\'esentation de $\mathbf{G}(\aq)$ 
par translations \`a droite. 
Les inclusions $\mathcal{A}_{\rm cusp}(\mathbf{G}) \subset \mathcal{A}(\mathbf{G}) \subset \mathcal{A}_{\rm disc}(\mathbf{G})$ 
sont des \'egalit\'es, car $\mathbf{G}$ n'admet pas de sous-groupe parabolique propre d\'efini sur $\Q$ (ni m\^eme sur $\R$). 
On a une d\'ecomposition 
$$\mathcal{A}(\mathbf{G}) = \mathcal{A}_{\rm disc}(\mathbf{G}) \,=\, \bigoplus {\rm m}(\pi) \pi$$
la somme portant sur toutes les repr\'esentations irr\'eductibles admissibles de $\mathbf{G}(\aq)$,  
${\rm m}(\pi)$ d\'esignant leur multiplicit\'e (finie) dans $\mathcal{A}(\mathbf{G})$. 
Une telle $\pi$ est dite discr\`ete si l'on a ${\rm m}(\pi) \neq 0$. 
Les r\'esultats d'Arthur et Ta\"ibi rappel\'es au Chapitre \ref{Théorie d'Arthur pour SO-2n+1} (Théorème \ref{thm_mult_Arthur})
montrent qu'alors ${\rm m}(\pi)=1$. \ps\ps

Toute repr\'esentation irr\'eductible admissible de $\mathbf{G}(\aq)$ s'\'ecrit sous la forme 
$\pi = \pi_\infty \otimes \pi_f$, 
o\`u $\pi_\infty$ est une repr\'esentation continue irr\'eductible {\it de dimension finie} 
du groupe compact $\mathbf{G}(\R)={\rm SO}(m)$. 
Soient $U$ une telle repr\'esentation, ainsi que $K$ un sous-groupe compact ouvert de $\mathbf{G}(\AAA_f)$. 
{\it L'espace des formes automorphes pour $\mathbf{G}$ de poids $U$ et de niveau $K$} est d\'efini par la formule 
\begin{equation} 
\label{defSwGK} 
{\rm S}_{U}(\mathbf{G},K) := 
{\rm Hom}_{\mathbf{G}(\R)}(U, \mathcal{A}(\mathbf{G})^K) 
\simeq \bigoplus\limits_{\substack{\pi_\infty \simeq U \\ \pi_f^{K}\neq 0}}{\rm m}(\pi)\, \pi_f^K,
\end{equation}
o\`u $\mathcal{A}(\mathbf{G})^K$ est le sous-espace des fonctions $K$-invariantes (\`a droite) de $\mathcal{A}(\mathbf{G})$.
\ps

Posons $L = {\rm I}_m$, ainsi que $L_p = L \otimes \Z_p$ et $V_p = L\otimes \Q_p$ pour $p$ premier.
On consid\`ere le sous-groupe compact ouvert ${\rm K}(2) \subset \mathbf{G}(\AAA_f)$, 
choisi sous la forme ${\rm K}(2) = \prod_p {\rm K}(2)_p$, et avec pour tout $p$ premier
$${\rm K} (2)_p = \{ g \in \mathbf{G}(\Q_p)\, \, |\, \, g(L_p) = L_p\} \,\,\,\,{\rm avec}\,\,\, \, L={\rm I}_m.$$
On note aussi ${\rm K}(2)^+=\prod_p {\rm K}(2)^+_p$ le sous-groupe des \'el\'ements $(g_p)_p$ de ${\rm K}(2)$ 
v\'erifiant $\chi_{L}(g_2)=1$ (Formule \eqref{locetaL}). On a ${\rm K}(2)^+_p={\rm K}(2)_p$ pour $p\neq 2$,
et ${\rm K}(2)^+_2$ d'indice $2$ dans ${\rm K}(2)_2$ pour $m>1$ (Proposition \ref{chiLetaL}).
La combinaison du Corollaire \ref{cortype2} avec le Lemme \ref{OdmegalOim}, la 
Proposition \ref{chiLetaL} et la Proposition-D\'efinition \ref{prop-def_J+} pour $p=2$, entraînent le :
\begin{fact} Le sous-groupe compact ouvert ${\rm K}(2)_p$ de ${\rm SO}(V_p)$ 
est hypersp\'ecial pour $p \neq 2$, \'epiparamodulaire pour $p=2$. De plus, ${\rm K}(2)^+_2$
est le sous-groupe paramodulaire de ${\rm SO}(V_2)$ inclus dans ${\rm K}(2)_2$. 
\end{fact}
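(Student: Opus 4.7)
Le plan est de combiner l'information structurelle sur $L_{\rm even} \otimes \Z_p$ fournie par le Corollaire \ref{cortype2} avec les descriptions concrètes des sous-groupes hyperspécial et épiparamodulaire de $\SO_{2n+1}$ sur un corps $p$-adique données au Chapitre \ref{Le groupe paramodulaire}, en traitant séparément les cas $p$ impair et $p=2$, puis d'utiliser la Proposition \ref{chiLetaL} combinée au Théorème \ref{thm_J+_noyau_norme_spin} pour obtenir l'assertion paramodulaire.

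Pour $p$ impair, le quotient $L/L_{\rm even} \simeq \Z/2\Z$ est annulé par $-\otimes \Z_p$, de sorte que $L_p = L_{\rm even} \otimes \Z_p$. Le Corollaire \ref{cortype2} affirme alors que $L_p$ est un réseau de type $1$ de $V_p$ (après multiplication de la forme par l'unité $(-1)^n \in \Z_p^\times$ signalée dans la note en bas de page). Par construction, le stabilisateur dans $\SO$ d'un tel réseau est un sous-groupe compact maximal hyperspécial de $\SO(V_p)$, conformément au §\ref{Le groupe K_0}, ce qui montre que ${\rm K}(2)_p$ est bien hyperspécial.

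Pour $p=2$, le Lemme \ref{OdmegalOim} identifie le $\SO$-stabilisateur de $L_2$ à celui de $L_{\rm even} \otimes \Z_2$, et le Corollaire \ref{cortype2} indique que ce dernier est un réseau de type $2 = \varpi$ de $V_2$. Par la Proposition \ref{Proposition}, deux réseaux de type $\varpi$ de $V_2$ sont $\SO$-conjugués, et leur stabilisateur commun est épiparamodulaire au sens du §\ref{Le_groupe_J}, si bien que ${\rm K}(2)_2$ est épiparamodulaire. Pour l'assertion paramodulaire, la Proposition \ref{chiLetaL} donne $\chi_L(g) = \eta_L(g) = (\eta \circ \nu_L)(g)$ pour tout $g \in \SO(L \otimes \Z_2) = {\rm K}(2)_2$, où $\nu_L$ désigne la norme spinorielle et $\eta$ l'unique caractère quadratique non ramifié non trivial de $\Q_2^\times$ introduit au §\ref{Norme spinorielle}. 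Par définition, ${\rm K}(2)_2^+ = \ker \chi_L$ coïncide donc avec le noyau de $\eta \circ \nu_L$ dans ${\rm K}(2)_2$, et le Théorème \ref{thm_J+_noyau_norme_spin} identifie ce dernier au sous-groupe paramodulaire $\J^+$ de $\SO(V_2)$.

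La seule véritable subtilité -- et le principal obstacle -- tient à la normalisation de la forme quadratique : la forme naturelle sur $L\otimes \Q_p$ (héritée de la structure euclidienne de $\R^m$) diffère de la forme standard sur $V_n$ fixée au Chapitre \ref{Le groupe paramodulaire}, et leur identification passe par le rescaling par $(-1)^n$ signalé dans la note de bas de page du Corollaire \ref{cortype2}. Puisque $(-1)^n \in \Z_p^\times$ en toute place, cette manipulation préserve le groupe $\SO$ et la notion de type d'un réseau (quitte à remplacer le type $a$ par $(-1)^n a$, qui reste une unité ou une uniformisante selon les cas) ; une fois cette identification mise en place, chaque assertion se réduit à un appariement direct des définitions de la Première Partie.
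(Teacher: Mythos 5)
Votre démonstration est correcte et suit pour l'essentiel le même chemin que celle du texte, qui se réduit à la combinaison du Corollaire \ref{cortype2}, du Lemme \ref{OdmegalOim} et de la Proposition \ref{chiLetaL} (avec la gestion de la renormalisation de la forme par l'unité $(-1)^n$). La seule différence, purement cosmétique, est que vous concluez l'assertion paramodulaire via la caractérisation par la norme spinorielle (Théorème \ref{thm_J+_noyau_norme_spin}) plutôt que directement par la Proposition-Définition \ref{prop-def_J+}, ces deux descriptions de $\J^+$ étant équivalentes.
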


Par construction, l'ensemble $\mathbf{G}(\Q) \backslash \mathbf{G}(\aq) / {\rm K}(2)$ s'identifie 
naturellement \`a l'ensemble des r\'eseaux de $\R^m$ qui sont dans le m\^eme genre que $L$
(voir par exemple \cite[Prop. 2.3]{borelfinitude} pour un \'enonc\'e g\'en\'eral), c'est-\`a-dire \`a $\mathcal{L}_m$ 
d'apr\`es la Proposition \ref{genusIm}.
Rappelons comment (voir aussi \cite[\S 4.4.4]{Chen-Lannes}).\ps

 Fixons $g=(g_\infty,g_f) \in \mathbf{G}(\aq) = \mathbf{G}(\R) \times \mathbf{G}(\AAA_f)$ et \'ecrivons $g_f=(g_p)_p$ 
 avec $g_p \in \mathbf{G}(\Q_p)$ (et donc $g_p \in {\rm K}(2)_p$ pour presque tout $p$). 
 D'apr\`es le point de vue local-global sur les r\'eseaux d'un $\Q$-espace vectoriel (Eichler, Weil),
il existe un unique r\'eseau de $V$, not\'e $g_f(L)$, tel que pour tout premier $p$
on a $g_f(L) \otimes \Z_p = g_p( L_p)$. En particulier, c'est un r\'eseau de $\R^m$, 
et il y a donc un sens \`a consid\'erer $g_\infty^{-1}(g_f(L))$. 
Par construction, $g_f(L)$ est dans le m\^eme genre que $L$, donc unimodulaire impair, 
et il en va de m\^eme du r\'eseau $g_\infty^{-1}(g_f(L))$ qui lui est isomorphe. 
On a donc d\'efini une application 
\begin{equation} \label{iotagenus}
\begin{array}{ccccc}
\iota & : &  \mathbf{G}(\Q) \backslash \mathbf{G}(\aq)/ {\rm K}(2) &\longrightarrow & \mathcal{L}_m \\
 & & (g_\infty,g_f) & \longmapsto & g_\infty^{-1}(g_f({\rm I}_m)) \\
\end{array}.
\end{equation}

\begin{prop}\label{propiota}  L'application \eqref{iotagenus} est bijective et $\mathbf{G}(\R)$-\'equivariante. \end{prop}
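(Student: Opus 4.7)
The plan is to establish both properties of $\iota$ by an instance of the classical local--global correspondence between adelic double cosets and the genus of a lattice, of which the statement is a special case (and one could in fact invoke directly the general result cited in the text, \cite[Prop.~2.3]{borelfinitude}). The $\mathbf{G}(\R)$-equivariance is a direct calculation once conventions are fixed: with $h \in \mathbf{G}(\R)$ acting on $\mathbf{G}(\Q)\backslash\mathbf{G}(\aq)/{\rm K}(2)$ via the usual embedding of $\mathbf{G}(\R)$ in $\mathbf{G}(\aq)$, one has $\iota(hg_\infty, g_f) = g_\infty^{-1} h^{-1}(g_f(L))$, which matches the natural action of $\mathbf{G}(\R) = {\rm SO}(m)$ on $\mathcal{L}_m$ after the customary left/right normalisation.

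For surjectivity, given $L' \in \mathcal{L}_m$, my first step will be to produce a $\Q$-linear isometry $\psi : L' \otimes \Q \isomo V$. This will follow from the Hasse--Minkowski principle applied to the $\Q$-quadratic spaces $L' \otimes \Q$ and $V$, which are locally isometric at every finite place by Proposition~\ref{genusIm} and at the archimedean place by the equality of signatures. Setting $M = \psi(L') \subset V$, a lattice in the genus of $L = {\rm I}_m$, I will then construct $g_f \in \mathbf{G}(\af)$ with $g_f(L) = M$ by choosing local isometries $g_p \in {\rm SO}(V_p)$ with $g_p(L_p) = M_p$ at each prime (using $\Oo(V_p) = \{\pm 1\} \times {\rm SO}(V_p)$ in odd dimension to fix the determinant), with $g_p = {\rm id}$ at the almost all primes where $M_p = L_p$. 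Finally, extending $\psi$ to an $\R$-isometry $L'\otimes \R \isomo V\otimes \R \simeq \R^m$, and composing with $-{\rm id} \in \Oo(L')$ if necessary to attain determinant one, yields $g_\infty \in \mathbf{G}(\R)$ with $g_\infty^{-1}(g_f(L)) = \psi^{-1}(M) = L'$.

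For injectivity, suppose $\iota(g_\infty,g_f) = \iota(g'_\infty,g'_f)$. I will set $\gamma := g'_\infty g_\infty^{-1} \in {\rm SO}(m)$ and observe that $\gamma$ carries the rational lattice $g_f(L) \subset V$ onto the rational lattice $g'_f(L) \subset V$. Any $\R$-isometry of $V \otimes \R$ carrying one $\Q$-lattice of $V$ to another must preserve the $\Q$-span $V$, hence lies in $\Oo(V) = \Oo(V \otimes \Q)$; together with $\det \gamma = 1$ this places $\gamma$ in $\mathbf{G}(\Q)$. Then the equality $\gamma \cdot g_p(L_p) = g'_p(L_p)$ at each prime gives $(g'_p)^{-1} \gamma g_p \in {\rm K}(2)_p$, and globally $k := (g'_f)^{-1}\gamma g_f \in {\rm K}(2)$, so $(g'_\infty, g'_f) = \gamma \cdot (g_\infty, g_f) \cdot k$ in the double coset. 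The main obstacle, if any, is purely bookkeeping around left/right conventions and careful tracking of determinants so that one always stays inside $\mathbf{G} = \bm{\SO_V}$ rather than the ambient orthogonal group; the substantive mathematical input is just Proposition~\ref{genusIm} combined with Hasse--Minkowski.
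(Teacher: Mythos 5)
Your proposal is correct and follows essentially the same route as the paper: surjectivity via Proposition~\ref{genusIm} (single genus) combined with Hasse--Minkowski to get a rational isometry, realised by an element of ${\rm O}(m)$ and corrected to determinant $1$ using that $m$ is odd, then local isometries (using $\det {\rm O}(L_p)=\{\pm 1\}$) to produce $g_f$; the paper simply declares injectivity trivial and equivariance evident where you spell them out. The only point to watch is the convention: the paper lets $\mathbf{G}(\R)$ act on the double coset space by \emph{inverse right} translation, $h\cdot x = x(h^{-1}\times 1)$, which is precisely the normalisation you allude to and makes $\iota(h\cdot x)=h(\iota(x))$ immediate.
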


Dans cet \'enonc\'e, $\mathbf{G}(\Q) \backslash \mathbf{G}(\aq) / K$ est vu comme un $\mathbf{G}(\R)$-ensemble \`a gauche 
en utilisant la translation inverse \`a droite. 

\begin{proof} L'injectivit\'e est triviale. V\'erifions la surjectivit\'e. Soit $L' \in \mathcal{L}_m$.
D'apr\`es la Proposition \ref{genusIm}, $L'$ est dans le m\^eme genre que $L={\rm I}_m$.
D'apr\`es le théorème de Hasse-Minkowski, cela entra\^ine que $L' \otimes \Q$
est isom\'etrique \`a $L \otimes \Q$. 
Une telle isom\'etrie est toujours induite par un \'el\'ement $h$ de ${\rm O}(m)$, 
car on a $L \otimes \R = L' \otimes \R = \R^m$. 
Quitte \`a la remplacer par $-h$, on peut la supposer de d\'eterminant $1$ car $m$ est impair.
On a donc montr\'e l'existence de $g_\infty \in {\rm SO}(m)$ avec $g_\infty(L') \subset L \otimes \Q$.
Toujours par la Proposition \ref{genusIm}, il existe $g_f \in \mathbf{G}(\AAA_f)$ avec $g_\infty(L')=g_f(L)$
(noter aussi que $\det {\rm O}(L_p) = \{ \pm 1\}$ pour tout $p$), ce qui conclut.
Enfin, l'assertion de $\mathbf{G}(\R)$-\'equivariance signifie que pour tout $h \in \mathbf{G}(\R)$, et tout $x \in \mathbf{G}(\aq)$, 
on a $\iota (x (h^{-1} \times 1)) = h(\iota(x))$, ce qui est évident. 
\end{proof}

D\'ecrivons enfin l'analogue de \eqref{iotagenus} pour ${\rm K}(2)^+$. On consid\`ere cette fois-ci
l'ensemble $\tlm$ des r\'eseaux unimodulaires marqu\'es d\'efini au \S \ref{reseauxunimmarques}.
On fixe un marquage $\kappa$ pour le r\'eseau $L = {\rm I}_m$, par exemple $\kappa:=2e=(2,2,\cdots,2)$ 
pour fixer les id\'ees. On dispose alors d'une application naturelle 
\begin{equation} \label{iotagenusplus} 
\begin{array}{ccccc}
 \widetilde{\iota} & : &  \mathbf{G}(\Q) \backslash \mathbf{G}(\aq)/ {\rm K}(2)^+ &\longrightarrow & \tlm \\
 & & (g_\infty,g_f) & \longmapsto & g_\infty^{-1}(g_f({\rm I}_m),g_2(\kappa)) \\
\end{array}.
\end{equation}
Le m\^eme argument que dans la Proposition \ref{propiota} d\'emontre la :
\begin{prop} \label{propiotatilde}  L'application \eqref{iotagenusplus} est $\mathbf{G}(\R)$-\'equivariante, bijective pour $m>1$.\end{prop}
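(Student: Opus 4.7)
Le plan est de suivre la démonstration de la Proposition \ref{propiota} en contrôlant de plus l'effet des différentes actions sur le marquage. Les deux ingrédients nouveaux sont, d'une part, que par définition même de $\chi_L$ en \eqref{locetaL}, un élément $k_2 \in {\rm K}(2)_2$ appartient à ${\rm K}(2)^+_2$ si, et seulement s'il agit trivialement sur $\kappa(L) \simeq \Z/4\Z$ ; d'autre part, que la Proposition \ref{chiLetaL} (pour $m>1$) garantit la surjectivité du morphisme $\chi_L$ restreint à ${\rm K}(2)_2$.

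Avec ces observations, je commencerais par vérifier que $\widetilde{\iota}$ passe bien au quotient. Pour $k=(k_p) \in {\rm K}(2)^+$, on a $k_p(L_p)=L_p$ pour tout $p$ (donc $g_f k^{-1}({\rm I}_m)=g_f({\rm I}_m)$), et $k_2$ fixe $\kappa$ (donc $(g_f k^{-1})_2(\kappa)=g_2(\kappa)$) : la formule est invariante par l'action à droite de ${\rm K}(2)^+$. L'invariance par $\mathbf{G}(\Q)$ à gauche est tautologique puisqu'un élément $\gamma \in \mathbf{G}(\Q)$ induit une isométrie globale, compatible avec toutes les constructions locales. L'équivariance pour $\mathbf{G}(\R)$ se lit directement sur la formule, exactement comme dans la Proposition \ref{propiota}.

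Pour la bijectivité, je raisonnerais en comparant les fibres du diagramme commutatif au-dessus de $\mathcal{L}_m$, dont le morphisme vertical de gauche est l'application quotient modulo ${\rm K}(2)/{\rm K}(2)^+$ d'ordre 2 (Proposition \ref{chiLetaL} pour $m>1$), et celui de droite est la projection $\tlm \to \mathcal{L}_m$, à fibres de cardinal 2. Comme \eqref{iotagenus} est bijective, il suffit donc de démontrer la surjectivité de $\widetilde{\iota}$. Soit $(L',\kappa') \in \tlm$ ; la Proposition \ref{propiota} fournit $(g_\infty,g_f)$ avec $g_\infty^{-1}(g_f({\rm I}_m))=L'$, d'où un marquage induit $\widetilde{\kappa}:=g_\infty^{-1}(g_2(\kappa))$ de $L'$. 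Si $\widetilde{\kappa}=\kappa'$, on a fini. Sinon, $\widetilde{\kappa}=-\kappa'$, et il s'agit d'inverser le marquage sans modifier le réseau sous-jacent : c'est ici qu'intervient la surjectivité de $\chi_L$ garantie par la Proposition \ref{chiLetaL} pour $m>1$. On prend $k_2 \in {\rm K}(2)_2$ avec $\chi_L(k_2)=-1$ ; en remplaçant $g_f$ par $g_f \cdot (1,\ldots,k_2,\ldots,1)$, on ne change ni $g_f({\rm I}_m)$ ni $g_\infty$, mais on multiplie le marquage induit par $-1$, ce qui conclut.

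L'injectivité, enfin, se déduit de celle de \eqref{iotagenus} : si deux éléments ont même image, leurs images dans $\mathcal{L}_m$ coïncident, donc ils diffèrent par un couple $(\gamma, k) \in \mathbf{G}(\Q) \times {\rm K}(2)$ ; l'égalité supplémentaire des marquages force alors $k_2(\kappa)=\kappa$, \ie $\chi_L(k_2)=1$, soit $k \in {\rm K}(2)^+$. La principale subtilité de ce programme me semble être l'articulation entre le marquage \emph{global} (intrinsèquement 2-adique puisque défini dans $L_{\rm even}\otimes \Z/4\Z$) et l'action adélique : c'est précisément ce que le choix de ${\rm K}(2)^+_2$ comme sous-groupe paramodulaire -- identifié au noyau de $\chi_L$ via la Proposition \ref{chiLetaL} -- permet d'encoder ; tout le travail de la Première Partie est ainsi absorbé dans cet énoncé, et la démonstration devient essentiellement formelle.
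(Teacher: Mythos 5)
Votre démonstration est correcte et suit exactement la voie que le texte indique (la preuve du papier se réduit à « le même argument que dans la Proposition \ref{propiota} ») : vous explicitez précisément les détails sous-entendus, et vous identifiez correctement le seul ingrédient nouveau, à savoir la surjectivité de $\chi_L$ sur ${\rm SO}(L\otimes\Z_2)$ pour $m>1$ (Proposition \ref{chiLetaL}), qui permet d'inverser le marquage sans changer le réseau et explique l'hypothèse $m>1$. Rien à redire.
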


L'\'enonc\'e final suivant clôt le lien entre les espaces concrets d\'efinis au \S \ref{reseauxunimmarques}, et les objets g\'en\'eraux du monde automorphe dans le cas du groupe $\mathbf{G}$ et en niveaux ${\rm K}(2)$ et ${\rm K}(2)^+$.

\begin{prop}\label{prop_8.3.21} Pour toute repr\'esentation irr\'eductible $U$ de ${\rm SO}(m)$, avec $m \equiv \pm 1 \bmod 8$, les bijections $\iota$ et $\widetilde{\iota}$ induisent des isomorphismes d'espaces vectoriels ${\rm S}_{U}(m) \isomo {\rm S}_{U}(\mathbf{G},{\rm K}(2))$ et $\widetilde{{\rm S}}_{U}(m) \isomo {\rm S}_{U}(\mathbf{G},{\rm K}(2)^+)$ (pour $m>1$).
\end{prop}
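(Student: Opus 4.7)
La démonstration se ferait en trois temps. \textbf{Première étape :} transformer la définition adélique en quelque chose de plus concret. Puisque $\mathbf{G}(\R)$ est compact et que l'ensemble $\mathbf{G}(\Q)\backslash\mathbf{G}(\aq)/K$ est fini pour $K \in \{{\rm K}(2), {\rm K}(2)^+\}$ (fait rappelé par les Propositions \ref{propiota} et \ref{propiotatilde}), l'espace $\mathcal{A}(\mathbf{G})^K$ s'identifie tautologiquement à l'espace, de dimension finie, des fonctions $\mathbf{G}(\Q)\backslash\mathbf{G}(\aq)/K \to \C$, la condition de $\mathbf{G}(\R)$-finitude étant alors automatique. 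Les bijections $\mathbf{G}(\R)$-équivariantes $\iota$ et $\widetilde{\iota}$ induisent alors des isomorphismes ${\rm SO}(m)$-équivariants $\mathcal{A}(\mathbf{G})^{{\rm K}(2)} \isomo \C^{\mathcal{L}_m}$ et $\mathcal{A}(\mathbf{G})^{{\rm K}(2)^+} \isomo \C^{\tlm}$.

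\textbf{Deuxième étape :} appliquer le foncteur ${\rm Hom}_{{\rm SO}(m)}(U,-)$ à ces deux isomorphismes. Pour un ${\rm SO}(m)$-ensemble fini $X$ quelconque, l'application $\varphi \mapsto (x \mapsto (u \mapsto \varphi(u)(x)))$ fournit (par simple vérification d'équivariance à partir des définitions) un isomorphisme canonique
\[
{\rm Hom}_{{\rm SO}(m)}(U, \C^X) \isomo \{ f : X \to U^*\, \, |\, \, f(g x) = g\cdot f(x), \, \forall x \in X, \forall g \in {\rm SO}(m)\}.
\]
Appliqué à $X=\mathcal{L}_m$, resp. $X=\tlm$, cela identifie ${\rm S}_U(\mathbf{G}, {\rm K}(2))$, resp. ${\rm S}_U(\mathbf{G}, {\rm K}(2)^+)$, à ${\rm S}_{U^*}(m)$, resp. à $\widetilde{{\rm S}}_{U^*}(m)$, au sens de la Proposition-Définition \ref{definvariantsunim} (avec $U^*$ à la place de $U$).

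\textbf{Troisième étape}, qui constitue le point-clé : invoquer l'\emph{autodualité} des représentations irréductibles de ${\rm SO}(2n+1)$. En effet, ce groupe est de type $B_n$ et l'élément le plus long de son groupe de Weyl agit par $-\mathrm{id}$ sur l'algèbre de Cartan ; pour toute représentation irréductible $U$ de plus haut poids $\lambda$, le plus bas poids est $-\lambda$, \ie le plus haut poids de $U^*$, d'où $U \simeq U^*$ (canoniquement à un scalaire près). Le choix d'un tel isomorphisme transporte l'identification de l'étape précédente en les isomorphismes ${\rm S}_U(m) \isomo {\rm S}_U(\mathbf{G}, {\rm K}(2))$ et $\widetilde{{\rm S}}_U(m) \isomo {\rm S}_U(\mathbf{G}, {\rm K}(2)^+)$ souhaités.

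L'argument est ainsi essentiellement formel, reposant sur la combinaison (i) de la finitude du nombre de classes de réseaux (Proposition \ref{genusIm}), (ii) de la compacité de $\mathbf{G}(\R)$, et (iii) de l'autodualité des représentations de ${\rm SO}(2n+1)$. Le principal point d'attention me semble être le suivi soigneux de l'action de ${\rm SO}(m)$ à chaque étape -- en particulier, sans l'étape trois, on n'obtiendrait qu'un isomorphisme ${\rm S}_U(\mathbf{G}, {\rm K}(2)) \isomo {\rm S}_{U^*}(m)$, et c'est bien cette spécificité du type $B_n$ qui permet d'énoncer le résultat sous la forme simple de la Proposition.
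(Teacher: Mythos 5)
Votre démonstration suit essentiellement la même route que celle du texte : identification de $\mathcal{A}(\mathbf{G})^{K}$ aux fonctions sur $\mathcal{L}_m$ (resp. $\widetilde{\mathcal{L}_m}$) via $\iota$ (resp. $\widetilde{\iota}$), application de ${\rm Hom}_{{\rm SO}(m)}(U,-)$, puis autodualité $U\simeq U^\vee$ ; votre troisième étape a le mérite de \emph{justifier} cette autodualité (par $-w_0=\mathrm{id}$ en type $B_n$) là où le texte se contente de l'invoquer.

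Un point de votre première étape est cependant faux tel quel : l'ensemble $\mathbf{G}(\Q)\backslash\mathbf{G}(\aq)/K$ n'est \emph{pas} fini. Les Propositions \ref{propiota} et \ref{propiotatilde} l'identifient à $\mathcal{L}_m$ (resp. $\widetilde{\mathcal{L}_m}$), qui est infini ; l'ensemble fini est $\mathcal{X}_m={\rm O}(m)\backslash\mathcal{L}_m$, c'est-à-dire le quotient \emph{supplémentaire} par $\mathbf{G}(\R)$. En conséquence, $\mathcal{A}(\mathbf{G})^{K}$ n'est pas de dimension finie, la condition de $\mathbf{G}(\R)$-finitude n'est pas automatique, et l'« isomorphisme » $\mathcal{A}(\mathbf{G})^{{\rm K}(2)}\isomo \C^{\mathcal{L}_m}$ est en réalité l'inclusion du sous-espace des fonctions ${\rm SO}(m)$-finies. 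Cette erreur est heureusement sans conséquence pour la conclusion : votre isomorphisme de la deuxième étape vaut pour un ${\rm SO}(m)$-ensemble $X$ quelconque, et toute image équivariante de $U$ dans $\C^{X}$ est automatiquement constituée de fonctions ${\rm SO}(m)$-finies, de sorte que ${\rm Hom}_{{\rm SO}(m)}(U,\C^{\mathcal{L}_m})={\rm Hom}_{{\rm SO}(m)}(U,\mathcal{A}(\mathbf{G})^{{\rm K}(2)})$ (et de même pour ${\rm K}(2)^+$). Il suffit donc de supprimer l'affirmation de finitude et d'insérer cette remarque pour que votre argument soit complet.
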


\begin{proof} D'apr\`es la Proposition \ref{propiota} (resp. \ref{propiotatilde}), $\iota$ (resp. $\widetilde{\iota}$) induit un isomorphisme $\mathbf{G}(\R)$-\'equivariant 
entre $\mathcal{A}(\mathbf{G})^{{\rm K}(2)}$ (resp. $\mathcal{A}(\mathbf{G})^{{\rm K}(2)^+}$) et l'espace des fonctions $\mathcal{L}_m \rightarrow \C$ (resp. $\tlm \rightarrow \C$). On conclut par l'isomorphisme 
	$${\rm Hom}_{\mathbf{G}(\R)}(U,\mathcal{A}(\mathbf{G})^K) \,\simeq \,(U^\vee \otimes_\C \mathcal{A}(\mathbf{G})^K)^{\mathbf{G}(\R)}$$
qui vaut pour tout sous-groupe compact ouvert $K$ de $\mathbf{G}(\AAA_f)$. On a \'egalement utilis\'e le fait que $U^\vee \simeq U$ pour toute représentation irr\'eductible $U$ de ${\rm SO}(m)$. 
\end{proof}

\subsection{Formules de dimensions}
\begin{defi}
Soient $m \equiv \pm 1\bmod 8$, $V$ le $\Q$-espace vectoriel quadratique ${\rm I}_m \otimes \Q$ et $\mathbf{G}= \bm{\SO_V}$ le $\Q$-groupe algébrique spécial orthogonal de $V$ (relevant donc du {\bf Cas 2}).

Soit $\underline{w} \in {\rm W}_{m}$. On note $\Pi_{\underline{w}}^1(\mathbf{G})$ l'ensemble des représentations automorphes cuspidales algébriques $\pi$ de $\mathbf{G}$, de poids $\underline{w}$ et de conducteur $1$. Plus précisément, 
\begin{itemize}
\item $\pi_\infty \simeq U_{\underline{w}}$ au sens de la Définition \ref{defUw} ; \smallskip
\item $\pi_p$ est non ramifiée pour tout $p$. \smallskip
\end{itemize}

Soit de plus $\eps \in \{+,-\}$. On note $\Pi_{\underline{w}}^{2,\eps}(\mathbf{G})$ l'ensemble des représentations automorphes cuspidales algébriques $\pi$ de $\mathbf{G}$, de poids $\underline{w}$ de conducteur $2$, de signe local $\eps$. Plus précisément,
\begin{itemize}
\item $\pi_\infty \simeq U_{\underline{w}}$ au sens de la Définition \ref{defUw} ;\smallskip
\item $\pi_p$ est non ramifiée pour $p>2$ ; \smallskip
\item $\pi_2^{\K_0(2)}=\{0\}$ et $\pi_2^{(\J(2),\eps)}\neq \{0\}$. \smallskip
\end{itemize}

On pose $$\Pi_{\underline{w}}^{2}(\mathbf{G})=\Pi_{\underline{w}}^{2,+}(\mathbf{G}) \coprod \Pi_{\underline{w}}^{2,-}(\mathbf{G})$$ et $$\Pi_{\underline{w}}(\mathbf{G})=\Pi_{\underline{w}}^{1}(\mathbf{G}) \coprod \Pi_{\underline{w}}^{2}(\mathbf{G}).$$
\end{defi}

\begin{prop}\label{dim_inv_2_so}
Soient $m \equiv \pm 1\bmod 8$, $\underline{w} \in {\rm W}_{m}$ et $\eps \in \{+,-\}$. Alors
\[
{\rm S}_{U_{\underline{w}}}(m)^\eps \simeq \bigoplus_{\pi \in \Pi_{\underline{w}}(\mathbf{G})} \pi_2^{(\J(2),\eps)}.
\]

Si l'on suppose de plus que $\underline{w}$ est \emph{très régulier} (\ie $|w_i-w_j|>2$ pour $i\neq j$), alors 
\[
\dim {\rm S}_{U_{\underline{w}}}(m)^\eps=\left| \Pi_{\underline{w}}^1(\mathbf{G}) \right| + \left|\Pi_{\underline{w}}^{2,\eps}(\mathbf{G}) \right|.
\]
\end{prop}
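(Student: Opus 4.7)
The plan is to transport the involution $\tau$ from the concrete geometric side to the adelic side, and then combine the Arthur--Taïbi decomposition of the discrete spectrum of $\mathbf{G}$ with the local analysis of the first part of the thesis.

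First I would apply Proposition \ref{prop_8.3.21} to obtain an isomorphism $\widetilde{{\rm S}}_{U_{\underline{w}}}(m) \simeq {\rm S}_{U_{\underline{w}}}(\mathbf{G}, {\rm K}(2)^+)$, and then use \eqref{defSwGK} together with the multiplicity-one statement from Theorem \ref{thm_mult_Arthur} to write the right-hand side as $\bigoplus_{\pi : \pi_\infty \simeq U_{\underline{w}}} \pi_f^{{\rm K}(2)^+}$. Since ${\rm K}(2)^+_p = {\rm K}(2)_p = \K_0(p)$ is hyperspecial for $p \neq 2$, and ${\rm K}(2)^+_2 = \J^+(2)$, this factorises as $\pi_f^{{\rm K}(2)^+} = \bigl(\bigotimes_{p \neq 2} \pi_p^{\K_0(p)}\bigr) \otimes \pi_2^{\J^+(2)}$, with each $\pi_p^{\K_0(p)}$ of dimension at most one by the Satake isomorphism.

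Next I would identify the involution $\tau$ on $\widetilde{\mathcal{L}_m}$ with the action of $\J(2)/\J^+(2)$ under the bijection $\widetilde{\iota}$ of Proposition \ref{propiotatilde}. Concretely, from $\widetilde{\iota}(g_\infty, g_f) = g_\infty^{-1}(g_f(L), g_2(\kappa))$, the change of marking $\kappa \mapsto -\kappa$ corresponds to right multiplication by an element $h \in \mathbf{G}(\af)$ whose 2-component $h_2 \in {\rm K}(2)_2$ satisfies $\chi_L(h_2) = -1$; by Proposition \ref{chiLetaL} this is equivalent to $\eta_L(h_2) = -1$, i.e. $h_2 \in \J(2) \setminus \J^+(2)$ in view of Theorem \ref{thm_J+_noyau_norme_spin}. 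Such an $h_2$ acts on $\pi_2^{\J^+(2)}$ through the non-trivial character of $\J(2)/\J^+(2) \simeq \Z/2\Z$, yielding the decomposition $\pi_2^{\J^+(2)} = \pi_2^{(\J(2),+)} \oplus \pi_2^{(\J(2),-)}$. Extracting the $\eps$-eigenspace on both sides, and noting that the sum is supported on the $\pi$ with $\pi_p$ unramified for all $p \neq 2$ and $\pi_2^{\J^+(2)} \neq 0$ (which, combined with the condition on $\pi_\infty$, is precisely the set $\Pi_{\underline{w}}(\mathbf{G})$), one obtains the first isomorphism of the proposition, the spherical tensor factors at $p \neq 2$ being one-dimensional.

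Finally, for the dimension formula under the very regular hypothesis, I would invoke Corollary \ref{cor_shin_caraiani}, which forces each local component of $\pi \in \Pi_{\underline{w}}(\mathbf{G})$ to be tempered, and then apply Corollary \ref{cor_temp_dim_invariants_général} at $p = 2$: for $\pi \in \Pi_{\underline{w}}^1(\mathbf{G})$ one has $\dim \pi_2^{(\J(2),\eps)} = 1$ for both signs; for $\pi \in \Pi_{\underline{w}}^{2,\eps'}(\mathbf{G})$ one has $\dim \pi_2^{(\J(2),\eps')} = 1$ and $\dim \pi_2^{(\J(2),-\eps')} = 0$. Summing gives the claimed equality $\dim {\rm S}_{U_{\underline{w}}}(m)^\eps = |\Pi_{\underline{w}}^1(\mathbf{G})| + |\Pi_{\underline{w}}^{2,\eps}(\mathbf{G})|$. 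The main technical hurdle is the transport of $\tau$ in the middle step; it is not conceptually difficult but requires the careful bookkeeping linking three \emph{a priori} distinct objects — the marking of the lattice $L$, the character $\chi_L$ of ${\rm O}(L \otimes \Z_2)$, and the spinor norm $\eta_L$ — via the key Proposition \ref{chiLetaL}, which is precisely the bridge between the geometric and representation-theoretic definitions of $\J^+(2)$.
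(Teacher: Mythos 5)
Votre démonstration est correcte et suit essentiellement la même voie que celle du texte : l'identification ${\rm S}_{U_{\underline{w}}}(m)^\eps \simeq {\rm S}_{U_{\underline{w}}}(\mathbf{G},{\rm K}(2)^+)^{\tau=\eps}$ via la Proposition \ref{prop_8.3.21} et \eqref{defSwGK}, puis le Corollaire \ref{cor_shin_caraiani} et le Corollaire \ref{cor_temp_dim_invariants_général} pour les dimensions. Vous explicitez simplement davantage le transport de l'involution $\tau$ du côté adélique (via $\chi_L=\eta_L$ et le Théorème \ref{thm_J+_noyau_norme_spin}), étape que le texte laisse implicite dans les renvois à \eqref{Spm}, \eqref{SU+concret} et \eqref{SUconcret}.
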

\begin{proof}
Le premier isomorphisme vient de \eqref{defSwGK} combinée à \eqref{SU+concret}, \eqref{SUconcret} et la Proposition \ref{prop_8.3.21}. On utilise alors que $\pi_p^{\K_0(p)}$ est de dimension 1 si $\pi_p$ est non ramifiée (sans autre hypothèse sur $\pi_p$).

Pour la seconde égalité, l'hypothèse \og très régulier \fg{} 
entraîne que $\pi_2$ est tempérée par le Corollaire \ref{cor_shin_caraiani}. Les résultats découlent alors des calculs de dimensions d'invariants de la Première Partie (voir le Corollaire \ref{cor_temp_dim_invariants_général}).
\end{proof}

À noter enfin que l'ensemble $\Pi_{\underline{w}}^{1}(\mathbf{G})$ est déterminé en poids motivique $\leq 23$ par les travaux de Chenevier et Renard \cite{Chen-Ren}. Nous retrouvons d'ailleurs lesdits éléments avec les tables de dimension de \cite{tabledimSOchenevier}.

\chapter{La formule explicite de Riemann-Weil-Mestre}\label{La_formule explicite de Riemann-Weil-Mestre}
L'enjeu de ce Chapitre va être de mettre en place la \og machinerie \fg{} de la formule explicite de Riemann-Weil dans son cadre général donné par Mestre \cite{Mes}. 
Il faudra donc vérifier au §\ref{Les fonctions Lambda sont des fonctions Lambda} que les fonctions $\Lambda$ des représentations automorphes cuspidales algébriques, et plus précisément des \emph{paires} de telles représentations, telles qu'introduites au paragraphe \ref{Fonctions Lambda et facteurs epsilon globaux} relèvent bien du formalisme de l'article \cite{Mes}. Nous donnerons alors l'expression de la formule explicite au §\ref{Énoncés de la formule explicite de Riemann-Weil-Mestre} en général puis dans un langage \emph{ad hoc} pour notre étude.
Nous préciserons enfin, outre de précieux raffinements, une \emph{ingénierie} de cette formule qui nous permettra d'en extraire toute la puissance et qui gouvernera les calculs effectifs énoncés dans le chapitre suivant, à l'origine de nos démonstrations.

Nous utilisons à plusieurs endroits dans ce chapitre (§\ref{Une spectaculaire illustration}, §\ref{Cas du conducteur 1}, §\ref{Un exemple en conducteur 1}) des exemples en conducteur 1. Ces exemples ne sont pas originaux et sont tirés de \cite{Chen-Lannes}. Il nous a néanmoins paru judicieux de les exposer en détail tant ils sont illustratifs et -- puisque le fait de travailler en conducteur 2 ajoute une certaine technicité -- préférables à des exemples originaux en conducteur 2. On trouvera néanmoins de tels exemples originaux dans l'ensemble du Chapitre \ref{Chapitre_calculs}.

\section{Les fonctions $\Lambda$ (de paires) sont des fonctions $\Lambda$ (au sens de Mestre)}\label{Les fonctions Lambda sont des fonctions Lambda}
Nous commençons par introduire la notion de fonction $\Lambda$ au sens de \cite{Mes}.
\begin{defi}\label{defi préfonction lambda}
Soient $M,M' \in \mathbb{N} $, soit $A \in \mathbb{R}^*$, soit $c \in \mathbb{R}_+$.

On se donne $(a_i)_{1 \leq i \leq M} \in (\mathbb{R}_+)^M$, $(a'_i)_{1 \leq i \leq M} \in \mathbb{C}^M$ tels que $\mathrm{Re}(a'_i) \geq 0$ et $\mathrm{Re}(a_i+a'_i) > 0$ pour tout $i$. 

On se donne, pour tout $p$ premier $(\alpha_j(p))_{1 \leq j \leq M'} \in \mathbb{C}^{M'}$ tel que, pour tout $j$, $| \alpha_j(p)| \leq p^c$.

Une \emph{pré-fonction $\Lambda$} est une fonction méromorphe sur $\mathbb{C}$ vérifiant
\begin{enumerate}
\item $\Lambda$ n'a qu'un nombre fini de pôles.
\item La fonction $s\mapsto \Lambda(s)$ diminuée de ses parties singulières est bornée dans toute bande verticale
\begin{equation*}
- \infty < \sigma_0 \leq \mathrm{Re}(s) \leq \sigma_1 < + \infty.
\end{equation*}
\item Pour $\mathrm{Re}(s) > 1+c$, on a le développement en produit absolument convergent suivant :
\begin{equation} 
\Lambda(s)=A^s \prod_{i=1}^M \Gamma(a_i s+a'_i) \prod_p \prod_{j=1}^{M'} \frac{1}{1-\alpha_j(p)p^{-s}}.
\end{equation}
\end{enumerate}

\end{defi}

Ce qu'il manque, par rapport aux fonctions $\Lambda$ usuelles, c'est l'équation fonctionnelle. On va ici l'introduire par le biais d'un couple de pré-fonctions $\Lambda$.
\begin{defi}\label{Defi fonctions lambda}

Considérons un couple de pré-fonctions $\Lambda$, $\Lambda_1$ et $\Lambda_2$. Pour fixer les notations, écrivons la condition 3. de la Définition \ref{defi préfonction lambda} pour chacune d'entre elles. 
Pour $\mathrm{Re}(s) > 1+c_1$ (resp. $\mathrm{Re}(s) > 1+c_2$), on a les développements en produit absolument convergent suivants :
\begin{align*}
\Lambda_1(s)&=A^s \prod_{i=1}^{M_1} \Gamma(a_i s+a'_i) \prod_p \prod_{j=1}^{M'_1} \frac{1}{1-\alpha_j(p)p^{-s}} , \\
\Lambda_2(s)&=B^s \prod_{i=1}^{M_2} \Gamma(b_i s+b'_i) \prod_p \prod_{j=1}^{M'_2} \frac{1}{1-\beta_j(p)p^{-s}}.
\end{align*}
On dit alors que $(\Lambda_1,\Lambda_2)$ forme un couple de fonctions $\Lambda$ s'il existe un complexe non nul $w$ tel que :

\begin{enumerate}[(i)]

\item $\Lambda_1(s)=w \Lambda_2(1-s)$ \hfill (équation fonctionnelle)
\item $\sum_{i=1}^{M_1} a_{i}= \sum_{i=1}^{M_2} b_{i}$ \hfill (compatibilité des facteurs archimédiens)
\end{enumerate}
\end{defi}
On remarque tout de suite que l'on peut choisir le même réel positif $c$ pour les deux pré-fonctions $\Lambda$, en considérant le plus grand des deux. On peut également choisir les mêmes entiers $M$ et $M'$, quitte à poser des $a_i=0$, $ a'_i=1$ et $\alpha_j(p)=0$. Selon la même logique, on pourrait d'ailleurs considérer un seul entier $M=M'$, ce que nous ne faisons pas pour pouvoir conserver des notations cohérentes avec l'Annexe \ref{Démonstration de la formule explicite de Riemann-Weil-Mestre}.


\begin{prop}
Soient $\pi$ et $\pi'$ deux représentations automorphes cuspidales algébriques de $\GL_n$ et $\GL_{n'}$ respectivement. Nous avons défini au paragraphe \ref{Paires globales} la fonction $\Lambda$ de la paire $\{\pi,\pi'\}$. Posons :
\begin{align*}
\tilde{\Lambda}(s, \pi \times \pi')  &={\rm N}(\pi \times \pi')^{\frac{s}{2}}\Lambda(s,\pi \times \pi'), \numberthis \label{Lambda_tilde} \\
 \tilde{\Lambda}(s, \pi^\vee \times (\pi')^\vee)  &={\rm N}(\pi \times \pi')^{\frac{s}{2}}\Lambda(s,\pi^\vee \times (\pi')^\vee),
\end{align*}
où ${\rm N}(\pi,\pi')$ est le conducteur de la paire $\{\pi,\pi'\}$ -- dont on a remarqué \emph{loc. cit.} qu'il était bien égal à celui de la paire $\{\pi^\vee,(\pi')^\vee\}$.

Alors le couple $(\tilde{\Lambda}(\cdot, \pi^\vee \times \pi'),\tilde{\Lambda}(\cdot, \pi \times (\pi')^\vee))$ est un couple de fonctions $\Lambda$ au sens de la Définition \ref{Defi fonctions lambda}.
\end{prop}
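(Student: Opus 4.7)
Le plan est de vérifier séparément les trois conditions de la Définition \ref{defi préfonction lambda} pour chacune des deux fonctions du couple, puis les deux axiomes (i), (ii) de la Définition \ref{Defi fonctions lambda}. Les conditions (1) et (2) (nombre fini de pôles, bornitude dans toute bande verticale après soustraction des parties singulières) découlent des résultats rappelés au §\ref{Paires globales} : la fonction $\Lambda(s,\pi\times\pi')$ est entière sauf si $(\pi')^\vee \simeq \pi$, auquel cas elle présente deux pôles simples en $s=0$ et $s=1$ (Jacquet-Piatetski-Shapiro-Shalika et M\oe{}glin-Waldspurger), et elle est bornée dans les bandes verticales (propriété classique de croissance des fonctions $L$ de Rankin-Selberg). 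La multiplication par le facteur ${\rm N}(\pi\times\pi')^{s/2}$, entier et ne s'annulant pas, préserve ces deux propriétés.

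Pour la condition (3), on déplie le produit eulérien \eqref{prod_eul+archim}. Aux places finies, la Proposition \ref{prop_calcul_fonction_L_locale} exprime chaque facteur $L_p(s,\pi_p\times\pi'_p)$ comme $\det(1-p^{-s}X_p)^{-1}$ pour une matrice semi-simple explicite $X_p$, ce qui donne exactement la forme requise $\prod_j (1-\alpha_j(p)p^{-s})^{-1}$ (quitte à compléter par des zéros aux places ramifiées où le nombre de facteurs d'Euler peut être strictement inférieur à $nn'$). La borne uniforme $|\alpha_j(p)| \leq p^c$ provient des estimées de Jacquet-Shalika sur les paramètres de Satake des représentations automorphes cuspidales de $\GL_n$. À la place archimédienne, le facteur $\Gamma(s,\Ll(\pi_\infty)\otimes\Ll(\pi'_\infty))$ se décompose, via les identités $\Gamma_\R(s) = \pi^{-s/2}\Gamma(s/2)$ et $\Gamma_\C(s) = 2(2\pi)^{-s}\Gamma(s)$, en un produit de facteurs $\Gamma(a_i s + a'_i)$ avec $a_i \in \{\frac{1}{2},1\}$ et $\re(a'_i) \geq 0$ (par positivité des poids algébriques), multiplié par un facteur exponentiel en $s$ que l'on absorbe, avec ${\rm N}(\pi\times\pi')^{s/2}$, dans un $A^s$ global.

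L'équation fonctionnelle (i) s'obtient par substitution directe : l'équation rappelée au §\ref{Paires globales} appliquée à la paire $\{\pi^\vee,\pi'\}$ s'écrit $\Lambda(s,\pi^\vee\times\pi') = \eps(\pi^\vee\times\pi')\,{\rm N}(\pi^\vee\times\pi')^{1/2-s}\,\Lambda(1-s,\pi\times(\pi')^\vee)$ ; en multipliant par ${\rm N}(\pi^\vee\times\pi')^{s/2}$, qui coïncide avec ${\rm N}(\pi\times(\pi')^\vee)^{s/2}$ par invariance du conducteur sous dualité de la paire, on obtient $\tilde\Lambda(s,\pi^\vee\times\pi') = \eps(\pi^\vee\times\pi')\,\tilde\Lambda(1-s,\pi\times(\pi')^\vee)$, c'est-à-dire (i) avec $w = \eps(\pi^\vee\times\pi')$. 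Pour (ii), le Corollaire \ref{cor_alg_duale_memes_poids} assure $\pi_\infty^\vee \simeq \pi_\infty$ et $(\pi'_\infty)^\vee \simeq \pi'_\infty$, d'où l'égalité des facteurs archimédiens des paires $\pi^\vee\times\pi'$ et $\pi\times(\pi')^\vee$, et \emph{a fortiori} l'égalité des sommes $\sum a_i$.

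L'obstacle principal est plutôt de nature technique : d'une part, la constante multiplicative pure $2^k$ engendrée par les facteurs $\Gamma_\C$ n'entre pas littéralement dans la forme $A^s$ de la Définition \ref{defi préfonction lambda}, mais reste inoffensive (elle n'intervient pas dans la dérivée logarithmique, seule utilisée dans la formule explicite) ; d'autre part, il faut contrôler la borne uniforme $|\alpha_j(p)|\leq p^c$ aux places ramifiées, réglée par les estimées de Jacquet-Shalika avec une constante $c$ ne dépendant que de $n$ et $n'$.
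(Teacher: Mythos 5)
Votre démonstration est correcte et suit essentiellement la même démarche que celle du texte : propriétés analytiques de la fonction $\Lambda$ de Rankin-Selberg (pôles, bornitude à la Gelbart-Shahidi), dépliage du produit eulérien avec les estimées de Jacquet-Shalika pour la borne $|\alpha_j(p)|\leq p^c$, équation fonctionnelle renormalisée par le conducteur pour (i), et autodualité des éléments de $K_\infty$ pour (ii). Le seul point où le texte procède plus proprement est l'\og obstacle \fg{} du facteur constant $2^k$ issu des $\Gamma_\C$ : plutôt que de l'écarter en invoquant la dérivée logarithmique, il suffit d'appliquer la formule de duplication $\Gamma_\C(s)=\Gamma_\R(s)\Gamma_\R(s+1)$, ce qui rend tous les $a_i$ égaux à $\frac{1}{2}$ et absorbe la constante dans la forme exigée par la Définition \ref{defi préfonction lambda}.
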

\begin{proof}
Commençons par vérifier qu'il s'agit de pré-fonctions $\Lambda$ au sens de la Définition \ref{defi préfonction lambda}.

Le point 1. est vérifié par le résultat de \cite{MW} cité au §\ref{Fonctions Lambda et facteurs epsilon globaux}. Le point 2. est un résultat de Gelbart et Shahidi \cite{Gelbart-Shahidi}. Pour le point 3., il faut s'assurer d'avoir un produit eulérien avec le facteur archimédien \emph{et} les facteurs non archimédiens qui ont la bonne forme.

Par les propriétés analytiques déjà énoncées au §\ref{Fonctions Lambda et facteurs epsilon globaux}, on a une décomposition du type \eqref{prod_eul+archim} pour $\re(s)>1$ et il s'agit donc de regarder ce qu'il se passe à chaque place.\medskip

Commençons par le cas d'une place finie non ramifiée. On a vu au §\ref{Facteurs locaux} qu'alors :
\[
L_p(s,\pi_p^\vee \times \pi'_p)=\det (1-p^{-s}c_p(\pi_p^\vee) \otimes c_p(\pi'_p))^{-1}
\]
où $c_p(\pi_p)$ est le paramètre de Satake de la représentation locale non ramifiée $\pi_p$.
Or, puisque $\pi$ est une représentation automorphe cuspidale de $\GL_n$ de caractère central unitaire (en fait trivial), toutes les $\pi_v$ sont unitaires et on a $\Ll(\pi_v^\vee)=\Ll(\pi_v)^\vee=\overline{\Ll(\pi_v)}$, donc \emph{a fortiori} $c_p(\pi_p^\vee)=c_p(\pi_p)^{-1}=\overline{c_p(\pi_p)}$. 

On a alors :
$$
L_p(s,\pi_p^\vee \times \pi'_p)=\prod_{i,j} \frac{1}{1-p^{-s}\overline{\lambda_i} \mu_j}
$$
où $c_p(\pi_p)$ est la classe de conjugaison de $\mathrm{diag}((\lambda_i))$ et $c_p(\pi'_p)$ celle de $\mathrm{diag}((\mu_j))$.

La conjecture de Ramanujan généralisée prévoit que les $\lambda_i$ et les $\mu_j$ soient de module 1. Celle-ci est connue dans des cas particuliers (voir par exemple le Théorème \ref{thm_shin_caraiani}), mais en général les estimées de Jacquet et Shalika \cite{JS} donnent, puisque $\pi$ est centrée donc unitaire, $p^{-\frac{1}{2}}< |\lambda_i|< p^\frac{1}{2}$ (et de même pour $\mu_j$ bien sûr), si bien que l'on a écrit
\[
L_p(s,\pi_p^\vee \times \pi'_p)=\prod_{k=1}^{nn'} \frac{1}{1-p^{-s}\alpha_k(p)}
\]
avec $|\alpha_k(p)|\leq p$ pour tout $k\in\{1,\cdots,nn'\}$.

Puisqu'il n'y a qu'un nombre fini de places ramifiées pour $\pi$ ou $\pi'$, on trouve bien un réel $c (\geq 1)$ et un entier $M' (\geq nn')$ qui conviennent.\medskip


Concernant l'unique place archimédienne, il faut nous rappeler comment est défini $L_\infty(s, \pi_\infty^\vee\times \pi_\infty')$. Les calculs du paragraphe \ref{Facteur local} nous assurent qu'on a bien le produit de (composées affines de) fonctions $\Gamma$ d'Euler recherché. On a d'ailleurs $M=nn'$ et tous les $a_i$ égaux à $\frac{1}{2}$ si l'on choisit d'appliquer la formule de duplication à tous les facteurs $\Gamma_\C$ qui apparaissent (voir \emph{loc. cit.}). En particulier, on a bien $\mathrm{Re}(a'_i) \geq 0$ et $\mathrm{Re}(a_i+a'_i) > 0$ pour tout $i$.\medskip 

Nous avons donc bien affaire à deux \emph{pré-fonctions} $\Lambda$, il reste à vérifier les deux conditions de la Définition \ref{Defi fonctions lambda}. 
La première est la réécriture (en intégrant le conducteur) de l'équation fonctionnelle \eqref{eq_fonctionnelle_paire_globale} :
\begin{equation}\label{eq_fonct_normalisée}
\tilde{\Lambda}(s, \pi \times \pi')=\eps(\pi \times \pi')\tilde{\Lambda}(1-s, \pi^\vee \times (\pi')^\vee).
\end{equation}

 La seconde consiste en une \og compatibilité \fg{} des facteurs archimédiens.
Ici, nous avons plus précisément 
\begin{align*}
L_\infty(s, \pi_\infty^\vee \times \pi'_\infty)
&= \Gamma(s, \Ll(\pi_\infty^\vee) \otimes \Ll(\pi'_\infty)) \\
&= \Gamma(s, \Ll(\pi_\infty)^\vee \otimes \Ll(\pi'_\infty)) \\
&= \Gamma(s, \Ll(\pi_\infty) \otimes \Ll(\pi'_\infty)^\vee) \\
&= \Gamma(s, \Ll(\pi_\infty) \otimes \Ll((\pi')^\vee_\infty)) \\
&= L_\infty(s, \pi_\infty \times (\pi')^\vee_\infty)
\end{align*}
en utilisant la compatibilité de la correspondance de Langlands locale archimédienne pour $\GL_n$ à la dualité (\cf Théorème \ref{LLC_arch}) et le fait que tous les éléments de $K_\infty$ sont autoduaux (\cf Définition \ref{defi_G(R)_alg_K_infty}).
En particulier, les facteurs archimédiens de $\tilde{\Lambda}(s, \pi^\vee \times \pi')$ et $\tilde{\Lambda}(s, \pi \times (\pi')^\vee)$ sont bien compatibles (ce sont les mêmes) et nos deux \emph{pré-fonctions} $\Lambda$ forment finalement bien un couple de fonctions $\Lambda$.



\end{proof}
\color{black}

\textbf{Dans toute la suite, nous omettrons le tilde} pour alléger les notations, les fonctions $\Lambda$ seront donc toujours supposées intégrer le conducteur de façon normalisée.

\section{Énoncés de la formule explicite de Riemann-Weil-Mestre}\label{Énoncés de la formule explicite de Riemann-Weil-Mestre}

On a introduit au paragraphe précédent les couples de fonctions $\Lambda$ au sens de \cite{Mes}, on énonce ici la formule explicite dans ce même cadre général, avant de la préciser au cas qui nous intéresse qui est celui de fonctions $\Lambda$ de paires de représentations automorphes cuspidales algébriques du groupe général linéaire.
\subsection{Énoncé général}\label{Enonce_general}
On se donne donc un couple de fonctions $\Lambda$ au sens de la Définition \ref{Defi fonctions lambda} que l'on note $\Lambda_1$ et $\Lambda_2$. On a donc un réel $c \geq 0$, deux entiers $M$ et $M'$ et on fixe pour l'ensemble du chapitre les notations liées au développement en produit pour $\mathrm{Re}(s)>1+c$ (soumises aux hypothèses de la Définition \ref{defi préfonction lambda}) :
\begin{align*}
\Lambda_1(s)&=A^s \prod_{i=1}^M \Gamma(a_i s+a'_i) \prod_p \prod_{j=1}^{M'} \frac{1}{1-\alpha_j(p)p^{-s}} , \\
\Lambda_2(s)&=B^s \prod_{i=1}^M \Gamma(b_i s+b'_i) \prod_p \prod_{j=1}^{M'} \frac{1}{1-\beta_j(p)p^{-s}}.
\end{align*}

Puisque les fonctions $\Lambda_1$ et $\Lambda_2$ forment un couple de fonctions $\Lambda$ au sens de la Définition \ref{Defi fonctions lambda}, on a de plus un complexe non nul $w$ tel que $\Lambda_1(s)=w\Lambda_2(1-s)$ et on sait que $\sum_{i=1}^M a_i=\sum_{i=1}^M b_i$.

Il faut également se donner une fonction d'une classe particulière, qu'on nommera \emph{fonction test}.

\begin{defi} \emph{(\cite{Mes})}\label{defi fonction explicite}

Soit $c\geq 0$. Soit $F:\R \rightarrow \R$, mesurable, paire. On dit que $F$ est une fonction test (de niveau $c$) si : 
\begin{enumerate}[(i)]
\item il existe $\varepsilon >0$ tel que $x \mapsto F(x)e^{(\frac{1}{2}+c+\varepsilon)|x|} \in {\rm L}^1(\mathbb{R})$,
\item il existe $\varepsilon >0$ tel que $x \mapsto F(x)e^{(\frac{1}{2}+c+\varepsilon)|x|} $ soit à variation bornée et normalisée (i.e. égale en chaque point à la moyenne de ses limites à gauche et à droite),
\item la fonction $x \mapsto \dfrac{F(x)-F(0)}{x}$ est à variation bornée.
\end{enumerate}
\end{defi}

En pratique, nous n'utiliserons comme fonction test que des fonctions de classe ${\rm C}^2$ à support compact, pour lesquelles ces hypothèses sont trivialement vérifiées.

On pose, pour $s$ complexe avec $-c\leq \re(s) \leq 1+c$ :
\begin{equation}\label{defi_Phi}
\Phi(s)= \int_{- \infty}^{+ \infty} F(x) e^{(s-\frac{1}{2})x} \dd x.
\end{equation}

\begin{thm} \emph{Formule explicite de Riemann-Weil-Mestre \cite{Mes}} \label{formule_explicite}

Avec les notations précédentes, on a, pour toute fonction test $F$ de niveau $c$ : 
\begin{multline*}
\sum \Phi(\rho) - \sum \Phi(\mu)=F(0) \log(AB)\\
-\sum_p \sum_{j=1}^{M'} \sum_{k=1}^{+ \infty} F(k \log p) \frac{\log(p)}{p^{\frac{k}{2}}}(\alpha_j(p)^k+\beta_j(p)^k)\\
-\sum_{i=1}^M (I(a_i,a'_i)+I(b_i,b'_i))
\end{multline*}
où $\rho$ (resp. $\mu$) parcourt les zéros (resp. les pôles) de $\Lambda_1$ de partie réelle comprise entre $-c$ et $1+c$ comptés avec multiplicité, et où 
\[
I(a,a')=\int_0^{+ \infty} \left( \frac{F(ay)e^{- (\frac{a}{2}+a') y}}{1-e^{-y}} - F(0) \frac{e^{-y}}{y} \right) \dd y.
\]
\end{thm}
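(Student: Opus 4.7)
The strategy is a contour-integration argument of Mellin-Perron type applied to the logarithmic derivative $\Lambda_1'/\Lambda_1$ multiplied by the transform $\Phi(s)$. The plan is to evaluate the integral
\[
J_\sigma = \frac{1}{2\pi i}\int_{(\sigma)} \frac{\Lambda_1'(s)}{\Lambda_1(s)}\,\Phi(s)\,ds
\]
in two ways: once on the far-right line $\sigma = 1+c+\varepsilon$, where the Dirichlet series and $\Gamma$-identities give the right-hand side of the formula, and once on the far-left line $\sigma = -c-\varepsilon$, where the functional equation converts the integrand to one involving $\Lambda_2$. The difference between the two evaluations is, by the residue theorem, precisely the contribution of the zeros and poles of $\Lambda_1$ in the strip $-c \leq \re(s) \leq 1+c$, giving the left-hand side $\sum \Phi(\rho) - \sum \Phi(\mu)$.

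First I would verify the analytic prerequisites: using hypotheses \textit{(i)} and \textit{(ii)} of Definition \ref{defi fonction explicite}, the function $\Phi(s)$ defined by \eqref{defi_Phi} is holomorphic in an open neighbourhood of the strip $-c \leq \re(s) \leq 1+c$ and decays like a rapidly oscillating Fourier-type integral on vertical lines, thanks to the bounded variation condition \textit{(iii)}. Combined with the convexity control on $\Lambda_1 - (\text{poles})$ in vertical strips (condition 2 of Definition \ref{defi préfonction lambda}), this legitimizes shifting the contour between the two vertical lines and discarding the horizontal segments.

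Next I would compute $J_{1+c+\varepsilon}$ explicitly. From the Euler product one has, for $\re(s)>1+c$,
\[
\frac{\Lambda_1'(s)}{\Lambda_1(s)} = \log A + \sum_{i=1}^M a_i\,\psi(a_is+a'_i) - \sum_p \sum_{j=1}^{M'}\sum_{k\geq 1} \alpha_j(p)^k \log(p)\, p^{-ks},
\]
where $\psi = \Gamma'/\Gamma$. The constant term $\log A$ contributes $F(0)\log A$ via the Fourier inversion $\frac{1}{2\pi i}\int_{(\sigma)} \Phi(s)\,ds = F(0)$. Each Dirichlet monomial $p^{-ks}$, integrated against $\Phi(s)$, yields $F(k\log p)\,p^{-k/2}$, producing the prime-power sum with weight $\alpha_j(p)^k\log(p)/p^{k/2}$. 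Finally, each digamma term is handled by the classical integral representation $\psi(z) = \int_0^\infty\bigl(e^{-y}/y - e^{-zy}/(1-e^{-y})\bigr)dy$; swapping the order of integration with the defining integral of $\Phi$ and setting $x = ay$ gives precisely the integrals $I(a_i,a'_i)$ of the statement.

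The final ingredient is symmetry. On the left line I would apply $\Lambda_1(s) = w\,\Lambda_2(1-s)$, so that $\Lambda_1'/\Lambda_1(s) = -\Lambda_2'/\Lambda_2(1-s)$, and substitute $s \mapsto 1-s$; the evenness of $F$ implies $\Phi(1-s) = \Phi(s)$, so $J_{-c-\varepsilon}$ becomes the negative of the analogous quantity for $\Lambda_2$, giving the $\log B$, $\beta_j(p)^k$, and $I(b_i,b'_i)$ contributions. Adding the two expressions for $J_{1+c+\varepsilon} - J_{-c-\varepsilon}$ recovers the explicit formula. The main obstacle, as always in such arguments, is the justification of the contour shift: one must control $\Lambda_1'/\Lambda_1$ on the horizontal sides of a large rectangle. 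This is the role of the boundedness in vertical strips (Definition \ref{defi préfonction lambda}, condition 2) after subtracting principal parts, combined with the exponential decay of $\Phi$ in the imaginary direction built into the test-function hypothesis. The compatibility condition $\sum a_i = \sum b_i$ is crucial here: it ensures that the archimedean factors of $\Lambda_1$ and $\Lambda_2$ have matching growth on vertical lines, so that the functional equation does not break the bound needed for the contour shift.
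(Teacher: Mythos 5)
Your overall strategy coincides with the paper's (which follows Mestre's original argument, reproduced in detail in Annexe~\ref{Démonstration de la formule explicite de Riemann-Weil-Mestre}): apply the residue theorem to $\Phi(s)\Lambda_1'(s)/\Lambda_1(s)$ on a tall rectangle, fold the left edge onto the right edge using the functional equation together with $\Phi(1-s)=\Phi(s)$, evaluate the Dirichlet-series part by Fourier inversion and the $\Gamma$-part by Gauss's integral for $\psi=\Gamma'/\Gamma$. The decomposition, the role of the evenness of $F$, and the role of the compatibility $\sum a_i=\sum b_i$ (needed to bound $L_1$ on far-left vertical lines via the functional equation and Phragm\'en--Lindel\"of) are all as in the paper.

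The genuine gap is your justification for discarding the horizontal segments. You invoke the boundedness of $\Lambda_1$ in vertical strips after subtracting principal parts, combined with the decay of $\Phi$; but that controls $\Lambda_1$, not $\Lambda_1'/\Lambda_1$. The logarithmic derivative has a pole at every zero of $\Lambda_1$ in the critical strip, and these zeros accumulate as $|t|\to\infty$, so on a generic horizontal line the integrand is not even bounded. The paper's proof devotes its entire section of majorations to exactly this point: (i) $\xi=\Lambda_1\cdot\prod_i(s-\mu_i)$ is entire of order at most $1$ (Théorème \ref{lambda_d'ordre_1}), hence admits a Hadamard factorization; (ii) a Jensen-type count (Lemme \ref{lem_zer}, Proposition \ref{136}) gives $O(\log T)$ zeros per unit-height box, so one can choose heights $T_m\in\,]m,m+1[$ at distance $\gg 1/\log m$ from every zero (Corollaire \ref{T_m_evite}); (iii) the Hadamard expansion then yields $|\Lambda_1'/\Lambda_1(\sigma+iT_m)|=O((\log m)^2)$ (Proposition \ref{estim_lambda_log}), which against $\Phi=O(1/|t|)$ kills the horizontal edges and simultaneously gives meaning to $\sum\Phi(\rho)$ as a limit of symmetric truncations. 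None of these ideas appears in your sketch. Two further points to tighten: the interchange of integrals in the archimedean term is not a routine Fubini step (the integral $\int\varphi(t)\psi(\sigma+ia t)\,\dd t$ is only conditionally convergent since $\psi$ grows like $\log|t|$); the paper uses a lemma of Poitou whose hypothesis is precisely condition \textit{(iii)} on $(F(x)-F(0))/x$, a hypothesis your argument never invokes. And you write the chain-rule factor $a_i$ in $\frac{\dd}{\dd s}\log\Gamma(a_is+a'_i)=a_i\psi(a_is+a'_i)$, whereas the paper's \eqref{decomp_arch_ultr} carries out the computation for $\psi(a_is+a'_i)$ without that prefactor; the two bookkeepings cannot both lead to the coefficient $1$ in front of $I(a_i,a'_i)$ in the displayed statement, so you should verify explicitly whether your change of variables absorbs the factor $a_i$ or produces $a_i\,I(a_i,a'_i)$.
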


Cette formule provient d'une formule des résidus le long d'un contour bien choisi puis d'un passage à la limite. La démonstration de Jean-François Mestre mentionne \og des techniques classiques \fg{} d'analyse complexe, desquelles nous n'étions pas familier. Il a donc paru utile à notre compréhension de développer en détail la démonstration de \cite{Mes} et notamment de comprendre les hypothèses sur la fonction test. Elle ne contient aucune nouveauté par rapport à la démonstration originale, nous avons néanmoins cru judicieux de l'incorporer en Annexe \ref{Démonstration de la formule explicite de Riemann-Weil-Mestre} de cette thèse.

\subsection{Énoncé particulier}\label{Énoncé particulier}
Nous nous intéressons ici à des fonctions $\Lambda$ très spécifiques puisqu'il s'agit de fonctions $\Lambda$ de paires de représentations automorphes cuspidales algébriques du groupe général linéaire. 
Plus précisément, étant données $\pi$ et $\pi'$ deux représentations automorphes cuspidales algébriques de $\GL_n$ et $\GL_{n'}$ sur $\Q$ respectivement, on considère le couple de fonctions $\Lambda$ (normalisées au sens du §\ref{Les fonctions Lambda sont des fonctions Lambda}, \ie intégrant le conducteur) donné par $\Lambda(\cdot, \pi^\vee \times \pi')$ et $\Lambda(\cdot, \pi \times (\pi')^\vee)$.


À ce couple correspond un réel positif $c$ qui intervient à deux endroits dans la définition (convergence du produit eulérien et majoration des coefficients) et à deux endroits dans l'utilisation de la formule explicite (bande des zéros considérés et choix de la fonction test).
\begin{itemize}
\item Concernant la convergence du produit eulérien, un résultat de Jacquet et Shalika (\cite{JS}) déjà mentionné au paragraphe \ref{Paires globales} affirme qu'elle a lieu pour $\re(s)>1$ (correspondant donc à $c=0$).
\item Concernant la majoration des coefficients, on a vu au paragraphe \ref{Les fonctions Lambda sont des fonctions Lambda} que l'on pouvait choisir $c=1$ dans le cas non ramifié et qu'on avait bien un $c\geq 1$ dans le cas général.
\item Concernant la somme sur les zéros, on sait par les résultats de Jacquet et Shalika que tous les zéros de $\Lambda(s, \pi^\vee \times \pi')$ sont situés dans la bande $0 \leq \re(s) \leq 1$. Cette somme est donc indépendante de la valeur de $c \geq 0$.
\item Concernant le choix de la fonction test, elle est supposée de niveau $c$, elle est donc de niveau $c'$ pour tout réel positif $c' \leq c$.
\end{itemize}\smallskip

Ce fameux réel $c$ ne joue donc un rôle que dans le choix de la fonction test, avec d'autant moins de fonctions test que $c$ est plus grand. En pratique (voir le paragraphe \ref{Choix de fonction test}), on considérera une fonction régulière à support compact qui sera donc de niveau $c$ pour \emph{tout} réel $c$. Dans ce cadre-là et au vu des considérations précédentes, \emph{tout se passe donc comme si on avait} $c=0$ dans la formule explicite du Théorème \ref{formule_explicite}, ce que nous supposerons sans plus de précaution dans la suite. Nous renvoyons à la démonstration détaillée en Annexe \ref{Démonstration de la formule explicite de Riemann-Weil-Mestre} de la formule explicite, où le rôle joué par les différents paramètres apparaît plus clairement.\medskip

Il nous faut par ailleurs préciser ce que l'on entend par \og somme sur les zéros de $\Lambda(\cdot, \pi^\vee \times \pi')$ \fg{} (cela n'est pas nécessaire pour les pôles qui sont en nombre fini par hypothèse) : on désigne donc par $\sum_\rho \Phi(\rho)$ la limite quand $T$ tend vers $+\infty$ de $\sum_{|\Imm(\rho)|\leq T} \Phi(\rho)$ et c'est d'ailleurs un des résultats de la démonstration de la formule explicite que cette limite existe.\ps 

Remarquons enfin que l'on a :
\[
\Lambda(1-\overline{s},\pi^\vee \times \pi')=\eps(\pi^\vee \times \pi')\Lambda(\overline{s},\pi \times (\pi')^\vee)=\eps(\pi^\vee \times \pi')\overline{\Lambda(s,\pi^\vee \times \pi')},
\]
la dernière égalité provenant du fait déjà mentionné que chaque $\pi_v$ (resp. $\pi'_v$) est unitaire. 
Donc, si $s$ est un zéro de $\Lambda$, alors $1-\overline{s}$ en est un également, avec même multiplicité.

Nous pouvons maintenant reprendre les termes de la formule explicite :
\begin{multline*}
\sum \Phi(\rho) - \sum \Phi(\mu)=F(0) \log(AB)\\
-\sum_p \sum_{j=1}^{M'} \sum_{k=1}^{+ \infty} F(k \log p) \frac{\log(p)}{p^{\frac{k}{2}}}(\alpha_j(p)^k+\beta_j(p)^k)\\
-\sum_{i=1}^M (I(a_i,a'_i)+I(b_i,b'_i))
\end{multline*}

\begin{itemize}
\item On a vu que si $s$ est un zéro de $\Lambda$, alors $1-\overline{s}$ également, avec même multiplicité. Or $\Phi(1-\overline{s})=\Phi(\overline{s})=\overline{\Phi(s)}$ par \eqref{Phi(s)=Phi(1-s)} et \eqref{defi_Phi}. Donc $\sum \Phi(\rho)$ est une quantité réelle, nous la notons $\mathrm{Z}^F(\pi,\pi')$.
\item La fonction $\Lambda$ n'a de pôles que si $\pi' \simeq \pi$, auquel cas ils sont situés en $s=0$ et $s=1$ et ils sont simples. La contribution $\sum \Phi(\mu)$ vaut donc 0 si $\pi' \not\simeq \pi$ et $\Phi(0)+\Phi(1)$ dans le cas contraire. Or la parité de $F$ entraîne (\cf \eqref{Phi(s)=Phi(1-s)}) que $\Phi(s)=\Phi(1-s)$. Finalement $\sum \Phi(\mu)=2\Phi(0)\delta_{\pi,\pi'}$ (où $\delta$ correspond au symbole de Kronecker)
\item La dualité entre $(\pi^\vee,\pi')$ et $(\pi,(\pi')^\vee)$ implique que $\alpha_j(p)=\overline{\beta_j(p)}$ pour tout $j$ et tout $p$. La somme sur les nombres premiers $p$ (correspondant à la partie ultramétrique) est donc une quantité \emph{réelle} et on la note\footnote{le choix du facteur 2 permet que pour une place $p$ non ramifiée, le terme correspondant à la contribution en $p$ soit $\sum_k F(k \log p) \frac{\log p}{p^{k/2}} \re[ \overline{\mathrm{tr}(c_p(\pi)^k)}\mathrm{tr}(c_p(\pi')^k)]$} $2 \mathrm{B}_f^F(\pi,\pi')$ (qui est évidemment symétrique, et qui vérifie $\mathrm{B}_f^F(\pi,\pi')=\mathrm{B}_f^F(\pi^\vee,(\pi')^\vee)$)
\item Les quantités $I(a_i,a'_i)$ et $I(b_i,b'_i)$ correspondent aux facteurs $\Gamma_\R$ et $\Gamma_\C$ introduits dans le paragraphe \ref{Facteur local}. Or, quand on a défini \emph{loc. cit.} l'application $K_\infty  \ni V \mapsto \Gamma(\cdot,V)$, on a également fait apparaître des fonctions \og puissance de $\pi$ \fg{} (le \emph{réel} $\pi$) si bien que l'on va voir des facteurs $\pi$ apparaître dans le $A$ et le $B$ au sens de la Définition \ref{Defi fonctions lambda}. Nous suivons, pour la clarté, le choix de \cite{Chen-Lannes} Chap. IX Proposition-Définition 3.7 et introduisons la fonction $\mathrm{J}^F : K_\infty \rightarrow \R$ qui regroupe les contributions de ces puissances du réel $\pi$ avec les quantités $I(a_i,a'_i)$.
\end{itemize}
\begin{defi}\label{defi_J_B_infty}
On pose :
\begin{enumerate}
\item $\mathrm{J}^F(\mathbf{1})=\frac{1}{2} \log (\pi)F(0) + I(\frac{1}{2},0)$,
\item $\mathrm{J}^F(\eps_{\C / \R})=\frac{1}{2} \log (\pi)F(0) + I(\frac{1}{2},\frac{1}{2})$,
\item $\mathrm{J}^F(I_w)=\log (2\pi)F(0) + I(1,\frac{w}{2})$ pour $w>0$
\end{enumerate}
ce qui définit $\mathrm{J}^F$ sur $K_\infty$ par $\Z$-linéarité.\ps 

On définit alors l'application $\Z$-bilinéaire $\mathrm{B}_\infty^F$ sur $K_\infty$ par $\mathrm{B}_\infty^F(U,V)=\mathrm{J}^F(U^\vee \otimes V)$. Or on a aussi $\mathrm{B}_\infty^F(U,V)=\mathrm{J}^F(U \otimes V)$ puisque les éléments de $K_\infty$ sont autoduaux et on a ainsi défini une application $\Z$-bilinéaire \emph{symétrique}.

On note encore $\mathrm{B}_\infty^F(\pi,\pi')$ pour $\mathrm{B}_\infty^F(\Ll(\pi_\infty),\Ll(\pi'_\infty))$.
\end{defi}
\begin{itemize}
\item Il reste alors les contributions $N^{\frac{s}{2}}$ de chacune des fonctions $\Lambda$ et $\Lambda^\vee$ (modifiées), qui font donc apparaître un $F(0) \log (N^\frac{1}{2} N^\frac{1}{2})$.
\end{itemize}

On a donc :
\begin{thm} \emph{Formule explicite pour les fonctions $\Lambda$ de paires}\label{Formule explicite pour les fonctions Lambda de paires}

Soit $\{\pi,\pi'\}$ une paire de représentations automorphes cuspidales algébriques du groupe linéaire, soit $F$ une fonction test. Alors
\[
\mathrm{B}_f^F(\pi,\pi')+\mathrm{B}_\infty^F(\pi,\pi')+\frac{1}{2}\mathrm{Z}^F(\pi,\pi')=\Phi_F(0)\delta_{\pi,\pi'}+\frac{1}{2} F(0) \log {\rm N}(\pi^\vee \times \pi')
\]
\end{thm}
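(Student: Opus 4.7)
\emph{Plan de démonstration.} L'idée est simplement d'appliquer le Théorème \ref{formule_explicite} au couple de fonctions $\Lambda$ $(\Lambda(\cdot,\pi^\vee\times\pi'),\Lambda(\cdot,\pi\times(\pi')^\vee))$, normalisées au sens du §\ref{Les fonctions Lambda sont des fonctions Lambda} (\ie intégrant la puissance $N(\pi\times\pi')^{s/2}$ du conducteur), et d'identifier un à un les quatre termes obtenus avec les quantités définies au §\ref{Énoncé particulier}. Puisque $F$ est une fonction test, la discussion préliminaire du §\ref{Énoncé particulier} permet de se ramener au cas $c=0$ dans le Théorème \ref{formule_explicite}.

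Premièrement, la somme sur les zéros $\sum_\rho \Phi(\rho)$, prise au sens de limite symétrique déjà mentionnée, est réelle grâce à l'équation fonctionnelle normalisée \eqref{eq_fonct_normalisée} combinée à l'unitarité de chaque composante locale (qui donne $\Lambda(1-\overline{s},\pi^\vee\times\pi')=\eps\,\overline{\Lambda(s,\pi^\vee\times\pi')}$, donc $\rho\mapsto 1-\overline{\rho}$ échange les zéros avec même multiplicité et $\Phi(1-\overline{\rho})=\overline{\Phi(\rho)}$ par \eqref{Phi(s)=Phi(1-s)}) : on obtient $\mathrm{Z}^F(\pi,\pi')$. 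Deuxièmement, la somme sur les pôles est nulle si $\pi'\not\simeq\pi$ (résultat de M\oe{}glin--Waldspurger) et, si $\pi'\simeq\pi$, vaut $\Phi(0)+\Phi(1)=2\Phi_F(0)$ par parité de $F$ ; cela donne $2\Phi_F(0)\delta_{\pi,\pi'}$.

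Troisièmement, la somme sur les places finies : la dualité entre les deux fonctions du couple impose $\alpha_j(p)=\overline{\beta_j(p)}$ pour toute place $p$ non ramifiée, ce qui rend la somme $\sum_p\sum_j\sum_k F(k\log p)\frac{\log p}{p^{k/2}}(\alpha_j(p)^k+\beta_j(p)^k)$ réelle, égale à $2\mathrm{B}_f^F(\pi,\pi')$ par définition (étendue aux places ramifiées via les paramètres locaux de Langlands). Quatrièmement, et c'est la partie qui demande le plus de soin, la contribution archimédienne et le facteur conducteur : l'écriture normalisée $\tilde\Lambda(s)=N(\pi^\vee\times\pi')^{s/2}\Lambda(s)$ fournit, dans la décomposition eulérienne de la Définition \ref{defi préfonction lambda}, un facteur multiplicatif $N(\pi^\vee\times\pi')^{1/2}$ tant dans $A$ que dans $B$, ce qui donne le terme $F(0)\log N(\pi^\vee\times\pi')$ attendu. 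Les puissances de $\pi$ (le réel) restantes dans $A$ et $B$ proviennent de la normalisation \emph{ad hoc} des $\Gamma_\R$ et $\Gamma_\C$ du §\ref{Facteur local} ; la Définition \ref{defi_J_B_infty} a précisément été taillée pour absorber chacune de ces puissances dans les intégrales $I(a_i,a'_i)$ et $I(b_i,b'_i)$ correspondantes, produisant $\mathrm{J}^F$ appliqué aux facteurs irréductibles de $\Ll(\pi_\infty^\vee)\otimes\Ll(\pi'_\infty)$ (et similairement pour l'autre fonction du couple), d'où $2\mathrm{B}_\infty^F(\pi,\pi')$ par bilinéarité.

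En rassemblant et en divisant par $2$ l'identité obtenue
\[
\mathrm{Z}^F(\pi,\pi')-2\Phi_F(0)\delta_{\pi,\pi'}=F(0)\log N(\pi^\vee\times\pi')-2\mathrm{B}_f^F(\pi,\pi')-2\mathrm{B}_\infty^F(\pi,\pi'),
\]
on obtient la formule annoncée. L'unique difficulté technique est la vérification du quatrième point (regroupement des contributions archimédiennes et du conducteur), qui est purement une vérification combinatoire de la façon dont les normalisations de $\Gamma_\R$, $\Gamma_\C$ et $N^{s/2}$ s'articulent avec la définition de $\mathrm{J}^F$ ; les trois autres points sont des applications directes des propriétés analytiques rappelées au §\ref{Les fonctions Lambda sont des fonctions Lambda}.
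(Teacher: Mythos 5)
Votre démonstration est correcte et suit essentiellement la même démarche que le texte : appliquer le Théorème \ref{formule_explicite} au couple normalisé $(\Lambda(\cdot,\pi^\vee\times\pi'),\Lambda(\cdot,\pi\times(\pi')^\vee))$, puis identifier terme à terme la somme sur les zéros avec $\mathrm{Z}^F$, celle sur les pôles avec $2\Phi_F(0)\delta_{\pi,\pi'}$, la partie ultramétrique avec $2\mathrm{B}_f^F$ via $\alpha_j(p)=\overline{\beta_j(p)}$, et la partie archimédienne jointe aux facteurs $N^{s/2}$ et aux puissances de $\pi$ avec $2\mathrm{B}_\infty^F+F(0)\log{\rm N}(\pi^\vee\times\pi')$, avant de diviser par $2$. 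C'est exactement le découpage effectué au §\ref{Énoncé particulier}.
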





\section{Choix de fonction test}\label{Choix de fonction test}
\subsection{Propriétés de positivité}
Tous les termes de la formule explicite pour les fonctions $\Lambda$ de paires sont réels. À défaut de pouvoir les calculer explicitement, il peut être utile de connaître leur signe, ce qui donnera lieu à de fructueuses inégalités.

Nous rappelons donc deux propriétés de positivité pour une fonction test $F$. La première consiste simplement à supposer que cette fonction prend des valeurs positives. Les valeurs de la fonction $F$ apparaissent en effet dans la quantité $\B_f^F$.

La seconde est reliée à la quantité $\Phi_F$ qui intervient notamment dans la quantité ${\rm Z}^F$.
\begin{defi}
Soit $F$ une fonction test. On dit que $F$ est $\Phi$-positive si :
\[
\re \Phi_F(s) \geq 0 \text{ pour tout } s \in \C \text{ tel que } 0\leq \re (s) \leq 1.
\]
\end{defi}
C'est la propriété notée (T4) dans \cite{Chen-Lannes} IX.3.4. En particulier, nous avons le 
\begin{lemme}\label{lemme_F_Phi_positive}
Soit $F$ une fonction test $\Phi$-positive. Alors, pour toutes $\pi$ et $\pi'$, 
on a ${\rm Z}^F(\pi,\pi')\geq 0$.
\end{lemme}

La propriété notée (POS) dans \cite{Chen-Ta}, Définition 2.1, correspond à une fonction test positive \emph{et} $\Phi$-positive.

\subsection{La fonction d'Odlyzko}\label{La fonction d'Odlyzko}
On rappelle ici la définition et les propriétés de la fonction d'Odlyzko, 
qui sera la seule que l'on utilisera en pratique (notons néanmoins qu'elle dépend d'un paramètre $\lambda \in \R_{>0}$).  Cette fonction (ou famille de fonctions) s'avère optimale en un certain sens (\emph{cf.} \cite{Poit} §3) pour utiliser la formule explicite.\ps 

On définit la fonction $u: \R \rightarrow \R$ par 
\[
u(x)= \begin{cases}
\cos(\pi x) &\text{ si } |x| \leq \frac{1}{2} \\
0 &\text{ sinon}
\end{cases}
\]
puis on pose $g=2 u \ast u$ (produit de convolution de ${\rm L}^1(\R)$).

On obtient immédiatement que $g$ est paire, positive, à support dans $[-1;1]$, de classe ${\rm C}^2$, vérifiant $g(0)=1$ et $g$ est évidemment une fonction test (au sens de la Définition \ref{defi fonction explicite}) de niveau $c$ pour tout réel positif $c$.

On définit alors, pour $\lambda \in \R_{>0}$, la fonction d'Odlyzko de paramètre $\lambda$ :
\begin{equation}\label{Odlyzko}
{\rm F}_\lambda(x)=\frac{g(x/\lambda)}{\mathrm{ch}(x/2)}.
\end{equation}

Ce choix vient du résultat suivant, connu d'Odlyzko et de Poitou (\cite{Poit}).
\begin{lemme}\label{Positivité_Z_F}
Soit $h: \R \rightarrow \R$ une fonction paire, sommable et de carré sommable. On suppose que $h$ est une fonction test de niveau $c$. Sa transformée de Fourier $\widehat{h} \geq 0$ est bien définie et à valeurs réelles sur $\R$. Supposons $\widehat{h} \geq 0$ sur $\R$ et considérons la fonction $F : \R \rightarrow \R$ définie par 
\[
F(x)=\frac{h(x)}{\mathrm{ch}(x/2)}.
\]
Alors $F$ est une fonction test de niveau $c$ et $\Phi$-positive. 

\end{lemme}
\begin{proof}
Le fait que $F$ soit une fonction test de niveau $c$ est immédiat à partir de la régularité et de la minoration de la fonction cosinus hyperbolique sur $\R$. Nous renvoyons à (\cite{Chen-Lannes} IX. Lemme 3.15) pour le reste de la démonstration. 
\end{proof}

En remarquant que la transformée de Fourier de la fonction $g$ est $2 \widehat{u}^2$ qui est positive sur $\R$ car $u$ est réelle, paire et sommable, on obtient que ${\rm F}_\lambda$ est $\Phi$-positive. Par ailleurs, elle est évidemment positive et on remarque également que ${\rm F}_\lambda(0)=1$.

%



\subsection{Une spectaculaire illustration}\label{Une spectaculaire illustration}
Nous reprenons ici la discussion \og Cas $n=2$ ou $w\leq 10$ \fg{} du § IX.3.18 de \cite{Chen-Lannes} qui nous paraît particulièrement significative de la \emph{précision} de la formule explicite.

La fonction ${\rm F}_\lambda$ est à support compact, plus exactement est nulle en dehors de $[-\lambda,\lambda]$. Ainsi, la série définissant $\mathrm{B}_f^{{\rm F}_\lambda}$ dans les notations du Théorème \ref{Formule explicite pour les fonctions Lambda de paires} est en fait une somme finie ; plus exactement n'interviennent que les indices $p$ et $k$ tels que $k \log p \in [-\lambda,\lambda]$, soit encore $p^k < e^\lambda$. En particulier, si on choisit $\lambda \leq \log 2$, alors la somme est vide, si bien que $\mathrm{B}_f^{{\rm F}_\lambda}$ est nul.

Soit $\pi$ une représentation automorphe cuspidale algébrique de $\GL_n$ de conducteur 1 (cas traité dans \cite{Chen-Lannes}, remarquer que \emph{l'on ne suppose rien sur $n$}) et appliquons ceci à la paire $\{\1,\pi\}$ où $\mathbf{1}$ est la représentation triviale. On obtient alors, pour tout $\lambda \leq \log 2$ :
\[
\mathrm{B}_\infty^{{\rm F}_\lambda}(\1,\pi) +\frac{1}{2}\mathrm{Z}^{{\rm F}_\lambda}(\1,\pi)=\Phi_{{\rm F}_\lambda}(0)\delta_{\1,\pi}.
\]

Si l'on cherche d'autres représentations que la représentation triviale, le terme de droite est nul. De plus, par le Lemme \ref{Positivité_Z_F}, la quantité $\mathrm{Z}^{{\rm F}_\lambda}(\1,\pi)$ est réelle positive. Enfin, $\mathrm{B}_\infty^{{\rm F}_\lambda}(\1,\pi)=\mathrm{J}_{{\rm F}_\lambda} (\Ll(\pi_\infty))$, si bien que l'on obtient :
\[
\mathrm{J}_{{\rm F}_\lambda} (\Ll(\pi_\infty)) \leq 0
\]

À $w$ fixé, la fonction $\lambda \mapsto \mathrm{J}_{{\rm F}_\lambda}(I_w)$ est croissante (revenir à la définition, en remarquant que ${\rm F}_\lambda \leq {\rm F}_{\lambda'}$ si $\lambda \leq \lambda'$), on tirera donc le meilleur parti de cette inégalité en choisissant $\lambda=\log 2$.

C'est ce qu'on fait en calculant les valeurs pour les représentations irréductibles de $\W_\R$ triviales sur $\R_{>0}$ que l'on distingue selon leur parité : les $I_w$ avec $w>0$ impair d'un côté, les $I_w$ avec $w>0$ pair ainsi que $\1$ et $\eps_{\C/\R}$ de l'autre.
\begin{center}
\begin{tabular}{ c || *{5}{c|} c}
   $V$ & $I_1$ & $I_3$ & $I_5$ & $I_7$ & $I_9$ & $I_{11}$\\
   \hline
   $\mathrm{J}_{F_{\log 2}} (V)$ & 0.848 & 0.611 & 0.408 & 0.230 & 0.074 & -0.065  \\
 \end{tabular}

Table à $10^{-3}$ près pour $V$ \og impair \fg{} 
\end{center}

\begin{center}
\begin{small}
\begin{tabular}{ c || *{7}{c|} c}
   $V $ & $\1$ & $\eps_{\C/\R}$ & $I_2$ & $I_4$ & $I_6$ & $I_8$ & $I_{10}$ & $I_{12}$\\
   \hline
   $\mathrm{J}_{F_{\log 2}} (V)$ & 0.560 & 0.421 & 0.725 & 0.506 & 0.316 & 0.150 & 0.003 & -0.129\\
 \end{tabular}
\end{small}
Table à $10^{-3}$ près pour $V$ \og pair \fg{} 
\end{center}

On voit ainsi qu'en conducteur 1, le plus petit poids motivique qui puisse apparaître (et ce, \emph{quelle que soit la dimension}) pour une représentation non triviale est 11.

Par le Lemme \ref{lemme_carac_central_triv}, on peut voir $\pi$ comme une représentation automorphe cuspidale algébrique de $\PGL_n$. Dans le cas où $n=2$, on sait alors, via les résultats rappelés au paragraphe \ref{En niveau Gamma_0_N} qu'une représentation automorphe cuspidale de $\PGL_2$ de poids $\{\pm \frac{w}{2}\}$ correspond à une forme modulaire parabolique normalisée pour $\SL_2(\Z)$ de poids modulaire $w+1$.

Cela redémontre qu'il n'existe pas de forme modulaire parabolique de poids modulaire <12 pour $\SL_2(\Z)$. La précision de la formule est d'ailleurs étonnante puisque l'on sait qu'il existe bien une telle forme en poids modulaire 12. Nous renvoyons à la discussion \cite{Chen-Lannes} Chap. IX §3.18 qui montre davantage et détermine entre autres les dimensions des espaces de formes modulaires paraboliques (pour les poids 12 à 22) à l'aide de la formule explicite (nous détaillons au paragraphe \ref{Un exemple en conducteur 1} le cas du poids modulaire 14).

\paragraph{En conducteur 2}

Nous considérons maintenant une représentation $\pi$ automorphe cuspidale algébrique de $\GL_n$ de conducteur 2 (là encore, sans rien supposer sur $n$). La même discussion que ci-dessus pour la paire $\{\1,\pi\}$ amène, pour tout $\lambda \leq \log 2$ :
\[
\mathrm{B}_\infty^{{\rm F}_\lambda}(\1,\pi) +\frac{1}{2}\mathrm{Z}^{{\rm F}_\lambda}(\1,\pi)=\Phi_{{\rm F}_\lambda}(0)\delta_{\1,\pi} + \frac{1}{2} \log {\rm N}(\1^\vee \times \pi)
\]
d'où l'on tire
\begin{equation}\label{inegalite_J_F_cond_2}
\mathrm{J}_{{\rm F}_\lambda} (\Ll(\pi_\infty)) \leq \frac{\log 2}{2}.
\end{equation}

\begin{prop}\label{prop_facile_w<6}
Il n'existe pas de représentation automorphe cuspidale algébrique de $\GL_n$ de conducteur $2$, de poids motivique inférieur strictement à $6$.
\end{prop}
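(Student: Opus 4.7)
The plan is to adapt the argument sketched at the end of \S\ref{Une spectaculaire illustration} (which established the analogous statement for motivic weight $\leq 10$ in conductor $1$) to the present setting of conductor $2$, relying on the sharper inequality \eqref{inegalite_J_F_cond_2} that was just derived there.

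Concretely, suppose for contradiction that $\pi$ is an algebraic cuspidal automorphic representation of $\GL_n$ over $\Q$ of conductor $2$ and motivic weight $w = {\rm w}(\pi) < 6$. Apply the explicit formula (Theorem \ref{Formule explicite pour les fonctions Lambda de paires}) to the pair $\{\1, \pi\}$ with the Odlyzko test function $F = {\rm F}_{\log 2}$ (formula \eqref{Odlyzko}). Since $\pi$ has conductor $2$, it is not isomorphic to $\1$, so the Kronecker term $\Phi_F(0)\delta_{\1,\pi}$ vanishes. Moreover, the support condition on $g$ forces ${\rm F}_{\log 2}(k\log p)= 0$ for every prime $p$ and every $k\geq 1$ (because then $p^k \geq 2 = e^{\log 2}$), so $\B_f^F(\1,\pi) = 0$; and the $\Phi$-positivity of ${\rm F}_{\log 2}$ (Lemmas \ref{Positivité_Z_F} and \ref{lemme_F_Phi_positive}) gives ${\rm Z}^F(\1,\pi)\geq 0$. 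Since ${\rm N}(\1^\vee\times \pi) = {\rm N}(\pi) = 2$ and $F(0) = 1$, the formula reduces exactly to the inequality \eqref{inegalite_J_F_cond_2}:
\[
\mathrm{J}^{{\rm F}_{\log 2}}\!\bigl(\Ll(\pi_\infty)\bigr) \;\leq\; \tfrac{1}{2}\log 2,
\]
using $\B_\infty^F(\1,\pi) = \mathrm{J}^F(\1\otimes\Ll(\pi_\infty)) = \mathrm{J}^F(\Ll(\pi_\infty))$.

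The contradiction will then come from comparing this upper bound with the tabulated values of $\mathrm{J}^{{\rm F}_{\log 2}}$. By Lemma \ref{lemme_pi_infty_i_w}, $\Ll(\pi_\infty)$ is an effective element of $K_\infty^{\leq w}$; as $w \leq 5$, the relevant basis vectors of $K_\infty^{\leq w}$ all lie in the finite set $\{\1,\,\eps_{\C/\R},\,I_1,I_2,I_3,I_4,I_5\}$ (Definition \ref{defin_K_infty_leq_w}). The two numerical tables in \S\ref{Une spectaculaire illustration} show that on each of these seven vectors $\mathrm{J}^{{\rm F}_{\log 2}}$ takes a value strictly greater than $\tfrac{1}{2}\log 2 \approx 0.347$ (the smallest one being $\mathrm{J}^{{\rm F}_{\log 2}}(I_5) \approx 0.408$). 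Since $\Ll(\pi_\infty)$ is a non-trivial non-negative integral combination of these basis vectors and $\mathrm{J}^{{\rm F}_{\log 2}}$ is $\Z$-linear, we deduce $\mathrm{J}^{{\rm F}_{\log 2}}(\Ll(\pi_\infty)) > \tfrac{1}{2}\log 2$, contradicting the upper bound above.

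There is no real obstacle in this proof: the main work — setting up the explicit formula, verifying the $\Phi$-positivity of ${\rm F}_{\log 2}$, computing the values of $\mathrm{J}^{{\rm F}_{\log 2}}$ on basis vectors of $K_\infty$ — has already been done in the preceding sections. The only point that requires attention is the slightly more permissive right-hand side $\tfrac12\log 2$ compared to the $0$ on the right-hand side in the conductor~$1$ argument: this is precisely what forces the bound on $w$ to drop from $10$ to $5$.
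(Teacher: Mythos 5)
Your proof is correct and is exactly the paper's argument: the paper's own proof consists of the single line « On utilise les tables précédentes et la valeur de $\log 2$ », i.e. combining the inequality \eqref{inegalite_J_F_cond_2} for $\lambda=\log 2$ with the two tables of values of $\mathrm{J}_{{\rm F}_{\log 2}}$, whose entries on the generators of $K_\infty^{\leq 5}$ all exceed $\tfrac{1}{2}\log 2 \approx 0{,}347$. You have simply written out in full the derivation that the paper leaves implicit.
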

\begin{proof}
On utilise les tables précédentes et la valeur de $\log 2 \simeq 0,693$ à $10^{-3}$ près.
\end{proof}

\section{Un résultat de finitude}\label{Un résultat de finitude}

On se donne à nouveau une représentation $\pi$ automorphe cuspidale algébrique de $\GL_n$ sur $\Q$, sans rien supposer sur la dimension $n$, ni sur le conducteur ${\rm N}(\pi)$ de la représentation. Plutôt que de s'intéresser à la paire $\{\1,\pi\}$ comme au paragraphe précédent, on peut considérer la (multi-)paire $\{\pi,\pi\}$. On obtient alors :
\[
\mathrm{B}_f^F(\pi,\pi)+\mathrm{B}_\infty^F(\pi,\pi)+\frac{1}{2}\mathrm{Z}^F(\pi,\pi)=\Phi_F(0)+\frac{1}{2} F(0) \log {\rm N}(\pi^\vee \times \pi)
\]
pour toute fonction test $F$.

\begin{lemme}\label{lemme_F_positive_B_f}
Soit $F$ une fonction test \emph{positive}. Alors, quelle que soit $\pi$, on a $\B_f^F(\pi,\pi) \geq 0$.
\end{lemme}
\begin{proof}
Notons $L(s,\pi^\vee \times \pi)$ la partie finie de la fonction $\Lambda$, \ie le produit sur tous les $p$ des $L_p(s,\pi^\vee \times \pi)$.
Alors le Lemme 2.a de \cite{HR} énonce que, pour $\re (s)>1$,
\[
-\frac{L'}{L}(s, \pi^\vee \times \pi)=\sum_p \sum_{k=1}^{+\infty} x_{p^k} (\pi^\vee \times \pi) \frac{\log p}{p^{ks}},
\]
avec $x_{p^k} (\pi^\vee \times \pi)$ réel positif pour tout $p$ et pour tout $k$. Cela revient à dire, dans les notations de la Définition \ref{defi préfonction lambda} que, pour tout $p$ et pour tout $k$, la quantité $\sum_{j=1}^{M'} \alpha_j(p)^k$ est réelle positive. 
Si l'on suppose de plus $F$ positive, alors la quantité
\[
\mathrm{B}_f^F(\pi,\pi)=\sum_p \sum_{j=1}^{M'}\sum_{k=1}^{+ \infty} F(k \log p) \frac{\log(p)}{p^{\frac{k}{2}}}\alpha_j(p)^k
\]
 est bien réelle (ce qu'on savait déjà) positive.
\end{proof}


La combinaison de ce dernier lemme avec le Lemme \ref{lemme_F_Phi_positive} nous donne l'essentielle

\begin{prop} \emph{(\cite{Chen-HM}, Proposition 4.4)} \label{Prop_inegalite_pi_times_pi}

Soit $\pi$ une représentation automorphe cuspidale algébrique de $\GL_n$ sur $\Q$ et soit $F$ une fonction test positive et $\Phi$-positive. Alors
\begin{equation}\label{inegalite_pi_times_pi}
\mathrm{B}_\infty^F (\pi,\pi) \leq \Phi_F(0) +\frac{1}{2} F(0) \log {\rm N}(\pi^\vee \times \pi).
\end{equation}
\end{prop}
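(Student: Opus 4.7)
Le plan est d'appliquer directement la formule explicite pour les fonctions $\Lambda$ de paires (Théorème \ref{Formule explicite pour les fonctions Lambda de paires}) à la multi-paire $\{\pi,\pi\}$, puis d'exploiter les deux hypothèses de positivité sur $F$ pour éliminer les deux termes \og automorphes \fg{} non archimédiens (la partie finie $\mathrm{B}_f^F$ et la somme sur les zéros encodée par $\mathrm{Z}^F$). Comme le symbole de Kronecker $\delta_{\pi,\pi}$ vaut ici $1$, l'égalité fournie par le Théorème \ref{Formule explicite pour les fonctions Lambda de paires} se réécrit immédiatement sous la forme
\[
\mathrm{B}_f^F(\pi,\pi)+\mathrm{B}_\infty^F(\pi,\pi)+\tfrac{1}{2}\mathrm{Z}^F(\pi,\pi)=\Phi_F(0)+\tfrac{1}{2} F(0) \log {\rm N}(\pi^\vee \times \pi),
\]
qui est l'égalité de départ, à laquelle il ne reste qu'à retrancher les contributions automorphes non archimédiennes pour obtenir l'inégalité souhaitée.

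Je traiterais ensuite ces deux contributions séparément. D'une part, l'hypothèse de positivité de $F$ permet d'invoquer le Lemme \ref{lemme_F_positive_B_f} et d'en tirer $\mathrm{B}_f^F(\pi,\pi)\geq 0$ ; d'autre part, l'hypothèse de $\Phi$-positivité permet d'invoquer le Lemme \ref{lemme_F_Phi_positive} et d'en tirer $\mathrm{Z}^F(\pi,\pi)\geq 0$. En reportant ces deux minorations dans l'égalité ci-dessus et en réarrangeant, on obtient exactement \eqref{inegalite_pi_times_pi}.

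Il n'y a donc pas ici d'obstacle véritable : toute la technicité a été absorbée en amont, d'une part dans l'établissement de la formule explicite (dont l'analyse complexe est renvoyée en Annexe \ref{Démonstration de la formule explicite de Riemann-Weil-Mestre}), et d'autre part dans les deux lemmes de positivité. Le seul point non trivial à garder en tête est le fondement du Lemme \ref{lemme_F_positive_B_f}, qui utilise de façon cruciale le résultat de Hoffstein-Rajan (\cite{HR}, Lemme 2.a) selon lequel la série de Dirichlet $-L'/L(s,\pi^\vee\times\pi)$ est à coefficients de Dirichlet positifs, propriété sans laquelle la positivité ponctuelle de $F$ ne suffirait pas à conclure quoi que ce soit sur $\mathrm{B}_f^F(\pi,\pi)$. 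Cette propriété étant acquise dans la preuve du Lemme \ref{lemme_F_positive_B_f}, la Proposition découle tautologiquement de l'assemblage des trois briques précédentes.
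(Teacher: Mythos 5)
Votre démonstration est correcte et suit exactement la même voie que le texte : application de la formule explicite du Théorème \ref{Formule explicite pour les fonctions Lambda de paires} à la multi-paire $\{\pi,\pi\}$ (avec $\delta_{\pi,\pi}=1$), puis combinaison des Lemmes \ref{lemme_F_positive_B_f} et \ref{lemme_F_Phi_positive} pour minorer par $0$ les termes $\mathrm{B}_f^F(\pi,\pi)$ et $\mathrm{Z}^F(\pi,\pi)$. Votre remarque sur le rôle crucial de la positivité des coefficients de Dirichlet de $-L'/L(s,\pi^\vee\times\pi)$ (\cite{HR}, Lemme 2.a) dans la preuve du Lemme \ref{lemme_F_positive_B_f} est également bien vue.
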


On sait que la fonction d'Odlyzko vérifie les hypothèses requises (pour tout paramètre). Ce sera la seule fonction test que nous utiliserons en pratique.

\subsection{Cas du conducteur 1}\label{Cas du conducteur 1}
Dans le cas où l'on suppose de plus que $\pi$ est de conducteur 1, le deuxième terme du membre de droite de \eqref{inegalite_pi_times_pi} est nul et on retombe sur le travail de Chenevier et Lannes \cite{Chen-Lannes}. En particulier, les Lemmes IX.3.34 et IX.3.36 \emph{op. cit.} nous disent que $\mathrm{B}_\infty^F$, vue comme forme $\Z$-bilinéaire symétrique sur $K_\infty$, est définie positive pour un bon choix de $F$ sur les sous-$\Z$-modules $K_\infty^{\leq 21}$ et $K_\infty^{\leq 22}$ respectivement. On a donc une équation du type $q(V) \leq C$ où $q$ est une forme quadratique définie positive sur un $\Z$-module libre de type fini et $C$ est une constante. Il y a donc un nombre fini de $V$ qui conviennent, et on se restreint en fait à ceux qui ont des coefficients positifs (\ie effectifs au sens de la Définition \ref{defin_K_infty_leq_w}).

Or, un résultat célèbre de Harish-Chandra dans \cite{HC68} démontre qu'à $V$ (effectif) et à conducteur $N$ fixés, il n'existe qu'un nombre fini de $\pi$ automorphes discrètes de conducteur $N$ telles que $\Ll(\pi_\infty) \simeq V$.

La combinaison de ces deux finitudes nous donne donc le
\begin{thm} \emph{(\cite{Chen-Lannes}, Théorème F)}

Il existe un nombre fini (à torsion près) de représentations automorphes cuspidales algébriques de $\GL_n$ sur $\Q$ de conducteur 1 et de poids motivique $\leq 22$.
\end{thm}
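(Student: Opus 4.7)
The plan is to combine two finiteness inputs already singled out in the excerpt: a quadratic inequality coming from the explicit formula applied to the pair $\{\pi,\pi\}$, and Harish-Chandra's classical finiteness of cuspidal automorphic representations with fixed archimedean component and fixed conductor.

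First, I would fix a test function $F$ that is positive and $\Phi$-positive; concretely one may take a suitable Odlyzko function ${\rm F}_\lambda$ of §\ref{La fonction d'Odlyzko}, whose parameter $\lambda$ will be chosen below. Next, I would apply Proposition~\ref{Prop_inegalite_pi_times_pi} to the paire $\{\pi,\pi\}$ with ${\rm N}(\pi)=1$: since the conductor of the paire $\pi^\vee \times \pi$ is then equal to $1$, the logarithmic term vanishes and one gets the clean bound
\[
\mathrm{B}_\infty^F(\pi,\pi) \leq \Phi_F(0).
\]
By the Definition~\ref{defi_J_B_infty} of $\mathrm{B}_\infty^F$ and the Lemma~\ref{lemme_pi_infty_i_w}, the left-hand side equals $\mathrm{B}_\infty^F(V,V)$ where $V=\Ll(\pi_\infty) \in K_\infty^{\leq w}$ is effective, with $w={\rm w}(\pi) \leq 22$.

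The crucial input is then the statement (mentioned in the excerpt and proved as Lemmes IX.3.34 and IX.3.36 of \cite{Chen-Lannes}) that for a sufficiently good choice of $F$, the $\Z$-bilinear symmetric form $\mathrm{B}_\infty^F$ is \emph{positive definite} on the finitely generated $\Z$-module $K_\infty^{\leq 22}$. Granting this, the inequality $\mathrm{B}_\infty^F(V,V)\leq \Phi_F(0)$ is a bound of the shape $q(V) \leq C$ where $q$ is a positive definite quadratic form on a free $\Z$-module of finite rank; such an inequality has only finitely many integer solutions, and \emph{a fortiori} only finitely many effective solutions. Hence the set
\[
\mathcal{V} = \{\, V \in K_\infty^{\leq 22} \text{ effectif} \mid \mathrm{B}_\infty^F(V,V)\leq \Phi_F(0)\,\}
\]
is finite, and contains $\Ll(\pi_\infty)$ for every cuspidal algebraic $\pi$ of conductor $1$ and motivic weight $\leq 22$.

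To finish, I would invoke the finiteness theorem of Harish-Chandra \cite{HC68} recalled in the introduction: for each fixed effective $V \in \mathcal{V}$ and fixed conductor $N=1$, there are only finitely many representations automorphes cuspidales algébriques $\pi$ of $\GL_n$ (with $n=\dim V$) such that $\Ll(\pi_\infty)\simeq V$ and ${\rm N}(\pi)=1$. Summing over the finite set $\mathcal{V}$ yields the desired finiteness, the implicit ``à torsion près'' coming from the centering convention of Proposition-Définition~\ref{defi_alg_centrée}. The main (and only nontrivial) obstacle in this plan is the positive definiteness of $\mathrm{B}_\infty^F$ on $K_\infty^{\leq 22}$ for a well-chosen $F$: this is a numerical verification, carried out in \cite{Chen-Lannes} for the Odlyzko function with a carefully tuned parameter $\lambda$, and it is precisely the point at which the bound $w\leq 22$ enters — one must exhibit an $F$ that simultaneously controls all the vectors $I_w,\mathbf{1},\eps_{\C/\R}$ with $w\leq 22$, while keeping $\Phi_F(0)$ small enough to be compatible with positive definiteness.
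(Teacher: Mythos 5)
Your proposal is correct and follows essentially the same route as the paper: the inequality $\mathrm{B}_\infty^F(\pi,\pi)\leq \Phi_F(0)$ from Proposition \ref{Prop_inegalite_pi_times_pi} (with the conductor term vanishing for ${\rm N}(\pi)=1$), the positive definiteness of $\mathrm{B}_\infty^F$ on $K_\infty^{\leq 22}$ from the Lemmes IX.3.34 et IX.3.36 de \cite{Chen-Lannes}, and the finiteness of Harish-Chandra at fixed $V$ and fixed conductor. Rien à redire.
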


Ce théorème est en fait donné \emph{op. cit.} avec une liste explicite desdites représentations. Il est généralisé au poids motivique 23 (Theorem A) et même 24  (Theorem B) sous l'hypothèse de Riemann généralisée dans \cite{Chen-HM} en ce qui concerne la finitude, la liste explicite des représentations se trouvant, sous certaines hypothèses supplémentaires, dans \cite{Chen-Ta} (Theorems 3, 4 and 5).


\subsection{Cas d'un conducteur quelconque}
Dans le cas d'un conducteur quelconque, il faut pouvoir contrôler la quantité ${\rm N}(\pi^\vee \times \pi)$ en fonction du conducteur ${\rm N}(\pi)$ de $\pi$, représentation automorphe cuspidale de $\GL_n$. On a, par définition  :
\[
{\rm N}(\pi^\vee \times \pi)=\prod_p p^{\aar_p (\pi_p^\vee \times \pi_p)}.
\]

Or, l'inégalité d'Henniart \eqref{exposant_prod_tens_rep_locales} nous donne, pour tout $p$,
\[
\aar_p (\pi_p^\vee \times \pi_p) \leq (2n-1) \aar_p(\pi_p)
\]
puisqu'on a $ \aar_p(\pi_p^\vee)= \aar_p(\pi_p)$, comme rappelé à la Définition \ref{Cond_rep_locales_def}. On a finalement :
\begin{align*}
\log {\rm N}(\pi^\vee \times \pi) &= \sum_p  \aar_p(\pi_p^\vee \times \pi_p) \log p\\
						   & \leq \sum_p (2n-1) \aar_p(\pi_p) \log p\\
						   & \leq (2n-1) \log {\rm N}(\pi).
\end{align*}

Si l'on raisonne à conducteur $N$ fixé, l'inégalité \eqref{inegalite_pi_times_pi} devient :
\[
\mathrm{B}_\infty^F (\pi,\pi) \leq \Phi_F(0) +\frac{1}{2} F(0) (2n-1)\log N.
\]

Le $n$ est celui du $\GL_n$ dont $\pi$ est une représentation automorphe. On remarque que c'est aussi la dimension de la représentation semi-simple $\Ll(\pi_v)$ de $\WD_{\Q_v}$ donnée par la correspondance de Langlands locale à chaque place $v$ de $\Q$.

En particulier, puisqu'on s'intéresse à des représentations algébriques, la représentation de $\W_\R$ associée est un élément de $K_\infty$ et l'application qui à un élément de $K_\infty$ associe sa dimension est \emph{linéaire}.
Nous avons donc une équation du type $q(V) \leq C + L(V)$ où $q$ (resp. $L$) est une forme quadratique (resp. linéaire) sur le $\Z$-module libre $K_\infty$ et $C$ est une constante. 
Si l'on se restreint à un sous-$\Z$-module de rang fini sur lequel la forme quadratique est définie positive, alors on n'a encore qu'un nombre fini de $V$ qui conviennent (une forme quadratique définie positive croît plus vite que n'importe quelle forme linéaire). Via le même résultat de Harish-Chandra, on obtient ainsi le Théorème A de \cite{Chen-HM}.

\subsection{Cas du conducteur 2}
Dans le cas qui nous intéresse principalement, celui du conducteur 2, on sait qu'on n'a affaire qu'à un seul type de ramification (c'est le Lemme \ref{lemme_cond_2_implique_type_I}), on peut donc calculer les conducteurs de paires explicitement. Cela ne change évidemment rien à l'énoncé \emph{théorique} de finitude, mais pour les calculs pratiques que nous ferons, toute amélioration est bonne à prendre.

Au-delà du seul conducteur ${\rm N}(\pi^\vee \times \pi)$ où $\pi$ est une représentation automorphe algébrique cuspidale de conducteur 2, nous considérons toutes les paires faisant intervenir des représentations automorphes cuspidales du groupe linéaire de conducteur 1 ou 2. On raisonne alors sur l'exposant d'Artin de la composante locale en 2 de chacune de ces représentations.

\begin{prop}\label{prop_cond_2}
Soient $\pi$ et $\pi'$ deux représentations automorphes de $\GL_m$ et $\GL_{m'}$ respectivement, de conducteur $1$.

Soient $\varpi$ et $\varpi'$ deux représentations automorphes de $\GL_n$ et $\GL_{n'}$ respectivement, de conducteur $2$.

Alors :
\begin{itemize}
\item $\aar_2(\pi \times \pi')=0$ ;
\item $\aar_2(\pi \times \varpi)=m$ ;
\item $\aar_2(\varpi \times \varpi')=n+n'-2$.
\end{itemize}
\end{prop}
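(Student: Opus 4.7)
The plan is to work entirely on the Galois side via the local Langlands correspondence for $\GL_n$ at the place $p=2$, and then use the additivity and explicit formulas for the Artin exponent of representations of the Weil--Deligne group established in §\ref{Conducteur d'une représentation du groupe de Weil-Deligne}. The point is that by Lemma \ref{lemme_cond_2_implique_type_I}, any representation of conductor $2$ is necessarily of type (I), so Proposition-Définition \ref{prop_types_I_et_II} gives completely explicit descriptions of the local parameters at $2$:
\[
\Ll(\pi_2)=\chi_1\oplus\cdots\oplus\chi_m,\qquad \Ll(\varpi_2)=\eta_1\oplus\cdots\oplus\eta_{n-2}\oplus(\psi\otimes U_2),
\]
with all the characters $\chi_i,\eta_j,\psi$ unramified, and analogously for $\pi'$ and $\varpi'$. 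Then $\aar_2(\mu\times\mu')=\aar_{\WD}(\Ll(\mu_2)\otimes\Ll(\mu'_2))$ by Definition \ref{Cond_rep_locales_def}, and it suffices to decompose these tensor products as sums of irreducibles and apply formula \eqref{awd_irr}.

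For the first identity, $\Ll(\pi_2)\otimes\Ll(\pi'_2)=\bigoplus_{i,j}\chi_i\chi'_j$ is a sum of unramified characters, each contributing $0$ to $\aar_{\WD}$, so $\aar_2(\pi\times\pi')=0$. For the second, the decomposition computed in the proof of Proposition \ref{prop_calcul_fonction_L_locale} gives
\[
\Ll(\pi_2)\otimes\Ll(\varpi_2)=\bigoplus_{i,j}\chi_i\eta_j\,\oplus\,\bigoplus_{i=1}^m(\chi_i\psi)\otimes U_2.
\]
The first sum contributes $0$; each of the $m$ terms $(\chi_i\psi)\otimes U_2$ has an unramified character tensored with $U_2$, hence by \eqref{awd_irr} contributes $d-1=1$. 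This yields $\aar_2(\pi\times\varpi)=m$.

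For the third identity, the same proof recalls
\[
\Ll(\varpi_2)\otimes\Ll(\varpi'_2)=\bigoplus_{i,j}\eta_i\eta'_j\,\oplus\,\bigoplus_i\eta_i\psi'\otimes U_2\,\oplus\,\bigoplus_j\eta'_j\psi\otimes U_2\,\oplus\,(\psi\psi'\otimes(U_1\oplus U_3)),
\]
obtained from $U_2\otimes U_2\simeq U_1\oplus U_3$. Applying \eqref{awd_irr} term by term, the unramified sums $\bigoplus_{i,j}\eta_i\eta'_j$ and $\psi\psi'\otimes U_1$ contribute $0$; the two middle families contribute $(n-2)\cdot 1$ and $(n'-2)\cdot 1$ respectively; and $\psi\psi'\otimes U_3$, having unramified $\psi\psi'$ and $d=3$, contributes $d-1=2$. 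Summing gives $(n-2)+(n'-2)+2=n+n'-2$, as asserted.

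There is no real obstacle here: once the local parameters have been made explicit via the classification in Proposition-Définition \ref{prop_types_I_et_II} (which relies crucially on the absence of tamely ramified characters of $\Q_2^\times$, Lemma \ref{lemme_cond_2_implique_type_I}), the argument is a bookkeeping of irreducible summands together with the two cases of \eqref{awd_irr}. The only point that requires a small amount of care is the term $\psi\psi'\otimes U_3$ in the self-paired case: it is tempting to forget it because both $\psi$ and $\psi'$ are unramified, but precisely because $U_3$ is non-trivial of odd dimension it contributes $2$ to the exponent, which is what makes the final formula $n+n'-2$ (rather than $n+n'-4$) come out correctly.
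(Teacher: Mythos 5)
Your proof is correct and follows essentially the same route as the paper's: both reduce to the explicit type (I) parameters at $2$ and apply formula \eqref{awd_irr} term by term to the decomposition of the tensor product (the paper merely shortcuts by restricting to the $\SU(2)$-factor, which is legitimate since all the Weil-group characters involved are unramified, whereas you keep the full Weil--Deligne parameter — the bookkeeping is identical). Your remark about the $\psi\psi'\otimes U_3$ term contributing $2$ is exactly the point the paper's computation also hinges on.
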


\begin{proof}
On note bien sûr $\aar_2(\pi \times \pi')$ pour $\aar_2(\pi_2 \times \pi'_2)$ \emph{et caetera similia}.

Nous avons le résultat bien connu $\aar_2(\pi \times \pi')=0$, dont on remarque d'ailleurs que c'est la borne donnée par l'inégalité d'Henniart.\ps 

On sait que $\varpi_2$ est de type (I) au sens de la Proposition-Définition \ref{prop_types_I_et_II} (avec un léger abus, \emph{stricto sensu} c'est $\varpi$ qui est de type (I)). Il suffit donc de considérer $\Ll(\varpi_2)$ restreint au sous-groupe $\{1\}\times \SU(2)$ du groupe de Weil-Deligne de $\Q_2$. 
On a alors :
\begin{align*}
(\Ll(\pi_2) \otimes \Ll(\varpi_2))_{|\SU(2)} &= m\1  \otimes ((n-2)\1\oplus U_2) \\
&= m(n-2)\1 \oplus m U_2
\end{align*}
dont l'exposant d'Artin est égal à $m(n-2)(1-1)+m(2-1)=m$ (c'est la formule \eqref{awd_irr} et l'additivité de l'exposant d'Artin). Cela donne donc $\aar_2(\pi \times \varpi)=m$, qui est là encore la borne donnée par l'inégalité d'Henniart.\ps 

Pour le troisième cas, considérons de même 
\begin{align*}
(\Ll(\varpi_2) \otimes \Ll(\varpi'_2))_{|\SU(2)} &= ((n-2)\1 \oplus U_2)  \otimes ((n'-2)\1\oplus U_2) \\
&= (n-2)(n'-2)\1 \oplus (n+n'-4) U_2 \oplus (U_2 \otimes U_2) \\
&= (n-2)(n'-2)\1 \oplus (n+n'-4) U_2 \oplus (\1 \oplus U_3) \\
&= [(n-2)(n'-2)+1]\1 \oplus (n+n'-4) U_2 \oplus U_3 
\end{align*}
dont l'exposant d'Artin est égal à $(n+n'-4)(2-1)+ (3-1)=n+n'-2$. Cela donne donc $\aar_2(\varpi \times \varpi')=n+n'-2$, là où la borne donnée par l'inégalité d'Henniart est $n+n'-1$.
\end{proof}

On obtient donc la précision suivante du Theorem A de \cite{Chen-HM} (voir aussi la Proposition \ref{Prop_inegalite_pi_times_pi}). 
\begin{cor}\label{cor_finitude_cond_2}

Soit $w$ un entier inférieur à $23$. Alors il n'existe qu'un nombre fini de $V \in K_\infty$ 
tels que $V \simeq \pi_\infty$ 
où $\pi$ est une représentation automorphe cuspidale algébrique de $\GL_n$ de conducteur $2$ et de poids motivique $w$.
En particulier, $n=\dim V$ et $V \in K_\infty^{\leq w}$. De tels $V$ vérifient :
\[
\mathrm{B}_\infty^F (V,V) \leq \Phi_F(0) + F(0) (\dim(V)-1)\log 2,
\]
pour toute fonction test $F$ positive et $\Phi$-positive.
\end{cor}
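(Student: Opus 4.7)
The plan is to assemble the corollary as a direct combination of the inequality of Proposition \ref{Prop_inegalite_pi_times_pi} with the explicit conductor computation of Proposition \ref{prop_cond_2}, and then to invoke positive definiteness of $\B_\infty^F$ on $K_\infty^{\leq w}$ (for a suitable test function $F$) to deduce finiteness.

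More precisely, fix $\pi$ a cuspidal automorphic algebraic representation of $\GL_n$ of conductor $2$, and let $V = \Ll(\pi_\infty) \in K_\infty$. By Lemma \ref{lemme_pi_infty_i_w}, if $\pi$ has motivic weight $w$, then $V \in K_\infty^{\leq w}$, and of course $\dim V = n$. By Corollary \ref{cor_cond_contragrediente} the contragredient $\pi^\vee$ also has conductor $2$, and by Lemma \ref{lemme_cond_2_implique_type_I} both $\pi_2$ and $\pi_2^\vee$ are of type (I). The third bullet of Proposition \ref{prop_cond_2} then yields $\aar_2(\pi^\vee \times \pi) = n + n - 2 = 2(n-1)$. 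Since $\pi$ and $\pi^\vee$ are unramified at every prime $\ell \neq 2$, one obtains
\[
\log {\rm N}(\pi^\vee \times \pi) = 2(\dim V - 1)\log 2.
\]
Substituting this into the inequality of Proposition \ref{Prop_inegalite_pi_times_pi} applied to the (multi-)pair $\{\pi,\pi\}$ and any positive, $\Phi$-positive test function $F$ gives the displayed bound
\[
\B_\infty^F(V,V) \leq \Phi_F(0) + F(0)(\dim V - 1)\log 2,
\]
using that $\B_\infty^F(\pi,\pi) = \B_\infty^F(V,V)$ by Definition \ref{defi_J_B_infty}.

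For the finiteness, I would argue as follows. The module $K_\infty^{\leq w}$ is free of finite rank (Definition \ref{defin_K_infty_leq_w}), and the symmetric $\Z$-bilinear form $\B_\infty^F$ extends canonically to a real quadratic form $q$ on $K_\infty^{\leq w} \otimes \R$. For $w \leq 22$ the positive definiteness of $q$ on $K_\infty^{\leq w}$ for an appropriate choice of Odlyzko function $F_\lambda$ is exactly the content of Lemmas IX.3.34 and IX.3.36 of \cite{Chen-Lannes}; for $w = 23$ the same conclusion (for a slightly different $\lambda$) is the content of \cite{Chen-HM}. Granting this, the displayed inequality reads $q(V) \leq C + L(V)$ where $L(V) = F(0)(\dim V - 1)\log 2$ is a linear form and $C = \Phi_F(0)$ is a constant. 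Since a positive definite quadratic form on a finite-dimensional real vector space dominates any linear form at infinity, the set of $V$ in the lattice $K_\infty^{\leq w}$ satisfying this inequality is finite.

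The only genuinely delicate point is that one must indeed invoke the positive definiteness of $\B_\infty^F$ all the way up to $w=23$, which for the boundary weight is not the most elementary part of \cite{Chen-Lannes} but a refinement carried out in \cite{Chen-HM}; everything else is purely formal bookkeeping on top of the material already developed in the chapter, so no genuine new difficulty arises.
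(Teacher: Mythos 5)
Your proposal is correct and follows essentially the same route as the paper: Lemma \ref{lemme_pi_infty_i_w} to place $V$ in $K_\infty^{\leq w}$, the exact conductor computation $\aar_2(\pi^\vee\times\pi)=2(\dim V-1)$ from Proposition \ref{prop_cond_2} fed into Proposition \ref{Prop_inegalite_pi_times_pi} to get the displayed inequality, and then positive definiteness of $\B_\infty^F$ on the lattice $K_\infty^{\leq w}$ (via \cite{Chen-HM}) dominating the linear term to conclude finiteness. The paper additionally restricts to effective $V$ with nonzero $I_w$-component, but as it notes this only strengthens the finiteness, so your argument is complete as stated.
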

\begin{proof}
Par le Lemme \ref{lemme_pi_infty_i_w}, de tels $V$ appartiennent plus précisément à $K_\infty^{\leq w}$ et sont effectifs. La finitude vient alors du fait que, pour un bon choix de $F$ (on renvoie à \cite{Chen-HM}), $\mathrm{B}_\infty^F$ est définie positive sur le réseau $K_\infty^{\leq w}$. On impose ici de plus que la composante en $I_w$ soit non nulle (puisque $\pi$ est de poids motivique exactement $w$), ce qui ne fait que renforcer la finitude. Cette dernière une fois établie, l'inégalité vaut alors pour \emph{toute} fonction test positive et $\Phi$-positive.
\end{proof}

\section{Raffinements}
À partir de maintenant, on se restreint aux représentations automorphes cuspidales algébriques de $\GL_n$ de conducteur 1 ou 2. Cela nous permet d'alléger les notations, quoique la discussion vaille de façon beaucoup plus générale. Le cas du conducteur $p$ de type (I) se traite rigoureusement de la même manière (\cf §\ref{En conducteur p>2}).

\begin{defi}
Soit $n$ un entier naturel strictement positif. On désigne par $\Pi_{\rm alg}^1(\GL_n)$ (resp. $\Pi_{\rm alg}^2(\GL_n)$) l'ensemble des représentations automorphes cuspidales algébriques de $\GL_n$ et de conducteur $1$ (resp. $2$). On pose alors 
\[
\Pi_{\rm alg}^i=\coprod_{n\geq 1} \Pi_{\rm alg}^i (\GL_n),
\]
avec $i \in \{1,2\}$ et $\Pi_{\rm alg}=\Pi_{\rm alg}^1 \coprod \Pi_{\rm alg}^2$.
\end{defi}

On fixe une fonction test $F$. La formule explicite s'écrit alors, pour un couple $(\pi,\pi') \in \Pi_{\rm alg} \times \Pi_{\rm alg}$ :
\[
\mathrm{B}_f^F(\pi,\pi')+\mathrm{B}_\infty^F(\pi,\pi')+\frac{1}{2}\mathrm{Z}^F(\pi,\pi')=\Phi_F(0)\delta_{\pi,\pi'}+\frac{\log 2}{2} F(0) \aar_2(\pi^\vee \times \pi').
\]

On peut étendre chacune des cinq fonctions de $\Pi_{\rm alg} \times \Pi_{\rm alg}$ dans $\R$ envoyant respectivement $(\pi,\pi')$ sur $\mathrm{B}_f^F(\pi,\pi'), \mathrm{B}_\infty^F(\pi,\pi'), \mathrm{Z}^F(\pi,\pi'), \delta_{\pi,\pi'}, \aar_2(\pi^\vee \times \pi')$ en une application bilinéaire symétrique sur le groupe libre $\Z[\Pi_{\rm alg}]$ et même sur le $\R$-espace vectoriel $\R[\Pi_{\rm alg}]$. On a alors l'égalité entre formes bilinéaires :
\begin{equation}\label{Formule_explicite_fbs}
\mathrm{B}_f^F+\mathrm{B}_\infty^F+\frac{1}{2}\mathrm{Z}^F=\Phi_F(0) \delta + \frac{\log 2}{2} F(0) \aar_2.
\end{equation}

On dira enfin qu'un élément de $\R[\Pi_{\rm alg}]$ est effectif s'il est à coefficients positifs.

\subsection{La partie finie}\label{La partie finie}
Le Lemme \ref{lemme_F_positive_B_f} nous a montré que, pour $\pi \in \Pi_{\rm alg}$ et $F$ fonction test positive, on avait $\mathrm{B}_f^F(\pi,\pi) \geq 0$. Nous pourrions en déduire aisément que, sous ces mêmes hypothèses, la \emph{forme bilinéaire symétrique} $\B_f^F$ définie sur $\R[\Pi_{\rm alg}]$ est positive. Puisque nous considérons des cas particuliers (représentations locales non ramifiées ou ramifiées de type (I) au sens de la Proposition-Définition \ref{prop_types_I_et_II}), il est intéressant d'étudier en détail cette forme bilinéaire et ainsi de \emph{raffiner} la positivité.\ps 

On se donne donc une fonction test \emph{positive} $F$ et on omet dans la suite de ce paragraphe l'exposant $F$. La première chose à remarquer est que l'on peut décomposer $\mathrm{B}_f$ selon chaque nombre premier $p$. On pose, avec les notations du paragraphe \ref{Énoncé particulier}, pour un nombre premier $p$, et pour deux représentations $\pi$ et $\pi'$ de $\Pi_{\rm alg}$ :
\begin{equation}\label{defi_B_p}
\mathrm{B}_p(\pi,\pi')= \sum_{j=1}^{M'} \sum_{k=1}^{+ \infty} F(k \log p) \frac{\log(p)}{p^{\frac{k}{2}}} \re(\alpha_j(p)^k),
\end{equation}
ce qui définit immédiatement $\mathrm{B}_p$ comme une forme bilinéaire symétrique sur $\R[\Pi_{\rm alg}]$. On a alors $\B_f=\sum_p \B_p$ en tant que forme bilinéaire symétrique.\medskip

\emph{Remarque :} Si $F$ est de support compact, alors la quantité $\mathrm{B}_p(\pi,\pi')$ est nulle pour tout $p$ suffisamment grand.

\begin{prop}\emph{(\cite{Chen-Lannes}, Corollaire 3.11)} \label{B_p_sym_pos}

Soit $F$ une fonction test positive.
Alors, à toute place finie $p\neq 2$, $\B_p$ est une forme bilinéaire symétrique \emph{positive} sur $\R[\Pi_{\rm alg}]$.
\end{prop}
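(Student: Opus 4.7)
The plan is to exploit the simple observation that at any place $p \neq 2$, every representation in $\Pi_{\rm alg}$ (of conductor $1$ or $2$) is automatically unramified at $p$. Under this condition the local bilinear form $\mathrm{B}_p$ reduces to something manifestly diagonal on the \og Satake side \fg{}, and positivity becomes a sum-of-squares identity.

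More precisely, first I would fix $p \neq 2$ and, for every $\pi \in \Pi_{\rm alg}$, write the Satake class $c_p(\pi) = \mathrm{diag}(\lambda_1(\pi,p),\cdots,\lambda_n(\pi,p))$ associated to the unramified representation $\pi_p$ as in \S\ref{Facteurs locaux}. Setting $a_k(\pi) = \mathrm{tr}(c_p(\pi)^k) \in \C$, the first formula of Proposition \ref{prop_calcul_fonction_L_locale} gives
\[
L_p(s,\pi^\vee \times \pi') = \det(1 - p^{-s}\,\overline{c_p(\pi)} \otimes c_p(\pi'))^{-1},
\]
so that in the notation of Definition \ref{defi préfonction lambda} one has $\sum_j \alpha_j(p)^k = \overline{a_k(\pi)}\, a_k(\pi')$. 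Substituting into \eqref{defi_B_p}, I obtain
\[
\mathrm{B}_p(\pi,\pi') = \sum_{k=1}^{+\infty} F(k\log p)\,\frac{\log p}{p^{k/2}}\,\re\!\bigl(\overline{a_k(\pi)}\,a_k(\pi')\bigr).
\]

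Next, given $x = \sum_\pi c_\pi \pi \in \R[\Pi_{\rm alg}]$ with real coefficients $c_\pi$, bilinearity yields
\[
\mathrm{B}_p(x,x) = \sum_{k=1}^{+\infty} F(k\log p)\,\frac{\log p}{p^{k/2}}\,\re\!\left(\sum_{\pi,\pi'} c_\pi c_{\pi'}\overline{a_k(\pi)} a_k(\pi')\right) = \sum_{k=1}^{+\infty} F(k\log p)\,\frac{\log p}{p^{k/2}}\,\Bigl|\sum_\pi c_\pi\, a_k(\pi)\Bigr|^2.
\]
Each factor on the right is non-negative: $F \geq 0$ by hypothesis, $\log p/p^{k/2} > 0$, and the last factor is a square of a modulus. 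Hence $\mathrm{B}_p(x,x) \geq 0$, which is the desired positivity.

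There is no genuine obstacle to this argument: the only subtlety is the role of the hypothesis $p \neq 2$, which ensures that \emph{every} $\pi \in \Pi_{\rm alg}$ is unramified at $p$ (so the uniform Satake description applies to all of them at once). At $p = 2$ this fails — the conductor-$2$ representations carry an extra $\psi \otimes U_2$ piece (Proposition-Définition \ref{prop_types_I_et_II}), and the third formula of Proposition \ref{prop_calcul_fonction_L_locale} produces an additional factor $(1-\psi(2)\psi'(2)\,2^{-s})^{-1}$ that spoils the clean sum-of-squares shape; this is precisely why the statement is restricted to $p \neq 2$.
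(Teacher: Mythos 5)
Your proof is correct and follows essentially the same route as the paper's: both reduce to the fact that for $p\neq 2$ every element of $\Pi_{\rm alg}$ is unramified at $p$, express $\mathrm{B}_p$ via traces of powers of Satake parameters (using unitarity to write $c_p(\pi^\vee)=\overline{c_p(\pi)}$), and conclude by the sum-of-squares identity $\mathrm{B}_p(x,x)=\sum_k F(k\log p)\frac{\log p}{p^{k/2}}\bigl|\sum_\pi c_\pi \mathrm{tr}(c_p(\pi)^k)\bigr|^2$. Your closing remark on why $p=2$ is excluded also matches the paper's subsequent treatment of $\mathrm{B}_2$.
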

\begin{proof}
Soit $\pi \in \R[\Pi_{\rm alg}]$. Alors $\pi$ s'écrit sous la forme $\sum_{i=1}^r \lambda_i \pi_i$ avec $\lambda_i \in \R$ et $\pi_i \in \Pi_{\rm alg}$. 
En dernier lieu, il faut donc calculer $\B_p(\pi_i,\pi_j)$, pour tous $i,j \in \{1,\cdots,r\}$.

Cette dernière quantité ne dépend que des représentations locales $\pi_{i,p}$ et $\pi_{j,p}$, non ramifiées par hypothèse. La correspondance de Langlands associe donc $\Ll(\pi_{i,p})=\bigoplus_{a=1}^{n_i} \chi_a$ somme de caractères non ramifiés de $\Q_p^\times$. De même, $\Ll(\pi_{j,p})=\bigoplus_{b=1}^{n_j} \psi_b$. On a donc, selon les calculs effectués au §\ref{prop_calcul_fonction_L_locale} :
\[
L_p(s,\pi_i^\vee \times \pi_j)=\prod_{a=1}^{n_i} \prod_{b=1}^{n_j} \frac{1}{1-\overline{\chi_a(p)}\psi_b(p) p^{-s}}=\frac{1}{\det(1-p^{-s}\overline{c_p(\pi_i)}\otimes c_p(\pi_j))},
\]
en utilisant de nouveau le fait que $\pi_i$ est unitaire à toute place et avec la notation $c_p$ de \eqref{fonction_L_rep_loc_nr}.\ps 

Alors 
\begin{align*}
\B_p(\pi_i,\pi_j)&=\sum_{a=1}^{n_i} \sum_{b=1}^{n_j} \sum_{k=1}^{+ \infty} F(k \log p) \frac{\log(p)}{p^{\frac{k}{2}}} \re(\overline{\chi_a(p)}^k \psi_b(p)^k) \\
				&=\sum_{k=1}^{+ \infty} F(k \log p) \frac{\log(p)}{p^{\frac{k}{2}}}
\re \left[\overline{\mathrm{tr} (c_p(\pi_i)^k)} \mathrm{tr} (c_p(\pi_j)^k) \right],
\end{align*}
puisque $c_p(\pi_i)$ est la classe de conjugaison semi-simple dans $\GL_{n_i}(\C)$ de l'élément $\mathrm{diag}(\chi_1(p), \cdots, \chi_{n_i}(p))$, et de même pour $c_p(\pi_j)$.\ps 

%
%

On a donc :
\begin{align*}
\B_p (\pi,\pi) &=\sum_{i,j=1}^r \lambda_i \lambda_j \B_p(\pi_i,\pi_j) \\
			   &=\sum_{i,j=1}^r \lambda_i \lambda_j \sum_{k=1}^{+ \infty} F(k \log p) \frac{\log(p)}{p^{\frac{k}{2}}} \re \left[\overline{\mathrm{tr} (c_p(\pi_i)^k)} \mathrm{tr} (c_p(\pi_j)^k) \right] \\
			   &=\sum_{k=1}^{+ \infty} F(k \log p) \frac{\log(p)}{p^{\frac{k}{2}}} \re \sum_{i,j=1}^r  \left[\overline{\lambda_i \mathrm{tr} (c_p(\pi_i)^k)}  \lambda_j \mathrm{tr} (c_p(\pi_j)^k) \right] \\
			   &=\sum_{k=1}^{+ \infty} F(k \log p) \frac{\log(p)}{p^{\frac{k}{2}}} \left| \sum_{i=1}^r  \lambda_i \mathrm{tr} (c_p(\pi_i)^k) \right|^2, 
\end{align*}
quantité qui est bien positive sous l'hypothèse que $F$ l'est.
\end{proof}

Cette proposition nous montre que, dans le cadre du conducteur 1 étudié dans \cite{Chen-Lannes} et dans \cite{Chen-Ta}, la forme bilinéaire symétrique $\B_f$ est positive sur $\R[\Pi_{\rm alg}^1]$. Nous voulons ici considérer également les représentations de conducteur 2, il nous faut donc étudier en détail ce qu'il se passe à la place 2.


\begin{defi}\label{defi_B_2_calc}
Soient $\pi$ et $\pi'$ deux représentations automorphes de $\GL_m$ et $\GL_{m'}$ respectivement, de conducteur $1$.

Soient $\varpi$ et $\varpi'$ deux représentations automorphes de $\GL_n$ et $\GL_{n'}$ respectivement, de conducteur $2$.

On sait que $\varpi_2$ et $\varpi'_2$ sont de type (I) donc on peut écrire :
\begin{align*}
\Ll(\varpi_2)&=\eta_1 \oplus \cdots \oplus \eta_{n-2} \oplus (\psi \otimes U_2), \\
\Ll(\varpi'_2)&=\eta'_1 \oplus \cdots \oplus \eta'_{n'-2} \oplus (\psi' \otimes U_2),
\end{align*}
avec tous les caractères intervenant non ramifiés.\ps 

On définit alors l'application $\B_2^{\rm calc}$ par :
\begin{align*}
&\B_2^{\rm calc}(\pi,\pi')=0, \\
&\B_2^{\rm calc}(\pi,\varpi)=\B_2^{\rm calc}(\varpi,\pi)=0, \\
&\B_2^{\rm calc}(\varpi,\varpi')=\sum_{k=1}^{+ \infty} F(k \log 2) \frac{\log(2)}{2^{\frac{k}{2}}} \re (\overline{\psi(2)^k}\psi'(2)^k).
\end{align*}
Cette application est symétrique et s'étend en une forme bilinéaire symétrique sur $\R[\Pi_{\rm alg}]$.
\end{defi}

Il faut remarquer que la quantité $\B_2^{\rm calc}$ est liée aux caractères non ramifiés $\psi$ et $\psi'$ que l'on peut également \emph{lire} sur le facteur epsilon (\cf \eqref{eps_local_varpi_p} et Proposition \ref{prop_facteur_eps_p_paire}).

\begin{lemme}\label{B_2_calc_pos}
Soit $F$ une fonction test positive. Alors, la forme bilinéaire symétrique $\B_2^{\rm calc}$ est positive sur $\R[\Pi_{\rm alg}]$.
\end{lemme}
\begin{proof}
Comme dans la preuve de la Proposition \ref{B_p_sym_pos}, on se donne $\pi=\sum \lambda_i \pi_i + \sum \mu_j \varpi_j$ où pour tout $i,\, \lambda_i \in \R ,\, \pi_i \in \Pi_{\rm alg}^1$ et pour tout $j,\, \mu_j \in \R ,\, \varpi_j \in \Pi_{\rm alg}^2$.

On peut développer linéairement $\B_2^{\rm calc} (\pi,\pi)$ et, par définition seuls les termes qui font intervenir deux représentations de conducteur 2 sont non nuls.
On a alors, en reprenant les notations de la Définition \ref{defi_B_2_calc} :
\begin{align*}
\B_2^{\rm calc} (\pi,\pi) &=\sum_{j,j'} \mu_j \mu_{j'} \B_2^{\rm calc}(\varpi_j,\varpi_{j'}) \\
			   &=\sum_{j,j'} \mu_j \mu_{j'} \sum_{k=1}^{+ \infty} F(k \log 2) \frac{\log(2)}{2^{\frac{k}{2}}} \re (\overline{\psi_j(2)^k}\psi_{j'}(2)^k) \\
			   &=\sum_{k=1}^{+ \infty} F(k \log 2) \frac{\log(2)}{2^{\frac{k}{2}}} \re \sum_{j,j'} (\overline{\mu_j \psi_j(2)^k} \mu_{j'}\psi_{j'}(2)^k) \\
			   &=\sum_{k=1}^{+ \infty} F(k \log 2) \frac{\log(2)}{2^{\frac{k}{2}}} \left| \sum_{j} \mu_j \psi_j(2)^k \right|^2, 
\end{align*}
quantité qui est bien positive sous l'hypothèse que $F$ l'est.
\end{proof}


\begin{prop}
Soit $F$ une fonction test positive.
On peut décomposer la forme bilinéaire symétrique $\B_2$ définie sur $\R[\Pi_{\rm alg}]$ en somme de deux formes bilinéaires symétriques positives $\B_2^{\rm non\,calc}$ et $\B_2^{\rm calc}$, où $\B_2^{\rm calc}$ est donnée par la Définition \ref{defi_B_2_calc}.
\end{prop}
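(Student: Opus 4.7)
The plan is to explicitly compute $\B_2 - \B_2^{\rm calc}$ and show it has the same structure as the forms $\B_p$ ($p \neq 2$) treated in Proposition \ref{B_p_sym_pos}, so that the same trick yields positivity.

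First I would use Proposition \ref{prop_calcul_fonction_L_locale} to rewrite the local factor $L_2(s, \pi^\vee \times \pi')$ for each of the three cases (both unramified, mixed, both ramified of type (I)) in the uniform shape $\det(1 - 2^{-s}\,\overline{\gamma_2(\pi)}\otimes\gamma_2(\pi'))^{-1}$, \emph{times} an extra Euler factor $(1-\overline{\psi(2)}\psi'(2)\,2^{-s})^{-1}$ appearing only when both $\pi$ and $\pi'$ are ramified of type (I). Here I set, for $\pi$ of conductor $1$, $\gamma_2(\pi) := c_2(\pi_2)$ (its Satake class in $\GL_{\dim\pi}(\C)$), and for $\varpi$ of conductor $2$ of type (I), $\gamma_2(\varpi) := d_2(\varpi_2)$ (the class in $\GL_{\dim\varpi - 1}(\C)$ introduced after \eqref{fonction_L_rep_loc_ram_type_I}). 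The fact that $\overline{c_2(\pi_2)} = c_2(\pi_2^\vee)$ and $\overline{d_2(\varpi_2)} = d_2(\varpi_2^\vee)$ follows from the unitarity of the local components (these representations are algebraic, hence centered, hence unitary at every place).

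Taking the logarithmic derivative, the coefficients $\alpha_j(2)^k$ appearing in the definition \eqref{defi_B_p} of $\B_2(\pi,\pi')$ split accordingly into two contributions. The \emph{first} piece gives rise to the quantity
\[
\B_2^{\rm non\,calc}(\pi,\pi') := \sum_{k=1}^{+\infty} F(k\log 2)\,\frac{\log 2}{2^{k/2}}\,\re\left[\overline{\mathrm{tr}(\gamma_2(\pi)^k)}\,\mathrm{tr}(\gamma_2(\pi')^k)\right],
\]
defined uniformly on $\Pi_{\rm alg}$, while the \emph{second} piece (nonzero only when both $\pi$ and $\pi'$ are ramified) reproduces exactly $\B_2^{\rm calc}(\pi,\pi')$ from Définition \ref{defi_B_2_calc}. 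By bilinearity this gives the decomposition $\B_2 = \B_2^{\rm non\,calc} + \B_2^{\rm calc}$ on $\R[\Pi_{\rm alg}]$, and $\B_2^{\rm non\,calc}$ is visibly a symmetric bilinear form.

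Positivity of $\B_2^{\rm calc}$ is Lemme \ref{B_2_calc_pos}. For $\B_2^{\rm non\,calc}$, I would mimic \emph{verbatim} the computation at the end of the proof of Proposition \ref{B_p_sym_pos}: writing $\pi = \sum_i \lambda_i \pi_i$ (now with $\pi_i \in \Pi_{\rm alg}$ of either conductor), one gets
\[
\B_2^{\rm non\,calc}(\pi,\pi) = \sum_{k=1}^{+\infty} F(k\log 2)\,\frac{\log 2}{2^{k/2}}\,\left|\sum_i \lambda_i\,\mathrm{tr}(\gamma_2(\pi_i)^k)\right|^2 \geq 0
\]
whenever $F$ is positive. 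The only mild subtlety, which is essentially a bookkeeping exercise rather than a genuine obstacle, is to check that the extraction of the extra Euler factor $(1-\overline{\psi(2)}\psi'(2)\,2^{-s})^{-1}$ in the ramified/ramified case leaves a local $L$-factor of the pure tensor form $\det(1-2^{-s}\,\overline{d_2(\varpi)}\otimes d_2(\varpi'))^{-1}$; this is exactly the content of the last step of the proof of Proposition \ref{prop_calcul_fonction_L_locale}, where $L_p^{\rm I} L_p^{\rm II}$ combined with the second factor of $L_p^{\rm III}$ is identified with that determinant.
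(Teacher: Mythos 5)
Your proposal is correct and follows essentially the same route as the paper: identify the extra Euler factor $(1-\overline{\psi(2)}\psi'(2)2^{-s})^{-1}$ appearing only in the ramified/ramified case as the source of $\B_2^{\rm calc}$, and recognize the remaining contribution, expressed uniformly via the classes $c_2$ and $d_2$, as a sum of terms $F(k\log 2)\frac{\log 2}{2^{k/2}}\bigl|\sum_i \lambda_i \mathrm{tr}(\gamma_2(\pi_i)^k)\bigr|^2$, hence positive for $F$ positive. The paper simply spells out the three cases (unramified/unramified, mixed, ramified/ramified) one by one before assembling the same square, so your uniform $\gamma_2$ notation is only a cosmetic repackaging.
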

\begin{proof}
Les formes bilinéaires symétriques $\B_2$ et $\B_2^{\rm calc}$ ont été définies en \eqref{defi_B_p} et Définition \ref{defi_B_2_calc} respectivement. La forme bilinéaire symétrique $\B_2^{\rm non\,calc}$ est donc entièrement déterminée par leur différence. Il reste à voir qu'elle est positive.\ps 

Considérons d'abord ses valeurs sur la base naturelle de $\R[\Pi_{\rm alg}]$. Soient $\pi$ et $\pi'$ deux représentations automorphes de $\GL_m$ et $\GL_{m'}$ respectivement, de conducteur 1.
Soient $\varpi$ et $\varpi'$ deux représentations automorphes de $\GL_n$ et $\GL_{n'}$ respectivement, de conducteur 2.\ps 

On a $\B_2^{\rm non\,calc}(\pi,\pi')=\B_2(\pi,\pi')$ car $\B_2^{\rm calc}$ est nulle sur ce couple. De plus, $\pi$ et $\pi'$ sont non ramifiées en 2, et tout se déroule alors comme dans la Proposition \ref{B_p_sym_pos}.\ps 

On a $\B_2^{\rm non\,calc}(\pi,\varpi)=\B_2(\pi,\varpi)$ car $\B_2^{\rm calc}$ est nulle sur ce couple. Néanmoins, puisque $\varpi$ est ramifiée en 2, les calculs de la Proposition \ref{B_p_sym_pos} ne s'appliquent pas. On sait que l'on peut écrire :
\begin{align*}
\Ll(\pi_{2})&=\chi_1 \oplus \cdots \oplus \chi_{m}, \\
\Ll(\varpi_{2})&=\eta_1 \oplus \cdots \oplus \eta_{n-2} \oplus (\psi \otimes U_2),
\end{align*}
avec tous les caractères intervenant non ramifiés.

On a alors, selon la Proposition \ref{prop_calcul_fonction_L_locale} :
\begin{align*}
L_2(s,\pi^\vee \times \varpi)&=\prod_{a=1}^{m} \left( \prod_{b=1}^{n-2} \frac{1}{1-\overline{\chi_a(2)}\eta_b(2) 2^{-s}} \right) \frac{1}{1-\overline{\chi_a(2)}\psi(2) 2^{-\frac{1}{2}-s}} \\
							&=\frac{1}{\det(1-2^{-s}c_2(\pi)\otimes d_2(\varpi))},
\end{align*}
où l'on rappelle que, par analogie avec $c_2(\pi)$ qui désigne la classe de conjugaison semi-simple dans $\GL_{m}(\C)$ de $\mathrm{diag}(\chi_1(2),\cdots,\chi_{m}(2))$, on note $d_2(\varpi)$ la classe de conjugaison semi-simple dans $\GL_{n-1}(\C)$ de $\mathrm{diag}(\eta_1(2),\cdots,\eta_{n-2}(2),\psi(2)2^{-\frac{1}{2}})$ (\cf \eqref{fonction_L_rep_loc_nr}).

On peut donc écrire
\begin{align*}
\B_2^{\rm non\,calc}(\pi,\varpi)&=\sum_{a=1}^{m} \sum_{k=1}^{+ \infty} F(k \log 2) \frac{\log(2)}{2^{\frac{k}{2}}}    \re \left( \sum_{b=1}^{n-2}\overline{\chi_a(2)}^k \eta_b(2)^k + \overline{\chi_a(2)}^k 2^{-k/2}\psi(2)^k \right) \\
									&=\sum_{k=1}^{+ \infty} F(k \log 2) \frac{\log(2)}{2^{\frac{k}{2}}}
\re \left[\overline{\mathrm{tr} (c_2(\pi)^k)} \mathrm{tr} (d_2(\varpi)^k) \right].
\end{align*}\ps 

Enfin, il nous faut déterminer $\B_2^{\rm non\,calc}(\varpi,\varpi')=\B_2(\varpi,\varpi')-\B_2^{\rm calc}(\varpi,\varpi')$. Le second terme est donné par la Définition \ref{defi_B_2_calc}. On a :
\begin{align*}
\Ll(\varpi_{2})&=\eta_1 \oplus \cdots \oplus \eta_{n-2} \oplus (\psi \otimes U_2), \\
\Ll(\varpi'_{2})&=\eta'_1 \oplus \cdots \oplus \eta'_{n'-2} \oplus (\psi' \otimes U_2),
\end{align*}
puis, selon la Proposition \ref{prop_calcul_fonction_L_locale}
\[
L_2(\varpi_2\times \varpi'_2)=\frac{1}{\det(1-2^{-s}d_2(\varpi)\otimes d_2(\varpi'))}\cdot \frac{1}{1-\psi(2)\psi'(2)2^{-s}},
\]
%
%
d'où l'on tire
\begin{align*}
\B_2(\varpi,\varpi')&=\sum_{k=1}^{+ \infty} F(k \log 2) \frac{\log(2)}{2^{\frac{k}{2}}}
\re \left[\overline{\mathrm{tr} (d_2(\varpi)^k)} \mathrm{tr} (d_2(\varpi')^k)  + (\overline{\psi(2)}\psi'(2))^k\right] \\
						&=\B_2^{\rm non\,calc}(\varpi,\varpi')+\B_2^{\rm calc}(\varpi,\varpi')
\end{align*}
avec 
\[
\B_2^{\rm non\,calc}(\varpi,\varpi')=\sum_{k=1}^{+ \infty} F(k \log 2) \frac{\log(2)}{2^{\frac{k}{2}}}
\re \left[\overline{\mathrm{tr} (d_2(\varpi)^k)} \mathrm{tr} (d_2(\varpi')^k) \right].
\]

Ainsi, nous avons déterminé $\B_2^{\rm non\,calc}$ sur la base naturelle de $\R[\Pi_{\rm alg}]$, il nous reste à voir qu'elle est positive comme forme bilinéaire symétrique. On se donne donc, comme dans la preuve du Lemme \ref{B_2_calc_pos}, $\pi=\sum \lambda_i \pi_i + \sum \mu_j \varpi_j$ où pour tout $i \in \{1,\cdots r\}, \lambda_i \in \R , \pi_i \in \Pi_{\rm alg}^1$ et pour tout $j \in \{1,\cdots s\}, \mu_j \in \R , \varpi_j \in \Pi_{\rm alg}^2$. On trouve alors :
\[
\B_2^{\rm non\,calc}(\pi,\pi)=\sum_{k=1}^{+ \infty} F(k \log 2) \frac{\log(2)}{2^{\frac{k}{2}}} \left| \sum_{i=1}^r  \lambda_i \mathrm{tr} (c_2(\pi_i)^k) + \sum_{j=1}^s \mu_j \mathrm{tr} (d_2(\varpi_j)^k) \right|^2,
\]
qui est bien une quantité positive, sous l'hypothèse que $F$ l'est.

Ainsi $\B_2$, somme de deux formes bilinéaires symétriques positives, est positive.
\end{proof}

Nous avons finalement montré en détail la 

\begin{prop}\label{cor_B_f_pos}
Soit $F$ une fonction test positive.
La forme bilinéaire symétrique $\B_f$ est positive sur $\R[\Pi_{\rm alg}]$.
\end{prop}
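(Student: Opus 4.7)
Le plan est d'exploiter la décomposition de $\B_f$ en somme de contributions locales $\B_p$ sur les nombres premiers $p$, suivant la définition \eqref{defi_B_p}. Cette décomposition se réécrit, en tant qu'égalité de formes bilinéaires symétriques sur $\R[\Pi_{\rm alg}]$, sous la forme
\[
\B_f = \sum_{p \neq 2} \B_p + \B_2^{\rm non\,calc} + \B_2^{\rm calc},
\]
où l'on a appliqué la décomposition supplémentaire $\B_2 = \B_2^{\rm non\,calc} + \B_2^{\rm calc}$ établie à la Proposition précédant l'énoncé.

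Il suffit alors de rassembler les trois résultats de positivité déjà démontrés. La Proposition \ref{B_p_sym_pos} assure que chaque $\B_p$ avec $p \neq 2$ est positive sur $\R[\Pi_{\rm alg}]$ (le calcul \emph{loc. cit.}, qui n'utilise que la non-ramification des représentations considérées en $p$, donne bien la réécriture en un carré hermitien). Le Lemme \ref{B_2_calc_pos} donne la positivité de $\B_2^{\rm calc}$, et la Proposition précédant l'énoncé donne celle de $\B_2^{\rm non\,calc}$ (via la réécriture en carré faisant intervenir les quantités $c_2(\pi_i)$ et $d_2(\varpi_j)$). La somme d'une famille (éventuellement infinie, mais convergente terme à terme grâce aux hypothèses de croissance de la fonction test) de formes bilinéaires symétriques positives étant positive, on conclut.

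Il n'y a ici pas de réelle difficulté : l'essentiel du travail a déjà été effectué dans les trois énoncés intermédiaires, et la preuve proposée se réduit à invoquer ces résultats. La seule vérification à garder à l'esprit concerne la convergence absolue de la somme $\sum_p \B_p(\pi,\pi)$ pour $\pi \in \R[\Pi_{\rm alg}]$ fixée, qui découle du fait que pour $F$ fonction test, la quantité $\B_p(\pi,\pi)$ est contrôlée place par place par les hypothèses de croissance de la Définition \ref{defi préfonction lambda}, et même nulle pour $p$ assez grand lorsque $F$ est à support compact (ce qui sera toujours le cas en pratique avec la fonction d'Odlyzko).
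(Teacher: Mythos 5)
Votre preuve est correcte et suit exactement la démarche du texte : la Proposition est présentée dans le papier comme la simple synthèse de la décomposition $\B_f=\sum_p \B_p$ avec la Proposition \ref{B_p_sym_pos} pour $p\neq 2$ et la décomposition $\B_2=\B_2^{\rm non\,calc}+\B_2^{\rm calc}$ en deux formes positives. Votre remarque sur la convergence de la somme sur $p$ (nullité de $\B_p$ pour $p$ grand lorsque $F$ est à support compact) est un soin supplémentaire bienvenu mais non nécessaire.
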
 

\subsection{La multiplicité de Taïbi}
\begin{defi}
Soit $V$ un élément effectif de $K_\infty$. On note $\m_1(V)$ (resp. $\m_2(V)$) le nombre de représentations $\pi$ dans $\Pi_{\rm alg}^1$ (resp. dans $\Pi_{\rm alg}^2$) vérifiant $\Ll(\pi_\infty) \simeq V$. Le résultat de Harish-Chandra déjà cité nous dit que $\m_1(V)$ et $\m_2(V)$ sont finis.
\end{defi}

Nous allons voir une version \emph{quantitative} de ce résultat

Fixons une fonction test $F$ positive et $\Phi$-positive (comme la fonction d'Odlyz\-ko). On suppose de plus que $F(0) \neq 0$ et donc, sans nuire à la généralité, que $F(0)=1$ (ce qui est le cas de la fonction d'Odlyzko). Nous omettons dans la suite de ce paragraphe les exposants et indices $F$.

\begin{thm} \emph{Multiplicité de Taïbi (Généralisation de \cite{Chen-Lannes} IX. Corollaire 3.13)} \label{Multiplicité de Taibi}

Soit $V$ un élément effectif de $K_\infty$. Alors :
\begin{enumerate}
\item $\m_1(V) \mathrm{B}_\infty(V,V) \leq \Phi(0)$ ;\ps 
\item $\m_2(V) (\mathrm{B}_\infty(V,V)-\log 2 (\dim V-1)) \leq \Phi(0)$ ;\ps 
\item Si $\m_1(V) + \m_2(V)>0$,
\begin{multline*}
(\m_1(V) + \m_2(V)) \mathrm{B}_\infty^F(V,V) \leq \Phi_F(0) \\
+ (\log 2) \m_2(V)\left(\dim V-\frac{\m_2(V)}{\m_1(V)+\m_2(V)}\right).
\end{multline*}

\end{enumerate}
%
\end{thm}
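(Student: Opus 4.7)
The three statements have the same structure and follow from evaluating the bilinear identity \eqref{Formule_explicite_fbs} on a cleverly chosen diagonal and using the positivity results already established. Let me sketch the plan.

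For each part, I will introduce the finite formal sum
\[
\sigma_1 = \sum_{\substack{\pi \in \Pi_{\rm alg}^1 \\ \Ll(\pi_\infty)\simeq V}} \pi, \qquad \sigma_2 = \sum_{\substack{\pi \in \Pi_{\rm alg}^2 \\ \Ll(\pi_\infty)\simeq V}} \pi, \qquad \sigma = \sigma_1+\sigma_2,
\]
viewed as elements of $\R[\Pi_{\rm alg}]$, having respectively $\m_1(V)$, $\m_2(V)$ and $M:=\m_1(V)+\m_2(V)$ terms. The key point is that each summand $\pi$ has the same archimedean component (with parameter $V$), and in particular $\dim \pi = \dim V$; moreover, the sums are made of pairwise distinct elements of $\Pi_{\rm alg}$.

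Evaluating \eqref{Formule_explicite_fbs} on $(\sigma_1,\sigma_1)$, the four terms compute as follows. First, $\B_\infty(\sigma_1,\sigma_1) = \m_1(V)^2\,\B_\infty(V,V)$ by bilinearity, since $\B_\infty$ depends only on the archimedean parameters. Second, $\delta(\sigma_1,\sigma_1) = \m_1(V)$, because the summands are pairwise distinct. Third, $\aar_2(\sigma_1,\sigma_1)=0$ by the first bullet of Proposition \ref{prop_cond_2}. Finally, Proposition \ref{cor_B_f_pos} (positivity of $\B_f$) and Lemma \ref{lemme_F_Phi_positive} (positivity of $\Z^F$ for $F$ a $\Phi$-positive test function, which is part of the standing assumption on $F$) yield $\B_f(\sigma_1,\sigma_1)\geq 0$ and $\Z(\sigma_1,\sigma_1)\geq 0$. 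Combining and dividing by $\m_1(V)$ gives statement~1.

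For statement~2, the same computation applies to $(\sigma_2,\sigma_2)$, with two modifications. The $\delta$-term gives $\m_2(V)$, and the third bullet of Proposition \ref{prop_cond_2} yields $\aar_2(\pi^\vee\times \pi') = 2\dim V - 2$ for any two conductor-$2$ representations of dimension $\dim V$, hence $\aar_2(\sigma_2,\sigma_2) = 2\m_2(V)^2(\dim V - 1)$. Plugging in and dividing by $\m_2(V)$ gives statement~2. For statement~3, we evaluate on $(\sigma,\sigma)$ and the only new ingredient is the mixed term: by bilinearity,
\[
\aar_2(\sigma,\sigma) = 2\m_1(V)\m_2(V)\cdot \dim V + \m_2(V)^2\cdot 2(\dim V - 1) = 2\m_2(V)\bigl[M\dim V - \m_2(V)\bigr],
\]
using the second and third bullets of Proposition \ref{prop_cond_2}. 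Substituting into the explicit formula, discarding the nonnegative terms $\B_f$ and $\tfrac12 \Z$, and dividing by $M$ gives exactly the bound of statement~3. There is no genuine obstacle here: once Propositions \ref{prop_cond_2} and \ref{cor_B_f_pos} and Lemma \ref{lemme_F_Phi_positive} are in hand, the entire argument is a direct bilinear evaluation, and statements~1 and~2 appear as the two \emph{a priori} looser consequences obtained by restricting to $\sigma_1$ or $\sigma_2$ only.
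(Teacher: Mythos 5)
Your proposal is correct and follows essentially the same route as the paper: the paper forms the effective element $\pi_1+\cdots+\pi_r+\varpi_1+\cdots+\varpi_s$ (your $\sigma_1+\sigma_2$, with $r=\m_1(V)$, $s=\m_2(V)$ in the extreme cases), evaluates the explicit formula on it, discards the nonnegative terms $\B_f$ and $\tfrac12\Z$, and computes $\aar_2$ via Proposition \ref{prop_cond_2} exactly as you do, obtaining the three statements by specializing to $s=0$, $r=0$, or neither. The only cosmetic difference is that the paper keeps $r,s$ generic before specializing (and notes the trivial cases $\m_i(V)=0$), whereas you work directly with the full sums.
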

\begin{proof}
Supposons disposer de $r$ représentations distinctes $\pi_1, \cdots, \pi_{r}$ de $\Pi_{\rm alg}^1$ et de $s$ représentations distinctes $\varpi_1, \cdots, \varpi_{s}$ de $\Pi_{\rm alg}^2$ telles que $\Ll(\pi_{i,\infty})\simeq \Ll(\varpi_{j,\infty})\simeq V$ pour tous $i,j$. En particulier, ce sont des représentations automorphes de $\GL_n$ sur $\Q$ pour $n= \dim V$. On considère l'élément $\pi=\pi_1 + \cdots + \pi_{r} + \varpi_1 + \cdots + \varpi_{s}$ de $\R[\Pi_{\rm alg}]$.

La formule explicite nous donne :
\begin{equation*}
\mathrm{B}_f(\pi,\pi)+\mathrm{B}_\infty(\pi,\pi)+\frac{1}{2}\mathrm{Z}(\pi,\pi)=\Phi_F(0) \delta(\pi,\pi) + \frac{\log 2}{2} \aar_2(\pi,\pi).
\end{equation*}

Examinons chacun des termes de cette égalité.
\begin{itemize}
\item La Proposition \ref{cor_B_f_pos} nous dit que la quantité $\mathrm{B}_f(\pi,\pi)$ est positive.
\item La quantité $\mathrm{B}_\infty(\pi,\pi)$ ne dépend que de $\pi_\infty=V^{\oplus(r+s)}$ donc $\mathrm{B}_\infty(\pi,\pi)=(r+s)^2 \mathrm{B}_\infty(V,V) $.
\item Le Lemme \ref{lemme_F_Phi_positive} et le fait que l'élément $\pi$ soit effectif nous indiquent que la quantité $\mathrm{Z}(\pi,\pi)$ est positive. 
\item Puisque les représentations $\pi_i$ et $\varpi_j$ sont supposées deux à deux distinctes, on a $\delta(\pi,\pi)=r+s$.
\item La Proposition \ref{prop_cond_2} nous indique que 
\begin{align*}
\aar_2(\pi,\pi)&=\sum_{i,i'} \aar_2(\pi_i,\pi_{i'}) + 2 \sum_{i,j} \aar_2(\pi_i,\varpi_j) + \sum_{j,j'} \aar_2(\varpi_j,\varpi_{j'}) \\
			   &=\sum_{i,i'} 0 + 2 \sum_{i,j} n + \sum_{j,j'} (2n-2) \\
			   &= 2 n rs + s^2 (2n-2).
\end{align*}
\end{itemize}

Nous avons donc :
\begin{equation}\label{temporaire}
(r+s)^2 \mathrm{B}_\infty(V,V) \leq (r+s)\Phi(0) + (\log 2)s((r+s)n-s).
\end{equation}

L'inégalité 1. s'obtient donc avec $s=0$, \ie en ne considérant que les représentations de conducteur 1 et en divisant par $r$ (si $r>0$ ; si $\m_1(V)=0$, il n'y a rien à montrer) : c'est le Corollaire IX.3.13 de \cite{Chen-Lannes}.\ps 

L'inégalité 2. s'obtient avec $r=0$, \ie en ne considérant que les représentations de conducteur 2, et en divisant par $s$ (si $s>0$ ; si $\m_2(V)=0$, il n'y a rien à montrer).\ps 

L'inégalité 3. s'obtient en divisant \eqref{temporaire} par $r+s$.
\end{proof}

Nous avons mentionné au paragraphe \ref{Cas du conducteur 1} que l'on pouvait trouver une fonction $F$ telle que la forme bilinéaire symétrique $\B_\infty^F$ soit définie positive sur $K_\infty^{\leq 23}$ (voir \cite{Chen-HM}). En considérant des éléments \emph{effectifs} dans ces sous-$\Z$-modules, on redémontre donc dans le cas du conducteur 1 la finitude de Harish-Chandra déjà mentionnée, dans une version \emph{quantitative}.

Pour le conducteur 2, il faut trouver une fonction $F$ telle que la quantité $(\mathrm{B}_\infty^F(V,V)-\log 2 (\dim V-1))$ soit strictement positive, nous verrons que c'est possible en pratique pour les cas étudiés (jusqu'au poids motivique pair 16 et jusqu'au poids motivique impair 21).

Le corollaire suivant, qui repose sur la simple observation, déjà faite, que $\Ll(\pi_\infty) \simeq \Ll(\pi_\infty^\vee)$ nous rendra de grands services. On le rapprochera du Corollaire \ref{cor_654}.

\begin{cor}\label{cor_mult_autoduale}

\begin{enumerate}
\item Soit $V$ un élément effectif de $K_\infty$. On suppose que $\m_i(V) \leq 1$. Alors s'il existe $\pi \in \Pi_{\rm alg}^i$ telle que $\Ll(\pi_\infty) \simeq V$ (\ie $m_i(V) \geq 1$), $\pi$ est nécessairement autoduale.
\item Soit $\pi \in \Pi_{\rm alg}^i$. On suppose que $\pi$ n'est pas autoduale. Alors $\m_i(V)\geq 2$ où $V=\Ll(\pi_\infty)$.
\end{enumerate}
\end{cor}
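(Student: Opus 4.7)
The plan is to prove (2) first, from which (1) follows immediately as the contrapositive. The key observation is that the three defining features of being in $\Pi_{\rm alg}^i$ with $\Ll(\pi_\infty) \simeq V$ (namely cuspidality-algebraicity, conductor $i$, and archimedean parameter $V$) are all preserved under the operation $\pi \mapsto \pi^\vee$, so that $\pi^\vee$ is another member of the same set, distinct from $\pi$ exactly when $\pi$ is not self-dual.

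First I would invoke Proposition \ref{prop_613}, which ensures that the contragredient $\pi^\vee$ of a cuspidal automorphic representation of $\GL_n$ over $\Q$ exists and is itself cuspidal automorphic. Second, I would use Corollary \ref{cor_alg_duale_memes_poids}: if $\pi$ is algebraic, so is $\pi^\vee$, and more precisely their archimedean components are isomorphic, so $\Ll(\pi^\vee_\infty) \simeq \Ll(\pi_\infty) \simeq V$. (Note that this uses crucially that every element of $K_\infty$ is self-dual, as recorded just after Definition \ref{defi_G(R)_alg_K_infty}.) Third, by Corollary \ref{cor_cond_contragrediente}, ${\rm N}(\pi^\vee) = {\rm N}(\pi) = i$ (for $i \in \{1,2\}$), so $\pi^\vee$ has the same conductor as $\pi$ and thus lies in $\Pi_{\rm alg}^i$.

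Assembling these three ingredients, both $\pi$ and $\pi^\vee$ belong to $\Pi_{\rm alg}^i$ and both satisfy $\Ll((\cdot)_\infty) \simeq V$. If $\pi$ is not autoduale, meaning $\pi^\vee \not\simeq \pi$, these are two distinct elements of $\Pi_{\rm alg}^i$ mapping to $V$ under $\Ll((\cdot)_\infty)$, so $\m_i(V) \geq 2$ by definition, which proves (2). Part (1) follows directly: assuming $\m_i(V) \leq 1$ and the existence of $\pi \in \Pi_{\rm alg}^i$ with $\Ll(\pi_\infty) \simeq V$, if $\pi$ were not autoduale then (2) would force $\m_i(V) \geq 2$, a contradiction; hence $\pi$ is autoduale.

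There is no real obstacle here — the statement is essentially a bookkeeping remark about the action of duality on $\Pi_{\rm alg}^i$, combined with the key fact that duality is invisible at the archimedean place (every $V \in K_\infty$ is self-dual) and preserves the conductor. The only subtlety is to make sure that all three relevant invariants (archimedean parameter, conductor, cuspidality-algebraicity) are indeed preserved under $\pi \mapsto \pi^\vee$, which has been established earlier in the paper.
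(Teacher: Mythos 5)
Your proof is correct and is exactly the argument the paper intends: the paper justifies this corollary only by the remark that $\Ll(\pi_\infty)\simeq\Ll(\pi^\vee_\infty)$, and you have simply spelled out the same observation, together with the preservation of cuspidality (Proposition \ref{prop_613}) and of the conductor (Corollaire \ref{cor_cond_contragrediente}), to conclude that $\pi$ and $\pi^\vee$ are two distinct elements counted by $\m_i(V)$ when $\pi$ is not self-dual.
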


\section{Un élément global}\label{Un élément global}
Nous avons, pour deux représentations $\pi$ et $\pi'$ de $\Pi_{\rm alg}$, l'équation fonctionnelle suivante :
\[
\Lambda(s,\pi^\vee \times \pi')=\eps(\pi^\vee \times \pi')\Lambda(1-s,\pi \times (\pi')^\vee)
\]
avec les bonnes normalisations qui font de $\eps(\pi^\vee \times \pi')$ une constante. La formule explicite fait, entre autres, intervenir les zéros de la fonction méromorphe $\Lambda(\cdot,\pi^\vee \times \pi')$. 

Les résultats de Jacquet et Shalika mentionnés au paragraphe \ref{Énoncé particulier} nous indiquent que tous les zéros de $\Lambda(s, \pi^\vee \times \pi')$ sont situés dans la bande $0 \leq \re(s) \leq 1$. L'hypothèse de Riemann généralisée (GRH) conjecture qu'ils sont en fait tous sur la droite critique $\re(s)=\frac{1}{2}$. Nous ne faisons pas ici cette hypothèse mais renvoyons à \cite{Chen-HM} pour voir comment elle permet de choisir une \og meilleure \fg{} fonction test et d'obtenir des résultats plus forts que ceux que nous avons avec la fonction d'Odlyzko.

Il est néanmoins un zéro que l'équation fonctionnelle peut nous donner : si les représentations $\pi$ et $\pi'$ sont autoduales, alors les fonctions $\Lambda$ sont les mêmes dans chaque membre de l'équation et donc $\eps(\pi\times\pi')\in\{\pm 1\}$. Si l'on suppose que ce facteur epsilon est  égal à $-1$, alors la fonction $\Lambda$ s'annule en $s=\frac{1}{2}$.


\begin{prop-def}
Soient $\pi$ et $\pi'$ deux représentations de $\Pi_{\rm alg}$. On pose ${\rm e}^\perp(\pi,\pi')=1$ si $\pi$ et $\pi'$ sont autoduales \emph{et si} $\eps(\pi\times\pi')=-1$ (ce qui implique que $\pi$ et $\pi'$ soient de nature différente selon l'alternative symplectique-orthogonale, \cf Théorème \ref{thm_alt_sp_orth}). Dans le cas contraire, on pose ${\rm e}^\perp(\pi,\pi')=0$.

Soit $F$ une fonction test $\Phi$-positive. On pose : $$\tilde{\mathrm{Z}}^F(\pi,\pi')=\mathrm{Z}^F(\pi,\pi')-\Phi_F(\frac{1}{2}){\rm e}^\perp(\pi,\pi').$$

Les quantités ${\rm e}^\perp$ et $\tilde{\mathrm{Z}}^F$ s'étendent immédiatement en des formes bilinéaires symétriques sur $\R[\Pi_{\rm alg}]$. Elles sont toutes les deux positives sur les éléments effectifs.
\end{prop-def}
\begin{proof}
La discussion précédente nous indique que, si ${\rm e}^\perp(\pi,\pi')=1$, alors on a un zéro en $s=\frac{1}{2}$ et $\mathrm{Z}^F(\pi,\pi')=\Phi_F(\frac{1}{2}) + \tilde{\mathrm{Z}}^F(\pi,\pi')$ où le deuxième terme correspond à la somme sur les autres zéros (qui peuvent encore contenir $s=\frac{1}{2}$ si c'est un zéro d'ordre multiple). La forme bilinéaire symétrique ${\rm e}^\perp$ est immédiatement positive (pas seulement sur les éléments effectifs d'ailleurs). Quant à la forme bilinéaire symétrique $\tilde{\mathrm{Z}}^F$, elle est bien positive sur les éléments effectifs sous l'hypothèse de $\Phi$-positivité de $F$, comme pour le Lemme \ref{lemme_F_Phi_positive}. 
\end{proof}


\subsection{Un exemple en conducteur 1}\label{Un exemple en conducteur 1}
Nous avons vu au paragraphe \ref{Une spectaculaire illustration} l'étonnante précision de la formule explicite qui permet, en conducteur 1, de (re)démontrer qu'il n'existe pas de forme modulaire pour le groupe $\SL_2(\Z)$ de poids modulaire <12. On sait classiquement qu'il en existe bien une pour le poids modulaire 12, et pour tous les poids modulaires pairs supérieurs, à l'exception notable du poids modulaire 14. Nous suivons ici encore la discussion \og Cas $n=2$ ou $w\leq 10$ \fg{} du § IX.3.18 de \cite{Chen-Lannes}.\ps 

Supposons disposer d'une telle forme, il lui correspond alors une représentation automorphe cuspidale algébrique $\pi$ de $\GL_2$, telle que $\Ll(\pi_\infty)=I_{13}$, de caractère central trivial. Cette représentation est alors nécessairement autoduale\footnote{Le calcul de la multiplicité selon le Théorème \ref{Multiplicité de Taibi} nous donne $\m_1(I_{13}) \leq 1$ pour le choix de $F={\rm F}_1$, ce qui donne un autre argument avec le Corollaire \ref{cor_mult_autoduale}.} par le Corollaire \ref{cor_GL_2_autoduale}. 

On peut donc calculer $\eps(\1 \times \pi)=\eps_\infty(\pi)=\eps_\infty(I_{13})=i^{14}=-1$. Comme $\pi$ et $\1$ sont autoduales, on a ${\rm e}^\perp(\1,\pi)=1$ et la fonction $\Lambda(s, \1 \times \pi)$ admet un zéro en $s=\frac{1}{2}$. Comme au paragraphe \ref{Une spectaculaire illustration}, en considérant ${\rm F}_{\log 2}$, on fait disparaître la quantité ${\rm B}_f^F$, on obtient alors :

\[
\mathrm{B}_\infty^{{\rm F}_\lambda}(\1,\pi) +\frac{1}{2}\mathrm{Z}^{{\rm F}_\lambda}(\1,\pi)=\Phi_{{\rm F}_\lambda}(0)\delta_{\1,\pi}=0,
\]
ce qui, puisque $\mathrm{Z}^{{\rm F}_\lambda}(\1,\pi) \geq \Phi_F(\frac{1}{2})$, impose finalement $\mathrm{J}^F(I_{13}) \leq - \frac{1}{2}\Phi_{{\rm F}_\lambda}(\frac{1}{2})$.\ps

Or $\mathrm{J}^F(I_{13}) \simeq -0,189$ à $10^{-3}$ près et $\frac{1}{2}\Phi_{{\rm F}_\lambda}(\frac{1}{2}) \simeq 0.279$ à $10^{-3}$ près, ce qui nous fournit une contradiction.

Du point de vue de la formule explicite, il n'y a donc \og pas de place \fg{} pour un zéro en $\frac{1}{2}$ en poids motivique 13, ce qui interdit donc l'existence d'une forme modulaire pour $\SL_2(\Z)$ de poids modulaire 14.

Nous verrons ce phénomène de \emph{contrainte sur les zéros} se produire également en conducteur 2 et, sur l'ensemble des représentations déterminées, par la rareté générale des représentations de signe epsilon global non trivial (voir les Tables en Annexe \ref{Tables_de_representations}).
\section{Méthode géométrique}\label{Version géométrique}
\subsection{Quantités calculables}\label{Quantites_calculables}

La formule explicite dans sa version \eqref{Formule_explicite_fbs} est une égalité entre formes bilinéaires 
certes explicites, mais sous réserve de connaître certaines informations sur les représentations de $\Pi_{\rm alg}$.

En pratique, nous utiliserons comme fonction test la fonction d'Odlyzko de paramètre $\lambda$, ${\rm F}_\lambda$, qui a les propriétés de positivité requises et pour laquelle tout est réellement explicite (aussi bien ses valeurs que les valeurs de $\Phi_{{\rm F}_\lambda}$, \cf néanmoins \cite{Chen-Lannes} §IX.3.16 et \cite{Chen-Ta} Remark 2.10 pour la question délicate de justification inconditionnelle des calculs). On a dans ce cas ${\rm F}_\lambda(0)=1$ et, pour alléger les notations, on ne fera plus apparaître les exposants et indices indiquant la dépendance en la fonction test. On a donc :

\[
\mathrm{B}_f+\mathrm{B}_\infty+\frac{1}{2}\tilde{\mathrm{Z}}+\frac{1}{2}\Phi(\frac{1}{2}){\rm e}^\perp=\Phi(0) \delta + \frac{\log 2}{2} \aar_2.
\]

On peut alors considérer les formes bilinéaires symétriques :
\begin{align*}
{\rm C^o}&=\Phi(0) \delta + \frac{\log 2}{2} \aar_2-\mathrm{B}_\infty ; \\
{\rm C}&=\Phi(0) \delta + \frac{\log 2}{2} \aar_2-\mathrm{B}_\infty-\frac{1}{2}\Phi(\frac{1}{2}){\rm e}^\perp ; \\
{\rm C^s}&=\Phi(0) \delta + \frac{\log 2}{2} \aar_2-\mathrm{B}_\infty-\frac{1}{2}\Phi(\frac{1}{2}){\rm e}^\perp - \mathrm{B}_2^{\rm calc}.
\end{align*}

Ces formes correspondent à la partie \emph{calculable} de la formule explicite, la quantité ${\rm C}$ généralise celle introduite sous le même nom dans le Corollaire IX.3.11 de \cite{Chen-Lannes}. On remarque que le calcul de ${\rm C^o}$ nécessite seulement de connaître les conducteurs et les composantes archimédiennes des différentes représentations, tandis que le calcul de ${\rm C}$ et de ${\rm C^s}$ requiert de connaître leur caractère autodual et le facteur epsilon en 2.

\begin{prop}\label{prop_c_c_s_positives}
Soit $F$ une fonction test positive et $\Phi$-positive (telle que $F(0)=1$).
Les formes bilinéaires symétriques ${\rm C^o}$, ${\rm C}$ et ${\rm C^s}$ sont positives sur les éléments effectifs de $\R[\Pi_{\rm alg}]$.
\end{prop}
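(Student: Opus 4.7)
The plan is to use the explicit formula of Theorem \ref{Formule explicite pour les fonctions Lambda de paires} --- rewritten as the bilinear identity \eqref{Formule_explicite_fbs} --- to re-express each of ${\rm C^o}$, ${\rm C}$, ${\rm C^s}$ as a sum of bilinear forms that are already known to be positive (either everywhere on $\R[\Pi_{\rm alg}]$, or on its effective cone), and then invoke the relevant positivity results established earlier in the chapter.

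First, from \eqref{Formule_explicite_fbs} and the decomposition $\mathrm{Z}=\tilde{\mathrm{Z}}+\Phi(\tfrac12)\,{\rm e}^\perp$, we obtain
\begin{equation*}
\Phi(0)\,\delta+\frac{\log 2}{2}\,\aar_2-\mathrm{B}_\infty \;=\; \mathrm{B}_f+\frac{1}{2}\tilde{\mathrm{Z}}+\frac{1}{2}\Phi(\tfrac{1}{2})\,{\rm e}^\perp,
\end{equation*}
so that
\begin{align*}
{\rm C^o} &= \mathrm{B}_f+\tfrac{1}{2}\tilde{\mathrm{Z}}+\tfrac{1}{2}\Phi(\tfrac{1}{2})\,{\rm e}^\perp, \\
{\rm C}   &= \mathrm{B}_f+\tfrac{1}{2}\tilde{\mathrm{Z}}, \\
{\rm C^s} &= \mathrm{B}_f-\mathrm{B}_2^{\rm calc}+\tfrac{1}{2}\tilde{\mathrm{Z}} \;=\; \mathrm{B}_2^{\rm non\,calc}+\sum_{p\neq 2}\mathrm{B}_p+\tfrac{1}{2}\tilde{\mathrm{Z}},
\end{align*}
the last equality using the splitting $\mathrm{B}_f=\sum_{p\neq 2}\mathrm{B}_p+\mathrm{B}_2^{\rm calc}+\mathrm{B}_2^{\rm non\,calc}$ established in the previous section.

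Now each summand is positive, at least on effective elements. The form $\mathrm{B}_f$ is positive on all of $\R[\Pi_{\rm alg}]$ by Proposition \ref{cor_B_f_pos} (using that $F$ is positive); similarly each $\mathrm{B}_p$ for $p\neq 2$ is positive by Proposition \ref{B_p_sym_pos}, and $\mathrm{B}_2^{\rm non\,calc}$ is positive by the proposition proved just before the Definition of $\mathrm{B}_2^{\rm calc}$ (which splits $\mathrm{B}_2$ into two positive pieces). The form $\tilde{\mathrm{Z}}$ is positive on effective elements because $F$ is $\Phi$-positive, and this same $\Phi$-positivity of $F$ forces $\Phi(\tfrac12)=\re\Phi(\tfrac12)\geq 0$; combined with the manifest positivity of the integer-valued form ${\rm e}^\perp$, this shows that $\tfrac{1}{2}\Phi(\tfrac12)\,{\rm e}^\perp$ is positive on all of $\R[\Pi_{\rm alg}]$. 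Assembling these pieces yields the positivity of ${\rm C^o}$, ${\rm C}$ and ${\rm C^s}$ on effective elements.

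There is no genuine obstacle here; the only point requiring a little care is ${\rm C^s}$, where one must resist the temptation to write ${\rm C^s}={\rm C}-\mathrm{B}_2^{\rm calc}$ (which would be useless, as it is the difference of two positive forms) and instead use the finer decomposition of $\mathrm{B}_f$ so that the cancellation of $\mathrm{B}_2^{\rm calc}$ happens term by term.
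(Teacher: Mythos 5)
Your proof is correct and follows essentially the same route as the paper: rewrite ${\rm C^o}$, ${\rm C}$ and ${\rm C^s}$ via the explicit formula as sums of the forms $\mathrm{B}_f$ (or its per-prime pieces $\mathrm{B}_p$ and $\mathrm{B}_2^{\rm non\,calc}$), $\tfrac12\tilde{\mathrm{Z}}$ and $\tfrac12\Phi(\tfrac12){\rm e}^\perp$, each of which is positive on effective elements by the results of the preceding sections. Your closing remark about avoiding the decomposition ${\rm C^s}={\rm C}-\mathrm{B}_2^{\rm calc}$ is exactly the point the paper's own proof makes implicitly by passing to $\sum_{p>2}\mathrm{B}_p+\mathrm{B}_2^{\rm non\,calc}+\tfrac12\tilde{\mathrm{Z}}$.
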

\begin{proof}
Il suffit de remarquer que ${\rm C^o}={\rm B}_f +\frac{1}{2}\mathrm{Z}$ et ${\rm C}={\rm B}_f +\frac{1}{2}\tilde{\mathrm{Z}}$. Or nous avons vu à la Proposition \ref{cor_B_f_pos} et au Lemme \ref{lemme_F_Phi_positive} (resp. au paragraphe \ref{Un élément global}) que ce sont des formes positives sur les éléments effectifs.

Quant à ${\rm C^s}$, elle est égale à 
\[
\sum_{p>2}{\rm B}_p + {\rm B}_2^\mathrm{non\,calc} +\frac{1}{2}\tilde{\mathrm{Z}}
\]
qui est une somme de formes positives sur les éléments effectifs (\cf § \ref{La partie finie}).
\end{proof}

Cette proposition va nous permettre de déployer la stratégie suivante pour \emph{interdire} l'existence de certaines représentations.

\paragraph{Stratégie}

Considérons donc $n$ représentations (putatives ou avérées) $\pi_1,\cdots,\linebreak \pi_n$ de conducteurs (1 ou 2) et de composantes archimédiennes algébriques $\pi_{i,\infty}$ fixés. Alors, la Proposition \ref{prop_c_c_s_positives} affirme que, si ces $n$ représentations existent simultanément, ${\rm C^o}(\sum t_i \pi_i,\sum t_i \pi_i) \geq 0$, pour tout $\underline{t}=(t_1,\cdots,t_n) \in (\R_{\geq 0})^n$. Et de même pour ${\rm C}$ et ${\rm C^s}$, si l'on fixe également le caractère autodual ainsi que le facteur epsilon en 2 de chaque $\pi_i$.\ps 

Pour \emph{interdire} l'existence simultanée des $\pi_i$, il suffit donc de trouver \emph{un} $\underline{t}=(t_1,\cdots,t_n) \in (\R_{\geq 0})^n$ tels que ${\rm C^*}(\sum t_i \pi_i,\sum t_i \pi_i) < 0$. 
On a :
\[
{\rm C^s} \leq {\rm C} \leq {\rm C^o},
\]
c'est donc avec ${\rm C^s}$ que l'on trouvera le plus souvent des valeurs strictement négatives. L'intérêt de ${\rm C^o}$ est qu'elle nécessite moins d'informations, mais on peut alors s'interroger sur la pertinence de ${\rm C}$ qui demande \emph{a priori} autant d'informations que ${\rm C^s}$ pour des résultats \og moins négatifs \fg{}.\ps 

Le lemme suivant précise les choses dans la seule situation que nous utilisons, à savoir celle de $n-1$ représentations avérées et d'une représentation putative.

\begin{lemme}\label{lemme_C_flexible}
Soient $\pi_2,\cdots,\pi_n$ $n-1$ représentations de $\Pi_{\rm alg}$ connues. Soit $\pi=\pi_{1}$ une représentation putative de conducteur et de composante archimédienne algébrique $\pi_\infty$ fixés. On précise alors pour le calcul de ${\rm C}$ que $\pi$ est non autoduale et de facteur epsilon en $2$ égal à $1$. On suppose que l'on trouve $\underline{t}=(t_1,\cdots,t_{n})$ tel que ${\rm C}(\sum t_i \pi_i,\sum t_i \pi_i) < 0$.

Cela interdit alors l'existence d'une telle $\pi$ et, plus généralement, de toute représentation de mêmes conducteur et composante archimédienne, indépendamment de son caractère autodual et de son facteur epsilon en $2$.
\end{lemme}
\begin{proof}
Puisque l'on suppose dans un premier temps que $\pi$ est non autoduale, la quantité ${\rm e}^\perp(\pi,\cdot)$ est identiquement nulle, et ce quelle que soit la valeur du facteur epsilon en 2. On interdit donc bien l'existence de toute telle représentation non autoduale, indépendamment de la valeur de son facteur epsilon en 2.

Si on considère désormais que $\pi$ est autoduale, alors pour certaines valeurs\footnote{En réalité au plus une car, si $\pi$ est de conducteur 1, on a $\eps_2(\pi)=1$ et, si $\pi$ est de conducteur 2, l'autodualité impose $\eps_2(\pi)\in\{\pm 1\}$.} du facteur epsilon en 2, et pour certains indices\footnote{En pratique au plus un car, d'après la Proposition \ref{facteur_epsilon_alternative_sp_orth}, cela ne peut se produire que si $\pi$ et $\pi_i$ sont de nature (symplectique ou orthogonale) différente ; la seule représentation orthogonale que nous aurons à considérer est la représentation triviale.} $i$, on aura éventuellement ${\rm e}^\perp(\pi,\pi_i)=1$ et donc le nouveau calcul de ${\rm C}$ est \og plus négatif \fg, maintenant la contradiction.
\end{proof}

On comprend alors que la quantité ${\rm C}$ nous permet de \emph{profiter} de plus d'information sur les représentations connues, tout en se restreignant à ne connaître pas davantage que le conducteur et la composante archimédienne de la représentation putative (comme pour ${\rm C^o}$). Cela repose \emph{in fine} sur le fait que la quantité $\frac{1}{2}\Phi(\frac{1}{2}){\rm e}^\perp$ est positive sur les éléments effectifs. Il n'en est pas de même de $\B_2^{\rm calc}$, on ne peut donc pas en faire autant avec ${\rm C^s}$.

\subsection{La question de la multiplicité}\label{La question de la multiplicité}
Soit $V$ un élément effectif de $K_\infty$. On sait par les travaux de Harish-Chandra et, de façon quantitative, par le Théorème \ref{Multiplicité de Taibi}, que le nombre de représentations de conducteur 1 ou 2 et de composante archimédienne isomorphe à $V$, est fini. Il faut maintenant chercher à exploiter cette information du point de vue des quantités calculables ${\rm C}^*$. L'objectif est de rendre ces quantités strictement négatives, pour aboutir à une contradiction avec la Proposition \ref{prop_c_c_s_positives}. Tout ce qui contribue à les \emph{diminuer} est donc bon à prendre.\medskip

Fixons donc notre élément effectif $V$ de $K_\infty$ et un conducteur $N$ ($\in \{1,2\}$). On peut s'interroger, avec ${\rm C^o}$, sur l'existence d'une représentation $\pi$ de conducteur $N$ et telle que $\Ll(\pi_\infty)=V$. On peut en fait s'interroger sur l'existence de \emph{plusieurs} telles représentations. De même que la discussion du paragraphe \ref{Quantites_calculables} consistait à voir comment interdire l'existence d'\emph{au moins une} telle représentation, on peut, un entier naturel $r$ étant fixé, voir comment interdire l'existence d'\emph{au moins $r$} telles représentations (non isomorphes).\ps 

Soient donc $\varpi_1,\cdots,\varpi_r$ ces $r$ putatives représentations et considérons \[
\pi=\frac{\varpi_1+\cdots+\varpi_r}{r},
\]
élément de $\R[\Pi_{\rm alg}]$. Alors, pour $\pi'$ représentation connue, on a :
\begin{align*}
{\rm C^o}(\pi,\pi')&=\Phi(0) \delta(\pi,\pi') + \frac{\log 2}{2} \aar_2(\pi,\pi')-\mathrm{B}_\infty(\pi,\pi') \\
				 &=\frac{1}{r} \left( 0 + r\frac{\log 2}{2} \aar_2(\varpi_1,\pi')-r\mathrm{B}_\infty(\varpi_1,\pi') \right) \\
				 &={\rm C^o}(\varpi_1,\pi'),
\end{align*}
puisque la calcul de $\aar_2$ ne dépend que de la dimension et du conducteur, et que celui de $\B_\infty$ ne dépend que de la composante archimédienne, qui sont les mêmes pour toutes les $\varpi_i$. Par ailleurs :
\begin{align*}
{\rm C^o}(\pi,\pi)&=\Phi(0) \delta(\pi,\pi) + \frac{\log 2}{2} \aar_2(\pi,\pi)-\mathrm{B}_\infty(\pi,\pi) \\
				 &=\frac{1}{r^2} \left( r\Phi(0) + r^2\frac{\log 2}{2} \aar_2(\varpi_1,\varpi_1)-r^2\mathrm{B}_\infty(V,V) \right) \\
				 &=\frac{1}{r} \Phi(0) + \frac{\log 2}{2} \aar_2(\varpi_1,\varpi_1)-\mathrm{B}_\infty(\varpi_1,\varpi_1).
\end{align*}

On a donc ${\rm C^o}(\pi,\cdot) \leq {\rm C^o}(\varpi_1,\cdot)$ sur les éléments effectifs et donc plus de chances d'obtenir une contradiction avec $r$ représentations qu'avec une seule. Le Lemme \ref{lemme_C_flexible} s'applique d'ailleurs \emph{mutatis mutandis}, si bien que la technique s'applique encore avec ${\rm C}$. 
Nous pourrons ainsi faire diminuer $\m_N(V)$ au-delà de la borne donnée par le Théorème \ref{Multiplicité de Taibi}.

Cette stratégie de considérer simultanément plusieurs représentations non isomorphes ayant les mêmes caractéristiques vaut évidemment également pour ${\rm C^s}$, même si nous l'utiliserons peu en pratique (voir néanmoins la Remarque qui termine le paragraphe \ref{En_poids motivique 13}).

\subsection{Technique utilisée}

Notre objectif est donc d'interdire l'existence de certaines représentations en utilisant l'existence avérée d'autres représentations données. La première chose dont nous devons disposer est donc une liste de représentations \emph{connues}, ceci sera détaillé au début du Chapitre \ref{Chapitre_calculs}.\ps 

La classe de fonctions test que l'on utilisera est précisée : ce sont les $\F_\lambda$ avec $\lambda \in 
\frac{1}{10} \Z \cap [1;12]$. (Parfois, les calculs se limiteront à un nombre plus restreint de paramètres, quand les premiers \emph{sondages} indiquent qu'il faut chercher la contradiction à un endroit précis.)\ps 

Il faut ensuite une stratégie pour trouver une combinaison linéaire effective \og contredisante \fg{}. Cette stratégie est exposée avec beaucoup de clarté au §2.4 de \cite{Chen-Ta} dans le cadre de l'étude des représentations automorphes cuspidales algébriques de conducteur 1. Nous en disons quelques mots rapides avant d'exposer les différences auxquelles nous devons prêter attention.

Par homogénéité, on peut supposer que ce vecteur \og contredisant \fg est dans l'intersection de la sphère unité et du pan d'espace positif. On découpe alors ce pan de sphère en \og faces \fg{} de dimension plus petite et on est alors ramené à évaluer le minimum d'une forme quadratique sur chacune de ces faces. On construit la matrice de Gram associée à l'une des formes ${\rm C^o}, {\rm C}$ ou ${\rm C^s}$ et on se ramène alors à déterminer la plus petite valeur propre d'une matrice symétrique réelle \emph{associée à un vecteur propre effectif} (\ie à coefficients tous positifs ou nuls). Si celle-ci est strictement négative, on a la contradiction (à la Proposition \ref{prop_c_c_s_positives}) voulue.\ps 

Les différences notoires avec les calculs de \cite{Chen-Ta} sont les suivantes.
\begin{itemize}
\item Nous avons affaire à des représentations de conducteur 1 ou 2, il faut donc déterminer le conducteur d'une paire ${\rm N}(\pi_1 \times \pi_2)$ dans ce cadre-là, ce qui est assuré par la Proposition \ref{prop_cond_2}.
\item Il nous faut calculer la quantité $\B_2^{\rm calc}(\pi_1 \times \pi_2)$ selon la Définition \ref{defi_B_2_calc}, quantité non nulle seulement si $\pi_1$ et $\pi_2$ sont de conducteur 2.
\item Nous utilisons les représentations de conducteur 1 comme représentations connues, mais seulement jusqu'au poids motivique 21 (inclus). Elles sont alors toutes autoduales et la seule représentation non symplectique est la représentation triviale. 
\item La Proposition \ref{type I sp} affirme qu'une représentation autoduale de conducteur 2 est nécessairement symplectique. Ainsi, si $\pi$ est une telle représentation, la quantité ${\rm e}^\perp(\pi \times \pi')$ est nulle sauf peut-être pour la représentation triviale (soit $\pi'$ n'est pas autoduale et il n'y a rien à dire, soit $\pi'$ est autoduale et dans les cas étudiés, la seule non symplectique est la représentation triviale, voir notes de bas de page 2 et 3 \emph{supra}). La détermination de ${\rm e}^\perp(\pi \times \1)$, demande alors de connaître son signe local en 2, $\eps_2(\pi)$, en plus de sa composante archimédienne. \newline
\end{itemize}

Les algorithmes utilisent des routines et certaines fonctions de \cite{Chen-Ren} en PARI-GP, mais ils ont été codés indépendamment de ceux de \cite{Chen-Ta}. Il a ainsi pu être utile de \og tester \fg{} nos algorithmes sur les représentations connues de conducteur 1, dans une optique de vérification. Nous reprenons d'ailleurs à notre compte la remarque qui clôt le § 2.4.4. de \cite{Chen-Ta}, à savoir qu'il n'est pas nécessaire de fournir de justification pour nos algorithmes : pour interdire l'existence d'une représentation, il suffit de trouver une fonction test et une combinaison linéaire \emph{ad hoc} ; seule l'exactitude du calcul avec \emph{cette} fonction test et \emph{cette} combinaison linéaire est alors à discuter. Et elle ne l'est pas en pratique, tant les résultats obtenus sont loin des erreurs d'arrondi.

%




\chapter{Calculs combinant la formule explicite et les résultats d'Arthur}\label{Chapitre_calculs}

Tous les outils sont désormais en place pour démontrer les Théorèmes \ref{thm_2_w17}, \ref{thm_2_w19} et \ref{thm_p}. Nous précisons aussi les étapes qui nous amènent à formuler la Conjecture \ref{thm_2_w21}. Pour alléger les notations, nous écrirons \og représentation de $\GL_n$ \fg{} pour \og représentation automorphe cuspidale algébrique de $\GL_n$ sur $\Q$ \fg{}.

On rappelle d'ailleurs que ce sont des représentations \emph{centrées} donc de caractère central trivial, ce qui impose d'une part une contrainte sur les composantes archimédiennes (\cf §\ref{En_poids motivique pair}), et permet d'autre part d'utiliser le Corollaire \ref{cor_GL_2_autoduale}. 

Il s'agit ici de détailler notre démarche pour illustrer la façon dont on aboute la technique \emph{constructive} des Chapitres \ref{Théorie d'Arthur pour SO-2n+1} et \ref{Lien avec des objets classiques} avec la technique \emph{limitative} du Chapitre \ref{La_formule explicite de Riemann-Weil-Mestre}. De même qu'au Chapitre \ref{La_formule explicite de Riemann-Weil-Mestre}, nous nous limitons dans ce préambule et jusqu'au §\ref{Conjectures pour le poids motivique $21$} au cas du conducteur 2. En particulier, une telle représentation, si elle est autoduale, est automatiquement symplectique (Proposition \ref{type I sp}). Cela nous permet d'alléger les notations, les modifications à apporter pour le cas du conducteur $p>2$ de type (I) sont alors immédiates (et résumées au §\ref{En conducteur p>2}). \ps 

Toutes les étapes de notre raisonnement se trouvent dans nos feuilles de calcul (avec les algorithmes mis en œuvre). Il nous a paru néanmoins éclairant d'illustrer les mécanismes en jeu dans ce Chapitre. \newline

Nous nous intéressons donc aux représentations de $\GL_n$ de conducteur 2 et, même si nos techniques \og capturent \fg{} également\footnote{L'inégalité du Corollaire \ref{cor_finitude_cond_2} est \emph{a fortiori} vérifiée pour les composantes archimédiennes de représentations existantes de conducteur 1. Donc il sera normal de \og voir apparaître \fg{} les composantes archimédiennes des représentations (connues) de conducteur 1.} les représentations de conducteur 1 (puisqu'elles généralisent les techniques de \cite{Chen-Ren}, \cite{Chen-Lannes}, \cite{Chen-Ta} et donc permettraient de redémontrer leurs résultats), nous les considérons ici comme des \emph{données}. Plus précisément, nous utiliserons la liste suivante (exhaustive) de représentations \emph{connues} de conducteur 1 de poids motivique $\leq 19$ (\cite{Chen-Lannes}, extrait du Théorème F, dont on reprend les notations) :
\begin{itemize}
\item la représentation triviale $\1$ ;
\item quatre représentations de $\GL_2$ associées à des formes modulaires paraboliques pour $\SL_2(\Z)$ notées $\Delta_{11}, \Delta_{15}, \Delta_{17}, \Delta_{19}$ ;
\item une représentation de $\GL_4$ notée $\Delta_{19,7}$.
\end{itemize}\ps 

Pour la Conjecture \ref{thm_2_w21}, nous utiliserons le \og complément \fg{} suivant en poids motivique $\leq 21$ (\emph{loc. cit.}) :
\begin{itemize}
\item une représentation de $\GL_2$ associées à une forme modulaire parabolique pour $\SL_2(\Z)$ notée $\Delta_{21}$ ;
\item trois représentations de $\GL_4$ notées $\Delta_{21,5}, \Delta_{21,9}, \Delta_{21,13}$. \newline
\end{itemize} 

Par analogie avec ces notations, si $\underline{w}=(w_1,\cdots,w_n)$ est un $n$-uplet d'entiers naturels rangés par ordre décroissant et si $\eps \in \{+,-\}$, nous noterons $\E_{\underline{w}}^\eps$ une représentation de $\GL_{2n}$, de poids $\{\pm \frac{w_1}{2},\cdots, \pm \frac{w_n}{2}\}$, de conducteur 2 et de signe local $\eps$ en 2. Cela suffira en général à caractériser \emph{la seule représentation} avec ces propriétés ; quand ce n'est pas le cas, nous rajouterons des lettres en exposant (par exemple $\E_{21,7}^{-,a},\,\E_{21,7}^{-,b}$). \newline

Précisons maintenant la démarche, dans une présentation algorithmique.
\begin{enumerate}
\item[{\bf Étape 1}] Pour chaque poids motivique $w$, identifier les $V \in K_\infty^{\leq w}$ possibles (\ie correspondants à une représentation de $\GL_n$ de poids motivique $w$ exactement et de conducteur 2). Ils sont en nombre fini par le Corollaire \ref{cor_finitude_cond_2}. Les éléments trouvés doivent également vérifier l'inégalité \eqref{inegalite_J_F_cond_2} pour $\lambda=\log 2$.
\item[{\bf Étape 2}] Pour chaque $V$ dans cette liste, déterminer le nombre maximal de $\pi$ potentielles de conducteur 2 avec $\pi_\infty \simeq V$ avec le Théorème \ref{Multiplicité de Taibi}.
\item[{\bf Étape 3}] Confronter, par la \emph{méthode géométrique} du §\ref{Version géométrique}, l'existence de telles représentations aux représentations déjà connues.
\item[{\bf Étape 4}] Dans le cas où l'on a affaire à une $\pi$ autoduale régulière (donc symplectique par la Proposition \ref{type I sp}), vérifier son existence par sa \og trace \fg dans un paramètre d'Arthur global selon le Corollaire \ref{cor_762}.
\item[{\bf Étape 5}] Mettre à jour la liste des représentations connues.\newline
\end{enumerate}

À cause du Lemme \ref{lemme_pi_infty_i_w}, nous traitons les poids motiviques pairs et impairs séparément dans le cas du conducteur $2$ (dans le cas du conducteur $p>2$, le poids motivique impair fait partie des \emph{hypothèses} du Théorème \ref{thm_p}).

\section{En poids motivique impair $\leq 13$}
\subsection{En poids motivique impair $\leq 7$}
Nous savons déjà par la Proposition \ref{prop_facile_w<6} qu'il n'existe pas de représentation de $\GL_n$ de conducteur 2 et de poids motivique (impair) inférieur ou égal à 5.

Le premier poids impair à considérer est donc 7.

\paragraph*{Étape 1}
Il s'agit de considérer les éléments de $K_\infty^{\leq 7}$ de poids motivique exactement 7 qui vérifient l'inégalité du Corollaire \ref{cor_finitude_cond_2}.

La forme bilinéaire $\B_\infty^{\F_2}$ est définie positive sur $K_\infty^{\leq 7}$ (où $\F_2$ désigne la fonction d'Odlyzko \eqref{Odlyzko} de paramètre $\lambda=2$). Le calcul effectif du Corollaire \ref{cor_finitude_cond_2} nous donne alors une liste réduite à un élément : $I_7$. L'inégalité doit être vérifiée pour toute fonction test positive et $\Phi$-positive, on la \og teste \fg{} pour $\F_\lambda$ avec $\lambda \in \frac{1}{10} \Z \cap [1;12]$ et on conserve notre unique élément potentiel.

De plus, on a bien $\mathrm{J}_{{\rm F}_{\log 2}} (I_7) \leq \frac{\log 2}{2}$.



\paragraph*{Étape 2}

On peut alors compter combien (au plus) de représentations de $\GL_2$ de conducteur $2$ ont $I_7$ comme composante archimédienne. Le Théorème \ref{Multiplicité de Taibi} 2. pour $\F_2$ nous donne qu'il y en a au plus une.

Notons $\pi$ cette putative représentation et remarquons qu'elle est nécessairement autoduale, à la fois par le Corollaire \ref{cor_mult_autoduale} et par le Corollaire \ref{cor_GL_2_autoduale}.

\paragraph*{Étape 3}
Considérons le signe local en 2 de $\pi$. Ce signe vaut $\pm 1$ par autodualité et la méthode géométrique avec $\F_1$ et la représentation triviale interdit le signe $-1$ (qui imposerait un zéro en $s=\frac{1}{2}$ à la fonction $\Lambda$ correspondante pour lequel il n'y a \og pas de place \fg).

S'il existe une représentation de $\GL_n$, de conducteur 2 et de poids motivique 7, alors $n=2$, ladite représentation est autoduale et de signe local en 2 égal à $+1$. La méthode géométrique, en utilisant les représentations \emph{connues} de conducteur 1 (outre la représentation triviale), ne nous fournit pas de contradiction pour la fonction d'Odlyzko (avec des paramètres variables).

\paragraph*{Étape 4}
Par le Corollaire \ref{cor_dim_s_k} (et son amélioration avec le signe), il y a autant de représentations automorphes cuspidales autoduales algébriques de $\GL_2$ de poids motivique 7 et de signe local en 2 égal à $+1$ que de formes modulaires paraboliques normalisées nouvelles pour le groupe $\Gamma_0(2)$ de \emph{poids modulaire} 8 et de signe d'Atkin-Lehner égal à $+1$. Et il y a bien une telle forme modulaire parabolique (\cite{LMFDB}) et donc une telle représentation de $\GL_2$.

\paragraph*{Étape 5}
Nous ajoutons à la liste des représentations connues la représentation $\E_7^+$ qui est donc \emph{la seule} représentation de $\GL_n$ (pour $n$ quelconque) de conducteur 2 et de poids motivique impair $\leq 7$.

%

\subsection{En poids motivique $9$}
Tout se passe rigoureusement de la même façon.
\paragraph*{Étape 1} La forme bilinéaire $\B_\infty^{\F_2}$ est définie positive sur $K_\infty^{\leq 9}$. Le calcul effectif du Corollaire \ref{cor_finitude_cond_2} nous donne alors une liste réduite à un élément : $I_9$ et on a bien $\mathrm{J}_{{\rm F}_{\log 2}} (I_9) \leq \frac{\log 2}{2}$. 
\paragraph*{Étape 2}  Le Théorème \ref{Multiplicité de Taibi} 2. pour $\F_2$ nous dit qu'il y en a au plus une représentation $\pi$ de $\GL_2$ de conducteur 2 avec $\Ll(\pi_\infty)=I_9$.
\paragraph*{Étape 3}  Considérons le signe local en 2 de $\pi$ (qui est autoduale si elle existe) : le signe $+1$ est interdit par \emph{croisement} avec la représentation triviale.
\paragraph*{Étape 4}  Il existe bien une telle représentation correspondant à une forme modulaire parabolique normalisée nouvelle pour le groupe $\Gamma_0(2)$ de \emph{poids modulaire} 10 et de signe d'Atkin-Lehner égal à $-1$.
\paragraph*{Étape 5}  Nous ajoutons à la liste des représentations connues la représentation $\E_9^-$.

%

\subsection{En poids motivique $11$}
\paragraph*{Étape 1} La forme bilinéaire $\B_\infty^{\F_2}$ est définie positive sur $K_\infty^{\leq 11}$. Le calcul effectif du Corollaire \ref{cor_finitude_cond_2} nous donne alors une liste réduite à un élément : $I_{11}$ et on a bien $\mathrm{J}_{{\rm F}_{\log 2}} (I_{11}) \leq \frac{\log 2}{2}$. \medskip

On pourrait alors aller directement à l'Étape 4 puisqu'une telle représentation (si elle existe) est autoduale par le Corollaire \ref{cor_GL_2_autoduale}, manifestement régulière. On conclurait alors par le fait que l'espace ${\rm S}_{12}(\Gamma_0(2))^{\rm new}$ est réduit à $\{0\}$. On peut en fait aboutir à la même conclusion \emph{purement} \og côté formule explicite \fg.

\paragraph*{Étape 2} On sait que $\m_1(I_{11})=1$. Le Théorème \ref{Multiplicité de Taibi} 3. avec $\F_{1,5}$ implique alors $\m_2(I_{11})<1$ et donc $\m_2(I_{11})=0$.

%

\subsection{En poids motivique $13$}\label{En_poids motivique 13}
\paragraph*{Étape 1} On trouve quatre éléments potentiels : $I_{13},\,I_{13}\oplus I_9,\,I_{13}\oplus I_7,\,I_{13}\oplus I_5$. 
\paragraph*{Étape 2} Le Théorème \ref{Multiplicité de Taibi} 2. nous donne ${\rm m}_2(I_{13})\leq 2$ et ${\rm m}_2(I_{13}\oplus I_v)\leq 1$ pour $v \in \{5;7;9\}$. Les représentations putatives de $\GL_4$ et de conducteur 2 correspondantes doivent être autoduales.
\paragraph*{Étape 3} Pour une telle représentation putative $\pi$ de $\GL_4$, on peut considérer ${\rm C^s}(\pi,\pi)$, en remarquant que la quantité $\B^{\rm calc}_2$ est la même\footnote{En fait, le calcul donne le même résultat quel que soit $\lambda$ de module 1, on n'a pas besoin de l'autodualité, simplement du fait -- acquis -- que $\pi^\vee\simeq\overline{\pi}$.} que $\eps_2(\pi)$ vaille 1 ou $-1$. Le calcul pour $\F_2$ est négatif, contredisant la Proposition \ref{prop_c_c_s_positives}. Ces représentations n'existent pas.

\paragraph*{Étape 4}  Il existe bien deux représentations de $\GL_2$ de poids motivique 13 correspondant à chacune des formes modulaires paraboliques normalisées nouvelles pour le groupe $\Gamma_0(2)$ de \emph{poids modulaire} 14 et de signe d'Atkin-Lehner égal à $+1$ (resp. $-1$).
\paragraph*{Étape 5}  Nous ajoutons à la liste des représentations connues les représentations $\E_{13}^+$ et $\E_{13}^-$.\medskip
%
%

\emph{Remarque :} Nous aurions pu avec la formule explicite et la quantité ${\rm C^s}$ voir à l'Étape 3 que chacun des signes locaux était \emph{autorisé} pour les représentations de $\GL_2$ de poids motivique 13, mais qu'il ne pouvait y en avoir deux de même signe.

\section{En poids motivique pair $\leq 16$}\label{En_poids motivique pair}
\subsection{En poids motivique pair $\leq 12$}
Nous savons déjà par la Proposition \ref{prop_facile_w<6} qu'il n'existe pas de représentation de $\GL_n$ de conducteur 2 et de poids motivique (pair) inférieur ou égal à 4.

Le premier poids pair à considérer est donc 6. L'Étape 1 avec ce poids motivique ainsi qu'avec les poids 8 et 10 retourne une liste vide. \emph{Il n'y a donc aucune} représentation automorphe cuspidale algébrique de $\GL_n$ (avec $n$ quelconque) de conducteur 2 et de poids motivique pair $\leq 10$.

Nous en arrivons alors au poids motivique 12.

\paragraph*{Étape 1} La forme bilinéaire $\B_\infty^{\F_2}$ est définie positive sur $K_\infty^{\leq 12}$. Le calcul effectif du Corollaire \ref{cor_finitude_cond_2} nous donne sur ce réseau un élément potentiel : $I_{12} \oplus\eps_{\C/\R}$ et on a bien $\mathrm{J}_{{\rm F}_{\log 2}} (I_{12} \oplus\eps_{\C/\R}) \leq \frac{\log 2}{2}$.

\paragraph*{Étape 2} Le Théorème \ref{Multiplicité de Taibi} 2. nous donne ${\rm m}_2(I_{12} \oplus\eps_{\C/\R})\leq 1$. Or la Proposition \ref{type I sp} nous dit que la putative représentation de $\GL_3$ de conducteur 2 et de composante archimédienne $I_{12}\oplus\eps_{\C/\R}$ ne peut pas être autoduale (puisque $n=3$ est impair). D'après le Corollaire \ref{cor_654}, on doit donc avoir $\m_2(I_{12}\oplus\eps_{\C/\R})$ pair et finalement $\m_2(I_{12}\oplus\eps_{\C/\R})=0$. \newline

Il n'existe donc aucune représentation de $\GL_n$ de conducteur 2 et de poids motivique $\leq 12$.\newline

\emph{Remarque :} On peut s'interroger sur le fait
qu'il faille \og attendre \fg{} le poids motivique pair 12 pour que l'inégalité du Corollaire \ref{cor_finitude_cond_2} fasse apparaître les premiers éléments non triviaux de $K_\infty$ (indépendamment de leur réalisation effective comme composante archimédienne d'une représentation de $\GL_n$ de conducteur 2) quand la même inégalité faisait apparaître les premiers éléments en poids motivique impair 7.
C'est que, au-delà de la décroissance de $w \mapsto \J^{\F_\lambda}(w)$ pour la fonction d'Odlyzko (et ce, quel que soit le paramètre $\lambda$), les éléments de $K_\infty$ doivent être de déterminant 1 pour bel et bien correspondre à des représentations de $\GL_n$ de caractère central trivial. Or $\det I_w=\eps_{\C/\R}^{w+1}$ si bien qu'on ne peut avoir $I_w$ (avec $w$ pair) seul.
C'est pourquoi il faut atteindre un poids motivique plus élevé pour faire apparaître les premiers éléments \og pairs \fg{}.

\subsection{En poids motivique $14$}

\paragraph*{Étape 1} La forme bilinéaire $\B_\infty^{\F_2}$ est définie positive sur $K_\infty^{\leq 14}$. Le calcul effectif du Corollaire \ref{cor_finitude_cond_2} nous donne sur ce réseau 6 éléments potentiels : $I_{14}\oplus\eps_{\C/\R}$ et $I_{14}\oplus I_v$ avec $v \in \{2;4;6;8;10\}$.

L'élément $I_{14}\oplus I_2$ est tout de suite éliminé car il ne vérifie pas l'inégalité \eqref{inegalite_J_F_cond_2} pour $\lambda=\log 2$.

\paragraph*{Étape 2} Pour $I_{14}\oplus\eps_{\C/\R}$, le Corollaire \ref{cor_654} nous dit que l'on doit avoir $\m_2(I_{14}\oplus\eps_{\C/\R})$ pair. Or le Théorème \ref{Multiplicité de Taibi} pour $\F_3$ nous donne $\m_2(I_{14}\oplus\eps_{\C/\R})\leq 1$ et donc $\m_2(I_{14}\oplus\eps_{\C/\R})=0$.

Pour les autres éléments, il faut utiliser, outre la Proposition \ref{type I sp}, la Proposition \ref{CL VIII 3.3}, qui affirme que l'on ne peut avoir de représentation autoduale de $\GL_4$ de conducteur $2$ et de composante archimédienne $I_{14}\oplus I_v$ avec $v \neq 14$. Là encore, le Corollaire \ref{cor_654} impose $\m_2(I_{14}\oplus I_v)$ pair quand les calculs du Théorème \ref{Multiplicité de Taibi} donnent, avec $\F_3$, $\m_2(I_{14}\oplus I_v)<2$ pour $v \in \{4;6;8;10\}$.\newline

%
%
%

Il n'existe donc pas de représentation de $\GL_n$ de conducteur 2 et de poids motivique 14.

\subsection{En poids motivique $16$}

\paragraph*{Étape 1} On trouve 12 éléments potentiels.

\paragraph*{Étape 2} L'argument de multiplicité paire vaut encore, sauf qu'il reste 5 éléments parmi ces 12 où le Théorème \ref{Multiplicité de Taibi} ne permet pas de descendre en-deçà de 2. Il n'est donc pas \emph{a priori} exclu que l'on ait affaire à une représentation non autoduale et à sa duale. Les 5 éléments \emph{récalcitrants} sont : $I_{16}\oplus\eps_{\C/\R}, \; I_{16}\oplus I_v \text{ avec } v \in \{4;6;8;10\}$.

\paragraph*{Étape 3} On peut, avec la méthode géométrique, faire diminuer $\m_2(V)$ au-delà de la borne donnée par le Théorème \ref{Multiplicité de Taibi} (c'est ce qui est discuté au §\ref{La question de la multiplicité}). Cela nous permet de montrer\footnote{Pour l'élément $I_{16}\oplus I_6$, on utilise le \emph{croisement} avec les représentations $\Delta_{15}$ et $\E_{15,5}^-$. L'existence de cette dernière ne sera prouvée qu'au §\ref{En_poids_motivique 15}, indépendamment de toute discussion en poids motivique pair. Le lecteur nous pardonnera cette légère anticipation.} que $\m_2(I_{16}\oplus I_v)<2$ pour $v \in \{4;6;10\}$ et donc d'exclure ces éléments.\ps 

Pour les deux derniers éléments, il faut développer une technique plus subtile.

\begin{prop}\label{prop_TK_ruse}
Soit $\pi$ une représentation automorphe cuspidale unitaire de $\GL_n$ sur $\Q$ de conducteur $2$, \emph{non autoduale}.

Alors on sait minorer $\B_2^{\rm calc}(\frac{\pi+\pi^\vee}{2},\frac{\pi+\pi^\vee}{2})$ (pour la fonction d'Odlyzko).
\end{prop}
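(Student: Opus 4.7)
The plan is to reduce the assertion to an explicit trigonometric sum, by expanding the bilinear form $\B_2^{\rm calc}$ on the combination $\sigma := \frac{\pi+\pi^\vee}{2}$ and exploiting the fact that the Langlands parameter of $\pi^\vee$ at $2$ is the complex conjugate of that of $\pi$. By bilinearity one has
$$\B_2^{\rm calc}(\sigma,\sigma) = \tfrac{1}{4}\bigl[\B_2^{\rm calc}(\pi,\pi) + 2\B_2^{\rm calc}(\pi,\pi^\vee) + \B_2^{\rm calc}(\pi^\vee,\pi^\vee)\bigr],$$
so everything reduces to computing the three terms on the right.

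Since $\pi$ is of conductor $2$, it is of type (I) by the Lemma \ref{lemme_cond_2_implique_type_I} (note: here the statement concerns $\GL_n$ over $\Q$ of conductor $2$, hence type (I) by the Lemma). Write $\Ll(\pi_2)=\eta_1\oplus\cdots\oplus\eta_{n-2}\oplus(\psi\otimes U_2)$ with $\psi$ a non-ramified character of $\W_{\Q_2}$, and set $\psi(2)=re^{i\theta}$ with $r=|\psi(2)|\geq 0$. Since $\pi$ is unitary, $\Ll(\pi^\vee_2)$ is the complex conjugate of $\Ll(\pi_2)$; in particular, the character appearing in its $U_2$-summand is $\overline{\psi}$, with $\overline{\psi}(2)=re^{-i\theta}$. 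Plugging these values into Definition \ref{defi_B_2_calc} yields
$$\B_2^{\rm calc}(\pi,\pi)=\B_2^{\rm calc}(\pi^\vee,\pi^\vee)=\sum_{k\geq 1}F(k\log 2)\,\frac{\log 2}{2^{k/2}}\,r^{2k},$$
and
$$\B_2^{\rm calc}(\pi,\pi^\vee)=\sum_{k\geq 1}F(k\log 2)\,\frac{\log 2}{2^{k/2}}\,r^{2k}\cos(2k\theta).$$

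Combining these three contributions and using the elementary identity $1+\cos(2k\theta)=2\cos^{2}(k\theta)$ gives the closed form
$$\B_2^{\rm calc}(\sigma,\sigma)=\sum_{k\geq 1}F(k\log 2)\,\frac{\log 2}{2^{k/2}}\,r^{2k}\cos^2(k\theta).$$
The right-hand side is a sum of non-negative terms, because the Odlyzko function $F=\F_\lambda$ is positive (Lemme \ref{Positivité_Z_F}), $r^{2k}\geq 0$, and $\cos^2(k\theta)\geq 0$. Hence $\B_2^{\rm calc}(\sigma,\sigma)\geq 0$, and this is the desired effective lower bound.

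There is no substantive obstacle in the proof: it is a direct bilinear computation together with a trigonometric simplification. The only points requiring care are, first, the identification of the local character attached to $\pi^\vee$ as the complex conjugate of $\psi$ (which uses the unitarity of $\pi$ and the compatibility of the local Langlands correspondence for $\GL_n$ with duality), and second, the fact that the resulting bound is \emph{effective}, in the sense that once $\B_\infty$, $\aar_2$, and the other known quantities in $\C^s(\sigma,\sigma)$ are computed, the unknown character $\psi$ disappears from the inequality $\B_2^{\rm calc}(\sigma,\sigma)\geq 0$, making the strategy of §\ref{La question de la multiplicité} applicable even without further information on $\psi$.
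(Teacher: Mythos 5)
Your bilinear expansion and the local computation are correct, and your closed form
\[
\B_2^{\rm calc}\Bigl(\tfrac{\pi+\pi^\vee}{2},\tfrac{\pi+\pi^\vee}{2}\Bigr)
=\sum_{k\geq 1}F(k\log 2)\,\frac{\log 2}{2^{k/2}}\,\cos^2(k\theta)
\]
(with $|\psi(2)|=1$, which unitarity forces and which you should invoke before identifying $\psi^\vee(2)$ with $\overline{\psi(2)}$ rather than with $\psi(2)^{-1}$) is a clean repackaging of what the paper obtains, namely $\frac{1}{2}\bigl(\Theta_F(1)+\Theta_F(\psi(2)^2)\bigr)$ with $\Theta_F(z)=\re\sum_k F(k\log 2)\frac{\log 2}{2^{k/2}}z^k$. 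Up to that point the two arguments coincide.

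The gap is in your conclusion. The bound $\B_2^{\rm calc}(\sigma,\sigma)\geq 0$ is not the content of the Proposition: it is already Lemme \ref{B_2_calc_pos} (positivity of $\B_2^{\rm calc}$ on all of $\R[\Pi_{\rm alg}]$), and it buys nothing in the application. Since ${\rm C^s}={\rm C}-\B_2^{\rm calc}-\cdots$, knowing only $\B_2^{\rm calc}(\sigma,\sigma)\geq 0$ gives ${\rm C^s}(\sigma,\sigma)\leq{\rm C}(\sigma,\sigma)$, i.e.\ no improvement over the computation with ${\rm C}$ alone --- and the whole point of the Proposition (used for $I_{16}\oplus\eps_{\C/\R}$ and $I_{16}\oplus I_8$ at motivic weight $16$) is precisely that ${\rm C}$ alone does \emph{not} yield a contradiction there. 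What is needed, and what the paper extracts, is a \emph{strictly positive} lower bound independent of the unknown $\psi(2)$: one minimizes $\Theta_{\F_\lambda}$ over the unit circle (the minimum is observed to be attained at $z=-1$ for the relevant $\lambda$), giving
$\B_2^{\rm calc}(\sigma,\sigma)\geq \frac{1}{2}\bigl(\Theta_{\F_\lambda}(1)+\Theta_{\F_\lambda}(-1)\bigr)>0$,
a quantity one can then \emph{add} to the ${\rm C}$-computation to push it below zero. Your own closed form yields this immediately (the worst case is $\theta=\pi/2$, killing the odd-$k$ terms and leaving $\sum_{k\ \mathrm{even}}F(k\log 2)\frac{\log 2}{2^{k/2}}>0$ once $\lambda>2\log 2$), so the fix is a one-line minimization over $\theta$ --- but without it the argument at weight $16$ does not close. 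A minor point: the positivity of $\F_\lambda$ is immediate from \eqref{Odlyzko}; Lemme \ref{Positivité_Z_F} is about $\Phi$-positivity, which is not what you need here.
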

\begin{proof}
On a $$\B_2^{\rm calc}\left(\frac{\pi+\pi^\vee}{2},\frac{\pi+\pi^\vee}{2}\right)=\frac{1}{4}\left( \B_2^{\rm calc}(\pi,\pi)+ 2\B_2^{\rm calc}(\pi,\pi^\vee)+\B_2^{\rm calc}(\pi^\vee,\pi^\vee) \right).$$

Soit $\varphi=\chi_1\oplus\cdots\oplus\chi_{n-2}\oplus (\psi\otimes U_2)$ le paramètre de Langlands de $\pi_2$. On a, par compatibilité de la correspondance de Langlands à la dualité $\Ll((\pi^\vee)_2)=\varphi^\vee$. On peut donc, selon la Définition \ref{defi_B_2_calc} déterminer chacun des termes.
\begin{align*}
\B_2^{\rm calc}(\pi,\pi)&=\sum_{k=1}^{+ \infty} F(k \log 2) \frac{\log(2)}{2^{\frac{k}{2}}} \re (\overline{\psi(2)^k}\psi(2)^k) \\
						&=\sum_{k=1}^{+ \infty} F(k \log 2) \frac{\log(2)}{2^{\frac{k}{2}}},
\end{align*}
et le calcul est le même pour $\B_2^{\rm calc}(\pi^\vee,\pi^\vee)$. Par ailleurs 
\[
\B_2^{\rm calc}(\pi,\pi^\vee)=\sum_{k=1}^{+ \infty} F(k \log 2) \frac{\log(2)}{2^{\frac{k}{2}}} \re (\psi(2)^{2k}),
\]
et contrairement au cas autodual où $\psi(2) \in \{\pm 1\}$, on ne sait rien sur $\psi(2)$ si ce n'est que c'est un nombre complexe de module 1.\ps 

Si $F$ est la fonction test sous-jacente, on peut alors définir la fonction 
\[
\begin{array}{ccccl}
\Theta_F & : & \mathbb{U} & \longrightarrow & \R \\
 & & z & \longmapsto & \re \left(\sum_{k=1}^{+ \infty} F(k \log 2) \frac{\log(2)}{2^{k/2}} z^{k} \right)\\
\end{array}
\]
où $\mathbb{U}$ désigne l'ensemble des nombres complexes de module 1. On remarque d'ailleurs que $\B_2^{\rm calc}(\pi,\pi)=\Theta_F(1)$.\ps 

Dans le cas où $F=\F_\lambda$ est la fonction d'Odlyzko, la somme est finie et la fonction $\Theta_{\F_\lambda}$ est continue, en particulier, elle est bornée et atteint ses bornes. Nous \emph{observons} pour les paramètres $\lambda$ qui nous intéressent (et cela peut sans doute se démontrer) que $\Theta_{\F_\lambda}(z)\geq \Theta_{\F_\lambda}(-1)$ pour tout $z\in \mathbb{U}$. On a alors :
\[
\B_2^{\rm calc}\left(\frac{\pi+\pi^\vee}{2},\frac{\pi+\pi^\vee}{2}\right) \geq \frac{\Theta_{\F_\lambda}(1)+\Theta_{\F_\lambda}(-1)}{2}.
\]

Dans les cas étudiés, nous \emph{observons} encore que cette quantité est strictement positive et on peut alors l'utiliser dans le calcul de ${\rm C^s}$.
\end{proof}

%
%
%
%

En pratique, on a cherché par la méthode géométrique avec ${\rm C}$ la combinaison linéaire \emph{la plus proche d'être contredisante} et on a calculé ${\rm C^s}$ pour cette même combinaison linéaire en ajoutant la contribution de la Proposition \ref{prop_TK_ruse}. Nous avons depuis intégré algorithmiquement la machinerie \emph{ad hoc} pour savoir traiter ce cas directement. \newline

Il n'existe donc pas de représentation de $\GL_n$ de conducteur 2 et de poids motivique 16. Comme l'indique la mobilisation de cette technique subtile supplémentaire, la méthode touche à ses limites. C'est d'ailleurs le dernier poids pair que nous sachions traiter.

\section{En poids motivique $15$ et $17$}
\subsection{En poids motivique $15$}\label{En_poids_motivique 15}
\paragraph*{Étape 1} On trouve 7 éléments potentiels :\ps 
\begin{itemize}
\item 1 de dimension 2 : $I_{15}$ ; \ps
\item 6 de dimension 4 : $I_{15}\oplus I_v$ avec $v \in \{3;5;7;9;11;13\}.$
\end{itemize}
\paragraph*{Étape 2} Le Théorème \ref{Multiplicité de Taibi} nous donne ${\rm m}_2(I_{15})\leq 1$ (c'est le 3. en utilisant ${\rm m}_1(I_{15})=1$) et ${\rm m}_2(I_{13}\oplus I_v)\leq 1$ pour $v \in \{3;5;7;9;11;13\}$. Les représentations putatives correspondantes doivent être autoduales.

\paragraph*{Étape 3} La méthode géométrique nous permet d'éliminer avec la quantité ${\rm C}$ les éléments $I_{15}\oplus I_v$ avec $v \in \{3;9;11;13\}$ par \emph{croisement} avec $\1, \Delta_{11}, \Delta_{15}$.\ps 

L'élément $I_{15}\oplus I_7$ est \og plus coriace \fg{} et on n'obtient pas de contradiction avec la quantité ${\rm C}$. Comme on sait que la putative représentation correspondante doit être autoduale, on peut utiliser la quantité ${\rm C^s}$. On obtient alors une contradiction pour chaque signe, et une telle représentation n'existe pas.\ps 

On ne peut pas éliminer l'élément $I_{15}\oplus I_5$, mais on peut interdire le signe local $+1$ par croisement avec la représentation triviale.

\paragraph*{Étape 4}  Il existe bien une représentation de $\GL_2$ de conducteur $2$ et de poids motivique 15 par \cite{LMFDB}, c'est $\E_{15}^+$. La combinaison de la formule \cite{Ibu-Kita} avec les Corollaires \ref{cor_dim_s_k_j} et \ref{cor_762} nous dit qu'il existe bien une représentation de $\GL_4$ de conducteur $2$ et de poids $\{\pm \frac{15}{2},\pm \frac{5}{2}\}$. Par l'Étape 3, on connaît son signe local. 

\paragraph*{Étape 5}  Nous ajoutons à la liste des représentations connues les représentations $\E_{15}^+$ et $\E_{15,5}^-$.\medskip


\emph{Remarque 1 :} Puisqu'à l'Étape 2, nous avons montré que toutes les représentations putatives étaient autoduales, et manifestement (très) régulières, nous aurions pu directement aller à l'Étape 4. La combinaison de la formule \cite{Ibu-Kita} avec les Corollaires \ref{cor_dim_s_k_j} et \ref{cor_762} nous aurait dit directement qu'il n'y avait pas d'autre représentation de $\GL_4$ que celle de poids $\{\pm \frac{15}{2},\pm \frac{5}{2}\}$. Comme mentionné au §\ref{Signe_4}, la formule \cite{Ibu-Kita} ne nous donne cependant pas le signe local et on voit toute l'utilité de la formule explicite.\smallskip

\emph{Remarque 2 :} À l'inverse, nous aurions pu vérifier le signe (mais pas l'existence) de la représentation de $\GL_2$ de poids motivique 15 avec la formule explicite. On peut bien interdire le signe $-1$, mais il faut mobiliser quatre représentations connues pour y arriver : $\1, \Delta_{15}, \E_{13}^+, \E_{13}^-$.
 

\subsection{En poids motivique $17$}\label{paragraphe_poids_17}
Comme le laisse entrevoir la Remarque 2 ci-dessus, les limites de la formule explicite commencent à se dessiner. Le poids motivique 17 est d'ailleurs le dernier poids pour lequel nous avons l'énoncé le plus général (Théorème \ref{thm_2_w17}).

\paragraph*{Étape 1} On trouve 23 éléments potentiels :\ps
\begin{itemize}
\item 1 de dimension 2 : $I_{17}$ ;\ps 
\item 8 de dimension 4 : $I_{17}\oplus I_v$ avec $v \in [1;15]\cap (2\Z+1)$ ;\ps 
\item 14 de dimension 6 : $I_{17}\oplus I_v\oplus I_w$ avec $v > w$.
\end{itemize}
\paragraph*{Étape 2} Le Théorème \ref{Multiplicité de Taibi} nous donne que toutes les représentations putatives de dimension 6 sont autoduales. On a également ${\rm m}_2(I_{17}\oplus I_v)\leq 1$ pour $v \in \{1;13;15\}$. Pour tous les autres éléments, on ne peut conclure à l'autodualité à cette étape.

\paragraph*{Étape 3} La méthode géométrique nous permet d'éliminer avec la quantité ${\rm C}$ tous les éléments de dimension 6 (sans utiliser leur autodualité en fait, puisque les calculs effectués avec ${\rm C}$ n'intègrent pas leur signe local, voir le Lemme \ref{lemme_C_flexible}) avec la liste suivante : $\{\1, \Delta_{11}, \Delta_{15}, \Delta_{17}\}$.

Concernant les 8 éléments de dimension 4, après les croisements avec la même liste $\{\1, \Delta_{11}, \Delta_{15}, \Delta_{17}\}$, il nous reste :
\begin{itemize}
\item $I_{17}\oplus I_3$ de multiplicité au plus 1 ;
\item $I_{17}\oplus I_v$ avec $v \in \{5;7;9\}$ de multiplicité au plus 2.
\end{itemize}

\paragraph*{Étape 4}  Il existe bien une représentation (et une seule) de $\GL_2$ de conducteur $2$ et de poids motivique 17 par \cite{LMFDB}, c'est $\E_{17}^-$. \smallskip

La combinaison de la formule \cite{Ibu-Kita} avec les Corollaires \ref{cor_dim_s_k_j} et \ref{cor_762} nous dit qu'il existe une représentation (et une seule) $\pi$ de $\GL_4$ de conducteur 2 \emph{autoduale} avec $\Ll(\pi_\infty) \simeq I_{17}\oplus I_5$.

On sait que ${\rm m}_2(I_{17}\oplus I_5) \leq 2$, ce qui correspond à quatre possibilités (par le fait qu'une représentation et sa contragrédiente ont les mêmes poids) :
\begin{enumerate}
\item aucune représentation ;
\item une seule représentation autoduale ;
\item une représentation non autoduale et sa duale ;
\item deux représentations autoduales.
\end{enumerate}

Les possibilités 1. et 3. ci-dessus sont déjà exclues puisque $\pi$ est autoduale, la possibilité 4. également puisqu'elle impliquerait qu'on trouve \emph{une autre} représentation autoduale. On conclut donc à l'existence d'\emph{une seule} représentation de $\GL_4$ de conducteur 2, de poids $\{\pm\frac{17}{2},\pm\frac{5}{2}\}$. Cette représentation est de plus autoduale. \smallskip

Les choses se passent de la même manière pour $I_{17}\oplus I_v$ avec $v \in \{7;9\}$ pour aboutir à la même conclusion.\smallskip

Pour $I_{17}\oplus I_3$ qui est de multiplicité au plus 1, on peut, utilisant son autodualité si elle existe :
\begin{itemize}
\item soit confronter son existence (avec chacun des signes locaux possibles) à celle des représentations $\1$ et $\Delta_{17}$ par la méthode géométrique avec la quantité ${\rm C^s}$, aboutissant à une contradiction ;
\item soit remarquer que, puisqu'elle est très régulière, elle devrait apparaître, via la même gymnastique, dans la formule \cite{Ibu-Kita} ce qui n'est pas le cas. \smallskip
\end{itemize} 

Nous avons donc établi la liste exhaustive des représentations de conducteur 2 et de poids motivique 17, notons cependant qu'on ne connaît pas le signe local des représentations de $\GL_4$. Comme pour $\E_{15,5}^-$, la formule explicite (c'est un retour à l'Étape 3) nous permet de tester chacun des signes et d'interdire pour chacun des trois cas le signe local en 2 qui donnerait un facteur epsilon global égal à $-1$ (et donc un zéro en $s=\frac{1}{2}$ à la fonction $\Lambda$ correspondante pour lequel il n'y a \og pas de place \fg).

\paragraph*{Étape 5}  Nous ajoutons à la liste des représentations connues les représentations $\E_{17}^-,\,\E_{17,5}^+,\,\E_{17,7}^-,\,\E_{17,9}^+$.\bigskip

\emph{Remarque :} On voit que l'on touche aux limites de la méthode. Jusqu'à présent, on arrivait à éliminer presque toutes les représentations par la seule formule explicite. Les putatives représentations résiduelles étaient entièrement caractérisées (autoduales, de signe local prescrit) et on les \og retrouvait \fg{} dans des objets classiques. Ici, nous avons dû, pour la dernière étape, \emph{utiliser} l'existence d'objets classiques pour pouvoir conclure.

\section{En poids motivique $19$}\label{En_poids_motivique 19}
L'Étape 1 est la seule qui reste en place : on trouve 127 éléments potentiels.

Du fait de ce nombre élevé, il faut non seulement \og industrialiser \fg{} les Étapes suivantes d'un point de vue algorithmique, mais il faut surtout les intriquer (avec des allers-retours entre les Étapes). Nous indiquons ici le déroulé de notre démonstration qui mélange les Étapes selon une dynamique moins linéaire que pour les poids motiviques plus petits.\medskip

La technique la plus efficace étant la méthode géométrique du §\ref{Version géométrique}, notamment en utilisant les représentations connues de conducteur 1, nous réalisons une première série de croisements (c'est un saut à l'Étape 3) avec $\lambda \in \frac{1}{10} \Z \cap [1;5]$ (il n'y a rien à espérer pour $\lambda>5$ d'après les premiers sondages, et les temps de calcul commencent à être significatifs) et la liste $\{\1, \Delta_{11}, \Delta_{15}, \Delta_{17}, \Delta_{19}, \Delta_{19,7}\}$. Il est intéressant d'ailleurs de voir que cette dernière représentation (qui est la première de dimension 4 à apparaître en conducteur 1) joue un rôle crucial dans la constitution de combinaisons linéaires \og contredisantes \fg. \smallskip

Il nous reste alors 17 éléments de $K_\infty^{19}$ : \ps
\begin{itemize}
\item 1 de dimension 2 : $I_{19}$ ; \ps 
\item 6 de dimension 4 : $I_{19}\oplus I_v$ avec $v \in [3;13]\cap (2\Z+1)$ ;\ps 
\item 10 de dimension 6 : $I_{19}\oplus I_v\oplus I_w$ avec $v > w$.\smallskip
\end{itemize}

La comparaison de cette liste avec la liste pour le poids motivique 17 au paragraphe précédent peut donner l'impression que la formule explicite nous donne un nombre de représentations légèrement inférieur pour le poids 19 que pour le poids 17. Il n'en est rien, la liste des 23 éléments du paragraphe \ref{paragraphe_poids_17} est à rapprocher des 127 représentations que nous mentionnons et pour lesquelles nous avons mis en place une première élimination par croisements géométriques \og élémentaires \fg{}.

L'autre grande différence est que les bornes de multiplicité fournies par le Théorème \ref{Multiplicité de Taibi} ne sont plus aussi bonnes (c'est l'Étape 2), et qu'on ne peut donc plus conclure (si rapidement, en tout cas) à l'autodualité automatique des représentations putatives correspondantes.\medskip

Il nous faut alors, comme pour la fin du traitement du poids motivique 17, \emph{utiliser} autant que possible les informations sur les représentations déjà connues et faire un saut à l'Étape 4. Ainsi, on sait déjà par le Corollaire \ref{cor_dim_s_k} (et son amélioration \emph{signée}) avec les formules de dimensions \cite{LMFDB} que l'on a exactement deux représentations de $\GL_2$ de conducteur 2 et de poids motivique 19 : $\E_{19}^+$ et $\E_{19}^-$.

Par ailleurs, on sait relier les dimensions de \cite{Ibu-Kita} à des représentations de $\GL_4$.

\begin{center}
\begin{tabular}{| c |c|}
\hline
   $(w,v)$	& $\dim {\rm S}_{j,k}(\Gamma^{\rm para}(2))$\\
   \hline
   $(19,3)$	& 1 \\
   \hline
   $(19,5)$	& 1 \\
   \hline
      $(19,7)$	& 2 \\
   \hline
      $(19,9)$	& 2 \\
   \hline
      $(19,11)$	& 1 \\
   \hline
      $(19,13)$	& 1 \\
   \hline
 \end{tabular}\newline

Dimensions données par \cite{Ibu-Kita} (on rappelle que $j=v-1, \, k=\frac{w-v}{2}+2$)
\end{center}

On a au moins 6 représentations de $\GL_4$ de conducteur 2 : $\E_{19,3}^?,\, \E_{19,5}^?,\, \E_{19,9}^{?,a},\linebreak \E_{19,9}^{?,b},\, \E_{19,11}^?,\, \E_{19,13}^?$ par la combinaison des Corollaires \ref{cor_761} et \ref{cor_dim_s_k_j}. On remarque que l'on n'a pas de représentation autoduale de $\GL_4$ de conducteur 2 et de poids $\{\pm\frac{19}{2},\pm\frac{7}{2}\}$ par la même combinaison de Corollaires, avec l'existence\footnote{En fait, on peut montrer mieux avec la formule explicite et le Théorème \ref{Multiplicité de Taibi} 3. : le fait que ${\rm m}_1(I_{19}\oplus I_7)=1$ impose que ${\rm m}_2(I_{19}\oplus I_7)<1$. Il n'y a donc aucune représentation correspondante de conducteur 2, \emph{sans hypothèse d'autodualité}.} de $\Delta_{19,7}$ de conducteur 1. On ne connaît pas le signe local par ces formules, d'où le point d'interrogation en exposant.\medskip 

On peut alors, comme précédemment, tâcher de déterminer ce signe par la méthode géométrique (Étape 3) avec la quantité ${\rm C^s}$. On y parvient dans tous les cas, sauf pour la représentation autoduale de $\GL_4$ de conducteur 2 et de poids $\{\pm\frac{19}{2},\pm\frac{11}{2}\}$ pour laquelle les deux signes locaux en 2 semblent possibles.

On utilise alors la deuxième technique évoquée au paragraphe \ref{Signe_4}, \ie on construit un paramètre d'Arthur pour $\SO_7$ (du {\bf Cas 2}) selon le Corollaire \ref{cor_761} et on regarde la dimension (signée) donnée par la Proposition \ref{dim_inv_2_so}.
On lit dans les tables \cite{tabledimSOchenevier} que $\dim {\rm S}_{U_{(19,15,11)}}(7)^+=1$ et $\dim {\rm S}_{U_{(19,15,11)}}(7)^-=0$. Or on peut construire le paramètre d'Arthur très régulier $\E_{19,11}^?+\Delta_{15}$ qui est bien de multiplicité 1 selon le Corollaire \ref{cor_761} : on en déduit donc que le signe local en 2 de $\E_{19,11}^?$ est $+1$. \medskip


Parmi les 10 de dimension 6, on peut parfois faire baisser la multiplicité par croisements géométriques à 1, alors l'autodualité nous permet d'utiliser ${\rm C^s}$ avec les signes $1$ et $-1$, ce qui parfois suffit à trouver une contradiction.\smallskip

Illustrons un cas intéressant. On trouve avec le Théorème \ref{Multiplicité de Taibi} 2. puis avec les croisements selon le paragraphe \ref{La question de la multiplicité} que ${\rm m}_2(I_{19}\oplus I_{17}\oplus I_3)\leq 1$. Si une représentation $\pi$ de $\GL_6$ de conducteur 2 correspondante existe, alors elle est autoduale. En particulier, elle fournit un paramètre d'Arthur (régulier mais non très régulier) selon le Corollaire \ref{cor_761}. Le Corollaire \ref{cor_762} nous dit que dans le cas \emph{régulier}, on a {\it au moins autant} de représentations automorphes discrètes de $\SO_7$ avec les bons poids et les bons invariants que de paramètres d'Arthur globaux $\psi$ symplectiques de dimension $6$ tempérés, tels que considérés au Corollaire \ref{cor_761}.

Or, on a $\dim {\rm S}_{U_{(19,17,3)}}(7)=1$ (on pourrait regarder le signe, mais c'est hors de notre propos ici). Pour interdire l'existence de $\pi$, il suffit de montrer qu'il existe \emph{déjà} un paramètre d'Arthur global qui intervient pour ces invariants. Et c'est bien le cas avec $\E_{19,3}^+ + \Delta_{17}$.\medskip

On peut en fait, en combinant toutes ces étapes, éliminer toutes les représentations qui ne seraient pas très régulières. L'Étape 4 nous fournit alors toutes les représentations autoduales (régulières) et, si $V$ est un élément de $K_\infty^{\leq 19}$ \emph{restant}, on peut encore conclure si le majorant de ${\rm m}_2(V)$ est d'au plus une unité supérieur aux représentations autoduales connues (comme cela a été discuté pour la multiplicité de $I_{17}\oplus I_5$ \emph{supra}).

Il reste néanmoins quelques $V$ récalcitrants et c'est pourquoi nous avons rajouté l'hypothèse \og autoduale \fg{} dans l'énoncé du Théorème \ref{thm_2_w19}. À noter que cette hypothèse est également nécessaire du fait qu'on ne peut pas exclure l'existence de représentations non autoduales en poids motivique \emph{pair} 18.\bigskip

Nous sommes arrivés à l'Étape 5. On ajoute les représentations suivantes :\ps  
\begin{itemize}
\item $2$ pour $\GL_2$ :  $\E_{19}^+,\, \E_{19}^-$ ; \smallskip
\item $6$ pour $\GL_4$ :  $\E_{19,3}^+,\,\E_{19,5}^-,\, \E_{19,9}^+,\,\E_{19,9}^-,\,\E_{19,11}^+,\, \E_{19,13}^-$ ; \smallskip
\item $2$ pour $\GL_6$ :  $\E_{19,13,3}^-,\, \E_{19,13,5}^+$ ; \smallskip
\end{itemize}

Par ailleurs, pour les éléments suivants $V$ de $K_\infty^{\leq 19}$, on ne peut exclure qu'il existe une représentation $\pi$ de conducteur 2 \emph{non autoduale} telle que $\Ll(\pi_\infty)=V$ (et alors $\pi^\vee$ est une autre représentation de conducteur 2 non autoduale vérifiant la même condition) :
\[
I_{19}\oplus I_5,\;I_{19}\oplus I_9,\;I_{19}\oplus I_{11},\;I_{19}\oplus I_{13},\;I_{19}\oplus I_{15}\oplus I_3,\;I_{19}\oplus I_{15}\oplus I_5.
\]

%
%

\section{Conjectures pour le poids motivique $21$}\label{Conjectures pour le poids motivique $21$}
Nous adoptons ici un style beaucoup plus discursif : quoique nous soyons assez certain de la véracité de la Conjecture \ref{thm_2_w21}, elle fait appel à beaucoup d'extensions de nos travaux, que nous préférons exposer en en limitant la technicité. Le plus fort argument en faveur de ces extensions (et de notre Conjecture) est la confirmation sans faille qui leur est apportée par les tables \cite{tabledimSOchenevier}.\bigskip

Nous avons vu dans les paragraphes précédents que nos méthodes touchaient à leurs limites. Précisons un peu les choses dans cet ordre d'idées. L'Étape 1 donne ici 2421 éléments potentiels. Comme pour le poids motivique 19, on a intérêt à essayer tout de suite d'éliminer certains de ces éléments avec la méthode géométrique du §\ref{Version géométrique}. On considère la liste $\mathcal{L}$ constituée des représentations connues suivantes :
\begin{itemize}
\item toutes les représentations de conducteur 1 de poids motivique $\leq 21$ (qui sont listées en préambule de ce Chapitre) ;
\item toutes les représentations autoduales de conducteur 2 de poids motivique $\leq 19$ (qui sont celles du Théorème \ref{thm_2_w19}).\ps 
\end{itemize}

Après croisement avec \emph{une seule} représentation de $\mathcal{L}$, il ne nous reste déjà plus que 997 éléments. Les croisements avec deux représentations de $\mathcal{L}$ permettent de descendre à 605 éléments. Si l'on essaie avec plus de deux représentations, les temps de calcul deviennent sensiblement plus importants (plusieurs dizaines d'heures) pour aucun résultat probant.

Il demeure en fait beaucoup d'éléments de $K_\infty^{\leq 21}$ non réguliers (en particulier avec plusieurs fois $I_{21}$) qu'on ne sait pas gérer à l'Étape 4, quand bien même on parviendrait à montrer qu'ils sont autoduaux.\medskip

Notre Conjecture \ref{thm_2_w21} porte donc une double restriction : on s'intéresse aux représentations automorphes cuspidales de $\GL_n$ \emph{autoduales} et \emph{régulières}. On peut alors en théorie tout traiter selon les Chapitres \ref{Théorie d'Arthur pour SO-2n+1} et \ref{Lien avec des objets classiques} avec les limitations suivantes :
\begin{enumerate}
\item on ne sait que partiellement gérer les éléments réguliers, non très réguliers (\cf Corollaire \ref{cor_762}) ;
\item on n'a pas accès directement au signe local des éléments de $\GL_4$ (\cf §\ref{Signe_4}) ;
\item on n'a pas d'analogue des Corollaires \ref{cor_761} et \ref{cor_762} qui fasse intervenir
des représentations de $\GL_{2n}$ avec $n>4$.
\end{enumerate}

La première limitation nous invite à conjecturer une formule de multiplicité d'Arthur similaire à celle du \ref{thm_mult_Arthur} pour des paramètres d'Arthur non génériques (comme $\psi=\Delta_{21}+\1[6]$ ou $\psi=\Delta_{19}[3]$), avec le caractère $\eps_\psi$ qui n'est plus nécessairement trivial. On a affaire à des paquets d'Arthur locaux non tempérés pour lesquels conjecturalement \og tout se passe bien \fg{} mais sans que l'on sache le démontrer.\ps 

La deuxième limitation est levée selon les stratégies indiquées \emph{loc. cit.} : soit avec la formule explicite, soit en considérant des paramètres d'Arthur globaux faisant intervenir l'élément en question.\ps 

La troisième limitation appelle plus de commentaires. Un espace quadratique $(V,q)$ de dimension $2n+1$ sur $\Q$ relevant du {\bf Cas 2} du §\ref{Groupes étudiés et leurs représentations} n'existe que pour $2n+1 \equiv \pm 1 \bmod 8$. Si $2n+1 \equiv \pm 3 \bmod 8$, on peut construire un espace quadratique $(V,q)$ sur $\Q$ vérifiant :
\begin{itemize}
\item  $q \otimes_\Q \Q_p$ est d'indice de Witt maximal $n$ pour tout nombre premier $p>2$. Ainsi le groupe spécial orthogonal local associé $\bm{\SO_{V_p}}$ est déployé sur $\qp$ pour tout nombre premier $p>2$ et on peut utiliser les résultats de la Première Partie.
\item  $q \otimes_\Q \Q_2$ est d'indice de Witt $n-1$. Plus exactement $V\otimes_\Q \Q_2$ est la somme de $n-1$ plans hyperboliques et d'un espace (unique à isométrie près) anisotrope de dimension 3. Le groupe $\bm{\SO_{V_2}}$ n'est \emph{pas déployé} mais est forme intérieure d'un groupe déployé.
\item la forme quadratique $q \otimes_\Q \R$ est de signature $(2n+1,0)$ (resp. $(0,2n+1)$) donc définie positive (resp. définie négative) et le groupe spécial orthogonal associé $\bm{\SO_{V_\infty}} \simeq \SO_{2n+1,0}/\R \simeq \SO_{0,2n+1}/\R$ est compact. \newline
\end{itemize}
C'est ce qu'on appellera le \textbf{Cas 3}. Nous conjecturons que l'on peut développer une théorie du groupe épiparamodulaire (et de son sous-groupe paramodulaire) similaire à celle qui a été développée au Chapitre \ref{Le groupe paramodulaire}, avec un analogue du Théorème \ref{thm_A}. On peut alors conjecturer l'analogue de la Proposition \ref{dim_inv_2_so}

\begin{defi}
Soient $m \equiv \pm 3\bmod 8$, $V$ le $\Q$-espace vectoriel quadratique ${\rm I}_m \otimes \Q$ et $\mathbf{G}= \bm{\SO_V}$ le $\Q$-groupe algébrique spécial orthogonal de $V$ (relevant donc du {\bf Cas 3}).

Soient $\underline{w} \in {\rm W}_{m}$ et $\eps \in \{+,-\}$. On note $\Pi_{\underline{w}}^{2,\eps}(\mathbf{G})$ l'ensemble des représentations automorphes cuspidales algébriques $\pi$ de $\mathbf{G}$, de poids $\underline{w}$ de conducteur $2$, de signe local $\eps$. Plus précisément,
\begin{itemize}
\item $\pi_\infty \simeq U_{\underline{w}}$ au sens de la Définition \ref{defUw} ;\smallskip
\item $\pi_p$ est non ramifiée pour $p>2$ ; \smallskip
\item $\pi_2^{\K_0(2)}=\{0\}$ et $\pi_2^{(\J(2),\eps)}\neq \{0\}$. \smallskip
\end{itemize}

On pose $$\Pi_{\underline{w}}^{2}(\mathbf{G})=\Pi_{\underline{w}}^{2,+}(\mathbf{G}) \coprod \Pi_{\underline{w}}^{2,-}(\mathbf{G}).$$ 
\end{defi}

\begin{conj}\label{conj_dim_inv_2_so}
Soient $m \equiv \pm 3\bmod 8$, $\underline{w} \in {\rm W}_{m}$ et $\eps \in \{+,-\}$. Alors
\[
{\rm S}_{U_{\underline{w}}}(m)^\eps \simeq \bigoplus_{\pi \in \Pi_{\underline{w}}(\mathbf{G})} \pi_2^{(\J(2),-\eps)}.
\]

Si l'on suppose de plus que $\underline{w}$ est \emph{très régulier} (\ie $|w_i-w_j|>2$ pour $i\neq j$), alors 
\[
\dim {\rm S}_{U_{\underline{w}}}(m)^\eps=\left|\Pi_{\underline{w}}^{2,-\eps}(\mathbf{G}) \right|.
\]
\end{conj}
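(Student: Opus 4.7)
The plan is to transpose to the Case 3 setting the proof of Proposition \ref{dim_inv_2_so}, which handles Case 2. The global framework of §\ref{Interprétations automorphes} is essentially insensitive to the Hasse invariant of $V_2$: the bijection between the genus of ${\rm I}_m$ and the double coset space $\mathbf{G}(\Q)\backslash\mathbf{G}(\aq)/{\rm K}(2)$ of Proposition \ref{propiota}, and its marked analog Proposition \ref{propiotatilde}, should continue to hold in Case 3 with essentially the same proofs---Hasse-Minkowski combined with the uniqueness of the genus (Proposition \ref{genusIm}). Hence I would first verify analogs of these propositions and of Proposition \ref{prop_8.3.21}, yielding isomorphisms ${\rm S}_{U_{\underline{w}}}(m)^\eps \simeq {\rm S}_{U_{\underline{w}}}(\mathbf{G},{\rm K}(2))^{\eps'}$ for some sign convention $\eps'$ on the right. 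Combined with \eqref{defSwGK}, the conjecture then reduces to computing $\dim \pi_2^{(\J(2),\delta)}$ for irreducible tempered representations $\pi_2$ of the non-split inner form $\bm{\SO_{V_2}}(\Q_2)$.

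The heart of the matter is therefore an analog of Part I of this thesis for the non-split group $\bm{\SO_{V_2}}(\Q_2)$. Following Gross, one would first construct a pair $\J^+(2) \subset \J(2)$ of paramodular and epiparamodular subgroups of $\bm{\SO_{V_2}}(\Q_2)$, from a natural lattice built out of $n-1$ hyperbolic planes and the unique three-dimensional anisotropic summand of $V_2$. The crucial step, and the main obstacle, is then to prove an analog of Theorem \ref{thm_A}: for every irreducible tempered representation $\pi_2$ of $\bm{\SO_{V_2}}(\Q_2)$ of conductor $\pk$ (defined via its Langlands parameter and the tautological embedding $\Sp_{2n}(\C) \hookrightarrow \GL_{2n}(\C)$), the space $\pi_2^{\J^+(2)}$ is one-dimensional, with $\J(2)$ acting by a prescribed sign. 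Two routes suggest themselves: either a direct parallel to Chapters \ref{Le groupe paramodulaire}--\ref{Conducteur}, where the analog of the Moeglin-Tadi\'c reduction (Proposition \ref{ser_disc_>4_pas_d'inv}) would be the most delicate point since the classification of discrete series on non-split classical groups is subtler; or an indirect route through Arthur's endoscopic transfer from $\bm{\SO_{V_2}}$ to the split group $\SO_{2n+1}$, using that irreducible tempered constituents of a given $L$-packet on the inner form correspond to characters of the component group $\mathcal{S}_\psi$ subject to a Vogan-type sign condition.

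The sign flip $\eps \mapsto -\eps$ in the statement should then emerge transparently. In the split Case 2, Theorem \ref{thm_J_variants_eps} identifies the action of $\J$ on $\pi^{\J^+}$ with the local epsilon factor $\eps(\pi,\tau)$. Passing to the non-split form at $p=2$ introduces an additional twist by the Hasse invariant of the anisotropic part of $V_2$, which is exactly $-1$ here; equivalently, under the endoscopic transfer sending $\pi_2$ on the inner form to its correspondent on the split form, the characters of $\mathcal{S}_\psi$ indexing the two sides differ by the non-trivial central character. This is the arithmetic source of the sign change, and matches what the tables \cite{tabledimSOchenevier} force.

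Once the local result is available, the global assembly proceeds just as in Case 2. The Arthur-Ta\"ibi classification \cite{Taibi_cpctmult} applies to the inner form $\mathbf{G}$ so that Theorem \ref{thm_Arthur_Taibi} holds; Corollary \ref{cor_shin_caraiani} ensures temperedness of $\pi_2$ in the very regular case; and the Arthur multiplicity formula (Theorem \ref{thm_mult_Arthur}) selects exactly which $\pi \in \Pi_{\underline{w}}(\mathbf{G})$ occur. The archimedean character $\chi_\infty$ is computed via Adams-Johnson packets as in Proposition \ref{prop_calcul_chi_infty} for the compact form (unchanged from Case 2), while $\chi_p$ is trivial at all finite primes by the analog of Proposition \ref{prop_chi_p_trivial}, since the relevant local Langlands parameters still have Artin exponent $\leq 1$. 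Combined with the local input above, this yields the predicted bijection with $\Pi_{\underline{w}}^{2,-\eps}(\mathbf{G})$ and completes the proof.
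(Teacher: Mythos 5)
Le point essentiel à souligner d'emblée est que l'énoncé que vous traitez est présenté dans la thèse comme une \emph{conjecture} (Conjecture \ref{conj_dim_inv_2_so}) : le texte n'en donne aucune démonstration, seulement une heuristique au §\ref{Conjectures pour le poids motivique $21$}, corroborée par l'accord numérique avec les tables \cite{tabledimSOchenevier}. Votre plan reproduit d'ailleurs assez fidèlement cette heuristique ; en particulier, la source que vous assignez au renversement de signe $\eps \mapsto -\eps$ — le caractère local en $2$ de la forme intérieure non déployée est non trivial sur le centre, d'où la valeur $-1$ — est exactement le mécanisme décrit dans le paragraphe \og Heuristique pour $m=5$ \fg{}. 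Comme feuille de route, votre proposition est donc alignée sur les intentions de l'auteur.

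Mais ce n'est pas une démonstration, et les lacunes que vous laissez ouvertes sont précisément celles qui font de l'énoncé une conjecture. Premièrement, toute la théorie locale en $p=2$ manque : pour $m \equiv \pm 3 \bmod 8$, le groupe $\bm{\SO_{V_2}}(\Q_2)$ est une forme intérieure non déployée, le réseau ${\rm I}_m \otimes \Z_2$ contient le facteur anisotrope $P$ (Proposition \ref{genreIm}(ii)), de sorte que $L_{\rm even}\otimes\Z_2$ n'est pas un réseau \og de type $a$ \fg{} au sens de la Définition \ref{Defi_reseau_type_a} ; ni les factorisations d'Iwasawa, ni les propriétés d'intersection avec les Levi, ni la réduction à la M\oe{}glin-Tadi\'c des Chapitres \ref{Le groupe paramodulaire} à \ref{Conducteur} ne sont disponibles. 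Vous nommez cela \og the main obstacle \fg{} mais ne le surmontez pas. Deuxièmement, vous affirmez que le Théorème \ref{thm_Arthur_Taibi} et la formule de multiplicité s'appliquent au groupe du {\bf Cas 3}, alors que les résultats de Taïbi \cite{Taibi_cpctmult} ne sont invoqués dans la thèse que pour des groupes quasi-déployés en toute place finie (c'est pour cela que seul le {\bf Cas 2} \og relève des résultats démontrés par Taïbi \fg{}) ; pour un groupe non quasi-déployé en $2$, c'est une instance encore ouverte des conjectures d'Arthur. Troisièmement, le calcul du caractère $\chi_2$ sur $\mathcal{S}_{\psi_2}$ pour la forme intérieure — le fait qu'il soit le caractère non trivial sur le centre, d'où l'inversion de signe — est exactement ce qu'il faudrait démontrer et n'est argumenté que par analogie avec le cas paramodulaire déployé. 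Votre stratégie est la bonne, mais chacun de ses trois piliers reste aujourd'hui conjectural.
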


Il faut bien noter que, selon cette conjecture, les représentations non ramifiées n'apparaissent pas.\medskip

La table $m=3$ de \cite{tabledimSOchenevier} montre cette \og inversion de signe \fg{} à l'œuvre et on retrouve bien toutes les représentations de $\GL_2$ de conducteur 2 connues.

\paragraph*{Heuristique pour $m=5$.}\label{Heuristique pour m=5}
Décrivons maintenant ce qui nous amène à conjecturer une formule de multiplicité et donc un analogue du Corollaire \ref{cor_761} pour le groupe $\SO_5$ relevant du {\bf Cas 3}.\ps 

Les caractères $\chi_v$ pour $v\neq 2,\infty$ sont triviaux (ceci n'est pas conjectural, on retombe sur les groupes et paquets locaux de la Proposition \ref{prop_752}).\ps 

Le caractère $\chi_\infty$ est calculé comme indiqué à la Proposition \ref{prop_calcul_chi_infty} pour les groupes $\SO_{2n+1}$ \emph{compacts à l'infini}. Nous résumons avec les notations \emph{loc. cit.} en écrivant $\underline{\eps}=(+,-)$, la règle pour calculer $\chi_\infty$ étant alors la même. \ps 

Le caractère $\chi_2$ est désormais trivial pour une représentation de conducteur 1 et non trivial (donc égal à $-1$) pour une représentation du conducteur 2. Détaillons un peu l'heuristique.

Le calcul du caractère pour les paquets de représentations du groupe $\SO_5(\Q_2)$ déployé impose que ledit caractère soit trivial sur le centre. Puisque le groupe $\SO_5(\Q_2)$ non déployé en est une forme intérieure, et selon les conjectures générales de la correspondance de Langlands pour les groupes non quasi-déployés, on doit avoir que le caractère pour les paquets de représentations du groupe $\SO_5(\Q_2)$ non déployé est \emph{non trivial sur le centre}. Or, selon le parallélisme que l'on suppose entre notre étude et ce qu'il se passe pour le groupe paramodulaire \emph{non déployé}, les paquets correspondants doivent encore être des singletons, si bien que ledit caractère prend la valeur $-1$.\ps 

Nous adoptons des notations analogues à celles du Corollaire \ref{cor_761} en différenciant néanmoins graphiquement les représentations de conducteur 1 ou 2. Nous noterons $\pi^w$ (resp. $\varpi^w$) une représentation (autoduale symplectique) de $\GL_2$ de poids $\{\pm \frac{w}{2}\}$ de conducteur 1 (resp. 2). Les poids seront rangés dans l'ordre alphabétique strictement croissant, ainsi lorsqu'on écrit $\pi^w+\pi^v$, il est sous-entendu que $w>v$. De même $\varpi^{w,v}$ désigne une représentation autoduale symplectique de $\GL_4$ de poids $\{\pm \frac{w}{2}, \pm \frac{v}{2}\}$ et de conducteur 2.

Calculons, dans l'esprit du Théorème \ref{thm_mult_Arthur} et avec nos \og nouveaux \fg{} caractères la formule de multiplicité.\bigskip

Soit $\psi=\pi^w+\varpi^v$ et soit $\pi \in \Pi(\psi)$. Alors on a $\chi_\infty(s_1)=\chi_2(s_1)=1$ et $\chi_\infty(s_2)=\chi_2(s_2)=-1$. Le paramètre étant générique, on a $\eps_\psi(s_1)=\eps_\psi(s_2)=1$. En particulier, les caractères $\prod_v \chi_v$ et $\eps$ coïncident et $\pi$ est bien automorphe discrète (et de multiplicité 1 dans le spectre discret) : ce paramètre \emph{doit bien être considéré}.

Soit maintenant $\psi=\varpi^w+\pi^v$ et soit $\pi \in \Pi(\psi)$. Alors on a $\chi_\infty(s_1)=1$ et $\chi_2(s_1)=-1$ tandis que $\chi_\infty(s_2)=-1$ et $\chi_2(s_2)=1$. Le paramètre étant générique, on a $\eps_\psi(s_1)=\eps_\psi(s_2)=1$. En particulier, les caractères $\prod_v \chi_v$ et $\eps$ ne sont pas égaux et $\pi$ n'est pas automorphe discrète : ce paramètre \emph{ne doit pas être considéré}.\ps 

Cette gymnastique nous fournit la liste de paramètres suivante (dans l'esprit du Corollaire \ref{cor_761}) :
\begin{itemize}
\item $\varpi^{w,v}$ ;
\item $\pi^w+\varpi^v$ ;
\item $\pi^w +\eta[2]$ avec $\eps_{\rm glob}(\pi^w)=+1$ ;
\item $\varpi^w +\1[2]$ avec $\eps_{\rm glob}(\varpi^w)=-1$ ;
\item $\eta[4]$. \newline
\end{itemize}

On retrouve alors exactement toutes les dimensions de \cite{tabledimSOchenevier}, en particulier, on repère que l'on peut ainsi accéder \emph{directement} au signe local en 2 des représentations de $\GL_4$ de conducteur 2 (comme annoncé au §\ref{Signe_4}).

\paragraph*{La liste de la Conjecture \ref{thm_2_w21}.}
Toutes les représentations listées à l'Annexe \ref{Annexe_poids_21} sont de dimension $\leq 8$. L'existence de celles qui sont très régulières est donc \emph{démontrée} par l'Étape 4. Pour celles qui sont régulières, mais pas très régulières, on peut en général conclure avec le même genre d'arguments que ceux employés pour l'élément $I_{19}\oplus I_{17}\oplus I_3$ au paragraphe \ref{En_poids_motivique 19}.\ps 

Comme mentionné \emph{supra}, nous n'avons \emph{pas besoin} de l'étude conjecturale du {\bf Cas 3} pour déterminer le signe local des représentations de $\GL_4$. On conclut soit avec la formule explicite, soit en considérant des paramètres d'Arthur globaux pour $\SO_7$ ou $\SO_9$ (du {\bf Cas 2}) faisant intervenir l'élément en question. À noter que toutes ces techniques donnent des résultats cohérents.

Un point plus intéressant de la Conjecture \ref{thm_2_w21} est de savoir qu'il n'y a \emph{pas d'autres} représentations automorphes cuspidales algébriques \emph{autoduales régulières} de $\GL_n$ de conducteur 2 que celles indiquées. On pourrait, fort de nos conjectures pour le {\bf Cas 3} étudier l'ensemble des éléments des tables \cite{tabledimSOchenevier} et conclure qu'on arrive à expliquer toutes les dimensions avec les représentations de la Conjecture \ref{thm_2_w21} (et celles de conducteur 1 bien sûr). En fait, on peut faire mieux, c'est-à-dire moins.\medskip

La formule explicite ne donne pas de résultats très concluants en général pour le poids motivique 21, mais on peut s'interroger sur les résultats pour les seules représentations autoduales régulières. On peut énumérer les $2^{10}$ éléments réguliers de $K_\infty^{\leq 21}$ de poids motivique exactement 21 et leur appliquer les Étapes 1 et 3 ci-dessus. Les temps de calculs sont alors considérablement réduits et, après des croisements relativement simples, il ne nous reste que 128 éléments, qui sont tous de dimension $\leq 12$. Ainsi, on doit certes considérer les paramètres au-delà de ceux du Corollaire \ref{cor_761}, mais seulement pour $\SO_{11}$ et $\SO_{13}$ (relevant du {\bf Cas 3}). Par ailleurs, il suffit alors pour éliminer les représentations putatives correspondantes d'avoir au moins autant de paramètres que les dimensions indiquées. Ainsi, c'est seulement une \emph{petite partie} des lignes des tables $m=11$ et $m=13$ de \cite{tabledimSOchenevier} qu'il faut considérer.

Afin de vérifier notre Conjecture \ref{thm_2_w21}, nous avons néanmoins énuméré tous les paramètres correspondants pour toutes les lignes de ces tables (ainsi que pour $m \leq 9$), ce qui corrobore bien l'existence des représentations de la Conjecture \ref{thm_2_w21}, tout en nous donnant un degré d'assurance élevé quant au fait que la liste \emph{loc. cit.} est bien exhaustive.

\section{En conducteur $p>2$}\label{En conducteur p>2}
Nous sommes ici très bref sur la façon de traiter le cas du conducteur $p>2$. Nous avons simplement parallélisé ce qui pouvait l'être immédiatement, c'est pourquoi l'on s'intéresse aux représentations \emph{de poids motivique impair}. En effet, selon la Proposition \ref{CL VIII 3.3}, les représentations autoduales (qui sont les seules que nous sachions étudier) correspondantes sont symplectiques, et en particulier de type (I) au sens de la Proposition-Définition \ref{prop_types_I_et_II}.

L'efficacité de la méthode limitative avec la formule explicite décroît de façon spectaculaire et nous n'arrivons alors qu'à traiter des petits poids, pour lesquels les seules représentations à considérer sont de dimension $2$ ou $4$. Ainsi, bien que les résultats du Corollaire \ref{cor_761} s'appliquent encore, on ne les utilise pas au-delà des cas $\SO_3$ déployé et $\SO_5$ déployé. Il n'est donc pas nécessaire de chercher un analogue au paragraphe \ref{Formes automorphes de conducteur 2_GC} pour \emph{nourrir} cette machinerie.\medskip

On remarque également que chaque conducteur $p$ se gère séparément, \ie on ne fait intervenir dans les calculs que des représentations de conducteur $p$ et, bien sûr, des représentations de conducteur 1. Cela s'explique par le fait que l'inégalité d'Henniart est saturée pour des représentations de conducteur différent (c'est le deuxième cas de la Proposition \ref{prop_cond_2} qui vaut encore en conducteur $p$ de type (I) \emph{mutatis mutandis}) : les quantités en jeu annihilent complètement l'efficacité de la formule explicite.\medskip

Terminons enfin par observer qu'il existe bien des représentations de conducteur $p$ et de type (II), comme on peut le voir avec le caractère de Legendre en $p$ étendu en caractère de Hecke sur $\aq^\times$. Mais elles sont de poids motivique \emph{pair} (en l'occurrence 0).

\appendix
\part*{Annexes}
\chapter{Tables de représentations}\label{Tables_de_representations}

\section{Représentations automorphes cuspidales algébriques de $\GL_n$ de conducteur 2 en poids motivique $\leq 17$ }
\begin{center}
\begin{tabular}{ |c || *{2}{c|} c|}
\hline 
   Poids motivique & $\GL_2$ & $\GL_4$ & $\GL_6$\\
   \hline
   7				& $\E_7^+$ &		&			\\
   \hline
    9				& $\E_9^-$ &		&			\\
   \hline
    11				& 			 &		&			\\
   \hline
    13				& $\bm{\E_{13}^+},\, \E_{13}^-$  &	 	&			\\
   \hline
    15				& $\E_{15}^+$  &	$\E_{15,5}^-$	&			\\
   \hline
    17				& $\E_{17}^-$  &	$\E_{17,5}^+,\, \E_{17,7}^-,\, \E_{17,9}^+$	&		\\
   \hline

 \end{tabular}\newline

Représentations automorphes cuspidales algébriques de $\GL_n$ de conducteur 2 en poids motivique $\leq 17$ 
\end{center}

Elles sont toutes autoduales très régulières, et de signe epsilon global égal à 1, sauf $\bm{\E_{13}^+}$ justement mise en gras (de signe $-1$).

\section{Représentations automorphes cuspidales algébriques autoduales de $\GL_n$ de conducteur 2 en poids motivique $\leq 19$ }
\begin{center}
\begin{tabular}{ |c || *{2}{c|} c|} 
\hline
   Poids motivique & $\GL_2$ & $\GL_4$ & $\GL_6$\\
   \hline
   7				& $\E_7^+$ &		&			\\
   \hline
    9				& $\E_9^-$ &		&			\\
   \hline
    11				& 			 &		&			\\
   \hline
    13				& $\bm{\E_{13}^+},\, \E_{13}^-$  &	 	&			\\
   \hline
    15				& $\E_{15}^+$  &	$\E_{15,5}^-$	&			\\
   \hline
    17				& $\E_{17}^-$  &	$\E_{17,5}^+,\, \E_{17,7}^-,\, \E_{17,9}^+$	&		\\
   \hline
   \multirow{2}{*}{19}  & $\E_{19}^+$ &	$\E_{19,3}^+,\,\E_{19,5}^-,\, \bm{\E_{19,9}^+}$	& $\E_{19,13,3}^-$\\
   & $\bm{\E_{19}^-}$ &	$\E_{19,9}^-,\,\E_{19,11}^+,\, \E_{19,13}^-$	& $\E_{19,13,5}^+$\\

   \hline
 \end{tabular}\newline

Représentations automorphes cuspidales algébriques autoduales de $\GL_n$ de conducteur 2 en poids motivique $\leq 19$ 
\end{center}

Elles sont toutes très régulières, et de signe epsilon global égal à 1, sauf celles en gras (de signe epsilon global égal à $-1$).

%
%

\section{Représentations automorphes cuspidales autoduales algébriques régulières de $\GL_n$ de conducteur 2 et de poids motivique $21$ }\label{Annexe_poids_21}

\begin{center}
\begin{tabular}{| *{4}{c|}}
\hline 
    $\GL_2$ & $\GL_4$ & $\GL_6$ & $\GL_8$ \\
   \hline
	$\bm{\E_{21}^+}$ & $\E_{21,3}^-$ & $\E_{21,13,3}^+$ & $\E_{21,15,9,3}^+$ \\
	$\E_{21}^-$  & $\E_{21,5}^+$ & $\E_{21,13,5}^- $ & $\E_{21,17,9,3}^-$ \\
	& $\E_{21,7}^{-,a}$ & $\E_{21,13,7}^+$ & $\E_{21,17,11,3}^+$ \\
	& $\E_{21,7}^{-,b}$ & $\bm{\E_{21,13,7}^-}$ & $\E_{21,17,11,5}^-$ \\
	& $\E_{21,9}^{+,a}$	 & $\E_{21,15,3}^-$					& $\E_{21,17,13,3}^-$ \\
	& $\E_{21,9}^{+,b}$	 & $\E_{21,15,5}^+$					&$\E_{21,17,13,5}^+$	\\
	& $\bm{\E_{21,11}^+}$ & $\bm{\E_{21,15,7}^+}$ & $\E_{21,19,11,1}^+$	\\
	& $\E_{21,11}^{-,a}$ & $\E_{21,15,7}^-$				& $\E_{21,19,11,3}^-$ \\
	& $\E_{21,11}^{-,b}$ & $\bm{\E_{21,15,9}^-}$	& $\E_{21,19,13,1}^-$ \\
	& $\E_{21,13}^+$	& $\E_{21,17,3}^+$					& $\E_{21,19,13,3}^+$ \\
	& $\E_{21,15}^-$	& $\E_{21,17,5}^-$					& $\E_{21,19,15,3}^-$ \\
	& $\E_{21,17}^+$	& $\E_{21,17,7}^{+,a}$ 				& $\E_{21,19,15,5}^+$\\
	&					& $\E_{21,17,7}^{+,b}$	& \\
	&					& $\E_{21,17,9}^-$ 		& \\
	\hline
 \end{tabular}\newline

Représentations automorphes cuspidales algébriques autoduales de $\GL_n$ de conducteur 2 de poids motivique $21$ (liste conjecturale)
\end{center}

\section{Représentations automorphes cuspidales algébriques de $\GL_n$ de conducteur 3 en poids motivique impair $\leq 13$ }
\begin{center}
\begin{tabular}{ |c || *{1}{c|} c|} 
\hline
   Poids motivique & $\GL_2$ & $\GL_4$ \\
      \hline
   5				& $(5)^-$ &		\\
   \hline
   7				& $(7)^+$ &		\\
   \hline
    9				& $\bm{(9)^+},\,(9)^-$ &		\\
   \hline
    11				& $(11)^+$ &		\\
   \hline
    13				& $\bm{(13)^+},\,(13)^-$ &	$(13,5)^+,\,(13,7)^-$	\\
   \hline

 \end{tabular}\newline

Représentations automorphes cuspidales algébriques autoduales de $\GL_n$ de conducteur 3 en poids motivique impair $\leq 13$ 
\end{center}

Elles sont toutes très régulières, et de signe epsilon global égal à 1, sauf celles en gras (de signe epsilon global égal à $-1$).\bigskip

L'écriture $(w)^\eps$ désigne une représentation automorphe de $\GL_2$ de conducteur 3, de poids $\{\pm\frac{w}{2}\}$ et de signe local en 3 égal à $\eps$.

L'écriture $(w,v)^\eps$ désigne une représentation automorphe de $\GL_4$ de conducteur 3, de poids $\{\pm\frac{w}{2},\pm\frac{v}{2}\}$ et de signe local en 3 égal à $\eps$.

Nous utiliserons encore ces notations pour le conducteur $p\geq3$.

\section{Représentations automorphes cuspidales algébriques de $\GL_n$ de conducteur 5 en poids motivique impair $\leq 11$ }
\begin{center}
\begin{tabular}{ |c || *{1}{c|} c|} 
\hline
   Poids motivique & $\GL_2$ & $\GL_4$ \\
      \hline
   3				& $(3)^+$ &		\\
      \hline
   5				& $(5)^-$ &		\\
   \hline
   7				& $(7)^{+,a},\,(7)^{+,b},\,\bm{(7)^{-}}$ &		\\
   \hline
    9				& $\bm{(9)^+},\,(9)^{-,a},\,(9)^{-,b}$ &	$(9,5)^+$	\\
   \hline
    11				& $(11)^{+,a},\,(11)^{+,b},\,\bm{(11)^{-}}$ &	$(11,5)^-$	\\
   \hline

 \end{tabular}\newline

Représentations automorphes cuspidales algébriques autoduales de $\GL_n$ de conducteur 5 en poids motivique impair $\leq 11$ 
\end{center}

Elles sont toutes très régulières, et de signe epsilon global égal à 1, sauf celles en gras (de signe epsilon global égal à $-1$).

\section{Représentations automorphes cuspidales algébriques de $\GL_n$ de conducteur 7 en poids motivique impair $\leq 7$ }
\begin{center}
\begin{tabular}{ |c || *{1}{c|} c|} 
\hline
   Poids motivique & $\GL_2$ & $\GL_4$ \\
      \hline
   3				& $(3)^+$ &		\\
      \hline
   5				& $\bm{(5)^+},\,(5)^{-,a},\,(5)^{-,b}$ &		\\
   \hline
   7				& $(7)^{+,a},\,(7)^{+,b},\,\bm{(7)^{-}}$ &		\\
   \hline

 \end{tabular}\newline

Représentations automorphes cuspidales algébriques autoduales de $\GL_n$ de conducteur 7 en poids motivique impair $\leq 7$ 
\end{center}

Elles sont toutes très régulières, et de signe epsilon global égal à 1, sauf celles en gras (de signe epsilon global égal à $-1$).

\section{Représentations automorphes cuspidales algébriques de $\GL_n$ de conducteur 11 en poids motivique impair $\leq 7$ }
\begin{center}
\begin{tabular}{ |c || *{1}{c|} c|} 
\hline
   Poids motivique & $\GL_2$ & $\GL_4$ \\
   \hline
   1				& $(1)^-$ &		\\
   \hline
   3				& $(3)^{+,a},\,(3)^{+,b}$ &		\\
   \hline
	5				& $\bm{(5)^+},\,(5)^{-,a},\,(5)^{-,b},\,(5)^{-,c}$ &		\\
\hline
  \multirow{2}{*}{7}				& $(7)^{+,a},\,(7)^{+,b},\,(7)^{+,c},\,(7)^{+,d}$ &	$(7,3)^+$	\\
  									& $\bm{(7)^{-,a}},\,\bm{(7)^{-,b}}$ &	$(7,5)^-$	\\ 
\hline

 \end{tabular}\newline


Représentations automorphes cuspidales algébriques autoduales de $\GL_n$ de conducteur 11 en poids motivique impair $\leq 7$ 
\end{center}

Elles sont toutes régulières, et de signe epsilon global égal à 1, sauf celles en gras (de signe epsilon global égal à $-1$).

\section{Représentations automorphes cuspidales algébriques de $\GL_n$ de conducteur 13 en poids motivique impair $\leq 5$ }
\begin{center}
\begin{tabular}{ |c || *{1}{c|} } 
\hline
   Poids motivique & $\GL_2$ \\

   \hline
   3				& $(3)^{+,a},\,(3)^{+,b},\,\bm{(3)^-}$		\\
 \hline
    5				& $\bm{(5)^{+,a}},\,\bm{(5)^{+,b}},\,(5)^{-,a},\,(5)^{-,b},\,(5)^{-,c}$		\\
\hline

 \end{tabular}\newline


Représentations automorphes cuspidales algébriques autoduales de $\GL_n$ de conducteur 13 en poids motivique impair $\leq 5$ 
\end{center}

Elles sont toutes régulières, et de signe epsilon global égal à 1, sauf celles en gras (de signe epsilon global égal à $-1$).

\section{Représentations automorphes cuspidales algébriques de $\GL_n$ de conducteur 17 en poids motivique impair $\leq 3$ }
\begin{center}
\begin{tabular}{ |c || *{1}{c|} } 
\hline
   Poids motivique & $\GL_2$ \\

   \hline
   1				& $(1)^-$ \\
   \hline
   3				& $(3)^{+,a},\,(3)^{+,b},\,(3)^{+,c},\,\bm{(3)^-}$		\\
 \hline

 \end{tabular}\newline


Représentations automorphes cuspidales algébriques autoduales de $\GL_n$ de conducteur 17 en poids motivique impair $\leq 3$ 
\end{center}

Elles sont toutes régulières, et de signe epsilon global égal à 1, sauf celles en gras (de signe epsilon global égal à $-1$).

\chapter{Démonstration de la formule explicite de Riemann-Weil-Mestre}\label{Démonstration de la formule explicite de Riemann-Weil-Mestre}
Cette Annexe vise à démontrer la formule explicite de Riemann-Weil-Mestre, dans le cadre défini par Jean-François Mestre dans \cite{Mes}.

Comme mentionné en \ref{Enonce_general}, sa démonstration invoque \og des techniques classiques \fg{} d'analyse complexe sans référence spécifique. Voici donc la démonstration détaillée, avec l'ensemble des étapes nécessaires. Pour les rares lemmes que nous n'avons pas démontrés, nous renvoyons à des énoncés précis de la littérature.

Enfin, pour donner à cette Annexe une certaine autonomie, nous rappelons les définitions du paragraphe \ref{Enonce_general} \emph{in extenso} dans le corps du texte.


\section{La fonction $\Gamma$ d'Euler}
On définit la fonction $\Gamma$ d'Euler par la formule suivante :
\begin{align} 
\frac{1}{\Gamma(z)}
&=\lim\limits_{n \rightarrow +\infty} \frac{z(z+1)\cdots (z+n)}{n! n^z} \\
&=\lim\limits_{n \rightarrow +\infty} \frac{z(z+1)\cdots (z+n)}{1 \cdot 2 \cdots (n+1)}n^{1-z}. \label{dev_gamma_euler}
\end{align}

Ceci définit bien une fonction méromorphe sur $\mathbb{C}$ avec des pôles simples aux entiers négatifs. La fonction $\Gamma$ prend des valeurs de tout argument, si bien qu'on ne peut \emph{a priori} utiliser aucune détermination du logarithme pour définir $\log \Gamma$ globalement sur $\mathbb{C} \setminus \mathbb{Z}_-$.

Deux solutions différentes sont proposées par \cite{Bbki} et \cite{Rem}, qui toutes les deux imposent de se restreindre à $\mathbb{C} \setminus \mathbb{R}_- $, où la détermination principale du logarithme est bien définie.

\cite{Rem} propose de partir de la dérivée logarithmique de la fonction $\Gamma$ (bien définie car les pôles sont des réels négatifs) et de l'intégrer le long du segment $[1;z]$. On pose alors :
\[
l(z)=\int_{[1;z]} \frac{\Gamma'(s)}{\Gamma(s)} \dd s,
\]
ce qui définit \emph{un} logarithme de la fonction $\Gamma$ au sens où :
\[
e^{l(z)}=\Gamma(z) \; \; \forall z \in \mathbb{C} \setminus \mathbb{R}_-.
\]

Cette version permet d'énoncer le plus de résultats concernant la fonction $\Gamma$. Pour notre étude, nous suivrons plutôt \cite{Bbki}, qui, quoique moins précis, suffira à nos estimations. On utilise donc la détermination principale du logarithme \emph{ \og en convenant que lorsqu'un logarithme, dans cette formule, porte sur un nombre réel négatif, il a l'une ou l'autre des deux valeurs limites (différant de $2\pi i$) de la détermination principale du logarithme en ce point \fg{}} \cite{Bbki} [VII.12]

Pour alléger les notations, on notera systématiquement `` = '' dans les formules qui suivent, sachant que ce signe est à considérer comme signifiant la congruence modulo $2 i \pi  \mathbb{Z}$ 
, ce qui, quoique déroutant dans les développements asymptotiques de précision supérieure à $O(1)$, s'avérera inoffensif pour les estimées effectives dont nous ferons usage.

\begin{prop}\label{stirl_ex}
On a, pour $z \in \mathbb{C} \setminus \mathbb{R}_- $, avec les conventions citées :
\begin{equation*}
\log \Gamma(z)= (z-\frac{1}{2}) \log z - z +\frac{1}{2} \log (2 \pi) - \int_0^{\uparrow+ \infty} \frac{P(u)}{z+u} \dd u,
\end{equation*}
où $P$ désigne la fonction \og dents de scie \fg{} définie comme la fonction 1-périodique, donnée par $u \mapsto u-\frac{1}{2}$ sur $[0;1[$.
\end{prop}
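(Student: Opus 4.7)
\medskip

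Mon plan est de partir directement du produit infini \eqref{dev_gamma_euler}, en prendre le logarithme pour obtenir
\[
\log \Gamma(z) \,=\, \lim_{n\to+\infty} \Bigl[\log(n!) + z\log n - \sum_{k=0}^{n}\log(z+k)\Bigr],
\]
la limite étant prise modulo $2i\pi\mathbb{Z}$ conformément aux conventions de l'énoncé, et d'appliquer la formule sommatoire d'Euler-Maclaurin à l'ordre~1 à la somme $\sum_{k=0}^{n}\log(z+k)$, avec la fonction \og dents de scie \fg{} $P$ (qui coïncide avec le polynôme de Bernoulli $B_1$ en $\{u\}$). Cette formule découle d'une intégration par parties élémentaire sur chaque intervalle $[k,k+1]$, et s'écrit ici
\[
\sum_{k=0}^{n}\log(z+k)\,=\,\int_0^n \log(z+t)\,\dd t + \tfrac{1}{2}\bigl(\log z + \log(z+n)\bigr) + \int_0^n \frac{P(t)}{z+t}\,\dd t.
\]
Je calcule alors explicitement $\int_0^n \log(z+t)\,\dd t = (z+n)\log(z+n) - (z+n) - z\log z + z$, ce qui réduit toute la preuve à un calcul d'asymptotique quand $n\to+\infty$.

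Pour cette asymptotique, j'utilise deux développements : d'une part $\log(z+n) = \log n + O(1/n)$ et $(z+n)\log(z+n) = n\log n + z\log n + z + o(1)$, d'autre part la formule de Stirling classique $\log(n!) = (n+\tfrac{1}{2})\log n - n + \tfrac{1}{2}\log(2\pi) + o(1)$. (Cette dernière peut elle-même se justifier par la même technique d'Euler-Maclaurin appliquée à $\sum_{k=1}^n \log k$, ce qui évite toute circularité.) En regroupant les termes, les divergences en $\log n$ et en $n$ se compensent exactement et l'on obtient
\[
\log(n!) + z\log n - \sum_{k=0}^n \log(z+k) = (z-\tfrac{1}{2})\log z - z + \tfrac{1}{2}\log(2\pi) - \int_0^n \frac{P(t)}{z+t}\,\dd t + o(1),
\]
ce qui donne la formule annoncée en passant à la limite, à condition de justifier la convergence de l'intégrale impropre.

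Le point le plus délicat sera précisément cette convergence : $P$ étant bornée mais seulement d'intégrale oscillante, et $1/(z+t)$ décroissant comme $1/t$, l'intégrale ne converge pas absolument sur $[0,+\infty[$ -- d'où la notation fléchée $\int_0^{\uparrow+\infty}$ qui signifie la limite $\lim_{A\to+\infty}\int_0^A$. Pour l'établir, j'introduirai la primitive $Q(u) = \int_0^u P(s)\,\dd s$ (qui est 1-périodique, continue, et bornée car $\int_0^1 P = 0$), et j'intégrerai par parties :
\[
\int_0^A \frac{P(t)}{z+t}\,\dd t = \Bigl[\frac{Q(t)}{z+t}\Bigr]_0^A + \int_0^A \frac{Q(t)}{(z+t)^2}\,\dd t,
\]
ce qui ramène l'étude à une intégrale absolument convergente sur $[0,+\infty[$ grâce à la bornitude de $Q$ et au facteur $1/(z+t)^2$. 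Ceci assure simultanément la convergence de l'intégrale impropre et la validité du passage à la limite dans l'asymptotique précédente, et achève la preuve modulo la convention $2i\pi\mathbb{Z}$.
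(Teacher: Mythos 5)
Votre démonstration est correcte et suit essentiellement la même voie que celle du texte : produit limite d'Euler, formule sommatoire d'Euler--Maclaurin à l'ordre 1 faisant apparaître la fonction $P$, puis identification de la constante $\frac{1}{2}\log(2\pi)$ au moyen de la formule de Stirling classique. Les seules différences sont des variantes mineures : le texte isole la constante en soustrayant le cas $z=1$ et en la calculant dans le Lemme \ref{int_imp} (lui-même fondé sur Stirling) là où vous invoquez Stirling directement pour $\log(n!)$, et il établit la convergence de l'intégrale impropre en la récrivant comme une série de terme général $O(1/n^2)$ là où vous intégrez par parties contre la primitive bornée de $P$ -- les deux arguments sont également valables.
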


Le développement \eqref{dev_gamma_euler} nous donne :
\begin{equation*}
- \log \Gamma(z)=\lim\limits_{n \rightarrow +\infty} \left(\sum_{k=0}^n \log \left(\frac{z+k}{1+k} \right)+(1-z) \log n \right).
\end{equation*}
On utilise alors le lemme suivant :
\begin{lemme}
Soit $f \in {\rm C}^1(\mathbb{R}_+,\mathbb{C})$. Alors, pour tout entier naturel $n$
\begin{equation*}
\sum_{k=0}^n f(k) - \int_0^n f(u) \dd u= \frac{f(0)+f(n)}{2}+\int_0^n f'(u) P(u) \dd u,
\end{equation*}
où $P$ désigne la fonction \og dents de scie \fg{} de la Proposition \ref{stirl_ex}.
\end{lemme}
\begin{proof}
Soit $k \in \mathbb{N}^*$, on a :
\begin{align*}
f(k)-\int_{k-1}^k f(u) \dd u
&=f(k)-\int_{k-1}^k f(u) \dd(u-k+\frac{1}{2}) \\
&=f(k)-\frac{f(k)+f(k-1)}{2}+\int_{k-1}^k f'(u) (u-k+\frac{1}{2}) \dd u \\
&=\frac{f(k)-f(k-1)}{2}+\int_{k-1}^k f'(u) P(u) \dd u.
\end{align*}
On somme alors pour $k$ allant de 1 à $n$ et on conclut en ajoutant $f(0)$.
\end{proof}

On considère maintenant la fonction $f : u \mapsto \log (z+u)$ pour $z$ dans le domaine $-\pi +\delta \leq \mathrm{arg}\, z \leq \pi - \delta$, où $\delta \in ]0;\pi[$. On obtient :
\begin{align*}
\sum_{k=0}^n \log(z+k)
&=\int_0^n \log(z+u) \dd u + \frac{\log (z)+\log (z+n)}{2} +\int_0^n \frac{P(u)}{z+u} \dd u \\
&=\left[ (z+u) \log (z+u)-u \right]_0^n + \frac{\log (z)+\log (z+n)}{2} +\int_0^n \frac{P(u)}{z+u} \dd u \\
&=(z+n) \log(z+n) -n - z \log(z) + \frac{\log (z)+\log (z+n)}{2} +\int_0^n \frac{P(u)}{z+u} \dd u 
\\
&=-\left(z-\frac{1}{2}\right) \log (z)+\frac{1}{2} \log(z+n)+(z-1)\log(z+n) \\ &\qquad \qquad + (n+1)\log(z+n) -n +\int_0^n \frac{P(u)}{z+u} \dd u.
\end{align*}
En particulier, pour $z=1$, on obtient :
\[
\sum_{k=0}^n \log(1+k)=\frac{1}{2} \log(n+1)+ (n+1)\log(1+n) -n +\int_0^n \frac{P(u)}{1+u} \dd u,
\]
et en soustrayant cette dernière égalité à l'égalité précédente, on a :
\begin{multline*}
\sum_{k=0}^n \log \frac{z+k}{1+k} = -\left(z-\frac{1}{2}\right) \log (z)+\frac{1}{2} \log \frac{z+n}{n+1}+(z-1)\log(z+n) \\ + (n+1)\log \frac{z+n}{1+n} +\int_0^n \left(\frac{P(u)}{z+u}- \frac{P(u)}{1+u} \right) \dd u.
\end{multline*}

En utilisant le fait que, pour $n$ assez grand, $\log (s+n)=\log n+ O\left(\frac{1}{n}\right)$, on a :
\begin{multline*}
\lim\limits_{n \rightarrow +\infty}\left( (1-z) \log n + \sum_{k=0}^n \log \frac{z+k}{1+k} \right)
=-\left(z-\frac{1}{2}\right) \log (z) \\ +\lim\limits_{n \rightarrow +\infty} \left( (n+1)\log \frac{z+n}{1+n} +\int_0^n \left(\frac{P(u)}{z+u}- \frac{P(u)}{1+u} \right) \dd u \right).
\end{multline*}
On reconnaît $-\log \Gamma(z)$ dans le membre de gauche et on a, par ailleurs :
\[
(n+1)\log \frac{z+n}{1+n}=(n+1)\log \left(1+\frac{z-1}{1+n} \right) \underset{n \rightarrow + \infty}{\longrightarrow} z-1.
\]
Pour conclure à la Proposition \ref{stirl_ex}, il suffit de calculer $\lim\limits_{n \rightarrow +\infty}\int_0^n \frac{P(u)}{1+u} \dd u$, ce qui fait l'objet du 
\begin{lemme}\label{int_imp}
Avec les notations précédentes, on a :
\[
\int_0^{\uparrow +\infty} \frac{P(u)}{1+u} \dd u=\frac{1}{2} \log (2 \pi) -1.
\]
\end{lemme}

Avant de donner la démonstration du lemme, donnons un corollaire de la Proposition \ref{stirl_ex}.

\begin{cor}\label{log_gam}
On a, pour $z \in \mathbb{C} \setminus \mathbb{R}_- $, en considérant la détermination principale du logarithme complexe : 
\begin{equation*}
\log \Gamma(z)= (z-\frac{1}{2}) \log z - z +\frac{1}{2} \log (2 \pi) + O\left(\frac{1}{|z|}\right)
\end{equation*}
uniformément dans tout domaine de la forme $-\pi +\delta \leq \mathrm{arg}\, z \leq \pi - \delta$
\end{cor}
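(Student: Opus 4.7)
The plan is to derive the Corollary directly from Proposition \ref{stirl_ex} by showing that the integral remainder
\[
R(z) := \int_0^{\uparrow+\infty} \frac{P(u)}{z+u}\,\dd u
\]
satisfies $R(z) = O(1/|z|)$ uniformly in any closed sector $S_\delta = \{z \in \C^\times : -\pi+\delta \leq \arg z \leq \pi-\delta\}$, with $\delta \in (0,\pi)$ fixed. Once this is granted, subtracting from the exact formula of Proposition \ref{stirl_ex} gives the asymptotic expansion of the Corollary.

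First I would introduce the primitive $Q(u) = \int_0^u P(t)\,\dd t$. Because $P$ is $1$-periodic with mean zero, $Q$ is continuous, $1$-periodic, and in particular bounded on $\R_+$; call $M = \|Q\|_\infty$. An integration by parts on $[0,N]$ then yields
\[
\int_0^N \frac{P(u)}{z+u}\,\dd u = \left[\frac{Q(u)}{z+u}\right]_0^N + \int_0^N \frac{Q(u)}{(z+u)^2}\,\dd u.
\]
Since $Q(0)=0$ and $|Q(N)/(z+N)| \leq M/|z+N| \to 0$ as $N \to \infty$ (for $z \in S_\delta$ fixed), the boundary terms vanish in the limit and we obtain
\[
R(z) = \int_0^{+\infty} \frac{Q(u)}{(z+u)^2}\,\dd u,
\]
where the integral is now absolutely convergent.

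The main technical step -- and the only place where the sector condition $z \in S_\delta$ really enters -- is the geometric inequality
\[
|z+u| \geq \sin(\delta/2)\,(|z|+u) \qquad \text{for all } u \geq 0,\ z \in S_\delta.
\]
This is the only mildly nontrivial point: it follows by writing $z = |z|e^{i\theta}$ with $|\theta| \leq \pi-\delta$ and using that the distance from $-u$ (a point on $\R_-$) to the ray through $z$ is at least $|z|\sin((\pi-|\theta|)/2)$, or more directly by bounding below $|z+u|^2 = |z|^2 + 2u|z|\cos\theta + u^2$ using $\cos\theta \geq -\cos\delta > -1$ and a standard minorization of $a^2 + b^2 + 2abc$ for $c \in (-1,1]$. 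Applying this bound gives
\[
|R(z)| \leq M \int_0^{+\infty} \frac{\dd u}{|z+u|^2} \leq \frac{M}{\sin^2(\delta/2)} \int_0^{+\infty} \frac{\dd u}{(|z|+u)^2} = \frac{M}{\sin^2(\delta/2)\,|z|},
\]
which is exactly the desired uniform bound $R(z) = O(1/|z|)$ on $S_\delta$.

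Combining this with Proposition \ref{stirl_ex} finishes the proof; the ambiguity modulo $2i\pi\Z$ inherent to the chosen branch of the logarithm is absorbed in the conventions recalled before the Proposition and does not affect the asymptotic statement, which is a statement about $\log\Gamma(z)$ in the principal determination up to the constant $\tfrac12\log(2\pi)$. The hard part, as indicated, is purely the uniform sector estimate on $1/|z+u|$; everything else is formal manipulation with the integration by parts made possible by the zero-mean property of $P$.
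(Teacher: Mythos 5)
Your proof is correct, and it takes a genuinely different route from the paper's. The paper converts the improper integral $\int_0^{\uparrow+\infty} \frac{P(u)}{z+u}\,\dd u$ into the series $\sum_{n\geq 0}\bigl(1-(\tfrac{1}{2}+z+n)\log(1+\tfrac{1}{z+n})\bigr)$ — a rewriting it needs anyway for the proof of Lemme \ref{int_imp} — then expands each term to obtain $-\tfrac{1}{12(z+n)^2}+o(1/n^2)$ uniformly in the sector and concludes by comparison of series. You instead integrate by parts against the bounded $1$-periodic primitive $Q$ of $P$ (bounded precisely because $P$ has mean zero), which turns the conditionally convergent integral into the absolutely convergent $\int_0^{+\infty}\frac{Q(u)}{(z+u)^2}\,\dd u$, and you then invoke the sector inequality $|z+u|\geq \sin(\delta/2)(|z|+u)$. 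Your version is the more classical derivation of Stirling's formula and has the merit of making the uniformity in $\delta$ fully explicit with a clean constant $M/\sin^2(\delta/2)$; the paper's final step (``on conclut par théorèmes de comparaison pour les séries'') implicitly requires the very same kind of sector estimate to sum $\sum_n |z+n|^{-2}$ into $O(1/|z|)$, and leaves it to the reader. The paper's route, on the other hand, is economical within the appendix since the series manipulation does double duty for the evaluation of the constant $\tfrac{1}{2}\log(2\pi)-1$ in Lemme \ref{int_mp}. Your geometric inequality is correct: minimizing $\frac{r^2+u^2-2ru\cos\delta}{(r+u)^2}$ over $r,u>0$ gives exactly $\sin^2(\delta/2)$, attained at $r=u$. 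The only point worth stating explicitly is that the integration by parts is legitimate because $P$ is merely piecewise continuous: one performs it on each interval $[n,n+1]$ and the boundary terms telescope since $Q$ is continuous.
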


\begin{proof}
On a :
\begin{align*}
\int_0^{\uparrow+ \infty} \frac{P(u)}{z+u} \dd u
&= \sum_{n=0}^{+ \infty} \int_n^{n+1} \frac{P(u)}{z+u} \dd u \\
&=\sum_{n=0}^{+ \infty} \int_0^1 \frac{P(u)}{z+n+u} \dd u \\
&=\sum_{n=0}^{+ \infty} \int_0^1 \left(1-\frac{\frac{1}{2}+z+n}{z+n+u} \right) \dd u \\
&=\sum_{n=0}^{+ \infty} \left( 1- \left(\frac{1}{2}+z+n \right) \log \left(1+\frac{1}{z+n} \right) \right).
\end{align*}
Cette écriture nous permet déjà de démontrer le Lemme \ref{int_imp} :
\begin{proof} (du Lemme \ref{int_imp})

Il s'agit d'effectuer le calcul pour $z=1$, considérons donc les sommes partielles jusqu'à l'indice $N-1$ pour $N \in \mathbb{N}^*$ :
\begin{align*}
\int_0^N \frac{P(u)}{1+u} \dd u
&=\sum_{n=0}^{N-1} \left( 1- \left(\frac{1}{2}+1+n \right) \log \left(1+\frac{1}{1+n} \right) \right) \\
&=\sum_{n=0}^{N-1} \left( 1- \left(n+\frac{3}{2} \right) \log \left(\frac{n+2}{n+1} \right) \right) \\
&=N-\sum_{n=0}^{N-1} \left(n+\frac{3}{2} \right) \log (n+2)+\sum_{n=0}^{N-1} \left(n+\frac{1}{2} \right) \log (n+1)+  \sum_{n=0}^{N-1}\log (n+1) \\
&=N-\left(N+\frac{1}{2} \right) \log (N+1)+ \log(N!).
\end{align*}

On utilise alors la formule de Stirling dans sa forme précise, telle qu'on peut l'obtenir via les intégrales de Wallis :
\[
N! = \sqrt{2 \pi N} \left( \frac{N}{e}\right)^N e^{o(1)}.
\]
On a donc :
\begin{align*}
\int_0^N \frac{P(u)}{1+u} \dd u
&= N-\left(N+\frac{1}{2} \right) \left(\log (N) +\frac{1}{N}+o(\frac{1}{N})\right)+\frac{1}{2} \log(2\pi) \\ &\qquad \qquad +\frac{1}{2} \log(N) +N \log (N) -N + o(1)\\
&=-1+\frac{1}{2} \log(2\pi)+o(1),
\end{align*}
ce qui conclut.
\end{proof}

On reprend la démonstration du Corollaire \ref{log_gam}.

On a, pour $n$ assez grand, et uniformément dans le domaine considéré :
\begin{align*}
& 1- \left(\frac{1}{2}+z+n \right) \log \left(1+\frac{1}{z+n} \right) \\
& \qquad= -\left(\frac{1}{2}+z+n \right) \left(\frac{1}{z+n}-\frac{1}{2(z+n)^2}+\frac{1}{3(z+n)^3}+o\left(\frac{1}{n^3} \right) \right)+1 \\
& \qquad=-1+\frac{1}{2(z+n)}-\frac{1}{3(z+n)^2}-\frac{1}{2(z+n)}+\frac{1}{4(z+n)^2}+1+ o\left(\frac{1}{n^2} \right) \\
& \qquad=-\frac{1}{12(z+n)^2}+o\left(\frac{1}{n^2} \right)
\end{align*}
et on conclut par théorèmes de comparaison pour les séries.
\end{proof}

Posons $z=r e^{i \theta}=\sigma+it$ (les lettres $r,\theta,\sigma,t$ correspondront toujours à ces décompositions dans la suite). Alors :
\begin{align*}
\log \Gamma(z) 	&= (\sigma+it-\frac{1}{2})(\log r+i \theta)-(\sigma+it)+\frac{1}{2} \log (2 \pi) +O(\frac{1}{r}) \\
        				&= (\sigma-\frac{1}{2})\log r - t \theta-\sigma+\frac{1}{2} \log (2 \pi) + i(\cdots) +O(\frac{1}{r}),
\end{align*}
d'où l'on déduit :
\begin{equation}\label{Gamma_estim_pour_ordre}
|\Gamma(z)|=r^{\sigma-\frac{1}{2}}e^{-t \theta} \sqrt{2 \pi} e^{-\sigma} e^{O(\frac{1}{r})}
\end{equation}
uniformément dans tout domaine de la forme $-\pi +\delta \leq \mathrm{arg}\, z \leq \pi - \delta$ lorsque $r=|z| \longrightarrow + \infty$

En particulier, si on fixe une bande verticale de largeur finie 
\begin{equation*}
- \infty < \sigma_0 \leq \mathrm{Re}(s) \leq \sigma_1 < + \infty,
\end{equation*}
on a, lorsque $|t| \longrightarrow +\infty$ :
\begin{equation*}
|\Gamma(z)| \sim |t|^{\sigma-\frac{1}{2}}e^{-\frac{|t| \pi}{2}} \sqrt{2 \pi} e^{-\sigma} 
\end{equation*}
uniformément dans la bande (mais suffisamment loin des pôles aux entiers négatifs) -- en utilisant le fait que, dans cette situation, $\theta~\longrightarrow~\pm \frac{\pi}{2}$ et $r \sim |t|$.

On introduit maintenant la fonction \emph{digamma} d'Euler, que nous utiliserons également dans la suite.

\begin{defi}
On définit la fonction \emph{digamma} comme la dérivée logarithmique de la fonction $\Gamma$ d'Euler. On pose donc, pour $z \in \mathbb{C} \setminus \mathbb{Z}_-$,
\begin{equation*}
\psi(z)=\frac{\Gamma'(z)}{\Gamma(z)}
\end{equation*}
\end{defi}

\begin{prop} \label{digamma_estim}
On a, uniformément dans toute bande verticale de largeur bornée incluse dans le demi-plan $\re s >0$ :
\[
\psi (z)=O(\log |t|).
\]
\end{prop}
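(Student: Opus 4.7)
The plan is to derive the required estimate by differentiating the integral representation of $\log \Gamma$ given in Proposition \ref{stirl_ex}, taking advantage of the fact that the derivative $\psi = \Gamma'/\Gamma$ is unambiguously defined (so the ``$\bmod\ 2i\pi\Z$'' convention inherited from the logarithm causes no difficulty).

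First I would justify that one may differentiate under the integral sign in the formula
\[
\log \Gamma(z)= \Bigl(z-\tfrac{1}{2}\Bigr) \log z - z +\tfrac{1}{2} \log (2 \pi) - \int_0^{\uparrow+ \infty} \frac{P(u)}{z+u} \,\dd u,
\]
on the half-plane $\re z > 0$. Since $|P(u)| \leq 1/2$ and, for $z$ in any fixed vertical strip $\sigma_0 \leq \re z \leq \sigma_1$ with $\sigma_0>0$, one has $|z+u| \geq \sigma_0 + u$, the integrand and its $z$-derivative are dominated by integrable functions uniformly in a neighbourhood of each $z$. Standard theorems on differentiation under the integral then yield
\[
\psi(z) = \log z - \frac{1}{2z} + \int_0^{\uparrow + \infty} \frac{P(u)}{(z+u)^2} \,\dd u.
\]

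Next I would estimate each term in this identity, uniformly on the strip $\sigma_0 \leq \sigma \leq \sigma_1$ as $|t| \to \infty$. For the logarithm we use $|z| = r \sim |t|$ and $|\arg z| < \pi/2$, so $|\log z| \leq \log r + \pi/2 = O(\log |t|)$. The term $-1/(2z)$ is $O(1/|t|)$. For the integral, the bound $|P(u)| \leq 1/2$ combined with $|z+u|^2 \geq (\sigma + u)^2 \geq (\sigma_0 + u)^2$ gives
\[
\left| \int_0^{\uparrow+\infty} \frac{P(u)}{(z+u)^2} \,\dd u \right| \leq \frac{1}{2} \int_0^{+\infty} \frac{\dd u}{(\sigma_0 + u)^2} = \frac{1}{2\sigma_0},
\]
which is $O(1)$. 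Summing these contributions yields $\psi(z) = O(\log|t|)$ as required.

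The proof is almost entirely routine; the only minor technical point is the justification of exchanging derivation and integration in the improper integral $\int_0^{\uparrow + \infty}$, which requires verifying that the truncated integrals $\int_0^N$ differ from the full integral by $O(1/N)$ uniformly in $z$ on compact subsets of $\re z > 0$—this is immediate from the domination above. No serious obstacle is expected; the estimate is weaker than what Stirling's formula for $\log \Gamma$ provides, precisely because we must allow for the $\log z$ term that appears when differentiating the leading asymptotic $(z - 1/2)\log z$.
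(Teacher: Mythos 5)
Your proof is correct and follows essentially the same route as the paper: differentiate the Stirling-type identity of Proposition \ref{stirl_ex} and estimate the three resulting terms. The only differences are cosmetic — you justify the differentiation under the integral sign explicitly and bound the remainder integral by $O(1)$ rather than the $O(1/|z|)$ the paper states, which is more than enough for the $O(\log|t|)$ conclusion.
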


\begin{proof}
Il suffit de dériver l'expression de la Proposition \ref{stirl_ex}, on obtient :
\begin{align*}
\psi(z)&=\frac{z-\frac{1}{2}}{z}+\log z -1+\int_0^{+ \infty} \frac{P(u)}{(z+u)^2} \dd u \\
&=-\frac{1}{2z}+\log z +O\left( \frac{1}{z}\right)\\
&=\log z +O\left( \frac{1}{z}\right)
\end{align*}
\end{proof}

Nous avons également une formule intégrale exacte pour la fonction digamma.

\begin{prop} \label{digamma_expl}
On a, pour tout complexe $z$ de partie réelle strictement positive :
\begin{equation*}
\psi(z) = \int_0^{\infty} \left( \frac{e^{-t}}{t} - \frac{e^{-zt}}{1-e^{-t}}\right)\dd t
\end{equation*}
\end{prop}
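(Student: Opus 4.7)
The plan is to derive the Gauss integral representation starting from the limit formula for $\psi$ obtained by logarithmic differentiation of the Weierstrass product \eqref{dev_gamma_euler}, rather than going through the Stirling-type expression of Proposition \ref{stirl_ex}. Taking $-\log$ of \eqref{dev_gamma_euler} and differentiating termwise (justified by the uniform convergence of the logarithmic derivative away from the poles), one obtains
\[
\psi(z)=\lim_{n\to+\infty}\left(\log n-\sum_{k=0}^{n}\frac{1}{z+k}\right),
\]
valid for every $z\in\C\setminus\Z_{\leq 0}$, and in particular for $\re(z)>0$.

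Next, I would rewrite both ingredients as integrals on $\R_+$. For $\re(z+k)>0$ one has $\frac{1}{z+k}=\int_0^{+\infty}e^{-(z+k)t}\dd t$, so summing the geometric series gives
\[
\sum_{k=0}^{n}\frac{1}{z+k}=\int_0^{+\infty}e^{-zt}\,\frac{1-e^{-(n+1)t}}{1-e^{-t}}\,\dd t.
\]
For the logarithm, the Frullani-type identity $\log n=\int_0^{+\infty}\frac{e^{-t}-e^{-nt}}{t}\,\dd t$ (for $n\geq 1$) expresses $\log n$ as an integral with the same weight at $t=0$ as the singular part of the previous integrand. Subtracting and regrouping,
\[
\log n-\sum_{k=0}^{n}\frac{1}{z+k}=\int_0^{+\infty}\!\!\left(\frac{e^{-t}}{t}-\frac{e^{-zt}}{1-e^{-t}}\right)\dd t\;+\;R_n(z),
\]
with a remainder $R_n(z)=\int_0^{+\infty}\bigl(\frac{e^{-zt}e^{-(n+1)t}}{1-e^{-t}}-\frac{e^{-nt}}{t}\bigr)\dd t$ that one needs to show tends to $0$.

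The main obstacle is thus the passage to the limit and the integrability of the limit integrand. Near $t=0$, the expansion $\frac{1}{1-e^{-t}}=\frac{1}{t}+\frac{1}{2}+O(t)$ gives
\[
\frac{e^{-t}}{t}-\frac{e^{-zt}}{1-e^{-t}}=\frac{e^{-t}-e^{-zt}}{t}-\frac{e^{-zt}}{2}+O(t),
\]
so the singularity cancels and the integrand is bounded on a neighbourhood of $0$ (uniformly in $z$ on compact subsets of $\{\re z>0\}$). Near $t=+\infty$, exponential decay of both $\frac{e^{-t}}{t}$ and $\frac{e^{-zt}}{1-e^{-t}}$ (using $\re z>0$) ensures absolute convergence. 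For $R_n(z)$, the first piece is controlled by $\int_0^{+\infty}\frac{e^{-(n+1)t-t\re z}}{1-e^{-t}}\dd t=O(\log n)/n$ by splitting at $t=1/n$, and the Frullani remainder $\int_0^{+\infty}\frac{e^{-nt}}{t}\dd t$ is absorbed after an identical splitting, showing $R_n(z)\to 0$.

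In summary, the proof has three steps: (i) differentiate the Weierstrass product to obtain the limit formula for $\psi$; (ii) rewrite the summation and the logarithm as integrals and sum the geometric series; (iii) verify, using dominated convergence and the cancellation of the $1/t$-singularities at $t=0$, that the limit can be taken under the integral sign and that the remainder vanishes. The delicate point — and the only genuinely analytic one — is step (iii), namely justifying that the integrand of the limit belongs to $\mathrm{L}^1(\R_+)$ and that the $n$-dependent integrands converge to it in $\mathrm{L}^1$; both follow from the singularity-cancellation estimate above together with the exponential decay at infinity guaranteed by the hypothesis $\re(z)>0$.
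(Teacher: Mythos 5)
Your approach is sound and, unlike the paper, actually proves the statement: the paper's proof of Proposition \ref{digamma_expl} is only a citation of Whittaker--Watson \S 12.3. Your derivation --- differentiate the Euler limit formula \eqref{dev_gamma_euler} to get $\psi(z)=\lim_n\bigl(\log n-\sum_{k=0}^n\frac{1}{z+k}\bigr)$, convert each $\frac{1}{z+k}$ into a Laplace integral, sum the geometric series, and use the Frullani integral for $\log n$ --- is the classical route and fits the paper's setup particularly well, since \eqref{dev_gamma_euler} is precisely the definition of $\Gamma$ the paper starts from. Your analysis of the main integrand (cancellation of the $1/t$ singularities at $0$, exponential decay at infinity for $\re(z)>0$) is correct.

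One step, as written, would fail: your estimate of $R_n(z)$. You bound ``the first piece'' by $\int_0^{\infty}\frac{e^{-(n+1)t-t\re (z)}}{1-e^{-t}}\,\dd t$ and treat $\int_0^{\infty}\frac{e^{-nt}}{t}\,\dd t$ separately, but both of these integrals diverge at $t=0$ (each integrand behaves like $1/t$ there), so neither is $O(\log n/n)$ nor even finite. The two terms of $R_n$ must be kept together, exactly as for the main integrand. Write
\[
R_n(z)=\int_0^{\infty}e^{-nt}\left(\frac{e^{-(z+1)t}}{1-e^{-t}}-\frac{1}{t}\right)\dd t,
\]
and observe that the bracket equals $-z-\frac{1}{2}+O(t)$ near $t=0$ (the same cancellation you already exploited) and is $O(1/t)$ at infinity, hence bounded on $(0,\infty)$ locally uniformly in $z$; this yields $|R_n(z)|\le C(z)\int_0^{\infty}e^{-nt}\,\dd t=O(1/n)$, which is exactly what you need. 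With this repair the proof is complete.
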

\begin{proof}
Cette formule est due à Gauss. On en trouve une preuve dans \cite{WW} §12.3.
\end{proof}

\section{Fonctions $\Lambda$}
\subsection{Notations}
Nous rappelons maintenant la notion de fonction $\Lambda$ au sens de \cite{Mes}.
\begin{defi}\label{B_defi préfonction lambda}
\emph{(Définition \ref{defi préfonction lambda})}

Soit $M \in \mathbb{N} $, soit $A \in \mathbb{R}^*$, soit $c \in \mathbb{R}_+$.

On se donne $(a_i)_{1 \leq i \leq M} \in (\mathbb{R}_+)^M$, $(a'_i)_{1 \leq i \leq M} \in \mathbb{C}^M$ tels que $\mathrm{Re}(a'_i) \geq 0$ et $\mathrm{Re}(a_i+a'_i) > 0$ pour tout $i$. 

On se donne, pour tout $p$ premier $(\alpha_j(p))_{1 \leq j \leq M'} \in \mathbb{C}^{M'}$ tel que, pour tout $j$, $| \alpha_j(p)| \leq p^c$.

Une \emph{pré-fonction $\Lambda$} est une fonction méromorphe sur $\mathbb{C}$ vérifiant
\begin{enumerate}
\item $\Lambda$ n'a qu'un nombre fini de pôles.
\item La fonction $\Lambda$ diminuée de ses parties singulières est bornée dans toute bande verticale
\begin{equation*}
- \infty < \sigma_0 \leq \mathrm{Re}(s) \leq \sigma_1 < + \infty.
\end{equation*}
\item Pour $\mathrm{Re}(s) > 1+c$, on a le développement en produit suivant :
\begin{equation} \label{prod_eul+gamma}
\Lambda(s)=A^s \prod_{i=1}^M \Gamma(a_i s+a'_i) \prod_p \prod_{j=1}^{M'} \frac{1}{1-\alpha_j(p)p^{-s}}.
\end{equation}
\end{enumerate}
On appellera fonction $L$ associée à la pré-fonction $\Lambda$ la fonction définie pour $\mathrm{Re}(s) > 1+c$ par :
\begin{equation*}
L(s)=\prod_p \prod_{j=1}^{M'} \frac{1}{1-\alpha_j(p)p^{-s}}.
\end{equation*}
On pose également :
\begin{equation*}
G(s)=A^s \prod_{i=1}^M \Gamma(a_i s+a'_i).
\end{equation*}
\end{defi}

\begin{lemme} La fonction $L$, \emph{a priori} définie seulement pour $\mathrm{Re}(s) > 1+c$ admet de façon évidente un prolongement méromorphe à $\mathbb{C}$ comme quotient des fonctions méromorphes $\Lambda$ et $G$.

De plus, elle n'a qu'un nombre fini de pôles.
\end{lemme}
\begin{proof}
Il suffit de remarquer que la fonction $G$ ne s'annule pas sur $\mathbb{C}$ et que $\Lambda$ n'a qu'un nombre fini de pôles par hypothèse.
\end{proof}

Ce qu'il manque, par rapport aux fonctions $\Lambda$ usuelles, c'est l'équation fonctionnelle. On va ici, l'introduire, par le biais d'un couple de pré-fonctions $\Lambda$.
\begin{defi}
\emph{(Définition \ref{Defi fonctions lambda})}

Considérons un couple de pré-fonctions $\Lambda$, $\Lambda_1$ et $\Lambda_2$. Pour fixer les notations, écrivons la condition 3. de la Définition \ref{B_defi préfonction lambda} pour chacune d'entre elles. 
Pour $\mathrm{Re}(s) > 1+c_1$ (resp. $\mathrm{Re}(s) > 1+c_2$), on a les développements en produit absolument convergent suivants :
\begin{align*}
\Lambda_1(s)&=A^s \prod_{i=1}^{M_1} \Gamma(a_i s+a'_i) \prod_p \prod_{j=1}^{M'_1} \frac{1}{1-\alpha_j(p)p^{-s}} , \\
\Lambda_2(s)&=B^s \prod_{i=1}^{M_2} \Gamma(b_i s+b'_i) \prod_p \prod_{j=1}^{M'_2} \frac{1}{1-\beta_j(p)p^{-s}}.
\end{align*}
On dit alors que $(\Lambda_1,\Lambda_2)$ forme un couple de fonctions $\Lambda$ s'il existe un complexe non nul $w$ tel que :

\begin{enumerate}[(i)]

\item $\Lambda_1(s)=w \Lambda_2(1-s)$ \hfill (équation fonctionnelle)
\item $\sum_{i=1}^{M_1} a_{i}= \sum_{i=1}^{M_2} b_{i}$ \hfill (compatibilité des facteurs archimédiens)
\end{enumerate}
\end{defi}

Comme on l'a dit au paragraphe \ref{Les fonctions Lambda sont des fonctions Lambda}, on peut choisir le même réel positif $c$ pour les deux pré-fonctions $\Lambda$, ainsi que les mêmes entiers $M$ et $M'$, quitte à poser des $a_i=0$, $ a'_i=1$ et $\alpha_j(p)=0$.

\begin{ex}
La fonction $\Lambda$ associée à la fonction $\zeta$ de Riemann :
\begin{equation*}
\Lambda(s)=\pi^{-\frac{s}{2}} \Gamma(\frac{s}{2}) \prod_p \frac{1}{1-p^{-s}}
\end{equation*}
est une pré-fonction $\Lambda$ (pour $M=M'=1$, $c=0$) : elle admet exactement deux pôles simples (en $s=0$ et $s=1$) et la condition de bornitude dans toute bande verticale s'obtient classiquement par l'écriture de $\Lambda$ comme somme de deux intégrales convergentes.

C'est en fait une fonction $\Lambda$ car elle vérifie l'équation fonctionnelle $\Lambda(s)=\Lambda(1-s)$ (on a donc $\Lambda=\tilde{\Lambda}$ et $w=1$)
\end{ex}

On se donne un couple de fonctions $\Lambda$ associées, que l'on note $\Lambda_1$ et $\Lambda_2$. On a donc un réel $c >0$, deux entiers $M$ et $M'$ et on fixe pour l'ensemble du mémoire les notations liées au développement en produit pour $\mathrm{Re}(s)>1+c$ :
\begin{equation*}
\Lambda_1(s)=A^s \prod_{i=1}^M \Gamma(a_i s+a'_i) \prod_p \prod_{j=1}^{M'} \frac{1}{1-\alpha_j(p)p^{-s}}
\end{equation*}
\begin{equation*}
\Lambda_2(s)=B^s \prod_{i=1}^M \Gamma(b_i s+b'_i) \prod_p \prod_{j=1}^{M'} \frac{1}{1-\beta_j(p)p^{-s}}
\end{equation*}
et on note $L_1$ et $L_2$ les produits eulériens associés, ainsi que $G_1$ et $G_2$ les produits \emph{archimédiens} associés.

\subsection{Ordre des fonctions $\Lambda$}

On définit maintenant l'ordre d'une fonction entière.
\begin{defi}
Soit $f$ une fonction \emph{entière}.

On dit que $f$ est \emph{d'ordre fini} s'il existe $M,C>0$ et $\lambda>0$ tels que :
\begin{equation*}
\forall z \in \mathbb{C},|f(z)| \leq M e^{C |z|^\lambda}
\end{equation*}
La borne inférieure des $\lambda$ pour lesquels on trouve une telle majoration s'appelle \emph{l'ordre de $f$}
\end{defi}

On dira, quelque peu improprement qu'une fonction holomorphe est d'ordre fini \emph{dans une certaine zone} (bande verticale, demi-plan, etc.) si elle y vérifie une inégalité du même type.

\begin{thm} \emph{Théorème de Weierstrass}

Soit $f$ une fonction entière d'ordre au plus $1$ dont les zéros, comptés avec multiplicité, sont ordonnés de sorte que $|z_1| \leq |z_2| \leq \cdots$. Alors il existe des constantes $A, B$ telles que :
\begin{equation*}
f(z)=e^{A+Bz} \prod_{n=1}^{+\infty} (1-\frac{z}{z_n}) e^{\frac{z}{z_n}}
\end{equation*}
et le produit est absolument convergent.
\end{thm}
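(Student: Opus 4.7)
The plan is to prove this classical result, which is essentially the Hadamard factorization theorem specialized to order at most one. It proceeds in four stages: control of the zeros, construction of the canonical product, quotient to a nonvanishing entire function, and identification of the remaining exponential factor.

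First I would study the distribution of the zeros. Applying Jensen's formula on disks of radius $2R$ and using the order assumption $|f(z)| \leq M e^{C|z|^{\lambda}}$ with some $\lambda \leq 1$, one obtains a bound on the counting function $n(R) = \#\{n : |z_n| \leq R\}$ of the form $n(R) = O(R^{\lambda + \varepsilon})$ for every $\varepsilon > 0$. An Abel summation then shows that $\sum_n |z_n|^{-s} < \infty$ for every $s > \lambda$, in particular for $s = 2$. This is exactly the summability condition that makes the Weierstrass product of genus one converge.

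Next I would define the canonical product $P(z) = \prod_n E_1(z/z_n)$ where $E_1(w) = (1-w) e^{w}$ is the Weierstrass elementary factor of genus $1$. The elementary estimate $|\log E_1(w)| \leq C|w|^2$ for $|w| \leq 1/2$, combined with $\sum |z_n|^{-2} < \infty$, gives absolute and uniform convergence of $P$ on compact sets, so $P$ is entire with exactly the prescribed zeros. A more careful splitting of the product, according to whether $|z_n| \leq 2|z|$ or $|z_n| > 2|z|$, together with the two complementary bounds on $\log E_1$, shows that $P$ itself has order at most $1$.

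Third, the quotient $g = f/P$ is entire (all zeros cancel with the correct multiplicities) and never vanishes. Since $\C$ is simply connected, $g = e^{h}$ for some entire function $h$. The heart of the argument is to show that $h$ is a polynomial of degree at most one. For this one needs a matching lower bound $|P(z)| \geq e^{-C'|z|^{1+\varepsilon}}$ on a sequence of circles $|z| = R_k \to \infty$ chosen so as to stay away from the zeros (this is the minimum modulus estimate for canonical products). Combined with the order bound on $f$, this yields $\re h(z) \leq C''|z|^{1+\varepsilon}$ on these circles, and the Borel–Carathéodory theorem converts this into $|h(z)| \leq C'''|z|^{1+\varepsilon}$ on slightly smaller circles, hence everywhere by the maximum principle. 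A Liouville-type argument applied to high derivatives of $h$ (or equivalently Cauchy's inequalities) then forces $h$ to be a polynomial, with $\deg h \leq 1$, so $h(z) = A + Bz$ and the desired factorization follows.

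The principal obstacle is the third step, namely the minimum modulus estimate for $P$ away from its zeros, which requires a delicate cover of the complex plane by annuli avoiding the zeros and careful term-by-term lower bounds on $E_1(z/z_n)$ in the two regimes $|z_n|$ small versus large with respect to $|z|$. Once this is in hand, the invocation of Borel–Carathéodory to upgrade control of $\re h$ to control of $h$ is standard, and the conclusion $h(z) = A+Bz$ is immediate.
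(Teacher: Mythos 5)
Your proposal is correct and follows exactly the route the paper takes, which simply refers to Murty (Section 6.2) with the same outline: control the distribution of the zeros to get convergence of the genus-one canonical product, then show the quotient is the exponential of a polynomial of degree at most one via minimum-modulus estimates and Borel--Carathéodory. The only cosmetic point is that, like the statement itself, you tacitly assume $f(0)\neq 0$; otherwise a factor $z^{m}$ must be extracted first.
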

\begin{proof}
On renvoie à \cite{Murty} Section 6.2. Il faut commencer par montrer des propriétés sur la répartition des zéros (\og il n'y en a pas trop \fg) qui assurent l'absolue convergence du produit infini puis voir que le quotient de la fonction de départ par le produit infini s'écrit sous la forme \og exponentielle d'un polynôme de degré au plus 1 \fg{}.
\end{proof}

On prendra garde que le théorème de Weierstrass s'applique aux fonctions \emph{entières}, on ne peut espérer de formulation aussi simple dans le cas des fonctions d'ordre fini \emph{dans une certaine zone}.

\begin{thm}\label{lambda_d'ordre_1}
Une fonction $\Lambda$ corrigée de ses singularités éventuelles est d'ordre au plus $1$.
\end{thm}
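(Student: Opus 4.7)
The plan is to show that, after subtracting the Laurent principal parts at the finitely many poles of $\Lambda$, the resulting entire function $\tilde{\Lambda}$ satisfies a global bound of the form $|\tilde{\Lambda}(s)|\leq Me^{C|s|\log|s|}$ for $|s|$ large. Since $|s|\log|s|=o(|s|^{1+\varepsilon})$ for every $\varepsilon>0$, this bound forces $\tilde{\Lambda}$ to be of order at most $1$. To establish the bound I will partition $\mathbb{C}$ into three regions (a right half-plane, a left half-plane, and a middle strip) and estimate $\tilde{\Lambda}$ on each.

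First, on the right half-plane $\mathrm{Re}(s)\geq 2+c$, I use the defining representation \eqref{prod_eul+gamma}. The Euler part $L(s)$ is uniformly bounded there: the hypothesis $|\alpha_j(p)|\leq p^c$ gives $|\alpha_j(p)p^{-s}|\leq p^{-2}$, so $\log|L(s)|$ is dominated by $M'\sum_p p^{-2}<\infty$. For the archimedean part, the Stirling estimate \eqref{Gamma_estim_pour_ordre} applied to each factor $\Gamma(a_is+a'_i)$ yields $\log|\Gamma(a_is+a'_i)|=O(|s|\log|s|)$ uniformly in any sector $|\arg s|\leq \pi-\delta$. Combining these gives $|\Lambda(s)|\leq Me^{C|s|\log|s|}$ on the right half-plane, and since the finitely many poles of $\Lambda$ are contained in a bounded region, the same bound holds for $\tilde{\Lambda}$ there.

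Second, on the left half-plane $\mathrm{Re}(s)\leq -c-1$, I invoke the functional equation $\Lambda_1(s)=w\Lambda_2(1-s)$ relating $\Lambda=\Lambda_1$ to its partner $\Lambda_2$ in the pair. Applying the bound of the previous step to $\Lambda_2$ in its right half-plane and using $|1-s|\sim|s|$ as $|s|\to\infty$ transfers the estimate $|\Lambda(s)|\leq Me^{C|s|\log|s|}$ to the left half-plane for $\Lambda_1$; the rational principal parts contribute only polynomial corrections, negligible against the $e^{C|s|\log|s|}$ envelope. Third, on the middle strip $-c-1\leq\mathrm{Re}(s)\leq 2+c$, item 2 of Definition \ref{B_defi préfonction lambda} states directly that $\tilde{\Lambda}$ is bounded. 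Gluing these three estimates yields the desired global bound $|\tilde{\Lambda}(s)|\leq Me^{C|s|\log|s|}$, hence order at most $1$.

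The main technical subtlety lies in the uniform application of the Stirling estimate \eqref{Gamma_estim_pour_ordre} across the entire closed right half-plane, particularly at the interface $\arg s\approx \pm\pi/2$ where the factor $e^{-|\mathrm{Im}(a_is+a'_i)|\arg(a_is+a'_i)}$ contributes exponential decay in $|t|$. One must verify that the dominant Stirling factor $|a_is+a'_i|^{a_i\mathrm{Re}(s)-1/2}$, which contributes precisely $e^{O(|s|\log|s|)}$, controls all the others, and that the boundary between the half-plane and strip regions can be crossed without loss. This is handled by writing $s=\sigma+it$ and splitting into the sub-cases $|t|\leq \sigma$ and $|t|\geq\sigma$ within the right half-plane, checking in each that the combined gamma and Euler-product contribution fits the $e^{O(|s|\log|s|)}$ envelope uniformly.
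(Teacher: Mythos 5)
Votre démonstration est correcte et suit essentiellement la même stratégie que celle du texte : même découpage en deux demi-plans et une bande verticale, même usage de l'estimée de Stirling \eqref{Gamma_estim_pour_ordre} et de la bornitude du produit eulérien à droite, de l'équation fonctionnelle à gauche, et de la condition 2 de la Définition \ref{B_defi préfonction lambda} dans la bande. La seule différence est cosmétique : vous corrigez les singularités en soustrayant les parties principales tandis que le texte multiplie par $\prod_i(s-\mu_i)$, et votre majoration en $e^{C|s|\log|s|}$ est légèrement plus fine que le $O(e^{|s|^{1+\varepsilon}})$ du texte ; dans les deux cas on conclut à l'ordre au plus $1$.
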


\begin{proof}
On énumère les pôles de $\Lambda$ (en nombre fini par définition) comptés avec multiplicité : $\mu_1, \cdots \mu_n$. On pose :
\begin{equation}\label{xi}
\xi(s)=\Lambda(s)\prod_{i=1}^n (s-\mu_i).
\end{equation}
Alors $\xi$ est bien définie sur $\mathbb{C}$ tout entier et y est holomorphe, on peut donc s'intéresser à son ordre.\ps 

Plaçons-nous d'abord sur le demi-plan $\mathrm{Re}(s) \geq c+2$. On a alors, d'après \eqref{Gamma_estim_pour_ordre} :
$$
|\Gamma(s)|=O(e^{|s|^{1+\varepsilon}}) \; \; \forall \varepsilon >0
$$
et donc $|G(s)|=O(e^{|s|^{1+\varepsilon}})$ pour tout $\varepsilon >0$.

Par ailleurs, l'expression de $L$ comme produit eulérien (convergent pour $\mathrm{Re}(s) \geq c+1$) nous assure que $L(s)$ est borné dans le même demi-plan $\mathrm{Re}(s) \geq c+2$.

Enfin le facteur \og correctif \fg{} $\prod_{i=1}^n (s-\mu_i)$ est un $O(s^\varepsilon)$ pour tout $\varepsilon >0$ (les polynômes sont d'ordre zéro).

On a donc, sur le demi-plan $\mathrm{Re}(s) \geq c+2$ :
\begin{equation*}
|\xi(s)|=O(e^{|s|^{1+\varepsilon}}) \; \; \forall \varepsilon >0.
\end{equation*}
L'équation fonctionnelle vérifiée par $\Lambda$ nous assure que cette majoration vaut également sur le demi-plan $\mathrm{Re}(s) \leq -c-1$.\ps 

Plaçons-nous maintenant sur la bande $-c-1 \leq \mathrm{Re}(s) \leq c+2$. Par définition, on sait que $\Lambda$ diminuée de ses parties singulières y est bornée. On a donc :
\begin{align*}
\xi(s)&=\prod_{i=1}^n (s-\mu_i) \left(\Lambda(s)-\sum_{i=1}^{n'} \frac{r_i}{(s-\mu_i)^{d_i}}+\sum_{i=1}^{n'} \frac{r_i}{(s-\mu_i)^{d_i}} \right) \\
&=(\mathrm{fonction \; born\acute{e}e}) \times (\mathrm{polyn\hat{o}me})+(\mathrm{polyn\hat{o}me})
\end{align*}
où l'on a supposé que les pôles \emph{distincts} étaient indicés de 1 à $n'$ (avec $n' \leq n$ pour d'éventuelles multiplicités), donc on a sur cette même bande :
$$
|\xi(s)|=O(e^{|s|^{\varepsilon}}) \; \; \forall \varepsilon >0.
$$

Finalement, $\xi$ est d'ordre fini, et même d'ordre au plus 1.
\end{proof}

\section{Majorations}
Nous allons avoir besoin de différentes majorations pour la suite, nous suivons la stratégie générale adoptée dans \cite{ANT}.
\begin{prop} \label{borne_l}
Pour toute bande verticale de largeur finie, il existe un réel $m \geq 0$ tel que l'on ait $L_i(s)=O(|t|^m)$ pour $|t|$ assez grand (à cause des éventuels pôles qu'il faut éviter).
\end{prop}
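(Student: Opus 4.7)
\textbf{Plan de démonstration de la Proposition \ref{borne_l}.} L'idée est de contrôler $L_i$ en trois zones du plan complexe, puis d'interpoler par le principe de Phragmén--Lindelöf. On traite $L_1$, le raisonnement pour $L_2$ étant identique.

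Tout d'abord, dans le demi-plan $\re(s) \geq 1+c+\delta$ (pour n'importe quel $\delta>0$), le produit eulérien qui définit $L_1$ est absolument convergent, uniformément en $s$, si bien que $L_1(s)=O(1)$ dans toute bande verticale contenue dans ce demi-plan. Ensuite, dans le demi-plan $\re(s) \leq -c-\delta$, j'exploite l'équation fonctionnelle : $\Lambda_1(s)=w\Lambda_2(1-s)$ entraîne
\[
L_1(s)=w\,\frac{G_2(1-s)}{G_1(s)}\,L_2(1-s),
\]
et le facteur $L_2(1-s)$ est borné par le premier cas. Il reste alors à majorer le quotient $G_2(1-s)/G_1(s)$ ; c'est ici qu'intervient de façon cruciale la condition de compatibilité des facteurs archimédiens $\sum_i a_i = \sum_i b_i$. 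En effet, l'estimée de Stirling \eqref{Gamma_estim_pour_ordre} fournit, pour $|t|\to +\infty$ à $\sigma$ borné,
\[
|\Gamma(a_i s+a_i')| \sim (a_i |t|)^{a_i\sigma+\re(a_i')-1/2}\, e^{-\pi a_i |t|/2}\,\sqrt{2\pi}\,e^{-(a_i\sigma+\re(a_i'))},
\]
et l'analogue pour $G_2(1-s)$. Le produit des facteurs exponentiels $e^{-\pi |t|/2 \sum_i a_i}$ et $e^{-\pi |t|/2 \sum_i b_i}$ se compense exactement dans le quotient $G_2(1-s)/G_1(s)$, ne laissant qu'un terme polynomial en $|t|$ : il existe donc $m_0 \geq 0$ tel que $L_1(s)=O(|t|^{m_0})$ uniformément dans toute bande verticale contenue dans $\re(s)\leq -c-\delta$.

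Il reste à contrôler $L_1$ dans la bande critique $-c-\delta \leq \re(s) \leq 1+c+\delta$, loin des pôles. Pour cela, je considère la fonction entière $\xi$ introduite en \eqref{xi}, qui est d'ordre au plus $1$ d'après le Théorème \ref{lambda_d'ordre_1}. La fonction $s \mapsto \xi(s)/G_1(s)$ coïncide avec $L_1(s)\prod_i(s-\mu_i)$, et son ordre dans toute bande verticale est contrôlé : $\xi$ est d'ordre $\leq 1$ sur $\C$, et l'inverse des facteurs $\Gamma(a_i s+a_i')$ est aussi d'ordre $\leq 1$ dans une bande verticale (les pôles étant évités en prenant $|t|$ assez grand). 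Le produit $L_1(s)\prod_i(s-\mu_i)$ est donc d'ordre fini $\leq 1$ dans la bande. On peut alors appliquer le principe de Phragmén--Lindelöf (sous sa forme classique pour une fonction d'ordre fini dans une bande verticale, bornée polynomialement en $|t|$ sur les deux bords verticaux) pour conclure que $L_1(s)\prod_i(s-\mu_i)=O(|t|^{m_1})$ uniformément dans la bande, pour un certain exposant $m_1 \geq 0$ (combinaison linéaire des exposants obtenus aux deux bords). Comme le facteur polynomial $\prod_i(s-\mu_i)$ est d'ordre polynomial, on en déduit $L_1(s)=O(|t|^m)$ avec $m = m_1 + O(1)$, en écartant les pôles $\mu_i$ par la condition \og $|t|$ assez grand \fg.

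La principale difficulté technique réside dans la vérification rigoureuse des hypothèses du principe de Phragmén--Lindelöf : il faut s'assurer que l'ordre du produit $L_1(s)\prod_i(s-\mu_i)$ est \emph{strictement inférieur} à l'ouverture angulaire de la bande (ce qui est trivialement satisfait ici puisque la bande est de largeur finie et la fonction est d'ordre $\leq 1$, tandis que l'angle d'ouverture de la bande est $\pi$), et que les majorations aux bords se prolongent bien à l'intérieur. Une fois cette interpolation justifiée, les trois estimations (droite, gauche, bande critique) s'assemblent en la majoration globale recherchée, $L_i(s)=O(|t|^m)$, pour $|t|$ assez grand afin d'éviter les pôles de $L_i$.
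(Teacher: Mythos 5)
Votre démonstration suit essentiellement la même stratégie que celle du texte : bornitude de $L_i$ dans le demi-plan $\re(s)>1+c$ par le produit eulérien, majoration polynomiale dans le demi-plan $\re(s)<-c$ via l'équation fonctionnelle, l'estimée de Stirling et la compensation exacte des facteurs exponentiels garantie par la condition $\sum_i a_i=\sum_i b_i$, puis interpolation par Phragmén--Lindelöf dans la bande critique. La seule différence est cosmétique (vous écartez les pôles en multipliant par $\prod_i(s-\mu_i)$ là où le texte se restreint à une demi-bande $t\geq t_0$), et l'argument est correct.
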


On aura besoin pour cela du théorème suivant.
\begin{thm} \emph{Théorème de Phragmen-Lindelöf}

Soit $f$ holomorphe dans une demi-bande verticale
$$
a \leq \sigma \leq b \;  \; \mathrm{ et } \;\; t \geq t_0 >0.
$$

On suppose que
\begin{enumerate}
\item $f$ est d'ordre fini dans la demi-bande.
\item Il existe un réel $w \geq 0$ tel que l'on ait $f(s)=O(|t|^w)$ sur les bords de la demi-bande.
\end{enumerate}
Alors $f(s)=O(|t|^w)$ dans toute la demi-bande verticale.
\end{thm}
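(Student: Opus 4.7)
Le plan est de démontrer le théorème en deux temps : d'abord se ramener au cas où $f$ est bornée sur les bords (\ie $w=0$), puis appliquer un argument classique reposant sur le principe du maximum et une fonction auxiliaire. Pour la première étape, je considère la fonction $g(s) := f(s) \, s^{-w}$. Puisque la demi-bande est contenue dans le demi-plan $t \geq t_0 > 0$, on peut y choisir sans ambiguïté la détermination principale du logarithme (définie sur $\C \setminus \R_-$) et donc définir $s^{-w} = e^{-w \log s}$ de façon holomorphe. Sur les bords verticaux $\sigma = a$ et $\sigma = b$, on a $|s|^w \sim |t|^w$ lorsque $|t| \to +\infty$ ; combiné avec l'hypothèse 2., cela fournit une constante $M$ telle que $|g(s)| \leq M$ sur les bords. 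De plus, $|s^{-w}|$ étant à croissance polynomiale, $g$ reste d'ordre fini dans la demi-bande. Il s'agit alors de montrer que $g$ est bornée par $M$ (à une constante absolue près) dans toute la demi-bande, ce qui entraîne aussitôt la conclusion $f(s) = O(|t|^w)$.

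\textbf{Application du principe du maximum via une fonction auxiliaire.} Je souhaite invoquer la forme \og classique \fg{} de Phragmen-Lindelöf pour $g$ : toute fonction holomorphe d'ordre fini dans une demi-bande verticale et bornée par $M$ sur ses bords est bornée par $M$ partout. La démonstration repose sur l'introduction d'une famille de fonctions auxiliaires $g_\eps(s) = g(s) \cdot e^{-\eps \varphi(s)}$, où $\varphi$ est holomorphe dans la demi-bande, vérifie $\re \varphi(s) \to +\infty$ suffisamment vite lorsque $t \to +\infty$ pour dominer la croissance d'ordre fini de $g$, et $\re \varphi$ reste minorée sur les bords. Un choix commode est $\varphi(s) = (-is)^k$ pour un entier $k$ pair strictement supérieur à l'ordre de $g$, en utilisant la détermination holomorphe obtenue en remarquant que $-is = t - i\sigma$ a un argument proche de $0$ pour $t$ grand et $\sigma \in [a,b]$ : on obtient alors $\re ((-is)^k) \sim t^k$ uniformément en $\sigma$. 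Pour $\eps > 0$ fixé, $g_\eps$ tend uniformément vers $0$ lorsque $t \to +\infty$ dans la demi-bande, et reste bornée par une constante multiple de $M$ (dépendant de $\eps$) sur les bords. Le principe du maximum appliqué à un rectangle $a \leq \sigma \leq b$, $t_0 \leq t \leq T$ avec $T$ assez grand fournit donc $|g_\eps(s)|$ majoré dans toute la demi-bande, puis un passage à la limite $T \to +\infty$ élimine la contribution du bord supérieur, et $\eps \to 0^+$ donne finalement $|g(s)| \leq M$.

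\textbf{Principale difficulté.} L'obstacle technique principal réside dans la construction de la fonction auxiliaire $\varphi$ et, surtout, dans la vérification que le \emph{même} $k$ convient quel que soit l'ordre (fini) de $g$, et dans la bonne gestion de la borne $\eps$-dépendante sur les bords lors du passage à la limite. Cette construction nécessite d'être attentif à l'existence d'une détermination holomorphe adéquate de $(-is)^k$ et au contrôle simultané de $\re \varphi$ à l'intérieur et sur les bords. Une fois ce point levé, le reste est un jeu d'applications du principe du maximum à une suite de rectangles exhaustifs ; la continuité de l'exponentielle assure la convergence ponctuelle $g_\eps(s) \to g(s)$ lorsque $\eps \to 0$, et la borne uniforme obtenue fournit la majoration voulue. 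Enfin, on peut noter que, puisqu'on a une \emph{demi}-bande (et non une bande complète), on n'a pas à se soucier de la condition critique d'ordre $\rho < \pi/(b-a)$ qui apparaît dans certaines formulations de Phragmen-Lindelöf : la direction unique $t \to +\infty$ permet de travailler avec une fonction auxiliaire polynomiale $(-is)^k$ plutôt qu'exponentielle.
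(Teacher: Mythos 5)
Votre démonstration est correcte et suit essentiellement la même voie que celle du texte : réduction au cas borné en divisant par $s^w$, puis fonction auxiliaire $e^{-\eps(-is)^k}$ — qui coïncide, à reparamétrage près, avec le $e^{\eps s^m}$ ($m\equiv 2 \bmod 4$) du texte — principe du maximum sur des rectangles exhaustifs et passage à la limite $\eps\to 0$. Seule nuance : sur les bords on a en fait $\re((-is)^k)\geq 0$ (quitte à augmenter $t_0$), donc la borne y est exactement $M$ et non une constante dépendant de $\eps$, mais cela ne change rien à la conclusion.
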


\emph{Remarque :} Il suffit de supposer la majoration de bord sur les deux demi-droites $\sigma=a$ et $\sigma=b$, la continuité sur le segment $[a+i t_0 ; b+ i t_0]$ assurant que $f$ y est bornée, donc en particulier est un $O(|t|^w)$ quel que soit $w \geq 0$.
\begin{proof}
Quitte à remplacer $f$ par $s \mapsto \dfrac{f(s)}{s^w}$, on peut supposer que $f$ est bornée sur le bord de la demi-bande et il s'agit alors de voir que $f$ est bornée dans la demi-bande. 

La fonction $f$ est d'ordre fini donc il existe $\alpha \geq 0$ et $B>0$ tels que $f(s) \leq B e^{|s|^\alpha}$ dans la demi-bande.

On choisit $m \in \mathbb{N}$ tel que $m \equiv 2 \; [4]$ et $m > \alpha$ 

On a alors $s^m=(r e^{i \theta})^m=r^m (\cos (m \theta) + i \sin (m \theta))$ et, quitte à augmenter $t_0$, on peut supposer que, pour tout $s$ dans la bande, $\frac{\pi}{4} < \mathrm{arg}\,s <\frac{3 \pi}{4}$ (et donc $\cos (m \theta)\leq \eta<0$). 

On pose, pour $\varepsilon >0$ :
\begin{equation*}
g_\varepsilon : s \longmapsto f(s) e^{\varepsilon s^m}
\end{equation*}
On a alors :
\begin{align*}
|g_\varepsilon  (s) | 	& \leq B e^{r^\alpha} e^{\varepsilon r^m \cos (m \theta)} \\
					& \leq B e^{r^\alpha+ \eta \varepsilon r^m}
\end{align*}
et $r^\alpha+ \eta \varepsilon r^m \longrightarrow - \infty$ lorsque $t=\mathrm{Im}(s) \longrightarrow + \infty$, uniformément dans la demi-bande.

Donc $g_\varepsilon  (s) \longrightarrow 0$ lorsque $t \longrightarrow + \infty$ et on peut en particulier trouver $t_1 > t_0$ tel que, pour tout $t_2 \geq t_1, |g_\varepsilon  (s) | \leq B$ sur le segment horizontal $[a+i t_2 ; b+ i t_2]$. 

On a $|e^{\varepsilon s^m}| \leq 1$ dans la demi-bande donc en particulier sur ses bords, et $|g_\varepsilon|$ est majorée par la même constante que $|f|$ sur les bords, disons $B'$. Quitte à remplacer $B$ par $\mathrm{max}(B, B')$, on a $g_\varepsilon$ bornée par $B$ sur les bords du rectangle délimité par les droites $\sigma=a, \sigma=b, t=t_0, t=t_2$.

Donc, par le principe du maximum, $|g_\varepsilon(s)| \leq B$ dans ce même rectangle. Comme $t_2$ peut être choisi arbitrairement grand, ceci vaut dans toute la demi-bande verticale. 

On a donc, à $s$ fixé et quel que soit $\varepsilon >0$ :
\begin{equation*}
|f(s)| \leq B e^{-\varepsilon r^m \cos (m \theta)}
\end{equation*}
(on remarque que $\cos (m \theta)<0$). On obtient donc  $|f(s)| \leq B$ en faisant tendre $\varepsilon$ vers 0, ce qui conclut.
\end{proof}

\begin{lemme}\label{133}
Les fonctions $L_i$ sont bornées le long de toute droite verticale $\mathrm{Re}(s)= \sigma$ avec $\sigma > 1+c$.
\end{lemme}
\begin{proof}
On considère l'expression de $L_1$ comme produit eulérien, absolument convergent pour $\mathrm{Re}(s)>1+c$ :
\begin{equation*}
L_1(s)=\prod_p \prod_{j=1}^{M'} \frac{1}{1-\alpha_j(p)p^{-s}}.
\end{equation*}

On a, pour $j$ entre 1 et $M'$ :
\begin{equation*}
1 - p^{c-\mathrm{Re}(s)} \leq 1-|\alpha_j(p)p^{-s}| \leq |1-\alpha_j(p)p^{-s}| \leq 1+ |\alpha_j(p)|p^{-\mathrm{Re}(s)} \leq 1 + p^{c-\mathrm{Re}(s)}.
\end{equation*}

La condition $\mathrm{Re}(s)>1+c$ nous assure que les deux expressions extrêmes sont les facteurs d'un produit absolument convergent, pris pour $p$ parcourant l'ensemble des nombres premiers.

Notons $G(s)$ et $H(s)$ les valeurs de ces produits (qui sont donc des quantités réelles, \emph{ne dépendant que de $\mathrm{Re}(s)$}). On a :

\begin{align*}
0  < G(s)  \leq &\prod_p |1-\alpha_j(p)p^{-s}|  \leq H(s)  < +\infty \\
0  < G(s)^{M'} \leq &\prod_{j=1}^{M'} \prod_p |1-\alpha_j(p)p^{-s}|  \leq H(s)^{M'}  < +\infty
\end{align*}
d'où l'on déduit que $L_1(s)$ (ainsi que son inverse) est bornée le long de toute droite verticale $\mathrm{Re}(s)= \sigma$ avec $\sigma > 1+c$. Le même raisonnement s'applique évidemment à $L_2$.
\end{proof}
Il reste à considérer le cas d'une droite verticale $\mathrm{Re}(s)= \sigma$ avec $\sigma <-c$. Si on ne dispose pas de formule explicite pour la fonction $L_1$, on peut exploiter le lien entre les fonctions $\Lambda_1$ et $\Lambda_2$ qui nous donne :
\begin{lemme}
Sur toute droite verticale $\mathrm{Re}(s)= \sigma$ avec $\sigma <-c$, on a :
$$
L_i(s)=O(|t|^{\gamma})
$$
pour un certain $\gamma \geq 0$.
\end{lemme}
\begin{proof}
On a :
\begin{align*}
L_1(s) 	&= \frac{\Lambda_1(s)}{A^s \prod_{i=1}^M \Gamma(a_i s+a'_i)} \\
		&= w \frac{B^{1-s}}{A^s} L_2(1-s) \prod_{i=1}^M \frac{\Gamma(b_i (1-s)+b'_i)}{\Gamma(a_i s+a'_i)}
\end{align*}
Or, sur une droite verticale donnée, $L_2(1-s)$ est bornée par le calcul précédent et $\frac{1}{w} \frac{A^{1-s}}{B^s}$ est constant en module sur cette même droite. Il reste donc à considérer le facteur lié aux fonctions $\Gamma$.\ps 

L'équivalent calculé au \eqref{Gamma_estim_pour_ordre} nous donne :
\begin{align*}
|\Gamma(a_i s+a'_i)|	&=  |\Gamma(a_i (\sigma+it)+a'_i)| \\
						&=  |\Gamma(a_i \sigma + \mathrm{Re}(a'_i) +i (ta_i+\mathrm{Im}(a'_i)))| \\
						& \sim |t'|^{\sigma'-\frac{1}{2}}e^{-\frac{|t'|\pi}{2}} \sqrt{2 \pi} e^{-\sigma'} \\
						& \sim J |t|^\alpha e^{-\frac{a_i|t|\pi}{2}}
\end{align*}
où l'on a noté $\sigma'=a_i \sigma+\mathrm{Re}(a'_i)$ et $t'=a_i t+\mathrm{Im}(a'_i)$, avec $J>0$ et $\alpha \in \mathbb{R}$, l'équivalent étant pris lorsque $|t| \rightarrow + \infty$ sur cette droite.\ps 

De même :
\begin{align*}
|\Gamma(b_i (1-s)+b'_i)|&=  |\Gamma(b_i (1-\sigma-it)+b'_i)| \\
						&=  |\Gamma(b_i (1-\sigma) + \mathrm{Re}(b'_i) +i (-tb_i+\mathrm{Im}(b'_i)))| \\
						& \sim |t''|^{\sigma''-\frac{1}{2}}e^{-\frac{|t''|\pi}{2}} \sqrt{2 \pi} e^{-\sigma''} \\
						& \sim K |t|^\beta e^{-\frac{b_i|t|\pi}{2}}
\end{align*}
où l'on a noté $\sigma''=b_i (1-\sigma)+ \mathrm{Re}(b'_i)$ et $t''=-tb_i+\mathrm{Im}(b'_i)$, avec $K>0$ et $\beta \in \mathbb{R}$.
\newline

On a alors :
\begin{align*}
\left| \prod_{i=1}^M \frac{\Gamma(b_i (1-s)+b'_i)}{\Gamma(a_i s+a'_i)} \right| 
&\sim \prod_{i=1}^M \frac {K_i |t|^{\beta_i} e^{-\frac{b_i|t|\pi}{2}}}{J_i |t|^{\alpha_i}e^{-\frac{a_i|t|\pi}{2}}} \\
& \sim C |t|^\gamma e^{\frac{|t|\pi}{2} \sum_{i=1}^M (a_i-b_i)} \\
& \sim C |t|^\gamma
\end{align*}
d'après l'hypothèse 2 concernant les fonctions $\Lambda$. 
\end{proof}

\begin{proof} (de la Proposition \ref{borne_l})

Une bande verticale de largeur donnée peut toujours être incluse dans une bande : 
$$- \infty < \sigma_0 \leq \mathrm{Re}(s) \leq \sigma_1 < + \infty$$
avec $\sigma_0 < -c$ et $\sigma_1 >1+c$.

Les pôles de $L_i$ sont les pôles de $\Lambda_i$ qui sont en nombre fini, donc $L_i$ est holomorphe sur la demi-bande $t \geq t_0$ pour $t_0$ assez grand. On peut alors appliquer le Théorème de Phragmen-Lindelöf et conclure que $L_i(s)=O(|t|^\alpha)$ dans cette demi-bande pour $t$ assez grand. En considérant $L_i(1-s)$, on a le résultat pour la demi-bande de partie imaginaire négative.
\end{proof}

Ces majorations vont nous permettre de localiser les zéros, ou plus exactement, d'identifier des zones sans zéros, qui permettront de trouver des chemins d'intégration \emph{pratiques}.

\begin{lemme} \label{lem_zer}
Soit $f$ une fonction holomorphe dans le disque fermé de centre $z_0$ et de rayon $R$. Soit $0<r<R$.

On suppose que $f$ a au moins $n$ zéros dans le disque fermé de centre $z_0$ et de rayon $r$, et que $f(z_0) \neq 0$.

Alors $$\left(\frac{R}{r}\right)^n \leq \frac{M}{|f(z_0)|},$$ où $$M=\sup_{|z-z_0|=R} |f(z)|.$$
\end{lemme}

\begin{proof}
On peut supposer $z_0=0$ et qu'on a exactement $n$ zéros comptés avec multiplicité dans le petit disque, notés $a_i$.

On pose, pour $|z| \leq R$ :
\[
g(z)=f(z)\prod_{i=1}^n \frac{R^2-\overline{a_i}z}{R(z-a_i)}.
\]

La fonction $g$ est holomorphe sur le grand disque et le principe du maximum nous donne $|g(0)| \leq \|g\|_R$ (sup sur le grand cercle).\ps 

Si $|z|=R$, on a, en écrivant $z=Re^{i \theta}$ :
\[
\left| \frac{R^2-\overline{a_i}z}{R(z-a_i)} \right|
=\left| \frac{R^2-\overline{a_i}Re^{i\theta}}{R(Re^{i \theta}-a_i)} \right|
= \frac{|R-\overline{a_i}e^{i\theta}|}{|Re^{i\theta}-a_i|} 
=\frac{|R-{a_i}e^{-i\theta}|}{|e^{i\theta}||R-a_ie^{-i\theta}|}
=1
\]
d'où $\|g\|_R=\|f\|_R=M$ avec les notations de l'énoncé.\ps 

Par ailleurs,
\[
|g(0)|
=|f(0)| \prod_{i=1}^n \left|\frac{R^2}{Ra_i}\right|
=|f(0)| \prod_{i=1}^n \frac{R^2}{R|a_i|}
\geq |f(0)| \prod_{i=1}^n \frac{R}{r},
\]
ce qui conclut.
\end{proof}

\begin{prop} \label{136}
Pour $T$ assez grand, le nombre de zéros de $\Lambda_i$ dans chacun des deux rectangles délimités par :
$$ -c \leq \sigma \leq 1+c$$
et
$$ T \leq |t| \leq T+1$$
est $O(\log T)$.
\end{prop}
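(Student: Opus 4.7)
Je propose la démonstration suivante, qui repose sur l'application directe du Lemme \ref{lem_zer} déjà établi.

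Traitons par symétrie le cas du rectangle supérieur $T \leq t \leq T+1$ pour $\Lambda_1$, le cas de $\Lambda_2$ et le cas $-T-1 \leq t \leq -T$ étant identiques via l'équation fonctionnelle et la conjugaison. L'idée est de plonger ce rectangle dans un disque centré en un point $z_0$ choisi suffisamment à droite pour qu'on contrôle à la fois les valeurs de $L_1$ (loin de toute singularité) et qu'on dispose d'une minoration effective de $|\Lambda_1(z_0)|$. Précisément, fixons $\sigma_0 = 2+c$ et posons $z_0 = \sigma_0 + i(T+\tfrac{1}{2})$. On choisit un réel $r$ tel que le rectangle considéré soit inclus dans le disque fermé $\overline{D}(z_0,r)$ (par exemple $r = \sqrt{(\sigma_0+c)^2 + 1/4}$), puis $R = 2r$, les deux constantes étant indépendantes de $T$.

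Pour $T$ assez grand, le disque $\overline{D}(z_0,R)$ évite les (finiment nombreux) pôles de $\Lambda_1$, si bien que $\Lambda_1$ est holomorphe sur ce disque. De plus $\Lambda_1(z_0) \neq 0$ puisque $\re z_0 > 1+c$ (le produit eulérien défini en \eqref{prod_eul+gamma} converge et n'a pas de zéro là-bas, et $G_1$ ne s'annule pas). Le Lemme \ref{lem_zer} fournit alors, en notant $n$ le nombre de zéros de $\Lambda_1$ dans $\overline{D}(z_0,r)$ et $M = \sup_{|z-z_0|=R} |\Lambda_1(z)|$ :
\[
n \log(R/r) \leq \log M - \log |\Lambda_1(z_0)|.
\]

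Il reste à majorer $\log M$ et à minorer $\log |\Lambda_1(z_0)|$ de façon effective. Les deux estimations exploitent la décomposition $\Lambda_1 = G_1 L_1$. Sur le disque $\overline{D}(z_0,R)$, tous les points $z$ vérifient $|t| \asymp T$ et $\sigma$ reste dans une bande verticale bornée. L'estimation de Stirling \eqref{Gamma_estim_pour_ordre} appliquée à chaque facteur $\Gamma(a_i s + a'_i)$ donne uniformément
\[
|G_1(z)| \leq C_1 \, T^{D_1} \, e^{-\pi \beta T /2},
\qquad \beta = \sum_{i=1}^M a_i,
\]
et la Proposition \ref{borne_l} fournit $|L_1(z)| = O(T^m)$ sur cette même bande (en évitant les pôles). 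Par conséquent $\log M \leq D_1' \log T - \pi \beta T/2 + O(1)$. Inversement, en $z_0$ la même estimation de Stirling donne la minoration $|G_1(z_0)| \geq C_2 T^{D_2} e^{-\pi \beta T/2}$, et le Lemme \ref{133} (ou plutôt sa preuve) montre que $|L_1(z_0)|$ est minoré par une constante strictement positive ne dépendant que de $\sigma_0$. Ainsi $\log |\Lambda_1(z_0)| \geq D_2 \log T - \pi \beta T/2 - O(1)$.

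La soustraction élimine exactement le terme exponentiel $\pi \beta T / 2$ -- c'est là le point crucial -- et il reste $\log M - \log|\Lambda_1(z_0)| = O(\log T)$. Comme $\log(R/r)$ est une constante strictement positive fixée, on conclut que $n = O(\log T)$, ce qui entraîne l'énoncé puisque le rectangle est inclus dans $\overline{D}(z_0,r)$. Le principal point technique est donc cette compensation exacte du facteur archimédien dominant $e^{-\pi \beta T / 2}$ entre le sommet et le bord du disque, rendue possible par le choix du centre $z_0$ dans le demi-plan de convergence absolue du produit eulérien.
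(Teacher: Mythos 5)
Votre démonstration est correcte et suit le même squelette que celle du texte (même Lemme \ref{lem_zer}, même centre $z_0=(2+c)+i(T+\tfrac{1}{2})$, mêmes rayons à des constantes près), mais vous appliquez le lemme à $\Lambda_1$ elle-même, là où le texte l'applique à $L_1$. Le texte observe simplement que tout zéro de $\Lambda_i$ est un zéro de $L_i$ puisque $G_i$ ne s'annule jamais ; il suffit alors de la majoration polynomiale $L_i(s)=O(|t|^m)$ (Proposition \ref{borne_l}) et de la minoration $|L_i(z_0)|\geq\alpha>0$ (issue du Lemme \ref{133}) pour conclure, sans jamais faire intervenir les facteurs $\Gamma$. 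Votre variante vous oblige à contrôler le facteur archimédien $G_1$, donc à vérifier que sa décroissance exponentielle se compense entre $\sup_{|z-z_0|=R}|\Lambda_1|$ et $|\Lambda_1(z_0)|$. Cette compensation n'est d'ailleurs pas \emph{exacte} comme vous l'écrivez : sur le cercle $|z-z_0|=R$, la partie imaginaire descend jusqu'à $T+\tfrac{1}{2}-R$, de sorte que le rapport des termes exponentiels vaut $e^{\pi\beta R/2}$ à un facteur $T^{O(1)}$ près, et non $1$ ; mais $R$ étant une constante indépendante de $T$, cet écart est absorbé par vos $O(1)$ et n'affecte pas la conclusion $n=O(\log T)$. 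Il faut aussi s'assurer, pour appliquer \eqref{Gamma_estim_pour_ordre} uniformément sur le disque, que les arguments $a_iz+a'_i$ restent dans un secteur $-\pi+\delta\leq \arg \leq \pi-\delta$ et loin des pôles de $\Gamma$, ce qui est bien le cas pour $T$ grand puisque leur partie imaginaire est de l'ordre de $a_iT$. En résumé : votre preuve est valable, mais le passage par $L_1$ court-circuite toute cette gymnastique de Stirling, et c'est précisément l'intérêt de la réduction opérée par le texte.
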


\begin{proof}
On traite le cas du rectangle de partie imaginaire positive, l'autre cas étant similaire.
On va utiliser les résultats concernant la fonction $L_i$ associée.

Tout zéro éventuel de $\Lambda_i$ est un zéro de $L_i$ (puisque la fonction $\Gamma$ ne s'annule pas sur $\mathbb{C}$, la réciproque est fausse). On majore donc le nombre de zéros de $L_i$.

Comme dans la démonstration de la Proposition \ref{borne_l}, la finitude du nombre de pôles de $\Lambda_i$ (donc de $L_i$) nous assure que $L_i$ est holomorphe pour $\mathrm{Im}(s) \geq t_0$ pour un certain $t_0$. 

Dans la suite, on considère $T>t_0+(2+2c)+1$.

On pose \begin{align*}
z_0&=(2+c)+i(T+\frac{1}{2})\, , \\
r&=\sqrt{(2+2c)^2+\left(\frac{1}{2}\right)^2} \, , \\
R&=er \,.
\end{align*}

Alors $T+\frac{1}{2}-R>t_0$ si bien que la fonction $L_i$ est holomorphe dans le disque fermé de centre $z_0$ et de rayon $R$, le Lemme \ref{lem_zer} nous donne :
$$\left(\frac{R}{r}\right)^n \leq \frac{M}{|L_i(z_0)|},$$
où $n$ désigne le nombre de zéros dans le disque de centre $z_0$ et de rayon $r$, qui contient le rectangle considéré.
Or, on a vu que $|L_i(s)|\geq \alpha >0$ sur la bande verticale $\mathrm{Re}(s)=2+c$ (ce qui justifie également que $L_i(z_0) \neq 0$), et $L_i(s)=O(|t|^m)$ donc :
\[
M=\sup_{|s-z_0|=R}|L_i(s)| \leq \sup_{|s-z_0|=R} K |t|^m \leq K |T+\frac{1}{2}+r|^m \leq K' T^m.
\]

On a donc \{nombre de zéros dans le rectangle considéré\} $\leq n  \log(\frac{R}{r}) \leq \log K'+m \log T - \log \alpha \leq \tilde{K} \log T$.
\end{proof}

\begin{cor} \label{T_m_evite}
Il existe $\eta >0$ tel que pour $m \in \mathbb{N}$ suffisamment grand en valeur absolue, on trouve $T_m \in ]m ; m+1 [$ avec $\Lambda_i$ sans zéros dans chacune des bandes horizontales 
\begin{align*}
T_m-\frac{\eta }{\log (m)} &\leq \mathrm{Im}(s) \leq T_m + \frac{\eta }{\log (m)} \\
-T_m-\frac{\eta }{\log (m)} &\leq \mathrm{Im}(s) \leq -T_m + \frac{\eta }{\log (m)}
\end{align*}
\end{cor}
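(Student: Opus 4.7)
L'idée est d'appliquer directement la Proposition \ref{136} et un argument élémentaire de mesure : si les zéros de $\Lambda_i$ dans le rectangle entre $\mathrm{Im}(s)=m$ et $\mathrm{Im}(s)=m+1$ sont en nombre $O(\log m)$, alors les bandes horizontales qu'ils \emph{interdisent} (de hauteur $2\eta/\log m$ chacune) ont une longueur cumulée $O(\eta)$, ce qui laisse nécessairement de la place dès que $\eta$ est assez petit.

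Plus précisément, je fixerais $m$ suffisamment grand pour pouvoir appliquer la Proposition \ref{136} dans chacun des deux rectangles
\[
R_m^\pm = \{ s \in \C \,|\, -c\leq \re(s) \leq 1+c,\ \pm m \leq \im(s) \leq \pm(m+1)\}.
\]
Notons $N_m^\pm$ le nombre de zéros de $\Lambda_i$ dans $R_m^\pm$ (comptés avec multiplicité) et $\{\tau_1^\pm, \dots, \tau_{N_m^\pm}^\pm\}$ leurs parties imaginaires. Par la Proposition \ref{136}, il existe une constante $K>0$ (dépendant de $\Lambda_i$ mais pas de $m$) telle que $N_m^\pm \leq K \log m$ pour $m$ assez grand. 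Je poserais alors
\[
E_m^\pm = \bigcup_{j=1}^{N_m^\pm} \left[\tau_j^\pm - \tfrac{\eta}{\log m},\, \tau_j^\pm + \tfrac{\eta}{\log m}\right],
\]
dont la mesure de Lebesgue vérifie $|E_m^\pm| \leq 2\eta N_m^\pm / \log m \leq 2 K \eta$.

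Il suffit donc de choisir une fois pour toutes $\eta>0$ tel que $2K\eta < 1$ (par exemple $\eta = 1/(4K)$). Alors la mesure de $E_m^+ \cup E_m^-$ est strictement inférieure à $1$ (majorée par $4K\eta < 1$), et par conséquent l'intervalle $]m, m+1[$ n'est pas entièrement recouvert par $(E_m^+ \cup (-E_m^-)) \cap ]m,m+1[$. On choisit alors $T_m$ dans le complémentaire : par construction, les deux bandes horizontales $[T_m \pm \eta/\log m]$ et $[-T_m \pm \eta/\log m]$ n'intersectent aucune partie imaginaire de zéro de $\Lambda_i$ dans la bande verticale critique $-c \leq \re(s) \leq 1+c$, et puisque $\Lambda_i$ n'a pas de zéro en dehors de cette bande critique pour $|\im(s)|$ assez grand (c'est une conséquence immédiate du Lemme \ref{133} qui montre que $L_i$, et donc $\Lambda_i$, ne s'annule pas pour $\re(s) > 1+c$, combinée à l'équation fonctionnelle pour le demi-plan $\re(s) < -c$), ces bandes sont effectivement sans zéro de $\Lambda_i$.

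Le seul point demandant un peu de soin est la justification que la mesure $1$ est suffisante pour garantir l'existence d'un $T_m$ (le choix de la constante $1/(4K)$ donne en fait bien plus de marge). Il n'y a pas d'obstacle technique : tout repose sur la Proposition \ref{136} déjà établie, le reste étant un comptage direct. Aucun argument d'analyse complexe nouveau n'est requis.
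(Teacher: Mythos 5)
Votre démonstration est correcte et suit essentiellement la même voie que celle du texte : l'ingrédient clé est la borne en $O(\log m)$ fournie par la Proposition \ref{136}, la localisation des zéros dans la bande $-c\leq \re(s)\leq 1+c$ découle du produit eulérien et de l'équation fonctionnelle, et votre argument de mesure n'est qu'une reformulation du principe des tiroirs employé dans le texte. Signalons seulement une petite coquille arithmétique : avec $\eta=1/(4K)$ on obtient $4K\eta=1$ et non $4K\eta<1$ comme vous l'affirmez ; il suffit de prendre $\eta<1/(4K)$, par exemple $\eta=1/(8K)$, ce qui ne change rien à l'argument.
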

\begin{proof}
Soit $m_0 \in \mathbb{N}$ suffisamment grand pour que la Proposition \ref{136} s'applique. Soit $m \geq m_0$.

On se place sur la bande horizontale $m \leq \mathrm{Im}(s) \leq m+1$.

D'après la formule \eqref{prod_eul+gamma} et les calculs du Lemme \ref{133}, $\Lambda_1$ n'a pas de zéro pour $\mathrm{Re}(s)> 1+c$. En considérant l'équation fonctionnelle et $\Lambda_2$, on voit qu'il n'y a pas non plus de zéro pour $\mathrm{Re}(s)< -c$.

Tous les zéros éventuels sont donc dans le rectangle délimité par
$$ -c \leq \sigma \leq 1+c$$
et
$$ m \leq t \leq m+1$$ et la proposition précédente nous dit que leur nombre est inférieur à $C \log (m)$ pour une constante réelle $C$ fixée.

De même les zéros éventuels dans la bande horizontale $-m-1 \leq \mathrm{Im}(s) \leq -m$ sont au plus au nombre de $C \log (m)$.

Par le principe de Dirichlet, il existe donc un intervalle (en partie imaginaire) de longueur $\dfrac{1}{2(C+1) \log m}$ ne contenant ni zéro de $\Lambda_i$ ni le conjugué d'un tel zéro . En posant $T_m$ le centre de cet intervalle et $\eta=\dfrac{1}{4(C+1)}$, on a le résultat.
\end{proof}

\begin{prop}\label{lambda_vertical}
Soit $a>0$. Alors, le long de toute droite verticale $\mathrm{Re}(s)=1+c+a$, on a :
$$
\frac{\Lambda'}{\Lambda}(s)=O(\log |t|)
$$
\end{prop}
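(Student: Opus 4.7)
The strategy is to take the logarithmic derivative of the factorisation \eqref{prod_eul+gamma} and to estimate each of the three resulting contributions on the vertical line $\re(s)=1+c+a$. Indeed, applying $\frac{d}{ds}\log(\cdot)$ to
\[
\Lambda(s)=A^s \prod_{i=1}^M \Gamma(a_i s+a'_i) \prod_p \prod_{j=1}^{M'}\frac{1}{1-\alpha_j(p)p^{-s}}
\]
yields the decomposition
\[
\frac{\Lambda'}{\Lambda}(s)=\log A +\sum_{i=1}^M a_i\,\psi(a_i s+a'_i)+\frac{L'}{L}(s),
\]
where $\psi$ is the digamma function and $L$ is the Euler product of $\Lambda$. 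The plan is to show that each of these three terms is $O(\log|t|)$ along the line $\re(s)=1+c+a$; this is trivial for the first term, and follows respectively from the digamma estimate (Proposition \ref{digamma_estim}) and from absolute convergence of the Euler product for the other two.

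First, for the gamma contribution: on the line $\re(s)=1+c+a$, the argument $a_i s+a'_i$ has real part $a_i(1+c+a)+\re(a'_i)$. If $a_i=0$ the term is constant, hence trivially bounded; otherwise $a_i>0$ and the real part is strictly positive (using the hypothesis $\re(a_i+a'_i)>0$ from Definition \ref{B_defi préfonction lambda} together with $a_i,a>0$). Thus each $a_i s+a'_i$ remains in a vertical strip of bounded width contained in the open right half-plane. Proposition \ref{digamma_estim} then gives $\psi(a_i s+a'_i)=O(\log|a_i t+\im(a'_i)|)=O(\log|t|)$ as $|t|\to\infty$, and summing the finitely many contributions keeps the bound $O(\log|t|)$.

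Second, for the Euler factor contribution, term-by-term logarithmic differentiation of $-\log L(s)=\sum_p\sum_j\log(1-\alpha_j(p)p^{-s})$ gives
\[
-\frac{L'}{L}(s)=\sum_p\sum_{j=1}^{M'}\frac{\alpha_j(p)(\log p)\,p^{-s}}{1-\alpha_j(p)p^{-s}}.
\]
On the line $\re(s)=1+c+a$ we have $|\alpha_j(p)p^{-s}|\leq p^c\cdot p^{-1-c-a}=p^{-1-a}\leq 2^{-1-a}<1$, so the denominator is bounded below by $1-2^{-1-a}>0$ uniformly, and the numerator is dominated in modulus by $(\log p)\,p^{-1-a}$. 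The series $\sum_p M'(\log p)p^{-1-a}$ converges, hence $L'/L$ is \emph{bounded} (in particular $O(\log|t|)$) on the whole line $\re(s)=1+c+a$. Combining the three estimates yields the conclusion. The argument is elementary; the only subtlety is to verify the positivity of $\re(a_i s+a'_i)$ on the line in order to apply Proposition \ref{digamma_estim}, which is precisely why the hypothesis $a>0$ (rather than $a\geq 0$) is needed.
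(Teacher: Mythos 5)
Your proof is correct and follows essentially the same route as the paper: the logarithmic-derivative decomposition of \eqref{prod_eul+gamma}, Proposition \ref{digamma_estim} for the digamma terms, and absolute convergence of the Euler product to bound $\frac{L'}{L}$; the only difference is that you carry out the latter estimate directly, whereas the paper defers it to the computation of \S\ref{Partie ultramétrique} (and you correctly restore the chain-rule factor $a_i$ that the paper drops). One small inaccuracy in your closing remark: the hypothesis $a>0$ is really needed for the convergence $\sum_p (\log p)\,p^{-1-a}<\infty$ in the Euler-product part, not for the positivity of $\mathrm{Re}(a_i s+a'_i)$, which already follows from $\mathrm{Re}(a_i+a'_i)>0$ and $c\geq 0$ when $a_i>0$.
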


\begin{proof}
Il suffit d'écrire
$$
\frac{\Lambda'}{\Lambda}(s)=\log A +\sum_{i=1}^M \psi(a_is+a'_i)+\frac{L'}{L}(s)
$$
Par un calcul fait plus bas, dans la section \ref{Partie ultramétrique} \og Partie ultramétrique \fg{}, on a $\frac{L'}{L}(s)$ bornée le long de toute droite verticale fixée.

Par ailleurs, la Proposition \ref{digamma_estim} nous assure que chacun des termes de type \emph{digamma} apparaissant est un $O(\log |s|)$, ce qui conclut.
\end{proof}

\begin{prop} \label{estim_lambda_log}
On a, pour $-c-1 \leq \sigma \leq 2+c$ :
$$ \left| \frac{\Lambda'}{\Lambda}(\sigma+iT_m) \right| \leq D (\log |m|)^2$$
où $T_m$ est défini par le Corollaire \ref{T_m_evite} pour $m$ assez grand.
\end{prop}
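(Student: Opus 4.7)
The plan is to exploit the Hadamard factorization of the entire function $\xi$ introduced in \eqref{xi}. By Theorem \ref{lambda_d'ordre_1}, $\xi$ is of order at most $1$, so the Weierstrass--Hadamard theorem yields constants $A,B \in \C$ such that
\[
\xi(s) = e^{A+Bs}\prod_\rho \left(1-\frac{s}{\rho}\right)e^{s/\rho},
\]
the product being absolutely convergent, where $\rho$ runs over the zeros of $\xi$. Taking logarithmic derivatives gives
\[
\frac{\xi'}{\xi}(s) \;=\; B + \sum_\rho \left(\frac{1}{s-\rho} + \frac{1}{\rho}\right).
\]
Since $\Lambda$ differs from $\xi$ by a rational function with poles at the $\mu_i$, the logarithmic derivatives $\Lambda'/\Lambda$ and $\xi'/\xi$ differ by a term $\sum_i 1/(s-\mu_i)$ which is bounded uniformly in $\sigma$ as soon as $|T_m|$ is large (the $\mu_i$ being fixed and finite in number), so I can work with $\xi'/\xi$.

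The key step is then to compare $\xi'/\xi$ at two points on the same horizontal line $\im s = T_m$: the target point $s = \sigma + iT_m$ with $-c-1 \leq \sigma \leq 2+c$, and the reference point $s_0 = (2+c+1) + iT_m$. At the reference point, Proposition \ref{lambda_vertical} applied to $\Lambda$ (and the bounded correction from $\xi$) gives $|\xi'/\xi(s_0)| = O(\log T_m) = O(\log m)$. Subtracting,
\[
\frac{\xi'}{\xi}(s) - \frac{\xi'}{\xi}(s_0) \;=\; \sum_\rho \left(\frac{1}{s-\rho} - \frac{1}{s_0-\rho}\right) \;=\; (s_0 - s)\sum_\rho \frac{1}{(s-\rho)(s_0-\rho)}.
\]

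I then split the sum over zeros $\rho$ according to whether $|\im\rho - T_m| \leq 1$ (the ``close'' zeros) or $|\im\rho - T_m| > 1$ (the ``far'' zeros), and estimate each part separately. For the close zeros, Proposition \ref{136} gives at most $O(\log m)$ such $\rho$; the crucial input from Corollary \ref{T_m_evite} is that $|T_m - \im\rho| \geq \eta/\log m$, so $|s-\rho|^{-1} \leq (\log m)/\eta$, and $|s_0-\rho|$ is bounded below by a constant (both $s$ and $s_0$ have real parts in a bounded range, so we keep the individual terms rather than telescoping for the close zeros). This yields an $O((\log m)^2)$ contribution. For the far zeros, the telescoped form shows each term is bounded by $C/(\im\rho - T_m)^2$; grouping the zeros into horizontal strips of unit width and using Proposition \ref{136} once more to count $O(\log(|T_m|+k))$ zeros in the strip at vertical distance $k$, the total is bounded by $\sum_{k\geq 1} \log(m+k)/k^2 = O(\log m)$, which is absorbed into the previous bound.

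The main technical obstacle, to which one has to pay attention, is the estimate on the close zeros: it is here that Corollary \ref{T_m_evite} is used in its quantitative form, and the factor $(\log m)^2$ in the final bound comes precisely from the product of the $O(\log m)$ cardinality of close zeros (Proposition \ref{136}) and the $O(\log m)$ individual size of each term (Corollary \ref{T_m_evite}). Conceptually the argument is classical, but these two logarithmic factors must be tracked carefully; the rest of the estimates (Hadamard factorization, far-zero telescoping, and bounded correction from the rational factor $\prod(s-\mu_i)$) are essentially routine.
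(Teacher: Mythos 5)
Your proof is correct and follows essentially the same route as the paper's: Hadamard factorization of $\xi$, comparison with a reference point on a vertical line where $\Lambda'/\Lambda$ is controlled by Proposition \ref{lambda_vertical}, and the split into close and far zeros, with the $(\log |m|)^2$ arising exactly as you say from the $O(\log m)$ count of close zeros (Proposition \ref{136}) multiplied by the $O(\log m)$ bound on $|s-\rho|^{-1}$ (Corollaire \ref{T_m_evite}). The only (harmless) variation is in the far-zero estimate, where the paper invokes its auxiliary lemma $\sum_\rho \frac{1}{1+(t-\gamma)^2}=O(\log|t|)$ instead of your direct unit-strip counting; both yield the same $O(\log m)$ contribution.
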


\begin{proof}
Soit $m$ suffisamment grand pour que le Corollaire \ref{T_m_evite} s'applique, qui nous donne un $T_m$.

On a $s=\sigma+iT_m$ et on pose $s_0=2+c+iT_m$.

On utilise la fonction $\xi$ introduite en \eqref{xi} qui correspond à la fonction $\Lambda$ corrigée de ses singularités éventuelles :
$$
\xi(s)=\Lambda(s)\prod_{i=1}^n (s-\mu_i)
$$
où les $\mu_i$ correspondent aux pôles de $\Lambda$ comptés avec multiplicité. Ils sont en nombre fini par définition.

Les zéros de $\xi$ sont les zéros de $\Lambda$, et $\xi$ étant entière d'ordre au plus 1 d'après le Théorème \ref{lambda_d'ordre_1}, elle admet d'après le théorème de Weierstrass le développement suivant :
$$
\xi(s)=e^{A+Bs}\prod_{\rho} (1-\frac{s}{\rho})e^{\frac{s}{\rho}}
$$
où $\rho$ parcourt les zéros de $\Lambda$.

On a donc :
$$
\Lambda(s)=e^{A+Bs}\prod_{\rho} (1-\frac{s}{\rho})e^{\frac{s}{\rho}}\prod_{i=1}^n \frac{1}{s-\mu_i}
$$
puis 
\begin{align}\label{dev_lambda_log}
\frac{\Lambda'}{\Lambda}(s)&=B+\sum_\rho (\frac{\frac{-1}{\rho}}{1-\frac{s}{\rho}} + \frac{1}{\rho})- \sum_{\mu} \frac{1}{s-\mu} \notag \\
&=B+\sum_\rho (\frac{1}{s-\rho} + \frac{1}{\rho})- \sum_{\mu} \frac{1}{s-\mu}
\end{align}

En particulier :
$$
\frac{\Lambda'}{\Lambda}(s)-\frac{\Lambda'}{\Lambda}(s_0)=\sum_\rho (\frac{1}{s-\rho}-\frac{1}{s_0-\rho})-\sum_\mu (\frac{1}{s-\mu}-\frac{1}{s_0-\mu})
$$
Tous les calculs s'effectuent hors des pôles et la somme (finie) portant sur les $\mu$ est donc majorée en valeur absolue par un réel fixé $K_1$ indépendamment de $m$ (supposé toujours suffisamment grand).

Si l'on considère la droite $\re(s)=2+c$, alors la proposition précédente nous assure que $\left| \frac{\Lambda'}{\Lambda}(s_0) \right| \leq  K_2 \log |m|$ pour une constante $K_2$ indépendante de $m$ (on utilise $T_m \sim m$).

Il reste à considérer la somme portant sur les zéros de $\Lambda$. On a :
$$
\left| \frac{1}{s-\rho}-\frac{1}{s_0-\rho} \right|=\frac{|s_0-s|}{|s-\rho||s_0-\rho|} \leq \frac{3+2c}{|s-\rho||s_0-\rho|}.
$$
Par ailleurs, en décomposant $\rho=\beta + i \gamma$, on a
$$
|s-\rho|=|\sigma+i T_m - \beta -i \gamma| \geq |T_m - \gamma|
$$
et de même pour $|s_0-\rho|$
d'où :
$$
\left| \frac{1}{s-\rho}-\frac{1}{s_0-\rho} \right| \leq \frac{3+2c}{(T_m - \gamma)^2}.
$$

On va maintenant séparer, dans la somme, les zéros tels que $|T_m - \gamma|<1$ (dont on notera l'ensemble $Z_1$) des autres (dont on notera l'ensemble $Z_2$).

Si $|T_m - \gamma| \geq 1$ alors $|T_m - \gamma|^2+1 \leq 2 |T_m - \gamma|^2$ et 
$$
\sum_{\rho \in Z_2} \left| \frac{1}{s-\rho}-\frac{1}{s_0-\rho} \right| \leq \sum_{\rho \in Z_2} 2\frac{3+2c}{|T_m - \gamma|^2+1}
$$
qui est inférieur à $K_3 \log |m|$ d'après le lemme suivant. \ps 

Pour les zéros dans $Z_1$, on a $|2+c+iT_m-\rho|=|2+c+iT_m-\beta - i\gamma|\geq 2+c-\beta \geq 1$ car $\beta$, partie réelle d'un zéro de $\Lambda$, est dans l'intervalle $[-c;1+c]$.

Donc $\sum_{\rho \in Z_1} \left|\frac{1}{s_0-\rho} \right| \leq \# Z_1 \times 1 \leq K_4 \log |m|$ d'après la Proposition \ref{136}.

À cette étape, on peut donc écrire :
$$
\frac{\Lambda'}{\Lambda}(s)=\sum_{\rho \in Z_1} \frac{1}{s-\rho}+R(s)
$$
avec $|R(s)| \leq K_1+ K_2 \log |m| + K_3 \log |m| + K_4 \log |m| \leq K_5 \log |m| $.

Or on a choisi $s$ de partie imaginaire $T_m$, de sorte que, par le Corollaire \ref{T_m_evite}, 
$$|s-\rho|=|\sigma+iT_m-\beta-i\gamma| \geq |T_m -  \gamma| \geq \frac{\eta}{\log |m|}$$
d'où en utilisant de nouveau $\# Z_1 \leq K_4 \log |m|$ :
$$
\left| \frac{\Lambda'}{\Lambda}(s) \right| \leq K_4 \log |m| \times \eta \log |m| + K_5 \log |m| \leq D (\log |m|)^2.
$$
\end{proof}

\begin{lemme}
On a, pour $\rho$ parcourant tous les zéros de $\Lambda$, et pour $t$ suffisamment grand en valeur absolue :
$$
\sum_{\rho} \frac{1}{1+(t-\gamma)^2}=O(\log |t|)
$$
Cette majoration est \emph{a fortiori} vérifiée pour $\rho$ parcourant seulement l'ensemble $Z_2$.
\end{lemme}

\begin{proof}
De \eqref{dev_lambda_log} et des majorations déjà faites concernant la somme portant sur les pôles, on tire :
$$
\frac{\Lambda'}{\Lambda}(s)=\sum_\rho (\frac{1}{s-\rho} + \frac{1}{\rho})+O(1).
$$

Pour $s=2+c+it$, on obtient, en utilisant la majoration de la Proposition \ref{lambda_vertical} le long de la droite $\re(s)=2+c$, et en prenant les parties réelles :
$$
\sum \re(\frac{1}{s-\rho} + \frac{1}{\rho}) \leq A \log |t|.
$$
Or 
\begin{align*}
\re(\frac{1}{s-\rho})&=\re(\frac{1}{2+c+it-\beta-i\gamma}) \\
					 &=\frac{2+c+\beta}{(2+c+\beta)^2+(t-\gamma)^2} \\
					 & \geq \frac{1}{(2+c+\beta)^2+(t-\gamma)^2} \\
					 & \geq \frac{1}{(2+2c)^2+(t-\gamma)^2}
\end{align*}
en utilisant que $-c \leq \beta \leq 1+c$ (partie réelle d'un zéro de $\Lambda$).

Par ailleurs,
$$
\re \frac{1}{\rho}=\frac{\beta}{|\rho|^2}
$$
et, $\beta$ étant borné, il suffit de voir que $\sum \frac{1}{|\rho|^2}$ converge pour conclure que la somme des $\frac{1}{\rho}$ est bornée en module (donc \emph{a fortiori} sa partie réelle est bornée).

Or les zéros de $\Lambda$ sont les zéros de $\xi$ et cette dernière fonction est d'ordre au plus 1 ; un résultat général concernant les fonctions d'ordre fini (\emph{cf.} \cite{Murty} Exercice 6.1.6) assure que $\sum \frac{1}{|\rho|^{1+\varepsilon}} < + \infty$ pour tout $\varepsilon >0$.

On peut donc écrire
$$
\sum_{\rho} \frac{1}{(2+2c)^2+(t-\gamma)^2} \leq A' \log |t|
$$
et on conclut en remarquant que, de $(2+2c)^2>1$ on tire $(2+2c)^2+(t-\gamma)^2 \leq (2+2c)^2(1+(t-\gamma)^2)$.
\end{proof}

\section{Formules explicites}
\subsection{Fonction test et formule des résidus}\label{Fonction explicite}

\begin{defi} \emph{(\cite{Mes})}\label{Définition fonction explicite} \emph{(Définition \ref{defi fonction explicite})}

Soit $c\geq 0$. Soit $F:\R \rightarrow \R$, mesurable, paire. On dit que $F$ est une fonction test (de niveau $c$) si : 
\begin{enumerate}[(i)]
\item il existe $\varepsilon >0$ tel que $x \mapsto F(x)e^{(\frac{1}{2}+c+\varepsilon)|x|} \in {\rm L}^1(\mathbb{R})$,
\item il existe $\varepsilon >0$ tel que $x \mapsto F(x)e^{(\frac{1}{2}+c+\varepsilon)|x|} $ soit à variation bornée et normalisée (i.e. égale en chaque point à la moyenne de ses limites à gauche et à droite),
\item la fonction $x \mapsto \dfrac{F(x)-F(0)}{x}$ est à variation bornée.
\end{enumerate}
\end{defi}

On se donne $F$ une fonction test et un réel $\varepsilon$ tel que les conditions $(ii)$ et $(iii)$ soient remplies. On se donne un entier naturel $m$ suffisamment grand pour être redevable du Corollaire \ref{T_m_evite}, qui a vocation à tendre vers $+ \infty$ (et $T_m$ avec).
On pose $\Psi_{\varepsilon}(x)=F(x)e^{(\frac{1}{2}+c+\varepsilon)|x|}$ (qui est également paire).\ps 

On pose, pour $s$ complexe avec $\mathrm{Re}(s) \in [-c- \frac{\varepsilon}{2};1+c+\frac{\varepsilon}{2}]$ :
$$ \Phi(s)= \int_{- \infty}^{+ \infty} F(x) e^{(s-\frac{1}{2})x} dx$$

Cette intégrale est bien définie car on a, pour $x \geq 0$ :
\begin{align*}
|F(x) e^{(s-\frac{1}{2})x}|
&=|F(x)| e^{(\sigma-\frac{1}{2})x} \\
& \leq |F(x)| e^{(1+c+\frac{\varepsilon}{2}-\frac{1}{2})x}  \\
& \leq e^{-\frac{\varepsilon}{2}x}|\Psi_{\varepsilon}(x)|
\end{align*}
La condition $(iii)$ nous donne que la fonction $\Psi_{\varepsilon}$ est en particulier bornée et alors, la fonction $x \mapsto F(x) e^{(s-\frac{1}{2})x}$ est non seulement intégrable au voisinage de $+ \infty$ mais également à décroissance rapide.

De même, pour $x<0$, on a 
$$
|F(x) e^{(s-\frac{1}{2})x}| \leq e^{\frac{\varepsilon}{2}x}|\Psi_{\varepsilon}(x)|
$$
donc, là encore, on a intégrabilité et décroissance rapide au voisinage de $- \infty$.
\newline

La fonction $\Phi$ est donc holomorphe, comme intégrale à paramètre, et la formule des résidus appliquée à $s \mapsto \Phi(s) \frac{\Lambda_1'(s)}{\Lambda_1(s)}$ sur le bord du rectangle
$$ \left[-c- \frac{\varepsilon}{2};1+c+\frac{\varepsilon}{2}\right] \times [-T_m;T_m]$$
donne :
\begin{equation}\label{resid}
 \frac{1}{2i\pi} \int \Phi(s) \frac{\Lambda_1'(s)}{\Lambda_1(s)} \dd s = \sum_{|\mathrm{Im}(\rho)|<T_m} \Phi(\rho) - \sum_{|\mathrm{Im}(\mu)|<T_m} \Phi(\mu)
\end{equation}
où $\rho$ (resp. $\mu$) parcourt les zéros (resp. les pôles) de $\Lambda_1$ de partie réelle comprise entre $-c- \frac{\varepsilon}{2}$ et $1+c+\frac{\varepsilon}{2}$ comptés avec multiplicité. Il n'y a pas de zéros sur les bords du rectangle d'après le Corollaire \ref{T_m_evite} (pour les bords horizontaux) ainsi que par la formule \eqref{prod_eul+gamma} (pour les bords verticaux).

\emph{Remarque :} En réalité, on a considéré tous les zéros et tous les pôles de la fonction $\Lambda_1$ car on a déjà vu que l'écriture en produit interdisait l'existence de zéros ou de pôles dans le demi-plan $\mathrm{Re}(s)>1+c$, l'équation fonctionnelle interdisant l'existence dans le demi-plan \og symétrique \fg{} $\mathrm{Re}(s)<-c$.

\begin{prop}\label{Phi_estim} Avec les notations précédentes, on a :
$\Phi(s)=O\left(\dfrac{1}{|t|}\right)$ lorsque $|t| \longrightarrow + \infty$
\end{prop}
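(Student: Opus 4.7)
The plan is to obtain the bound by a single Stieltjes integration by parts, exploiting the bounded-variation hypotheses on $F$ and the exponential integrability in condition (i).

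First, I would decompose $\Phi(s) = \Phi_{+}(s) + \Phi_{-}(s)$ along $x \geq 0$ and $x \leq 0$, and collect preliminary regularity facts on $F$. Writing $G(x) = F(x)\,e^{(\frac{1}{2}+c+\varepsilon)|x|}$, condition (ii) makes $G$ a bounded BV function on $\mathbb{R}$, so (i) upgrades to the pointwise estimate $|F(x)| \leq \|G\|_{\infty}\, e^{-(\frac{1}{2}+c+\varepsilon)|x|}$. Since the product of two BV functions is BV on a half-line, $F$ is BV on $(-\infty,0]$ and on $[0,+\infty)$. Condition (iii) forces $(F(x)-F(0))/x$ to be locally bounded near $0$, so $F(x) = F(0) + O(x)$ near the origin; in particular $F$ is continuous at $0$ and hence globally BV on $\mathbb{R}$ with no atom at $0$. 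A parallel computation shows that the Lebesgue-Stieltjes signed measure $dF$ admits the exponential weight: $e^{(\frac{1}{2}+c+\varepsilon/2)|x|}\,|dF(x)|$ is a finite measure on $\mathbb{R}$.

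Next, I would integrate by parts on each half-line with $g(x) = e^{(s-1/2)x}/(s-\tfrac{1}{2})$, using the Stieltjes formula $\int F\,g'\,dx = [Fg] - \int g\,dF$. For $\sigma = \re(s) \in [-c-\tfrac{\varepsilon}{2},\,1+c+\tfrac{\varepsilon}{2}]$ the boundary term at $+\infty$ vanishes: indeed $|F(x)\,g(x)| = O(e^{-\varepsilon x/2}/|s-\tfrac{1}{2}|)$, by combining the pointwise bound on $F$ with $|e^{(s-1/2)x}| \leq e^{(\frac{1}{2}+c+\varepsilon/2)x}$. The same estimate on the other half-line handles $-\infty$. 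The two boundary contributions at $x=0$ are $-F(0)/(s-\tfrac{1}{2})$ and $+F(0)/(s-\tfrac{1}{2})$, and they cancel thanks to the continuity of $F$ at $0$ established above. This yields the identity
\[
\Phi(s) \;=\; -\frac{1}{s-\tfrac{1}{2}}\int_{\mathbb{R}} e^{(s-1/2)x}\,dF(x).
\]

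Finally, I would bound the right-hand side uniformly in $t$. The exponential weight estimate on $|dF|$ gives, uniformly in $\sigma$ in the strip,
\[
\left|\int_{\mathbb{R}} e^{(s-1/2)x}\,dF(x)\right| \;\leq\; \int_{\mathbb{R}} e^{(\sigma - 1/2)x}\,|dF(x)| \;\leq\; C_{F,\varepsilon},
\]
a finite constant depending only on $F$ and $\varepsilon$. Since $|s-\tfrac{1}{2}| \geq |t|$, one concludes $\Phi(s) = O(1/|t|)$ as $|t|\to +\infty$, uniformly in the strip $[-c-\tfrac{\varepsilon}{2},\,1+c+\tfrac{\varepsilon}{2}]$.

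The main technical obstacle will be justifying the Stieltjes integration by parts rigorously in this generality: treating $dF$ as a Lebesgue-Stieltjes signed measure rather than as $F'(x)\,dx$, using the normalization condition in (ii) to handle countable jump sets properly, and verifying that the boundary contributions at $\pm\infty$ truly vanish for the precise exponential weights dictated by $\varepsilon/2$. The deeper role of condition (iii) is exactly the continuity of $F$ at the origin that makes the two boundary terms at $x=0$ cancel; without it, one would pick up an extra jump contribution that is only $O(1)$ rather than $O(1/|t|)$.
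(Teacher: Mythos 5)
Your proof is correct and follows essentially the same route as the paper: a single integration by parts against the oscillating exponential, with the boundary terms killed by the rapid decay of $x\mapsto F(x)e^{(\sigma-\frac{1}{2})x}$ and the remaining Stieltjes integral bounded uniformly by a total variation that is finite thanks to condition (ii). The only difference is bookkeeping: the paper antidifferentiates $e^{itx}$ alone (so the antiderivative has modulus exactly $1/|t|$ and no splitting at $x=0$ is needed), whereas you antidifferentiate the full factor $e^{(s-\frac{1}{2})x}$, which forces the split at the origin and the correct but avoidable appeal to continuity of $F$ at $0$ — in the paper's version condition (iii) plays no role in this proposition.
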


\begin{proof}
Il s'agit d'effectuer une intégration par parties \og fonction à variation bornée contre fonction continue \fg

On rappelle qu'on a la série d'implications suivante : \newline
\og VB sur \fg $\mathbb{R} \Rightarrow$ \og VB sur chaque segment \fg $\Rightarrow$ \og différence de deux fonctions croissantes sur chaque segment \fg $\Rightarrow$ \og dérivable presque partout \fg

On peut donc écrire : 
\begin{align*}
\Phi(s) &= \int_{- \infty}^{+ \infty} F(x) e^{(\sigma-\frac{1}{2})x}e^{itx} \, \dd x \\
		&= \left[F(x)e^{(\sigma-\frac{1}{2})x}\frac{e^{itx}}{it}\right]_{- \infty}^{+ \infty} - \frac{1}{it} \int_{- \infty}^{+ \infty} e^{itx} \,\dd (F(x)e^{(\sigma-\frac{1}{2})x})
\end{align*}

Or, on a vu plus haut que la fonction $x \mapsto F(x) e^{(\sigma-\frac{1}{2})x}$ est à décroissance rapide à l'infini.

Le premier terme est donc égal à $0$ et la seconde intégrale est bornée  par la variation totale, si bien qu'on obtient le résultat recherché.
\end{proof}

\begin{cor}
Les parties horizontales de l'intégrale (\ref{resid}) tendent vers $0$ quand $m$ (ou $T_m$) tend vers l'infini.
\end{cor}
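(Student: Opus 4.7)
The plan is to bound each horizontal portion of the contour integral directly by combining the two uniform estimates established just above. On the upper horizontal side, $s = \sigma + iT_m$ with $\sigma$ ranging over the fixed interval $[-c-\varepsilon/2,\,1+c+\varepsilon/2]$ of length $\ell := 1 + 2c + \varepsilon$, a constant independent of $m$. We may assume $\varepsilon \le 2$ (shrinking $\varepsilon$ if necessary does not affect the classes $(i)$--$(iii)$ in Definition \ref{Définition fonction explicite}), so this $\sigma$-range is contained in $[-c-1,\,2+c]$, the range in which Proposition \ref{estim_lambda_log} applies.

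Under these choices, Proposition \ref{estim_lambda_log} gives the uniform bound
\[
\left| \frac{\Lambda_1'}{\Lambda_1}(\sigma + iT_m) \right| \le D\,(\log m)^{2}
\]
for all $\sigma$ in our interval, and Proposition \ref{Phi_estim} gives $|\Phi(\sigma + iT_m)| = O(1/T_m)$, uniformly in $\sigma$ on any fixed compact (the constant in the asymptotic estimate produced by the integration by parts depends only on the total variation of $x \mapsto F(x) e^{(\sigma - 1/2)x}$, which is bounded uniformly for $\sigma$ in a compact). Multiplying these two bounds and integrating over the horizontal segment of fixed length $\ell$ yields
\[
\left| \int_{-c-\varepsilon/2}^{1+c+\varepsilon/2} \Phi(\sigma + iT_m)\,\frac{\Lambda_1'}{\Lambda_1}(\sigma + iT_m)\, d\sigma \right| = O\!\left( \frac{(\log m)^{2}}{T_m} \right),
\]
which tends to $0$ as $m \to \infty$, since $T_m \in (m, m+1)$ by Corollary \ref{T_m_evite}.

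The argument for the lower horizontal side $t = -T_m$ is identical, using the version of Proposition \ref{estim_lambda_log} valid at height $-T_m$: this statement is available because Corollary \ref{T_m_evite} guarantees a zero-free band of width $2\eta/\log m$ around \emph{both} $+T_m$ and $-T_m$, and the proof of Proposition \ref{estim_lambda_log} uses only that zero-free-band property together with the Weierstrass factorization. Honestly there is no real obstacle here: all the difficulty was already absorbed in the careful choice of $T_m$ carried out in Corollary \ref{T_m_evite} and in the $(\log m)^2$ estimate of Proposition \ref{estim_lambda_log}, whose entire purpose was precisely to make the horizontal portions of the contour negligible. The a priori worry — that a zero of $\Lambda_1$ could land on or near the horizontal contour and force the logarithmic derivative to blow up — is exactly what Corollary \ref{T_m_evite} eliminates by a Dirichlet pigeonhole argument on the zero count of Proposition \ref{136}.
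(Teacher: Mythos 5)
Votre démonstration est correcte et suit exactement la même voie que celle du texte, qui se contente de combiner la majoration $\Phi(s)=O(1/|t|)$ de la Proposition \ref{Phi_estim} avec l'estimée $O((\log m)^2)$ de la Proposition \ref{estim_lambda_log} le long des segments horizontaux de longueur fixe, d'où une contribution en $O((\log m)^2/T_m)\to 0$. Les précisions que vous ajoutez (uniformité en $\sigma$ des deux bornes, réduction à $\varepsilon\le 2$, traitement symétrique du côté $t=-T_m$ garanti par le Corollaire \ref{T_m_evite}) sont exactes et ne changent pas l'argument.
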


Pour alléger les notations, on note désormais $T$ au lieu de $T_m$ et on sera amené à raisonner avec le fait que $T$ tend vers l'infini lorsque $m$ tend vers l'infini.
\begin{proof}
Il suffit de combiner la majoration précédente avec l'estimée de la Proposition \ref{estim_lambda_log}
\end{proof}

On s'intéresse donc aux parties verticales de l'intégrale (\ref{resid}) et on remarque que l'équation fonctionnelle reliant $\Lambda_1$ et $\Lambda_2$ permet d'écrire, en posant $a=c+\frac{\varepsilon}{2}$ :
$$\int_{-a+iT}^{-a-iT} \Phi(s) \frac{\Lambda_1'(s)}{\Lambda_1(s)} \dd s
=\int_{1+a-iT}^{1+a+iT} \Phi(1-u) \frac{\Lambda_1'(1-u)}{\Lambda_1(1-u)} (- \dd u)$$
en faisant le changement de variable $u=1-s$.

Or, on a :
\begin{equation}\label{Phi(s)=Phi(1-s)}
\begin{aligned}
\Phi(1-s)	&=\int_{- \infty}^{+ \infty} F(x) e^{(1-s-\frac{1}{2})x} \dd x \\
			&=\int_{- \infty}^{+ \infty} F(x) e^{(\frac{1}{2}-s)x} \dd x \\
			&=\int_{- \infty}^{+ \infty} F(-x) e^{(s-\frac{1}{2})x} \dd x \\
			&=\Phi(s)
\end{aligned}
\end{equation}
par parité de $F$.\ps 

Finalement, en utilisant le fait que $w \Lambda_2'(s)=- \Lambda_1'(1-s)$, on obtient que les parties verticales de l'intégrale (\ref{resid}) peuvent s'écrire :
\begin{align*}
&\int_{1+a-iT}^{1+a+iT} \left(\Phi(s) \frac{\Lambda_1'(s)}{\Lambda_1(s)}
+ \Phi(1-s) \frac{\Lambda_2'(s)}{\Lambda_2(s)} \right) \dd s \\
&=\int_{1+a-iT}^{1+a+iT} \Phi(s) \left(\frac{\Lambda_1'(s)}{\Lambda_1(s)}
+  \frac{\Lambda_2'(s)}{\Lambda_2(s)} \right) \dd s 
\end{align*}

Comme tout se passe désormais pour $\mathrm{Re}(s)>1+c$, on peut utiliser l'écriture en produit donnée par (\ref{prod_eul+gamma}), particulièrement commode pour le calcul de la dérivée logarithmique, ce qui donne :
\begin{equation} \label{decomp_arch_ultr}
\frac{\Lambda_1'(s)}{\Lambda_1(s)}=
\underbrace{\log A +\sum_{i=1}^M \psi(a_is+a'_i)}_{\frac{G_1'(s)}{G_1(s)}}
\underbrace{-\sum_p \sum_{j=1}^{M'} \frac{\alpha_j(p) \log(p)}{1-\alpha_j(p) p^{-s}}}_{\frac{L_1'(s)}{L_1(s)}}
\end{equation}
Une telle décomposition est aussi valable pour la dérivée logarithmique de $\Lambda_2$.

\subsection{Partie ultramétrique} \label{Partie ultramétrique}
On s'intéresse à l'intégrale de $\Phi$ contre la partie ultramétrique de \eqref{decomp_arch_ultr} le long du segment $[1+a-iT;1+a+iT]$.

Or 
\begin{align*}
\frac{L_1'(s)}{L_1(s)}
&=-\sum_p \sum_{j=1}^{M'} \frac{\alpha_j(p) \log(p)}{1-\alpha_j(p) p^{-s}} \\
&= -\sum_p \sum_{j=1}^{M'} \alpha_j(p) \log(p) p^{-s} \sum_{k=0}^{+ \infty} \alpha_j(p)^k p^{-ks} \\
&=-\sum_p \sum_{j=1}^{M'} \sum_{k=1}^{+ \infty} \log(p)\alpha_j(p)^k p^{-ks}.
\end{align*}

On a donc :
\begin{align*}
&\frac{1}{2 i \pi} \int_{1+a-iT}^{1+a+iT} \Phi(s) \frac{L_1'(s)}{L_1(s)} \dd s \\
= &-\frac{1}{2 \pi} \int_{-T}^T \left(\int_\mathbb{R} F(x) e^{(\frac{1}{2}+a+it)x} \dd x \right) \sum \log(p)\alpha_j(p)^k p^{-k(1+a+it)} \dd t \\
= &-\frac{1}{2 \pi} \int_{-T}^T \sum \left(\int_\mathbb{R} F(u+k \log p) e^{(\frac{1}{2}+a+it)u}  p^{k(\frac{1}{2}+a+it)} \dd u \right) \log(p)\alpha_j(p)^k p^{-k(1+a+it)} \dd t \\
= &-\frac{1}{2 \pi} \int_{-T}^T \sum \left(\int_\mathbb{R} F(u+k \log p) e^{(\frac{1}{2}+a+it)u}  \dd u \right) \frac{\log(p)\alpha_j(p)^k}{p^{\frac{k}{2}}}  \dd t \\
= &-\frac{1}{2 \pi} \int_{-T}^T \left(\int_\mathbb{R} H(u) e^{itu}  \dd u \right) \dd t \\
\end{align*}
avec le changement de variable affine $x= u+k \log p$ et en posant :
\begin{align*}
H(u)&= \sum F(u+k \log p) e^{(\frac{1}{2}+a)u} \frac{\log(p)\alpha_j(p)^k}{p^{\frac{k}{2}}} \\
&= \sum F_a(u+k \log p) \frac{\log(p)\alpha_j(p)^k}{p^{k(1+a)}}
\end{align*}
où $F_a : x \mapsto F(x) e^{(\frac{1}{2}+a)x}$.

Il reste à justifier l'interversion \og somme-intégrale \fg{} qui a fait apparaître la fonction $H$.

La somme sur l'indice $j$ étant finie, elle n'a pas d'incidence sur la convergence, on suppose pour la clarté des calculs que $M'=1$ et on omet à partir de maintenant l'indice $j$. Il reste donc une double sommation sur $k \geq 1$ et sur $p$ premier.

La fonction $F$ étant une fonction test, on a $F_a$ à variation bornée, et en particulier bornée en valeur absolue sur $\mathbb{R}$ par une constante $W$. Alors
\begin{align*}
\sup_{u \in \mathbb{R}}\left| F_a(u+k \log p) \frac{\log(p)\alpha(p)^k}{p^{k(1+a)}} \right|
&\leq W \frac{\log p (p^{c})^k}{p^{k(1+c+\frac{\varepsilon}{2})}} \\
&\leq W \frac{\log p}{p^{k(1+\frac{\varepsilon}{2})}}
\end{align*}
en utilisant $a=c+ \frac{\varepsilon}{2}$ ainsi que l'hypothèse sur les fonctions $\Lambda$ concernant le module des $\alpha(p)$.

On somme alors en $k$, à $p$ fixé :
\begin{align*}
\sum_{k=1}^{+ \infty} W \frac{\log p}{p^{k(1+\frac{\varepsilon}{2})}}
&= W \frac{\log p}{p^{1+\frac{\varepsilon}{2}}} \sum_{k=0}^{+ \infty} \left(\frac{1}{p^{1+ \frac{\varepsilon}{2}}} \right) ^k \\
&= W \frac{\log p}{p^{1+\frac{\varepsilon}{2}}-1} \\
&\sim_{p \rightarrow + \infty} W \frac{\log p}{p^{1+\frac{\varepsilon}{2}}}.
\end{align*}

On obtient donc le terme général d'une série convergente en $p$, si bien que la double somme converge et donc la série définissant $H$ converge normalement sur $\mathbb{R}$.

En remarquant que si $F$ est une fonction test, alors $F_a \in {\rm L}^1(\mathbb{R})$, on a, via le même calcul avec une intégration en plus, légitimé l'interversion ci-dessus, et montré que $H \in {\rm L}^1(\mathbb{R})$. De même, on a $F_a$ à variation bornée et donc, par sous-additivité de la variation totale, $H$ à variation bornée.

On a donc $\int_\mathbb{R} H(u) e^{itu}  \dd u= \widehat{H}(t)$ où $\widehat{\cdot}$ désigne la transformée de Fourier sur le groupe localement compact $\mathbb{R}$.

On utilise une version faible de l'inversion de Fourier
\begin{thm} \emph{(Pringsheim)}

Soit $f$ une fonction complexe de la variable réelle, à variation bornée. On suppose que $f(t)$ tend vers 0 lorsque $t$ tend vers $\pm \infty$. Alors :
$$
\frac{1}{2 \pi} \lim_{T \rightarrow +\infty} \int_{t=-T}^{t=T} e^{-ity} \left( \int_{x=-\infty}^{x=+\infty} f(x) e^{itx} \dd x \right) \dd t= \frac{f(y)^+ + f(y)^-}{2}.
$$
\end{thm}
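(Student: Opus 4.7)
La stratégie de démonstration consiste à ramener l'énoncé à la théorie classique de la convergence des intégrales de Dirichlet (test de Jordan pour l'intégrale de Fourier). On notera $\hat{f}(t) = \int_\mathbb{R} f(x) e^{itx} \dd x$ l'intégrale intérieure (qui est bien définie, car dans les applications visées on dispose en outre de $f \in {\rm L}^1(\R)$, comme c'est le cas pour la fonction $H$ à laquelle le théorème est appliqué \emph{supra}).

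D'abord, comme $t$ parcourt le segment borné $[-T,T]$ et que $|e^{it(x-y)}| = 1$, le théorème de Fubini s'applique à la fonction $(x,t) \mapsto f(x) e^{it(x-y)}$ qui est intégrable sur $\R \times [-T,T]$ pour la mesure produit. On obtient ainsi, en intervertissant l'ordre d'intégration,
\[
\frac{1}{2\pi}\int_{-T}^{T} e^{-ity}\hat{f}(t) \dd t \;=\; \int_\mathbb{R} f(x)\,\mathcal{D}_T(x-y)\,\dd x,
\]
où $\mathcal{D}_T(u) = \frac{\sin(Tu)}{\pi u}$ désigne le noyau de Dirichlet sur la droite. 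Il s'agit alors de montrer que cette intégrale de convolution tend vers $\tfrac12(f(y^+) + f(y^-))$ lorsque $T \to +\infty$.

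Pour cela, on fixe $\delta > 0$ et l'on scinde l'intégrale en deux parties. Pour la partie \emph{extérieure} $\int_{|x-y|>\delta} f(x) \mathcal{D}_T(x-y)\dd x$, on observe que la fonction $x \mapsto \frac{f(x)}{\pi(x-y)}$ est intégrable sur $\{|x-y|>\delta\}$ (grâce à l'intégrabilité de $f$ et à la borne $|x-y|\geq \delta$), si bien que le lemme de Riemann–Lebesgue assure que cette contribution tend vers $0$ lorsque $T\to+\infty$. Pour la partie \emph{intérieure} $\int_{|x-y|<\delta} f(x) \mathcal{D}_T(x-y) \dd x$, on utilise crucialement l'hypothèse de variation bornée : quitte à la décomposer en différence de deux fonctions monotones (décomposition de Jordan), on peut supposer $f$ monotone sur chacun des intervalles $[y-\delta,y]$ et $[y,y+\delta]$. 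Sur chacun de ces deux intervalles, le second théorème de la moyenne pour les intégrales de Stieltjes, combiné à l'égalité classique $\int_0^{+\infty}\frac{\sin u}{u}\dd u = \frac{\pi}{2}$, donne respectivement
\[
\int_{y-\delta}^{y} f(x)\,\mathcal{D}_T(x-y)\,\dd x \;\underset{T\to+\infty}{\longrightarrow}\; \frac{f(y^-)}{2}, \qquad \int_{y}^{y+\delta} f(x)\,\mathcal{D}_T(x-y)\,\dd x \;\underset{T\to+\infty}{\longrightarrow}\; \frac{f(y^+)}{2}.
\]

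La principale difficulté est la gestion de cette partie intérieure : la fonction $\mathcal{D}_T$ n'est pas de signe constant et son intégrale n'est que conditionnellement convergente. L'hypothèse de variation bornée permet précisément de contourner cet obstacle \emph{via} le second théorème de la moyenne, qui remplace avantageusement l'argument de convolution par approximation de l'unité (lequel exigerait un noyau positif de masse $1$, ce que $\mathcal{D}_T$ n'est pas). L'annulation $f(t)\to 0$ lorsque $|t|\to\infty$ intervient indirectement pour assurer la convergence des intégrales tronquées à l'infini. En définitive, la démonstration combine l'application de Fubini sur $\R\times[-T,T]$, le lemme de Riemann–Lebesgue pour la partie éloignée de $y$, et le test de Jordan (second théorème de la moyenne plus intégrale de Dirichlet) pour la partie proche de $y$.
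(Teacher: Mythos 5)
Le texte ne démontre pas cet énoncé : il renvoie simplement à la référence \cite{Riesz}. Votre argument est donc un complément réel, et c'est bien la démonstration classique (test de Jordan pour l'intégrale de Fourier) : interversion de Fubini sur $\R\times[-T,T]$ faisant apparaître le noyau $\mathcal{D}_T(u)=\frac{\sin(Tu)}{\pi u}$, lemme de Riemann--Lebesgue pour la contribution éloignée de $y$, second théorème de la moyenne et intégrale de Dirichlet $\int_0^{+\infty}\frac{\sin u}{u}\,\dd u=\frac{\pi}{2}$ pour la contribution proche. C'est essentiellement l'argument que l'on trouve dans la référence citée, et il couvre toutes les utilisations faites dans la thèse, où les fonctions auxquelles le théorème est appliqué ($F$, $F_a$, $H$) sont à la fois intégrables, à variation bornée et normalisées.

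Deux réserves cependant. D'abord, votre étape de Fubini exige $f\in{\rm L}^1(\R)$, hypothèse qui ne figure pas dans l'énoncé : une fonction à variation bornée tendant vers $0$ en $\pm\infty$ n'est pas nécessairement intégrable (penser à $x\mapsto\frac{1}{1+|x|}$), et l'intégrale intérieure n'est alors que semi-convergente. Vous le signalez honnêtement, mais la démonstration de l'énoncé \emph{tel qu'il est écrit} demande un traitement supplémentaire (intégration par parties exploitant la variation bornée, ou troncature en $x$ avec contrôle uniforme des restes) pour justifier l'interversion ; votre preuve établit donc un cas particulier, suffisant pour le texte. Ensuite, pour la partie intérieure, les limites que vous annoncez pour $\delta$ fixé ne sont pas exactes : le second théorème de la moyenne donne seulement une majoration du type $\limsup_{T\to+\infty}\bigl|\int_{y}^{y+\delta}(f(x)-f(y^+))\,\mathcal{D}_T(x-y)\,\dd x\bigr|\leq C\,|f((y+\delta)^-)-f(y^+)|$, et il faut conclure en faisant tendre $\delta$ vers $0$ \emph{après} avoir pris la limite en $T$. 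Ce sont des points de rédaction plus que des lacunes d'idée : la structure de la preuve est la bonne.
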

\begin{proof}
On peut y voir un analogue du théorème de Dirichlet concernant la convergence ponctuelle des séries de Fourier. On trouve la preuve dans \cite{Riesz}.
\end{proof}
On a alors :
\begin{align*}
\frac{1}{2 i \pi} \int_{1+a-iT}^{1+a+iT} \Phi(s) \frac{L_1'(s)}{L_1(s)} \dd s
& \quad=-\frac{1}{2 \pi} \int_{-T}^T \widehat{H}(t) e^{-it \cdot 0}  \dd t \\
& \underset{T \rightarrow + \infty}{\longrightarrow}-H(0) \; \; \; \mbox{\footnotesize en utilisant le théorème précédent} \\
& \quad =-\sum_p \sum_{j=1}^{M'} \sum_{k=1}^{+ \infty} F(k \log p) \frac{\log(p)\alpha_j(p)^k}{p^{\frac{k}{2}}}
\end{align*}

On utilise le fait que $H$ est normalisée au sens du {\it (ii)} de la Définition \ref{Définition fonction explicite}, ce qui découle immédiatement du fait que $F$ est normalisée (ce qui vient à son tour de la définition d'une fonction explicite).

\subsection{Partie archimédienne}
On considère maintenant à l'intégrale de $\Phi$ contre la partie archimédienne de \eqref{decomp_arch_ultr} le long du segment $[1+a-iT;1+a+iT]$.

On commence par remarquer qu'on peut, puisque $T$ va tendre vers $+ \infty$, remplacer le segment $[1+a-iT;1+a+iT]$ par $[\frac{1}{2}-iT;\frac{1}{2}+iT]$. En effet, intégrons le long des bords du rectangle délimité par ces deux segments, où il n'y a pas de pôles :

\begin{align*}
0 	&= \int \Phi(s) \frac{G'(s)}{G(s)} \dd s \; \; \; \mbox{\footnotesize par le théorème des résidus} \\
	&= \int_{1+a-iT}^{1+a+iT} + \int_{\frac{1}{2}-iT}^{\frac{1}{2}+iT} + 2\left(O(\log T) \times o\left(\frac{1}{T}\right)\right),
\end{align*}
les majorations à droite provenant des Propositions \ref{digamma_estim} et \ref{Phi_estim}.
Ainsi la différence entre les deux intégrales tend vers 0 lorsque $T$ tend vers $+ \infty$.\ps 

Commençons par le terme correspondant à $A^s$, on a :
\begin{align*}
\int_{\frac{1}{2}-i T}^{\frac{1}{2}+i T} \Phi(s) \log A \;\dd s
&\quad= \int_{t=- T}^{t=T} \Phi(\frac{1}{2}+it) \log A \;i \; \dd t \\
&\quad= i \log A \int_{t=- T}^{t=T} \int_{x=- \infty}^{x=+\infty} F(x) e^{itx} \dd x \; \dd t \\
&\quad = i \log A \int_{t=- T}^{t=T} \widehat{F}(t) e^{-it \cdot 0} \dd t \\
&\underset{T \rightarrow + \infty}{\longrightarrow}  2 i \pi (\log A) F(0)
\end{align*}
par le même argument d'inversion de Fourier \og faible \fg{}.

On pose $\varphi (t)=\Phi(\frac{1}{2}+it)$, on doit alors calculer les limites lorsque $T$ tend vers l'infini des intégrales :
$$
\int_{\frac{1}{2}-i T}^{\frac{1}{2}+i T} \Phi(s) \psi(a_is+a'_i) \dd s
=i\int_{t=- T}^{t=T} \varphi(t) \psi(\sigma_i +i a_i t) \dd t
$$
où $\sigma_i=\frac{a_i}{2}+a'_i >0$ d'après les hypothèses faites sur $a_i$ et $a'_i$. Pour la lisibilité des calculs qui suivent, on omettra les indices $i$.

On va avoir besoin du
\begin{lemme} \label{lem_122}
Soit $k \in {\rm L}^1(\mathbb{R}) \cap {\rm L}^2(\mathbb{R})$. On pose
$$ \rho(t)=\int_{- \infty}^{+ \infty} k(x) \frac{1-e^{itx}}{x} \dd x $$

Soit $\gamma$ la transformée de Fourier de $ x \mapsto \dfrac{F(x)-F(0)}{x}$.
On suppose que $\lim\limits_{t \rightarrow +\infty} \rho(t) \gamma(t)=0$.

Alors $\rho \varphi \in {\rm L}^1(\mathbb{R})$ et 

$$ \frac{1}{2 \pi} \int_{- \infty}^{+ \infty} \rho(t) \varphi(t)dt=\int_{- \infty}^{+ \infty} k(x) \frac{F(0)-F(x)}{x} \dd x.$$
\end{lemme}
\begin{proof}
La démonstration se trouve dans \cite{Poit}, Lemme 2, p.5.
\end{proof}

On a, en utilisant la Proposition \ref{digamma_expl} ($\sigma >0$) :
\begin{align*}
\psi(\sigma+iat)- \psi (\sigma)
&= - \int_0^{+ \infty} \frac{e^{-(\sigma + iat)x}-e^{-\sigma x}}{1-e^{-x}} \, \dd x \\
&= - \int_0^{+ \infty}  \frac{e^{-\sigma x}}{1-e^{-x}} (e^{-iatx}-1) \, \dd x \\
&= + \int_0^{+ \infty}  \frac{e^{-\sigma \frac{y}{a}}}{1-e^{-\frac{y}{a}}}  (1-e^{-ity}) \, \frac{\dd y}{a} \\
&= \int_0^{+ \infty} e^{- \frac{\sigma y}{a}} \frac{y}{a} \frac{1}{1-e^{-\frac{y}{a}}} \frac{1-e^{-ity}}{y}\, \dd y \\
&= \int_{- \infty}^{+ \infty} k(y) \frac{1-e^{-ity}}{y}\, \dd y
\end{align*}

avec le changement de variable $y=ax$ et en posant

$$
k : y \longmapsto
\begin{cases}
e^{- \frac{\sigma y}{a}} \frac{y}{a} \frac{1}{1-e^{-\frac{y}{a}}} &\text{ si }y> 0 \\
0 &\text{ si }y \leq 0
\end{cases}
$$


On a alors $k \in {\rm L}^1(\mathbb{R}) \cap {\rm L}^2(\mathbb{R})$ et, en reprenant les notations du Lemme \ref{lem_122}, on a, modulo la vérification $\lim\limits_{t \rightarrow +\infty} \rho(t) \gamma(t)=0$ (pour laquelle on renvoie encore à \cite{Poit} §1) :
\begin{align*}
\frac{1}{2 \pi} \int_{- \infty}^{+ \infty} (\psi(\sigma+iat)- \psi (\sigma)) \varphi(t)\, \dd t
&= \int_{- \infty}^{+ \infty} k(x) \frac{F(0)-F(x)}{x} \,\dd x \\
&= \int_0^{+ \infty} e^{- \frac{\sigma x}{a}} \frac{x}{a} \frac{1}{1-e^{-\frac{x}{a}}} \frac{F(0)-F(x)}{x} \,\dd x \\
&= \int_0^{+ \infty}  \frac{e^{- \sigma y}}{1-e^{-y}} (F(0)-F(ay)) \,\dd y \\
\end{align*}

Enfin, en utilisant le même calcul d'inversion de Fourier que ci-dessus, on a :
\begin{align*}
\frac{1}{2 \pi} \int_{- \infty}^{+ \infty}  \psi (\sigma) \varphi(t)\, \dd t
&= \psi (\sigma) \times F(0) \\
&=  F(0)  \int_0^{+ \infty}  \left(\frac{e^{-y}}{y}-\frac{e^{- \sigma y}}{1-e^{-y}} \right) \, \dd y
\end{align*}
d'où :
\begin{equation*}
\frac{1}{2 \pi} \int_{- \infty}^{+ \infty} \psi(\sigma+iat) \varphi(t)dt
=- \int_0^{+ \infty} \left( \frac{F(ay)e^{- \sigma y}}{1-e^{-y}} - F(0) \frac{e^{-y}}{y} \right) \dd y
\end{equation*}

\subsection{Formule}
On prend la limite quand $m$ tend vers l'infini (donc $T_m$ tend vers l'infini) dans l'équation \eqref{resid} : le membre de gauche admet une limite par les calculs des sections précédentes, et la somme sur les pôles également car c'est une somme finie d'après l'hypothèse faite sur $\Lambda$.

On a donc montré que $\sum_{|\mathrm{Im}(\rho)|<T_m} \Phi(\rho)$ admet une limite quand $m$ tend vers l'infini qu'on notera désormais sans plus de précautions $\sum \Phi(\rho)$.

On peut finalement, en conservant les notations précédentes et en utilisant la parité de $F$, écrire la formule explicite, pour une paire quelconque de fonctions $\Lambda$. C'est le Théorème \ref{formule_explicite}.
\begin{multline*}\label{formule_explicite_ann}
\sum \Phi(\rho) - \sum \Phi(\mu)=F(0) \log(AB)\\
-\sum_p \sum_{j=1}^{M'} \sum_{k=1}^{+ \infty} F(k \log p) \frac{\log(p)}{p^{\frac{k}{2}}}(\alpha_j(p)^k+\beta_j(p)^k)\\
-\sum_{i=1}^M (I(a_i,a'_i)+I(b_i,b'_i))
\end{multline*}
où $$I(a,a')=\int_0^{+ \infty} \left( \frac{F(ay)e^{- (\frac{a}{2}+a') y}}{1-e^{-y}} - F(0) \frac{e^{-y}}{y} \right) \dd y.$$

\bibliographystyle{alpha}
\newpage
\bibliography{test_bibli_2018}

\end{document}